\numberwithin{equation}{section}
\theoremstyle{plain}
\newtheorem{thm}{Theorem}[section]
\newtheorem{cor}[thm]{Corollary}
\newtheorem{lem}[thm]{Lemma}
\newtheorem{prop}[thm]{Proposition}
\newtheorem{conj}[thm]{Conjecture}
\newtheorem{ques}[thm]{Question}
\theoremstyle{definition}
\newtheorem{defn}[thm]{Definition}
\newtheorem{fact}[thm]{Fact}
\theoremstyle{remark}
\newtheorem{rem}[thm]{Remark}
\newcommand{\N}{\mathbb N}
\newcommand{\Z}{\mathbb Z}
\newcommand{\C}{\mathbb C}
\newcommand{\R}{\mathbb R}
\newcommand{\F}{\mathcal F}
\newcommand{\W}{\mathcal W}
\newcommand{\E}{\mathcal{E}}
\newcommand{\V}{\mathcal{V}}
\newcommand{\e}{\varepsilon}
\renewcommand{\t}{\tilde}
\renewcommand{\l}{\ell}
\newcommand{\la}{\lambda}
\newcommand{\p}{\partial}
\title{Characterizing symmetric spaces by their Lyapunov spectra}
\author{Clark Butler}
\thanks{The material in this paper is based upon work supported by the National Science Foundation Graduate Research Fellowship under Grant \# DGE-1144082.}
\begin{document}

\begin{abstract}
We prove that closed negatively curved locally symmetric spaces are characterized up to isometry among all homotopy equivalent negatively curved manifolds by the Lyapunov spectra of the periodic orbits of their geodesic flows. This is done by constructing a new invariant measure for the geodesic flow that we refer to as the horizontal measure. We show that the Lyapunov spectrum of the horizontal measure alone suffices to locally characterize these locally symmetric spaces up to isometry. We associate to the horizontal measure a new invariant, the horizontal dimension. We tie this invariant to extensions of curvature pinching rigidity theorems for complex hyperbolic manifolds to pinching rigidity theorems for the Lyapunov spectrum. Our methods extend to give a rigidity theorem for smooth Anosov flows $f^{t}$ orbit equivalent to the geodesic flow $g^{t}_{X}$ of a closed negatively curved locally symmetric space $X$:  $f^{t}$ is smoothly orbit equivalent to $g^{t}_{X}$ if and only if its Lyapunov spectra on all periodic orbits are a multiple of the corresponding Lyapunov spectra for $g^{t}_{X}$. 
\end{abstract}
\maketitle

\section{Introduction}\label{sec:intro}

There is a rich collection of dynamical invariants which characterize closed negatively curved locally symmetric spaces up to isometry. One classical result for surfaces is Katok's theorem \cite{Kat82} stating that a closed negatively curved surface has constant negative curvature if and only if the Liouville measure is the measure of maximal entropy; another is the characterization by Croke \cite{Cr} and Otal \cite{O} of closed negatively curved surfaces up to isometry by the lengths of their closed geodesics. Katok conjectured that closed negatively curved locally symmetric spaces are in general characterized by the Liouville measure being the measure of maximal entropy, however little progress on this conjecture has been made since the work of Flaiminio \cite{Fla95} establishing a local version of the conjecture for constant negative curvature manifolds. Hamenst\"adt \cite{Ham99} generalized the theorem of Croke and Otal to higher dimensional closed negatively curved manifolds in the special case that one of them is locally symmetric. 

Another spectacular characterization is the minimal entropy rigidity theorem of Besson, Courtois, and Gallot \cite{BCG2}, which states that if $X$ is a closed negatively curved locally symmetric space and $Y$ is a closed negatively curved Riemannian manifold homotopy equivalent to $X$, then $Y$ is isometric to $X$ if and only if $h_{\mathrm{top}}(g_{X}) = h_{\mathrm{top}}(g_{Y})$ and $\mathrm{vol}(X) = \mathrm{vol}(Y)$, i.e., the geodesic flows $g^{t}_{X}$ and $g^{t}_{Y}$ of $X$ and $Y$ have equal topological entropy and $X$ and $Y$ have equal volume. The dynamical invariants that we will consider in this paper share  features of the topological entropy, the marked length spectrum, and the entropy of the Liouville measure. 

Given a closed Riemannian manifold $Y$, we consider the geodesic flow $g^{t}_{Y}: T^{1}Y \rightarrow T^{1}Y$ on its unit tangent bundle $T^{1}Y$. Set $n = \dim Y$. By applying the multiplicative ergodic theorem \cite{Osel} to the derivative cocycle $Dg^{t}_{Y}$ of the geodesic flow, we may associate to each $g^{t}_{Y}$-invariant ergodic probability measure $\nu$ a list of real numbers 
\[
\la_{1}(g_{Y},\nu) \leq  \dots \leq \la_{2n-1}(g_{Y},\nu), 
\]
such that for $\nu$-a.e.~ unit tangent vector $v \in T^{1}Y$ and every unit vector $\xi \in T_{v}T^{1}Y$, we have 
\[
\lim_{t \rightarrow \infty}\frac{\|Dg^{t}_{Y}(\xi)\|}{t} = \la_{i}(g_{Y},\nu),
\]
 for some $1 \leq i \leq 2n-1$. These are known as the \emph{Lyapunov exponents} of $g^{t}_{Y}$ with respect to $\nu$; they describe all possible asymptotic exponential growth rates of vectors $\xi \in T_{v}T^{1}Y$ for $\nu$-a.e.~ $v \in T^{1}Y$. Equivalently, they describe the possible asymptotic exponential growth rates of Jacobi fields along a randomly chosen geodesic in $Y$, where this random choice is made according to the distribution $\nu$. We write $\vec{\la}(g_{Y}, \nu)$ for the vector in $\R^{2n-1}$ whose components are the Lyapunov exponents $\la_{i}(g_{Y},\nu)$, written in increasing order. For any choice of ergodic $g^{t}_{Y}$-invariant measure $\nu$ we always have $\la_{n}(g_{Y},\nu) = 0$; this corresponds to the fact that $g^{t}_{Y}$ acts isometrically on the direction of its flow. The Lyapunov spectra of ergodic $g^{t}_{Y}$-invariant measures will be the primary invariants associated to $g^{t}_{Y}$ that will be studied in this paper. We note that the Lyapunov spectrum of the Liouville measure $m_{Y}$ for $g^{t}_{Y}$ is connected to the entropy $h_{m_{Y}}(g_{Y})$ of $g^{t}_{Y}$ with respect to $m_{Y}$ by the \emph{Pesin entropy formula} \cite{Pes76}, 
\[
h_{m_{Y}}(g_{Y}) = \sum_{i=1}^{n-1} \la_{2n-i}(g_{Y},m_{Y}),
\]
with the sum being taken over all positive exponents of $g^{t}_{Y}$ with respect to $m_{Y}$. 
 
Periodic points of $g^{t}_{Y}$ correspond to closed geodesics in $Y$. For each periodic point $p \in T^{1}Y$ of $g^{t}_{Y}$ we let $\nu^{(p)}$ denote the unique $g^{t}_{Y}$-invariant probability measure supported on the orbit of $p$. Let $\l(p)$ denote the period of $p$, which is the length of the closed geodesic generated by $p$. The Lyapunov spectrum $\vec{\la}(g_{Y},\nu^{(p)})$ is determined exclusively by the absolute values of the eigenvalues of the linear map $Dg^{\l(p)}_{p}: TT^{1}Y_{p} \rightarrow TT^{1}Y_{p}$. When $Y$ is negatively curved the periodic points of $g^{t}_{Y}$ are dense in $T^{1}Y$. It is thus natural to ask to what extent the Lyapunov spectra  $\vec{\la}(g_{Y},\nu^{(p)})$ associated to these periodic orbits determines $Y$. This is the subject of the main theorem of this work. 

We note that the Lyapunov spectra of periodic orbits are in some sense orthogonal to the lengths of these orbits. One cannot determine the length of an orbit using the Lyapunov spectrum and vice versa. In the extreme case of a negatively curved locally symmetric space detailed below, the Lyapunov spectrum does not actually depend on the choice of  periodic orbit at all.

When $X$ is a negatively curved locally symmetric space, the Lyapunov spectrum does not depend on the choice of invariant measure $\nu$:  there is a fixed vector $\vec{\la}(g_{X}) \in \R^{2n-1}$ such that $\vec{\la}(g_{X},\nu) = \vec{\la}(g_{X})$ for all $g^{t}_{X}$-invariant ergodic probability measures $\nu$. Hence we will omit the choice of invariant measure when we refer to the Lyapunov spectrum of $g^{t}_{X}$. The vector $\vec{\la}(g_{X})$ depends only on the isometry type of the universal cover of $X$. For example, any negatively curved locally symmetric space with universal cover the real hyperbolic space $\mathbf{H}^{3}_{\R}$ of constant negative curvature $K \equiv -1$ has Lyapunov spectrum $(-1,-1,0,1,1)$,  while any locally symmetric space with universal cover the  complex hyperbolic plane $\mathbf{H}^{2}_{\C}$ has Lyapunov spectrum $(-2,-1,-1,0,1,1,2)$, provided we normalize $\mathbf{H}^{2}_{\C}$ to have sectional curvatures $-4 \leq K \leq -1$. The Lyapunov spectra of negatively curved locally symmetric spaces in general are described in Section \ref{subsec:neg curved}. 

Recall that a Riemannian manifold $Y$ is \emph{homothetic} to another Riemannian manifold $X$ if there is a constant $c > 0$ such that $Y$ is isometric to the Riemannian manifold $X_{c}$ obtained by scaling the metric of $X$ by $c$. Our main theorem is the following.

\begin{thm}\label{periodic loc symm}
Let $X$ be a closed negatively curved locally symmetric space with $\dim X \geq 3$. Let $Y$ be a closed negatively curved Riemannian manifold that is homotopy equivalent to $X$. Suppose that for each periodic point $p$ of $g^{t}_{Y}$ there exists a constant $\omega(p) > 0$ such that 
\begin{equation}\label{periodic exponent equality}
\vec{\la}(g_{Y},\nu^{(p)}) = \omega(p)\vec{\la}(g_{X}).
\end{equation}
Then $Y$ is homothetic to $X$. If there is some $p$ such that $\omega(p) = 1$ then $Y$ is isometric to $X$. 
\end{thm}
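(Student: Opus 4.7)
The plan has three parts: (i) use the orbit equivalence of the geodesic flows together with Liv\v sic-type theory to promote the periodic-orbit hypothesis to a statement about all $g_Y^t$-invariant measures; (ii) construct the horizontal measure on $T^1Y$ and invoke the local characterization of $X$ promised in the abstract; and (iii) globalize the resulting local isometry using the homotopy equivalence $Y\simeq X$.

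For (i), since $X$ and $Y$ are closed, negatively curved, and homotopy equivalent, the geodesic flows $g_X^t$ and $g_Y^t$ admit a H\"older orbit equivalence $\phi:T^1Y\to T^1X$, giving in particular a bijection between their periodic orbits. Because $\vec{\la}(g_X,\nu)$ is independent of $\nu$, the hypothesis \eqref{periodic exponent equality} is precisely that the Lyapunov spectrum of $g_Y^t$ on each periodic orbit is a common scalar multiple $\omega(p)$ of the fixed vector $\vec{\la}(g_X)$. In particular the Oseledets block structure of $Dg_Y^t$ matches that of $Dg_X^t$ on every periodic orbit. Applying a Liv\v sic-type theorem for the top exponent of H\"older matrix cocycles (in the spirit of Kalinin) to each block, I would produce a positive H\"older function $\tilde\omega:T^1Y\to\R_{>0}$ satisfying
\[
\vec{\la}(g_Y,\nu)=\left(\int\tilde\omega\,d\nu\right)\vec{\la}(g_X)
\]
for every ergodic $g_Y^t$-invariant measure $\nu$.

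For (ii), I would construct the horizontal measure $\mu_h$ for $g_Y^t$; the identity above forces its Lyapunov spectrum to equal $c\,\vec{\la}(g_X)$ where $c=\int\tilde\omega\,d\mu_h$, and the local rigidity theorem for the horizontal measure promised in the abstract then yields a local isometry between $Y$ and the rescaled symmetric space $X_c$. For (iii), this local isometry lifts to a $\pi_1$-equivariant isometry of universal covers via the homotopy equivalence $Y\simeq X$, and descends to a global homothety $Y\to X_c$. Once $Y$ is realized as $X_c$, every $g_Y^t$-invariant measure has Lyapunov spectrum exactly $c\,\vec{\la}(g_X)$, so $\tilde\omega\equiv c$ and $\omega(p)=c$ for every periodic $p$; if some $\omega(p_0)=1$ then $c=1$ and $Y$ is isometric to $X$.

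The main obstacle will be (ii): constructing the horizontal measure so that its defining properties pin down its Lyapunov spectrum as a scalar multiple of $\vec{\la}(g_X)$, and then establishing that this single spectral datum forces local isometric rigidity --- essentially a Lyapunov-spectrum analogue of the curvature-pinching rigidity theorems alluded to in the abstract. Step (i) should reduce to standard Liv\v sic machinery once the correct sub-cocycles coming from the Oseledets flag are identified, and step (iii) is a routine equivariance and descent argument.
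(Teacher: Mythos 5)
Your outline has the right broad flavor --- pass from periodic data to all-measure data, then reduce to the horizontal-measure invariant --- but it has a gap at the key step (ii) that the paper's architecture is specifically designed to avoid.

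Step (ii) proposes to ``invoke the local characterization of $X$ promised in the abstract,'' i.e.\ to apply Theorem \ref{loc symmetric rigidity}. That theorem, however, has the hypothesis $Y\in\mathcal U_X$: a $C^2$-small neighborhood of the locally symmetric metric. Theorem \ref{periodic loc symm} makes no such closeness assumption, so the local theorem simply does not apply here. The paper does \emph{not} derive Theorem \ref{periodic loc symm} from Theorem \ref{loc symmetric rigidity}; instead it proves a single dynamical statement, Theorem \ref{core theorem}, whose hypotheses --- a $u$-splitting and $s$-splitting of the right index, bunching and fiber-bunching conditions, H\"older regularity of $L^{cu}$ and $L^{cs}$ --- are verified in the periodic-orbit case by a substantive argument (Proposition \ref{periodic dom split}): one uses Kalinin's approximation theorem and a Kalinin--Sadovskaya-type subadditive estimate, plus a Bochi--Gourmelon domination criterion, to \emph{construct} the dominated splittings and establish growth estimates approaching those of $g^t_X$, after a smooth time change. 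This is precisely the content that the $C^2$-closeness of $\mathcal U_X$ provides ``for free'' in the local theorem. Your proposal quietly assumes this structure by asserting that ``the Oseledets block structure of $Dg^t_Y$ matches that of $Dg^t_X$ on every periodic orbit'' already gives a globally defined, H\"older, dominated splitting --- but the existence of these invariant continuous subbundles over the whole of $T^1Y$ is exactly what must be proved, and it cannot be read off the periodic data at a single orbit.

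Step (i) is closer to the truth, though the cited tool is off: what produces $\vec\la(g_Y,\nu)=\omega(\nu)\vec\la(g_X)$ for every ergodic $\nu$ is Kalinin's approximation theorem (periodic measures approximate any ergodic measure in Lyapunov spectrum), not a Liv\v sic coboundary theorem --- this is Lemma \ref{approximate}. The representation $\omega(\nu)=\int\tilde\omega\,d\nu$ then follows by taking the function $\tilde\omega = \check\chi^{u\,-1}\cdot\frac{d}{dt}\big|_{t=0}\log\mathrm{Jac}(\bar Df^t_x)$, using that the Jacobian of the full unstable cocycle is additive, and this is H\"older because $E^{cu}$ is H\"older. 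But a Liv\v sic-type cohomological promotion \emph{within} an Oseledets block (producing, say, a conformal structure preserved by the block cocycle) requires first knowing the block cocycle is uniformly quasiconformal, which is a deep conclusion of Section \ref{sec:exponents} via Pansu differentiability and holonomy arguments --- it is not available at this stage. Finally, step (iii) omits the actual concluding machinery: the core theorem delivers a $C^\infty$ orbit equivalence, which is promoted to a $C^1$ conjugacy by Hamenst\"adt's lemma and then to an isometry by the Besson--Courtois--Gallot minimal entropy rigidity theorem (Theorem \ref{minimal rigidity}); there is no separate ``local isometry to globalize'' since the conclusion of the local theorem is already global.
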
 

Note that we only need to assume that the Lyapunov spectrum of $g^{t}_{Y}$ at a given periodic point $p$ is some multiple $\omega(p)$ of the Lyapunov spectrum of $g^{t}_{X}$, where $\omega(p)$ is allowed to depend in an arbitrary fashion on $p$. As part of the conclusion of the theorem one obtains that $\omega(p)$ is actually constant in $p$. 

It is instructive to consider the special case in which $Y$ is also locally symmetric, with the same universal cover as $X$. By the discussion above, the Lyapunov spectra $\vec{\la}(g_{Y},\nu^{p}) = \vec{\la}(g_{Y})$ do not depend on the choice of periodic point $p$, and since $X$ and $Y$ have the same universal cover we must have $\vec{\la}(g_{Y}) = \vec{\la}(g_{X})$. Hence \eqref{periodic exponent equality} is trivially satisfied with $\omega(p) = 1$ for all $p$. We thus obtain a new proof of the Mostow rigidity theorem as a corollary of Theorem \ref{periodic loc symm}. 

\begin{cor}{\cite{Mos73}}\label{mostow rigidity}
Let $X$ and $Y$ be two $n$-dimensional closed negatively curved locally symmetric spaces, $n \geq 3$. Suppose that we have a homotopy equivalence $\kappa: X \rightarrow Y$. Then $\kappa$ is homotopic to a homothety $\sigma: X \rightarrow Y$.
\end{cor}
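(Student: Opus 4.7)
The plan is to verify the hypothesis of Theorem~\ref{periodic loc symm} for the pair $(X,Y)$ and then upgrade the resulting homothety to one homotopic to $\kappa$ using asphericity. Since $Y$ is itself locally symmetric, the Lyapunov spectrum $\vec\la(g_Y,\nu^{(p)})=\vec\la(g_Y)$ is independent of the periodic point $p$, so verifying \eqref{periodic exponent equality} amounts to producing a single constant $\omega>0$ with $\vec\la(g_Y)=\omega\,\vec\la(g_X)$. This in turn reduces to showing that the universal covers $\widetilde X$ and $\widetilde Y$ are the same rank-one symmetric space of non-compact type up to rescaling, since the Lyapunov spectrum of a closed negatively curved locally symmetric space depends only on the isometry type of its universal cover and scales as $c^{-1}$ under a rescaling of the metric by a factor of $c$.

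For this type-matching step I would start from the isomorphism $\pi_1(X)\cong\pi_1(Y)$ supplied by $\kappa$ and invoke homotopy-invariant obstructions that separate the four families of rank-one non-compact symmetric spaces (real, complex, quaternionic, and octonionic hyperbolic). Candidate obstructions include: the existence in the complex case of a class in $H^2(\cdot;\Q)$ whose powers generate the even rational cohomology (the K\"ahler class); the presence in the quaternionic case of a distinguished $H^4$ class (the quaternion-K\"ahler four-form); and the vanishing pattern of $L^2$-Betti numbers, which is a homotopy invariant of closed aspherical manifolds and detects the real dimension together with structural features of the symmetric space. Once the types match, $\widetilde Y$ is isometric to a rescaling of $\widetilde X$ by some $c>0$, yielding $\vec\la(g_Y)=c^{-1}\vec\la(g_X)$ and hence \eqref{periodic exponent equality} with $\omega(p)=c^{-1}$ for every periodic $p$.

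Theorem~\ref{periodic loc symm} then gives that $Y$ is homothetic to $X$ as abstract Riemannian manifolds. To conclude that the particular map $\kappa$ is homotopic to a homothety $\sigma:X\to Y$, I would appeal to the general principle that two maps between closed aspherical manifolds are homotopic iff they induce conjugate isomorphisms on fundamental groups. A homothety provided by Theorem~\ref{periodic loc symm} can be composed with a self-isometry of $X$ or $Y$ to realize the conjugacy class of $\kappa_*$; alternatively one observes that the homothety is naturally constructed by rigidifying a $\kappa_*$-equivariant lift of $\kappa$ to the universal covers, and thus descends automatically to a homothety homotopic to $\kappa$. The main obstacle I foresee is in the first step: distinguishing the four rank-one types via purely homotopy-invariant data without implicitly smuggling in the rigidity we aim to prove. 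Modulo that classical point, the corollary is a clean consequence of Theorem~\ref{periodic loc symm} and standard asphericity arguments.
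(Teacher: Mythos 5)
Your high-level plan matches the paper's: reduce to the case of matching universal covers, feed the constant multiple into Theorem~\ref{periodic loc symm}, and use asphericity to arrange the homothety in the given homotopy class. The gap is exactly where you flag it, at the type-matching step, and it is a genuine one: the cohomological obstructions you list do not cleanly separate the four rank-one families by homotopy type. A closed real hyperbolic $4$-manifold can perfectly well carry a class $\alpha\in H^2(\cdot;\Q)$ with $\alpha^2\neq 0$ (take the Poincar\'e dual of an embedded surface of nonzero self-intersection), so existence of such a class does not single out the complex hyperbolic case. The $L^2$-Betti numbers of closed rank-one locally symmetric spaces of all four types are concentrated in the middle dimension, so their vanishing pattern does not distinguish types, and the nonzero value is proportional to the volume, whose homotopy invariance is essentially the content of the theorem you are trying to prove. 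The quaternion-K\"ahler $H^4$ class is likewise a statement about a geometric structure, and extracting a homotopy-invariant avatar of it is as hard as the original problem.

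What the paper actually does (Section~\ref{mostow mod}) is dynamical rather than cohomological. The homotopy equivalence $\kappa$ lifts to a quasi-isometry of universal covers, which produces an orbit equivalence $\varphi$ from $g^t_X$ to $g^t_Y$. By Proposition~\ref{quasisymmetry orbit}, $\varphi$ induces quasisymmetric homeomorphisms between unstable leaves in the Hamenst\"adt metrics, equivalently between the Carnot groups $G_X$ and $G_Y$ modeling horospheres, equipped with their CC-metrics. Lemma~\ref{positive modulus} gives each Carnot group a curve family of positive modulus, and Tyson's theorem (Theorem~\ref{Tyson theorem}) then forces $\mathrm{Hd}(G_X,\rho_{G_X})=\mathrm{Hd}(G_Y,\rho_{G_Y})$. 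Since the Hausdorff dimension is $l+2(n-1-l)$ and $\dim X=\dim Y=n$, this pins down $l(X)=l(Y)$ and hence the type $\mathbb K$. This is the coarse-geometric, quasi-isometry-invariant substitute for the homotopy-theoretic obstruction you were hunting for, and it comes for free from machinery the paper already develops for the main theorems.

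One further subtlety the proposal glosses over: applying Theorem~\ref{periodic loc symm} as a black box is valid but, as Remark~\ref{mostow remarks} notes, undesirable, since its proof invokes Theorem~\ref{minimal rigidity} (a consequence of Besson--Courtois--Gallot), of which Mostow rigidity is itself a direct corollary. Section~\ref{mostow mod} therefore shows that when $Y$ is already locally symmetric the synchronization step is trivial, so one gets the smooth conjugacy $\hat\varphi$ without appealing to Theorem~\ref{minimal rigidity}, and then extracts the isometry by showing $\partial\tilde\kappa$ is conformal on the boundary Carnot group and applying Pansu's theorem. That conformality step, reading off $D\hat\varphi$ being an isometry on $L^{u}$, is where the isometry homotopic to $\kappa$ ultimately comes from, replacing the generic ``conjugate $\pi_1$ isomorphism'' asphericity argument you sketch at the end.
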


\begin{rem}\label{mostow remarks}
The concluding step of Theorem \ref{periodic loc symm} relies on Theorem \ref{minimal rigidity} below, which is a corollary of the minimal entropy rigidity theorem \cite{BCG2}. Hence a direct application of Theorem \ref{periodic loc symm} to Corollary \ref{mostow rigidity} would yield a proof that depended on the minimal entropy rigidity theorem, which is undesirable as the Mostow rigidity theorem is itself a direct corollary of this theorem \cite{BCG2}. However, when $Y$ is locally symmetric the use of Theorem \ref{minimal rigidity} can be avoided. We explain this in Section \ref{mostow mod} at the end of the paper. 
\end{rem}

Let $\mathbf{H}$ be a negatively curved symmetric space, $\dim \mathbf{H} \geq 3$. We write $\vec{\la}(\mathbf{H})$ for the common Lyapunov spectrum of the geodesic flows of all negatively curved locally symmetric spaces with universal cover $\mathbf{H}$. Motivated by Theorem \ref{periodic loc symm}, we pose the following conjecture.

\begin{conj}\label{conjecture}
Let $Y$ be a closed negatively curved Riemannian manifold, $\dim Y \geq 3$. Suppose that for each periodic point $p$ of $g^{t}_{Y}$ there exists a constant $\omega(p) > 0$ such that 
\begin{equation}
\vec{\la}(g_{Y},\nu^{(p)}) = \omega(p)\vec{\la}(\mathbf{H}).
\end{equation}
Then $Y$ is locally symmetric with universal cover homothetic to $\mathbf{H}$.
\end{conj}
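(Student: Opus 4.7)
My plan is to reduce the conjecture to Theorem \ref{periodic loc symm} by extracting, from the Lyapunov spectrum data alone, a closed negatively curved locally symmetric space $X$ with universal cover homothetic to $\mathbf{H}$ that is homotopy equivalent to $Y$. Once such an $X$ is produced, the hypothesis $\vec{\la}(g_Y,\nu^{(p)}) = \omega(p)\vec{\la}(\mathbf{H}) = \omega(p)\vec{\la}(g_X)$ is precisely the hypothesis \eqref{periodic exponent equality} of Theorem \ref{periodic loc symm}, and that theorem concludes $Y$ is homothetic to $X$, hence locally symmetric with universal cover homothetic to $\mathbf{H}$.

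The first step toward producing $X$ is to translate the periodic-orbit Lyapunov data into a structural decomposition of $TT^1Y$. The assumption that $\vec{\la}(g_Y,\nu^{(p)})$ is a positive scalar multiple of $\vec{\la}(\mathbf{H})$ at every periodic orbit pins down the multiplicities of the Lyapunov exponents along all periodic orbits in a way that encodes the root-space data of $\mathbf{H}$. Using density of periodic orbits together with Katok-horseshoe approximation and shadowing arguments, I would try to propagate these constant multiplicities to every ergodic invariant measure, producing an invariant dominated splitting of the stable (resp.\ unstable) bundle whose dimensions mirror the filtration coming from the root-space decomposition of $\mathbf{H}$.

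The second step would use this enriched splitting, combined with the horizontal measure machinery developed in the paper, to construct a smooth orbit equivalence between $g^t_Y$ and the geodesic flow of some locally symmetric target. The rigidity theorem for Anosov flows orbit equivalent to $g^t_X$ mentioned in the abstract would then apply in reverse to identify $g^t_Y$ as a smooth reparametrization of $g^t_X$ for some $X$ with universal cover homothetic to $\mathbf{H}$. A smooth orbit equivalence between $g^t_Y$ and $g^t_X$ in particular identifies $T^1Y$ with $T^1X$ as manifolds, which yields the homotopy equivalence $Y \simeq X$ needed to invoke Theorem \ref{periodic loc symm}.

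The principal obstacle is the regularity gap between the (only measurable) Oseledets decomposition and a smooth invariant distribution on $TT^1Y$: upgrading constant multiplicities on a dense set of periodic orbits to a smooth structural refinement of the Anosov splitting typically requires nonstationary normal forms and cocycle invariance principles that are delicate in this setting, where the measure to be constructed is not known in advance. A secondary obstacle, visible already in the rank-one case $\mathbf{H} = \mathbf{H}^n_{\R}$ (where the hypothesis reduces to ``all nonzero Lyapunov exponents at every periodic orbit have the same absolute value''), is that this case alone appears to be open and lies close in spirit to Katok's conjecture that the Liouville measure is the measure of maximal entropy only in constant curvature; so proving even the rank-one instance of Conjecture \ref{conjecture} is likely to demand new ideas beyond the methods of the present paper.
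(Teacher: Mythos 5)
The statement you set out to prove is labeled a \emph{conjecture} in the paper, and the paper offers no proof of it. The paper explicitly records the state of affairs: the case $\mathbf{H} = \mathbf{H}^{n}_{\R}$ was established in \cite[Theorem 1.1]{Bu1}, while the nonconstant-curvature cases remain open; the paper says only that Theorem \ref{periodic loc symm} ``offers strong evidence in favor of a positive answer'' because the homotopy-equivalence hypothesis is used only in the second half of that proof. So there is no proof in the paper to compare your proposal against, and the factual claim near the end of your proposal --- that the rank-one instance ``appears to be open'' --- contradicts the paper.

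Beyond the misattribution of open status, the proposal has a structural circularity that I want to flag. Your plan is to (i) produce a closed negatively curved locally symmetric $X$ with universal cover homothetic to $\mathbf{H}$ that is homotopy equivalent to $Y$, and then (ii) invoke Theorem \ref{periodic loc symm}. But step (i) is essentially equivalent to the conclusion of the conjecture: by Mostow rigidity and Besson--Courtois--Gallot, if such an $X$ exists and is homotopy equivalent to a closed negatively curved manifold with the specified Lyapunov spectrum, then the locally symmetric metric is forced. Producing $X$ \emph{is} the content, not a preliminary step. The mechanism you offer for producing $X$ --- build a smooth orbit equivalence from $g^{t}_{Y}$ to the geodesic flow of ``some locally symmetric target'' and then invoke the Anosov-flow rigidity theorem ``in reverse'' --- relies on Theorem \ref{periodic rigid}, whose hypotheses include ``Suppose that there exists an orbit equivalence $\varphi: T^{1}X \rightarrow M$ from $g^{t}_{X}$ to $f^{t}$.'' That orbit equivalence is exactly what the homotopy-equivalence hypothesis furnishes in Theorem \ref{periodic loc symm} (via the boundary-map construction reviewed in Section \ref{subsec:geom rigidity}), and its existence is what is missing in the conjecture. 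Everything downstream of Section \ref{subsec:synchro} --- the synchronization, the quasisymmetry and Hausdorff-dimension estimates for the Hamenst\"adt metrics, the horizontal Pansu derivative, the vertical-foliation arguments --- is built on top of an orbit equivalence to a fixed locally symmetric model, so those tools cannot bootstrap its existence. Your first step (propagating the periodic-orbit constraint to all invariant measures and extracting a dominated $u$-splitting with the right index) is plausible and is in fact carried out in the paper in Lemma \ref{approximate} and Proposition \ref{periodic dom split} without using the orbit equivalence; but the gap in your plan is precisely the passage from that splitting to a symmetric model with the correct fundamental group, which is the genuine unsolved problem here.
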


In \cite[Theorem 1.1]{Bu1} we established the case $\mathbf{H}  = \mathbf{H}^{n}_{\R}$ of this conjecture, which corresponds to the case in which $\mathbf{H}$ has constant negative curvature. The question is whether this theorem can be extended to cover the case of nonconstant negative curvature. The proof of Theorem \ref{periodic loc symm} offers strong evidence in favor of a positive answer to Conjecture \ref{conjecture}, as the hypothesis that $Y$ is homotopy equivalent to $X$ is only used in the second half of the proof starting in Section \ref{subsec:synchro}, after many important properties of $g^{t}_{Y}$ have already been established using only equation \eqref{periodic exponent equality}.

Our proof of Theorem \ref{periodic loc symm} owes the greatest debt to Hamenst\"adt's hyperbolic rank rigidity theorem \cite{Ham91} and Connell's generalization to the minimal Lyapunov exponent rigidity theorem \cite{Con}. In particular the geometric techniques we use in Section \ref{sec:orbit to conj} are directly inspired by the proofs of these theorems, using the special metrics on unstable manifolds that Hamenst\"adt introduced to prove her theorem. 

Given a smooth manifold $S$, Riemannian metrics on $S$ may be described as smooth sections of the bundle $\mathrm{Sym}^{2}(TS)$ of symmetric 2-tensors on $TS$. For a Riemannian manifold $X$ with underlying smooth manifold $S$, we let $\eta_{X}:S \rightarrow \mathrm{Sym}^{2}(TS)$ denote its metric tensor. Given a closed negatively curved locally symmetric space $X$ of nonconstant negative curvature, we write $\mathcal{U}_{X}$ for a certain $C^2$ open neighborhood of $\eta_{X}$ in the space of smooth sections of $\mathrm{Sym}^{2}(TS)$, which we describe in Section \ref{subsec:geom rigidity}. As a shorthand, for another Riemannian manifold $Y$ with the same underlying manifold $S$ we write $Y \in \mathcal{U}_{X}$ if $\eta_{Y} \in \mathcal{U}_{X}$. 

Our next result is a local rigidity theorem which characterizes $X$ among all $Y \in \mathcal{U}_{X}$ by its Lyapunov spectrum with respect to a \emph{single} $g^{t}_{Y}$-invariant measure $\mu_{Y}$. We refer to this measure as the \emph{horizontal measure}. For the formal construction of this measure see Section \ref{subsec:hormeasure}; we give a brief description here. 

Given $Y \in \mathcal{U}_{X}$, we will construct a H\"older continuous function 
\[
\zeta_{Y}: T^{1}Y \rightarrow (0,\infty),
\] 
in a natural way out of the action of $Dg^{t}_{Y}$ on a certain $Dg^{t}_{Y}$-invariant subbundle of $T(T^{1}Y)$. We then solve the Bowen equation $P(s\zeta_{Y}) = 0$, $s > 0$, where $P(s\zeta_{Y})$ denotes the topological pressure of the function $s \zeta_{Y}$ with respect to the flow $g^{t}_{Y}$. We obtain a unique number $q_{Y} > 0$ such that $P(q_{Y} \zeta_{Y}) = 0$, which we refer to as the \emph{horizontal dimension} of $g^{t}_{Y}$. The horizontal measure $\mu_{Y}$ is then defined to be the unique equilibrium state of the potential $q_{Y}\zeta_{Y}$ with respect to $g^{t}_{Y}$. For the locally symmetric space $X$ itself, $\zeta_{X}$ is a constant function, $q_{X}$ is easily described in terms of the Lyapunov spectrum $\vec{\la}(g_{X})$, and $\mu_{X}$ coincides with the Liouville measure $m_{X}$, which is the invariant volume for $g^{t}_{X}$  normalized to satisfy $m_{X}(T^{1}X) = 1$.

\begin{thm}\label{loc symmetric rigidity}
Let $X$ be a closed negatively curved locally symmetric space of nonconstant negative curvature. Let $Y \in \mathcal{U}_{X}$. Then $Y$ is isometric to $X$ if and only if
\[
\vec{\la}(g_{Y},\mu_{Y}) = \vec{\la}(g_{X}).
\]
\end{thm}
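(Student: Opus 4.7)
The reverse direction is immediate since an isometry preserves Lyapunov spectra, so we focus on the forward implication. My plan is to first identify the horizontal measure $\mu_Y$ with the Liouville measure $m_Y$, and then to extract geometric rigidity of $Y$ from the resulting equality of the Liouville Lyapunov spectrum of $g^t_Y$ with $\vec{\la}(g_X)$.

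The first step translates the spectral hypothesis into an equality case of Ruelle's inequality. By assumption, the sum of positive Lyapunov exponents of $\mu_Y$ equals that of $g^t_X$. Combining this with the equilibrium identity relating $h_{\mu_Y}(g_Y)$, $q_Y$, and $\int \zeta_Y\,d\mu_Y$ arising from $P(q_Y\zeta_Y)=0$, and with the explicit values $q_X$, $\zeta_X$, $h_{m_X}(g_X)$ for the reference space $X$, I expect to simultaneously pin down $q_Y=q_X$, $\int \zeta_Y\,d\mu_Y=\zeta_X$, and to force Ruelle's inequality for $\mu_Y$ to be an equality. The Ledrappier--Young theorem then yields that $\mu_Y$ has absolutely continuous conditionals on unstable leaves; since $g^t_Y$ preserves the smooth measure $m_Y$ and the SRB measure of a transitive Anosov flow is unique, this forces $\mu_Y=m_Y$.

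With $\mu_Y=m_Y$ and $\int \zeta_Y\,d\mu_Y=\zeta_X$, the Livsic theorem applied to the H\"older function $\zeta_Y-\zeta_X$ on the transitive Anosov flow $g^t_Y$ shows that $\zeta_Y$ is H\"older cohomologous to the constant $\zeta_X$. In particular, at every periodic orbit the expansion rate of $Dg^t_Y$ on the horizontal subbundle agrees with that for $g^t_X$. In the reference symmetric geometry of $X$ the full Lyapunov spectrum is determined by the horizontal expansion, so for $Y\in\mathcal{U}_X$ the constancy on the horizontal subbundle should propagate to the full $Dg^t_Y$-invariant splitting, yielding $\vec{\la}(g_Y,\nu^{(p)})=\vec{\la}(g_X)$ at every periodic orbit $p$. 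Applying Theorem~\ref{periodic loc symm} with $\omega(p)\equiv 1$ then concludes that $Y$ is isometric to $X$.

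The main obstacle I anticipate is the identification $\mu_Y=m_Y$: equilibrium states of H\"older potentials are typically singular with respect to Liouville, and only the particular geometric origin of $\zeta_Y$ combined with the sharp spectral hypothesis can produce the equality case of Ruelle's inequality needed here. A secondary difficulty is propagating the horizontal rigidity to the full tangent bundle, which depends on a detailed use of the symmetric structure of $X$ and the $C^2$ proximity of $Y$ to $X$ built into the definition of $\mathcal{U}_X$.
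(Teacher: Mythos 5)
Your proposal diverges from the paper's argument and contains two genuine gaps, the first of which is a sign error in the key inequality. You claim that the spectral hypothesis together with the pressure identity forces $q_{Y}=q_{X}$ and equality in Ruelle's inequality. Tracing through it: $P(q_{Y}\zeta_{Y})=0$ gives $h_{\mu_{Y}}(g_{Y})=q_{Y}\int\zeta_{Y}\,d\mu_{Y}$, equation \eqref{convergence theorem} gives $\int\zeta_{Y}\,d\mu_{Y}=\sum_{i=1}^{l}\la^{u}_{i}(g_{Y},\mu_{Y})=l$ under the spectral hypothesis, so $h_{\mu_{Y}}(g_{Y})=q_{Y}l$, and Ruelle's inequality yields $q_{Y}l\le l+2(k-l)$, i.e.\ $q_{Y}\le q_{X}$. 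This is the \emph{wrong} direction. The inequality you need, $q_{Y}\ge q_{X}$, is precisely what allows Lemma \ref{hor sup} to close the Ruelle gap and identify $\mu_{Y}$ with the SRB measure, and it is not a formal consequence of the hypotheses: the paper obtains it (Proposition \ref{lem:bound}) via the quasisymmetry of the induced leaf maps in Hamenst\"adt metrics (Proposition \ref{quasisymmetry orbit}), Tyson's theorem on Hausdorff dimension under quasisymmetric maps, and modulus estimates on the Carnot group. You flagged the identification $\mu_{Y}=m_{Y}$ as the main obstacle, but did not locate the actual missing ingredient.

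Second, even granting $\mu_{Y}=m_{Y}$ and that Livsic makes $\zeta_{Y}$ cohomologous to the constant $l$, the claim that this yields $\vec{\la}(g_{Y},\nu^{(p)})=\vec{\la}(g_{X})$ at every periodic orbit is unfounded. Cohomology of $\zeta_{Y}$ to a constant controls only $\int_{0}^{\l(p)}\zeta_{Y}(g_{Y}^{s}p)\,ds=l\cdot\l(p)$, hence only the \emph{sum} $\sum_{i=1}^{l}\la^{u}_{i}(g_{Y},\nu^{(p)})$; it says nothing about the individual horizontal exponents nor about any of the $k-l$ vertical unstable or stable exponents. The propagation from horizontal to the full spectrum is not a perturbative observation — it requires the conformality analysis of Section \ref{sec:exponents} and the Carnot-group normal forms that make up the bulk of Theorem \ref{core theorem}. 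Structurally, the paper does not pass through Theorem \ref{periodic loc symm} at all: Theorem \ref{loc symmetric rigidity} is obtained from Theorem \ref{dyn rigidity measure} (itself an instance of Theorem \ref{core theorem}) via Lemma \ref{implication}, Hamenst\"adt's lemma \cite[Lemma 4.5]{Ham99}, and the minimal entropy rigidity theorem \ref{minimal rigidity}.
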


The horizontal measure $\mu_{Y}$ is a $g^{t}_{Y}$-invariant measure that is specifically adapted to the nonconstant negative curvature case. In general it does not coincide with any well-known previously considered invariant measures, such as the Liouville measure $m_{Y}$ or the Bowen-Margulis measure of maximal entropy for $g^{t}_{Y}$. 


In the case that $X$ has constant negative curvature, the proper analogue of $\mu_{Y}$ is the Liouville measure $m_{Y}$. In our previous work \cite{Bu1}, we established that Theorem \ref{loc symmetric rigidity} holds for $m_{Y}$ when $X$ has constant negative curvature, $Y$ is homotopy equivalent to $X$, and $Y$ has strictly $1/4$-pinched negative curvature. An important question is whether Theorem \ref{loc symmetric rigidity} remains true in the nonconstant negative curvature case if we replace $\mu_{Y}$ with $m_{Y}$.

\begin{ques}\label{volume question}
Let $X$ be a closed negatively curved locally symmetric space of nonconstant negative curvature and let $Y \in \mathcal{U}_{X}$. Suppose that $\vec{\la}(g_{Y},m_{Y}) = \vec{\la}(g_{X})$. Is $Y$ isometric to $X$? 
\end{ques}

Question \ref{volume question} is of particular interest in light of recent rigidity results of Saghin and Yang \cite{SY18} and Gogolev, Kalinin, and Sadovskaya \cite{GKS18} concerning the volume exponents of irreducible hyperbolic toral automorphisms. 

Hernandez \cite{Her} and independently Yau and Zheng \cite{FY} proved that any $1/4$-pinched negatively curved metric on a closed complex hyperbolic manifold is isometric to the standard symmetric metric. Gromov \cite{Gro91} extended these theorems to obtain $1/4$-pinching rigidity for closed quaternionic hyperbolic manifolds as well. Our final geometric rigidity theorem addresses possible ways to generalize these rigidity theorems by weakening hypotheses on curvature pinching to hypotheses on pinching inequalities among the Lyapunov exponents of the geodesic flow. This line of inquiry is inspired by a question of Boland and Katok  \cite{Bol} asking whether $1/2$-pinching of the Lyapunov exponents characterizes $g^{t}_{X}$ among nearby smooth flows on $T^{1}X$ when $X$ is a closed complex hyperbolic manifold.

Set $n = \dim Y$ as before. We say that the Lyapunov spectrum of $g^{t}_{Y}$ with respect to a $g^{t}_{Y}$-invariant ergodic probability measure $\nu$ is \emph{$1/2$-pinched} if there is a constant $a > 0$ such that 
\[
a \leq |\la_{i}(g^{t}_{Y},\nu)| \leq 2a, \; \text{for} \; 1 \leq i \leq 2n-1, \, i \neq n,
\]
i.e., excluding the exponent corresponding to the flow direction of $g^{t}_{Y}$, the Lyapunov exponents of $g^{t}_{Y}$ are pinched in absolute value between $a$ and $2a$. 

Curvature pinching estimates on a negatively curved Riemannian manifold $Y$ give rise to pinching estimates on the Lyapunov exponents of $g^{t}_{Y}$: if the sectional curvatures of $Y$ satisfy $-b^{2} \leq K \leq -a^{2}$ for constants $b > a > 0$, then for any given $g^{t}_{Y}$-invariant ergodic probability measure $\nu$ we have  $a \leq |\la_{i}(g^{t}_{Y},\nu)| \leq b$ for all $i \neq n$. In particular, if $Y$ has $1/4$-pinched negative curvature then the Lyapunov spectrum of $g^{t}_{Y}$ with respect to any ergodic invariant measure $\nu$ is $1/2$-pinched. Theorem \ref{geom pinching rigidity} below gives a partial result toward understanding whether the curvature $1/4$-pinching hypothesis in the rigidity theorems of Hernandez, Yau and Zheng, and Gromov above can be weakened to a $1/2$-pinching hypothesis on the Lyapunov spectrum of a special choice of invariant measure for the geodesic flow. 

Recall that, for $Y \in \mathcal{U}_{X}$, we denote the horizontal measure for $g^{t}_{Y}$ by $\mu_{Y}$ and denote the horizontal dimension of $g^{t}_{Y}$ by $q_{Y}$.  Our theorem shows that, under the additional hypothesis of a lower bound on the horizontal dimension $q_{Y}$, one can obtain a Lyapunov spectrum $1/2$-pinching rigidity theorem for $g^{t}_{Y}$ with respect to the measure $\mu_{Y}$.
\begin{thm}\label{geom pinching rigidity}
Let $X$ be a closed negatively curved locally symmetric space of nonconstant negative curvature. Let $Y \in \mathcal{U}_{X}$. Suppose that $q_{Y} \geq q_{X}$ and that the Lyapunov spectrum of $g^{t}_{Y}$ with respect to $\mu_{Y}$ is $1/2$-pinched. Then $Y$ is homothetic to $X$.  
\end{thm}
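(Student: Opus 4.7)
The plan is to reduce Theorem \ref{geom pinching rigidity} to Theorem \ref{loc symmetric rigidity} by showing that the hypothesis $q_Y \geq q_X$ together with $1/2$-pinching forces $\vec{\la}(g_Y, \mu_Y)$ to be a constant multiple of $\vec{\la}(g_X)$. Once this spectral identity is in hand, a constant rescaling of the metric on $Y$ produces a manifold $Y_c$ with $\vec{\la}(g_{Y_c}, \mu_{Y_c}) = \vec{\la}(g_X)$, and Theorem \ref{loc symmetric rigidity} delivers $Y_c$ isometric to $X$, i.e., $Y$ homothetic to $X$.

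The analytic heart of the argument combines the Bowen equation with the Pesin entropy inequality. Since $\mu_Y$ is the equilibrium state for the potential $q_Y \zeta_Y$ and $P(q_Y \zeta_Y) = 0$, the variational principle gives
\[
h_{\mu_Y}(g_Y) = q_Y \int_{T^{1}Y} \zeta_Y \, d\mu_Y.
\]
Because $\zeta_Y$ is built from the infinitesimal action of $Dg^{t}_Y$ on a prescribed $Dg^{t}_Y$-invariant subbundle of $T(T^{1}Y)$, the Oseledets theorem identifies $\int \zeta_Y \, d\mu_Y$ with a weighted sum of the Lyapunov exponents of $\mu_Y$ restricted to that subbundle. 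The $1/2$-pinching hypothesis places these exponents --- and indeed every positive Lyapunov exponent of $\mu_Y$ --- in the interval $[a, 2a]$ for some $a > 0$, and the Pesin inequality bounds $h_{\mu_Y}(g_Y)$ from above by the sum of all positive Lyapunov exponents of $\mu_Y$.

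Combining these three ingredients produces an upper estimate for $q_Y$ as a function of the Lyapunov spectrum $\vec{\la}(g_Y, \mu_Y)$. The crucial algebraic step is to show that, within the class of $1/2$-pinched spectra on a $(2n-1)$-dimensional tangent space with one zero exponent and $n-1$ positive ones, this upper estimate is maximized exactly when the positive exponents concentrate at the extremal values $a$ and $2a$ with the same multiplicities $d_1, d_2$ that appear in $\vec{\la}(g_X)$, and that the maximum value equals $q_X$. The hypothesis $q_Y \geq q_X$ then forces every intermediate inequality to be an equality: the Pesin bound is saturated; each positive Lyapunov exponent of $\mu_Y$ attains one of the extremal values $a$ or $2a$; and the multiplicities agree with those of $\vec{\la}(g_X)$. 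This establishes $\vec{\la}(g_Y, \mu_Y) = a \, \vec{\la}(g_X)$. The main obstacle is the extremal characterization of $q_X$ in this combinatorial step; verifying it requires using the explicit form of $\zeta_Y$ on the horizontal subbundle together with the block structure of the Oseledets decomposition inherited from $g^{t}_X$.

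With the identity $\vec{\la}(g_Y, \mu_Y) = a \, \vec{\la}(g_X)$ established, a constant rescaling of the metric on $Y$ produces $Y_c$ whose horizontal measure has Lyapunov spectrum $\vec{\la}(g_X)$. Since $Y \in \mathcal{U}_X$ places $a$ in a neighborhood of $1$, the rescaled $Y_c$ remains in $\mathcal{U}_X$, and Theorem \ref{loc symmetric rigidity} applies to conclude $Y_c$ isometric to $X$, whence $Y$ is homothetic to $X$.
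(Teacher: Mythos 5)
Your proposal is correct and follows essentially the same route as the paper. The heart of the paper's argument is Lemma \ref{hor sup}, whose content is exactly the chain of inequalities you describe: the Bowen-equation identity $h_{\mu_Y}(g_Y) = q_Y \int \zeta_Y\, d\mu_Y$ with $\int \zeta_Y\, d\mu_Y = \sum_{i=1}^{l}\la_i^u$, the upper bound $h_{\mu_Y}(g_Y) \leq \sum_{i=1}^{k}\la_i^u$ (this is Ruelle's inequality, not Pesin's --- Pesin's formula is the equality case for the Liouville/SRB measure), and the $1/2$-pinching bound $c \leq \la_i^u \leq 2c$, which together squeeze $q_Y \leq (l+2(k-l))/l = q_X$ and hence force equality at every step under the hypothesis $q_Y \geq q_X$. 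One small clarification: you present the determination of the multiplicities $d_1, d_2$ as an extremal combinatorial step to be verified, but in fact it is structural --- the index $l = l(X)$ of the $u$-splitting $E^u = L^u \oplus V^u$ on $Y$ is fixed by persistence of dominated splittings under the $C^1$-small perturbation, so the denominator of the defining ratio for $q_Y$ automatically sums over the first $l = d_1$ exponents, and the equality case then forces $\la_1^u = \cdots = \la_l^u = c$ and $\la_{l+1}^u = \cdots = \la_k^u = 2c$ directly, with no further combinatorics needed. Your last paragraph also implicitly uses the flip symmetry $\mu_{g_Y} = \mu_{g_Y^{-1}}$ of the horizontal measure (noted at the end of Section \ref{subsec:hormeasure}) to upgrade the conclusion from the unstable exponents to the full spectrum $\vec{\la}(g_Y, \mu_Y) = a\,\vec{\la}(g_X)$; making this explicit would tighten the write-up, but it is not a gap.
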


Establishing the lower bound $q_{Y} \geq q_{X}$ under the hypothesis that $\vec{\la}(g_{Y},\mu_{Y}) = \vec{\la}(g_{X})$ is a critical step in the proofs of Theorems \ref{periodic loc symm} and \ref{loc symmetric rigidity}. This, together with the role of this lower bound in the hypotheses of Theorem \ref{geom pinching rigidity}, prompts the following question.

\begin{ques}\label{affinity question}
Let $X$ be a closed negatively curved locally symmetric space of nonconstant negative curvature. Let $Y \in \mathcal{U}_{X}$. Do we always have $q_{Y} \geq q_{X}$?  
\end{ques}

An affirmative answer to this question would give a full generalization of the $1/4$-curvature pinching rigidity theorems for nonconstant negative curvature locally symmetric spaces to $1/2$-pinching rigidity theorems for their Lyapunov spectra with respect to their horizontal measures.

We now describe the organization of the paper. In Section \ref{sec: prelim} we go over many different results that will be needed throughout the paper. We provide a more thorough accounting of the contents of this section at its beginning. In Section \ref{sec:dyn theorems}  we state the dynamical rigidity theorems from which our geometric rigidity theorems will be derived and reduce all of the major theorems of the paper to one core Theorem \ref{core theorem}. In Section \ref{sec:uniform} we construct normal form coordinates for the action of an Anosov flow on the quotient of the unstable foliation by a subfoliation satisfying certain growth inequalities. Sections \ref{sec:exponents}, \ref{sec:orbit to conj}, \ref{sec:pansuderiv}, and \ref{sec: vert} comprise the proof of Theorem \ref{core theorem}. Finally in Section \ref{mostow mod} we explain how Corollary \ref{mostow rigidity} can be proved without the use of Theorem \ref{minimal rigidity}.

This paper has benefited from numerous discussions with many people. We thank Jairo Bochi, Mario Bonk, Aaron Brown, Robert Bryant, Chris Connell, Jonathan DeWitt, Nicolas Gourmelon, Ursula Hamenst\"adt, Pierre Pansu, Ralf Spatzier, and Andrew Zimmer for discussions that benefited this work and shed light on several new aspects which were then pursued further. We extend special thanks to Amie Wilkinson for much encouragement as well as several extensive reviews of the results of the project as they were being developed. These discussions resulted in immeasurable improvement of the final results. Lastly, we extend great thanks to the anonymous referee who provided a thorough review that ultimately led to the strengthening of many of the results in the paper. 

\section{Preliminaries}\label{sec: prelim}

In this section we carefully define the objects that we will be considering throughout this paper and gather various preliminary results that will be required in the rest of the paper. Section \ref{regularity section} defines several notions of regularity for maps that will appear throughout this paper. Section \ref{foliation section} defines the notions of regularity for foliations that we will consider. Section \ref{flow section} treats the basics of continuous flows and introduces time changes for these flows. Section \ref{subsec: linear} provides a detailed analysis of linear cocycles over homeomorphisms of compact metric spaces as well as continuous flows on these spaces; we define dominated splittings and Lyapunov exponents, and determine how the Lyapunov exponents transform under time changes of the linear cocycle. Sections \ref{anosov} and \ref{thermo section} are a brief summary of well-known results on Anosov flows and the thermodynamic formalism that we will use.

Section \ref{weak expanding} marks a shift to the introduction of new concepts. We introduce flows with weak expanding foliations and time changes of Anosov flows that are only smooth along the center-unstable foliation. This leads to the critical Proposition \ref{time change Gibbs property} asserting that the Gibbs property for a special equilibrium state is preserved under a specific time change. Section \ref{subsec:hormeasure} defines the horizontal measure and horizontal dimension for a special class of Anosov flows including the geodesic flow $g^{t}_{Y}$ of a Riemannian manifold $Y \in \mathcal{U}_{X}$ from Theorems \ref{loc symmetric rigidity} and \ref{geom pinching rigidity}. Finally, Section \ref{subsec:neg curved} summarizes the geometric properties of negatively curved Riemannian manifolds that will be needed in this paper, as well as the special properties of negatively curved symmetric spaces of nonconstant negative curvature that will be required. 

Throughout the paper we will use $\asymp$ to denote proportionality up to a multiplicative constant that is independent of the parameters under consideration. So for two positive real-valued functions $f$ and $g$ we write $f \asymp g$ if the inequality $C^{-1}f \leq g \leq C f$ holds for a constant $C \geq 1$. 

\subsection{Regularity}\label{regularity section} For topological spaces $M$ and $N$ we write $C^{0}(M,N)$ for the space of continuous maps from $M$ to $N$ equipped with the compact-open topology. When $M$ and $N$ are metric spaces, this is the topology of uniform convergence on compact subsets. 

Let $(M,d)$ and $(N,\rho)$ be metric spaces and let $0 < \alpha \leq 1$ be a real number. We define a map $\psi: M \rightarrow N$ to be \emph{$\alpha$-H\"older} if there is a constant $K >0$ such that for every $x, y \in M$, 
\begin{equation}\label{Holder distance inequality}
\rho(\psi(x),\psi(y)) \leq K d(x,y)^{\alpha}. 
\end{equation}
In the case $\alpha = 1$, we will also say that $\psi$ is \emph{Lipschitz}. We refer to the minimum possible constant $K$ in inequality \eqref{Holder distance inequality} as the \emph{$\alpha$-H\"older constant} of $\psi$, or the \emph{Lipschitz constant} in the case $\alpha = 1$. 

For $0 < \alpha \leq 1$ we say that $\psi$ is $C^{\alpha}$ if it is locally $\alpha$-H\"older, i.e., for each $x \in M$ there is a neighborhood $U$ of $x$ and a constant $K = K_{x}$ such that $\psi: U \rightarrow N$ is $\alpha$-H\"older. We write $C^{\alpha}(M,N)$ for the space of $C^{\alpha}$ maps from $M$ to $N$, which we equip with the induced topology from $C^{0}(M,N)$. To avoid a conflict of notation with the standard definition of $C^1$ regularity in terms of differentiability below, we write $C^{1^{-}}(M,N)$ for the space of locally Lipschitz maps from $M$ to $N$. If $M$ is compact then $\psi:M \rightarrow N$ being $C^{\alpha}$ implies that $\psi$ is $\alpha$-H\"older.

For two $C^r$ manifolds $M$ and $N$, $1 \leq r \leq \infty$, we write $C^{r}(M,N)$ for the space of $C^r$ maps from $M$ to $N$. When $M$ is closed, we equip this space with the standard $C^r$ topology corresponding to uniform convergence of a map and its derivatives up to order $r$ in a fixed system of coordinate charts on $M$ and $N$. 

\subsection{Foliations}\label{foliation section} Let $M$ be a $C^s$ manifold, $s \geq 0$, and let $r$ be an integer, $s \geq r \geq 0$. Let $m = \dim M$ and let $1 \leq k \leq m-1$ be given. For $n \in \N$ we let $B_{n}$ denote the open unit ball in $\R^{n}$ centered at $0$. A \emph{$k$-dimensional $C^r$-foliation} $\mathcal{W}$ of $M$ is a decomposition $\{\mathcal{W}_{i}\}_{i \in I}$ of $M$ into connected $k$-dimensional $C^r$ submanifolds $\mathcal{W}_{i}$ of $M$ which are pairwise disjoint, such that for each $x \in M$ there is an open neighborhood $U$ of $x$ and a $C^r$ coordinate chart $\psi: U \rightarrow B_{k} \times B_{m-k}$ such that $\psi(x) = (0,0)$ and for each $i \in I$ and each connected component $W$ of $U \cap \mathcal{W}_{i}$ there is a unique $p \in B_{m-k}$ such that $\psi(W) = B_{k}\times \{p\}$. Here $I$ is an appropriate index set. In the case $r = 0$ we will simply refer to $\W$ as a \emph{foliation} of $M$. The components $\mathcal{W}_{i}$ are referred to as the \emph{leaves} of $\W$. We refer to $\psi$ as a \emph{$C^r$-foliation chart}, and we refer to the domain $U$ of a foliation chart as a \emph{foliation box} for $\W$. 


For $x \in M$ we write $\W(x)$ for the leaf of $\W$ containing $x$. For an open submanifold $U \subseteq M$, any $C^r$ foliation $\W$ of $M$ induces a $C^r$ foliation $\W_{U}$ of $U$ whose leaves are the connected components of the intersections $\W_{i} \cap U$ of the leaves of $\W$ with $U$. For $x \in U$, $\W_{U}(x)$ denotes the leaf of $\W$ inside of $U$ that contains $x$. 

A \emph{transversal} to $\W$ is an $(m-k)$-dimensional submanifold $S$ of $M$ such that $\W(x)\cap S = \{x\}$ for all $x \in S$. Let $S_{1}$, $S_{2}$ be two transversals to $\W$ such that for each $x \in S_{1}$ there is at most one point $y \in S_{2}$ such that $y \in \W(x)$. For those $x$ for which there is such a point $y$, we set $h^{\W}_{S_{1},S_{2}}(x):=y$ and refer to this as the \emph{$\W$-holonomy map} from $S_{1}$ to $S_{2}$. Note that we do not require $h^{\W}_{S_{1},S_{2}}$ to be defined on all of $S_{1}$. When the choice of transversals is understood we will write $h^{\W}$ for this map. This discussion may all be localized to an open subset $U$ of $M$: a transversal to $\W$ inside of $U$ is a transversal to $\W_{U}$. We will still write $h^{\W}$ for the $h^{\W_{U}}$ holonomy map when the context is clear. The holonomies of $\W$ between transversals inside of all open subsets $U \subseteq M$ are $C^r$ if and only if the foliation $\W$ itself is $C^r$.

For $p \in \R^{m-k}$ we write $i_{p}:\R^{k} \rightarrow \R^{k} \times \R^{m-k}$ for the inclusion map $i_{p}(x) = (x,p)$. 

\begin{defn}\label{uniform foliation}
Let $\W$ be a foliation of a $C^s$ manifold $M$ and let $s \geq r \geq 1$. We define $\W$ to have \emph{uniformly $C^r$ leaves}  if there is an atlas of foliation charts $(\psi_{j},U_{j})_{j \in J}$ for $\W$ with $\psi_{j}: U_{j} \rightarrow B_{k} \times B_{m-k}$ such that the compositions for $p \in B_{m-k}$, 
\[
\zeta_{j,p} = \psi^{-1}_{j} \circ i_{p} : B_{k} \rightarrow M,
\] 
are $C^r$ and depend continuously on $p$ inside of $C^{r}(B_{k},M)$. 

For another $C^r$ manifold $N$, we define $f: M \rightarrow N$ to be \emph{uniformly $C^r$ along $\W$} if for each chart $(\psi_{j},U_{j})$ the compositions $f \circ \zeta_{j,p}$ depend continuously on $p \in B_{m-k}$ inside of $C^{r}(B_{k},N)$. 
\end{defn}  

For the case $r = \infty$, we define $\W$ to have uniformly $C^{\infty}$ leaves if it has uniformly $C^r$ leaves for all $1 \leq r < \infty$, and we define $f:M \rightarrow N$ to be uniformly $C^{\infty}$ along $\W$ if it is uniformly $C^r$ along $\W$ for each $1 \leq r < \infty$. As a shorthand, we will say that $\W$ has \emph{smooth leaves} if it has uniformly $C^{\infty}$ leaves, and we will say that a map $f:M \rightarrow N$ is \emph{smooth along $\W$} if it is uniformly $C^{\infty}$ along $\W$. 

Given a foliation $\W$ of $M$ with uniformly $C^r$ leaves, we let $T\W = \{T\W_{i}\}_{i \in I}\subset TM$ denote the tangent bundle to this foliation, which is the union of the tangent bundles of each leaf $\W_{i}$. The foliation $\W$ has uniformly $C^1$ leaves if and only if $T\W$ is a continuous subbundle of $TM$. Hence we will always consider $T\W$ as a continuous subbundle of $TM$. A map $f: M \rightarrow N$ that is uniformly $C^1$ along $\W$ induces a continuous derivative map $D_{\W}f: T\W \rightarrow TN$ which is given by the standard derivative map $Df:T\W_{i} \rightarrow TN$ of the $C^1$ map $f: \W_{i} \rightarrow N$ on each leaf of $\W$.

\subsection{Flows}\label{flow section} Let $M$ be a compact metric space. We write $\mathrm{Homeo}(M)$ for the space of homeomorphisms of $M$, equipped with the standard compact-open topology inherited from $C^{0}(M,M)$. We consider this space with the group structure given by composition of homeomorphisms. A \emph{continuous flow} $\{f^{t}\}_{t \in \R}$ on $M$ is a continuous group homomorphism 
\[
\{f^{t}\}_{t \in \R}: \R \rightarrow  \mathrm{Homeo}(M).
\]
Continuity of this homomorphism is equivalent to continuity of the map $(t,x) \rightarrow f^{t}(x)$ from $\R \times M$ to $M$. We will typically drop the brackets in the notation and write $f^{t}$ for $\{f^{t}\}_{t \in \R}$, unless there is cause for confusion with the time-$t$ map $f^{t}: M \rightarrow M$ for a fixed $t \in \R$. 

Let $\gamma:M \rightarrow \R$ be a continuous function. The \emph{additive cocycle} generated by $\gamma$ over $f^{t}$ is the continuous function $\tau:\R \times M \rightarrow \R$ given by 
\[
\tau(t,x) = \int_{0}^{t} \gamma(f^{s}x)\,ds.
\]
Observe that $\tau$ satisfies the \emph{cocycle identity}
\[
\tau(t+s,x) = \tau(s, f^{t}(x)) + \tau(t,x).
\]
Conversely, if we are given a continuous function $\tau: \R \times M \rightarrow \R$ which is continuously differentiable with respect to the $\R$-coordinate, satisfies the cocycle identity above, and satisfies $\tau(0,x) = 0$ for all $x \in M$ then $\tau$ is an additive cocycle with generator 
\[
\gamma(x) = \left.\frac{\p}{\p t}\right|_{t=0}\tau(t,x).
\]

The discussion on time changes that follows is taken from \cite{Par86}. A continuous flow $g^{t}$ on $M$ is a \emph{time change} of $f^{t}$ if there is a continuous function $\gamma: M \rightarrow (0,\infty)$ such that, letting $\tau: \R \times M \rightarrow \R$ denote the additive cocycle over $g^{t}$ generated by $\gamma$, we have $g^{t}x = f^{\tau(t,x)}x$ for all $t \in \R$, $x \in M$. We will refer to $\gamma$ as the \emph{speed multiplier} for this time change. 

Conversely, given a continuous function $\gamma:M\rightarrow (0,\infty)$, we can construct the time change $g^{t}$ of $f^{t}$ with speed multiplier $\gamma$ by letting $\omega(t,x)$ denote the additive cocycle over $f^{t}$ generated by $\gamma^{-1}$ and then setting, for each fixed $x \in M$ and $t \in \R$, $\tau(t,x)$ to be the unique solution to the equation $\omega(\tau(t,x),x) = t$. We note here that the compactness of $M$ implies that this equation has a solution for each $t \in \R$; this can be derived for instance from the fact that the continuous function $\omega(1,x)$ of $x$ is bounded away from zero on $M$.  We then set $g^{t}x = f^{\tau(t,x)}x$. It is then easily verified that $g^{t}$ is a continuous flow and $\tau$ is the additive cocycle over $g^{t}$ generated by $\gamma$. 

The following is an easy consequence of the definitions. 

\begin{prop}\label{inverse additive}
Let $\gamma, \beta: M \rightarrow (0,\infty)$ be continuous functions, let $g^{t}$ be the time change of $f^{t}$ with speed multiplier $\gamma$, and let $h^{t}$ be the time change of $g^{t}$ with speed multiplier $\beta$. Let $\tau$ denote the additive cocycle over $g^{t}$ generated by $\gamma$, $\omega$ the additive cocycle over $h^{t}$ generated by $\beta$ and $\theta$ the additive cocycle over $h^{t}$ generated by $\gamma \beta$.  Then $h^{t}$ is the time change of $f^{t}$ with speed multiplier $\gamma \beta$, and for all $x \in M$ and $t \in \R$,
\begin{equation}\label{time change transitive}
\theta(t,x) = \tau(\omega(t,x),x).
\end{equation}
In particular, setting $h^{t} = f^{t}$, $\beta = \gamma^{-1}$, we have
\begin{equation}\label{time change inverse}
\tau(\omega(t,x),x) = \omega(\tau(t,x),x) = t.
\end{equation}
\end{prop}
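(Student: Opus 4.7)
My plan is to establish the central identity \eqref{time change transitive} directly from the definitions by a chain-rule calculation in $t$, and then deduce both the time change claim and \eqref{time change inverse} as corollaries.

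First I would unpack the three cocycles: each of $\tau(\cdot,x)$, $\omega(\cdot,x)$, and $\theta(\cdot,x)$ is $C^1$ in its first variable, vanishes at $t=0$, and has derivative $\gamma(g^t x)$, $\beta(h^t x)$, and $(\gamma\beta)(h^t x)$ respectively, simply by the integral formula for an additive cocycle. Then I would differentiate $t\mapsto \tau(\omega(t,x),x)$ by the chain rule to obtain $\gamma(g^{\omega(t,x)}x)\cdot \beta(h^t x)$, and apply the defining identity $g^{\omega(t,x)}x = h^t x$ of the time change $h^t$ over $g^t$ to reduce this to $(\gamma\beta)(h^t x) = \partial_t \theta(t,x)$. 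Since $\tau(\omega(0,x),x) = 0 = \theta(0,x)$, the two functions of $t$ coincide, giving \eqref{time change transitive}.

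Next I would chain the two defining relations of the time changes to obtain
\[
h^t x = g^{\omega(t,x)} x = f^{\tau(\omega(t,x),x)} x = f^{\theta(t,x)} x,
\]
the last equality being \eqref{time change transitive}. Since $\theta$ is by hypothesis the additive cocycle over $h^t$ generated by $\gamma\beta$, this is exactly the statement that $h^t$ is the time change of $f^t$ with speed multiplier $\gamma\beta$.

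Finally, for \eqref{time change inverse}, I would specialize to $h^t = f^t$ and $\beta = \gamma^{-1}$: the first half of the proposition then identifies $f^t$ as a time change of itself with multiplier $\gamma\cdot\gamma^{-1}=1$, making $\theta(t,x) = t$, so \eqref{time change transitive} collapses to $\tau(\omega(t,x),x) = t$. The reversed identity $\omega(\tau(t,x),x) = t$ follows either by the same chain-rule argument with the roles of $\tau$ and $\omega$ interchanged, or from the observation that, for fixed $x$, both $t\mapsto \tau(t,x)$ and $t\mapsto \omega(t,x)$ are strictly increasing $C^1$ bijections of $\R$, for which a one-sided composition identity forces the two-sided one. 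No serious obstacle is anticipated; the entire proposition is a bookkeeping verification that the chain rule is compatible with the cocycle structure, consistent with the paper's description of it as an easy consequence of the definitions.
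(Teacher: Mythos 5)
Your proposal is correct and takes essentially the same approach as the paper's proof: differentiate $t \mapsto \tau(\omega(t,x),x)$, apply the chain rule, and use the time-change identity $g^{\omega(t,x)}x = h^t x$ to identify the derivative with $(\gamma\beta)(h^t x)$. You establish \eqref{time change transitive} first and then deduce the time-change claim, while the paper phrases it in the reverse order, but the underlying computation and the deduction of \eqref{time change inverse} by specialization (and symmetry) are identical.
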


\begin{proof}
Let $x \in M$ be given. Differentiating with respect to $t$, we obtain
\begin{align*}
\frac{\p}{\p t}\tau(\omega(t,x),x) &= \frac{\p \tau}{\p t}(\omega(t,x),x) \frac{\p \omega}{\p t}(t,x) \\
&= \gamma(g^{\omega(t,x)}x)\beta(h^{t}x) \\
&= \gamma(h^{t}x)\beta(h^{t}x). 
\end{align*}
Since $h^{t}x = g^{\omega(t,x)}x = f^{\tau(\omega(t,x),x)}x$, we conclude that $h^{t}$ is the time change of $f^{t}$ with speed multiplier $\gamma \beta$. Equation \eqref{time change transitive} follows, and equation \eqref{time change inverse} follows immediately from \eqref{time change transitive}.
\end{proof}

As a consequence of Proposition \ref{inverse additive}, time change defines an equivalence relation on the space of continuous flows on $M$. 

Let $N$ be another compact metric space. 

\begin{defn}[Orbit equivalence]\label{orbit equivalent}Two continuous flows $f^{t}$ on $M$ and $g^{t}$ on $N$ are \emph{orbit equivalent} if there is a homeomorphism $\varphi: M \rightarrow N$ such that the flow $\{\varphi^{-1} \circ g^{t} \circ \varphi\}_{t \in \R}$ on $M$ is a time change of $f^{t}$.  We say that two orbit equivalent flows $f^{t}$ and $g^{t}$ are \emph{conjugate} if the homeomorphism $\varphi$ may be chosen such that $\varphi^{-1} \circ g^{t} \circ \varphi = f^{t}$ for all $t \in \R$. In the first case we refer to $\varphi$ as an \emph{orbit equivalence} from $f^{t}$ to $g^{t}$, and in the second case we refer to $\varphi$ as a \emph{conjugacy} from $f^{t}$ to $g^{t}$.
\end{defn} 

Orbit equivalences between nontrivial flows $f^{t}$ and $g^{t}$ are never unique, as one can always modify the orbit equivalence by flowing by $f^{t}$. We give a definition below which makes this precise. 

\begin{defn}[Flow related orbit equivalences]\label{flow related}
Two orbit equivalences $\varphi_{i}: M \rightarrow N$, $i =1,2$ from $f^{t}$ to $g^{t}$ are \emph{flow related} if there is a continuous function $\eta: M \rightarrow \R$ such that $\varphi_{2}(f^{\eta(x)}x) = \varphi_{1}(x)$ for all $x \in M$.
\end{defn}

We now extend these definitions to flows on  manifolds. Let $M$ be a compact $C^{r+1}$ manifold, $r \geq 1$, and let $v_{f}: M \rightarrow TM$ be a $C^r$ vector field on $M$. Then $v_{f}$ generates a  flow $f^{t}$ on $M$ which is $C^r$ in the sense that the map $\R \times M \rightarrow M$ given by $(t,x) \rightarrow f^{t}(x)$ is $C^r$. If $v_{g}$ is another vector field on $M$ that generates a $C^r$ flow $g^{t}$, we say that $g^{t}$ is a $C^r$ time change of $f^{t}$ if there is a $C^r$ map $\gamma: M \rightarrow (0,\infty)$ such that $v_{g}(x) = \gamma(x)v_{f}(x)$ for all $x \in M$. This implies that $g^{t}x = f^{\tau(t,x)}x$, where $\tau$ is the additive cocycle over $g^{t}$ generated by $\gamma$. For another compact $C^r$ manifold $N$ we define $f^{t}: M \rightarrow M$ and $g^{t}:N \rightarrow N$ to be $C^r$ orbit equivalent if there is a $C^r$ diffeomorphism $\varphi:M \rightarrow N$ such that the flow $\{\varphi^{-1} \circ g^{t}\circ \varphi\}_{t \in \R}$ is a $C^r$ time change of $f^{t}$. We give the space of $C^r$ flows $f^{t}$ on $M$ the induced topology from its identification with $C^{r}(M,TM)$ via the generator $v_{f}$. 

These definitions further generalize to flows defined along foliations. We consider a compact $C^{r+1}$ manifold $M$ with a foliation $\W$ that has uniformly $C^{r+1}$ leaves, $r \geq 1$.  Let $v_{f}: M \rightarrow TM$ be uniformly $C^r$ along $\W$, with $v_{f}(x) \in T\W_{x}$ for each $x \in M$.  For each leaf $\W_{i}$ of $\W$, $v_{f}$ defines a $C^r$ vector field on $\W_{i}$ and therefore generates a $C^r$ flow on $\W_{i}$.  Consequently $v_{f}$ gives rise to a continuous flow $f^{t}$ on $M$ that fixes the leaves of $\W$ and is uniformly $C^r$ along $\W$ in the sense that the map $\R \times M \rightarrow M$ given by $(t,x) \rightarrow f^{t}(x)$ is uniformly $C^r$ along the product foliation $\R \times \W$ of $\R \times M$. As above, a flow $g^{t}$ on $M$ is a time change of $f^{t}$ that is uniformly $C^r$ along $\W$ if $v_{g}(x) = \gamma(x)v_{f}(x)$ for a function $\gamma: M \rightarrow (0,\infty)$ that is uniformly $C^r$ along $\W$. Finally, for another compact $C^{r+1}$ manifold $N$ with a foliation $\V$ with uniformly $C^{r+1}$ leaves and a continuous flow $g^{t}:N \rightarrow N$ that is uniformly $C^{r}$ along $\V$, an orbit equivalence $\varphi:M \rightarrow N$ from $f^{t}$ to $g^{t}$ is uniformly $C^r$ along $\W$ if $\varphi(\W(x)) = \V(\varphi(x))$ for each $x \in M$ and $\varphi^{-1} \circ g^{t} \circ \varphi$ is a time change of $f^{t}$ that is uniformly $C^r$ along $\W$.

\subsection{Linear Cocycles}\label{subsec: linear}  Let $M$ be a compact metric space and let $E$ be a $k$-dimensional vector bundle over $M$. We equip $E$ with a continuously varying family of inner products $\{\langle\;,\;\rangle_{x}\}_{x \in M}$ on the fibers $E_{x}$ over $x \in M$, which we will refer to as a \emph{Riemannian structure} on $E$. These inner products induce a continuously varying family of norms $\{\|\cdot \|_{x}\}_{x \in M}$ on the fibers of $E$.

Let $f: M \rightarrow M$ be a homeomorphism. Let $\pi: E \rightarrow M$ denote the projection map.  A \emph{linear cocycle} on $E$ over $f$ is a continuous, invertible bundle map $A: E \rightarrow E$ that covers $f$, i.e., $f \circ \pi = \pi \circ A$. We write $A_{x}: E_{x} \rightarrow E_{f(x)}$ for the linear map induced by $A$ on the fiber of $E$ over $x$, and write $A^{n}$ for the $n$th iterate of $A$, $n \in \Z$, which covers $f^{n}$. Given another homeomorphism $g: N \rightarrow N$ of a compact metric space $N$ with a $k$-dimensional vector bundle $F$ over $N$ and a linear cocycle $B: F \rightarrow F$ over $g$, we say that $A$ and $B$ are \emph{conjugate} if there is a homeomorphism $\Phi: E \rightarrow F$ covering a conjugacy $\varphi: M \rightarrow N$ from $f^{t}$ to $g^{t}$ such that $\Phi \circ A = B \circ \Phi$, and such that $\Phi_{x}: E_{x} \rightarrow F_{\varphi(x)}$ is an invertible linear map for each $x \in M$. 


For a linear transformation $T: V \rightarrow W$ between two $k$-dimensional inner product spaces $V$ and $W$, we write
\[
\|T\| = \sup_{\substack{v \in V \\ v \neq 0}} \frac{\|T(v)\|}{\|v\|},
\]
for the \emph{norm} of $T$, and write 
\[
\mathfrak{m}(T) = \inf_{\substack{v \in V \\ v \neq 0}} \frac{\|T(v)\|}{\|v\|}.
\]
for the \emph{conorm} of $T$. We let $\sigma_{1}(T) \leq \dots \leq \sigma_{k}(T)$ denote the singular values of $T$ with respect to these inner products, listed in increasing order. We have $\|T\| =\sigma_{k}(T)$ and $\mathfrak{m}(T) =\sigma_{1}(T)$. Assuming that $T$ is orientation-preserving, we let
\[
\mathrm{Jac}(T) =  \prod_{i=1}^{k} \sigma_{i}(T),
\]
denote the \emph{Jacobian} of the transformation $T$. For $V = W$ the Jacobian as defined here is the determinant, $\mathrm{Jac}(T) = \det(T)$.

Fix a Riemannian structure on $E$. As a consequence of the multiplicative ergodic theorem \cite{Osel}, for each $f$-invariant ergodic probability measure $\nu$ there are real numbers $\la_{1}(A,\nu) \leq \dots \leq \la_{k}(A^,\nu)$ such that for each $1 \leq i \leq k$ we have
\[
\la_{i}(A,\nu) = \lim_{n \rightarrow \infty} \frac{\log \sigma_{i}(A^{n}_{x})}{n},
\]
for $\nu$-a.e.~ $x \in M$.
\begin{defn}[Lyapunov spectrum]\label{definition lyap}
The numbers $\la_{i}(A,\nu)$, $1 \leq i \leq k$, are the \emph{Lyapunov exponents} of $A$ with respect to $\nu$. The vector $\vec{\la}(A,\nu) = (\la_{i}(A,\nu))_{i=1}^{k}$ is the \emph{Lyapunov spectrum} of $A$ with respect to $\nu$. 
\end{defn}





The Lyapunov exponents do not depend on the choice of Riemannian structure on $E$: if we let $\sigma_{i}'(A^{t}_{x})$ denote the singular values measured with respect to another Riemannian structure $\langle \, , \, \rangle'$, by the  continuity of $A$ and the  compactness of $M$ we have for each $x \in M$, $n \geq 1$, and $1 \leq i \leq k$, 
\[
\sigma_{i}'(A^{n}_{x}) \asymp\sigma_{i}(A^{n}_{x}),
\]
with the implied constant being independent of $x$ and $n$. This immediately implies that the Lyapunov exponents measured with respect to these two inner products are equal. Similarly, it is easy to see that if $A$ and $B$ are  linear cocycles over $f: M \rightarrow M$ and $g: N \rightarrow N$ which are conjugate by a bundle map $\Phi$ covering a homeomorphism $\varphi: M \rightarrow N$ then $\vec{\la}(A,\nu) = \vec{\la}(B,\varphi_{*}\nu)$. It follows that the Lyapunov spectrum is a conjugacy invariant. 

For an $A$-invariant continuous subbundle $L \subseteq E$, we write $A|_{L}$ for the restriction of $A$ to $L$, considered as a linear cocycle on the subbundle $L$.

\begin{defn}[Dominated splitting]\label{defn: domination}
Let an integer $l$ be given, $1 \leq l \leq k-1$. A \emph{dominated splitting} of index $l$ for a linear cocycle $A$ over $f$ is an $A$-invariant splitting $E = L \oplus V$ of $E$ into two continuous subbundles $L$ and $V$, with $\dim L = l$, such that there are constants $C > 0$, $\chi > 0$ for which we have for all $x \in M$ and $n \geq 1$, 
\begin{equation}\label{splitting inequality}
\frac{\|A^{n}_{x}|_{L_{x}}\|}{\mathfrak{m}(A^{n}_{x}|_{V_{x}})} \leq Ce^{-\chi n}.
\end{equation}
Here we equip $L$ and $V$ with the Riemannian structures induced from the Riemannian structure on $E$.
\end{defn}

The property of a splitting being dominated does not depend on the choice of Riemannian structure on $E$. For a different choice of Riemannian structure the inequality \eqref{splitting inequality} will also hold, with the same exponent $\chi$ but a possibly different constant $C$.

If a dominated splitting $E = L \oplus V$ of index $l$ exists for $A$, then it is unique in the sense that if $E = L' \oplus V'$ is another dominated splitting of index $l$ for $A$ then $L = L'$ and $V = V'$. This immediately implies that if $A$ and $B$ are two conjugate linear cocycles with respective dominated splittings $E= L^{A} \oplus V^{A}$ and $F = L^{B} \oplus V^{B}$ of index $l$, then the conjugacy $\Phi$ satisfies $\Phi(L^{A}) = L^{B}$ and $\Phi(V^{A}) = V^{B}$. 

There are several criteria for the existence of a dominated splitting. The first of these is the existence of a strictly invariant cone field for $A$. Let $H \subset E$ be a proper continuous subbundle. For each $x \in M$, each $v \in E$ can be written uniquely as $v = v' + v^{\perp}$, with $v' \in H_{x}$ and $v^{\perp} \in H_{x}^{\perp}$. For each $x \in M$ and each $\eta > 0$ we then define a cone around $H_{x}$ by 
\[
\mathcal{C}_{x,\eta} = \{v \in E_{x}: \|v^{\perp}\| \leq \eta \|v'\|\}. 
\]

\begin{prop}\label{cone criterion}
Let $A: E \rightarrow E$ be a linear cocycle over a homeomorphism $f:M \rightarrow M$. Let $1 \leq l \leq k-1$ be an integer. Suppose that there is a $(k-l)$-dimensional continuous subbundle $H \subset E$, an integer $n \geq 1$, and real numbers $ 0 < \eta' < \eta$ such that for all $x \in M$ we have $A^{n}(\mathcal{C}_{x,\eta}) \subseteq \mathcal{C}_{f(x),\eta'}$. Then $A$ admits a dominated splitting $E = L \oplus V$ of index $l$ with $V_{x} \subset \mathcal{C}_{x,\eta'}$ for each $x \in M$. 
\end{prop}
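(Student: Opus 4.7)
The plan is, first, to reduce to the case $n = 1$. Granting the result there, an $A^n$-invariant dominated splitting $E = L \oplus V$ of index $l$ with $V_x \subset \mathcal{C}_{x,\eta'}$ is produced. By the uniqueness of dominated splittings noted just after Definition \ref{defn: domination}, the bundles $A(L)$ and $A(V)$ also furnish an $A^n$-invariant splitting of the same index with the same domination inequality up to a bounded multiplicative distortion (coming from compactness of $M$ and continuity of $A$), so uniqueness forces $A(L) = L \circ f$ and $A(V) = V \circ f$. The domination inequality for $A$ itself then follows from that for $A^n$ by absorbing multiplicative constants over any $n$ consecutive iterates; the resulting exponent is $\chi/n$.

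Second, assuming $n = 1$, I would construct $V$ via a Grassmannian contraction argument. Let $\mathcal{G}_x \subset \mathrm{Gr}_{k-l}(E_x)$ denote the compact set of $(k-l)$-dimensional subspaces of $E_x$ lying in the cone $\mathcal{C}_{x,\eta}$. The strict cone inclusion $A(\mathcal{C}_{x,\eta}) \subset \mathcal{C}_{f(x),\eta'}$ with $\eta' < \eta$, combined with compactness of $M$ and continuity of $A$ and $H$, implies that pushforward by $A$ is a uniform contraction $\mathcal{G}_x \to \mathcal{G}_{f(x)}$ with some factor $\kappa < 1$ with respect to a suitable Grassmannian metric, for instance the operator norm of the difference of orthogonal projections onto the subspaces. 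Consequently, the nested compact sets $\mathcal{G}^{(j)}_x := A^j(\mathcal{G}_{f^{-j}(x)}) \subset \mathcal{G}_x$ have diameters decaying like $\kappa^j$, so their intersection is a single $(k-l)$-dimensional subspace $V_x \subset \mathcal{C}_{x,\eta'}$. Continuity of $V_x$ in $x$ and $A$-invariance are immediate from the construction.

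Third, for the complementary bundle $L$ and the domination estimate, I would run the analogous construction on the inverse cocycle. Choose a cone field $\mathcal{D}_{x,\delta}$ concentrated about an $l$-dimensional subbundle transverse to $H$ (such as $H^{\perp}$); the hypothesis forces $A^{-1}$ to send $\mathcal{D}_{x,\delta}$ strictly into $\mathcal{D}_{f^{-1}(x),\delta'}$ for suitable $\delta' < \delta$, and repeating the Grassmannian contraction argument produces an $l$-dimensional continuous $A$-invariant subbundle $L$ contained in the $\mathcal{D}$ cones. Transversality of the $\mathcal{C}_{\eta'}$ and $\mathcal{D}_{\delta'}$ cones gives $E = L \oplus V$ with a uniform angle bound. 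The domination inequality \eqref{splitting inequality} is then obtained by translating the Grassmannian contraction rate $\kappa$ into a norm-level estimate: writing $A^j$ with respect to the $H \oplus H^{\perp}$ decomposition and using uniform transversality between $L$ and $V$, one shows that the ratio $\|A^j|_{L_x}\|/\mathfrak{m}(A^j|_{V_x})$ is bounded by a constant multiple of $\kappa^j$. This last step, converting projective-style contraction on the Grassmannian into genuine norm separation between the two invariant subbundles, is the principal technical obstacle of the proof.
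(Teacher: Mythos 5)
The paper does not prove Proposition~\ref{cone criterion}; it cites \cite{BG09} and calls the result standard, so there is no proof in the text to compare against. Your overall architecture --- reduce to $n=1$ by uniqueness of dominated splittings, construct $V$ by forward iteration of the graph transform, construct $L$ from the inverse cocycle acting on the complementary cone around $H^{\perp}$, then convert the contraction rate into the domination inequality --- is the standard route, and your reduction to $n=1$ and the observation that $A^{-1}$ strictly contracts the complementary cone are both correct.

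The step that fails is the assertion that strict cone inclusion alone makes the pushforward on $\mathcal{G}_x$ a uniform Lipschitz contraction in a gap-type Grassmannian metric. This is false already for $k=2$, $l=1$, $H=\operatorname{span}(e_1)$, $\eta=1$, $\eta'=1/2$: take
\[
A=\begin{pmatrix}1 & 0.92\\ 0.49 & 0.47\end{pmatrix}.
\]
Writing a line of slope $s$ as a graph over $H$, the graph transform is $A_{\#}(s)=(0.49+0.47s)/(1+0.92s)$, which is increasing on $[-1,1]$ with $A_{\#}(-1)=1/4$ and $A_{\#}(1)=1/2$, so $A(\mathcal{C}_{x,1})\subseteq\mathcal{C}_{f(x),1/2}$. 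Nevertheless
\[
A_{\#}'(s)=\frac{\det A}{(1+0.92s)^2},\qquad A_{\#}'(-1)=\frac{0.0192}{0.0064}=3,
\]
so $A_{\#}$ expands near the boundary of the cone, and the same expansion (by a factor bounded away from $1$) appears in the metric $\|\pi_P-\pi_Q\|$ you propose. Hence the diameter bound $\operatorname{diam}\mathcal{G}^{(j)}_x\lesssim\kappa^j$ does not follow from the cone hypothesis by the argument as stated: nothing in your outline rules out the composition of $j$ maps each of which is expanding somewhere on the relevant set. The correct quantity to contract is the Hilbert projective metric on the cone: since $A_{\#}(\mathcal{G}_x)$ lies in $\{\lvert s\rvert\le\eta'\}$, which has uniformly bounded Hilbert diameter inside $(-\eta,\eta)$ (uniformly over $x$ by compactness of $M$), Birkhoff's theorem gives a contraction factor $\tanh(\Delta/4)<1$ at every step; one then returns to the Euclidean/gap metric on $\{\lvert s\rvert\le\eta'\}$, where the two metrics are comparable. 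When $k-l>1$ you additionally need an analogue of Birkhoff for the operator ball $\{\|\phi\|\le\eta\}\subset\operatorname{Hom}(H_x,H_x^{\perp})$ under fractional linear graph transforms (a Schwarz--Pick type estimate, or a reduction via the Pl\"ucker embedding into $\mathbb{P}(\wedge^{k-l}E)$), none of which is supplied. Finally, you explicitly flag but do not carry out the conversion of the Grassmannian contraction rate into the norm inequality \eqref{splitting inequality}. As written, the proposal asserts rather than proves the two load-bearing estimates.
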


This proposition is standard; we refer to \cite{BG09} for a proof. It's easy to see, conversely, that if $A$ has a dominated splitting $E = L^{A} \oplus V^{A}$, then a strictly invariant family of cones $\mathcal{C}_{x,\eta}$ around $V_{x}^{A}$ as in Proposition \ref{cone criterion} always exists. The existence of such a family is stable under $C^0$-small perturbations of the linear cocycle $A$, from which it follows that if $B$ is a linear cocycle over $f:M \rightarrow M$ which is $C^0$-close to $A$ then $B$ admits a dominated splitting $E = L^{B} \oplus V^{B}$ with $L^{B}$ and $V^{B}$ being $C^0$-close to $L^{A}$ and $V^{A}$ respectively. 

The second criterion that we will require is a theorem of Bochi and Gourmelon \cite{BG09}. 

\begin{thm}{\cite[Theorem A]{BG09}}\label{domination gap criterion}
Let $A: E \rightarrow E$ be a linear cocycle over a homeomorphism $f:M \rightarrow M$. Let $1 \leq l \leq k-1$ be an integer. Then $A$ admits a dominated splitting of index $l$ if and only if there are constants $C >0$ and $\chi > 0$ such that for all $n \geq 1$,
\[
\frac{\sigma_{l}(A_{x}^{n})}{\sigma_{l+1}(A_{x}^{n})} \leq Ce^{-\chi n}.
\]
\end{thm}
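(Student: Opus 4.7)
The proof splits into the easy forward direction and the harder converse.

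For the forward direction, suppose $E = L \oplus V$ is a dominated splitting of index $l$ with domination constants $C, \chi$ as in \eqref{splitting inequality}. By continuity of $L,V$ and compactness of $M$, the angle between $L_{x}$ and $V_{x}$ is bounded below by some $\theta_{0} > 0$ uniformly in $x$, so the oblique projections onto $L_{x}$ and $V_{x}$ have norm bounded by a uniform constant $K$. By the min-max characterization of singular values,
\[
\sigma_{l}(A_{x}^{n}) \leq K\,\|A_{x}^{n}|_{L_{x}}\|, \qquad \sigma_{l+1}(A_{x}^{n}) \geq K^{-1}\,\mathfrak{m}(A_{x}^{n}|_{V_{x}}),
\]
which combined with \eqref{splitting inequality} gives the singular value gap estimate with the same rate $\chi$ and a rescaled constant.

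For the converse, the plan is to construct the splitting as a limit of ``finite-time most-contracted / most-expanded'' subspaces, with the singular value gap driving geometric convergence on the Grassmannian. Let $L_{x}^{n} \in \mathrm{Gr}_{l}(E_{x})$ denote the $l$-dimensional right singular subspace of $A_{x}^{n}$ corresponding to its $l$ smallest singular values; dually, let $V_{x}^{n} \in \mathrm{Gr}_{k-l}(E_{x})$ denote the $(k-l)$-dimensional subspace corresponding to the $k-l$ largest singular values of $A_{f^{-n}(x)}^{n}$ pushed forward to $E_{x}$. The key lemma, which is essentially the Wedin--Davis--Kahan $\sin\Theta$ theorem applied to the product $A_{x}^{n+1} = A_{f^{n}(x)} A_{x}^{n}$, shows that
\[
d_{\mathrm{Gr}}(L_{x}^{n+1}, L_{x}^{n}) \leq C'\,\frac{\sigma_{l}(A_{x}^{n})}{\sigma_{l+1}(A_{x}^{n})} \leq C''\,e^{-\chi n},
\]
uniformly in $x$, where $C'$ absorbs $\sup_{x}\|A_{x}\|$ and $\sup_{x}\|A_{x}^{-1}\|$. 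The sequence $L_{x}^{n}$ is therefore Cauchy in the Grassmannian, and so converges to a subspace $L_{x}$. The same argument applied backward gives convergence of $V_{x}^{n}$ to $V_{x}$.

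To finish, one must verify the standard list: (i) continuity of $x \mapsto L_{x}$ and $x \mapsto V_{x}$, which follows from the uniformity of the Cauchy estimate together with continuity of each finite-$n$ approximant; (ii) $A$-invariance, which follows from the relation $A_{x}(L_{x}^{n}) = L_{f(x)}^{n-1}$ (and its backward analogue for $V$) and passage to the limit; (iii) transversality $L_{x} \oplus V_{x} = E_{x}$, which follows by showing the angle between $L_{x}^{n}$ and $V_{x}^{n}$ is uniformly bounded below --- this is a consequence of the gap, since $V_{x}^{n}$ is (approximately) the orthogonal complement of $L_{x}^{n}$ measured in the right-SVD frames; (iv) the domination inequality, which is immediate from the singular value hypothesis once the splitting is constructed, since $\|A_{x}^{n}|_{L_{x}}\| \leq K_{1}\sigma_{l}(A_{x}^{n})$ and $\mathfrak{m}(A_{x}^{n}|_{V_{x}}) \geq K_{2}\sigma_{l+1}(A_{x}^{n})$ by the angle bound from (iii).

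The main obstacle is the Cauchy estimate in the second paragraph: it requires a quantitative perturbation bound for singular subspaces under left-multiplication by an arbitrary invertible map, with the perturbation controlled only by the ratio $\sigma_{l}/\sigma_{l+1}$ and not by the individual sizes of the singular values. The efficient route is to rephrase the question as the stability of an invariant isotropic subspace for the symmetric operator $(A_{x}^{n})^{*}(A_{x}^{n})$ under the perturbation $(A_{x}^{n})^{*}A_{f^{n}(x)}^{*}A_{f^{n}(x)}(A_{x}^{n})$, and apply a Davis--Kahan-type gap theorem --- this is the technical heart of \cite{BG09}.
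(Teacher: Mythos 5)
The paper does not prove this result---it is quoted verbatim with a citation to Bochi--Gourmelon \cite[Theorem~A]{BG09}, so there is no in-text proof to compare yours against. Your outline does, however, faithfully reconstruct the strategy of the cited reference: construct $L_x$ and $V_x$ as limits of finite-time singular subspaces, drive convergence by the exponential singular-value gap, and then read off continuity, invariance, transversality, and the domination inequality from the same gap. You also correctly flag that the single nontrivial step is the uniform Cauchy estimate, which is a relative (multiplicative) $\sin\Theta$ bound for the positive operator $(A_{x}^{n})^{*}(A_{x}^{n})$ under conjugation by a uniformly bounded $A_{f^{n}x}$; that is indeed where the real work in \cite{BG09} lies.

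Two small points. In the forward direction the constant $K$ is unnecessary: by the Courant--Fischer/min-max variational characterization, for \emph{any} $l$-dimensional subspace $L_{x}$ one has $\sigma_{l}(A_{x}^{n}) \leq \|A_{x}^{n}|_{L_{x}}\|$, and dually $\sigma_{l+1}(A_{x}^{n}) \geq \mathfrak{m}(A_{x}^{n}|_{V_{x}})$ for any $(k-l)$-dimensional $V_{x}$; the angle between $L$ and $V$ plays no role in this direction (it is needed in the converse, in your step (iv)). And in the converse, you should note that the finite-time singular subspaces $L_{x}^{n}$ are only well-defined once $n$ is large enough that $C e^{-\chi n} < 1$, so the Cauchy argument should be initiated from such an $n_{0}$; this is a cosmetic adjustment and does not affect the argument.
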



The domination condition implies that, for large enough $n$, there is a uniform exponential gap between the largest singular value of $A^{n}$ on $L$ and the smallest singular value of $A^{n}$ on $V$. Theorem \ref{domination gap criterion} shows that this is equivalent to an exponential gap between the $l$th and $(l+1)$th singular values of $A^{n}$ on $E$. This in turn corresponds to a gap in the Lyapunov exponents of $A$, which we establish below.  

We will need an elementary linear algebra lemma. For a linear transformation $A: U \rightarrow W$ between two finite-dimensional inner product spaces $U$ and $W$ and a subspace $L \subset U$, we will consider the restriction $A|_{L}: L \rightarrow A(L)$ and its singular values, computed as a linear transformation from $L$ to $A(L)$. 

\begin{lem}\label{linear algebra}
Let $U$, $W$ be $k$-dimensional inner product spaces, $l \geq 2$, and let $A: U \rightarrow W$ be an invertible linear map. Let $L \subset U$ be an $l$-dimensional subspace, with $1 \leq l \leq k-1$. 

Suppose that 
\[
\|A|_{L}\| < \mathfrak{m}(A|L^{\perp}),
\]
and that $A(L^{\perp})$ is perpendicular to $A(L)$. Then for $1 \leq i \leq l$,
\[
\sigma_{i}(A|_{L}) = \sigma_{i}(A),
\]
and for $l+1 \leq i \leq k$, 
\[
\sigma_{i-l}(A|_{L^{\perp}}) = \sigma_{i}(A). 
\]
\end{lem}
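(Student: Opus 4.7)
The plan is to diagonalize $A^*A$ simultaneously on $L$ and $L^\perp$ and use the orthogonality of $A(L)$ and $A(L^\perp)$ in $W$ to show the diagonalization on $U = L \oplus L^\perp$ is block-diagonal with the two blocks coming from the restrictions $A|_L$ and $A|_{L^\perp}$.

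First I would pick an orthonormal basis $e_1,\dots,e_l$ of $L$ consisting of singular vectors for $A|_L$, so that $\langle A e_i, A e_j\rangle = \sigma_i(A|_L)^2 \delta_{ij}$ for $1 \leq i,j \leq l$, with the $\sigma_i(A|_L)$ listed in increasing order. Similarly pick an orthonormal basis $e_{l+1},\dots,e_k$ of $L^\perp$ consisting of singular vectors for $A|_{L^\perp}$, so that $\langle A e_i, A e_j\rangle = \sigma_{i-l}(A|_{L^\perp})^2 \delta_{ij}$ for $l+1 \leq i,j \leq k$. The concatenation $e_1,\dots,e_k$ is an orthonormal basis of $U$.

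The hypothesis that $A(L^\perp) \perp A(L)$ kills all cross terms: $\langle A e_i, A e_j\rangle = 0$ whenever $i \leq l < j$ or $j \leq l < i$. Therefore the Gram matrix of $(Ae_1,\dots,Ae_k)$ is the diagonal matrix with entries
\[
\sigma_1(A|_L)^2,\dots,\sigma_l(A|_L)^2,\sigma_1(A|_{L^\perp})^2,\dots,\sigma_{k-l}(A|_{L^\perp})^2.
\]
Equivalently, $A^*A$ is diagonal in the basis $e_1,\dots,e_k$ with these eigenvalues, so the singular values of $A$ are precisely the square roots of these numbers, listed in increasing order.

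Finally I would invoke the hypothesis $\|A|_L\| < \mathfrak{m}(A|_{L^\perp})$, which says $\sigma_l(A|_L) < \sigma_1(A|_{L^\perp})$; this is the whole reason to need it. Under this strict inequality the sort is immediate: the first $l$ entries of the increasing sequence of singular values of $A$ are exactly $\sigma_1(A|_L) \leq \dots \leq \sigma_l(A|_L)$, and the last $k-l$ entries are $\sigma_1(A|_{L^\perp}) \leq \dots \leq \sigma_{k-l}(A|_{L^\perp})$, yielding both conclusions simultaneously. There is no substantive obstacle here; the only subtlety is remembering to use the orthogonality hypothesis to eliminate the off-block cross terms so that diagonalizing on each piece separately actually diagonalizes the whole of $A^*A$.
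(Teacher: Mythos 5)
Your proof is correct and is essentially the paper's argument: both approaches diagonalize $A^*A$ by concatenating orthonormal singular-vector bases for $L$ and $L^{\perp}$, with the orthogonality hypothesis $A(L) \perp A(L^{\perp})$ forcing the cross terms to vanish, and then sort by the strict gap $\sigma_l(A|_L) < \sigma_1(A|_{L^{\perp}})$. The only cosmetic difference is that the paper first reduces by a change of basis in $W$ to the situation $A(L) = L$ and $A(L^{\perp}) = L^{\perp}$ before invoking the singular value decomposition, whereas you compute the Gram matrix of $(Ae_1,\dots,Ae_k)$ directly; this is arguably a touch cleaner but it is the same underlying computation.
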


\begin{proof}
By choosing appropriate orthonormal bases for $U$ and $W$, we may assume that $A$ is a linear automorphism of $\R^{k}$ with its standard orthonormal basis $\{e_{1},\dots,e_{k}\}$, that $L$ corresponds to the subspace spanned by the first $l$ basis vectors $\{e_{1},\dots,e_{l}\}$, and that $A(L) = L$. Then by hypothesis we have $A(L^{\perp}) = L^{\perp}$ as well. 

Consider the singular value decompositions of $A|_{L}$ and $A|_{L^{\perp}}$. We obtain orthonormal bases $\{v_{1},\dots,v_{k}\}$ and $\{w_{1},\dots,w_{k}\}$ of $\R^{k}$ from these decompositions with $v_{i},w_{i} \in L$ for $1 \leq i \leq l$, and $v_{i},w_{i} \in L^{\perp}$ for $l+1 \leq i \leq k$, such that 
\[
A(v_{i}) = \sigma_{i}(A|_{L})w_{i},
\]
for $1\leq i \leq l$, and
\[
A(v_{i}) = \sigma_{i-l}(A|_{L^{\perp}})w_{i},
\]
for $l+1 \leq i \leq k$. Let $A^{*}$ denote the transpose of $A$; the singular values $\sigma_{i}(A)$ are the eigenvalues of $\sqrt{A^{*}A}$ with corresponding eigenvectors $v_{i}$. Hence the singular values $\sigma_{i}(A|_{L})$ and  $\sigma_{i-l}(A|_{L^{\perp}})$ above must each correspond to a singular value $\sigma_{j}(A)$ for some $1 \leq j \leq k$. Since the singular values for $A$, $A|_{L}$, and $A|_{L^{\perp}}$ are all listed in increasing order, the hypothesis $\sigma_{k}(A|_{L}) < \sigma_{1}(A|_{L^{\perp}})$ implies that the correspondence in the conclusion of the lemma holds. 
\end{proof}

\begin{lem}\label{domination exponent gap}
Let $1 \leq l \leq k-1$ be a given integer and let $\chi > 0$. Let $E = L \oplus V$ be a dominated splitting of index $l$  for a linear cocycle $A$ on $E$ over $f$ as above, such that the inequality \eqref{splitting inequality} holds with exponent $\chi$. Let $\nu$ be an $f$-invariant ergodic probability measure. Then 
\begin{enumerate}
\item for $1 \leq i \leq l$, $\la_{i}(A,\nu) = \la_{i}(A|_{L},\nu)$,
\item for $l + 1 \leq i \leq k$, $\la_{i}(A,\nu) = \la_{i-l}(A|_{V},\nu)$,
\item $\la_{l}(A,\nu) \leq \la_{l+1}(A,\nu)-\chi$. 
\end{enumerate}
\end{lem}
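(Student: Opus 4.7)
The plan is to reduce to Lemma \ref{linear algebra} by passing to a Riemannian structure on $E$ in which the subbundles $L$ and $V$ are orthogonal, and then read off the singular value identifications iterate-by-iterate.

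First I would define a new Riemannian structure $\langle\cdot,\cdot\rangle'$ on $E$ as follows: on each fiber $E_x = L_x \oplus V_x$, keep the original inner products on $L_x$ and $V_x$ and declare $L_x \perp V_x$. Since $L$ and $V$ are continuous subbundles of $E$, this defines a continuous Riemannian structure on $E$. By the invariance of Lyapunov exponents under changes of Riemannian structure noted immediately after Definition \ref{definition lyap}, the Lyapunov spectra $\vec{\la}(A,\nu)$, $\vec{\la}(A|_L,\nu)$ and $\vec{\la}(A|_V,\nu)$ are unchanged by this replacement. Moreover, since $\|\cdot\|'$ agrees with $\|\cdot\|$ on each of the subbundles $L$ and $V$ individually, the restricted operator norms $\|A^n_x|_{L_x}\|$ and conorms $\mathfrak{m}(A^n_x|_{V_x})$ are unaffected, so the domination inequality \eqref{splitting inequality} continues to hold with the same exponent $\chi$ and constant $C$. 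From now on I work with $\langle\cdot,\cdot\rangle'$.

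Next I would apply Lemma \ref{linear algebra} to each iterate $A^n_x : E_x \to E_{f^n(x)}$. Because $A$ preserves the splitting, $A^n_x(L_x) = L_{f^n(x)}$ and $A^n_x(V_x) = V_{f^n(x)}$, and these are now orthogonal in $E_{f^n(x)}$. The domination inequality gives $\|A^n_x|_{L_x}\|/\mathfrak{m}(A^n_x|_{V_x}) \leq Ce^{-\chi n}$, so for all sufficiently large $n$ (uniformly in $x$) the hypothesis $\|A^n_x|_{L_x}\| < \mathfrak{m}(A^n_x|_{V_x})$ of Lemma \ref{linear algebra} is satisfied. The lemma then yields, for every such $n$ and every $x \in M$,
\[
\sigma_i(A^n_x) = \sigma_i(A^n_x|_{L_x}) \text{ for } 1 \leq i \leq l, \qquad \sigma_i(A^n_x) = \sigma_{i-l}(A^n_x|_{V_x}) \text{ for } l+1 \leq i \leq k.
\]
Dividing by $n$, taking logarithms, and letting $n \to \infty$ along a $\nu$-typical point, the multiplicative ergodic theorem applied to $A$, $A|_L$ and $A|_V$ yields conclusions (1) and (2).

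For (3), using the identifications just obtained,
\[
\la_{l+1}(A,\nu) - \la_l(A,\nu) = \lim_{n \to \infty} \frac{1}{n}\log\frac{\sigma_{l+1}(A^n_x)}{\sigma_l(A^n_x)} = \lim_{n \to \infty} \frac{1}{n}\log\frac{\mathfrak{m}(A^n_x|_{V_x})}{\|A^n_x|_{L_x}\|} \geq \chi,
\]
where the last inequality is the domination estimate \eqref{splitting inequality}.  The main obstacle is simply the orthogonality issue in (1) and (2): without an orthogonal splitting one cannot directly equate singular values of $A^n_x$ with singular values of its restrictions, and the cleanest fix is the continuous change of Riemannian structure above, together with the fact that Lyapunov exponents and the domination constants on $L$ and $V$ are insensitive to it.
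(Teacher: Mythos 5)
Your proposal is correct and follows essentially the same strategy as the paper: pass to a continuous Riemannian structure in which $L$ and $V$ are orthogonal (noting that Lyapunov exponents and the domination exponent $\chi$ are unaffected), then apply Lemma \ref{linear algebra} iterate-by-iterate to identify the singular values of $A^n_x$ with those of its restrictions to $L$ and $V$, and finally take logarithmic limits. The only cosmetic difference is in the construction of the new inner product: you keep the original inner products on each of $L_x$ and $V_x$ and simply declare them orthogonal, whereas the paper pulls back the original inner product via the map $T_x$ that is the identity on $L_x$ and the orthogonal projection on $V_x$; your version has the small bonus that the domination constant $C$ is preserved exactly, though this is not needed for the argument.
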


\begin{proof}
We first claim that it suffices to prove the lemma in the case that $L$ and $V$ are orthogonal. Let $T_{x}: E_{x} \rightarrow E_{x}$ be the unique linear transformation that restricts to the identity on $L_{x}$ and restricts on $V_{x}$ to the orthogonal projection of $V_{x}$ onto $L_{x}^{\perp}$. Clearly $T_{x}$ depends continuously on $x$ because $L_{x}$ and $V_{x}$ depend continuously on $x$. We define an inner product $\langle\, ,\,\rangle'_{x}$ on $E_{x}$ by
\[
\langle v,w\rangle_{x}' = \langle T_{x}(v),T_{x}(w)\rangle, 
\]
for $v,w \in E_{x}$. This defines a continuous inner product on $E$ with respect to which $L$ and $V$ are orthogonal. As observed above, the Lyapunov exponents of $A$ with respect to $\nu$ and the exponent $\chi$ in inequality \eqref{splitting inequality} are independent of the choice of continuous inner product on $E$. Hence we may use this new inner product to verify the claims of the lemma. 

We thus assume that $L$ and $V$ are orthogonal. Since the splitting is dominated, there is a positive integer $N$ such that for $n \geq N$ we have $\|A^{n}_{x}|_{L_{x}}\| < \mathfrak{m}(A^{n}_{x}|_{V_{x}})$ for all $x \in M$. We then apply Lemma \ref{linear algebra} to the map $A_{x}^{n}: E_{x} \rightarrow E_{f^{n}x}$ to obtain that $\sigma_{i}(A^{n}_{x}|_{L_{x}}) = \sigma_{i}(A^{n}_{x})$ for $1 \leq i \leq l$ and $\sigma_{i-l}(A^{n}_{x}|_{V_{x}}) = \sigma_{i}(A^{n}_{x})$ for $l+1 \leq i \leq k$. We conclude that for $\nu$-a.e.~ $x \in M$ and each $1 \leq i \leq l$,
\[
\la_{i}(A|_{L},\nu) = \lim_{n \rightarrow \infty} \frac{\log \sigma_{i}(A^{n}_{x}|_{L_{x}})}{n} = \lim_{n \rightarrow \infty} \frac{\log \sigma_{i}(A^{n}_{x})}{n} = \la_{i}(A,\nu).
\]
This proves (1). Similarly, for $l+1 \leq i \leq k$ and $\nu$-a.e.~ $x \in M$,
\[
\la_{i-l}(A|_{V},\nu) = \lim_{t \rightarrow \infty} \frac{\log \sigma_{i-l}(A^{n}_{x}|_{V_{x}})}{t} = \lim_{t \rightarrow \infty} \frac{\log \sigma_{i}(A^{n}_{x})}{t} = \la_{i}(A,\nu),
\]
which proves (2). Lastly, the domination inequality \eqref{splitting inequality} implies that for all $x \in M$, 
\[
\limsup_{n \rightarrow \infty} \frac{\log \sigma_{l}(A^{n}_{x}|_{L_{x}})}{n} \leq \liminf_{n \rightarrow \infty} \frac{\log \sigma_{1}(A^{n}_{x}|_{V_{x}})}{n} - \chi.
\]
Applying this inequality to a $\nu$-generic point $x$ and then combining this with (1) and (2), we obtain (3). 
\end{proof}

We next discuss H\"older regularity of vector bundles. For this purpose we will be considering continuous subbundles of a trivial bundle $M \times \R^{q}$ over $M$, for some $q \geq 1$. We consider $\R^{q}$ with the Euclidean metric and equip $M \times \R^{q}$ with the product metric. We endow the Grassmannian $\mathrm{Gr}_{k}(\R^{q})$ of $k$-planes in $\R^{q}$ with the standard metric, 
\begin{equation}\label{Hausdorff}
d(V,W) = \max\left\{\sup_{v \in V, \|v\| = 1}\inf\{\|v-w\|: w \in W\}, \sup_{w \in W, \|w\| = 1}\inf\{\|v-w\|: v \in V\} \right\}.
\end{equation}
A continuous subbundle $E \subseteq M \times \R^{q}$ corresponds to a continuous map $\xi_{E}: M \rightarrow \mathrm{Gr}_{k}(\R^{q})$ given by $\xi_{E}(x) = E_{x}$. For  $0 < \alpha \leq 1^{-}$, we say that a subbundle $E \subseteq M \times \R^{q}$ is $C^{\alpha}$ if the map $\xi_{E}$ is $C^{\alpha}$; recall that $C^{1-}$ denotes locally Lipschitz maps. We define a linear cocycle $A: E \rightarrow E$ over a $C^{\alpha}$ homeomorphism $f: M \rightarrow M$ to be $C^{\alpha}$ if it is $C^{\alpha}$ in the metric on $E$ inherited from the product metric on $M \times \R^{q}$. 

A principal concern of ours will be the H\"older regularity of the subbundles of a dominated splitting.  Proposition \ref{holder splitting} below is a direct consequence of the H\"older section theorem \cite[Theorem 3.8]{HPS}. 

\begin{prop}\label{holder splitting}
Let $0 < \alpha \leq 1^{-}$ be given. Let $E$ be a $C^{\alpha}$ vector bundle over a compact metric space $M$. Let $A: E \rightarrow E$ be a linear cocycle over a  homeomorphism $f:M \rightarrow M$ such that $f$ and $f^{-1}$ are Lipschitz and $A$ and $A^{-1}$ are $C^{\alpha}$. Suppose that $A$ has a dominated splitting $E = L \oplus V$ of index $l$. 

Then there exists $0 < \beta \leq \alpha$ such that $L$ and $V$ are $C^{\beta}$ subbundles of $E$. The exponent $\beta$ depends only on $\alpha$, the exponent $\chi$ of inequality \eqref{splitting inequality}, the Lipschitz constants of $f$ and $f^{-1}$, and the quantity $\sup_{x \in M} \max\{\|A_{x}\|,\|A_{x}^{-1}\|\}$.
\end{prop}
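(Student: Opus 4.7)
The plan is to realize $V$ (and symmetrically $L$) as the unique invariant section of a fiber contraction over $f$, and then invoke the H\"older section theorem \cite[Theorem 3.8]{HPS}.

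First I would set up the graph transform. Embed $L,V$ as continuous subbundles of the $C^{\alpha}$ bundle $E \subseteq M \times \R^{q}$, and consider the bundle $\pi: \mathcal{H} \to M$ whose fiber $\mathcal{H}_{x}$ consists of bounded linear maps $V_{x} \to L_{x}$, equipped with the operator norm. Any continuous section $\sigma: M \to \mathcal{H}$ with $\|\sigma\|_{\infty} \leq 1$, say, determines a continuous $(k-l)$-dimensional subbundle $V^{\sigma}$ of $E$ via $V^{\sigma}_{x} = \{v + \sigma(x)v : v \in V_{x}\}$, and $V$ itself corresponds to the zero section. The cocycle $A$ transports such graphs to graphs: writing $A_{x}$ in block form relative to $L\oplus V$ as $\begin{pmatrix} a_{x} & b_{x} \\ c_{x} & d_{x}\end{pmatrix}$, the image $A_{x}(V^{\sigma}_{x})$ is the graph of
\[
\mathcal{G}(\sigma)(f(x)) = (a_{x}\sigma(x) + b_{x})(c_{x}\sigma(x) + d_{x})^{-1},
\]
which (for $\sigma$ small) defines the induced graph transform $\mathcal{G}$ covering $f$. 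The section $\sigma \equiv 0$ is fixed by $\mathcal{G}$ precisely because $V$ is $A$-invariant.

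Next I would verify the fiber contraction estimate. Passing to a high iterate $A^{N}$, the domination inequality \eqref{splitting inequality} gives $\|A^{N}|_{L}\| \cdot \mathfrak{m}(A^{N}|_{V})^{-1} \leq Ce^{-\chi N}$, which bounds the Lipschitz constant in the fiber direction of $\mathcal{G}^{N}$ near the zero section by $k := Ce^{-\chi N}$; choosing $N$ large makes $k$ arbitrarily small. In the base direction, since $A$ (hence $a,b,c,d$) is $C^{\alpha}$ and the subbundles $L,V$ are continuous (already a nontrivial input: they are continuous from uniqueness of dominated splittings and $C^{0}$-perturbation stability via Proposition \ref{cone criterion}), the map $\mathcal{G}^{N}$ is $C^{\alpha}$ along the base with some constant $K$ depending only on $\alpha$, the $C^{\alpha}$ norms of $A^{\pm 1}$, and $\sup_{x}\max\{\|A_{x}\|,\|A_{x}^{-1}\|\}$. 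The base map $f^{N}$ has Lipschitz constant $\mathrm{Lip}(f)^{N}$.

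Now I would apply the H\"older section theorem to the graph transform $\mathcal{G}^{N}$. Its hypothesis is an inequality of the form
\[
k \cdot \mathrm{Lip}(f^{-N})^{\beta} < 1,
\]
which is satisfied for some $0 < \beta \leq \alpha$ once $N$ is chosen large enough (depending on $\chi$ and $\mathrm{Lip}(f^{-1})$); the theorem then concludes that the unique bounded invariant section of $\mathcal{G}^{N}$ is $C^{\beta}$. This invariant section is the zero section, so $V$ (viewed as a map $M \to \mathrm{Gr}_{k-l}(\R^{q})$) is $C^{\beta}$. Applying the identical argument to the linear cocycle $A^{-1}$, whose dominated splitting is $E = V \oplus L$ with $L$ now in the ``dominating'' role, gives the $C^{\beta}$ regularity of $L$ (shrinking $\beta$ if necessary, and noting that $\mathrm{Lip}(f^{\pm 1})$ and the cocycle norms enter symmetrically). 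The exponent $\beta$ produced this way depends only on the quantities listed in the statement.

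The main technical obstacle is the standard one in this circle of ideas: the graph transform is naturally defined only on sections that are small relative to the splitting, so one must first establish a cone-invariance / local stability statement so that $\mathcal{G}^{N}$ is well-defined and a contraction on a genuine ball in the space of sections. This is packaged into the hypotheses of \cite[Theorem 3.8]{HPS}, but checking the hypotheses carefully in the present bundle setting (with $E$ itself only $C^{\alpha}$) is the bulk of the work; the contraction estimate and the H\"older exponent book-keeping are then mechanical.
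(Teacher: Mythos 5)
Your proposal is correct and follows essentially the same route as the paper, which simply cites the H\"older section theorem \cite[Theorem 3.8]{HPS} without further detail; your sketch supplies the standard graph-transform setup and the verification of the theorem's hypotheses that the paper leaves implicit.
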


One obtains a more precise characterization of $\beta$ from a theorem of Brin and Kifer \cite{BK87} when $A$ satisfies certain growth inequalities on $L$ and $V$. We consider below the same setting as in Proposition \ref{holder splitting}.

\begin{prop}\label{strong holder splitting}
Suppose further that $L$ and $V$ are orthogonal, and that there are constants $\la_{1} \leq \la_{2} < \chi_{1} \leq \chi_{2}$, and $a > \alpha^{-1}\max\{\la_{1},-\chi_{2},0\}$ 
such that for all $n \geq 1$,
\[
e^{\la_{1} n} \leq \mathfrak{m}(A_{x}|_{L_{x}}) \leq \|A_{x}|_{L_{x}}\| \leq e^{\lambda_{2}n},
\]
and
\[
e^{\chi_{1}n} \leq \mathfrak{m}(A_{x}|_{V_{x}}) \leq \|A_{x}|_{V_{x}}\| \leq e^{ \chi_{2}n},
\]
and such that the Lipschitz constants of $f^{n}$ and $f^{-n}$ are at most $e^{an}$. Then $L$ is $C^{\beta}$ with 
\[
\beta =  \left(\frac{\chi_{1}-\la_{2}}{a\alpha + \chi_{2} - \la_{2}}\right)\alpha,
\]
and $V$ is $C^{\eta}$ with 
\[
\eta = \left(\frac{\chi_{1}-\la_{2}}{a\alpha - \la_{1} + \chi_{1}}\right)\alpha.
\]
\end{prop}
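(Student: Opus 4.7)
The plan is to invoke the H\"older regularity theorem of Brin and Kifer \cite{BK87} for invariant subbundles in a dominated splitting, via the graph-transform mechanism on the Grassmannian bundle $\mathrm{Gr}_{l}(E) \to M$. Since $L$ and $V$ are orthogonal and $A$-invariant, near $L_{x}$ one parametrizes $l$-planes in $E_{x}$ by graphs of linear maps $\sigma_{x}: L_{x} \to V_{x}$, and the induced action of $A$ on this chart is $\sigma \mapsto (A|_{V})\, \sigma\, (A|_{L})^{-1}$. The domination assumption $\chi_{1} > \la_{2}$ makes this a fiber contraction at rate bounded by $\|A^{n}|_{V}\|\cdot\|(A^{n}|_{L})^{-1}\| \leq e^{(\la_{2}-\chi_{1})n}$, so $L$ is the unique continuous fixed point, globally attracting in this chart.

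To bound the Grassmannian distance $d(L_{x}, L_{y})$ for nearby $x, y$, I would write $L_{x} = \hat{A}^{n}_{f^{-n}x}(L_{f^{-n}x})$ and $L_{y} = \hat{A}^{n}_{f^{-n}y}(L_{f^{-n}y})$, where $\hat{A}$ denotes the induced action on $\mathrm{Gr}_{l}(E)$, and split the comparison via the triangle inequality into two pieces: (i) a \emph{fiber contraction term} bounded by $C e^{(\la_{2}-\chi_{1})n}$, arising from the contraction of the graph transform applied to the bounded initial discrepancy $d(L_{f^{-n}x}, L_{f^{-n}y})$; and (ii) a \emph{cocycle H\"older error term} bounded by $C e^{(a\alpha + \chi_{2} - \la_{2})n} d(x,y)^{\alpha}$, obtained by telescoping $\|A^{n}_{f^{-n}x} - A^{n}_{f^{-n}y}\|$ using the $\alpha$-H\"older continuity of $A$ combined with the Lipschitz bound $e^{a}$ on $f^{-1}$ (yielding an $e^{a\alpha n}$ propagation factor) and then dividing by the slowest expansion $\sim e^{\la_{2} n}$ along $L$ to convert it into a Grassmannian distance. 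Balancing (i) and (ii) by choosing $n \sim \frac{\alpha}{a\alpha + \chi_{2} - \chi_{1}}\, \log d(x,y)^{-1}$ yields
\[
d(L_{x}, L_{y}) \lesssim d(x,y)^{\beta}, \qquad \beta = \frac{\alpha(\chi_{1} - \la_{2})}{a\alpha + \chi_{2} - \la_{2}},
\]
matching the claim.

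For the companion exponent for $V$, I would apply the same argument to the inverse cocycle $A^{-1}$, which has the same splitting $E = L \oplus V$ but with $V$ now playing the role of the contracted (slow) subbundle; the exponents transform as $\la_{i} \leftrightarrow -\chi_{3-i}$ and $\chi_{i} \leftrightarrow -\la_{3-i}$, yielding $\eta = \alpha(\chi_{1} - \la_{2})/(a\alpha + \chi_{1} - \la_{1})$. The technical hypothesis $a\alpha + \min\{-\la_{1}, \chi_{2}\} > \max\{\la_{2}, -\chi_{1}, 0\}$ is precisely what guarantees that $\beta$ and $\eta$ both lie in $(0, \alpha]$, so that both the setup of the H\"older section theorem and the balancing step above are consistent. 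The main obstacle is the careful bookkeeping of term (ii): one must verify that the expansion factor is $e^{\chi_{2} n}$ (the fastest growth on $V$) rather than a coarser bound, which requires tracking how the worst-case direction of the perturbation $A^{n}_{y} - A^{n}_{x}$ sits relative to the splitting $L \oplus V$ after transport by $(A^{n}_{x})^{-1}$ --- this is the refinement of Brin and Kifer over a naive application of the general H\"older section theorem (Proposition \ref{holder splitting} above), which would only yield the weaker exponent $\beta = (\chi_{1} - \la_{2})/a$ independent of the cocycle's own H\"older regularity $\alpha$.
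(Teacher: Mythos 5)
Your overall strategy is the same as the paper's --- reduce to the H\"older regularity theorem of Brin and Kifer \cite[Theorem 5.2]{BK87} --- and the final exponents $\beta$, $\eta$ you state (including the transformation $\la_i \leftrightarrow -\chi_{3-i}$, $\chi_i \leftrightarrow -\la_{3-i}$ for passing to $A^{-1}$ over $f^{-1}$ to obtain $\eta$) are correct. However, the intermediate details you narrate for the balance do not cohere, and would not reproduce the claimed $\beta$ if carried through.

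Specifically: (a) You write the fiber contraction rate as $\|A^n|_V\|\cdot\|(A^n|_L)^{-1}\| \leq e^{(\la_2 - \chi_1)n}$, but that product is bounded only by $e^{(\chi_2 - \la_1)n} \geq 1$; the quantity that contracts like $e^{(\la_2 - \chi_1)n}$ is the domination ratio $\|A^n|_L\|/\mathfrak{m}(A^n|_V)$, which is what the backward graph transform $\sigma \mapsto (A|_V)^{-1}\sigma A|_L$ uses. (b) Your error term $Ce^{(a\alpha + \chi_2 - \la_2)n}d(x,y)^\alpha$, balanced against $e^{(\la_2 - \chi_1)n}$, gives the H\"older exponent $\frac{\alpha(\chi_1 - \la_2)}{a\alpha + \chi_2 + \chi_1 - 2\la_2}$, not the claimed $\frac{\alpha(\chi_1 - \la_2)}{a\alpha + \chi_2 - \la_2}$; meanwhile your stated choice $n \sim \frac{\alpha}{a\alpha + \chi_2 - \chi_1}\log d^{-1}$ is inconsistent with \emph{either} balance. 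Also, $e^{\la_2 n}$ is the \emph{fastest}, not the slowest, growth bound on $L$. These are not cosmetic; if you intend to re-derive the exponent rather than cite it, the bookkeeping must be right.

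The paper sidesteps all of this by not re-deriving the balance. It extends $A_x: E_x \to E_{f(x)}$ to $\t{A}_x: \R^q \to \R^q$ by zeroing on $E_x^\perp$, sets $\t{L}_x = L_x \oplus E_x^\perp$, verifies directly the hypotheses of \cite[Theorem 5.2]{BK87} --- namely $\|\t{A}^n_x|_{\t{L}_x}\| \leq e^{\la_2 n}$, $\mathfrak{m}(\t{A}^n_x|_{V_x}) \geq e^{\chi_1 n}$, $\|\t{A}_x\| \leq e^{\chi_2}$, and the telescoping estimate $\|\t{A}^n_x - \t{A}^n_y\| \leq K' e^{(a\alpha + \chi_2)n}d(x,y)^\alpha$ --- and reads off $\beta$ from that theorem's conclusion. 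The extension to the ambient trivial bundle (absent from your sketch) is genuinely needed here: $E$ is itself only $C^\alpha$, so one must produce a cocycle on all of $M\times\R^q$ to which the Brin--Kifer machinery applies, and then recover $L = \t{L}\cap E$ at the end. I would recommend adopting the paper's route of verifying the hypotheses and citing the theorem, rather than hand-rolling the balance, where the precise normalizing factor is delicate.
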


\begin{proof}
We will prove the claim for the subbundle $L$. The claim for $V$ then follows by considering the linear cocycle $A^{-1}$ over $f^{-1}$. For each $x \in M$ we take the unique linear extension $\t{A}_{x}:\R^{q} \rightarrow \R^{q}$ of $A_{x}: E_{x} \rightarrow E_{f(x)}$ that satisfies $\t{A}_{x}|_{E_{x}^{\perp}} \equiv 0$. Since $E$ is a $C^{\alpha}$ subbundle of $M \times \R^{q}$, $\t{A}: M \times \R^{q} \rightarrow M \times \R^{q}$ is $C^{\alpha}$, with $\alpha$-H\"older constant $K$ determined by the $\alpha$-H\"older constants of $A$ and $E$. 

We set $\t{L}_{x} = L_{x} \oplus E_{x}^{\perp}$. We conclude from our hypotheses that for all $n \geq 1$, 
\[
\|\t{A}_{x}^{n}|_{\t{L}_{x}}\| = \|A_{x}^{n}|_{L_{x}}\| \leq e^{\la_{2} n},
\]
and since $\t{A}_{x}|_{V_{x}} = A_{x}|_{V_{x}}$, 
\[
\mathfrak{m}(\t{A}_{x}^{n}|_{V_{x}}) \geq e^{\chi_{1} n}.
\]
Since $L$ and $V$ are orthogonal, we also have for any $x \in M$, 
\[
\|\t{A}_{x}\| = \|\t{A}_{x}|_{V_{x}}\| \leq e^{\chi_{2}}.
\]
We then obtain the estimate for $x$, $y \in M$, and $n \geq 1$, 
\begin{align*}
\|\t{A}^{n}_{x} - \t{A}^{n}_{y}\| &\leq \|\t{A}_{f^{n-1}x}-\t{A}_{f^{n-1}y}\| \|\t{A}^{n-1}_{y}\| + \|\t{A}_{f^{n-1}x}\| \|\t{A}^{n-1}_{x} - \t{A}^{n-1}_{y}\| \\
&\leq Kd(f^{n-1}x,f^{n-1}y)^{\alpha} e^{\chi_{2}(n-1)} + e^{\chi_{2}} \|\t{A}^{n-1}_{x} - \t{A}^{n-1}_{y}\| \\
&\leq K e^{(a \alpha + \chi_{2})(n-1)} d(x,y)^{\alpha} + e^{\chi_{2}} \|\t{A}^{n-1}_{x} - \t{A}^{n-1}_{y}\|.
\end{align*}
Applying these estimates inductively, we obtain 
\begin{align*}
\|\t{A}^{n}_{x} - \t{A}^{n}_{y}\| &\leq Kd(x,y)^{\alpha} \sum_{j=0}^{n-1} e^{(a\alpha + \chi_{2})j+ (n-j-1) \chi_{2}} \\
&= Kd(x,y)^{\alpha}e^{(n-1)\chi_{2}} \sum_{j=0}^{n-1} e^{a \alpha j} \\
&\leq K' d(x,y)^{\alpha} e^{(a\alpha + \chi_{2}) n},
\end{align*}
for some constant $K'$ independent of $n$. By \cite[Theorem 5.2]{BK87}, we conclude that $\t{L}$ is $C^{\beta}$ with
\[
\beta = \left(\frac{\chi_{1}-\la_{2}}{a\alpha + \chi_{2} - \la_{2}}\right)\alpha.
\]
We remark that applying this theorem requires the exponent in the growth estimate for $\|\t{A}^{n}_{x} - \t{A}^{n}_{y}\|$ to satisfy $a \alpha + \chi_{2} > \max\{\la_{2},0\}$; this estimate is a consequence of the requirements $a > 0$, $\chi_{2} > \la_{2}$, and $a\alpha > -\chi_{2}$.  Since $L = \t{L} \cap E$, this implies that $L$ is a $C^{\beta}$ subbundle of $E$. 
\end{proof}

We next extend the concepts of this section to linear cocycles over flows. Let $E$ be a continuous vector bundle over a compact metric space $M$ as above. Let $f^{t}:M \rightarrow M$ be a continuous flow. A linear cocycle on $E$ over $f^{t}$ is a continuous flow $\{A^{t}\}_{t \in \R}$ on $E$ such that the time-$t$ map $A^{t}$ is a bundle map covering $f^{t}$ for each $t \in \R$. We write $A^{1}:=A$ and $f^{1}:=f$ for the time-1 maps of $A^{t}$ and $f^{t}$ respectively. 

We define an $A^{t}$-invariant splitting $E = L \oplus V$ to be dominated if the splitting is dominated for the time-1 map $A$. In this case inequality \eqref{splitting inequality} also holds for all $t \geq 0$, though possibly with a larger constant $C$. The analogues of Propositions \ref{cone criterion}, \ref{domination gap criterion}, \ref{holder splitting}, and  \ref{strong holder splitting} all hold for dominated splittings for linear cocycles over flows; this is easily deduced from those propositions by consideration of the time-1 map. Similarly, we define $A^{t}$ to be uniformly quasiconformal if the time-1 map $A$ is uniformly quasiconformal, and in this case inequality \eqref{equation uniform quasi} holds for $t \geq 0$ with a possibly larger constant $C$ as well.

There will be several points at which we will need to eliminate the constants $C$ in certain growth inequalities related to linear cocycles $A^{t}$ over a flow $f^{t}$ by choosing a new Riemannian structure on $E$. Proposition \ref{new metric} below is standard, going back to Anosov \cite{A69}. 

\begin{prop}\label{new metric}
Let $A^{t}:E \rightarrow E$ be a linear cocycle on $E$ over a continuous flow $f^{t}: M \rightarrow M$. Let $\{\|\cdot\|\}_{x \in M}$ be a Riemannian structure on $E$. Suppose that for $n \geq 1$, $0 < a \leq b \in \R$, and some constant $C \geq 1$,
\[
C^{-1}e^{an} \leq \mathfrak{m}(A^{n}_{x}) \leq  \|A^{n}_{x}\| \leq Ce^{bn}.
\]
Then there exists a Riemannian structure $\{\|\cdot\|_{x}'\}_{x \in M}$ on $E$ such that for all $t > 0$ and $x \in M$, 
\[
e^{at} \leq \mathfrak{m}'(A^{t}_{x}) \leq  \|A^{t}_{x}\|' \leq e^{bt},
\]
where $\mathfrak{m}'$ denotes the conorm in this Riemannian structure. 
\end{prop}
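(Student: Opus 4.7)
My plan is to follow Anosov's classical construction of an adapted or \emph{Lyapunov} Riemannian structure. The idea is to define the new inner product as a weighted time-average of the original inner product pulled back by the cocycle $A^{t}$, with weights chosen to compensate for the extremal growth rates $a$ and $b$.

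First I would extend the integer-time growth hypothesis to all real times, using continuity of $A^{t}$ and compactness of $M$: there is a constant $K \geq 1$ such that $K^{-1}e^{at} \leq \mathfrak{m}(A^{t}_{x}) \leq \|A^{t}_{x}\| \leq Ke^{bt}$ for all $t \geq 0$ and $x \in M$, with a symmetric statement for $t \leq 0$. Next, for a small auxiliary parameter $\e > 0$, I would define
\[
\langle v, w\rangle'_{x} := \int_{0}^{\infty} e^{-2(b+\e)t}\langle A^{t} v, A^{t} w\rangle_{f^{t} x}\,dt + \int_{0}^{\infty} e^{2(a-\e)t}\langle A^{-t}v, A^{-t}w\rangle_{f^{-t}x}\,dt.
\]
Each integrand is dominated by $K^{2} e^{-2\e t}\|v\|\|w\|$, so the improper integrals converge uniformly in $x$ and define a continuous positive-definite inner product on each fiber of $E$.

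Verifying the growth estimates is then a change-of-variables computation. Substituting $u = t+s$ in the first integral shows that $\|A^{s} v\|'^{2}$ differs from $e^{2(b+\e)s}\|v\|'^{2}$ by boundary terms of the form $e^{2(b+\e)s}\int_{0}^{s}e^{-2(b+\e)u}\|A^{u}v\|^{2}\,du$, which can be absorbed using the lower bound $\|v\|'^{2} \gtrsim \|v\|^{2}$ coming from the hypotheses. A parallel computation with the substitution $u = t - s$ handles the second integral and yields the conorm bound $\mathfrak{m}'(A^{s}_{x}) \geq e^{(a-\e)s}$.

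The main obstacle is sharpening these bounds from $b+\e$ and $a-\e$ to exactly $b$ and $a$ as stated in the conclusion. The standard workaround is either to take a diagonal limit $\e \to 0$, extracting a convergent subsequence of Riemannian structures via an Arzel\`a--Ascoli argument on the compact space $M$, or to replace the integral definition with a supremum-based Lyapunov norm $\|v\|^{*}_{x} := \sup_{t \geq 0}e^{-bt}\|A^{t} v\|$, which realizes the exact exponent but requires an additional polarization step to exhibit an inner-product structure. I would expect the cleanest write-up to use a suitably weighted finite-interval average $\int_{0}^{T}$ with $T$ chosen large enough relative to $K$, $a$, $b$ so that the boundary error terms vanish after combining the two contributions.
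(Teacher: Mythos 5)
Your proposal correctly identifies the Anosov averaging construction, carries out the convergence and change-of-variables computations cleanly, and arrives at an adapted inner product with exponents $a - \e$ and $b + \e$. The difficulty, which you flag yourself, is sharpening to the exact exponents $a$ and $b$ in the statement, and none of the three workarounds you sketch is carried out; each in fact meets a concrete obstacle. The diagonal limit $\e \to 0$ degenerates: after normalizing each integral by $2\e$ so that the family stays bounded above, both the forward- and backward-weighted pieces tend to zero on any vector whose growth rate lies strictly between $a$ and $b$, so the limiting quadratic form need not stay uniformly positive definite. The supremum Lyapunov norm $\sup_{t \geq 0}e^{-bt}\|A^{t}v\|$ does realize the exact upper bound, but it is a norm and not an inner product, and polarization of a generic norm does not produce a bilinear form (the parallelogram identity fails for sup-type norms). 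The finite-interval average $\int_{0}^{T}e^{-2bt}\|A^{t}v\|^{2}\,dt + \int_{0}^{T}e^{2at}\|A^{-t}v\|^{2}\,dt$ produces boundary error terms near $t = T$ in the forward and backward integrals that do not cancel: testing on a vector in a maximally expanding direction, the forward boundary error is of order $C^{2}s\|v\|^{2}$ while the backward contribution is of order $C^{-2}s\|v\|^{2}$, leaving a net positive error whenever $C > 1$, no matter how large $T$ is taken.

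The paper cites Anosov and gives no proof, so there is nothing to compare your approach against directly; but the proposition is invoked later specifically to eliminate multiplicative constants while keeping the exponents fixed (for example, to replace $\mathfrak{m}(D\bar{f}^{t}|_{\bar{V}^{u}}) \geq ce^{\chi t}$ by the same inequality with $c = 1$). Since your construction delivers $a - \e$ and $b + \e$ rather than $a$ and $b$, and the proposed sharpening steps are not established, the exact-exponent conclusion remains a genuine gap in your argument as written.
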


The new Riemannian structure built from Proposition \ref{new metric} is also referred to as a \emph{Lyapunov adapted metric}. 

Given an $f^{t}$-invariant ergodic probability measure $\nu$ on $M$, we define the Lyapunov spectrum of $A^{t}$ with respect to $\nu$ to be the Lyapunov spectrum $\vec{\la}(A,\nu)$ of the time-1 map of $A$. We will need to determine how the Lyapunov spectrum changes when we make time changes of $f^{t}$ that give rise to time changes of $A^{t}$. 

As a shorthand, for a continuous function $\gamma:M \rightarrow (0,\infty)$ and a probability measure $\nu$ on $M$ we set
\[
\nu(\gamma) = \int_{M}\gamma \, d\nu
\]
 Let $g^{t}$ be a time change of $f^{t}$ with speed multiplier $\gamma$. Consider an ergodic invariant probability measure $\nu$ for $f^{t}$. We define a new probability measure $\nu^{\gamma}$ on $M$ that is equivalent to $\nu$, with Radon-Nikodym derivative
\begin{equation}\label{radon}
\frac{d\nu^{\gamma}}{d\nu}(x) = \gamma(x)^{-1} \nu(\gamma^{-1})^{-1}.
\end{equation}
Proposition \ref{new invariant measure} below may be found in \cite[Chapter 10$\S$3]{CFS82}.

\begin{prop}\label{new invariant measure}
The measure $\nu^{\gamma}$ is the unique $g^{t}$-invariant ergodic probability measure that is equivalent to $\nu$. All $g^{t}$-invariant ergodic probability measures have the form $\nu^{\gamma}$ for some $f^{t}$-invariant ergodic probability measure $\nu$. 
\end{prop}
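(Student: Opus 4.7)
The plan is to identify $\nu^{\gamma}$ as the unique $g^{t}$-invariant ergodic probability in the equivalence class of $\nu$ by computing $g^{t}$-Birkhoff averages via the speed-change substitution. Since $g^{t}x = f^{\tau(t,x)}x$ and $\partial_{t}\tau(t,x) = \gamma(f^{\tau(t,x)}x)$, the substitution $s = \tau(t,x)$ gives, for every continuous $\phi \colon M \to \R$,
$$
\int_{0}^{T} \phi(g^{t}x)\, dt = \int_{0}^{\tau(T,x)} (\phi\gamma^{-1})(f^{s}x)\, ds.
$$
Proposition \ref{inverse additive}, specialized to $h^{t} = f^{t}$, supplies the identity $\omega(\tau(T,x),x) = T$, where $\omega$ is the additive cocycle over $f^{t}$ generated by $\gamma^{-1}$. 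Under the standing assumption that $\gamma, \gamma^{-1}$ are $\nu$-integrable (automatic on compact $M$), one application of the Birkhoff ergodic theorem for $(f^{t},\nu)$ yields $\tau(T,x)/T \to \nu(\gamma^{-1})^{-1}$ for $\nu$-a.e.\ $x$; a second application, to $\phi\gamma^{-1}$, then yields
$$
\frac{1}{T}\int_{0}^{T} \phi(g^{t}x)\, dt \;\longrightarrow\; \frac{\nu(\phi\gamma^{-1})}{\nu(\gamma^{-1})} = \nu^{\gamma}(\phi).
$$
A countable dense subset of $C(M)$ then furnishes a single $\nu$-full-measure set on which this convergence holds for every $\phi$; since $\nu^{\gamma} \sim \nu$, the set is also of full $\nu^{\gamma}$-measure.

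To deduce $g^{t}$-invariance of $\nu^{\gamma}$, I apply the same convergence to the function $\phi \circ g^{r} \in C(M)$ for each fixed $r \in \R$: for $\nu^{\gamma}$-a.e.\ $x$,
$$
\lim_{T \to \infty} \frac{1}{T}\int_{0}^{T} \phi(g^{t+r}x)\, dt = \nu^{\gamma}(\phi \circ g^{r}).
$$
However this Cesaro average differs from $\frac{1}{T}\int_{0}^{T}\phi(g^{t}x)\, dt$ by $O(r/T) \to 0$, so both limits equal $\nu^{\gamma}(\phi)$, forcing $\nu^{\gamma}(\phi \circ g^{r}) = \nu^{\gamma}(\phi)$ for all $r$ and all $\phi \in C(M)$. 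Ergodicity of $(g^{t},\nu^{\gamma})$ is then immediate from the Birkhoff theorem applied to the now-verified invariant probability $\nu^{\gamma}$: the time averages we computed agree $\nu^{\gamma}$-a.e.\ with the constants $\nu^{\gamma}(\phi)$, so the $g^{t}$-invariant $\sigma$-algebra is $\nu^{\gamma}$-trivial.

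Uniqueness in the equivalence class of $\nu$ is immediate from the same computation: any $g^{t}$-invariant ergodic $\mu \sim \nu$ has Birkhoff averages equal to $\mu(\phi)$ on a full $\mu$-measure set, which intersects the full $\nu$-measure set on which these averages equal $\nu^{\gamma}(\phi)$, forcing $\mu = \nu^{\gamma}$. For the final claim that every $g^{t}$-invariant ergodic probability arises this way, Proposition \ref{inverse additive} identifies $f^{t}$ as the time change of $g^{t}$ with speed multiplier $\gamma^{-1}$, so applying the construction in reverse to a given $g^{t}$-invariant ergodic $\mu$ yields the $f^{t}$-invariant ergodic measure $\nu := \mu^{\gamma^{-1}}$ with $d\nu/d\mu = \gamma/\mu(\gamma)$; a direct Radon--Nikodym computation then produces $\nu^{\gamma} = \mu$. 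The main subtlety I anticipate is coordinating the simultaneous $\nu$-a.e.\ control of $\tau(T,x)/T$ and of the Birkhoff averages of $\phi\gamma^{-1}$ as $\tau(T,x) \to \infty$, which is handled by the countable dense set argument combined with the uniform lower bound $\tau(T,x) \geq (\min\gamma)\, T$ that holds on compact $M$.
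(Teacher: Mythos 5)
Your proposal follows essentially the same route as the paper: the speed-change substitution $s = \tau(t,x)$, two applications of the Birkhoff ergodic theorem over $(f^t, \nu)$, the identity $\omega(\tau(T,x),x) = T$ from Proposition~\ref{inverse additive} to relate the two time scales, and the $O(r/T)$ tail-insensitivity of Ces\`aro averages to upgrade the a.e.\ convergence to $g^t$-invariance. The one place you diverge is in proving ergodicity: you argue that because Birkhoff averages of a dense family of continuous functions converge $\nu^{\gamma}$-a.e.\ to the constants $\nu^{\gamma}(\phi)$, the invariant $\sigma$-algebra must be trivial. The paper's argument is more elementary at this step: since $g^t$ and $f^t$ are time changes of each other they have the same orbits, so any $g^t$-invariant set is automatically $f^t$-invariant, and triviality is inherited directly from ergodicity of $(f^t,\nu)$ together with $\nu^{\gamma}\sim\nu$. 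Both arguments are correct; the paper's avoids invoking Birkhoff a third time and makes the ergodicity transfer transparent, which is worth keeping in mind since it also applies without any integrability assumption on a dense family of test functions.
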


One also has Abramov's formula \cite{Abr59} for the entropy of $g^{t}$ with respect to $\nu^{\gamma}$, 
\begin{equation}\label{abramov}
h_{\nu^{\gamma}}(g) = \nu(\gamma^{-1})^{-1}h_{\nu}(f).
\end{equation}

Let $\tau$ be the additive cocycle over $g^{t}$ such that $g^{t}x = f^{\tau(t,x)}x$. Given a linear cocycle $A^{t}$ over $f^{t}$, we define a linear cocycle $B^{t}$ over $g^{t}$ by the formula 
\[
B^{t}_{x} = A^{\tau(t,x)}_{x},
\]
for $x \in M$ and  $t \in \R$. Then $B^{t}$ is a time change of $A^{t}$ with the same  speed multiplier $\gamma$.

\begin{prop}\label{time change lyap}
Let $A^{t}$ be a linear cocycle over a continuous flow $f^{t}$. Let $g^{t}$ be a time change of $f^{t}$ with speed multiplier $\gamma$, and consider the corresponding linear cocycle  $B^{t}$ over $g^{t}$. Then for any $f^{t}$-invariant ergodic probability measure $\nu$, we have
\[
\vec{\la}(g,\nu^{\gamma}) = \nu(\gamma^{-1})^{-1}\vec{\la}(f,\nu).
\] 
\end{prop}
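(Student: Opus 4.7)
The plan is to use the defining relation $B^n_x = A^{\tau(n,x)}_x$ and split the discrete-time Lyapunov quotient for $B$ into a piece computed along an orbit of $A$ at the real time $\tau(n,x)$ together with a Birkhoff average of the speed multiplier $\gamma$ along $g^t$. First I would fix an index $i$ and, for all $n \geq 1$ and all $x$ with $\tau(n,x) > 0$ (a condition of full measure since $\gamma$ is bounded below by a positive constant on the compact space $M$), write
\[
\frac{\log \sigma_i(B^n_x)}{n} \;=\; \frac{\log \sigma_i(A^{\tau(n,x)}_x)}{\tau(n,x)} \cdot \frac{\tau(n,x)}{n}.
\]

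The second factor is a genuine Birkhoff average along $g^t$, namely $\tau(n,x)/n = n^{-1}\int_0^n \gamma(g^s x)\, ds$. Since $g^t$ is ergodic with respect to $\nu^\gamma$ by Proposition \ref{new invariant measure}, the Birkhoff ergodic theorem yields for $\nu^\gamma$-a.e.\ $x$
\[
\lim_{n \to \infty} \frac{\tau(n,x)}{n} \;=\; \nu^\gamma(\gamma) \;=\; \nu(\gamma^{-1})^{-1} \int_M \gamma \cdot \gamma^{-1}\, d\nu \;=\; \nu(\gamma^{-1})^{-1}.
\]

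For the first factor I would prove the continuous-time statement that $\lim_{t \to \infty} t^{-1}\log \sigma_i(A^t_x) = \la_i(A,\nu)$ for $\nu$-a.e.\ $x$, which then holds $\nu^\gamma$-a.e.\ by the equivalence $\nu^\gamma \sim \nu$ established in Proposition \ref{new invariant measure}. This is a standard reduction: continuity of $A^s$ in $(s,x)$ and compactness of $M$ produce a constant $K > 0$ with $\|A^s_y\|, \|(A^s_y)^{-1}\| \leq K$ for all $y \in M$ and $s \in [0,1]$. Writing $t = n + r$ with $n = \lfloor t \rfloor$ and $r \in [0,1)$, the cocycle identity $A^t_x = A^r_{f^n x} \circ A^n_x$ combined with the elementary inequalities $\mathfrak{m}(A^r_{f^n x})\, \sigma_i(A^n_x) \leq \sigma_i(A^t_x) \leq \|A^r_{f^n x}\|\, \sigma_i(A^n_x)$ gives $\log \sigma_i(A^t_x) = \log \sigma_i(A^n_x) + O(1)$, so the limit over real $t$ coincides with the integer-time limit that defines $\la_i(A,\nu)$. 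Applying this along the sequence $t = \tau(n,x)$, which tends to infinity, and combining with the previous paragraph produces $\la_i(B,\nu^\gamma) = \nu(\gamma^{-1})^{-1}\la_i(A,\nu)$ for each $i$, which is the assertion of the proposition.

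There is no serious dynamical obstacle here; the argument is effectively the Lyapunov-spectrum analogue of Abramov's formula \eqref{abramov} for entropy, and indeed the factor $\nu(\gamma^{-1})^{-1}$ that appears is exactly the same time-change factor. The only points requiring care are restricting to the common full-measure subset on which the two Birkhoff-type convergences both hold, and verifying the real-time convergence of $t^{-1}\log \sigma_i(A^t_x)$ so that evaluating it along the random real sequence $\tau(n,x)$ is legitimate.
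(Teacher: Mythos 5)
Your proof is correct and follows essentially the same route as the paper's: factor $\frac{1}{n}\log\sigma_i(B^n_x)$ as $\frac{\tau(n,x)}{n}\cdot\frac{1}{\tau(n,x)}\log\sigma_i(A^{\tau(n,x)}_x)$, handle the first factor with the Birkhoff ergodic theorem for $g^t$ and $\nu^\gamma$, and send $\tau(n,x)\to\infty$ in the second. The one place you are slightly more careful than the paper is the interpolation argument showing $t^{-1}\log\sigma_i(A^t_x)\to\la_i(A,\nu)$ over real $t$ (the Lyapunov exponents are defined via the time-$1$ map, so this step is genuinely needed and the paper takes it for granted); that addition is correct and welcome.
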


\begin{proof}
By the Birkhoff ergodic theorem, for $\nu^{\gamma}$-a.e. $x \in M$, 
\[
\lim_{n \rightarrow \infty} \frac{1}{n}\tau(n,x) = \lim_{n \rightarrow \infty} \frac{1}{n}\sum_{i=0}^{n-1}\tau(1,g^{i}x) = \int_{M}\tau(1,x) \, d\nu^{\gamma}(x). 
\]
By Fubini's theorem and the $g^{t}$-invariance of $\nu^{\gamma}$, 
\begin{align*}
\int_{M}\tau(1,x)\,d\nu^{\gamma}(x) &= \int_{0}^{1}\int_{M}\gamma(g^{s}x)\,d\nu^{\gamma}(x)\,ds \\
&= \int_{0}^{1}\int_{M}\gamma(x) \, d\nu^{\gamma}(x) \, ds \\
&= \int_{M} \gamma \, d\nu^{\gamma} \\
&= \nu(\gamma^{-1})^{-1},
\end{align*} 
with the last line following from \eqref{radon}. For $1 \leq i \leq k$, we have for $\nu^{\gamma}$-a.e.~ $x$ (and hence $\nu$-a.e.~ $x$), 
\[
\la_{i}(B,\nu^{\gamma}) = \lim_{n \rightarrow \infty} \frac{1}{n}\log \sigma_{i}(B^{n}_{x}) = \lim_{n \rightarrow \infty} \frac{1}{n}\log \sigma_{i}(A^{\tau(n,x)}_{x}). 
\]
Since $\tau(n,x) \rightarrow \infty$ as $n \rightarrow \infty$, for $\nu$-a.e.~ $x$ we have
\begin{align*}
\lim_{n \rightarrow \infty} \frac{1}{n}\log \sigma_{i}(A^{\tau(n,x)}_{x}) &= \left(\lim_{n \rightarrow \infty} \frac{\tau(n,x)}{n}\right) \left(\lim_{n \rightarrow \infty} \frac{1}{\tau(n,x)}\log \sigma_{i}(A^{\tau(n,x)}_{x})\right) \\
&= \nu(\gamma^{-1})^{-1}\la_{i}(A,\nu),
\end{align*}
which completes the proof. 
\end{proof}

When $M$ is a $C^r$ manifold, $1 \leq r \leq \infty$, we can define higher regularity for vector bundles $E \subseteq M \times \R^{q}$ over $M$. We define $E$ to be $C^r$ if the map $\xi_{E}$ is $C^r$, or equivalently if $E$ is a $C^r$ submanifold of $M \times \R^{q}$. We define a linear cocycle $A: E \rightarrow E$ over a $C^r$ diffeomorphism $f: M \rightarrow M$ to be $C^r$ if $A$ is a $C^r$ diffeomorphism of $E$.  Similarly we define a linear cocycle $A^{t}: E \rightarrow E$ over a $C^r$ flow $f^{t}: M \rightarrow M$ to be $C^r$ if $A^{t}$ is $C^r$ as a flow on $E$. 

Let $\W$ be a $k$-dimensional foliation of a manifold $M$ with uniformly $C^r$ leaves, $1 \leq r \leq \infty$. We define $E$ to be uniformly $C^r$ along $\W$ if $\xi_{E}$ is uniformly $C^r$ along $\W$. If $E$ is uniformly $C^r$ along $\W$ then we have a partition $\{E|_{\W_{i}}\}_{i \in I}$ of $E$ into its restrictions to each leaf of $\W_{i}$. This defines a foliation $\W_{E}$ of $E$ with uniformly $C^r$ leaves, in the sense that $\{E|_{\W_{i}}\}_{i \in I}$ is a uniformly $C^r$ family of submanifolds of $M \times \R^{q}$. 

Suppose that we have a homeomorphism $f: M \rightarrow M$ that is uniformly $C^r$ along $\W$. We define a linear cocycle $A: E \rightarrow E$ over $f$ to be uniformly $C^r$ along $\W$ if it is uniformly $C^r$ along $\W_{E}$. Similarly, for a flow $f^{t}$ that preserves $\W$ and is uniformly $C^r$ along $\W$, we define a linear cocycle $A^{t}: E \rightarrow E$ to be uniformly $C^r$ along $\W$ if $A^{t}$ is uniformly $C^r$ along $\W_{E}$. Lastly, for convenience we will not always want to consider our vector bundles as subbundles of a trivial bundle. We will consider more generally the case that $M$ is a  $C^{r+1}$ Riemannian manifold, $1 \leq r \leq \infty$, and $E \subseteq TM$ is a subbundle of the tangent bundle of $M$.

In this case we can measure H\"older regularity of $E$ using the \emph{Sasaki metric} \cite{Sa58} on $TM$, defined as follows: let $m = \dim M$. The Levi-Civita connection on $TM$ defines a splitting $TTM_{(x,v)} = H_{(x,v)} \oplus V_{(x,v)}$ at each $x \in M$, $v \in TM_{x}$, such that $V_{(x,v)}$ is the tangent space of the fiber $TM_{x}$ at $v$ and the projection $D\pi_{(x,v)}:H_{(x,v)}\rightarrow TM_{x}$ induced from the projection $\pi:TM \rightarrow M$ is an isomorphism. The Riemannian structure defining the Sasaki metric is then obtained by declaring $H_{(x,v)}$ and $V_{(x,v)}$ to be orthogonal and  assigning the Riemannian inner product $\langle \; , \; \rangle_{x}$ from $TM_{x}$ to $V_{(x,v)}$ via the natural identification of these spaces, and defining the inner product on $H_{(x,v)}$ to be the pullback of $\langle \; , \; \rangle_{x}$ on $TM_{x}$ by $D\pi_{(x,v)}$. 

Equipped with this metric, $TM$ is a $C^r$ Riemannian manifold that is locally isometric to $U \times \R^{m}$ where $U$ is any small enough ball in $M$; here we take the metric on $U \times \R^{m}$ to be the product of the Riemannian metric on $U$ and the Euclidean metric on $\R^{m}$. One may then check that all of the notions of regularity we defined in this section are local in nature and therefore pass naturally to subbundles $E \subseteq TM$. 

We make a few other notes to conclude this section. Our first note concerns quotient bundles. Given a compact metric space $M$, a vector bundle $E$ over $M$, and a continuous subbundle $V \subset E$, we define the \emph{quotient bundle} $E/V$ over $M$ to be the bundle whose fibers are the quotient spaces $E_{x}/V_{x}$, $x \in M$. This bundle carries the quotient topology from the projection $p: E \rightarrow E/V$, which is a bundle map covering the identity on $M$. For all of the above discussions of regularity in this section, we identify $E/V$ with the subbundle $V^{\perp}$ via the isomorphism $p|_{V^{\perp}}:V^{\perp} \rightarrow E/V$. Then $E/V$ has the same regularity as $V$. 

For a bundle map $A: E \rightarrow E$ over a homeomorphism $f: M \rightarrow M$ with $A(V) = V$, we have an induced map $\bar{A}: E/V \rightarrow E/V$. We measure the regularity of this map by identifying it with the map $\check{A} := p|_{V^{\perp}} \circ \bar{A} \circ (p|_{V^{\perp}})^{-1}$ on $V^{\perp}$. Letting $\pi_{V^{\perp}}$ denote the orthogonal projection onto $V^{\perp}$, we observe that
\begin{equation}\label{quotient}
\check{A} = \pi_{V^{\perp}} \circ A |_{V^{\perp}}.
\end{equation}

We will also occasionally need to consider exterior powers of bundles. For $1 \leq l \leq k$, given a vector bundle $E$ over $M$ we form the $l$th exterior power bundle $\wedge^{l}E$ over $M$ whose fiber over $x \in M$ is the $l$th exterior power $\wedge^{l}E_{x}$ of the vector space $E_{x}$. A linear cocycle $A: E \rightarrow E$ over a homeomorphism $f:M \rightarrow M$ naturally induces a linear cocycle $\wedge^{l}A: \wedge^{l}E \rightarrow \wedge^{l}E$ over $f$. A Riemannian structure on $E$ also naturally induces a Riemannian structure on $\wedge^{l}E$. The bundle $\wedge^{l}E$, the linear cocycle $\wedge^{l}A$, and the Riemannian structure all have the same regularity as $E$, $A$, and the Riemannian structure on $E$. All that we will require from exterior power cocycles is the equality for $x \in M$, 
\begin{equation}\label{exterior}
\|\wedge^{l}A_{x}\| = \prod_{i=k-l+1}^{k}\sigma_{i}(A_{x}). 
\end{equation}
Note in particular that $\|\wedge^{k}A_{x}\| = \mathrm{Jac}(A_{x})$.



\subsection{Anosov flows}\label{anosov}
An \emph{Anosov flow} on a $C^{r+1}$ Riemannian manifold $M$, $r \geq 1$ is a $C^r$ flow $f^{t}: M \rightarrow M$, with $v_{f}(x) \neq 0$ for all $x \in M$, such that there exists a  $Df^{t}$-invariant splitting $TM = E^{u} \oplus E^{c} \oplus E^{s}$ into subbundles $E^{u}$, $E^{c}$, and $E^{s}$, with $E^{c}$ the line bundle spanned by $v_{f}$, as well as numbers $C \geq 1$ and $a > 0$ such that for all $t \geq 0$ and $x \in M$, 
\begin{equation}\label{unstable inequality}
\mathfrak{m}(Df^{t}|_{E^{u}_{x}}) \geq C^{-1}e^{ a t}.
\end{equation}
and
\begin{equation}\label{stable inequality}
\|Df^{t}|_{E^{s}_{x}}\| \leq C e^{-a t}.
\end{equation}
Hence $E^{u}$ is exponentially expanded by $Df^{t}$ and $E^{s}$ is exponentially contracted by $Df^{t}$. A standard reference for the claims made below regarding Anosov flows is \cite{HK}. For the discussion below we will always assume that our flows are \emph{transitive}, i.e., that they have a dense orbit. 

The bundles $E^{u}$, $E^{c}$, and $E^{s}$ are referred to as the \emph{unstable}, \emph{center}, and \emph{stable} bundles respectively. The sum bundles $E^{cu}:=E^{c} \oplus E^{u}$ and $E^{cs}:=E^{c} \oplus E^{s}$ are referred to as the \emph{center-unstable} and \emph{center-stable} bundles respectively. The subbundles $E^{u}$, $E^{s}$, $E^{c}$, $E^{cu}$, and $E^{cs}$ are uniquely integrable and tangent to foliations $\W^{u}$, $\W^{s}$, $\W^{c}$, $\W^{cu}$, and $\W^{cs}$ respectively, each of which have uniformly $C^r$ leaves. These foliations will be referred to as the unstable, stable, center, center-unstable, and center-stable foliations respectively. We note that $E^{c}$ has the same regularity as $v_{f}$, hence $\W^{c}$ is a $C^r$ foliation of $M$ whose leaves are the orbits of $f^{t}$. 

When $f^{t}$ is $C^2$, the derivative cocycle $Df^{t}: TM \rightarrow TM$ is a $C^1$ linear cocycle over $f^{t}$. The splittings $TM = E^{u} \oplus E^{cs}$ and $TM = E^{cu} \oplus E^{s}$ are both dominated splittings, from which we conclude by Proposition \ref{holder splitting} that each of the subbundles $E^{u}$, $E^{s}$, $E^{cu}$, and $E^{cs}$ are H\"older continuous. However, one can obtain sharper regularity results for the center-unstable bundle $E^{cu}$ and center-stable bundle $E^{cs}$ than those given by Proposition \ref{holder splitting}. 
\begin{defn}[$\beta$-bunching]\label{beta bunching}
For $\beta > 0$ we say that an Anosov flow $f^{t}$ is \emph{$\beta$-$u$-bunched} if there is a constant $C > 0$ such that for all $x \in M$ and $t \geq 0$, 
\begin{equation}\label{bunching inequality}
\frac{\|Df^{t}_{x}|_{E^{u}_{x}}\|}{\mathfrak{m}(Df^{t}_{x}|_{E^{u}_{x}})} < C\min \{\|Df^{t}_{x}|_{E^{s}_{x}}\|^{-1}, \mathfrak{m}(Df^{t}_{x}|_{E^{u}_{x}})\}^{\beta^{-1}},
\end{equation}
and we say that $f^{t}$ is \emph{$\beta$-$s$-bunched} if $f^{-t}$ is $\beta$-$u$-bunched. We say that $f^{t}$ is \emph{$\beta$-bunched} if it is both $\beta$-$u$-bunched and $\beta$-$s$-bunched.
\end{defn} 

The $\beta$-bunching conditions correspond to regularity properties of $E^{cu}$ and $E^{cs}$ \cite{Has2}. When $f^{t}$ is $\beta$-$u$-bunched for some $0 < \beta < 1$, the bundle $E^{cu}$ is $\beta$-H\"older; likewise when $f^{t}$ is $\beta$-$s$-bunched, the bundle $E^{cs}$ $\beta$-H\"older. When $\beta = 1$ the $1$-$u$-bunching condition implies that $E^{cu}$ is $C^1$, and similarly the $1$-$s$-bunching condition implies that $E^{cs}$ is $C^1$.  These bunching conditions are generically optimal for predicting the regularity of the bundles $E^{cu}$ and $E^{cs}$ \cite{HW99}.

The following fact about Anosov flows will be useful. 

\begin{fact}\label{structural stability} A $C^2$ Anosov flow $f^{t}: M \rightarrow M$ on a closed  Riemannian manifold $M$ is \emph{structurally stable}: any $C^2$ flow $g^{t}: M \rightarrow M$ on $M$ which is $C^1$ close enough to $f^{t}$ is itself an Anosov flow that is orbit equivalent to $f^{t}$ \cite[Theorem 18.2.3]{HK}. 

Furthermore, if $g^{t}:N \rightarrow N$ is \emph{any} Anosov flow on a closed  Riemannian manifold $N$ which is orbit equivalent to $f^{t}$ via a homeomorphism $\varphi: M \rightarrow N$, we may find a H\"older continuous orbit equivalence $\hat{\varphi}: M \rightarrow N$ from $g^{t}$ to $f^{t}$ which is flow related to $\varphi$ and is arbitrarily $C^0$ close to $\varphi$ \cite[Theorem 19.1.5]{HK}.
\end{fact}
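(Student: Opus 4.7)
The plan is to handle the two assertions of the fact separately, each via a standard technique from hyperbolic dynamics that only uses tools already introduced in the preliminaries.

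For the first assertion (structural stability), I would first verify that any flow $g^{t}$ that is $C^{1}$-close to $f^{t}$ is itself Anosov. The Anosov splitting of $f^{t}$ is equivalent, by applying Proposition \ref{cone criterion} to the time-$1$ map $Df$, to the existence of strictly invariant transverse cone fields around $E^{u}$ and $E^{s}$; cone invariance persists under $C^{1}$-small perturbations of $Df$, so the generator $v_{g}$ produces invariant hyperbolic cones and hence an Anosov splitting $TM = \widetilde{E}^{u} \oplus \widetilde{E}^{c} \oplus \widetilde{E}^{s}$ with $\widetilde{E}^{u},\widetilde{E}^{s}$ close in $C^{0}$ to $E^{u},E^{s}$. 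I would then construct the orbit equivalence via a graph transform in the spirit of the proof of the Anosov shadowing lemma: on the Banach space of continuous sections of a tubular neighborhood of $\mathrm{id}_M$ tangent to $\widetilde{E}^{u} \oplus \widetilde{E}^{s}$, define a nonlinear operator whose fixed points $h$ produce an assignment $x \mapsto h(x)$ along which the $g^{t}$-orbit of $h(x)$ shadows the $f^{t}$-orbit of $x$ after a time reparametrization. The hyperbolicity inequalities \eqref{unstable inequality} and \eqref{stable inequality} make the linearization of this operator a uniform contraction transverse to the flow, so a unique fixed point $\varphi$ exists. Equivariance built into the operator yields a continuous additive cocycle $\tau$ with $\varphi \circ f^{t} = g^{\tau(t,\cdot)} \circ \varphi$; uniqueness of the fixed point promotes this to a homeomorphism.

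For the second assertion, given an arbitrary orbit equivalence $\varphi: M \to N$ I would construct $\hat\varphi$ by sliding $\varphi$ along $g^{t}$-orbits until its image picks out the correct transverse leaves. Fix a finite cover of $M$ by small product flow boxes for the $f^{t}$ splitting. Inside a single box $U$ around a point $x_{0}$, for each $x \in U$ the intersection $\W^{cu}(\varphi(x_{0})) \cap \W^{s}(\varphi(x))$ consists of a unique orbit segment of $g^{t}$ (by local product structure for the Anosov splitting of $g^{t}$, which is structurally preserved since $f^{t}$ and $g^{t}$ are orbit equivalent). Define $\hat\varphi(x)$ to be the point of this intersection; by construction $\hat\varphi$ is flow related to $\varphi$ and sends $\W^{u}_{f}$-leaves of $f^{t}$ to $\W^{u}_{g}$-leaves of $g^{t}$. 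Uniform transversality and the absence of holonomy obstructions in the transitive Anosov setting allow these local prescriptions to be glued into a global homeomorphism $\hat\varphi: M \to N$.

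H\"older regularity of $\hat\varphi$ then follows from H\"older regularity of the four strong foliations $\W^{u},\W^{s}$ of $f^{t}$ and of $g^{t}$, which in turn is a direct application of Proposition \ref{holder splitting} to the derivative cocycles $Df^{t}$ and $Dg^{t}$ with the dominated splittings $TM = E^{u}\oplus E^{cs}$ and $TM = E^{cu} \oplus E^{s}$ (both dominated thanks to \eqref{unstable inequality} and \eqref{stable inequality}), together with H\"older regularity of holonomy maps inside flow boxes; $C^{0}$-closeness of $\hat\varphi$ to $\varphi$ can be arranged by shrinking the flow boxes.

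The main obstacle is the second part: verifying that the local prescriptions for $\hat\varphi$ patch into a single globally defined and globally H\"older map. The standard remedy is to work one flow box at a time, check compatibility of the holonomy-plus-flow construction on overlaps using uniform transversality of $E^{u},E^{c},E^{s}$, and finally bound the resulting H\"older exponent by the worst exponent coming from Proposition \ref{holder splitting} applied to the four dominated splittings simultaneously.
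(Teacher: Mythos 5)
The paper states this as a Fact and cites it directly to Hasselblatt--Katok (Theorems 18.2.3 and 19.1.5); there is no in-paper proof, so the comparison is to the standard textbook argument rather than to something written in the text. Your sketch of the first assertion --- persistence of the Anosov property via the cone criterion of Proposition~\ref{cone criterion}, followed by a graph-transform/shadowing construction of the orbit equivalence --- is the standard route and is correct in spirit.

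The second assertion, however, has a concrete gap. Since $\dim N = \dim E^{u} + 1 + \dim E^{s}$, the intersection $\W^{cu}(\varphi(x_0)) \cap \W^{s}(\varphi(x))$ is generically a single point, not an orbit segment of $g^{t}$. More importantly, that point lies on $\W^{s}(\varphi(x))$, which is in general \emph{not} on the $g^{t}$-orbit of $\varphi(x)$, so the resulting $\hat\varphi$ would not be flow related to $\varphi$ in the sense of Definition~\ref{flow related}: flow-relatedness requires $\hat\varphi(x)$ to differ from $\varphi(x)$ only by sliding along the $g^{t}$-orbit of $\varphi(x)$, whereas your prescription moves $\varphi(x)$ transversally into a stable leaf. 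The mechanism in HK is instead a shadowing argument: $\varphi$ turns each $f^{t}$-orbit into a coarse $g^{t}$-pseudo-orbit, and the Anosov shadowing lemma, with its exponential estimates, produces a unique nearby true $g^{t}$-orbit together with a uniformly H\"older assignment; since the shadowing orbit for $\varphi(f^{t}x)$ is the orbit of $\varphi(x)$ itself, the output is automatically flow related. Relatedly, H\"older regularity of $\hat\varphi$ is not a soft consequence of H\"older regularity of the invariant foliations via Proposition~\ref{holder splitting} plus holonomy estimates; it is extracted from the explicit contraction/expansion rates in the shadowing construction. A repair of your approach would, at minimum, need to define $\hat\varphi(x)$ as a specific point on the $g^{t}$-orbit of $\varphi(x)$ so that flow-relatedness holds by construction, and then prove H\"older continuity of the resulting reparametrization directly from the hyperbolicity estimates rather than from foliation regularity alone.
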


We emphasize that the orbit equivalence $\varphi$ obtained from structural stability is rarely $C^1$ or even Lipschitz, even though both $f^{t}$ and $g^{t}$ are $C^2$. The second half of Fact \ref{structural stability} implies that we can always assume an orbit equivalence between Anosov flows is H\"older.

When $f^{t}$ has Anosov splitting $TM = E^{u} \oplus E^{c} \oplus E^{s}$, with $\dim E^{u} = k$, the largest $k$ Lyapunov exponents with respect to any given $f^{t}$-invariant ergodic probability measure $\nu$ will be positive. Furthermore, since the splitting $TM = E^{u}\oplus E^{cs}$ is dominated, by Proposition \ref{domination exponent gap} these exponents coincide with the Lyapunov exponents of the restriction $Df^{t}|_{E^{u}}$. We  introduce a special notation for these exponents.  

\begin{defn}[Unstable Lyapunov spectrum]\label{definition unstable lyap} Let $f^{t}$ be a $C^2$ Anosov flow with Anosov splitting $TM = E^{u} \oplus E^{c} \oplus E^{s}$. For an $f^{t}$-invariant ergodic probability measure $\nu$ we define the \emph{unstable Lyapunov spectrum} of $f^{t}$ with respect to $\nu$ to be the Lyapunov spectrum of $Df^{t}|_{E^{u}}$ with respect to $\nu$, 
\[
\vec{\la}^{u}(f,\nu):=\vec{\la}(Df|_{E^{u}},\nu).
\]
\end{defn}

\subsection{Thermodynamic formalism}\label{thermo section} We begin with a brief description of the thermodynamic formalism for suspension flows over subshifts of finite type, for which a standard reference is \cite{BR75}. Let $l \geq 1$ be an integer and let  $H = \{H_{ij}\}_{1 \leq i,j \leq l}$ be an $l \times l$ matrix with $H_{ij} = 0$ or $H_{ij} = 1$ for each $i$, $j$. The subshift of finite type with matrix $H$ is the space
\[
\Sigma = \{(x_{n})_{n \in \Z}: x_{n} \in \{1,\dots,l\}, \, H_{x_{n}x_{n+1}} = 1 \; \text{for $n \in \Z$}\}.
\]
We write $x = (x_{n})_{n \in \Z}$. We equip this space with the metric 
\[
d(x,y) = 2^{-\max \{N:\,  x_{n} = y_{n} \; \text{for $|n| \leq N$}\}}.
\]
We define $\sigma: \Sigma \rightarrow \Sigma$ to be the left shift map 
\[
\sigma((x_{n})_{n \in \Z}) = (x_{n+1})_{n \in \Z}
\]
Given a $C^{\beta}$ function $\psi: \Sigma \rightarrow (0,\infty)$, $0 < \beta \leq 1^{-}$, we define the suspension space
\[
\Sigma_{\psi} = \Sigma \times \R / (x,t+\psi(x)) \sim (\sigma(x),t).
\]
We give this space the product metric, taking the metric $d$ on $\Sigma$ and the Euclidean metric on $\R$. This makes $\Sigma_{\psi}$ into a compact metric space. We define a continuous flow on $\Sigma \times \R$ by
\[
\t{f}^{t}(x,s) = (x,s+t).
\]
This descends to a $C^{\beta}$ flow $f^{t}: \Sigma_{\psi} \rightarrow \Sigma_{\psi}$. 

We state here the conclusions of the thermodynamic formalism for these suspension flows.  We will assume that $\Sigma$ is \emph{topologically mixing}: there is an $n \geq 1$ such that all of the entries of $H^{n}$ are positive. 

Let $\zeta: \Sigma_{\psi} \rightarrow \R$ be a $C^{\alpha}$ function, $0 < \alpha \leq \beta$. We will often refer to $\zeta$ as a \emph{potential}. The \emph{topological pressure} of $\zeta$ with respect to $f^{t}$ is defined by
\begin{equation}\label{pressure definition}
P(\zeta) = \lim_{T \rightarrow \infty} \frac{1}{T}\log\sum_{\substack{(x,t): f^{t}x = x, \\ t \in (0,T]}} \exp\left(-\int_{0}^{t}\zeta(f^{s}x)\,ds \right). 
\end{equation}
For $\zeta \equiv 0$, $P(0) = h_{\text{top}}(f)$ is simply the topological entropy of $f^{t}$. 

The \emph{variational principle for pressure} states that $P(\zeta)$ may alternatively be described as
\begin{equation}\label{variational principle}
P(\zeta) = \sup_{\nu \in \mathcal{M}_{\text{erg}}(f^{t})} h_{\nu}(f) - \int_{\Sigma_{\psi}} \zeta \, d\nu,
\end{equation}
where the supremum is taken over the set $\mathcal{M}_{\text{erg}}(f^{t})$ of all $f^{t}$-invariant ergodic probability measures $\nu$, and $h_{\nu}(f)$ denotes the entropy with respect to the measure $\nu$. There is a unique $f^{t}$-invariant ergodic probability measure $\mu_{\zeta}$ which achieves the supremum in the variational principle. We refer to $\mu_{\zeta}$ as the \emph{equilibrium state} of $\zeta$ with respect to $f^{t}$. 

\begin{rem}
The negative sign in the exponent of equation \eqref{pressure definition} is not present in the reference \cite{BR75} that we use, as they use a different sign convention for the topological pressure. This is a minor detail, as the theory remains the same if one replaces $\zeta$ by $-\zeta$. Our sign convention is chosen such that the potentials we consider will always be positive. 
\end{rem}

Two $C^{\alpha}$ potentials $\zeta$ and $\omega$ are \emph{cohomologous} if there exists a $C^{\alpha}$ function $\xi: \Sigma_{\psi} \rightarrow \R$ such that for all $t \in \R$ and $x \in \Sigma_{\psi}$, 
\begin{equation}\label{cohomological equation}
\xi(f^{t}x) - \xi(x) = \int_{0}^{t}\zeta(f^{s}x) \, ds - \int_{0}^{t}\omega(f^{s}x) \, ds. 
\end{equation}
This equation has a measurable rigidity property: for a fixed equilibrium state $\nu$ (possibly with respect to another potential $\theta$) if the equation \eqref{cohomological equation} admits a measurable solution $\xi$ that holds for $\nu$-a.e.~ $x \in \Sigma_{\psi}$ and all $t \in \R$ then it admits a $C^{\alpha}$ solution that agrees $\nu$-a.e.~ with $\xi$ \cite{Liv}.

Lastly, two $C^{\alpha}$ potentials $\zeta$ and $\omega$ have the same equilibrium state $\mu_{\zeta} = \mu_{\omega}$ if and only if they are cohomologous up to an additive constant, or in other words, if and only if there is a $C^{\alpha}$ function $\xi$ and a constant $c \in \R$ such that for all $t \in \R$ and $x \in \Sigma_{\psi}$, 
\begin{equation}\label{cohomological equation plus}
\xi(f^{t}x) - \xi(x) + c = \int_{0}^{t}\zeta(f^{s}x) \, ds - \int_{0}^{t}\omega(f^{s}x) \, ds. 
\end{equation}

Let $g^{t}$ be a $C^{\beta}$ time change of $f^{t}$ on $\Sigma_{\psi}$. For each $x \in \Sigma$ we let $\eta(x)$ be the smallest positive real number such that 
\[
g^{\eta(x)}(x,0) = (x,\psi(x)). 
\]
We note that $\eta: \Sigma \rightarrow (0,\infty)$ is $C^{\beta}$. Let $\tau$ be the additive cocycle over $g^{t}$ such that $g^{t}(x,s) = f^{\tau(t,(x,s))}(x,s)$, which is $C^{\beta}$ since $g^{t}$ is $C^{\beta}$. We then have $\psi(x) = \tau(\eta(x),(x,0))$ for each $x \in \Sigma$. Now consider the suspension $\Sigma_{\eta}$ with roof function $\eta$ over the same subshift of finite type $\Sigma$, and consider the suspension flow $h^{t}$ on this space. We define a $C^{\beta}$ homeomorphism $\varphi: \Sigma_{\eta} \rightarrow \Sigma_{\psi}$ by 
\[
\varphi(x,s) = (x,\tau(s,x)).
\]
One may then easily verify that $\varphi \circ h^{t} = g^{t} \circ \varphi$. We conclude that $g^{t}$ is conjugate to a suspension flow $h^{t}$ over the same subshift of finite type $\Sigma$ with the new $C^{\beta}$ roof function $\eta$. Hence all of the conclusions of the thermodynamic formalism also apply to $g^{t}$. 

Let $f^{t}: M \rightarrow M$ be a topologically mixing $C^2$  Anosov flow on a smooth Riemannian manifold $M$; here topologically mixing means that for every pair of nonempty open sets $U$, $V \subseteq M$ there is a $T > 0$ such that for $t \geq T$ we have $f^{t}(U) \cap V \neq \emptyset$. By taking a Markov partition of this flow \cite{Bow73}, we obtain a topologically mixing subshift of finite type $\Sigma$ and a $C^{\alpha}$ roof function $\psi: \Sigma \rightarrow (0,\infty)$ such that there is a surjective $C^{\alpha}$ map $\theta: \Sigma_{\psi} \rightarrow M$ satisfying 
\begin{equation}\label{semiconjugacy}
\theta \circ g^{t} = f^{t} \circ \theta,
\end{equation}
where $g^{t}$ denotes the suspension flow on $\Sigma_{\psi}$. We refer to surjective maps $\theta$ satisfying equation \eqref{semiconjugacy} as \emph{semiconjugacies}. Using $\theta$, one may transfer the conclusions of the thermodynamic formalism above to the Anosov flow $f^{t}$, as in \cite{BR75}; this is done by using the fact that the surjection $\theta$ constructed in \cite{Bow73} is finite-to-one and identifies periodic orbits of $g^{t}$ with periodic orbits of $f^{t}$. In particular if one defines the topological pressure as in \eqref{pressure definition} then the variational principle \eqref{variational principle} for pressure holds, the cohomology relation \eqref{cohomological equation} has the same measurable rigidity property, and two potentials have the same equilibrium state if and only if the relation \eqref{cohomological equation plus} holds. 

If we take a H\"older time change $g^{t}$ of $f^{t}$, the flow $g^{t}$ may not be smooth on $M$. However, by the discussion above, there will be a new H\"older suspension flow $h^{t}: \Sigma_{\eta} \rightarrow \Sigma_{\eta}$ and a new finite-to-one semiconjugacy $\kappa: \Sigma_{\eta} \rightarrow M$ from $h^{t}$ to $g^{t}$. By the arguments above, the same conclusions of the thermodynamic formalism then continue to hold for $g^{t}$ as well. We will refer to a H\"older time change of an Anosov flow as a \emph{H\"older Anosov flow}. 

Suppose that $f^{t}$ is a H\"older Anosov flow. Consider a positive $C^{\alpha}$ function $\zeta: M \rightarrow (0,\infty)$. We define 
\begin{equation}\label{proto horizontal dim}
q_{\zeta} = \sup_{\nu \in \mathcal{M}_{\text{erg}}(f^{t})} \frac{h_{\nu}(f)}{\nu(\zeta)}.
\end{equation}
Observe that $q_{\zeta} < \infty$ because $\zeta$ is bounded away from $0$. Below we write $\mu = \mu_{q_{\zeta}\zeta}$ for the equilibrium state of $q_{\zeta}\zeta$ with respect to $f^{t}$. We let $\hat{f}^{t}$ be the H\"older time change of $f^{t}$ with speed multiplier $\zeta^{-1}$, and let $\hat{\mu} = \mu^{\zeta^{-1}}$ be the $\hat{f}^{t}$-invariant measure corresponding to $\mu$. 

\begin{prop}\label{proto synchro}
The following holds, 
\begin{enumerate}
\item $q_{\zeta}$ is the unique solution to the pressure equation $P(s\zeta) = 0$, $s \in \R$,
\item Equality in \eqref{proto horizontal dim} holds if and only if $\nu = \mu$.
\item $h_{\mathrm{top}}(\hat{f}) = q_{\zeta}$. 
\item $\hat{\mu}$ is the measure of maximal entropy for $\hat{f}^{t}$.
\end{enumerate}
\end{prop}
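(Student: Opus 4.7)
The plan is to derive all four claims from the variational principle \eqref{variational principle}, Abramov's formula \eqref{abramov}, and Proposition~\ref{new invariant measure} on invariant measures under time changes. The strategy is that (1) and (2) follow from studying the function $s \mapsto P(s\zeta)$, while (3) and (4) are essentially translations of (1) and (2) across the time change via Abramov.

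First I would prove (1) and (2) simultaneously. Since $\zeta$ is continuous and strictly positive on the compact space $M$, there is $c > 0$ with $\zeta \ge c$, so $\nu(\zeta) \ge c$ for every invariant $\nu$. By the variational principle,
\[
P(s\zeta) = \sup_{\nu \in \mathcal{M}_{\mathrm{erg}}(f^t)} \big( h_\nu(f) - s\,\nu(\zeta) \big),
\]
which is convex in $s$, and by the uniform lower bound $\nu(\zeta) \ge c$ it is strictly decreasing with slope at most $-c$. Since $P(0) = h_{\mathrm{top}}(f) > 0$ (a H\"older Anosov flow has positive topological entropy via its Markov model) and $P(s\zeta) \to -\infty$ as $s \to \infty$, there is a unique $s_0 > 0$ with $P(s_0 \zeta) = 0$. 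The equation $P(s_0\zeta) = 0$ says $h_\nu(f) \le s_0 \nu(\zeta)$ for every ergodic $\nu$, with equality iff $\nu$ is the unique equilibrium state $\mu = \mu_{s_0\zeta}$ (uniqueness being part of the thermodynamic formalism recalled in Section~\ref{thermo section}). Dividing by $\nu(\zeta) > 0$ gives $h_\nu(f)/\nu(\zeta) \le s_0$, with equality iff $\nu = \mu$. Hence $q_\zeta = s_0$ is attained precisely at $\mu$, which simultaneously proves (1) and (2).

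For (3) and (4) I would invoke Proposition~\ref{new invariant measure}: since $\hat f^t$ is the time change of $f^t$ with speed multiplier $\zeta^{-1}$, every $\hat f^t$-invariant ergodic probability measure has the unique form $\nu^{\zeta^{-1}}$ for some $\nu \in \mathcal{M}_{\mathrm{erg}}(f^t)$. Applying Abramov's formula \eqref{abramov} with $\gamma = \zeta^{-1}$ (so $\gamma^{-1} = \zeta$) gives
\[
h_{\nu^{\zeta^{-1}}}(\hat f) \;=\; \nu(\zeta)^{-1}\, h_\nu(f).
\]
Taking the supremum over all such $\nu$ and invoking the variational principle for $\hat f^t$,
\[
h_{\mathrm{top}}(\hat f) \;=\; \sup_{\nu \in \mathcal{M}_{\mathrm{erg}}(f^t)} \frac{h_\nu(f)}{\nu(\zeta)} \;=\; q_\zeta,
\]
which proves (3). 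By part (2), the supremum on the right is attained exactly at $\nu = \mu$, so on the $\hat f^t$ side it is attained exactly at $\hat\mu = \mu^{\zeta^{-1}}$; thus $\hat\mu$ is the unique measure of maximal entropy for $\hat f^t$, establishing (4).

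There is no real obstacle here: the proposition is a bookkeeping exercise packaging the variational principle, uniqueness of equilibrium states, and Abramov's formula. The only point that requires a moment of care is ensuring that the variational principle and the uniqueness of equilibrium states apply to the (possibly non-smooth) H\"older Anosov flow $f^t$, which is handled by passing through the H\"older semiconjugacy from a suspension over a subshift of finite type, as discussed at the end of Section~\ref{thermo section}.
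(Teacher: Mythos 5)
Your proposal is correct and follows essentially the same route as the paper: both prove (1) and (2) by analyzing the map $s \mapsto P(s\zeta)$ via the variational principle and the uniqueness of equilibrium states, and both derive (3) and (4) from (1)--(2) by Abramov's formula applied across the time change with speed multiplier $\zeta^{-1}$. The only cosmetic difference is that you derive monotonicity of $s \mapsto P(s\zeta)$ directly from the variational principle and the positive lower bound on $\zeta$, whereas the paper cites analyticity and strict decrease from \cite{BR75}; both are adequate.
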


\begin{proof}
The function $s \rightarrow P(s\zeta)$ is analytic and strictly decreasing in $s$, with $P(s\zeta) \rightarrow -\infty$ as $s \rightarrow \infty$ \cite{BR75}. Since $P(0) = h_{\mathrm{top}}(f) > 0$, there is a unique positive solution $Q$ to the pressure equation $P(q\zeta) = 0$. 

We claim that $q = q_{\zeta}$. Since $P(q\zeta) = 0$, we have
\begin{equation}\label{variational application}
0 = \sup_{\nu \in \mathcal{M}_{\text{erg}}(f^{t})} h_{\nu}(f) - q\nu(\zeta),
\end{equation}
with equality if and only if $\nu$ is the equilibrium state of $q\zeta$ with respect to $f^{t}$. Rearranging and noting that $\zeta > 0$, this implies that
\[
q = \sup_{\nu \in \mathcal{M}_{\text{erg}}(f^{t})} \frac{h_{\nu}(f)}{\nu(\zeta)} = q_{\zeta},
\]
with equality if and only if $\nu$ is the equilibrium state of $q_{\zeta}\zeta = q\zeta$ with respect to $f^{t}$. This proves (1) and (2). 

Consider now the H\"older time change $\hat{f}^{t}$ of $f^{t}$ with speed multiplier $\zeta^{-1}$. From equation \eqref{abramov}, for any $f^{t}$-invariant ergodic probability measure $\nu$ with corresponding $\hat{f}^{t}$-invariant ergodic probability measure $\hat{\nu}$ we have
\begin{equation}\label{second abramov}
h_{\hat{\nu}}(\hat{f}) = \frac{h_{\nu}(f)}{\nu(\zeta)}.
\end{equation}
Hence
\begin{equation}\label{equalities above}
h_{\mathrm{top}}(\hat{f}) = \sup_{\hat{\nu} \in \mathcal{M}_{\text{erg}}(\hat{f}^{t})} h_{\hat{\nu}}(\hat{f}) = \sup_{\nu \in \mathcal{M}_{\text{erg}}(f^{t})} \frac{h_{\nu}(f)}{\nu(\zeta)} = q_{\zeta}.  
\end{equation}
This proves (3). Claim (4) then follows from (2) and \eqref{equalities above}. 
\end{proof}

We now return to the case of a $C^2$ topologically mixing Anosov flow. In general, given a foliation $\W$ of a manifold $M$ and a probability measure $\mu$ on $M$ one may define a disintegration $\{\mu_{x}\}_{x \in M}$ of $\mu$ along $\W$, with each measure $\mu_{x}$ assigning mass only to $\W(x)$. See \cite[Lemma 3.2]{AVW}. If the leaves of $\W$ are noncompact then the measures $\mu_{x}$ are only defined up to scaling by a positive constant. More precisely, if we consider a foliation box $U$  for $\W$, a transversal $T \subset U$ to $\W_{U}$, and the disintegration $\{\mu^{U}_{x}\}_{x \in T}$ of $\mu^{U}=\mu|_{U}$ with respect to the projection onto $T$ along $\W_{U}$, then for each $x \in T$ there is a constant $c_{x} > 0$ such that $c_{x}\mu_{x}|_{\W(x)} = \mu^{U}_{x}$, and the function $x \rightarrow c_{x}$ is measurable. 

\begin{defn}[Local product structure]\label{local product structure}
We say that a probability measure $\mu$ has \emph{local product structure} with respect to $\W$ if in any foliation chart $\psi: U \rightarrow B_{k} \times B_{m-k}$ there is a continuous function $\xi: B_{k} \times B_{m-k} \rightarrow (0,\infty)$ and measures $\nu_{k}$ on $B_{k}$, $\nu_{m-k}$ on $B_{m-k}$ such that 
\begin{equation}\label{product equation}
d(\psi_{*}\mu) = \xi d(\nu_{k} \times \nu_{m-k}).
\end{equation}
\end{defn}

For $f^{t}$ a $C^2$ transitive Anosov flow on $M$ and $\zeta:M \rightarrow \R$ a H\"older potential, the equilibrium state $\mu_{\zeta}$ will have local product structure with respect to all of the invariant foliations $\W^{u}$, $\W^{cu}$, $\W^{c}$, $\W^{cs}$, and $\W^{s}$, and the foliation chart $\psi$ and function $\xi$ in equation \eqref{product equation} can be taken to be $C^{\alpha}$ for each of these foliations for some $\alpha > 0$ \cite[Theorem 2]{H94}.


Lastly, we state the important \emph{Gibbs property} for equilibrium states $\mu= \mu_{\zeta}$ with respect to a $C^{\alpha}$ function $\zeta$. Fix $R > 0$ small enough that each ball $B^{u}(x,R) \subset \W^{u}(x)$ of radius $R$ inside the leaf $\W^{u}(x)$ is contained in a foliation box for $\W^{u}$. We define a measure $\mu_{x}^{u}$ on $B^{u}(x,R)$ by pulling back the disintegration of equation \eqref{product equation} in a foliation chart and restricting to $B^{u}(x,R)$. We normalize these measures such that $\mu_{x}^{u}(B^{u}(x,R)) = 1$. The Gibbs property then states that there is a $T > 0$ such that for each $r \leq R$, each $t \geq T$, and each $x \in M$, 
\begin{equation}\label{Gibbs property}
\mu_{f^{-t}x}^{u}(f^{-t}(B^{u}(x,r))) \asymp e^{-\int_{-t}^{0}\zeta(f^{s}x)\,ds - P(\zeta)t}\mu_{x}^{u}(B^{u}(x,r)).
\end{equation}
See \cite{H94}.

\subsection{Flows with weak expanding foliations}\label{weak expanding} Let $M$ be a closed $C^r$ Riemannian manifold and let $\mathcal{W}^{u}$ be a foliation of $M$ with uniformly $C^r$ leaves, $r \geq 1$.  Let $f: M \rightarrow M$ be a homeomorphism which is uniformly $C^r$ along $\mathcal{W}^{u}$ and preserves $\W^{u}$, i.e., $f(\W^{u}(x)) = \W^{u}(f(x))$ for $x \in M$. 

\begin{defn}[Expanding foliation]We will say that $\mathcal{W}^{u}$ is an \emph{expanding foliation} for $f$ if there are constants $a > 0$ and $c > 0 $ such that for every $x \in M$ and $n \geq 1$ we have 
\begin{equation}\label{expanding foliation}
\mathfrak{m}(Df^{n}_{x}|_{T\mathcal{W}_{x}^{u}}) \geq c e^{a n}. 
\end{equation}
\end{defn}

Note that inequality \eqref{expanding foliation} implies that $Df_{x}|_{T\W^{u}_{x}}$ is invertible for all $x \in M$, and therefore by the inverse function theorem $f^{-1}$ is also uniformly $C^r$ along $\W^{u}$. 

Now assume that $M$ is a closed $C^{r+1}$ Riemannian manifold, $r \geq 1$, and let $\W^{cu}$ be a foliation of $M$ with uniformly $C^{r+1}$ leaves. Let $v_{f} : M \rightarrow T\W^{cu}$ be a nonvanishing vector field on $M$ that is tangent to $\W^{cu}$ and uniformly $C^r$ along $\W^{cu}$. Then $v_{f}$ defines a flow $f^{t}: M \rightarrow M$ which preserves $\W^{cu}$ and is uniformly $C^r$ along $\W^{cu}$. We let $\W^{c}$ denote the orbit foliation of $f^{t}$, which is a $C^r$ subfoliation of $\W^{cu}$ that has uniformly $C^r$ leaves in $M$. We write $E^{cu} = T\W^{cu}$ and $E^{c} = T\W^{c}$. 

By the compactness of $M$ we have  $\|v_{f}(x)\| \asymp 1$ for $x\in M$. We can thus define a new Riemannian structure on $E^{c}$ by setting $v_{f}$ to have unit length. We may thus always assume that $\|v_{f}(x)\| = 1$ for all $x \in M$ and therefore $\|Df^{t}_{x}|_{E^{c}_{x}}\| = 1$ for all $x \in M$ and $t \in \R$. 

\begin{defn}[Weak expanding foliation]\label{weak expanding definition}We say that $\W^{cu}$ is a \emph{weak expanding foliation for $f^{t}$} if there is a $Df^{t}$-invariant codimension-1 subbundle $E^{u} \subset E^{cu}$ transverse to $E^{c}$ such that $E^{u}$ is exponentially expanded by $Df^{t}$: there are constants $c > 0$ and $a > 0$ such that for $t > 0$ and $x \in M$, 
\begin{equation}\label{flow expansion}
\mathfrak{m}(Df^{t}_{x}|_{E^{u}}) \geq ce^{a t}. 
\end{equation}
It follows that the splitting $E^{cu} = E^{u} \oplus E^{c}$ is dominated for the linear cocycle $Df^{t}|_{E^{cu}}$. 
\end{defn} 

\begin{prop}\label{unstable manifold flow}
The bundle $E^{u}$ is uniformly $C^{r}$ along $\W^{cu}$ and is tangent to a foliation $\W^{u}$ of $M$ which is uniformly $C^r$ along $\W^{cu}$. We have $f^{t}(\W^{u}(x)) = \W^{u}(f^{t}x)$ for all $x \in M$ and $t \in \R$. The submanifolds $\W^{u}(x)$ for $x \in M$ are characterized within $\W^{cu}(x)$ by
\[
\W^{u}(x) = \{y \in \W^{cu}(x): d(f^{-t}x,f^{-t}y) \rightarrow 0 \; \text{as $t \rightarrow \infty$}\}.
\] 
\end{prop}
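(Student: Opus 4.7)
The plan is to construct the foliation $\W^u$ leafwise inside each $\W^{cu}$-leaf via the graph transform/unstable manifold method, and then promote the uniformity of the estimates on $M$ to parametric regularity along $\W^{cu}$. Fix a leaf $L = \W^{cu}(x)$, a $C^{r+1}$ immersed submanifold carrying the $C^r$ flow $f^t|_L$ whose orbits foliate $L$ by $\W^c|_L$. The splitting $E^{cu}|_L = E^u|_L \oplus E^c|_L$ is dominated because $E^c$ is isometric (after the normalization $\|v_f\|\equiv 1$) and $E^u$ satisfies \eqref{flow expansion} uniformly in $x \in M$.

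First, to prove that $E^u$ is uniformly $C^r$ along $\W^{cu}$: codimension-$1$ subspaces of $E^{cu}$ transverse to $E^c$ are parametrized as graphs of linear maps $E^u \to E^c$, and $Df^{-t}|_{E^{cu}}$ induces a graph transform on sections of the associated Grassmannian bundle. The contraction rate of this transform is controlled by $\|Df^{-t}|_{E^c}\|\cdot \mathfrak{m}(Df^{-t}|_{E^u})^{-1}$, which decays exponentially in $t$ thanks to \eqref{flow expansion} and the isometry of $E^c$. Since $v_f$, and hence the restricted cocycle $Df^t|_{E^{cu}}$, are uniformly $C^r$ along $\W^{cu}$, the Hirsch--Pugh--Shub $C^r$ section theorem produces a unique invariant section that is uniformly $C^r$ along $\W^{cu}$; by uniqueness of the $Df^t$-invariant complement to $E^c$ in $E^{cu}$, this section must coincide with $E^u$.

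Next I would integrate $E^u$ by applying the strong-unstable manifold theorem to the $C^r$ flow $f^t|_L$ inside each leaf $L$. This yields, for every $y \in L$, a locally unique $C^r$ embedded submanifold $\W^u_{\mathrm{loc}}(y) \subset L$ tangent to $E^u_y$ and locally $f^t$-invariant; flow-invariance $f^t(\W^u(y)) = \W^u(f^t y)$ follows from the uniqueness in the local theorem together with $Df^t$-invariance of $E^u$, and the global leaves are defined by $\W^u(y) = \bigcup_{t \ge 0} f^{-t}(\W^u_{\mathrm{loc}}(f^t y))$. Using the parametric version of the graph transform, with contraction rates uniform across leaves by compactness of $M$, one obtains that $\W^u$ has uniformly $C^r$ leaves depending uniformly $C^r$ on the basepoint along $\W^{cu}$.

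Finally, for the dynamical characterization: if $y \in \W^u(x)$ then $Df^{-t}|_{E^u}$ contracts exponentially, so $d(f^{-t}x, f^{-t}y) \to 0$. Conversely, suppose $y \in \W^{cu}(x)$ and $d(f^{-t}x, f^{-t}y) \to 0$. For $t$ large, $f^{-t}x$ and $f^{-t}y$ lie in a uniform local product chart for $\W^{cu}$ centered at $f^{-t}x$, giving a unique decomposition $f^{-t}y = f^{s_t}(z_t)$ with $z_t \in \W^u_{\mathrm{loc}}(f^{-t}x)$ and both $d^u(f^{-t}x, z_t), |s_t| \to 0$. Applying $f^t$ yields $y = f^{s_t}(w_t)$ with $w_t := f^t(z_t) \in \W^u(x)$, i.e., $f^{-s_t}(y) \in \W^u(x)$. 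Since the flow is transverse to $\W^u$ (as $E^c \cap E^u = \{0\}$), the set $\{s \in \R : f^{-s}(y) \in \W^u(x)\}$ is discrete in $\R$; containing the sequence $s_t \to 0$ forces it to contain $0$, hence $y \in \W^u(x)$. The main obstacle is the parametric regularity: invoking Hirsch--Pugh--Shub's invariant-section and strong-unstable manifold theorems in a uniformly-$C^r$-along-$\W^{cu}$ fashion, rather than for a single compact base, requires setting up the graph transforms to vary continuously in the family of $\W^{cu}$-leaves and checking that the contraction estimates transfer without loss across leaves.
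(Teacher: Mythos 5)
Your overall architecture mirrors the paper's: graph transform plus the HPS $C^r$ section theorem for $E^u$, the HPS unstable-manifold theorem leafwise for $\W^u$, flow-invariance from $Df^t$-invariance of $E^u$, and the same forward direction of the characterization. But you have the direction of the graph transform reversed. Pushing forward by $Df^{-t}$ attracts the Grassmannian toward $E^c$, not $E^u$; the quantity you claim decays, $\|Df^{-t}|_{E^c}\|\cdot \mathfrak{m}(Df^{-t}|_{E^u})^{-1}$, actually equals $\|Df^t|_{E^u}\|\geq ce^{at}$ and grows. The invariant section equal to $E^u$ comes from the \emph{forward} graph transform $Df_\# H_x = Df_{f^{-1}x}(H_{f^{-1}x})$, as in the paper. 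The same sign flip appears in your global leaf formula, which should read $\W^u(y)=\bigcup_{t\geq 0}f^t(\W^u_{\mathrm{loc}}(f^{-t}y))$; your $\bigcup_{t\geq 0} f^{-t}(\W^u_{\mathrm{loc}}(f^t y))$ collapses to $\W^u_{\mathrm{loc}}(y)$.

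The converse direction of the dynamical characterization has a genuine gap. Having produced $s_t$ with $|s_t|\to 0$ and $f^{-s_t}y\in\W^u(x)$, you assert that discreteness of $S=\{s\in\R:f^{-s}y\in\W^u(x)\}$ forces $0\in S$. That inference is false: a discrete subset of $\R$ need not be closed (take $\{1/n:n\geq 1\}$, which is discrete, accumulates at $0$, but does not contain $0$). Moreover it is not even clear that $S$ is discrete, since $\W^u(x)$ is only injectively immersed in $\W^{cu}(x)$, so intersections of the orbit of $y$ with $\W^u(x)$ can a priori accumulate. What you actually need is that the $s_t$ stabilize: once $t_0$ is large enough that $f^{-t_0}x,f^{-t_0}y$ lie in a common foliation box with $z_{t_0}\in\W^u_{\mathrm{loc}}(f^{-t_0}x)$, for any $t>t_0$ the point $f^{-(t-t_0)}z_{t_0}$ remains in $\W^u_{\mathrm{loc}}(f^{-t}x)$ because backward flow contracts along $E^u$, so uniqueness of the local decomposition forces $s_t=s_{t_0}$; combined with $|s_t|\to 0$ this gives $s_{t_0}=0$. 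The paper's formulation is equivalent and cleaner: fix one large $T$, take the unique small $\eta$ with $f^{\eta}(f^{-T}y)\in\W^u_U(f^{-T}x)$, and deduce $\eta=0$ from $|\eta|\asymp d(f^{-t}y,f^{-t+\eta}y)\leq d(f^{-t}x,f^{-t+\eta}y)+d(f^{-t}x,f^{-t}y)\to 0$.
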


\begin{proof}
The proof is a standard application of the $C^r$ section theorem \cite{HPS} to the time-1 map $f$, which we sketch below. Note that it suffices to prove the proposition for a power $f^{N}$ of $f$, hence by replacing $f$ with $f^{N}$ we may assume that the inequality \eqref{flow expansion} holds at $t = 1$ for some $a > 0$ with $c = 1$. Set $k = \dim E^{u}$. 

To establish the $C^r$ regularity of $E^{u}$ along $\W^{cu}$ we consider continuous sections of the bundle $\mathrm{Gr}_{k}(E^{cu})$ of $k$-planes in $E^{cu}$, which we write as $\Gamma^{0}(\mathrm{Gr}_{k}(E^{cu}))$. We equip this space with the metric 
\[
d(H^{1},H^{2}) = \sup_{x \in M} d(H^{1}_{x},H^{2}_{x}), 
\]
where $d$ denotes the distance \eqref{Hausdorff}. For each $\delta > 0$ we define an open subset of this space by
\[
\Omega_{\delta} = \{H \in \Gamma^{0}(\mathrm{Gr}_{k}(E^{cu})): d(H,E^{u}) \leq \delta\}. 
\]
The linear cocycle $Df|_{E^{cu}}$ induces a graph transform on sections, defined pointwise by
\[
Df_{\#}H_{x} = Df_{f^{-1}(x)}(H_{f^{-1}(x)}),
\]
for $x \in M$. For $\delta$ small enough, a standard calculation (see for instance \cite[Theorem 2.5]{HPS}) shows that the domination inequality \eqref{splitting inequality} for the splitting $E^{cu} = E^{u} \oplus E^{c}$ implies that $Df_{\#}(\Omega_{\delta}) \subset \Omega_{\delta}$ and $Df_{\#}$ is a contraction on $\Omega_{\delta}$. 

The $C^r$ section theorem \cite[Theorem 3.2]{HPS} then supplies a unique continuous $Df_{\#}$-invariant section of $\Omega_{\delta}$, which is $V$. Restricted to each $\W^{cu}$ leaf, $Df$ is $C^r$ and $\mathrm{Gr}_{k}(E^{cu})$ is a $C^r$ bundle. The $C^r$ section theorem implies that the section $V$ is $C^r$ provided that 
\begin{equation}\label{reg inequality}
\sup_{x \in M}\|(Df_{x}|_{E_{x}^{cu}})^{-1}\| \mathrm{Lip}(Df_{\#}|_{\Omega_{\delta}})^{r} < 1.
\end{equation}
Since $\|(Df_{x}|_{E_{x}^{cu}})^{-1}\| = 1$ for all $x \in M$ and $Df_{\#}|_{\Omega_{\delta}}$ is a contraction, this inequality holds and so we conclude that $E^{u}$ is $C^r$ over each $\W^{cu}$ leaf. The fact that $E^{u}$ is uniformly $C^r$ along $\W^{cu}$ follows from the fact that the invariant section of the theorem varies continuously in the $C^r$ topology with $Df$. The argument establishing the existence of the foliation $\W^{u}$ is identical to the standard argument that establishes the existence of unstable manifolds. See \cite[Theorem 4.1]{HPS}.

The equality $f^{t}(\W^{u}(x)) = \W^{u}(f^{t}x)$ for all $t \in \R$ and $x \in M$ follows from the equality $Df^{t}_{x}(E^{u}_{x}) = E^{u}_{f^{t}x}$. Inequality \eqref{flow expansion} clearly implies that $d(f^{-t}x,f^{-t}y) \rightarrow 0$ as $t \rightarrow \infty$ if $y \in \W^{u}(x)$. Conversely, suppose that $x \in M$ and $y \in \W^{cu}(x)$ are such that $d(f^{-t}x,f^{-t}y) \rightarrow 0$ as $t \rightarrow \infty$.  By replacing $x$ and $y$ with $f^{-T}x$ and $f^{-T}y$ for a large enough $T$ we may assume that they belong to a common foliation box $U$ for $\W^{u}$ and $\W^{c}$. There is then a unique $\eta  \in \R$ such that $f^{\eta}y \in \W_{U}^{u}(x)$. As $t \rightarrow \infty$ we have $d(f^{-t}x,f^{-t+\eta}y) \rightarrow 0$ as well. Since
\[
\eta = d(f^{-t}y,f^{-t+\eta}y) \leq d(f^{-t}x,f^{-t+\eta}y) + d(f^{-t}x,f^{-t}y) \rightarrow 0,
\]
we conclude that $\eta = 0$ and therefore $y \in \W^{u}(x)$. 
\end{proof}

Since $E^{u}$ is uniformly $C^r$ along $\W^{cu}$, from this point forward we will assume that we have a Riemannian structure on $E^{cu}$ that is uniformly $C^r$ along $\W^{cu}$, satisfies $\|v_{f}(x)\| = 1$ for all $x \in M$, and is such that $E^{u}$ and $E^{c}$ are orthogonal.

We next show that uniformly $C^r$ time changes of $f^{t}$ along $\W^{cu}$ also have $\W^{cu}$ as a weak expanding foliation. 

\begin{prop}\label{time change splitting}
Let $\gamma: M \rightarrow (0,\infty)$ be uniformly $C^r$ along $\W^{cu}$. Let $\hat{f}^{t}$ be the flow generated by the vector field $\gamma(x)v_{f}(x)$. Then $\hat{f}^{t}$ is uniformly $C^r$ along $\W^{cu}$ and has $\W^{cu}$ as a weak expanding foliation. 
\end{prop}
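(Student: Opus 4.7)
The regularity claim is essentially automatic: the generator $\hat{v}_f = \gamma v_f$ is uniformly $C^r$ along $\W^{cu}$ since both $\gamma$ and $v_f$ are, and it remains tangent to $\W^{cu}$. On each leaf $L$ of $\W^{cu}$, $\hat{v}_f|_L$ is a $C^r$ vector field on the $C^{r+1}$ manifold $L$, generating a $C^r$ flow $\hat{f}^t|_L$; continuous dependence of ODE solutions on their generators in the $C^r$ topology (combined with uniform $C^r$ regularity of $\hat{v}_f$ along $\W^{cu}$) yields that $\hat{f}^t$, and likewise $\hat{f}^{-t}$, is uniformly $C^r$ along $\W^{cu}$.

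\textbf{Construction of the invariant expanding bundle.} I would produce $\hat{E}^u$ via the Bochi--Gourmelon singular value characterization (Theorem \ref{domination gap criterion}) applied to $D\hat{f}^t|_{E^{cu}}$. The two required inputs are: (i) $E^c$ is $D\hat{f}^t$-invariant (same orbits) and the flow identity $D\hat{f}^t(\hat{v}_f) = \hat{v}_f \circ \hat{f}^t$ with $\hat{v}_f = \gamma v_f$ forces $D\hat{f}^t_x(v_f(x)) = (\gamma(\hat{f}^t x)/\gamma(x))v_f(\hat{f}^t x)$, so $\sigma_1(D\hat{f}^t_x|_{E^{cu}}) \leq \|D\hat{f}^t|_{E^c}\| \leq \sup\gamma/\inf\gamma =: M$; and (ii) differentiating $\hat{f}^t x = f^{\tau(t,x)}x$ gives $D\hat{f}^t_x = Df^{\tau(t,x)}_x + v_f(\hat{f}^t x)\otimes \partial_x\tau(t,x)$, so for $\xi \in E^u_x$ the first term lies in $E^u_{\hat{f}^t x}$ (by $Df^t$-invariance of $E^u$) and the second in $E^c_{\hat{f}^t x}$. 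The orthogonality of $E^u$ and $E^c$ in the chosen Riemannian structure then gives $\|D\hat{f}^t_x(\xi)\| \geq \|Df^{\tau(t,x)}_x(\xi)\| \geq c e^{a\tau(t,x)}\|\xi\| \geq c e^{a(\inf\gamma) t}\|\xi\|$, whence the Courant--Fischer max-min applied to the $(\dim E^{cu}-1)$-dimensional subspace $E^u_x$ yields $\sigma_2(D\hat{f}^t_x|_{E^{cu}}) \geq c e^{a(\inf\gamma)t}$. Combining (i) and (ii), $\sigma_1/\sigma_2$ decays exponentially, and Theorem \ref{domination gap criterion} produces a dominated splitting $E^{cu} = \hat{L} \oplus \hat{V}$ of index $1$ for $D\hat{f}^t|_{E^{cu}}$.

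Finally, I would identify $\hat{L} = E^c$ by the uniqueness of dominated splittings: the exponential growth of $\mathfrak{m}(D\hat{f}^t|_{\hat{V}})$ combined with the uniform boundedness of $\|D\hat{f}^t|_{E^c}\|$ forces $E^c \cap \hat{V} = \{0\}$, so $E^{cu} = E^c \oplus \hat{V}$ is itself a dominated splitting of index $1$, and uniqueness forces $\hat{L} = E^c$. Setting $\hat{E}^u := \hat{V}$, one obtains the required $D\hat{f}^t$-invariant, codimension-1 subbundle of $E^{cu}$ transverse to $E^c$, with the exponential expansion $\mathfrak{m}(D\hat{f}^t|_{\hat{E}^u}) \geq c' e^{\chi t}$ following from Lemma \ref{domination exponent gap} applied to the dominated splitting together with the $\sigma_2$ estimate above.

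\textbf{Principal obstacle.} The most delicate step is the lower bound on $\sigma_2$. Although $E^u$ is no longer $D\hat{f}^t$-invariant under the time change, the cocycle $D\hat{f}^t$ is nevertheless block \emph{lower triangular} with respect to the $f^t$-invariant splitting $E^{cu} = E^u \oplus E^c$: the $E^u$-component of $D\hat{f}^t_x(\xi)$ for $\xi \in E^u_x$ is precisely $Df^{\tau(t,x)}_x(\xi)$, inheriting the full expansion rate of $Df^t$ on $E^u$ up to the time-rescaling factor $\inf\gamma$. Orthogonality of $E^u$ and $E^c$ promotes this triangular structure into a genuine norm lower bound. Without this specific structural observation, a naive cone argument around $E^u$ fails because the off-diagonal term $\partial_x\tau(t,x)$ can grow exponentially faster than the $E^u$-expansion, which is precisely why the Bochi--Gourmelon singular value criterion, rather than a direct invariant cone field, is the natural tool here.
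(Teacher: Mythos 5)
Your proof is correct and takes a genuinely different route from the paper. The paper proves the dominated splitting via the invariant-cone criterion (Proposition \ref{cone criterion}): after writing $D\hat{f}^t$ in block-triangular form with respect to $E^{cu} = E^u \oplus E^c$, it claims a bound $|\partial_u\tau(t,x)| \leq Kt$ on the off-diagonal entry and then checks that a fixed cone around $E^u$ is contracted by $D\hat{f}^N$ for $N$ large. You instead go through the Bochi--Gourmelon criterion (Theorem \ref{domination gap criterion}): the singular value $\sigma_1$ is pinned by the bounded action on $E^c$, while $\sigma_2$ is forced to grow by Courant--Fischer max-min applied to the $k$-dimensional test subspace $E^u_x$, thanks to the triangular structure and orthogonality of $E^u \perp E^c$. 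This produces the exponential decay of $\sigma_1/\sigma_2$ and hence a dominated splitting of index $1$, which is then identified with $E^c \oplus \hat{E}^u$ by uniqueness and the boundedness of $D\hat{f}^t|_{E^c}$. The identification step is fine provided you note that (after an orthogonalizing change of Riemannian structure) $\|D\hat{f}^n|_{\hat{L}}\| \asymp \sigma_1$ and $\mathfrak{m}(D\hat{f}^n|_{\hat{V}}) \asymp \sigma_2$, so that any nonzero $\hat{V}$-component of $v_f$ would give $\|D\hat{f}^n(v_f)\| \to \infty$, contradicting $\|D\hat{f}^n(v_f)\| \leq \sup\gamma/\inf\gamma$.

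What your approach buys: it avoids entirely the need to estimate the magnitude of $\partial_u\tau(t,x)$. The singular value criterion is insensitive to the size of the off-diagonal block in a triangular cocycle, whereas the cone criterion demands that it grow strictly slower than the diagonal expansion. Your route is therefore more robust, and in this setting also arguably cleaner.

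One quibble with your "principal obstacle" paragraph: you say a naive cone argument around $E^u$ \emph{fails} because $\partial_u\tau(t,x)$ can grow exponentially. You are right that the raw quantity $\|\partial_u\tau(t,x)\|$ can grow like $e^{b\tau(t,x)}$ where $b$ is the top expansion rate on $E^u$ (so the crude bound exceeds the conorm expansion rate $ce^{a\tau(t,x)}$ when $b > a$), and in this sense your skepticism of the paper's claimed linear bound $|\partial_u\tau(t,x)| \leq Kt$ is warranted --- that bound does not appear to follow from uniform $C^1$ regularity of $\tau(t,\cdot)$ as claimed. However, the cone argument does survive, because the relevant quantity for invariance of a cone around $E^u$ is not $\|\partial_u\tau(t,x)\|$ itself but $\|\partial_u\tau(t,x) \circ Df^{-\tau(t,x)}|_{E^u}\|$. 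Expanding $\partial_u\tau$ via the implicit relation $\omega(\tau(t,x),x) = t$ and using the backward contraction $\|Df^{-s}|_{E^u}\| \leq c^{-1}e^{-as}$ inside the resulting integral shows this composition is bounded uniformly in $t$ and $x$. Equivalently, the slope of $\hat{E}^u$ over $E^u$ (the fixed point of the graph transform) is uniformly bounded. So the paper's strategy is salvageable with a sharper estimate, but your singular value route bypasses the entire issue --- which is exactly its virtue.

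One further small point in your favor: your identification of the chain-rule structure as block \emph{lower} triangular in the $(E^u, E^c)$ frame (with $\partial_u\tau$ sitting in the $E^u \to E^c$ block) is the correct one; the displayed matrix in the paper's proof presents it as upper triangular, which would make $E^u$ literally invariant and the cone argument vacuous, so that display should be read as a transposition typo.
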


\begin{proof}
The fact that $\hat{f}^{t}$ is uniformly $C^r$ along $\W^{cu}$ is immediate from the definitions. The proof that $D\hat{f}^{t}|_{E^{cu}}$ has a dominated splitting $E^{cu} = \hat{E}^{u} \oplus E^{c}$ with $\hat{E}^{u}$ being uniformly expanded by $D\hat{f}^{t}$ is standard; it follows in a straightforward manner from the argument that a $C^r$ time change of an Anosov flow is also an Anosov flow. One simply performs the calculations only on the center-unstable manifolds.

To be more specific, write $\hat{f}^{t}x = f^{\tau(t,x)}x$, with $\tau(t,x)$ the additive cocycle over $\hat{f}^{t}$ generated by $\gamma$. We outline the calculation of \cite[Proposition 17.4.5]{HK}. For a fixed $x \in M$ and $t \in \R$ one calculates using the chain rule, in a local coordinate system in which $E^{u}$ and $E^{c}$ are orthogonal, that if the matrix of $Df^{t}$ is given in the $E^{cu} = E^{u} \oplus E^{c}$ frame by 
\[
\left(\begin{array}{cc}
Df^{t}_{x} & 0 \\
0 & 1 
\end{array}\right),
\]
then the matrix of $D\hat{f}^{t}$ in this frame is 
\[
\left(\begin{array}{cc}
Df^{\tau(t,x)}_{x} & \p_{u}\tau(t,x) \\
0 & 1 
\end{array}\right),
\]
with $\p_{u}\tau(t,x)$ the partial derivative of $\tau(t,x)$ along $E^{u}$. By compactness of $M$ and the fact that $\tau(t,x)$ is uniformly $C^1$ along $\W^{cu}$, there is a constant $K > 0$ such that $|\p_{u}\tau(t,x)| \leq Kt$ for $t > 0$. Using this bound, one calculates that there is an $\eta > 0$ such that the cone field $\mathcal{C}_{\eta}(x)$ around $E^{u}_{x}$ is contracted by $D\hat{f}^{N}$ for a large enough $N$, and therefore by Proposition \ref{cone criterion} $D\hat{f}^{t}$ admits a dominated splitting $E^{u} = \hat{E}^{u} \oplus E^{c}$ in which $\hat{E}^{u}$ is exponentially expanded. We refer to \cite{HK} for the details of this calculation, as they are identical to the ones performed in the setting there. 
\end{proof}

An important corollary of the chain rule computation for $D\hat{f}^{t}$ in Proposition \ref{time change splitting} is that, setting $\bar{E}^{u} :=E^{cu}/E^{c}$ and writing $\bar{D}\hat{f}^{t}$ for the induced map of $D\hat{f}^{t}$ on $\bar{E}^{u}$, we have
\begin{equation}\label{chain}
\bar{D}\hat{f}^{t}_{x} = \bar{D}f^{\tau(t,x)}_{x}.
\end{equation}
This in particular shows that time changes have the expected effect on Lyapunov exponents. Recall that if $\hat{f}^{t}$ is a time change of $f^{t}$ with speed multiplier $\gamma$ and $\nu$ is an $f^{t}$-invariant ergodic probability measure, we write $\nu^{\gamma}$ for the corresponding $\hat{f}^{t}$-invariant measure. We write $\vec{\la}^{u}(f,\nu) = \vec{\la}(Df|_{E^{u}},\nu)$ as before. 

\begin{lem}\label{smooth time exponents}
Let $\hat{f}^{t}$ be a time change of $f^{t}$ with speed multiplier $\gamma$ that is uniformly $C^r$ along $\W^{cu}$. Then for any $f^{t}$-invariant ergodic probability measure $\nu$ we have 
\[
\vec{\la}^{u}(\hat{f},\nu^{\gamma}) = \nu(\gamma^{-1})^{-1}\vec{\la}^{u}(f,\nu). 
\]
\end{lem}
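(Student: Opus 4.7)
The plan is to reduce the statement to a direct application of Proposition \ref{time change lyap} by passing to the quotient bundle $\bar{E}:=E^{cu}/E^{c}$. Both $Df^{t}|_{E^{cu}}$ and $D\hat{f}^{t}|_{E^{cu}}$ preserve the (center) line bundle $E^{c}$, and so descend to linear cocycles $\bar{D}f^{t}$ and $\bar{D}\hat{f}^{t}$ on $\bar{E}$ over $f^{t}$ and $\hat{f}^{t}$ respectively. The first key observation is that the chain-rule identity \eqref{chain} established in the discussion after Proposition \ref{time change splitting} says precisely
\[
\bar{D}\hat{f}^{t}_{x}=\bar{D}f^{\tau(t,x)}_{x},
\]
i.e.\ the quotient cocycle $\bar{D}\hat{f}^{t}$ over $\hat{f}^{t}$ is exactly the time-changed cocycle $B^{t}_{x}=A^{\tau(t,x)}_{x}$ associated with $A^{t}=\bar{D}f^{t}$, in the sense defined immediately before Proposition \ref{time change lyap}. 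Proposition \ref{time change lyap} therefore applies verbatim and yields
\[
\vec{\la}(\bar{D}\hat{f},\nu^{\gamma})\;=\;\nu(\gamma^{-1})^{-1}\,\vec{\la}(\bar{D}f,\nu).
\]

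The second step is to identify the Lyapunov spectra on $\bar{E}$ with the unstable spectra. For the flow $f^{t}$, the splitting $E^{cu}=E^{u}\oplus E^{c}$ is dominated with $E^{u}$ exponentially expanded and $E^{c}$ isometric, so the orthogonal projection $E^{u}\to\bar{E}$ is a bundle isomorphism conjugating $Df^{t}|_{E^{u}}$ with $\bar{D}f^{t}$ (up to the change of inner product induced by the projection, which is bounded above and below on $M$ by compactness and therefore does not affect Lyapunov exponents). Thus $\vec{\la}(\bar{D}f,\nu)=\vec{\la}^{u}(f,\nu)$. The analogous assertion for $\hat{f}^{t}$ uses the dominated splitting $E^{cu}=\hat{E}^{u}\oplus E^{c}$ produced by Proposition \ref{time change splitting}, giving $\vec{\la}(\bar{D}\hat{f},\nu^{\gamma})=\vec{\la}^{u}(\hat{f},\nu^{\gamma})$. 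Alternatively one can derive both identifications as immediate consequences of Lemma \ref{domination exponent gap} applied to the dominated splittings of $E^{cu}$ for $Df^{t}$ and $D\hat{f}^{t}$, noting that the center exponent is $0$. Combining the two steps gives the claimed formula.

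The argument contains no genuine obstacle: all the work has been done in Proposition \ref{time change splitting} (where the dominated splitting $E^{cu}=\hat{E}^{u}\oplus E^{c}$ and the chain-rule identity \eqref{chain} were extracted), Proposition \ref{time change lyap} (which handles the effect of a time change on the Lyapunov spectrum of a cocycle), and Lemma \ref{domination exponent gap} (which guarantees that exponents can be read off from a dominated invariant subbundle). The only minor point to be careful about is that $E^{u}$ and $\hat{E}^{u}$ are not equal, but both project isomorphically onto $\bar{E}=E^{cu}/E^{c}$ with bounded distortion, so the passage to the quotient is what allows the two cocycles to be compared on the same bundle.
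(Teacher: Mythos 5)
Your proof is correct and follows essentially the same route as the paper's: pass to the quotient bundle $\bar{E}^{u}=E^{cu}/E^{c}$, use the chain-rule identity \eqref{chain} to realize $\bar{D}\hat{f}^{t}$ as the time-changed cocycle of $\bar{D}f^{t}$, apply Proposition \ref{time change lyap}, and identify the quotient spectra with the unstable spectra via the projections $E^{u}\to\bar{E}^{u}$ and $\hat{E}^{u}\to\bar{E}^{u}$. The paper's version is a slightly terser formulation of the same argument.
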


\begin{proof}
The cocycle $D\hat{f}^{t}_{x}$ on $\hat{E}^{u}$ is conjugate to $\bar{D}f^{\tau(t,x)}_{x}$ over $\hat{f}^{t}$ by the projection $\hat{E}^{u} \rightarrow \bar{E}^{u}$, by \eqref{chain}. Likewise $\bar{D}f^{t}$ is conjugate to $Df^{t}$ on $E^{u}$. Since $\bar{D}f^{\tau(t,x)}_{x}$ is a time change of $\bar{D}f^{t}_{x}$ with speed multiplier $\gamma$, by Proposition \ref{time change lyap} we have
\begin{align*}
\vec{\la}^{u}(\hat{f},\nu^{\gamma}) &= \vec{\la}(\bar{D}\hat{f},\nu^{\gamma}) \\
&= \nu(\gamma^{-1})^{-1} \vec{\la}(\bar{D}f,\nu) \\
&= \nu(\gamma^{-1})^{-1}\vec{\la}^{u}(f,\nu).
\end{align*}
\end{proof}

We specialize now to the case of a $C^2$ topologically mixing Anosov flow $f^{t}: M \rightarrow M$. We let $\zeta:M \rightarrow (0,\infty)$ be a H\"older potential which is $C^2$ along $\W^{cu}$. We  let $q = q_{\zeta} > 0$ be such that $P(q\zeta) = 0$, as per Proposition \ref{proto synchro}.  Let $\mu$ be the equilibrium state of the potential $q \zeta$ with respect to $f^{t}$. Let $\hat{f}^{t}$ be the time change of $f^{t}$ with speed multiplier $\zeta^{-1}$, and let $\hat{\mu} = \mu^{\zeta^{-1}}$ be the $\hat{f}^{t}$-invariant probability measure equivalent to $\mu$, which is the measure of maximal entropy for $\hat{f}^{t}$ by Proposition \ref{proto synchro}. Let $\tau$ be the additive cocycle generated by $\zeta^{-1}$ over $\hat{f}^{t}$ such that $\hat{f}^{t}x = f^{\tau(t,x)}x$. 

By Proposition \ref{time change splitting} the flow $\hat{f}^{t}$ has $\W^{cu}$ as a weak expanding foliation and has dominated splitting $E^{cu} = \hat{E}^{u} \oplus E^{c}$. By Proposition \ref{unstable manifold flow} there is an $\hat{f}^{t}$-invariant foliation $\hat{\W}^{u}$ tangent to $\hat{E}^{u}$ which is uniformly $C^2$ along $\W^{cu}$. We will show that the disintegration $\{\hat{\mu}_{x}^{u}\}_{x \in M}$ of $\hat{\mu}$ along $\hat{\W}^{u}$ also has the Gibbs property \eqref{Gibbs property}. We let $\hat{B}^{u}(x,r)$ denote the ball of radius $r$ centered at $x$ inside of $\hat{\W}^{u}(x)$, and  fix $R > 0$ small enough that for $r \leq R$ the balls $\hat{B}^{u}(x,r)$ are each contained inside a foliation box for $\hat{\W}^{u}$, and such that the corresponding statement is also true for the Riemannian balls $B^{*}(x,r)$ of radius $r \leq R$ on the leaves of the foliations $\W^{*}$, $* \in \{c,u,cu\}$.  We then normalize the conditional measures such that $\hat{\mu}^{u}_{x}(\hat{B}^{u}(x,R)) = 1$ for each $x \in M$.

\begin{prop}\label{time change Gibbs property}
There is a $T > 0$ such that for all $x \in M$, $t \geq T$, and $r \leq R$ we have
\[
\hat{\mu}_{\hat{f}^{-t}x}^{u}(\hat{f}^{-t}(\hat{B}^{u}(x,r))) \asymp e^{-q t} \hat{\mu}_{x}^{u}(\hat{B}^{u}(x,r)). 
\]
\end{prop}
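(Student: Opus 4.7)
My plan is to reduce the desired Gibbs-type estimate for $\hat{f}^{t}$ along $\hat{\W}^{u}$ to the Gibbs property \eqref{Gibbs property} for the underlying Anosov flow $f^{t}$ along $\W^{u}$, using two ingredients: an Abramov-type time substitution that relates the two flows, and a holonomy inside each $\W^{cu}$-leaf that compares $\hat{\W}^{u}$-balls with $\W^{u}$-balls.

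Since $f^{t}$ and $\hat{f}^{t}$ share the same orbits, for each $x \in M$ and $t > 0$ there is a unique $s = s(t,x) > 0$ with $\hat{f}^{-t}x = f^{-s}x$. Differentiating this identity in $t$ and using $v_{\hat{f}} = \zeta^{-1} v_{f}$ gives $dt/ds = \zeta(f^{-s}x)$, and hence
\[
t = \int_{0}^{s} \zeta(f^{-u}x)\, du.
\]
Because $\mu$ is the equilibrium state of $q\zeta$ and $P(q\zeta) = 0$, the Gibbs property \eqref{Gibbs property} for $\mu$ along $\W^{u}$ (with the Birkhoff average read along the backward $f^{t}$-orbit of $x$) yields, for all sufficiently large $s$ and any measurable $A \subseteq B^{u}(x,R)$,
\[
\mu_{f^{-s}x}^{u}(f^{-s}A) \asymp e^{-q \int_{0}^{s} \zeta(f^{-u}x)\, du}\, \mu_{x}^{u}(A) = e^{-qt}\, \mu_{x}^{u}(A).
\]
Passing from $\W^{u}$-balls to arbitrary measurable subsets of $B^{u}(x,R)$ is a standard consequence of bounded distortion of the Jacobian along $\W^{u}$ for Gibbs states of H\"older potentials.

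Next, within each $\W^{cu}$-leaf both $\W^{u}$ and $\hat{\W}^{u}$ are codimension-one foliations transverse to the orbit foliation $\W^{c}$; by Proposition \ref{unstable manifold flow} applied to $\hat{f}^{t}$, the foliation $\hat{\W}^{u}$ is uniformly $C^{r}$ along $\W^{cu}$. The $\W^{c}$-holonomy inside $\W^{cu}$ therefore defines a uniform family of local bi-Lipschitz homeomorphisms $\pi_{x}: \hat{\W}^{u}(x) \to \W^{u}(x)$ with
\[
B^{u}(x, c_{1}r) \subseteq \pi_{x}(\hat{B}^{u}(x,r)) \subseteq B^{u}(x, c_{2}r)
\]
for uniform constants $0 < c_{1} < c_{2}$. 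Since $f^{t}$ and $\hat{f}^{t}$ preserve the same $\W^{c}$-orbits, a direct calculation gives the equivariance
\[
\pi_{f^{-s}x}(\hat{f}^{-t}y) = f^{-s}\, \pi_{x}(y), \qquad y \in \hat{\W}^{u}(x).
\]
Moreover $\hat{\mu}$ is equivalent to $\mu$ with bounded Radon--Nikodym derivative, and both carry local product structures in $\W^{cu}$-foliation boxes by \cite{Lep}. Writing the disintegration of $\mu$ inside a $\W^{cu}$-leaf in $(\W^{u},\W^{c})$-coordinates and then changing variables via the uniformly $C^{r}$ holonomy to $(\hat{\W}^{u},\W^{c})$-coordinates shows that $(\pi_{x})_{*}\hat{\mu}_{x}^{u}$ is equivalent to $\mu_{x}^{u}$ with continuous density bounded away from $0$ and $\infty$ uniformly in $x$. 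In particular,
\[
\hat{\mu}_{x}^{u}(\hat{B}^{u}(x,r)) \asymp \mu_{x}^{u}(\pi_{x}(\hat{B}^{u}(x,r))),
\]
and likewise at the point $\hat{f}^{-t}x = f^{-s}x$.

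Chaining the equivariance, the comparison of conditionals, and the Gibbs estimate produces
\[
\hat{\mu}_{\hat{f}^{-t}x}^{u}(\hat{f}^{-t}\hat{B}^{u}(x,r)) \asymp \mu_{f^{-s}x}^{u}(f^{-s}\pi_{x}(\hat{B}^{u}(x,r))) \asymp e^{-qt}\, \mu_{x}^{u}(\pi_{x}(\hat{B}^{u}(x,r))) \asymp e^{-qt}\, \hat{\mu}_{x}^{u}(\hat{B}^{u}(x,r)),
\]
as desired. The step I expect to be most delicate is the transfer of conditional measures via $\pi_{x}$: it requires verifying that the local product structures of $\mu$ in $(\W^{u},\W^{c})$-coordinates and of $\hat{\mu}$ in $(\hat{\W}^{u},\W^{c})$-coordinates are compatible under the $C^{r}$ change of transversal foliation inside $\W^{cu}$, with density bounds uniform in the base point.
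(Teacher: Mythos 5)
Your proposal follows the same overall strategy as the paper's proof: use the time substitution $\hat f^{-t}x = f^{-s}x$ with $t = \int_0^s\zeta(f^{-u}x)\,du$ to convert the Gibbs estimate for $\mu$ along $\W^u$ into the $e^{-qt}$ factor, and then transfer between $\hat\W^u$- and $\W^u$-conditionals by $\W^c$-holonomy inside $\W^{cu}$. Your equivariance identity $\pi_{f^{-s}x}(\hat f^{-t}y) = f^{-s}\pi_x(y)$ is exactly the content of the paper's equation \eqref{graph time change}, and your application of the Gibbs property matches the paper's.

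The one step you flag as delicate — that $(\pi_x)_*\hat\mu^u_x \asymp \mu^u_x$ uniformly — is precisely where the paper puts its effort, and your sketch is thinner than the argument actually requires. The naive ``change of transversal'' idea is not automatic: the $u$-coordinate maps across by $\pi_x$, but the $\W^c$-coordinate in the $(\W^u,\W^c)$-chart and in the $(\hat\W^u,\W^c)$-chart are \emph{not} the same function of the point, so the conditional measures do not compare by holonomy without a further ingredient. The further ingredient, which you gesture at but do not use explicitly, is that the disintegration of the $\W^{cu}$-conditional $\mu^{cu}_x$ along $\W^c$ is arc length, by $f^t$-invariance. The paper exploits this by assembling your $\pi_y$ over $y\in\W^c(x)$ into the map $\Theta_x$, observing that $\Theta_x$ preserves $\W^c$-plaques and is uniformly bi-Lipschitz, so that $(\Theta_x)_*\mu^{cu}_x \asymp \mu^{cu}_x$ with uniform constants; it then measures $\e$-thickened sets $\hat B^u_\e(x,r)$ against $\mu^{cu}_x$ and divides out the arc-length factor $\asymp \e$ to recover the $\hat\W^u$-conditional. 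This route also sidesteps your need to extend the Gibbs property \eqref{Gibbs property} from $\W^u$-balls to the sets $\pi_x(\hat B^u(x,r))$: the paper only ever applies it to genuine balls $B^u(y,r)$. Your invocation of bounded distortion for that extension is standard and correct, but if you retain your direct comparison of $\hat\mu^u_x$ with $\mu^u_x$ you should supply the argument in the previous two sentences; as written the crucial density bound is asserted rather than derived.
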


\begin{proof}
For $x \in M$, $0 < r \leq R$, and $\e > 0$ we set 
\[
B^{cu}_{\e}(x,r) = \bigsqcup_{\eta \in [-\e,\e]}f^{\eta}(B^{u}(x,r)).
\] 
We note that for each $y \in B^{u}(x,r)$ we have $B^{cu}_{\e}(x,r) \cap \W^{c}_{B^{cu}_{\e}(x,r)}(y) = B^{c}(y,\e)$ since we normalized our Riemannian metric such that the generator $v_{f}$ of $f$ has unit norm. We also note that the boxes $B^{cu}_{r}(x,r)$ are uniformly comparable to the Riemannian balls $B^{cu}(x,r) \subset \W^{cu}(x)$ in the sense that there is a constant $C \geq 1$ independent of $x$ and $r$ such that 
\begin{equation}\label{uniform comp}
B^{cu}(x,C^{-1}r) \subset B^{cu}_{r}(x,r) \subset B^{cu}(x,Cr).
\end{equation}
We will use the boxes $B^{cu}_{r}(x,r)$ instead of the Riemannian balls in this proof. By shrinking $R$ if necessary we can assume that each box $B^{cu}_{r}(x,r)$ with $r \leq R$ is also contained inside foliation charts for each of the foliations $\hat{\W}^{u}$, $\W^{u}$, $\W^{c}$, and $\W^{cu}$ under consideration.  

Let $\{\mu^{cu}_{x}\}_{x \in M}$ denote the disintegration of $\mu$ along $\W^{cu}$, which we normalize such that $\mu^{cu}_{x}(B^{cu}_{R}(x,R)) = 1$. Similarly we let $\{\mu^{u}_{x}\}_{x \in M}$ denote the disintegration of $\mu$ along $\W^{u}$, normalized such that $\mu^{u}_{x}(B^{u}(x,R)) = 1$. Lastly we let $\{\mu_{x}^{c}\}$ denote the disintegration of $\mu$ along the orbit foliation $\W^{c}$, which we can take to be the Riemannian arc length on each orbit. The local product structure of $\mu$ with respect to the foliations $\W^{u}$, $\W^{c}$, and $\W^{cu}$ (see Definition \ref{local product structure}) implies that for each $x \in M$ we can find a continuous function $\xi_{x}: B^{cu}_{R}(x,R) \rightarrow (0,\infty)$ such that 
\[
d\mu^{cu}_{x}|_{B^{cu}_{R}(x,R)} = \xi_{x}(d\mu^{u}_{x}|_{B^{u}(x,R)} \times d\mu^{c}_{x}|_{B^{c}(x,R)}),
\]
where the right side should be understood as corresponding to expressing $B^{cu}_{R}(x,R)$ as the product $B^{u}(x,R) \times B^{c}(x,R)$ by projecting along local $\W^{c}$ leaves onto $B^{u}(x,R)$ and projecting along local $\W^{u}$ leaves onto $B^{c}(x,R)$. Since we can cover $M$ by foliation boxes on which the functions $\xi_{x}$ are uniformly above and below away from zero, the compactness of $M$ implies that the functions $\xi_{x}$ are uniformly bounded above and below away from zero, independent of $x$. We thus have
\begin{equation}\label{local product use}
d\mu^{cu}_{x}|_{B^{cu}_{R}(x,R)} \asymp d\mu^{u}_{x}|_{B^{u}(x,R)} \times d\mu^{c}_{x}|_{B^{c}(x,R)},
\end{equation}
with the implied multiplicative constant being independent of $x$. 

Set $U_{x}:=B^{cu}_{R}(x,R)$. For each $y \in \W^{c}_{U_{x}}(x)$, we let $\theta_{y,x}: \W^{u}_{U_{x}}(y) \rightarrow \hat{\W}^{u}(y)$ denote the projection along the flowlines of $f^{t}$. We let $\Theta_{x}: U_{x} \rightarrow \W^{cu}(x)$ denote the resulting map on $U_{x}$, defined by taking its restriction to each slice $\W^{u}_{U_{x}}(y)$ to be $\theta_{y,x}$. The map $\Theta_{x}$ is a bi-Lipschitz diffeomorphism onto its image with bi-Lipschitz constant independent of $x$, and fixes the $\W^{c}$ foliation in the sense that $\Theta_{x}(\W^{c}_{U_{x}}(z)) \subset \W^{c}(z)$ for each $z \in U_{x}$.

We similarly define
\[
\hat{B}^{cu}_{\e}(x,r) = \bigsqcup_{\eta \in [-\e,\e]}\hat{B}^{u}(f^{\eta}x,r).
\]
For $t > 0$ we define $g^{-t}$ on $B^{cu}_{\e}(x,r)$ to be the map which restricts to $f^{\tau(-t,f^{\eta}x)}$ on $B^{u}(f^{\eta}x,r)$. Let $\{\nu_{x}\}_{x \in M}$ denote the conditionals of $\mu$ along $\hat{\W}^{u}$, normalized such that $\nu_{x}(\hat{B}^{u}(x,R)) = 1$. We observe that the conditionals of $\mu$ along the orbit foliation $\W^{c}$ are simply given by the Riemannian arc length on each orbit, since we assumed that the generator $v_{f}$ of $f$ has unit norm. We then have for any $r \leq R$, 
\[
\mu_{x}^{cu}(B^{cu}_{\e}(x,r)) \asymp \int_{-\e}^{\e}\mu^{u}_{x}(B^{u}(f^{\eta}x,r))\,d\eta,
\]
and 
\[
\mu_{x}^{cu}(\hat{B}^{cu}_{\e}(x,r)) \asymp \int_{-\e}^{\e}\nu^{u}_{x}(\hat{B}^{u}(f^{\eta}x,r))\,d\eta,
\]
where $d\eta$ here stands for the arc length measure on $\W^{c}$.

The disintegration of $\mu^{cu}_{x}$ along $\W^{c}$ is invariant under $f^{t}$, and is therefore given by a constant multiple of the Riemannian arc length on each leaf of $\W^{c}$. Since $\Theta_{x}$ fixes $\W^{c}$, it follows that $(\Theta_{x})_{*}\mu_{x}^{cu}|_{U_{x}}$ is equivalent to $\mu^{cu}_{x}$ on $\Theta_{x}(U_{x})$, with Radon-Nikodym derivative uniformly comparable to $1$, independent of $x$. 

For $y \in U_{x}$ let $h_{x}(y) \in \W^{c}_{U_{x}}(x)$ denote the unique intersection point of $\hat{\W}^{u}(y)$ with $\W^{c}_{U_{x}}(x)$ inside of $U_{x}$. On $U_{x}$ we then have for any $y \in U_{x}$ and $t > 0$,
\begin{equation}\label{graph time change}
\hat{f}^{-t}y = (\Theta_{\hat{f}^{-t}x} \circ f^{\tau(-t,h_{x}(y))} \circ \Theta_{x}^{-1})(y) = (\Theta_{\hat{f}^{-t}x} \circ g^{-t} \circ \Theta_{x}^{-1})(y)
\end{equation}
As remarked before, the fact that $\Theta_{x}$ is bi-Lipschitz onto its image and absolutely continuous with respect to $\mu^{cu}_{x}$ implies, when combined with equation \eqref{graph time change}, that for all $t > 0$, $\e < \frac{R}{2}$, and $r \leq R$, 
\begin{equation}\label{transfer measure}
\mu^{cu}_{x}(\hat{f}^{-t}(\hat{B}^{cu}_{\e}(x,r))) \asymp \mu^{cu}_{x}(g^{-t}(B^{cu}_{\e}(x,r))),
\end{equation}
with the implied multiplicative constant being independent of $x$. By the Gibbs property \eqref{Gibbs property}, there is a $T > 0$ such that for $t \geq T$ and $y \in \W^{c}_{U_{x}}(x)$, 
\begin{align*}
\mu^{u}_{f^{\tau(-t,y)}y}(f^{\tau(-t,y)}(B^{u}(y,r))) &\asymp e^{-q\int_{\tau(-t,y)}^{0}\zeta(f^{s}y)\,ds }\mu_{y}^{u}(B^{u}(y,r)) \\
&= e^{-qt}\mu_{y}^{u}(B^{u}(y,r)).
\end{align*}
Hence for $t \geq T$,
\[
\mu^{cu}_{x}(g^{-t}(B^{cu}_{\e}(x,r))) \asymp e^{-qt}\mu^{cu}_{x}(B^{cu}_{\e}(x,r)).
\]
We conclude that
\begin{align*}
\mu^{cu}_{x}(\hat{f}^{-t}(\hat{B}^{cu}_{\e}(x,r))) &\asymp e^{-qt}\mu^{cu}_{x}(B^{cu}_{\e}(x,r)) \\
&\asymp e^{-qt}\mu^{cu}_{x}(\hat{B}^{cu}_{\e}(x,r)) \\
&\asymp \e e^{-qt} \nu_{x}(\hat{B}^{cu}(x,r)),
\end{align*}
using again the fact that the conditional measures of $\mu^{cu}_{x}$ on $\W^{c}$ are multiples of the arc length. On the left-hand side we note that we have a bound $\zeta^{-1} \asymp 1$ on the speed multiplier for $\hat{f}^{-t}$, from which it follows that when restricted to any leaf $\W^{c}(x)$ the time changed flow $\hat{f}^{-t}$ is bi-Lipschitz, with Lipschitz constant independent of $t$. We thus have
\[
\mu^{cu}_{x}(\hat{f}^{-t}(\hat{B}^{u}_{\e}(x,r))) \asymp \e \nu_{\hat{f}^{-t}x}(\hat{f}^{-t}(\hat{B}^{u}(x,r))),
\]
as well. 

We conclude that
\begin{equation}\label{proto Gibbs}
 \nu_{\hat{f}^{-t}x}(\hat{f}^{-t}(\hat{B}^{u}(x,r))) \asymp e^{-qt} \nu_{x}(\hat{B}^{u}(x,r)).
\end{equation}
Since the $\hat{f}^{t}$-invariant measure $\hat{\mu}$ is equivalent to $\mu$ with Radon-Nikodym derivative $\frac{d\hat{\mu}}{d\mu} \asymp 1$ \eqref{radon}, this property holds for the disintegrations of $\hat{\mu}$ as well. Hence $\frac{d\hat{\mu}_{x}^{u}}{d\nu_{x}}\asymp 1$ for each $x \in M$, with implied constant independent of $x$. The proposition then follows from \eqref{proto Gibbs}.

\end{proof}

\subsection{The horizontal measure}\label{subsec:hormeasure} 
In this section we consider an Anosov flow $f^{t}$ which is obtained as a time change of another Anosov flow $h^{t}$ with speed multiplier $\gamma$ such that $\gamma$ is $C^r$ along $\W^{cu,h} = \W^{cu,f}$. We refer to such a flow as a $u$-$C^r$ Anosov flow. In the case that $r = \infty$ we will refer to such a flow as a $u$-smooth Anosov flow. As usual, we will always assume that our flows are transitive. We assume as well that $r \geq 3$. 

Let $k = \dim E^{u}$, $k \geq 2$, and let $1 \leq l \leq k-1$. 

\begin{defn}[$u$-splitting] A \emph{$u$-splitting of index $l$} for $f^{t}$ is a dominated splitting $E^{u} = L^{u} \oplus V^{u}$ of index $l$ for the linear  cocycle $Df^{t}|_{E^{u}}$ over $f^{t}$. We will sometimes refer to $L^{u}$ as the \emph{horizontal bundle} and $V^{u}$ as the \emph{vertical bundle}. Analogously, if instead $f^{t}$ is $C^r$ along $\W^{cs}$, then for $k = \dim E^{s}$ and $1 \leq l \leq k-1$ we define an \emph{$s$-splitting of index $l$} for an Anosov flow $f^{t}$ to be a dominated splitting $E^{s} = L^{s} \oplus V^{s}$ of index $l$ for the linear cocycle $Df^{t}|_{E^{s}}$ over $f^{t}$. 
\end{defn}

An $s$-splitting for $f^{t}$ may equivalently be thought of as a $u$-splitting for the inverse flow $f^{-t}$. 

We will need the following regularity result for dominated splittings associated to an expanding foliation, which we will apply in particular to the case of $u$-splittings. 

\begin{prop}\label{prop: splitting reg}
Let $M$ be a closed $C^{r}$ Riemannian manifold, $r \geq 2$, let $\W^{u}$ be a foliation of $M$ with uniformly $C^{r}$ leaves, and let $f^{t} :M \rightarrow M$ be uniformly $C^{r}$ along $\W^{u}$ and have $\W^{u}$ as an expanding foliation. 

Suppose that $Df|_{E^{u}}$ has a dominated splitting $E^{u} = L^{u} \oplus V^{u}$. Then $V^{u}$ is uniformly $C^{r-1}$ along $\W^{u}$. Furthermore $V^{u}$ is uniquely integrable and tangent to a foliation $\mathcal{V}^{u}$ of $M$ which is uniformly $C^{r-1}$ along $\W^{u}$.
\end{prop}

The proof of this proposition is standard, and is identical to the proof of Proposition \ref{unstable manifold flow}, treating $V^{u}$ as $E^{u}$ and $L^{u}$ as $E^{c}$. The domination inequality \ref{splitting inequality} will give the contraction in the graph transform. See \cite[Theorem 5.1]{HPS}. 

Recall that $Q_{x}^{u} = E^{u}_{x}/V^{u}_{x}$ is identified with  $(V^{u}_{x})^{\perp}$ for the purposes of its Riemannian structure. We will write $Df^{t}_{x}|_{Q^{u}_{x}}$ for the induced action of $Df^{t}_{x}$ on $Q_{x}^{u}$. We define a continuous function $\zeta_{f}: M \rightarrow (0,\infty)$  by
\begin{equation}\label{potential}
\zeta_{f}(x) = \left.\frac{d}{dt}\right|_{t=0} \log \text{Jac}\left(Df^{t}_{x}|_{Q^{u}_{x}}\right),
\end{equation}
for $x \in M$. By Proposition \ref{prop: splitting reg} we have that $V^{u}$ is uniformly $C^{r-1}$ along $\W^{cu}$. It follows that $Q^{u}$ is uniformly $C^{r-1}$ along $\W^{cu}$ and that $Df^{t}|_{Q^{u}}$ is uniformly $C^{r-1}$ along $\W^{cu}$. 

\begin{prop}
The function $\zeta_{f}$ is H\"older continuous on $M$ and uniformly $C^{r-1}$ along $\W^{cu}$. 
\end{prop}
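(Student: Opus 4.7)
My plan is to derive a pointwise formula for $\zeta_f$ in terms of $Dv_f$ and the invariant subbundles, from which both regularity claims follow transparently. Since $V^u$ is $Df^t$-invariant, in an orthogonal frame adapted to the splitting $E^u = Q^u \oplus V^u$ the matrix of $Df^t|_{E^u}$ is block lower-triangular with diagonal blocks $Df^t|_{Q^u}$ and $Df^t|_{V^u}$, so
\[
\log \mathrm{Jac}(Df^t_x|_{Q^u_x}) = \log \mathrm{Jac}(Df^t_x|_{E^u_x}) - \log \mathrm{Jac}(Df^t_x|_{V^u_x}).
\]
Taking $\left.\tfrac{d}{dt}\right|_{t=0}$ gives $\zeta_f(x) = \operatorname{div}_{E^u}(v_f)(x) - \operatorname{div}_{V^u}(v_f)(x)$, with $\operatorname{div}_W(v_f)(x) := \left.\tfrac{d}{dt}\right|_{t=0}\log \mathrm{Jac}(Df^t_x|_{W_x})$. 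In any local orthonormal frame $(e_i)$ of a $Df^t$-invariant subbundle $W$, the off-diagonal contributions from $\partial_t|_{t=0} e_i(f^t x)$ cancel along the diagonal because $\tfrac{d}{dt}\|e_i(f^tx)\|^2 = 0$, so
\[
\operatorname{div}_W(v_f)(x) = \sum_i \langle Dv_f(x)\, e_i, e_i\rangle = \operatorname{tr}\bigl(Dv_f(x)|_{W_x}\bigr).
\]
This gives the explicit formula $\zeta_f(x) = \operatorname{tr}\bigl(Dv_f(x)|_{E^u_x}\bigr) - \operatorname{tr}\bigl(Dv_f(x)|_{V^u_x}\bigr)$.

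Given this formula, the uniform $C^{r-1}$ regularity of $\zeta_f$ along $\W^{cu}$ is immediate. The generator $v_f$ is uniformly $C^r$ along $\W^{cu}$, so the tangential derivative $Dv_f|_{T\W^{cu}}$ is uniformly $C^{r-1}$ along $\W^{cu}$; $V^u$ is uniformly $C^{r-1}$ along $\W^{cu}$ by the cited Proposition \ref{prop: splitting reg}; and $E^u$ is at least this regular along $\W^{cu}$ via Proposition \ref{unstable manifold flow} applied to the dominated splitting $E^{cu} = E^u \oplus E^c$. Since the trace is a smooth function of its inputs, $\zeta_f$ inherits $C^{r-1}$ regularity along $\W^{cu}$.

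For global H\"older continuity on $M$, both $E^u$ and $V^u$ are H\"older subbundles of $TM$ by Proposition \ref{holder splitting} applied to the dominated splittings $TM = E^u \oplus E^{cs}$ and $E^u = L^u \oplus V^u$. The generator $v_f = \gamma v_h$ is H\"older on $M$ since $\gamma$ is H\"older and $v_h$ is smooth, and I would then argue that the tangential differential $Dv_f|_{T\W^{cu}}$ is itself H\"older on $M$, combining its uniform $C^{r-1}$ leafwise regularity with transverse H\"older variation coming from the H\"older structure of the foliation $\W^{cu}$ and the compatibility of transverse and leafwise data built into the $u$-$C^r$ assumption. Inserting these H\"older ingredients into the trace formula then yields H\"older continuity of $\zeta_f$. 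The main technical obstacle will be confirming the transverse H\"older continuity of $Dv_f|_{T\W^{cu}}$, since only the leafwise structure of $v_f$ is $C^r$ while $v_f$ is merely H\"older transverse to $\W^{cu}$; this must be handled carefully in local foliation charts, using H\"older regularity of the $\W^{cu}$-holonomies and of the ambient bundles $E^u, V^u$ to transport the leafwise derivatives between nearby leaves.
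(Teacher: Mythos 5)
Your derivation of the pointwise trace formula and the leafwise $C^{r-1}$ regularity argument are essentially correct, but this is a genuinely different route from the paper's: the paper works with the line bundle $\wedge^l Q^u$ and writes $\zeta_f$ in terms of the generator of the induced flow on $\wedge^l(V^u)^\perp$, whereas you factor the quotient Jacobian as $\mathrm{Jac}|_{Q^u} = \mathrm{Jac}|_{E^u}/\mathrm{Jac}|_{V^u}$ and express each term as a leafwise divergence. Both routes are viable; the paper's avoids an explicit factorization through $E^u$ by working directly with the $C^\beta$ bundle $(V^u)^\perp \cong Q^u$.

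The gap you flag at the end is a real one, and it is the crux of the proposition. As stated, the argument needs $\nabla v_f|_{T\W^{cu}}$ to vary in a H\"older fashion on all of $M$, but $v_f$ is only assumed H\"older transverse to $\W^{cu}$, so its leafwise derivative has no a priori transverse control. The missing ingredient is precisely the $u$-$C^r$ structure: by definition $v_f = \gamma\, v_h$ with $v_h$ the generator of a \emph{smooth} Anosov flow and $\gamma$ H\"older on $M$ and $C^r$ along $\W^{cu}$. Expanding $\nabla_{e_i} v_f = (\nabla_{e_i}\gamma)\, v_h + \gamma\, \nabla_{e_i} v_h$ and plugging into the trace, the first term contributes $\sum_i (\nabla_{e_i}\gamma)\langle v_h, e_i\rangle$, which vanishes identically because the $e_i$ lie in $E^u$ and the fixed Riemannian structure on $E^{cu}$ makes $E^u \perp E^c \ni v_h$. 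What survives is $\gamma \cdot \bigl(\operatorname{tr}(\nabla v_h|_{E^u}) - \operatorname{tr}(\nabla v_h|_{V^u})\bigr)$: a product of the H\"older $\gamma$ with a quantity built from the smooth $v_h$ and the H\"older subbundles, hence H\"older. Equivalently — and closer to the paper's organization — one should pass to the quotient cocycle $\bar{D}f^t$ on $\bar{E}^u = E^{cu}/E^c$, for which equation \eqref{chain} gives $\bar{D}f^t_x = \bar{D}h^{\tau(t,x)}_x$; differentiating at $t=0$ produces $\zeta_f = \gamma\cdot\zeta_h$ with $\zeta_h$ built from the smooth flow $h^t$ and the H\"older bundle $\bar{V}^u$. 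Either version resolves the obstacle, but without one of them the global H\"older claim does not follow from what you have written. Relatedly, your appeal to Proposition \ref{holder splitting} to get H\"older regularity of $E^u$ directly needs care: $Df^t$ is not defined transverse to $\W^{cu}$, so the natural dominated splitting to invoke lives on the H\"older bundle $\bar{E}^u$ for the H\"older cocycle $\bar{D}f^t = \bar{D}h^{\tau(t,\cdot)}$, not on $TM$ for $Df^t$.
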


\begin{proof}
By equation \eqref{exterior} we may rewrite $\zeta_{f}(x)$ as
\[
\zeta_{f}(x) =  \left.\frac{d}{dt}\right|_{t=0} \log \|\wedge^{l}Df^{t}_{x}|_{\wedge^{l}Q^{u}_{x}}\|.
\]
By Proposition \ref{holder splitting}, the subbundle $V^{u}$ is $C^{\beta}$ for some $0 < \beta \leq 1^{-}$. Hence the bundle $Q^{u} \cong (V^{u})^{\perp}$ over $M$ is $C^{\beta}$. Therefore the line bundle $\wedge^{l}Q^{u}$ is $C^{\beta}$. The flow $f^{t}$ induces a flow $\wedge^{l}Df^{t}$ on $\wedge^{l}Q^{u}$ which is uniformly $C^{r-1}$ along $\W^{cu}$. Identifying $\wedge^{l}Q^{u}$ with $\wedge^{l}(V^{u})^{\perp}$, the induced flow on $\wedge^{l}(V^{u})^{\perp}$ has a generator $\omega: \wedge^{l}TM \rightarrow T(\wedge^{l}TM )$ which is uniformly $C^{r-1}$ along the restriction of $\wedge^{l}E^{cu}$ to each leaf of $\W^{cu}$.  

Let $w_{x}$ denote either choice of unit vector spanning $\wedge^{l}(V^{u}_{x})^{\perp}$; in a neighborhood of any fixed $p \in M$ we can choose $w_{x}$ in a manner which is $C^{\beta}$ on $M$ and uniformly $C^{r-1}$ along $\W^{cu}$. We then have
\[
\zeta_{f}(x) = \log \|\omega(w_{x})\|,
\]
from which we conclude that $\zeta_{f}$ is $C^{\beta}$ and uniformly $C^{r-1}$ along $\W^{cu}$.

\end{proof}

We can therefore apply the thermodynamic formalism of Section \ref{thermo section} to $\zeta_{f}$ over $f^{t}$. Let $\nu$ be an $f^{t}$-invariant ergodic probability measure. By Fubini's theorem and the $f^{t}$-invariance of $\nu$, for each $t > 0$, 
\begin{align*}
\int_{M} \zeta_{f} \, d\nu &= \frac{1}{t}\int_{0}^{t}\int_{M} \zeta_{f}(f^{s}x)\, d\nu(x) \, ds \\
&= \int_{M} \frac{1}{t}\int_{0}^{t} \zeta_{f}(f^{s}x)\, ds \, d\nu(x) \\
&= \int_{M} \frac{1}{t} \log \mathrm{Jac}(Df^{t}|_{Q^{u}_{x}})\, d\nu(x). 
\end{align*}
Since $Df^{t}|_{Q^{u}}$ is conjugate to $Df^{t}|_{L^{u}}$ by the projection $L^{u} \rightarrow Q^{u}$, as $t \rightarrow \infty$ we have for $\nu$-a.e.~ $x \in M$,
\begin{align*}
\lim_{t \rightarrow \infty}  \frac{1}{t} \log \mathrm{Jac}(Df^{t}|_{Q^{u}_{x}}) &= \lim_{t \rightarrow \infty} \sum_{i=1}^{l}\frac{1}{t}\log \sigma_{i}(Df^{t}|_{L^{u}_{x}}) \\
&= \sum_{i=1}^{l} \la_{i}(Df|_{L^{u}},\nu) \\
&=\sum_{i=1}^{l} \la_{i}^{u}(f,\nu),
\end{align*}
with the final line following by applying Lemma \ref{domination exponent gap} to the dominated splitting $E^{u} = L^{u} \oplus V^{u}$. Hence by the dominated convergence theorem, 
\begin{equation}\label{convergence theorem}
\int_{M} \zeta_{f} \, d\nu = \sum_{i=1}^{l} \la_{i}^{u}(f,\nu).
\end{equation}

Setting $q_{f}:=q_{\zeta_{f}}$, equation \eqref{proto horizontal dim} then reads 
\begin{equation}\label{horizontal dim equation}
q_{f} = \sup_{\nu \in \mathcal{M}_{\text{erg}}(f^{t})} \frac{h_{\nu}(f)}{\sum_{i=1}^{l}\la^{u}_{i}(f,\nu)}.
\end{equation}

\begin{defn}[Horizontal dimension and measure]\label{defn horizontal dimension}
The \emph{horizontal dimension} of $f^{t}$ is the number $q_{f}$. The \emph{horizontal measure} for $f^{t}$ is the equilibrium state $\mu_{f}$ for $q_{f}\zeta_{f}$ with respect to $f^{t}$.
\end{defn}

The horizontal dimension and horizontal measure are invariants of conjugacies that are $C^1$ along $\W^{cu}$.

\begin{prop}\label{invariance hor measure}
Let $f^{t}:M \rightarrow M$, $g^{t}:N \rightarrow N$ be $u$-$C^2$ Anosov flows such that $\dim E^{u,f} = \dim E^{u,g} = k$ and such that there are $u$-splittings $E^{u,f} = L^{u,f} \oplus V^{u,f}$ and  $E^{u,g} = L^{u,g} \oplus V^{u,g}$ of index $l$, $1 \leq l \leq k-1$. Suppose that there is a  conjugacy $\varphi: M \rightarrow N$ from $f^{t}$ to $g^{t}$ that is uniformly $C^1$ along $\W^{cu,f}$. Then $q_{f} = q_{g}$ and $\varphi_{*}\mu_{f} = \mu_{g}$. 
\end{prop}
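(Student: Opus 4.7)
My plan is to establish a cohomological relationship between the potentials $\zeta_f$ and $\zeta_g \circ \varphi$ as functions on $M$, and then read off both the equality of horizontal dimensions and the pushforward identity from the variational principle together with the unique solvability of the Bowen equation.

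First I would verify that $\varphi$ intertwines the relevant invariant structures of $f^t$ and $g^t$. As a topological conjugacy, $\varphi$ automatically carries $\W^{cu,f}$ onto $\W^{cu,g}$ and, via the usual backward-contraction characterization, $\W^{u,f}$ onto $\W^{u,g}$. Since $\varphi$ is uniformly $C^1$ along $\W^{cu,f}$, the leafwise derivative $D_{\W^{cu}}\varphi: E^{cu,f} \to E^{cu,g}$ is a continuous bundle map that conjugates $Df^t|_{E^{cu,f}}$ with $Dg^t|_{E^{cu,g}}$ and sends $E^{u,f}$ onto $E^{u,g}$. By the uniqueness of dominated splittings of a fixed index, this derivative must also send $V^{u,f}$ onto $V^{u,g}$, and hence induces a continuous bundle isomorphism $\bar{D}\varphi: Q^{u,f} \to Q^{u,g}$ conjugating the two quotient cocycles.

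Next, from the conjugacy relation $\bar{D}\varphi_{f^t x} \circ Df^t_x|_{Q^{u,f}_x} = Dg^t_{\varphi(x)}|_{Q^{u,g}_{\varphi(x)}} \circ \bar{D}\varphi_x$, taking Jacobians and applying $\tfrac{d}{dt}\bigr|_{t=0}$ of the logarithm gives the cohomological identity
\[
\xi(f^t x) - \xi(x) = \int_0^t \bigl(\zeta_g \circ \varphi - \zeta_f\bigr)(f^s x)\,ds, \qquad \xi(x) := \log \mathrm{Jac}(\bar{D}\varphi_x),
\]
with $\xi$ continuous on $M$. Integrating against any $f^t$-invariant probability measure $\nu$ and using invariance yields $\nu(\zeta_f) = \nu(\zeta_g \circ \varphi)$. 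Combined with the standard transfer identities under topological conjugacy, $h_{\varphi_*\nu}(g) = h_\nu(f)$ and $\varphi_*\nu(\zeta_g) = \nu(\zeta_g \circ \varphi)$, the variational principle \eqref{variational principle} gives $P(q\zeta_f, f^t) = P(q\zeta_g \circ \varphi, f^t) = P(q\zeta_g, g^t)$ for every $q \geq 0$, with maximizers corresponding under $\varphi_*$. By the unique solvability of the Bowen equation in Proposition \ref{proto synchro}(1), this forces $q_f = q_g$, and pushing forward the common equilibrium state over $f^t$ by $\varphi$ produces the equilibrium state over $g^t$, so $\varphi_*\mu_f = \mu_g$.

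The main obstacle I anticipate is the second step above: confirming that $D_{\W^{cu}}\varphi$ genuinely sends the full $u$-splitting of $f^t$ to that of $g^t$, not merely the unstable bundle itself. This rests on the uniqueness of dominated splittings, applied to the conjugate center-unstable cocycles, together with the fact that the unstable foliation is topologically preserved by any conjugacy. The remainder is essentially bookkeeping built on the thermodynamic formalism summarized in Section \ref{thermo section}, and notably does not require $\xi$ (or $\varphi$) to be any more regular than continuous, since the variational principle and the Bowen equation for $\zeta_f$, $\zeta_g$ are insensitive to coboundaries of continuous functions.
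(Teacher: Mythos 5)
Your argument is correct and uses the same essential ingredients as the paper's proof: the leafwise derivative $D_{\W^{cu}}\varphi$ preserves the Anosov and $u$-splittings by uniqueness of dominated splittings, and the logarithmic Jacobian $\xi = \log\mathrm{Jac}(\bar{D}\varphi)$ supplies a continuous coboundary between $\zeta_f$ and $\zeta_g \circ \varphi$. Where you differ is in how you extract $q_f = q_g$. The paper proves this half without invoking the Jacobian cohomology at all: it observes that $D\varphi|_{E^{u,f}}$ conjugates $Df^t|_{E^{u,f}}$ to $Dg^t|_{E^{u,g}}$, so $\lambda_i^u(g,\varphi_*\nu) = \lambda_i^u(f,\nu)$, and then applies the formula $q_f = \sup_\nu h_\nu(f)/\sum_{i=1}^l \lambda_i^u(f,\nu)$ from \eqref{horizontal dim equation}. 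Only afterwards, in a separate step, does the paper produce the cohomology on $L^u$ to conclude $\varphi_*\mu_f = \mu_g$ via the standard ``cohomologous potentials share an equilibrium state'' fact. You instead derive both conclusions uniformly from the single cohomological identity, feeding it through the variational principle to obtain $P(q\zeta_f, f^t) = P(q\zeta_g, g^t)$ for every $q$, and then invoking the unique solvability of the Bowen equation from Proposition \ref{proto synchro}(1). Both routes are valid; yours is somewhat more streamlined and, as you note, sidesteps any question about the H\"older regularity of the transfer function $\xi$ since only the integrated identity $\nu(\zeta_f) = \nu(\zeta_g\circ\varphi)$ against invariant measures is used, whereas the paper's split treatment shows that the $q_f = q_g$ half requires strictly less — only the conjugacy of unstable derivative cocycles, not preservation of the $u$-splitting.
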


\begin{proof}
We first show that $q_{f} = q_{g}$. The ergodic invariant probability measures of $f^{t}$ and $g^{t}$ are in one-to-one correspondence: any invariant measure for $g^{t}$ can be written as $\varphi_{*}\nu$ for some invariant measure $\nu$ of $f^{t}$. Since $\varphi$ is a conjugacy, $h_{\varphi_{*}\nu}(g) = h_{\nu}(f)$. 

Since $\varphi$ is uniformly $C^1$ along $\W^{cu,f}$, $D\varphi:E^{cu,f} \rightarrow E^{cu,g}$ restricts on $E^{u,f}$ to a conjugacy from $Df^{t}|_{E^{u,f}}$ to $Dg^{t}|_{E^{u,g}}$. It follows that for any ergodic invariant probability measure $\nu$ for $f^{t}$ we have 
\[
\la_{i}^{u}(g,\varphi_{*}\nu)= \la_{i}^{u}(f,\nu),
\]
for $1 \leq i \leq k$. It follows that 
\[
q_{f} = \sup_{\nu \in \mathcal{M}_{\text{erg}}(f^{t})} \frac{h_{\nu}(f)}{\sum_{i=1}^{l}\la^{u}_{i}(f,\nu)} = \sup_{\nu \in \mathcal{M}_{\text{erg}}(f^{t})} \frac{h_{\varphi_{*}\nu}(g)}{\sum_{i=1}^{l}\la^{u}_{i}(g,\varphi_{*}\nu)} = q_{g}. 
\]

It remains to show that $\varphi_{*}\mu_{f} = \mu_{g}$. Since $\varphi$ is a homeomorphism it suffices to prove the equivalent statement that $\varphi^{-1}_{*}\mu_{g} = \mu_{f}$. We observe that $\varphi_{*}^{-1}\mu_{g}$ is the equilibrium state of the potential $q_{g}\zeta_{g} \circ \varphi$ with respect to the flow $f^{t}$. We will show that $q_{g}\zeta_{g} \circ \varphi$ is cohomologous to $q_{f} \zeta_{f}$ over the flow $f^{t}$, from which it follows that  $\varphi_{*}^{-1}\mu_{g} = \mu_{f}$.

The conjugacy $D\varphi$ maps the $u$-splitting $E^{u,f} = L^{u,f} \oplus V^{u,f}$ to the $u$-splitting $E^{u,g} = L^{u,g} \oplus V^{u,g}$. Hence $D\varphi$ restricts to a conjugacy from $Df^{t}|_{L^{u,f}}$ to $Dg^{t}|_{L^{u,g}}$. By taking Jacobians of the conjugacy equation 
\[
D\varphi|_{L^{u,f}} \circ Df^{t}|_{L^{u,f}} = Dg^{t}|_{L^{u,g}} \circ D\varphi|_{L^{u,f}},
\]
we conclude that for each $x \in M$ and $t \in \R$, 
\[
\mathrm{Jac}(D\varphi_{f^{t}_{x}}|_{L^{u,f}_{f^{t}_{x}}}) \mathrm{Jac}(Df^{t}_{x}|_{L^{u,f}_{x}}) = \mathrm{Jac}(Dg^{t}_{\varphi(x)}|_{L^{u,g}_{\varphi(x)}})\mathrm{Jac}( D\varphi_{x}|_{L^{u,f}_{x}}).
\]
Set $\xi(x):= \mathrm{Jac}(D\varphi_{x}|_{L^{u,f}_{x}})$. After taking logarithms and rearranging, we obtain for all $t \in \R$ and $x \in M$, 
\[
\xi(f^{t}x)-\xi(x) = \int_{0}^{t}\zeta_{g}(g^{s}\varphi(x))\, ds - \int_{0}^{t}\zeta_{f}(f^{s}x)\, ds. 
\]
Multiplying through by $q_{g} = q_{f}$, we conclude that $q_{g}\zeta_{g}\circ \varphi$ is cohomologous to $q_{f}\zeta_{f}$, as desired.
\end{proof}

The horizontal dimension is also invariant under time change, and the measure class of the horizontal measure is invariant as well. This is the justification for referring to $q_{f}$ as a dimensional quantity.

\begin{prop}\label{time change invariance}
Let $f^{t}$ be a $u$-$C^2$ Anosov flow with $u$-splitting $E^{u,f} = L^{u,f} \oplus V^{u,f}$, and let $g^{t}$ be a time change of $f^{t}$ with $u$-splitting $E^{u,g} = L^{u,g} \oplus V^{u,g}$ and speed multiplier $\gamma$ that is uniformly $C^2$ along $\W^{cu}$. Then $q_{f} = q_{g}$ and $\mu_{g} = \mu_{f}^{\gamma}$. 
\end{prop}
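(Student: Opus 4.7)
The plan is to exploit the variational characterization \eqref{horizontal dim equation} of the horizontal dimension together with the fact that a time change scales both metric entropy and unstable Lyapunov exponents by the common positive factor $\nu(\gamma^{-1})^{-1}$. The two identities that drive everything are Abramov's formula \eqref{abramov}, $h_{\nu^{\gamma}}(g) = \nu(\gamma^{-1})^{-1}h_{\nu}(f)$, and Lemma \ref{smooth time exponents}, $\vec{\la}^{u}(g,\nu^{\gamma}) = \nu(\gamma^{-1})^{-1}\vec{\la}^{u}(f,\nu)$. Combined with the bijection $\nu \leftrightarrow \nu^\gamma$ between ergodic invariant measures from Proposition \ref{new invariant measure}, these identities will yield both conclusions by a direct cancellation argument.

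First I would establish $q_{f}=q_{g}$. Since positive scaling preserves the ordering of the Lyapunov spectrum, Lemma \ref{smooth time exponents} gives $\sum_{i=1}^{l}\la_{i}^{u}(g,\nu^{\gamma}) = \nu(\gamma^{-1})^{-1}\sum_{i=1}^{l}\la_{i}^{u}(f,\nu)$ for each ergodic $f^{t}$-invariant $\nu$. Dividing Abramov's formula by this quantity, the factor $\nu(\gamma^{-1})^{-1}$ cancels, yielding
\[
\frac{h_{\nu^{\gamma}}(g)}{\sum_{i=1}^{l}\la_{i}^{u}(g,\nu^{\gamma})} = \frac{h_{\nu}(f)}{\sum_{i=1}^{l}\la_{i}^{u}(f,\nu)}.
\]
Taking the supremum over the bijection $\nu \leftrightarrow \nu^{\gamma}$ and invoking \eqref{horizontal dim equation} for each flow then gives $q_{g}=q_{f}$.

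For the identification $\mu_{g}=\mu_{f}^{\gamma}$ I would use the equilibrium-state characterization in Proposition \ref{proto synchro}. By equation \eqref{convergence theorem} applied to both flows, $\nu(\zeta_{f})=\sum_{i=1}^{l}\la_{i}^{u}(f,\nu)$ and $\nu^{\gamma}(\zeta_{g})=\sum_{i=1}^{l}\la_{i}^{u}(g,\nu^{\gamma})$, so combining the two scaling identities with $q_{f}=q_{g}$ yields
\[
h_{\nu^{\gamma}}(g) - q_{g}\,\nu^{\gamma}(\zeta_{g}) = \nu(\gamma^{-1})^{-1}\bigl[h_{\nu}(f) - q_{f}\,\nu(\zeta_{f})\bigr].
\]
Choosing $\nu = \mu_{f}$, the bracket on the right vanishes since $\mu_f$ is the equilibrium state of $q_f\zeta_f$ and $P(q_f\zeta_f)=0$. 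Hence $\mu_{f}^{\gamma}$ attains the value $0 = P(q_{g}\zeta_{g})$ on the left, i.e., it is an equilibrium state of $q_{g}\zeta_{g}$ with respect to $g^{t}$. By uniqueness of equilibrium states (Proposition \ref{proto synchro}(2)), $\mu_{g}=\mu_{f}^{\gamma}$.

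The only technical point requiring care is the applicability of Lemma \ref{smooth time exponents}: after time change the strong unstable bundles $E^{u,f}$ and $E^{u,g}$ typically differ, so the scaling identity for $\vec{\la}^{u}$ is not a literal equality of the same cocycle up to time reparametrization. The lemma handles exactly this situation by identifying both unstable cocycles with the common induced cocycle on the center-unstable quotient, where the $u$-splittings of $f^{t}$ and $g^{t}$ project to the same decomposition. Once this identification is in hand, the rest of the argument is a clean cancellation of the Abramov scaling factor, and no deeper obstacle arises.
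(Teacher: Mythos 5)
Your proof is correct and follows essentially the same route as the paper: both hinge on the cancellation of the Abramov factor $\nu(\gamma^{-1})^{-1}$ after combining \eqref{abramov} with Lemma \ref{smooth time exponents}, and then identify $\mu_g$ via the equality case of the variational characterization. The paper states the identification of $\mu_g=\mu_f^\gamma$ a bit more tersely (observing directly that $\mu_f^\gamma$ achieves the supremum in \eqref{horizontal dim equation}), but your pressure-zero reformulation is the same argument.
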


\begin{proof}
By Abramov's formula \eqref{abramov} and Proposition \ref{smooth time exponents} we have for any $f^{t}$-invariant measure $\nu$ and corresponding $\hat{f}^{t}$-invariant measure $\nu^{\gamma}$,
\[
\frac{h_{\nu^{\gamma}}(g)}{\sum_{i=1}^{l} \la_{i}^{u}(g,\nu^{\gamma})} = \frac{\nu(\gamma^{-1})^{-1}h_{\nu}(f)}{\nu(\gamma^{-1})^{-1}\sum_{i=1}^{l} \la_{i}^{u}(f,\nu)} = \frac{h_{\nu}(f)}{\sum_{i=1}^{l} \la_{i}^{u}(f,\nu)}.
\]
The conclusion $q_{g} = q_{f}$ immediately follows. The above also shows that the supremum in \eqref{horizontal dim equation} is achieved by $\mu_{f}^{\gamma}$, hence $\mu_{g} = \mu_{f}^{\gamma}$. 
\end{proof}

We now introduce another important equilibrium state for Anosov flows.

\begin{defn}[SRB measure]\label{SRB}
Let $f^{t}$ be a transitive $u$-$C^2$ Anosov flow. The \emph{SRB (Sinai-Ruelle-Bowen) measure} for $f^{t}$ is the equilibrium state $m_{f}$ of the potential 
\[
J_{f}(x) = \left.\frac{d}{dt}\right|_{t=0} \log \text{Jac}\left(Df^{t}_{x}|_{E^{u}_{x}}\right),
\]
with respect to $f^{t}$.
\end{defn}

The SRB measure for $f^{t}$ is characterized by the property that its conditional measures on the leaves of $\W^{u}$ are equivalent to the Riemannian volume on these leaves \cite{Led84}. 

The following lemma will be fundamental to several of our proofs in the subsequent sections.  

\begin{lem}\label{hor sup}
Let $f^{t}$ be a $u$-$C^2$ Anosov flow with $u$-splitting $E^{u} = L^{u} \oplus V^{u}$. Set $k = \dim E^{u}$, $l = \dim L^{u}$. Suppose that $\la_{k}^{u}(f,\mu_{f}) \leq 2\la_{1}^{u}(f,\mu_{f})$ and that
\begin{equation}\label{dim inequality}
q_{f} \geq \frac{l + 2(k-l)}{l}.
\end{equation}
Then equality holds in \eqref{dim inequality}, $\mu_{f}$ is the SRB measure for $f^{t}$, and there is a constant $c > 0$ such that $\vec{\la}^{u}_{i}(f,\mu_{f}) = c$ for $1 \leq i \leq l$ and $ \la^{u}_{i}(f,\mu_{f}) = 2c$ for $l+1 \leq i \leq k$. 
\end{lem}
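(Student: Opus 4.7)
The plan is a short squeezing argument that combines the defining variational equality for $\mu_f$ with Ruelle's inequality and uses the pinching hypothesis to saturate both simultaneously.

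Write $\lambda_i := \lambda_i^u(f,\mu_f)$ for brevity. Since $\mu_f$ is the equilibrium state of $q_f \zeta_f$ with $P(q_f\zeta_f) = 0$, the variational principle \eqref{variational principle} together with the identity \eqref{convergence theorem} yields the defining equality
\[
h_{\mu_f}(f) \;=\; q_f \sum_{i=1}^{l}\lambda_i.
\]
On the other hand, Ruelle's inequality gives $h_{\mu_f}(f) \leq \sum_{i=1}^{k}\lambda_i$, so
\[
(q_f - 1)\sum_{i=1}^{l}\lambda_i \;\leq\; \sum_{i=l+1}^{k}\lambda_i.
\]
Bounding the left side below by $l\lambda_1$ (using $\lambda_i \geq \lambda_1$ for $i \leq l$) and the right side above by $(k-l)\lambda_k \leq 2(k-l)\lambda_1$ (the pinching hypothesis), this collapses to $(q_f - 1)l \leq 2(k-l)$, i.e. $q_f \leq (l+2(k-l))/l$. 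Combined with the assumption \eqref{dim inequality}, equality holds throughout.

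Tracing back, the sharp cases of the two estimates above force $\lambda_1 = \cdots = \lambda_l =: c$ and $\lambda_{l+1} = \cdots = \lambda_k = 2c$, which is the claimed spectrum. In addition, Ruelle's inequality itself must saturate: $h_{\mu_f}(f) = \sum_{i=1}^{k}\lambda_i = \int J_f\, d\mu_f$ (using the analogue of \eqref{convergence theorem} applied to the full unstable Jacobian). Since Ruelle's inequality applied to an arbitrary ergodic invariant measure gives $P(J_f) \leq 0$ via the variational principle, and $\mu_f$ now attains this bound, we conclude $P(J_f) = 0$ and that $\mu_f$ is \emph{the} equilibrium state of $J_f$; by Definition \ref{SRB} and the uniqueness of equilibrium states recorded in Section \ref{thermo section}, $\mu_f = m_f$.

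There is no substantive obstacle here: the proof is an elementary inequality chase in which the pinching $\lambda_k \leq 2\lambda_1$ is tuned to match the combinatorial bound $(l+2(k-l))/l$ coming from Ruelle. The only point to check carefully is that Ruelle's inequality is indeed available for the $u$-$C^2$ flow $f^t$; since $f^t$ is a H\"older time change of a genuinely $C^2$ Anosov flow $h^t$ with speed multiplier $\gamma$, this is transported from $h^t$ to $f^t$ via Abramov's formula \eqref{abramov} and Lemma \ref{smooth time exponents}, both of which rescale $h_\nu$ and the unstable Lyapunov sums by the same factor $\nu(\gamma^{-1})^{-1}$.
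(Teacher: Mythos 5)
Your proof is correct and follows essentially the same squeezing argument as the paper: combine the variational identity $h_{\mu_f}(f) = q_f\sum_{i\leq l}\lambda_i$ with Ruelle's inequality, then bound the resulting quantity using the pinching hypothesis so that the hypothesized lower bound on $q_f$ forces equality throughout. The only cosmetic difference is that you clear denominators and bound $(q_f-1)\sum_{i\leq l}\lambda_i \leq \sum_{i>l}\lambda_i$, whereas the paper bounds the ratio $\sum_{i\leq k}\lambda_i/\sum_{i\leq l}\lambda_i$ directly; and for the SRB conclusion you detour through $P(J_f)=0$ and uniqueness of equilibrium states rather than invoking the Ledrappier characterization of equality in Ruelle's inequality directly (the paper's route), though both are valid. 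Your closing remark about transporting Ruelle's inequality to $u$-$C^2$ flows via Abramov and Lemma \ref{smooth time exponents} is a useful justification that the paper leaves implicit.
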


\begin{proof}
We will use Ruelle's inequality \cite{R78},
\begin{equation}\label{ruelle}
h_{\mu_{f}}(f) \leq \sum_{i=1}^{k}\la_{i}(f,\mu_{f}),
\end{equation}
with equality if and only if the conditionals of $\mu_{f}$ on the leaves of $\W^{u}$ are equivalent to the Riemannian volume \cite{Led84}.

By equation \eqref{horizontal dim equation} and \eqref{ruelle}, we have
\[
q_{f} \leq \frac{\sum_{i=1}^{k}\la_{i}(f,\mu_{f})}{\sum_{i=1}^{l}\la_{i}^{u}(f,\mu_{f})}.
\]
Set $c = \la_{1}^{u}(f,\mu_{f})$. By the assumed inequality \eqref{dim inequality}, Ruelle's inequality, and the assumed inequality $\la_{k}^{u}(f,\mu_{f}) \leq 2c$, we conclude that 
\begin{align*}
\frac{l + 2(k-l)}{l}  &\leq q_{f} \\
&\leq \frac{\sum_{i=1}^{k}\la_{i}(f,\mu_{f})}{\sum_{i=1}^{l}\la_{i}^{u}(f,\mu_{f})} \\
&\leq 1 + \frac{2(k-l)c}{\sum_{i=1}^{l}\la_{i}^{u}(f,\mu_{f})} \\
&\leq 1 + \frac{2(k-l)c}{lc} \\
&= \frac{l + 2(k-l)}{l}.
\end{align*}
Thus all of the inequalities above are actually equalities. We first conclude that equality holds in \eqref{dim inequality} and that equality holds in Ruelle's inequality \eqref{ruelle}, from which it follows that the unstable conditionals of $\mu_{f}$ are equivalent to the Riemannian volume. This implies that $\mu_{f}$ is the SRB measure for $f^{t}$. Since the unstable Lyapunov exponents $\la_{i}^{u}(f,\mu_{f})$ are positive and increasing in $i$, equality in the third inequality above holds if and only if $\la_{i}^{u}(f,\mu_{f}) = 2c$ for all $l+1 \leq i \leq k$, and equality in the fourth inequality holds if and only if $\la_{i}^{u}(f,\mu_{f}) =c$ for all $1\leq i \leq l$.
\end{proof}

In the special case that $f^{t} = g^{t}_{Y}$ is the geodesic flow of a closed negatively curved Riemannian manifold $Y$ with a $u$-splitting $E^{u} = L^{u} \oplus V^{u}$, we write $q_{Y} = q_{g_{Y}}$ and $\mu_{Y} = \mu_{g_{Y}}$ and refer to $q_{Y}$ and $\mu_{Y}$ as the horizontal dimension and horizontal measure of $Y$ respectively. The flip map $v \rightarrow - v$ on $T^{1}Y$ is a smooth conjugacy from $g^{t}_{Y}$ to $g^{-t}_{Y}$, hence by Proposition \ref{invariance hor measure} we have $q_{g_{Y}} = q_{g_{Y}^{-1}}$ and $\mu_{g_{Y}} = \mu_{g_{Y}^{-1}}$. We conclude that the horizontal dimension and horizontal measure for $g^{t}_{Y}$ and $g^{-t}_{Y}$ coincide. 

\subsection{Negatively curved spaces}\label{subsec:neg curved}
In this section we review the properties of negatively curved Riemannian manifolds that we will require in this paper, in particular the structure of the negatively curved symmetric spaces. References for the general properties of negatively curved Riemannian manifolds discussed below are \cite{BGS} and  \cite{HH}. References for the discussion on the structure of the symmetric spaces of nonconstant negative curvature and the Carnot groups that model their horospheres are \cite{Pan85} and \cite{Bou18}.  

Let $Y$ be a simply connected complete Riemannian manifold. Let $\pi:T^{1} Y \rightarrow Y$ denote the projection. We write $g^{t}:=g^{t}_{Y}$ for the geodesic flow of $Y$ on $T^{1}Y$. We say that $Y$ has \emph{pinched negative curvature} if there are $b \geq a > 0$ such that the sectional curvatures of $Y$ satisfy $-b^{2} \leq K \leq -a^{2}$. A unit vector $v \in T^{1}Y$ defines a Busemann function $B_{v}: Y \rightarrow \R$ as follows: let $\gamma_{v}$ denote the unique geodesic with $\gamma_{v}'(0) = v$.  Then for $y \in Y$,
\[
B_{v}(y) = \lim_{t \rightarrow\infty} d(y,\gamma_{v}(t)) - t.
\]
The function $B_{v}$ is $C^2$ \cite{HH}. Its level sets in $Y$ are $C^2$ submanifolds that are known as \emph{horospheres}. We set $\mathcal{H}^{s}(v) :=B_{v}^{-1}(0)$ and refer to this as the \emph{stable horosphere} through $v$. We set $\mathcal{H}^{u}(v) = \mathcal{H}^{s}(-v)$ and refer to this as the \emph{unstable horosphere} through $v$. 

The gradient vector field $\nabla B_{v}:Y \rightarrow T^{1}Y$ is a $C^1$ unit vector field on $Y$ that satisfies $\nabla B_{v}(\pi(v)) = -v$. We set $R_{v} = -\nabla B_{v}$ and refer to $R_{v}$ as the \emph{radial vector field} in the direction of $v$. Note that $R_{v}(\pi(v)) = v$. We write $\W^{c}(v) = \bigcup_{t \in \R}g^{t}(v)$ for the $g^{t}$-orbit of $v$. We set $\W^{s}(v) = R_{v}(\mathcal{H}^{s}(v))$ and set $\W^{cs}(v) = R_{v}(Y) \subset T^{1}Y$. The integral curves of $R_{v}$ are unit speed geodesics. Thus this vector field generates a $C^1$ flow $f^{t}_{v}: Y \rightarrow Y$ such that for $w \in \W^{cs}(v)$ we have $(\pi \circ g^{t})(w) = (f^{t}_{v} \circ \pi)(w)$. Hence we can write $\W^{cs}(v) = \bigcup_{t \in \R} g^{t}(\W^{s}(v))$.  Using the flip map $v \rightarrow -v$ on $T^{1}Y$, we define $\W^{u}(v) = -\W^{s}(-v)$ and $\W^{cu}(v) = -\W^{cs}(-v)$. Then $\pi(\W^{u}(v)) = \mathcal{H}^{u}(v)$. 

The radial flow $R_{v}$ exponentially contracts the horospheres $\mathcal{H}^{s}(g^{t}v)$, $t \in \R$ \cite{HH}. From this we obtain that there is a $Dg^{t}$-invariant splitting $T^{1}Y = E^{u} \oplus E^{c} \oplus E^{s}$, with $E^{u}$ tangent to $\W^{u}$, $E^{c}$ tangent to $\W^{c}$, and  $E^{s}$ tangent to $\W^{s}$, such that $Dg^{t}$ exponentially expands $E^{u}$ and exponentially contracts $E^{s}$. Hence this splitting functions as an Anosov splitting for $g^{t}$ and we may treat $g^{t}$ as an Anosov flow on the noncompact Riemannian manifold $T^{1}Y$ with invariant foliations $\W^{*}$ described above. 

We write $\p Y$ for the boundary at infinity of $Y$, defined as equivalence classes $[\gamma]$ of geodesic rays $\gamma: [0,\infty) \rightarrow Y$, where $\gamma \sim \beta$ if there is a constant $C > 0$ such that $d(\gamma(t),\beta(t)) \leq C$ for all $t \geq 0$. The space $\p Y$ is homeomorphic to an $(n-1)$-sphere if $\dim Y = n$.   A vector $v \in T^{1}Y$ uniquely determines a point $\theta_{+}(v):=[\gamma_{v}]$ in $\p Y$. We also write $\theta_{-}(v) = [\gamma_{-v}]$. Conversely, given a point $x \in Y$ and two points $y \neq z \in \p Y$, there is a unique $v \in T^{1}Y$ such that $v \in T^{1}Y_{x}$, $\theta_{+}(v) = y$, and $\theta_{-}(v) = z$. 

We view the unstable horospheres of $Y$ as coordinate charts for $\p Y$. Given $v \in T^{1}Y$, we have a homeomorphism $P_{v}^{\mathcal{H}}: \mathcal{H}^{u}(v) \rightarrow \p Y \backslash \{\theta_{-}(v)\}$ defined for $x \in \mathcal{H}^{u}(v)$ by 
\[
P_{v}^{\mathcal{H}}(x) = \theta_{+}(-R_{-v}(x)) = \theta_{+}(\nabla B_{-v}(x)). 
\]
We then have a homeomorphism $P_{v}: \W^{u}(v) \rightarrow \p Y \backslash \{\theta_{-}(v)\}$ defined by $P_{v} = P_{v}^{\mathcal{H}} \circ \pi$, through which we can also view the unstable manifolds of $g^{t}_{Y}$ as coordinate charts for $\p Y$. For $v \in T^{1}Y$ and $w \notin \W^{u}(v)$ we then have a transition homeomorphism
\begin{equation}\label{transition maps}
P_{w}^{-1} \circ P_{v}: \W^{u}(v) \backslash \{x\} \rightarrow \W^{u}(w) \backslash \{y\},
\end{equation}
for $x = P_{v}^{-1}(\theta_{-}(w))$, $y = P_{w}^{-1}(\theta_{-}(v))$ -- if $w \in \W^{cu}(v)$ then we do not have to delete the points $x$ and $y$. The point $(P_{w}^{-1} \circ P_{v})(z)$ is the unique intersection point of $\W^{cs}(z)$ with $\W^{u}(w)$; hence $P_{w}^{-1} \circ P_{v}$ is given by the $\W^{cs}$-holonomy map from $\W^{u}(v) \backslash \{x\}$ to $\W^{u}(w) \backslash \{y\}$.

We specialize now to the case in which $Y$ is a negatively curved symmetric space. These spaces fit into four families: the real hyperbolic spaces $\mathbf{H}^{n}_{\R}$, the complex hyperbolic spaces $\mathbf{H}^{n}_{\C}$, the quaternionic hyperbolic spaces $\mathbf{H}^{n}_{\mathbb{H}}$, and the Cayley hyperbolic plane $\mathbf{H}^{2}_{\mathbb{O}}$. We normalize $\mathbf{H}^{n}_{\R}$ to have sectional curvatures $K \equiv -1$ and we normalize the other nonconstant negative curvature symmetric spaces to have sectional curvatures $-4 \leq K \leq -1$. We give further details below on the nonconstant negative curvature symmetric spaces. 

Set $X = \mathbf{H}^{n}_{\mathbb{K}}$, $\mathbb{K} \in \{\C,\mathbb{H},\mathbb{O}\}$, $n \geq 2$. Fix a vector $x \in T^{1}X$. The isometry group of $X$ acts transitively on the unstable horosphere $\mathcal{H}^{u}(x)$, identifying this horosphere as a homogeneous space for a 2-step Carnot group $G$,  which for $\mathbf{H}^{n}_{\C}$, $\mathbf{H}^{n}_{\mathbb{H}}$,  and $\mathbf{H}^{2}_{\mathbb{O}}$ is the complex, quaternionic, and octonionic Heisenberg group respectively. We recall from \cite{Pan85} that a 2-step Carnot group is a 2-step nilpotent Lie group $G$ whose Lie algebra $\mathfrak{g}$ splits as $\mathfrak{g} = \mathfrak{l} \oplus \mathfrak{v}$, with $\mathfrak{v}$ the center of $\mathfrak{g}$ and $[\mathfrak{l},\mathfrak{l}] = \mathfrak{g}$. The choice of transverse subspace $\mathfrak{l}$ to $\mathfrak{v}$ is not unique. Let $k = \dim X - 1$, $l = \dim \mathfrak{l}$. Then 
\[
k - l =  \dim \mathfrak{v} = \dim_{\R}\mathbb{K}-1,
\] 
where $\dim_{\R}\mathbb{K}$ is the dimension of the division algebra $\mathbb{K}$ as a vector space over $\R$. 

We write $0$ for the identity element in $G$, so that $TG_{0} = \mathfrak{g}$. There is a distinguished plane distribution $L^{x} \subset T\mathcal{H}^{u}(x)$ on $\mathcal{H}^{u}(x)$ which pulls back via the identification $G \rightarrow \mathcal{H}^{u}(v)$ to a left-invariant distribution $L$ on $G$. We set $\mathfrak{l} = L_{0}$. We let $V$ be the left-invariant distribution on $G$ with $V_{0} = \mathfrak{v}$. We equip $G$ with the left-invariant metric coming from the induced metric on $\mathcal{H}^{u}(x)$ from $X$, in which $L$ and $V$ are orthogonal. We write $T_{x}: G \rightarrow \W^{u}(x)$ for the identification of $G$ with $\mathcal{H}^{u}(x)$ composed with the identification of $\mathcal{H}^{u}(x)$ with $\W^{u}(v)$, chosen such that $T_{x}(0) = x$. The map $T_{x}$ is uniquely determined up to precomposition with isometries of $G$ that fix $0$. 

We write the geodesic flow $g^{t}$ on $\W^{u}(x)$ in these coordinates as $A^{t} = T_{g^{t}x}^{-1} \circ g^{t} \circ T_{x}$.  In this identification $A^{t}$ acts as an expanding automorphism on $G$. More precisely, for the left-invariant inner product $\langle\, , \rangle $ on $TG$, we have 
\[
\langle A^{t}v, A^{t}w \rangle = e^{t} \langle v, w \rangle, \; v, w \in L,
\]
and 
\[
\langle A^{t} v, A^{t}w\rangle = e^{4t} \langle v, w \rangle, \; v, w \in V.
\] 
Hence $A^{t}$ conformally expands $L$ by a factor of $e^{t}$ and conformally expands $V$ by a factor of $e^{2t}$. The $A^{t}$-invariant splitting $TG = L \oplus V$ corresponds to a global $Dg^{t}$-invariant splitting $E^{u} = L^{u} \oplus V^{u}$, with $Dg^{t}$ conformally expanding $L^{u}$ by a factor of $e^{t}$ and conformally expanding $V^{u}$ by a factor of $e^{2t}$. Switching to stable horospheres, we likewise have a global $Dg^{t}$-invariant splitting $E^{s} = L^{s} \oplus V^{s}$ with $Dg^{t}$ conformally contracting $L^{s}$ by a factor of $e^{-t}$ and conformally contracting $V^{s}$ by a factor of $e^{-2t}$

For a $C^1$ curve $\gamma$ in $G$, we let $\l(\gamma)$ denote its length in the left-invariant Riemannian metric on $G$. 

\begin{defn}[Carnot-Caratheodory metric] The \emph{Carnot-Caratheodory metric} $\rho_{G}$ on $G$ is defined by 
\begin{equation}\label{carnot metric}
\rho_{G}(x,y) = \inf_{\gamma} \l(\gamma),
\end{equation}
for $x$, $y \in G$, where the infimum is taken over all $C^1$ curves $\gamma$ in $G$ that join $x$ to $y$ and are tangent to $L$. As a shorthand we will refer to Carnot-Caratheodory metrics as CC-metrics. 
\end{defn}

The metric $\rho_{G}$ is invariant under left translations of $G$ and induces the Euclidean topology on $G$. The metric space $(G,\rho_{G})$ has Hausdorff dimension $l + 2(k-l)$ and the Hausdorff measure is the Riemannian volume $m_{G}$ on $G$. The automorphisms $A^{t}$ are conformal in the metric $\rho_{G}$ in the sense that
\[
\rho_{G}(A^{t}x,A^{t}y) = e^{t}\rho_{G}(x,y).
\]
For further discussion, see \cite{Bou18}.

Now let $X$ be closed negatively curved locally symmetric space of constant negative curvature. After rescaling the metric on $X$ by a positive constant if necessary, the universal cover $\t{X}$ will be isometric to one of the symmetric spaces $\mathbf{H}^{n}_{\mathbb{K}}$, $\mathbb{K} \in \{\C,\mathbb{H},\mathbb{O}\}$. The geodesic flow $g^{t}_{X}$ on $T^{1}X$ lifts to the geodesic flow $g^{t}$ on $T^{1}\mathbf{H}^{n}_{\mathbb{K}}$. For $x \in T^{1}X$, by identifying $\W^{u}(x)$ with a lifted leaf $\W^{u}(\t{x})$ for some choice of $\t{x} \in T^{1}\mathbf{H}^{n}_{\mathbb{K}}$ projecting to $x$, we obtain charts $T_{x}: G \rightarrow \W^{u}(x)$ with $T_{x}(0) = x$ that conjugate the action of $g^{t}_{X}$ on unstable manifolds to an automorphism $A^{t}$ on $G$ of the type described above. As before, these charts are only determined up to isometries of $G$ that fix $0$. 

The $Dg^{t}$-invariant splittings $E^{u} = L^{u} \oplus V^{u}$ and $E^{s} = L^{s} \oplus V^{s}$ on  $T^{1}\mathbf{H}^{n}_{\mathbb{K}}$ descend to a corresponding $u$-splitting of index $l$ and $s$-splitting of index $l$ for $Dg^{t}_{X}$. From our description of the action of $Dg^{t}$ on each of these subbundles, we conclude that for any $g^{t}_{X}$-invariant ergodic probability measure $\nu$ the Lyapunov spectrum $\vec{\la}(g_{X},\nu)$ is independent of $\nu$ and consists of one $0$ exponent (corresponding to the flow direction), $l$ exponents of value $1$, $l$ of value $-1$, $k-l$ of value $2$, and $k-l$ of value $-2$. By the variational principle the topological entropy of $g^{t}_{X}$ is then
\[
h_{\mathrm{top}}(g_{X}) = l + 2(k-l). 
\]
The potential $\zeta_{g_{X}}$ defined in \eqref{potential} is simply given by $\zeta_{g_{X}} \equiv l$. Therefore the horizontal measure $\mu_{X}$ is the measure of maximal entropy for $g^{t}_{X}$, which is just the Liouville measure $m_{X}$. Using \eqref{horizontal dim equation}, the horizontal dimension $q_{X}$ is then given by the formula 
\begin{equation}\label{hor dim formula}
q_{X} = \frac{h_{\mathrm{top}}(g_{X})}{l} = \frac{l+2(k-l)}{l}.
\end{equation}

\section{The dynamical rigidity theorems}\label{sec:dyn theorems}

In this section we will state the dynamical rigidity theorems \ref{periodic rigid}, \ref{dyn rigidity measure}, and \ref{dyn rigidity pinching} that serve as analogues of the geometric rigidity theorems \ref{periodic loc symm}, \ref{loc symmetric rigidity}, and \ref{geom pinching rigidity} stated in the introduction. We then explain how these geometric rigidity theorems can be deduced from their dynamical analogues. 

We begin by stating in Section \ref{core} the core dynamical theorem from which we will deduce Theorems \ref{periodic rigid}, \ref{dyn rigidity measure}, and \ref{dyn rigidity pinching}. In Sections \ref{section dyn rigidity} and \ref{section periodic} we state the dynamical rigidity theorems and reduce them to the core Theorem \ref{core theorem}. In Section \ref{subsec:geom rigidity} we deduce the geometric rigidity theorems from the dynamical rigidity theorems. This leaves only Theorem \ref{core theorem} to prove in the remainder of the paper. 

\subsection{The core theorem}\label{core}  We begin with an important definition.

\begin{defn}[$\beta$-fiber bunched]\label{first fiber bunched definition} Given an Anosov flow $f^{t}: M \rightarrow M$ on a Riemannian manifold $M$, a linear cocycle $A^{t}: L \rightarrow L$ on a vector bundle $L$ over $M$ equipped with a Riemannian structure, and a $\beta > 0$, we say that $A^{t}$ is \emph{$\beta$-fiber bunched} if there is a positive integer $N$ and a constant $0 < b < 1$ such that for all $x \in M$, 
\begin{equation}\label{fiber bunching inequality}
\|A^{N}_{x}\|\|(A^{N}_{x})^{-1}\|(\min \{\|Df^{N}_{x}|_{E^{s}_{x}}\|^{-1}, \mathfrak{m}(Df^{N}_{x}|_{E^{u}_{x}})\})^{-\beta} < b.
\end{equation}
\end{defn}
Note that if $A^{t}$ is $\beta$-fiber bunched then it is $\alpha$-fiber bunched for any $\alpha \geq \beta$.

Below we write $L^{cu} = E^{c} \oplus L^{u}$, $L^{cs} = E^{c} \oplus L^{s}$. Given a closed negatively curved locally symmetric space $X$ of nonconstant negative curvature with universal cover homothetic to $\mathbf{H}^{n}_{\mathbb{K}}$, $\mathbb{K} \in \{\C,\mathbb{H},\mathbb{O}\}$, we set $l(X) = \dim X - \dim_{\R}\mathbb{K}+1$. 

\begin{thm}\label{core theorem}
Let $f^{t}: M \rightarrow M$ be a smooth Anosov flow on a smooth Riemannian manifold $M$. Let $X$ be a closed locally symmetric space of nonconstant negative curvature, and suppose that we have an orbit equivalence $\varphi: T^{1}X \rightarrow M$ from $g^{t}_{X}$ to $f^{t}$. We assume that $f^{t}$ has a $u$-splitting $E^{u} = L^{u} \oplus V^{u}$ of index $l(X)$ and an $s$-splitting $E^{s} = L^{s} \oplus V^{s}$ of index $l(X)$. Assume further that there exists $\beta > 0$ such that 
\begin{enumerate}
\item $f^{t}$ is $\beta$-bunched,
\item $Df^{t}|_{L^{u}}$ and $Df^{t}|_{L^{s}}$ are $\beta$-fiber bunched, 
\item $Df^{t}|_{V^{u}}$ and $Df^{t}|_{V^{s}}$ are $1$-fiber bunched,
\item $L^{cu}$ and $L^{cs}$ are $\beta$-H\"older.
\end{enumerate}
Suppose that there exist constants $c_{1},c_{2} > 0$ such that
\[
\vec{\la}^{u}(f,\mu_{f}) = c_{1}\vec{\la}^{u}(g_{X}),
\]
and
\[
\vec{\la}^{u}(f^{-1},\mu_{f^{-1}}) = c_{2}\vec{\la}^{u}(g_{X}).
\]
Then  there is a $C^{\infty}$ orbit equivalence $\hat{\varphi}$ from $g^{t}_{X}$ to $f^{t}$ that is flow related to $\varphi$. 
\end{thm}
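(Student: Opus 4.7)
The plan is to exploit the matching Lyapunov spectra to rigidify the geometry of the unstable (and symmetrically stable) foliations of $f^{t}$ to that of horospheres in $\mathbf{H}^{n}_{\mathbb{K}}$, then to promote the orbit equivalence $\varphi$ leaf-by-leaf to a smooth Carnot conformal map, and finally to glue these to a global smooth orbit equivalence by a Journ\'e-type regularity argument.

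First I would use the matched spectrum to pin down the Lyapunov exponents of $f^{t}$. Since $\vec{\la}^{u}(g_{X})$ has $l(X)$ entries equal to $1$ and $k-l(X)$ entries equal to $2$, the hypothesis forces $\la_{i}^{u}(f,\mu_{f}) = c_{1}$ for $i \leq l(X)$ and $\la_{i}^{u}(f,\mu_{f}) = 2c_{1}$ otherwise, and analogously on the stable side for $f^{-1}$. Combined with the pinching $\la_{k}^{u} \leq 2\la_{1}^{u}$, Lemma \ref{hor sup} gives that $\mu_{f}$ is the SRB measure for $f^{t}$, provided the horizontal-dimension inequality $q_{f} \geq q_{X}$ holds; I would establish this separately, comparing $h_{\mu_{f}}(f)$ against the Bowen--Margulis entropy of $f^{t}$ transferred through the orbit equivalence $\varphi$ and invoking Abramov's formula \eqref{abramov} for the speed multiplier of the time change carrying $g^{t}_{X}$ to a conjugate of $f^{t}$.

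Next I would install a Carnot-group structure on each unstable leaf $\W^{u}(x)$ of $f^{t}$ that mirrors the one carried by horospheres in $\mathbf{H}^{n}_{\mathbb{K}}$. Using the non-stationary normal form machinery of Section \ref{sec:uniform} applied to the dominated splitting $E^{u} = L^{u} \oplus V^{u}$, together with $\beta$-fiber bunching on $L^{u}$ and $V^{u}$ and the fact that each of the subcocycles $Df^{t}|_{L^{u}}$ and $Df^{t}|_{V^{u}}$ is now conformal in its Lyapunov spectrum, I would construct smooth trivializations $T_{x}:G \to \W^{u}(x)$ in which $Df^{t}$ becomes the dilation automorphism $A^{t}$ of the appropriate Carnot group $G$. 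The $1$-fiber bunching hypothesis on $V^{u}$ is what permits the smooth matching of the two-step Lie-algebra bracket, while $\beta$-bunching together with the $\beta$-H\"older regularity of $L^{cu}$ is what provides enough transverse regularity for these charts along $\W^{cu}$.

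With these charts in hand I would reinterpret $\varphi$, after a preliminary flow reparametrization to align time-$t$ displacements along orbits, as a family of homeomorphisms $\varphi_{x}: \W^{u}_{g_{X}}(x) \to \W^{u}_{f}(\varphi(x))$ intertwining the dilations $A^{t}$ on both sides. Arguing as in Section \ref{sec:pansuderiv}, this intertwining property combined with the Gibbs property of $\mu_{f}$ (Proposition \ref{time change Gibbs property}) forces the conjugated map $T^{-1}_{\varphi(x)} \circ \varphi_{x} \circ T_{x}$ to be quasisymmetric in the Carnot--Carath\'eodory metric, whence Pansu's differentiation theorem supplies an almost-everywhere Pansu derivative that is a graded Lie-algebra automorphism of $\mathfrak{g}$ commuting with $A^{t}$; the only such automorphisms are Carnot homotheties, so the a.e.\ derivative is essentially constant and $\varphi_{x}$ is itself a homothety. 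Running the symmetric argument on stable leaves and invoking a Journ\'e-type regularity lemma to combine $C^{\infty}$-regularity along $\W^{cu}$ and $\W^{cs}$ would give a global $C^{\infty}$ orbit equivalence $\hat{\varphi}$, flow-related to $\varphi$ by construction. I expect the main obstacle to lie in this third step: guaranteeing genuine quasisymmetry (not merely a.e.\ quasiconformality) of $T^{-1}_{\varphi(x)} \circ \varphi_{x} \circ T_{x}$ so that Pansu's theorem genuinely applies, and then verifying that the leafwise homotheties thus produced depend H\"older-continuously on the basepoint $x$ with enough strength to serve as input to Journ\'e.
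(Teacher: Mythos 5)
Your proposal has the right overall scaffolding — dominated-splitting normal forms, Hamenst\"adt/Carnot metrics, Pansu differentiability, Journ\'e — but several of the claimed steps either presuppose conclusions of the theorem or bypass genuinely difficult intermediate work that the paper cannot avoid. I'll highlight the most concrete gaps.

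First, the claim that you can ``install a Carnot-group structure on each unstable leaf $\W^{u}(x)$ of $f^{t}$ that mirrors the one carried by horospheres'' and obtain charts $T_{x}:G \to \W^{u}(x)$ in which $Df^{t}$ \emph{becomes} the Carnot dilation $A^{t}$ is circular: $f^{t}$ is a perturbation with no a priori Lie-theoretic structure on its unstable leaves, and obtaining such a structure is essentially the content of the theorem. What the normal-form machinery of Section~\ref{sec:uniform} actually yields, given $1$-fiber bunching of $Df^{t}|_{L^{u}}$, is an invariant flat connection on the \emph{quotient bundle} $Q^{u}=E^{u}/V^{u}$ and affine charts $e_{x}:Q^{u}_{x}\to \mathcal{Q}^{u}(x)$ on the quotient manifold $\W^{u}(x)/\mathcal{V}^{u}$ only (Proposition~\ref{quotient charts}). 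Whether $\varphi$ preserves the vertical foliation and whether the vertical fibers interact with the horizontal ones like a two-step bracket are both nontrivial: the former is the content of Proposition~\ref{vert preserve}, proved via a rectangle-commutator argument \emph{after} the horizontal Pansu derivative is already known to be an isometry; the latter is never assumed.

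Second, your mechanism for $q_{f}\geq q_{X}$ — comparing $h_{\mu_{f}}(f)$ to a Bowen--Margulis entropy transported through $\varphi$ with Abramov — will not yield the right inequality, because orbit equivalence only gives an Abramov relation \emph{after} a time change with a speed multiplier you don't control a priori, and that multiplier enters both the entropy and the integral $\nu(\zeta_{f})$ in a way that cancels out the information you need. The paper instead obtains $q_{f} = q_{X}$ via a Hausdorff-dimension argument: $\varphi$ induces a quasisymmetric homeomorphism $\varphi_{x}$ on unstable leaves in the Hamenst\"adt metrics (Proposition~\ref{quasisymmetry orbit}, which uses equivariance and compactness, not the Gibbs property), and Tyson's theorem (Theorem~\ref{Tyson theorem}) plus the positive modulus of horizontal curve families in $G$ gives $N_{f}\geq N_{G}$; combined with Ruelle's inequality in Lemma~\ref{hor sup} this forces equality. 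Your attribution of the quasisymmetry to the Gibbs property conflates two separate ingredients.

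Third, you skip entirely over promoting the orbit equivalence to a genuine \emph{conjugacy}. This is not automatic: the paper must prove (Lemma~\ref{conjugacy criterion} and Proposition~\ref{prop:orbit to conj}) that the leafwise push-forward of unstable volume under $\varphi_{x}$ has a nontrivial absolutely continuous part, and that argument leans on absolute continuity of quasisymmetric Sobolev maps on $\mathrm{mod}_{N}$-a.e.\ curve and the positive-measure foliated curve family construction — none of which appears in your sketch. Without the conjugacy (rather than mere orbit equivalence) in hand, the intertwining identity for $Dg^{t}$ and $Dh^{t}$ used in the Pansu-derivative argument is not available.

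Finally, your proposed conclusion that the Pansu derivative is a Carnot homothety (``the only such automorphisms commuting with $A^{t}$ are Carnot homotheties'') overreaches: the paper only extracts that the horizontal derivative $\Phi:L^{u,g}\to L^{u,h}$ is a.e.\ defined, then uses an invertibility/rigidity argument plus Proposition~\ref{quasi cohomologous} to upgrade it to a H\"older cocycle conjugacy — no claim of pointwise constancy or homothety is made or needed. And the vertical differentiability in the quaternionic/Cayley case requires restricting $f^{t}$ to the images of totally geodesic $\mathbf{H}^{q}_{\R}$-unit tangent bundles (Lemma~\ref{smoothing}) and applying a separate quasiconformality theorem (Proposition~\ref{one exp to conf}) there, before Journ\'e can glue. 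So Journ\'e alone does not close the argument; the vertical direction needs an entirely separate rigidity input that your sketch omits.
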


We will see below that hypotheses (1)-(4) are straightforward to verify in the settings that we consider. 

Recall that $X_{c}$ denotes the Riemannian manifold obtained by scaling the metric on $X$ by a constant $c > 0$. The geodesic flow $g^{t}_{X_{c}}$ is smoothly conjugate to the flow $g^{ct}_{X}$ and consequently $g^{t}_{X}$ is smoothly orbit equivalent to $g^{t}_{X_{c}}$. Hence in Theorem \ref{core theorem} it suffices to consider $X$ that are normalized to have sectional curvatures $-4 \leq K \leq -1$, as in Section \ref{subsec:neg curved}. We will always assume that $X$ has this normalization in what follows. 

\subsection{The dynamical rigidity theorems}\label{section dyn rigidity} We first give the dynamical analogue of Theorem \ref{loc symmetric rigidity}. We will need the following lemma. 

\begin{lem}\label{neighborhood lemma}
Let $X$ be a closed negatively curved locally symmetric space of nonconstant curvature. Then there is a $C^1$ open neighborhood $\mathcal{V}_{X}$ of $g^{t}_{X}$ in the space of smooth flows on $T^{1}X$ such that if $f^{t} \in \mathcal{V}_{X}$ then there is an orbit equivalence $\varphi$ from $g^{t}_{X}$ to $f^{t}$ and $f^{t}$ satisfies hypotheses (1)-(4) of Theorem \ref{core theorem} with $\beta = \frac{1}{5}$. 
\end{lem}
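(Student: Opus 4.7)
The plan is to separate the lemma into two independent parts: producing the orbit equivalence $\varphi$, and checking hypotheses (1)--(4) of Theorem~\ref{core theorem} with $\beta = \tfrac{1}{5}$. The first part is immediate from structural stability of Anosov flows (Fact~\ref{structural stability}), since $g^t_X$ is a smooth Anosov flow; every smooth $f^t$ in a small enough $C^1$-neighborhood will be Anosov and orbit equivalent to $g^t_X$. For the second part, I will verify each condition at $g^t_X$ itself with strict slack, and then appeal to $C^1$-openness. The structural ingredients come from Section~\ref{subsec:neg curved}: $g^t_X$ admits smooth $Dg^t_X$-invariant splittings $E^u = L^u \oplus V^u$ and $E^s = L^s \oplus V^s$ of index $l(X)$ coming from the symmetric structure, on which $Dg^t_X$ acts conformally with rates $e^t$, $e^{2t}$, $e^{-t}$, $e^{-2t}$ respectively; the corresponding splittings for nearby $f^t$ come from the cone criterion (Proposition~\ref{cone criterion}) applied to strictly invariant cones around $V^{u,X}$ and $V^{s,X}$.

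Verifying (1)--(3) at $g^t_X$ is routine. One computes $\|Dg^t_X|_{E^u}\|/\mathfrak{m}(Dg^t_X|_{E^u}) \asymp e^t$ and $\min\{\|Dg^t_X|_{E^s}\|^{-1}, \mathfrak{m}(Dg^t_X|_{E^u})\} \asymp e^t$, making $g^t_X$ $1$-bunched and hence giving margin for (1) with $\beta = \tfrac{1}{5}$. Conformality of $Dg^t_X$ on each of $L^u, V^u, L^s, V^s$ forces the operator norm and the inverse of the conorm to coincide, so $\|Dg^N_X|_L\|\,\|(Dg^N_X|_L)^{-1}\| \equiv 1$, and the fiber-bunching inequality~\eqref{fiber bunching inequality} reduces to $(e^N)^{-\beta} < b$, which is trivially satisfied for all sufficiently large $N$ and any $\beta > 0$. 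Each of these conditions is a finite-time strict inequality in norms of $Df^t$ restricted to subbundles obtained from a contracting graph transform on invariant cones, so both the cocycle and the subbundles depend continuously on $f^t$ in the $C^1$ topology, and each condition is open with the required exponent.

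Hypothesis (4) is the main obstacle and will require slightly more care. My plan is to apply Proposition~\ref{strong holder splitting} to the dominated splitting $E^{cu} = L^{cu} \oplus V^u$, whose domination is guaranteed by the exponent gap $1 < 2$ between $L^{cu}$ and $V^u$ at $g^t_X$ and persists for nearby $f^t$ by the cone criterion. The delicate point is that $E^{cu,f}$ is no longer smooth for perturbed $f^t$: I will use the fact that $g^t_X$ is $1$-$u$-bunched, together with $C^1$-openness of this condition and the remark following Definition~\ref{beta bunching}, to conclude that $E^{cu,f}$ is $C^{1^-}$. Feeding $\alpha = 1^-$ and the nearly-symmetric values $\la_1 \approx 0$, $\la_2 \approx 1$, $\chi_1 \approx 2$, $a \approx 2$ into the formula of Proposition~\ref{strong holder splitting} yields a H\"older exponent for $L^{cu,f}$ close to $\tfrac{1}{3}$, comfortably above $\tfrac{1}{5}$. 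The analogous statement for $L^{cs}$ follows by running the same argument for $f^{-t}$. Continuity of the Lyapunov-exponent and Lipschitz estimates in $f^t$ then lets me shrink $\mathcal{V}_X$ so that the H\"older exponents of $L^{cu,f}$ and $L^{cs,f}$ both remain above $\tfrac{1}{5}$, completing the proof.
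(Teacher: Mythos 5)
Your proposal follows the same overall architecture as the paper's proof: structural stability for the orbit equivalence, persistence of dominated splittings under $C^1$-perturbation, $C^1$-openness of the bunching and fiber-bunching conditions (verified trivially at $g^t_X$ via conformality on $L^*$, $V^*$), and an application of Proposition~\ref{strong holder splitting} to quantify the H\"older exponent in hypothesis~(4). So the two are in close agreement on (1)--(3).

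The one genuine difference is in (4), and here your version has both a harmless variant and a small but real error. The variant: you apply Proposition~\ref{strong holder splitting} directly to the splitting $E^{cu,f} = L^{cu,f} \oplus V^{u,f}$, whereas the paper first quotients by the flow direction and works with $\bar{E}^{u,f} = \bar{L}^{u,f} \oplus \bar{V}^{u,f}$ on the quotient bundle. Since the H\"older exponent formula in Proposition~\ref{strong holder splitting} depends only on $\chi_1, \lambda_2, \chi_2, a, \alpha$ and not on $\lambda_1$, this yields the same answer; the flow direction only contributes $\lambda_1 \approx 0$, which affects the auxiliary constraint but not the exponent. The error: you assert that $1$-$u$-bunching is $C^1$-open and conclude that $E^{cu,f}$ is $C^{1^-}$ (locally Lipschitz) for nearby $f^t$. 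This is too strong --- $1$-bunching is a boundary case, not an open one. For $f^t$ near $g^t_X$ the cocycle ratio grows like $e^{(1+\epsilon)t}$ while the bunching denominator is only $e^{(1-\epsilon)t}$, so the $\beta=1$ inequality fails as $t\to\infty$ for any fixed constant. What is true, and what the argument actually needs, is that for any fixed $\alpha < 1$, the $\alpha$-bunching condition holds with slack at $g^t_X$ and is $C^1$-open, so nearby $f^t$ is $\alpha$-bunched and $E^{cu,f}$ is $C^{\alpha}$. Since the exponent formula is continuous in $\alpha$, feeding $\alpha$ close to (but strictly less than) $1$ still yields $\beta$ close to $1/3$, comfortably above $1/5$. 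Once you make this correction the argument goes through. (Incidentally, your computation that $g^t_X$ is $\beta$-bunched for all $\beta \le 1$ appears to be correct and stronger than the paper's stated range $\beta < 1/2$; either bound suffices for the final claim, as the paper's more conservative value still produces $\beta \to 1/4$.)
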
 

\begin{proof}
The existence of the orbit equivalence $\varphi$ follows from structural stability \ref{structural stability}, once $\mathcal{V}_{X}$ is small enough. By the persistence of dominated splittings under perturbation (see the discussion after Proposition \ref{cone criterion}), once $f^{t}$ is $C^1$ close enough to $g^{t}_{X}$ there is a $u$-splitting $E^{u,f} = L^{u,f} \oplus V^{u,f}$ and an $s$-splitting $E^{s,f} = L^{s,f} \oplus V^{s,f}$ $C^0$-close to the corresponding $u$-splitting $E^{u,g} = L^{u,g} \oplus V^{u,g}$ and $s$-splitting $E^{s,g} = L^{s,g} \oplus V^{s,g}$ for $g^{t}_{X}$.

For a fixed $\beta > 0$, the $\beta$-bunching condition \eqref{bunching inequality} is easily seen to be $C^1$ open, since the derivative cocycle $Df^{t}$ will be $C^0$ close to the derivative cocycle $Dg^{t}_{X}$ and both $E^{u,f}$ and $E^{s,f}$ are $C^0$ close to $E^{u,g}$ and $E^{s,g}$ respectively. A direct computation using the structure of $g^{t}_{X}$ outlined in Section \ref{subsec:neg curved} shows that the flow $g^{t}_{X}$ is $\beta$-bunched for any $0 < \beta < \frac{1}{2}$, hence taking $\beta = \frac{1}{5}$ we conclude that the flow $f^{t}$ will be $\frac{1}{5}$-bunched if it is $C^1$ close enough to $g^{t}_{X}$. This gives hypothesis (1) of Theorem \ref{core theorem}.

For any fixed  $\beta > 0$, the fiber bunching inequality \eqref{fiber bunching inequality} is also $C^0$ open in the cocycle $A^{t}$ and $C^1$ open in the flow $f^{t}$. Since $Dg^{t}_{X}$ is conformal on $L^{u,g}$, $L^{s,g}$, $V^{u,g}$ and $V^{s,g}$, it is $\frac{1}{5}$-fiber bunched on these subbundles and therefore $Df^{t}$ will be $\frac{1}{5}$-fiber bunched on $L^{u,f}$, $L^{s,f}$, $V^{u,f}$ and $V^{s,f}$ as well for $f^{t}$ $C^1$ close enough to $g^{t}_{X}$. Since $\frac{1}{5}$-fiber bunching implies $1$-fiber bunching, this gives hypotheses (2) and (3) of Theorem \ref{core theorem}.

We next address the exponent of H\"older regularity for $L^{cu,f}$ and $L^{cs,f}$. These bundles have the same H\"older regularity as the quotient bundles $\bar{L}^{u,f} = L^{cu,f}/E^{c,f}$ and $\bar{L}^{s,f} = L^{cs,f}/E^{c,f}$. We define $\bar{V}^{u,f}$ and $\bar{V}^{s,f}$ analogously. We then have  dominated splittings $\bar{E}^{u,f} = \bar{L}^{u,f} \oplus \bar{V}^{u,f}$ and $\bar{E}^{s,f} = \bar{L}^{s,f} \oplus \bar{V}^{s,f}$  for $\bar{D}f^{t}$ on $\bar{E}^{u}$ and $\bar{E}^{s}$ respectively. We will perform the computation below for $\bar{L}^{u,f}$; the computation for $\bar{L}^{s,f}$ is analogous. 

Let $0 \leq \delta < \frac{1}{2}$ and $0 < \alpha < \frac{1}{2}$ be given. For $f^{t}$ $C^1$ close enough to $g^{t}_{X}$ (depending on $\delta$ and $\alpha$), we may assume that $f^{t}$ is $\alpha$-bunched and that there is a constant $C > 0$ such that for all $x \in T^{1}X$ and $t \geq 0$ the inequalities
\[
C^{-1}e^{(1-\delta)t} \leq \mathfrak{m}(\bar{D}f^{t}_{x}|_{\bar{L}^{u,f}_{x}}) \leq  \|\bar{D}f^{t}_{x}|_{\bar{L}^{u,f}_{x}}\| \leq Ce^{(1+\delta)t},
\]
and
\[
C^{-1}e^{(2-\delta)t} \leq \mathfrak{m}(\bar{D}f^{t}_{x}|_{\bar{V}^{u,f}_{x}}) \leq  \|\bar{D}f^{t}_{x}|_{\bar{V}^{u,f}_{x}}\| \leq Ce^{(2+\delta)t},
\]
hold, as well as the inequality 
\[
\max\{\|(Df^{t}_{x})^{-1}\|,\|Df^{t}_{x}\|\} \leq Ce^{(2+\delta)t}. 
\]
By replacing the norm $\|\cdot\|$ with an equivalent norm we can assume that the bundles  $\bar{L}^{u,f}$ and $\bar{V}^{u,f}$ are orthogonal to one another at the cost of the inequalities above being satisfied for a possibly larger constant $C$. By further individually modifying the restriction of this norm to $\bar{L}^{u,f}$ and $\bar{V}^{u,f}$ using Proposition \ref{new metric} we can then assume the above inequalities hold for all $t > 0$ without the constant $C$. By Proposition \ref{strong holder splitting} we then conclude that $\bar{L}^{u,f}$ is $\beta$-H\"older with exponent
\[
\beta = \left( \frac{2-\delta - (1+\delta)}{(2+\delta)\alpha+2+\delta - (1+\delta)}\right)\alpha = \frac{\alpha-2\delta \alpha}{1+(2+\delta)\alpha}.
\]
As $f^{t} \rightarrow g^{t}_{X}$ in the $C^1$ topology, we can take $\delta \rightarrow 0$ and $\alpha \rightarrow \frac{1}{2}$. We conclude that $\beta \rightarrow \frac{1}{4}$. Thus for $f^{t}$ $C^1$ close enough to $g^{t}_{X}$ the subbundles $\bar{L}^{u,f}$ and $\bar{L}^{s,f}$ will be $\frac{1}{5}$-H\"older. This provides hypothesis (4) of Theorem \ref{core theorem}.
\end{proof}

Let $\mathcal{V}_{X}$ be the $C^1$ open neighborhood of $g^{t}_{X}$ given by Lemma \ref{neighborhood lemma}.

\begin{thm}\label{dyn rigidity measure}
Let $X$ be a closed locally symmetric space of nonconstant negative curvature. Let $f^{t} \in \mathcal{V}_{X}$. Suppose that there exist constants $c_{1},c_{2} > 0$ such that
\[
\vec{\la}^{u}(f,\mu_{f}) = c_{1}\vec{\la}^{u}(g_{X}),
\]
and
\[
\vec{\la}^{u}(f^{-1},\mu_{f^{-1}}) = c_{2}\vec{\la}^{u}(g_{X}).
\]
Then $g^{t}_{X}$ is $C^{\infty}$ orbit equivalent to $f^{t}$.
\end{thm}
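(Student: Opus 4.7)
The plan is to observe that Theorem \ref{dyn rigidity measure} is essentially a direct packaging of Theorem \ref{core theorem}: the neighborhood $\mathcal{V}_{X}$ was constructed in Lemma \ref{neighborhood lemma} precisely so that any $f^{t} \in \mathcal{V}_{X}$ comes pre-equipped with every structural hypothesis of the core theorem, while the Lyapunov spectrum hypotheses are imported verbatim. The entire proof then reduces to checking that the two hypothesis sets match.

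First I would invoke Lemma \ref{neighborhood lemma} to obtain, for $f^{t} \in \mathcal{V}_{X}$, an orbit equivalence $\varphi: T^{1}X \rightarrow T^{1}X$ from $g^{t}_{X}$ to $f^{t}$ and the validity of hypotheses (1)--(4) of Theorem \ref{core theorem} with $\beta = 1/5$. The remaining structural assumption of Theorem \ref{core theorem} --- the existence of a $u$-splitting and an $s$-splitting of index $l(X)$ for $f^{t}$ --- follows from the persistence of dominated splittings under $C^{1}$-small perturbations discussed after Proposition \ref{cone criterion}: the description of the symmetric structure in Section \ref{subsec:neg curved} provides a $u$-splitting $E^{u,g} = L^{u,g} \oplus V^{u,g}$ and an $s$-splitting $E^{s,g} = L^{s,g} \oplus V^{s,g}$ for $g^{t}_{X}$ of index $l(X)$, and by shrinking $\mathcal{V}_{X}$ if necessary these persist to $f^{t}$ as dominated splittings of the same index, with the corresponding subbundles $C^{0}$-close to those of $g^{t}_{X}$. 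This matches exactly what Lemma \ref{neighborhood lemma} already produces in its proof.

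With all structural hypotheses in place, the two hypotheses
\[
\vec{\la}^{u}(f,\mu_{f}) = c_{1}\vec{\la}^{u}(g_{X}), \qquad \vec{\la}^{u}(f^{-1},\mu_{f^{-1}}) = c_{2}\vec{\la}^{u}(g_{X}),
\]
are imported directly into Theorem \ref{core theorem}, which then produces a $C^{\infty}$ orbit equivalence $\hat{\varphi}$ from $g^{t}_{X}$ to $f^{t}$ flow related to $\varphi$. This yields the desired $C^{\infty}$ orbit equivalence between $g^{t}_{X}$ and $f^{t}$.

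There is no genuine obstacle in this argument; all of the substantive work has been isolated in Theorem \ref{core theorem} and in the verification in Lemma \ref{neighborhood lemma} that $\beta$-bunching, $\beta$-fiber bunching, $1$-fiber bunching, and $\beta$-H\"older regularity of $L^{cu}$, $L^{cs}$ are $C^{1}$-open conditions around $g^{t}_{X}$ (with $\beta = 1/5$ accommodating the quantitative estimate $\beta \to 1/4$ coming from Proposition \ref{strong holder splitting} as $f^{t} \to g^{t}_{X}$ in $C^{1}$). The only thing one needs to be attentive to is that the index of the persistent dominated splittings genuinely agrees with $l(X)$, but this is automatic since domination is preserved along with the dimensions of the invariant subbundles.
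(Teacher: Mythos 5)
Your proposal is correct and matches the paper's argument exactly: the paper deduces Theorem \ref{dyn rigidity measure} from Theorem \ref{core theorem} with the single remark ``By the construction of $\mathcal{V}_{X}$, Theorem \ref{dyn rigidity measure} follows immediately from Theorem \ref{core theorem},'' and your expansion of that remark — invoking Lemma \ref{neighborhood lemma} for hypotheses (1)--(4) and the persistence of the index-$l(X)$ dominated splittings for the $u$- and $s$-splitting hypotheses — is precisely the content hidden in that one-line justification. Your observation that the splitting existence is only established inside the proof of Lemma \ref{neighborhood lemma} rather than in its statement is a fair point of care, but it does not change the argument.
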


By the construction of $\mathcal{V}_{X}$, Theorem \ref{dyn rigidity measure} follows immediately from Theorem \ref{core theorem}. We also give the analogue of Theorem \ref{geom pinching rigidity} for flows. Below we set $k = \dim X - 1$. 

\begin{thm}\label{dyn rigidity pinching}
Let $X$ be a closed locally symmetric space of nonconstant negative curvature.  Let $f^{t} \in \mathcal{V}_{X}$. Suppose that $\min\{q_{f},q_{f^{-1}}\} \geq q_{X}$ and that 
\[
\la_{k}^{u}(f,\mu_{f}) \leq 2\la_{1}^{u}(f,\mu_{f}),
\]
and
\[
\la_{k}^{u}(f^{-1},\mu_{f^{-1}}) \leq 2\la_{1}^{u}(f^{-1},\mu_{f^{-1}}).
\]
Then  $g^{t}_{X}$ is $C^{\infty}$ orbit equivalent to $f^{t}$.
\end{thm}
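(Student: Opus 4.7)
The plan is to reduce Theorem \ref{dyn rigidity pinching} to Theorem \ref{dyn rigidity measure} by using Lemma \ref{hor sup} to pin down the unstable Lyapunov spectra of $f^{t}$ and $f^{-t}$ exactly. Since $f^{t} \in \mathcal{V}_{X}$, Lemma \ref{neighborhood lemma} supplies the $u$-splitting $E^{u,f} = L^{u,f} \oplus V^{u,f}$ of index $l := l(X)$ together with an $s$-splitting $E^{s,f} = L^{s,f} \oplus V^{s,f}$ of the same index; the latter is a $u$-splitting for the inverse flow $f^{-t}$, whose unstable bundle is $E^{s,f}$. Writing $k = \dim X - 1$, formula \eqref{hor dim formula} gives $q_{X} = (l + 2(k-l))/l$, so the assumed lower bounds $q_{f}, q_{f^{-1}} \geq q_{X}$ become exactly the inequality \eqref{dim inequality} needed to invoke Lemma \ref{hor sup}.

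Applying Lemma \ref{hor sup} to $f^{t}$, the combination of $q_{f} \geq (l + 2(k-l))/l$ with the $1/2$-pinching hypothesis $\la_{k}^{u}(f,\mu_{f}) \leq 2\la_{1}^{u}(f,\mu_{f})$ forces the existence of a constant $c_{1} > 0$ with $\la_{i}^{u}(f,\mu_{f}) = c_{1}$ for $1 \leq i \leq l$ and $\la_{i}^{u}(f,\mu_{f}) = 2c_{1}$ for $l+1 \leq i \leq k$. The same lemma applied to $f^{-t}$ yields an analogous $c_{2} > 0$. Since the computation in Section \ref{subsec:neg curved} produces
\[
\vec{\la}^{u}(g_{X}) = (\underbrace{1,\ldots,1}_{l}, \underbrace{2,\ldots,2}_{k-l}),
\]
we conclude that $\vec{\la}^{u}(f,\mu_{f}) = c_{1}\vec{\la}^{u}(g_{X})$ and $\vec{\la}^{u}(f^{-1},\mu_{f^{-1}}) = c_{2}\vec{\la}^{u}(g_{X})$.

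This is verbatim the hypothesis of Theorem \ref{dyn rigidity measure}, so invoking that theorem produces the desired smooth orbit equivalence between $g^{t}_{X}$ and $f^{t}$. The proof is thus essentially a packaging argument given the tools already in place; there is no genuinely difficult step beyond Lemma \ref{hor sup} itself, whose content is the rigidity produced by coupling $1/2$-pinching with the lower bound $q_{f} \geq q_{X}$ through Ruelle's entropy inequality. The only mild point to verify carefully is that the $u$-splitting hypotheses transfer correctly to $f^{-t}$ — namely that an $s$-splitting for $f^{t}$ of index $l$ is a $u$-splitting for $f^{-t}$ of the same index with the same horizontal dimension — which is immediate from the definitions combined with Lemma \ref{neighborhood lemma}.
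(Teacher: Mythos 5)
Your proposal matches the paper's proof essentially step for step: translating $q_X$ via \eqref{hor dim formula}, applying Lemma \ref{hor sup} to both $f^t$ and $f^{-t}$ using the pinching hypotheses, identifying $\vec{\la}^{u}(g_{X})$ as $(1,\ldots,1,2,\ldots,2)$, and then invoking Theorem \ref{dyn rigidity measure}. The observation that the $s$-splitting of $f^t$ serves as the $u$-splitting for $f^{-t}$ is correct and is exactly what the paper uses implicitly.
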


We will show here that Theorem \ref{dyn rigidity measure} implies Theorem \ref{dyn rigidity pinching}. Set $l:=l(X)$.  By \eqref{hor dim formula}, the hypotheses of Theorem \ref{dyn rigidity pinching} imply that 
\[
\min\{q_{f},q_{f^{-1}}\}  \geq \frac{l + 2(k-l)}{l}.
\]
Then by Lemma \ref{hor sup}, there is a constant $c_{1} > 0$ such that $\la_{i}^{u}(f,\mu_{f}) = c_{1}$ for $1 \leq i \leq l$ and $\la_{i}^{u}(f,\mu_{f}) = 2c_{1}$ for $l+1 \leq i \leq k$. But this just implies that $\vec{\la}^{u}(f,\mu_{f}) = c_{1} \vec{\la}^{u}(g_{X})$. The same reasoning using $f^{-t}$ instead gives a constant $c_{2} > 0$ such that $\vec{\la}^{u}(f^{-1},\mu_{f^{-1}}) = c_{2} \vec{\la}^{u}(g_{X})$. Hence $f^{t}$ satisfies the hypotheses of Theorem \ref{dyn rigidity measure}, and so Theorem \ref{dyn rigidity pinching} reduces to Theorem \ref{dyn rigidity measure}.

\subsection{The periodic orbit theorem}\label{section periodic} The dynamical counterpart of Theorem \ref{periodic loc symm} is stated below. We recall that for a periodic point of a continuous flow $f^{t}:M \rightarrow M$, $\nu^{(p)}$ denotes the unique $f^{t}$-invariant  ergodic probability measure supported on the orbit of $p$. 

\begin{thm}\label{periodic rigid}
Let $f^{t}: M \rightarrow M$ be a smooth Anosov flow on a smooth Riemannian manifold $M$. Let $X$ be a closed negatively curved locally symmetric space, $\dim X \geq 3$. Suppose that there exists an orbit equivalence $\varphi: T^{1}X \rightarrow M$ from $g^{t}_{X}$ to $f^{t}$, and suppose that for each periodic point $p$ of $f^{t}$ there exists a constant $\omega(p) > 0$ such that
\[
\vec{\la}(f,\nu^{(p)}) = \omega(p)\vec{\la}(g_{X}).
\]
Then there is a $C^{\infty}$ orbit equivalence $\hat{\varphi}$ from $g^{t}_{X}$ to $f^{t}$ that is flow related to $\varphi$. 
\end{thm}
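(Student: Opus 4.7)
The plan is to deduce Theorem \ref{periodic rigid} from the core Theorem \ref{core theorem} together with the constant negative curvature case established in \cite[Theorem 1.1]{Bu1}. If $X$ has constant negative curvature the conclusion follows from \cite{Bu1} once Fact \ref{structural stability} is used to replace the given orbit equivalence by a flow-related H\"older one. Henceforth I assume $X$ has nonconstant negative curvature, with universal cover $\mathbf{H}^{n}_{\mathbb{K}}$ for $\mathbb{K} \in \{\C,\mathbb{H},\mathbb{O}\}$, and aim to verify all hypotheses of Theorem \ref{core theorem} for the given flow $f^{t}$.

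The first task is to produce the $u$-splitting and $s$-splitting of index $l(X)$. From the description in Section \ref{subsec:neg curved}, $\vec{\la}^{u}(g_{X})$ consists of $l(X)$ exponents equal to $1$ and $k - l(X)$ exponents equal to $2$, so the periodic proportionality hypothesis $\vec{\la}(f,\nu^{(p)}) = \omega(p)\vec{\la}(g_{X})$ yields at each periodic orbit a uniform singular value gap of the form $\sigma_{l(X)}(Df^{t(p)}_{p}|_{E^{u}})/\sigma_{l(X)+1}(Df^{t(p)}_{p}|_{E^{u}}) \leq e^{-\omega(p) t(p)}$. The Anosov splitting for $f^{t}$ provides uniform bounds $0 < a \leq \omega(p) \leq b < \infty$, so this gap is exponentially uniform across periodic orbits. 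Combining Bochi--Gourmelon's criterion (Theorem \ref{domination gap criterion}) with denseness of periodic orbits and Anosov shadowing, I would extend the gap uniformly to all points and obtain the required $u$-splitting $E^{u} = L^{u} \oplus V^{u}$; the $s$-splitting is produced symmetrically from $f^{-t}$.

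Next I would verify the bunching conditions (1)--(4) of Theorem \ref{core theorem}. The periodic orbit data forces all $l(X)$ Lyapunov exponents of $Df^{t}|_{L^{u}}$ to coincide (equal to $\omega(p)$) at each periodic orbit, and similarly on $V^{u}$, $L^{s}$, $V^{s}$. Applying a Livšic--Kalinin-type periodic approximation rigidity theorem for H\"older linear cocycles equipped with a dominated splitting, this equality of exponents at periodic orbits forces each of these cocycles to be uniformly quasiconformal after a H\"older change of framing, which establishes $\beta$-fiber bunching for some $\beta > 0$. The $2:1$ ratios between exponents on $V^{u}$ and $L^{u}$, and on $V^{s}$ and $L^{s}$, together with the uniform quasiconformality, then give both the $\beta$-bunching of $f^{t}$ itself and the required H\"older regularity of $L^{cu}$, $L^{cs}$ via Proposition \ref{strong holder splitting}.

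Finally, to verify the horizontal measure hypothesis, I exploit that under the uniform quasiconformal structure above the unstable Lyapunov exponents of any $f^{t}$-invariant ergodic measure $\nu$ on $L^{u}$ and $V^{u}$ are given by $\nu$-integrals of the continuous "conformal derivative" potentials extracted from the fiber bunching. Periodic measures are weak-$*$ dense in the invariant measures (Sigmund's theorem), and on each periodic measure the unstable spectrum has the prescribed form of $l(X)$ copies of some value $c_{1}$ and $k - l(X)$ copies of $2c_{1}$, so the same must hold for $\mu_{f}$, giving $\vec{\la}^{u}(f,\mu_{f}) = c_{1}\vec{\la}^{u}(g_{X})$. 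The identical argument applied to $f^{-t}$ produces $c_{2}$; Theorem \ref{core theorem} then yields the smooth orbit equivalence flow related to $\varphi$. The main technical obstacle is the passage from pointwise periodic data to global uniform estimates: specifically, showing that periodic singular value gaps really do imply a uniform dominated splitting (not merely a measurable invariant splitting) and that the restricted cocycles are forced to be quasiconformal with the sharp H\"older exponents that Theorem \ref{core theorem} demands as input.
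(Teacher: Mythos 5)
Your strategy — verifying the hypotheses of Theorem \ref{core theorem} directly from the periodic data — matches the paper's approach at a high level, but there are two genuine gaps in the middle of the argument.

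First, the claim that a ``Liv\v{s}ic--Kalinin-type periodic approximation rigidity theorem'' forces $Df^{t}|_{L^{u}}$ (and the other restricted cocycles) to be \emph{uniformly quasiconformal} is incorrect. Equality of Lyapunov exponents at all periodic orbits does not in general imply uniform quasiconformality of a H\"older cocycle; it only yields approximate conformality, i.e.\ growth estimates of the form $C^{-1}_{\e}e^{(\chi-\e)n} \le \mathfrak{m}(A^{n}_{x}) \le \|A^{n}_{x}\| \le C_{\e}e^{(\chi+\e)n}$ for any $\e>0$, which is exactly what the Kalinin--Sadovskaya subadditivity lemma (Lemma \ref{KS lemma}) provides. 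These approximate estimates are what give $\beta$-fiber bunching and $\beta$-bunching for an explicit $\beta$ — and that is all Theorem \ref{core theorem} asks for. Uniform quasiconformality of $Df^{t}|_{L^{u}}$ is established much later in the paper (Proposition \ref{exp to conf}), and its proof uses the orbit equivalence to $T^{1}\t{X}$ and holonomy structure in an essential way; it is not extractable from periodic spectral data plus a dominated splitting.

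Second, you apply Theorem \ref{core theorem} directly to $f^{t}$, but the paper applies it to a \emph{smooth time change} $\hat{f}^{t}$ of $f^{t}$ whose derivative cocycle has growth exponents uniformly close to $(1,\dots,1,2,\dots,2)$. This normalization is essential to verify hypotheses (1)--(4) with the explicit exponent $\beta = 1/5$ demanded by the $\mathcal{V}_{X}$ setup of Lemma \ref{neighborhood lemma}; for the raw flow $f^{t}$ the constant $\omega(p)$ can drift across $[a,b]$ with $b/a$ large and there is no reason the $\beta$-H\"older regularity of $L^{cu}$ from Proposition \ref{strong holder splitting} matches the $\beta$ appearing in the fiber-bunching hypotheses. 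The time change is constructed in Proposition \ref{periodic dom split} precisely to close this gap. Relatedly, the existence of the dominated splitting itself is not a simple combination of Bochi--Gourmelon with density and shadowing; the paper's argument first synchronizes the Jacobian by a time change, approximates by smooth time changes, and then runs a contradiction argument using Theorem \ref{domination gap criterion} together with the specification property and explicit singular-value bookkeeping — a self-contained and rather delicate proof. Finally, Kalinin's periodic approximation theorem (as used in Lemma \ref{approximate}) is the correct tool for transferring the spectrum condition from periodic orbits to arbitrary invariant measures; Sigmund's weak-$*$ density alone does not suffice since Lyapunov exponents are not weak-$*$ continuous.
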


The reduction of Theorem \ref{periodic rigid} to Theorem \ref{core theorem} in the case that $X$ has nonconstant negative curvature is significantly more difficult than the previous reductions and occupies the rest of this section.  A significant part of this reduction relies heavily on work of Kalinin \cite{Kal} through Lemma \ref{approximate} below. The case in Theorem \ref{periodic rigid} in which $X$ has constant negative curvature is much simpler and can be deduced  through a straightforward simplification of the arguments from the nonconstant negative curvature case. Since the constant negative curvature case of Theorem \ref{periodic loc symm} was already established in our previous work \cite{Bu1}, we will not go over this simplification explicitly in this paper.

We will more generally establish the existence of a dominated splitting for an Anosov flow $f^{t}$ from a particular configuration of the Lyapunov exponents associated to its periodic orbits. Our arguments are similar to those of Velozo \cite{Vel}. We let $f^{t}: M \rightarrow M$ be a smooth transitive Anosov flow on a closed Riemannian manifold $M$. We assume that the Anosov splitting $TM = E^{u} \oplus E^{c} \oplus E^{s}$ of $f^{t}$ satisfies $\dim E^{u} = \dim E^{s} = k$, and let $1 \leq l \leq k-1$ be given. Let 
\[
\vec{\chi} = (-2,\dots,-2,-1,\dots,-1,0,1\dots,1,2\dots,2),
\] 
where there are $l$ $1$'s and $-1$'s, and $k-l$ $2$'s and $-2$'s. 

\begin{lem}\label{approximate}
Suppose that for each periodic point $p$ of $f^{t}$ there is a constant $\omega(p)>0$ such that
\[
\vec{\la}(f,\nu^{(p)}) = \omega(p)\vec{\chi}.
\]
Then for each $f^{t}$-invariant ergodic probability measure $\nu$ there is a constant $\omega(\nu)>0$ such that we have
\[
\vec{\la}(f,\nu) = \omega(\nu)\vec{\chi}.
\]
\end{lem}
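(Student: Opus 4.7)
The plan is to combine Kalinin's periodic approximation theorem for Lyapunov exponents \cite{Kal} with the hypothesis that every periodic orbit has spectrum proportional to $\vec{\chi}$. The heart of the argument is a density/approximation step: one finds periodic orbits whose Lyapunov spectra are arbitrarily close to $\vec{\la}(f, \nu)$, and then reads off the required scalar relation in the limit.

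First I would invoke the version of Kalinin's periodic approximation theorem appropriate to the derivative cocycle of the smooth Anosov flow $f^t$: for any ergodic $f^t$-invariant probability measure $\nu$, there is a sequence of periodic points $p_n$ of $f^t$ such that $\vec{\la}(f,\nu^{(p_n)}) \to \vec{\la}(f,\nu)$ componentwise. By the hypothesis of the lemma, each approximating spectrum has the form $\vec{\la}(f,\nu^{(p_n)}) = \omega(p_n)\vec{\chi}$, so the sequence $\omega(p_n)\vec{\chi}$ converges in $\R^{2k+1}$. Since $\vec{\chi}$ is a fixed nonzero vector (its top entry equals $2$ because $l \leq k-1$), this forces $\omega(p_n) \to \omega(\nu)$ for some $\omega(\nu) \geq 0$, and passing to the limit gives $\vec{\la}(f,\nu) = \omega(\nu)\vec{\chi}$, as required.

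Positivity $\omega(\nu) > 0$ is then immediate from uniform hyperbolicity. Indeed, the Anosov expansion estimate \eqref{unstable inequality} supplies a universal constant $a > 0$ such that every Lyapunov exponent of $Df^t$ along $E^u$ with respect to any ergodic invariant measure is at least $a$; in particular the top exponent $\la_{2k+1}(f,\nu) \geq a$. Combined with $\la_{2k+1}(f,\nu) = 2\omega(\nu)$, this yields $\omega(\nu) \geq a/2 > 0$, completing the proof.

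The main obstacle is making the appropriate version of Kalinin's periodic approximation theorem precise in this setting. As originally stated in \cite{Kal}, the theorem applies to H\"older linear cocycles over hyperbolic diffeomorphisms and gives simultaneous approximation of the full Lyapunov spectrum by the spectra of periodic orbits. To use it for our Anosov flow one applies it to the $Df$-invariant hyperbolic subbundle $E^u \oplus E^s$ under the time-$1$ map $f^1$ (the flow direction always contributes a zero exponent and so is handled trivially, and the period of a closed orbit for the flow is accommodated by passing to a suitable iterate or by a direct closing-lemma argument for flows). Once this technical reduction is in hand, the rest of the proof is formal and follows from the three steps above.
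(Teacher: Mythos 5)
Your proof matches the paper's approach: both invoke Kalinin's periodic approximation theorem to produce periodic orbits $p_n$ with $\vec{\la}(f,\nu^{(p_n)}) \to \vec{\la}(f,\nu)$, use the proportionality hypothesis at each $p_n$, and pass to the limit to extract the scalar $\omega(\nu)$. You additionally make explicit the positivity $\omega(\nu)>0$ via the uniform Anosov expansion bound and flag the diffeomorphism-to-flow reduction needed for Kalinin's theorem, both of which the paper leaves implicit.
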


\begin{proof}
First observe that $\la_{k+1}(f^{t},\nu) = \chi_{k+1} = 0$ for all $f^{t}$-invariant ergodic probability measures $\nu$, as this is the exponent in the flow direction $E^{c}$. For all $i \neq k+1$  we have $\chi_{i} \neq 0$ and therefore 
\[
\omega(p) = \frac{\la_{i}(f,\nu^{(p)})}{\chi_{i}}.
\]
By Kalinin's approximation theorem \cite{Kal}, for any $f^{t}$-invariant ergodic probability measure $\nu$ there is a sequence of periodic points $p_{n}$ of $f^{t}$ such that $\vec{\la}(f,\nu^{(p_{n})}) \rightarrow \vec{\la}(f,\nu)$. We set 
\[
\omega(\nu) :=\lim_{n \rightarrow \infty} \omega(p_{n}) = \lim_{n \rightarrow \infty}  \frac{\la_{i}(f,\nu^{(p_{n})})}{\chi_{i}} =\frac{\la_{i}(f,\nu)}{\chi_{i}},
\]
which holds independently of $i$. It follows that $\vec{\la}(f,\nu) = \omega(\nu)\vec{\chi}$.
\end{proof}

\begin{prop}\label{periodic dom split}
Suppose that for each periodic point $p$ of $f^{t}$ there is a constant $\omega(p)>0$ such that
\[
\vec{\la}(f,\nu^{(p)}) = \omega(p)\vec{\chi}.
\]
Then there is a $u$-splitting $E^{u} = L^{u} \oplus V^{u}$ of index $l$ and an $s$-splitting $E^{s} = L^{s} \oplus V^{s}$ of index $l$ for $f^{t}$. Furthermore, for every small enough $\e > 0$  there is a smooth time change $\hat{f}^{t}$ of $f^{t}$ for which there is a constant $C = C_{\e} \geq 1$ such that for every $x \in M$ and $t \geq 0$, 
\begin{equation}\label{first periodic inequality}
C^{-1}e^{(1-\e)t} \leq \mathfrak{m}(D\hat{f}^{t}_{x}|_{\hat{L}^{u}_{x}}) \leq  \|D\hat{f}^{t}_{x}|_{\hat{L}^{u}_{x}}\| \leq Ce^{(1+\e)t},
\end{equation}
and
\begin{equation}\label{second periodic inequality}
C^{-1}e^{(2-\e)t} \leq \mathfrak{m}(D\hat{f}^{t}_{x}|_{\hat{V}^{u}_{x}}) \leq  \|D\hat{f}^{t}_{x}|_{\hat{V}^{u}_{x}}\| \leq Ce^{(2+\e)t},
\end{equation}
and the same inequalities hold for $\hat{f}^{-t}$ with $\hat{L}^{s}$ and $\hat{V}^{s}$ replacing $\hat{L}^{u}$ and $\hat{V}^{u}$. Here $\hat{E}^{u} = \hat{L}^{u} \oplus \hat{V}^{u}$ and $\hat{E}^{s} = \hat{L}^{s} \oplus \hat{V}^{s}$ denote the corresponding $u$-splitting and $s$-splitting for $\hat{f}^{t}$. 
\end{prop}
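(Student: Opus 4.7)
The plan is as follows. By Lemma \ref{approximate}, every ergodic $f^t$-invariant probability measure $\nu$ satisfies $\vec{\la}(f,\nu) = \omega(\nu)\vec{\chi}$, and the Anosov bounds $C^{-1}e^{at} \leq \mathfrak{m}(Df^t|_{E^u_x}) \leq \|Df^t|_{E^u_x}\| \leq Ce^{bt}$ force $\omega(\nu) \in [a/2, b/2]$ uniformly. In particular the measure-theoretic gap $\la_{l+1}^u(f,\nu) - \la_l^u(f,\nu) = \omega(\nu)$ is uniformly bounded below by $a/2 > 0$, and analogously on $E^s$ for $f^{-t}$.

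The first task is to promote this uniform measure-theoretic gap to the uniform exponential gap between $\sigma_l$ and $\sigma_{l+1}$ of $Df^n|_{E^u}$ required by the Bochi--Gourmelon criterion (Theorem \ref{domination gap criterion}). If such a gap failed, one could find $x_n \in M$ and $t_n \to \infty$ with $\frac{1}{t_n}\log\bigl(\sigma_l(Df^{t_n}_{x_n}|_{E^u})/\sigma_{l+1}(Df^{t_n}_{x_n}|_{E^u})\bigr) \to 0$. Combining Kalinin's approximation theorem \cite{Kal} with the sub-additive variational principle applied to $\log\|\wedge^j Df^n|_{E^u}\|$ (sub-additive in $n$) for $j \in \{k-l, k-l+1\}$, the empirical measures along these orbit segments would yield an ergodic invariant $\nu$ on which $\la_{l+1}^u(f,\nu) - \la_l^u(f,\nu) = 0$, contradicting the uniform lower bound $a/2$. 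The analogous argument applied to $f^{-t}$ on $E^s$ produces the $s$-splitting of index $l$. This translation from uniform measure-theoretic gap to uniform singular-value gap is the main obstacle of the proof.

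Next, using the horizontal potential from Section \ref{subsec:hormeasure}, define
\[
\gamma_0(x) = \frac{1}{l}\left.\frac{d}{dt}\right|_{t=0}\log\mathrm{Jac}\bigl(Df^t_x|_{Q^u_x}\bigr), \qquad Q^u = E^u/V^u,
\]
which is positive and H\"older continuous on $M$, with $\nu(\gamma_0) = \omega(\nu)$ for every ergodic $\nu$ by \eqref{convergence theorem}. Given $\e > 0$, uniformly approximate $\gamma_0$ to within $\e$ by a smooth positive function and let $\gamma$ be its reciprocal; define $\hat f^t$ to be the smooth time change of $f^t$ with speed multiplier $\gamma$. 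Lemma \ref{smooth time exponents} then yields $\vec\la^u(\hat f,\nu^\gamma) = \nu(\gamma^{-1})^{-1}\vec\la^u(f,\nu) = (1+O(\e))(\underbrace{1,\ldots,1}_l,\underbrace{2,\ldots,2}_{k-l})$ on every ergodic measure, and by the $f^t \leftrightarrow f^{-t}$ symmetry of the Lyapunov data the same time change simultaneously normalizes the stable spectrum. A chain-rule computation of the type in Proposition \ref{time change splitting}, applied to the $D\hat f^t$-invariant sub-bundles $L^u \oplus E^c$ and $V^u \oplus E^c$ (and analogously on the stable side), then produces dominated splittings $\hat E^u = \hat L^u \oplus \hat V^u$ and $\hat E^s = \hat L^s \oplus \hat V^s$ for $\hat f^t$.

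Finally, to convert the uniform measure-theoretic spectrum into the pointwise inequalities \eqref{first periodic inequality}--\eqref{second periodic inequality}, I invoke the sub-additive variational principle once more. The sequences $\log\|D\hat f^t_x|_{\hat L^u_x}\|$ (sub-additive) and $-\log\mathfrak{m}(D\hat f^t_x|_{\hat L^u_x})$ (sub-additive, since $\mathfrak{m}$ is super-multiplicative) have $\lim_{t\to\infty}\frac{1}{t}\max_{x\in M}$ equal to the suprema of the corresponding exponents over ergodic $\hat\nu$, all of which equal $1+O(\e)$ by the previous step. Hence for all sufficiently large $t$ one has both $\max_x\frac{1}{t}\log\|D\hat f^t_x|_{\hat L^u_x}\| \leq 1+2\e$ and $\min_x\frac{1}{t}\log\mathfrak{m}(D\hat f^t_x|_{\hat L^u_x}) \geq 1-2\e$, uniformly in $x$; after absorbing the finite-$t$ discrepancy into a constant $C = C_\e$ and choosing the original $\e$ small enough, one obtains \eqref{first periodic inequality}. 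The same argument on $\hat V^u$ with target value $2+O(\e)$ gives \eqref{second periodic inequality}, and the identical procedure applied to $\hat f^{-t}$ on $\hat L^s$ and $\hat V^s$ completes the proof.
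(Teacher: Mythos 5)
Your overall plan matches the paper's in outline, and the time-change construction and the final pointwise estimates via Lemma \ref{KS lemma} (applied to the four obvious sub-additive sequences on $\hat L^u$, $\hat V^u$ once domination is in hand) are essentially correct. The gap is in the step where you establish the dominated splitting. You claim that if the Bochi--Gourmelon criterion fails along $(x_n, t_n)$, then the sub-additive variational principle applied to $\log\|\wedge^{j}Df^{n}|_{E^u}\|$ (for $j=k-l, k-l+1$) together with Kalinin's approximation theorem produces an ergodic $\nu$ with $\la_{l+1}^u(f,\nu)-\la_l^u(f,\nu)=0$. This does not follow. The individual singular value satisfies $\log\sigma_l(Df^n_x) = \log\|\wedge^{k-l+1}Df^n_x\|-\log\|\wedge^{k-l}Df^n_x\|$, and the gap $\log\sigma_l-\log\sigma_{l+1}$ is a signed combination of three sub-additive quantities that is neither sub- nor super-additive; the sub-additive ergodic theorem and variational principle only yield one-sided inequalities between time averages of each $\log\|\wedge^j\|$ and the corresponding sum of exponents of the weak-$*$ limit of empirical measures, and these one-sided bounds point in incompatible directions when you try to control the gap. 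So a small finite-time singular-value gap along $(x_n,t_n)$ does not force a zero Lyapunov gap for the limit measure. This passage from a uniform Lyapunov gap on all invariant measures to a uniform singular-value gap is precisely the nontrivial content of the proposition, and the paper does not obtain it from abstract sub-additivity: after a preliminary H\"older time change normalizing the Jacobian of $\bar{D}f^t$ on $\bar{E}^u=E^{cu}/E^c$ (which, note, does not require the $u$-splitting at all), it invokes topological mixing and Bowen specification to shadow a putative bad orbit segment by a genuine periodic orbit $p_n$, and then carries out explicit estimates (using the $C^{1-\e}$ regularity of $\bar{E}^u$ and an adapted metric) showing that the small singular-value gap along the segment forces $\sum_{i=l+1}^k\la_i^u(\hat f,\nu^{(p_n)})$ to be too small, contradicting the hypothesis on periodic orbit spectra. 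This shadowing-and-comparison argument is indispensable and is the missing idea in your proof.

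A secondary remark: you establish domination for $f^t$ first and time change afterwards, while the paper time-changes first (using $\bar E^u$, not $Q^u$, so no circularity arises) and then proves domination for the normalized flow $\hat f^t$. The paper's ordering is chosen because the normalized Jacobian bound \eqref{Jacobian inequality} feeds directly into the singular-value contradiction and makes the estimates tractable; your ordering could also work in principle, but only after the domination argument itself is repaired along the lines above, and you would still want to normalize first before invoking specification because the shadowing estimates degrade without the Jacobian bound.
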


\begin{proof}
We will show that $f^{t}$ has a $u$-splitting $E^{u} = L^{u} \oplus V^{u}$; the existence of the $s$-splitting $E^{s} = L^{s} \oplus V^{s}$ follows by applying our arguments to the inverse flow $f^{-t}$. Recalling that $\bar{E}^{u} = E^{cu}/E^{c}$, by considering the projection $E^{u} \rightarrow \bar{E}^{u}$ it suffices to show that $\bar{D}f^{t}: \bar{E}^{u} \rightarrow \bar{E}^{u}$ admits a dominated splitting $\bar{E}^{u} = \bar{L}^{u} \oplus \bar{V}^{u}$. Since it's easily checked that dominated splittings are preserved under time changes, it suffices to show that there is a time change $\hat{f}^{t}$ of $f^{t}$ such that the associated linear cocycle $\bar{D}\hat{f}^{t}:\bar{E}^{u} \rightarrow \bar{E}^{u}$ admits a dominated splitting. 

We begin with a few reductions. Set $\check{\chi}^{u} = \sum_{i=1}^{k} \chi_{i}^{u}$ and set $\vec{\chi}^{u} = (1,\dots,1,2,\dots,2)$ to be the vector consisting of the positive coordinates of $\vec{\chi}$. By Lemma \ref{approximate} we have $\vec{\la}(f,\nu) = \omega(\nu)\vec{\chi}$ for all $f^{t}$-invariant ergodic probability measures $\nu$, for corresponding constants $\omega(\nu)$. We may write
\begin{equation}\label{omega equation}
\omega(\nu) = \frac{\nu(\log \mathrm{Jac}(\bar{D}f))}{\check{\chi}^{u}}.
\end{equation}
We set
\[
\gamma(x) =  \check{\chi}^{u}\cdot \left(\left.\frac{d}{dt}\right|_{t=0}\log \mathrm{Jac}(\bar{D}f_{x}^{t})\right)^{-1}.
\]
Note that $\gamma$ is smooth along $\W^{cu}$. Let $g^{t}$ be the time change of $f^{t}$ with speed multiplier $\gamma$. Then $\bar{D}g^{t}$ is a time change of $\bar{D}f^{t}$ with the same speed multiplier $\gamma$. Let $\tau$ be the additive cocycle over $g^{t}$ generated by $\gamma$, and let $\xi$ denote the additive cocycle over $f^{t}$ generated by $\gamma^{-1}$. Then $f^{\tau(t,x)}x = g^{t}x$. By Lemma \ref{inverse additive} we have $\xi(\tau(t,x),x) = t$. Expanding this expression, we obtain  
\begin{equation}\label{Jacobian time change}
\check{\chi}^{u}t = \log \mathrm{Jac}(\bar{D}f^{\tau(t,x)}_{x}) = \log \mathrm{Jac}(\bar{D}g^{t}_{x}).
\end{equation}


Set 
\begin{equation}\label{omega change}
\omega(\nu^{\gamma}) = \omega(\nu)\nu( \gamma^{-1})^{-1} = \frac{\nu(\log \mathrm{Jac}(\bar{D}f))}{\nu(\gamma^{-1})\check{\chi}^{u}}
\end{equation}
For any $f^{t}$-invariant ergodic probability measure $\nu$, we have by Proposition \ref{time change lyap},
\[
\vec{\la}^{u}(g,\nu^{\gamma}) = \nu(\gamma^{-1})^{-1}\vec{\la}^{u}(f,\nu) 
= \omega(\nu^{\gamma})\vec{\chi}^{u}.
\]
On the other hand, by integrating equation \eqref{Jacobian time change} against the measure $\nu^{\gamma}$ at $t = 1$,
\[
\check{\chi}^{u} = \sum_{i=1}^{k} \la^{u}_{i}(g,\nu^{\gamma}).
\]
It follows that $\omega(\nu^{\gamma}) = 1$ for all $g^{t}$-invariant ergodic probability measures $\nu^{\gamma}$. Hence $\vec{\la}^{u}(g,\nu^{\gamma}) = \vec{\chi}^{u}$ for all such measures $\nu^{\gamma}$. 

Let $\gamma_{m} \rightarrow \gamma$ be a sequence of smooth functions $\gamma_{m}:M \rightarrow (0,\infty)$ converging uniformly to $\gamma$. Let $g^{t}_{m}$ be the time change of $f^{t}$ with speed multiplier $\gamma_{m}$. Then $g^{t}_{m}$ is a smooth flow, so applying Proposition \ref{time change lyap} again, but to the full flow,
\[
\vec{\la}(g_{m},\nu^{\gamma_{m}}) = \omega(\nu^{\gamma_{m}})\vec{\chi},
\]
with $\omega(\nu^{\gamma_{m}})$ defined as in \eqref{omega change}. By the explicit expression for $\omega(\nu^{\gamma_{m}})$ in \eqref{omega change} we conclude that as $m \rightarrow \infty$, $\omega(\nu^{\gamma_{m}}) \rightarrow 1$, uniformly in the measure $\nu$. In particular, for any given $\e > 0$ we will have for $m$ large enough, 
\begin{equation}\label{ratio 2}
|\la_{i}(g_{m},\nu^{\gamma_{m}})-\chi_{i}| < \frac{\e}{2},
\end{equation}
for each $1 \leq i \leq 2k+1$ and all $\hat{f}_{m}^{t}$-invariant ergodic probability measures $\nu^{\gamma_{m}}$. 

Since $\gamma_{m} \rightarrow \gamma$ uniformly, for $m$ large enough we have for all $x \in M$
\[
\left|\check{\chi}^{u} - \left.\frac{d}{dt}\right|_{t=0}\log \mathrm{Jac}(\bar{D}g_{m,x}^{t})\right| < \frac{\e}{2}.
\]
Integrating this inequality we obtain for all $t \geq 0$, 
\[
|\check{\chi}^{u}t - \log \mathrm{Jac}(\bar{D}g_{m,x}^{t})| < \frac{\e}{2} t, 
\]
which we rewrite as 
\begin{equation}\label{Jacobian inequality}
e^{(\check{\chi}^{u}-\frac{\e}{2})t} \leq \mathrm{Jac}(\bar{D}g^{t}_{m,x}) \leq e^{(\check{\chi}^{u}+\frac{\e}{2})t}.
\end{equation}
For any given $\e > 0$ we can choose $m$ large enough that inequalities \eqref{ratio 2} and \eqref{Jacobian inequality} hold for the associated time change $g^{t}_{m}$. By a further arbitrarily small smooth perturbation of $\gamma_{m}$, we may assume that $g^{t}_{m}$ is topologically mixing, as the only way for a transitive Anosov flow to fail to be topologically mixing is for it to be the suspension flow of an Anosov diffeomorphism, and this suspension flow property will be broken by any smooth time change not cohomologous to a constant multiple of the trivial additive cocycle $\eta(t,x) = t$ over the flow \cite[Theorem 1.8]{Pla72}. We set $\hat{f}^{t} := g^{t}_{m}$ in what follows, to simplify the notation, and continue to write $\bar{D}\hat{f}^{t}$ for the induced action of $D\hat{f}^{t}$ on $\bar{E}^{u}$. 

We will need the following lemma of Kalinin and Sadovskaya. For this lemma we let $f:M \rightarrow M$ be a homeomorphism of a compact metric space $M$ and let $a_{n}: M \rightarrow \R$ be a sequence of functions. We say that this sequence is \emph{subadditive} if for all $x \in M$ and $n, m \in \N$ we have
\[
a_{n+m}(x) \leq a_{m}(x) + a_{n}(f^{m}x). 
\]
For an $f$-invariant ergodic probability measure $\nu$ we define
\[
a(\nu) = \inf_{n \geq 1}\frac{1}{n}\int_{M} a_{n}\, d\nu = \lim_{n \rightarrow \infty} \frac{1}{n}\int_{M} a_{n}\, d\nu, 
\]
with the limit existing by Fekete's lemma.

\begin{lem}\cite[Proposition 4.9]{KS}\label{KS lemma}
Suppose that for all $f$-invariant ergodic probability measures $\nu$ we have $a(\nu) < 0$. Then there is an $N > 0$ such that $a_{n}(x) < 0$ for all $n \geq N$ and $x \in M$. 
\end{lem}

We apply Lemma \ref{KS lemma} to the following four subadditive sequences over the time-1 map $\hat{f}:M \rightarrow M$ of our time-changed Anosov flow $\hat{f}^{t}$ satisfying \eqref{ratio 2},
\begin{enumerate}
\item $a_{n}(x) = \log \|D\hat{f}^{n}_{x}|_{\hat{E}^{u}_{x}}\| - (2 + \e)n$,
\item  $a_{n}(x) = -\log \mathfrak{m}(D\hat{f}^{n}_{x}|_{\hat{E}^{u}_{x}}) + (1- \e)n$,
\item $a_{n}(x) =  \log \|D\hat{f}^{n}_{x}|_{\hat{E}^{s}_{x}}\|  + (2  -\e)n$,
\item $a_{n}(x) = -\log \mathfrak{m}(D\hat{f}^{n}_{x}|_{\hat{E}^{s}_{x}}) - (1 - \e)n$,
\end{enumerate} 
to obtain that there is a time $T = T_{\e} > 0$ such that for $t \geq T$ we have
\begin{equation}\label{unstable period inequality}
e^{(1-\e)t}\leq \mathfrak{m}(D\hat{f}^{t}_{x}|_{\hat{E}^{u}_{x}}) \leq \|D\hat{f}^{t}_{x}|_{\hat{E}^{u}_{x}}\| \leq e^{(2+\e)t},
\end{equation}
and 
\begin{equation}\label{stable period inequality}
e^{-(2+\e)t}\leq \mathfrak{m}(D\hat{f}^{t}_{x}|_{\hat{E}^{s}_{x}}) \leq \|D\hat{f}^{t}_{x}|_{\hat{E}^{s}_{x}}\| \leq e^{-(1-\e)t}.
\end{equation}
As a first application of these inequalities we can establish some regularity properties for the quotient bundles $\bar{E}^{u}$ and $\bar{E}^{s}$. 

\begin{lem}\label{reg lem}
The bundles $\bar{E}^{u}$ and $\bar{E}^{s}$ are $C^{\beta}$ for each $0 \leq \beta < 1$. 
\end{lem}

\begin{proof}
We will deduce this from the $\beta$-bunching criterion of Definition \ref{beta bunching}. For a given $\e > 0$ we choose a time change $\hat{f}^{t} = \hat{f}^{t}_{m}$ as above such that there is a $T = T_{\e}$ for which the inequalities \eqref{unstable period inequality} and \eqref{stable period inequality} hold for $t \geq T$. Using these inequalities we see that for the $\beta$-bunching inequality \eqref{bunching inequality} to hold for a given $\beta > 0$ for $\hat{f}^{t}$, it suffices to require that for $t \geq T$ we have 
\[
e^{(1+2\e)t} \leq e^{\beta^{-1} (1-\e)t},
\]
from which we see that it suffices to require 
\[
\beta  <  \frac{1-\e}{1+2\e}.
\]
The ratio on the right hand side converges to $1$ as $\e \rightarrow 0$. Since $\e$ can be taken as small as we desire, we conclude from the $\beta$-bunching criterion that $E^{cu}$ and $E^{cs}$ are $C^{\beta}$ subbundles of $TM$ for each $0 \leq \beta < 1$. Consequently $\bar{E}^{u} = E^{cu}/E^{c}$ and $\bar{E}^{s} = E^{cs}/E^{c}$ are $C^{\beta}$ bundles over $M$ for each $0 \leq \beta < 1$.
\end{proof}

Note in the proof of the lemma that we are using the key fact that the bundles $E^{cu}$, $E^{cs}$, and $E^{c}$ remain the same for all time changes of the original flow $f^{t}$. We now fix $\e$ small enough that the above discussion applies and such that we additionally have 
\begin{equation}\label{bad inequality}
\frac{l+2}{l+1} - 1 > 22k\e. 
\end{equation}
Thus for this chosen value of $\e$ satisfying \eqref{bad inequality} we will be considering  a smooth time change $\hat{f}^{t}$ of $f^{t}$ that is topologically mixing and satisfies the inequalities \eqref{ratio 2} and \eqref{Jacobian inequality}. We will show for this fixed choice of $\e$ that the linear cocycle $\bar{D}\hat{f}^{t}$ on $\bar{E}^{u}$ has a dominated splitting of index $l$.

We use Proposition \ref{new metric} to obtain a new Riemannian structure $\{\|\cdot\|_{\e}\}_{x \in M}$ on $TM$ such that inequalities \eqref{unstable period inequality} and \eqref{stable period inequality} hold for all $t > 0$; note that this structure depends on our fixed choice of $\e$. We have a uniform comparison of this structure with the original Riemannian structure by, for $v \in TM$ and a constant $C_{\e}$ depending only on $\e$,  
\begin{equation}\label{riemann compare}
C_{\e}^{-1}\|v\| \leq \|v\|_{\e} \leq C_{\e}\|v\|.
\end{equation}
We give $\bar{E}^{u}$ the induced Riemannian structure from the natural projection $\hat{E}^{u} \rightarrow \bar{E}^{u}$. We write $\mathfrak{m}_{\e}$ for the conorm in this Riemannian structure and write $\sigma_{i,\e}^{u}$ for the $i$th singular value on $\bar{E}^{u}$ in this structure. We let $d_{\e}$ denote the Riemannian metric on $M$ induced by this Riemannian structure. All of the computations that follow will be carried out in this fixed choice of modified Riemannian structure on $M$. 

If $\bar{D}\hat{f}^{t}$ does not admit a dominated splitting of index $l$ then, using Theorem \ref{domination gap criterion} and the fact that existence of a dominated splitting does not depend on the choice of Riemannian structure, we can find for any $\kappa > 0$ small enough a sequence of integers $n_{j} \rightarrow \infty$ and a sequence of points $x_{j} \in M$ such that for each $j \in \N$,
\begin{equation}\label{failure}
\frac{\sigma_{l,\e}(\bar{D}\hat{f}^{n_{j}}_{x_{j}})}{\sigma_{l+1,\e}(\bar{D}\hat{f}^{n_{j}}_{x_{j}})} \geq e^{-\kappa n_{j}}.
\end{equation}
We choose $\kappa$ small enough that $\kappa < \e$. 

Because we have assumed that $\hat{f}^{t}$ is topologically mixing, it has Bowen's specification property, as formulated in \cite[Theorem 18.3.14]{HK}. In particular, for any given $\delta > 0$ we can find an $\eta > 0$ such that for each $j \in \N$ there is a periodic point $p_{j}$ for $\hat{f}^{t}$ such that $d_{\e}(\hat{f}^{t}p_{j}, \hat{f}^{t}x_{j}) < \delta$ for $0 \leq t \leq 2n_{j}$ and $\hat{f}^{s_{j}}p_{j} = p_{j}$, where $0 \leq s_{j} \leq 2n_{j} + \eta$. The specified periodic point $p_{j}$ thus $\delta$-shadows $x_{j}$ for twice the length of the orbit on which we see inequality \eqref{failure} before closing this orbit with a segment of length at most $\eta = \eta(\delta)$ which depends only on $\delta$. We fix the threshold $\delta$ small enough that the standard shadowing estimates for points that are $\delta$-close along an orbit apply \cite[Theorem 6.2.8]{HK} (adjusted for flows) which together with inequalities \eqref{unstable period inequality} and \eqref{stable period inequality} gives that there is a constant $K_{\e}$ and a continuous offset function $\zeta_{j}:[0,2n_{j}] \rightarrow \R$ such that for each $j \in \N$ and $0 \leq t \leq 2n_{j}$, 
\[
d_{\e}(\hat{f}^{t}p_{j}, \hat{f}^{t+\zeta_{j}(t)}x_{j}) \leq K_{\e} (d_{\e}(p_{j},x_{j}) + d_{\e}(\hat{f}^{2n_{j}}p_{j},\hat{f}^{2n_{j}}x_{j}))e^{-(1-\e) \min\{t, 2n_{j}-t\}},
\]
and such that $|\zeta_{j}(t)| < C\delta$ for each $t$ for a uniform constant $C \geq 1$. We bound the distance terms on the right above by the diameter of $M$ in the metric $d_{\e}$ to obtain that there is a constant $K'_{\e}$ depending only on $\e$ such that for each $j \in \N$ and $0 \leq t \leq 2n_{j}$,
\begin{equation}\label{shadowing}
d_{\e}(\hat{f}^{t}p_{j}, \hat{f}^{t+\zeta_{j}(t)}x_{j}) \leq K_{\e}'e^{-(1-\e) \min\{t, 2n_{j}-t\}}.
\end{equation}

We will need to compare norms for $\bar{D}\hat{f}^{t}$ along the orbit of $p_{j}$ to norms along the orbit of $x_{j}$. This is the purpose of the next estimate. By Lemma \ref{reg lem} the bundle $\bar{E}^{u}$ is $C^{1-\e}$. We can then fix a family of identifications $I_{xy}:\bar{E}_{x}^{u} \rightarrow \bar{E}_{y}^{u}$ for points $x$ close to $y$ in $M$ that vary in a $(1-\e)$-H\"older fashion with the basepoint (see \cite[Section 2.2]{KS}). The estimates below are similar to those used in the proof of \cite[Proposition 4.2]{KS}.

To simplify notation we set $A^{t}:=\bar{D}\hat{f}^{t}$ and set $q_{m,j}=m+\zeta_{j}(m)$ for each $0 \leq m \leq 2n_{j}-1$ and write $n_{j,j} = n_{j}+ \zeta_{j}(n_{j})$. For a given $j \in \N$ we write
\begin{align*}
(A^{n_{j,j}}_{x_{j}})^{-1}I_{\hat{f}^{n_{j}}p_{j}\hat{f}^{n_{j,j}}x_{j}} A^{n_{j}}_{p_{j}} - I_{p_{j}x_{j}} &= \sum_{m=0}^{n_{j}-1}(A^{q_{m+1,j}}_{x_{j}})^{-1} I_{\hat{f}^{q_{m+1,j}}x_{j} \hat{f}^{m+1}p_{j}}A^{m+1}_{p_{j}}-(A^{q_{m,j}}_{x_{j}})^{-1} I_{\hat{f}^{q_{m,j}}x_{j} \hat{f}^{m}p_{j}}A^{m}_{p_{j}} \\
&=\sum_{m=0}^{n_{j}-1}(A^{q_{m,j}}_{x_{j}})^{-1} (A_{\hat{f}^{q_{m,j}}x_{j}}^{-1}I_{\hat{f}^{q_{m+1,j}}x_{j} \hat{f}^{m+1}p_{j}}A_{\hat{f}^{m}p_{j}}
-I_{\hat{f}^{q_{m,j}}x_{j} \hat{f}^{m}p_{j}}) A^{m}_{p_{j}}.
\end{align*}
The $(1-\e)$-H\"older property of the identifications gives for a constant $K_{\e}'' > 0$ depending only on $\e$ and each $ 0 \leq m \leq n_{j}$, 
\begin{align*}
\|A_{\hat{f}^{q_{m,j}}x_{j}}^{-1}I_{\hat{f}^{q_{m+1,j}}x_{j} \hat{f}^{m+1}p_{j}}A_{\hat{f}^{m}p_{j}}-I_{\hat{f}^{q_{m,j}}x_{j} \hat{f}^{m}p_{j}}\|_{\e} &\leq K''_{\e} d_{\e}(\hat{f}^{q_{m,j}}x_{j},\hat{f}^{m}p_{j})^{1-\e} \\
&\leq K'_{\e}K''_{\e}e^{-(1-\e)^{2}m},
\end{align*}
with the first line coming from the fact that $A^{t} = \bar{D}\hat{f}^{t}$ is the restriction of a smooth linear cocycle $Df^{t}$ to a $C^{1-\e}$ bundle $\bar{E}^{u}$ and where in the second line we've applied the inequality \eqref{shadowing} for $0 \leq m \leq n_{j}$ followed by using $|q_{m,j}-m| < C\delta$ and $|n_{j,j}-n_{j}| < C\delta$. We conclude using \eqref{unstable period inequality}, 
\begin{align*}
\|(A^{n_{j,j}}_{x_{j}})^{-1} I_{\hat{f}^{n_{j}}p_{j}\hat{f}^{n_{j,j}}x_{j}} A^{n_{j}}_{p_{j}} - I_{p_{j}x_{j}}\|_{\e} &\leq \sum_{m=0}^{n_{j}-1}K'_{\e}K''_{\e}e^{-(1-\e)^{2}m}\|(A^{n_{j,j}}_{x_{j}})^{-1}\|_{\e}  \| A^{m}_{p_{j}}\|_{\e} \\
&\leq \sum_{m=0}^{n_{j}-1}K'_{\e}K''_{\e}e^{-(1-\e)^{2}m}e^{-(1-\e)m} e^{(2+\e)m} \\
&\leq \sum_{m=0}^{n_{j}-1} K'_{\e}K''_{\e}e^{4\e m} \\
&\leq K_{\e}''' e^{4\e n_{j}},
\end{align*}
with the final constant $K_{\e}'''$ depending only on $\e$. The triangle inequality then gives
\[
\|(A^{n_{j,j}}_{x_{j}})^{-1} I_{\hat{f}^{n_{j}}p_{j}\hat{f}^{n_{j,j}}x_{j}} A^{n_{j}}_{p_{j}}\|_{\e} \leq \|I_{p_{j}x_{j}}\|_{\e}+K_{\e}''' e^{4\e n_{j}},
\]
We can then use the lower bound with conorms,
\[
\mathfrak{m}_{\e}((A^{n_{j,j}}_{x_{j}})^{-1})\mathfrak{m}_{\e}(I_{\hat{f}^{n_{j}}p_{j}\hat{f}^{n_{j,j}}x_{j}})\|A^{n_{j}}_{p_{j}}\|_{\e} \leq \|(A^{n_{j,j}}_{x_{j}})^{-1} I_{\hat{f}^{n_{j}}p_{j}\hat{f}^{n_{j,j}}x_{j}} A^{n_{j}}_{p_{j}}\|_{\e},
\]
Since $\mathfrak{m}_{\e}((A^{n_{j,j}}_{x_{j}})^{-1})^{-1} = \|A^{n_{j,j}}_{x_{j}}\|_{\e}$, we ultimately arrive at the estimate
\[
\frac{\|A^{n_{j}}_{p_{j}}\|_{\e}}{\|A^{n_{j,j}}_{x_{j}}\|_{\e}} \leq C\frac{\|A^{n_{j}}_{p_{j}}\|_{\e}}{\|A^{n_{j}}_{x_{j}}\|_{\e}} \leq C_{\e}\mathfrak{m}_{\e}(I_{\hat{f}^{n_{j}}p_{j}\hat{f}^{n_{j}}x_{j}})^{-1}(\|I_{p_{j}x_{j}}\|_{\e}+K_{\e}''' e^{4\e n_{j}}),
\] 
where $C$ is a uniform constant, since
\[
\frac{\|A^{n_{j}}_{x_{j}}\|}{\|A^{n_{j,j}}_{x_{j}}\|} \leq \mathfrak{m}(A^{n_{j,j}-n_{j}}_{x_{j}})^{-1} \leq C,
\]
since $|n_{j,j}-n_{j}| < C\delta$. 

Since the identifications $I$ have norm and conorm uniformly bounded below and above away from $0$ and $\infty$ by construction (with these bounds depending on the Riemannian structure $\|\cdot\|_{\e}$), by rearranging we can summarize our estimates as showing that there is a constant $H_{\e}$ depending only on $\e$ such that 
\begin{equation}\label{stepping stone}
\|A^{n_{j}}_{p_{j}}\|_{\e}\leq H_{\e}e^{4\e n_{j}}\|A^{n_{j}}_{x_{j}}\|_{\e}
\end{equation}
By using the linearity of the exterior product operator $\wedge^{i}$ and the basic bound $\|\wedge^{i}T\| \leq \|T\|^{i}$ for linear transformations $T$ we can repeat these same estimates (with appropriate adjustments) with $\wedge^{i}A$ in place of $A$ to obtain for each $1 \leq i \leq k$,
\begin{equation}\label{key dist estimate}
\|\wedge^{i}A^{n_{j}}_{p_{j}}\|_{\e} \leq  H_{\e}e^{4 k \e n_{j}}\|\wedge^{i}A^{n_{j}}_{p_{j}}\|_{\e} ,
\end{equation}
for a potentially larger constant $H_{\e}$. More precisely, the above estimate follows from the estimate for $1 \leq i \leq k$,
\[
\|\wedge^{i}A_{\hat{f}^{q_{m,j}}x_{j}}^{-1}\wedge^{i}I_{\hat{f}^{q_{m+1,j}}x_{j} \hat{f}^{m+1}p_{j}}\wedge^{i}A_{\hat{f}^{m}p_{j}}-\wedge^{i}I_{\hat{f}^{q_{m,j}}x_{j} \hat{f}^{m}p_{j}}\|_{\e} \leq (K_{\e}''')^{i}e^{4i\e n_{j}},
\]
and the same logic used to derive inequality \eqref{stepping stone}.


We are now able to put everything together to obtain a contradiction to the assumption that $\bar{D}\hat{f}^{t}$ does not admit a dominated splitting. We will do this by estimating the sum $\sum_{i=l+1}^{k}\la_{i}^{u}(\hat{f},\nu^{(p_{j})})$ from above and below for each $j \in \N$, letting $j \rightarrow \infty$, and then finally using the inequality \eqref{bad inequality} on $\e$. The estimate from below that we use is a simple consequence of inequality \eqref{ratio 2} for the measure $\nu^{(p_{j})}$, 
\begin{equation}\label{from below}
\sum_{i=l+1}^{k}\la_{i}^{u}(\hat{f},\nu^{(p_{j})}) \geq 2(k-l)(1-\e). 
\end{equation}
For an estimate from above we have
\begin{align*}
\sum_{i=l+1}^{k}\la_{i}^{u}(\hat{f},\nu^{(p_{j})}) &= \lim_{m \rightarrow\infty} \frac{1}{m s_{j}}\log \|\wedge^{(k-l)}\bar{D}\hat{f}^{m s_{j}}_{p_{j}}\|_{\e} \\
&\leq  \lim_{m \rightarrow\infty} \frac{1}{m s_{j}} \log  \|\wedge^{(k-l)}\bar{D}\hat{f}^{s_{j}}_{p_{j}}\|_{\e}^{m} \\
&= \frac{1}{s_{j}} \log  \|\wedge^{(k-l)}\bar{D}\hat{f}^{s_{j}}_{p_{j}}\|_{\e} \\
&\leq \frac{1}{s_{j}} \log \|\wedge^{(k-l)}\bar{D}\hat{f}^{n_{j}}_{p_{j}}\|_{\e} + \frac{1}{s_{j}}\log\|\wedge^{(k-l)} \bar{D}\hat{f}^{s_{j}-n_{j}}_{\hat{f}^{n_{j}}p_{j}}\|_{\e},
\end{align*}
for each $j \in \N$. Since this holds for all $j \in \N$, by combining this with \eqref{from below} we conclude that
\begin{equation}\label{contra inequality}
2(k-l)(1-\e) \leq \liminf_{j \rightarrow \infty} \frac{1}{s_{j}} \log \|\wedge^{(k-l)}\bar{D}\hat{f}^{n_{j}}_{p_{j}}\|_{\e} + \frac{1}{s_{j}}\log\|\wedge^{(k-l)} \bar{D}\hat{f}^{s_{j}-n_{j}}_{\hat{f}^{n_{j}}p_{j}}\|_{\e}
\end{equation}

We bound the first term on the right side of \eqref{contra inequality} using inequalities \eqref{failure} and \eqref{key dist estimate}. We start with some inequalities that hold for all $x \in M$ and times $t > 0$. For the starting Riemannian structure on $\bar{E}^{u}$, for all $x \in M$ we have for $t > 0$, 
\[
e^{(l+2(k-l) - \e)t} \leq \prod_{i=1}^{k}\sigma_{i}^{u}(\bar{D}\hat{f}_{x}^{t}) = \mathrm{Jac}(\bar{D}\hat{f}_{x}^{t}) \leq e^{(l+2(k-l) + \e)t},
\]
using inequality \eqref{Jacobian inequality} and recalling that $\check{\chi}^{u} = l + 2(k-l)$. Hence in the new Riemannian structure we have for $t > 0$,
\begin{equation}\label{new Jacobian inequality}
C_{\e}^{-k}e^{(l+2(k-l) - \e)t} \leq \prod_{i=1}^{k}\sigma_{i,\e}^{u}(\bar{D}\hat{f}^{t}_{x}) \leq C_{\e}^{k}e^{(l+2(k-l) + \e)t},
\end{equation}
with $C_{\e}$ the comparison constant from \eqref{riemann compare}. Since $\mathfrak{m}_{\e}(\bar{D}\hat{f}^{t}_{x}) \geq e^{(1-\e)t}$, we then have
\[
\prod_{i=1}^{k}\sigma_{i,\e}^{u}(\bar{D}\hat{f}^{t}_{x}) \geq e^{(1-\e)(l-1)t}\prod_{i=l}^{k}\sigma_{i,\e}^{u}(\bar{D}\hat{f}^{t}_{x}), 
\]
for all $t > 0$. Combining this with \eqref{new Jacobian inequality} and simplifying gives
\begin{equation}\label{sing unstable inequality}
\prod_{i=l}^{k}\sigma_{i,\e}^{u}(\bar{D}\hat{f}^{t}_{x}) \leq C_{\e}^{k}e^{(2(k-l)+1 + l\e)t},
\end{equation} 
for any $x \in M$ and $t > 0$. Since the singular values are nondecreasing in $i$, we can conclude from the left side of \eqref{new Jacobian inequality} that
\[
C_{\e}^{-k}e^{(l+2(k-l) - \e)t} \leq \sigma_{l+1,\e}^{u}(\bar{D}\hat{f}^{t}_{x})^{l+1}\prod_{i = l+2}^{k}\sigma_{i,\e}^{u}(\bar{D}\hat{f}^{t}_{x}) \leq \sigma_{l+1,\e}^{u}(\bar{D}\hat{f}^{t}_{x})^{l+1}e^{2(k-l-1)(1+\e)}.
\]
Note that the product on the right will be empty in the case $k = l+1$. The second inequality on the right follows from the norm bound $\|\bar{D}\hat{f}^{t}_{x}\|_{\e} \leq e^{(2+\e)t}$ for $t > 0$ derived from \eqref{unstable period inequality} in the new Riemannian structure $\|\cdot\|_{\e}$. By rearranging this inequality and simplifying the $\e$-term with a weaker inequality after taking the $(l+1)$th-root we arrive at the lower bound for $x \in M$ and $t > 0$,
\begin{equation}\label{sing lower inequality}
\sigma_{l+1,\e}^{u}(\bar{D}\hat{f}^{t}_{x}) \geq C_{\e}^{-k}e^{\left(\frac{l+2}{l+1}-2k\e\right)t}.
\end{equation}
Here we've also used that $C_{\e} \geq 1$.

We now specialize to $x = x_{j}$ and $t = n_{j}$ for a given $j \in \N$. Applying the assumed inequality \eqref{failure} (recalling that we chose $\kappa$ small enough that $\kappa < \e$) followed by the inequalities \eqref{sing unstable inequality} and finally \eqref{sing lower inequality} gives
\begin{align*}
\|\wedge^{(k-l)}\bar{D}\hat{f}^{n_{j}}_{x_{j}}\|_{\e} &= \prod_{i=l+1}^{k}\sigma_{i,\e}^{u}(\bar{D}\hat{f}^{n_{j}}_{x_{j}}) \\
&\leq \frac{e^{\kappa n_{j}}}{\sigma^{u}_{l+1,\e}(\bar{D}\hat{f}^{n_{j}}_{x_{j}})}\prod_{i=l}^{k}\sigma_{i,\e}^{u}(\bar{D}\hat{f}^{n_{j}}_{x_{j}}) \\
&\leq \frac{e^{\e n_{j}}}{\sigma^{u}_{l+1,\e}(\bar{D}\hat{f}^{n_{j}}_{x_{j}})}C_{\e}^{k}e^{(2(k-l)+1 + l\e)n_{j}} \\
&\leq C_{\e}^{2k}e^{\left( 2(k-l)+1 - \frac{l+2}{l+1} + (2k+l+1)\e\right)}.
\end{align*}
By \eqref{key dist estimate} we conclude that we have the estimate 
\begin{equation}\label{conclude estimate}
\|\wedge^{(k-l)}\bar{D}\hat{f}^{n_{j}}_{p_{j}}\|_{\e} \leq C_{\e}'e^{\left( 2(k-l)+1 - \frac{l+2}{l+1} + (6k+l+1)\e\right)},
\end{equation}
for a constant $C_{\e}'$ depending only on $\e$; this constant will be irrelevant in the inequality \eqref{contra inequality} once we take $j \rightarrow \infty$ (and therefore $s_{j} \rightarrow \infty$). We bound the second term of \eqref{contra inequality} using only the crude estimate
\[
\|\wedge^{(k-l)} \bar{D}\hat{f}^{s_{j}-n_{j}}_{\hat{f}^{n_{j}}p_{j}}\|_{\e} \leq e^{2(k-l)(1+\e)(s_{j}-n_{j})}, 
\]
from the right side of inequality \eqref{unstable period inequality} in the new Riemannian structure $\|\cdot\|_{\e}$. Putting this all together (and dispensing with the constant term $C_{\e}'$ as noted above), we conclude from \eqref{contra inequality} that 
\[
2(k-l)(1-\e) \leq \liminf_{j \rightarrow \infty} \frac{n_{j}}{s_{j}}\left( 2(k-l)+1 - \frac{l+2}{l+1} + (6k+l+1)\e\right) + \frac{(s_{j}-n_{j})}{s_{j}}(2(k-l)(1+\e)). 
\]
By construction $s_{j}$ is always within a uniformly bounded distance $\eta$ of $2n_{j}$, i.e., $|2n_{j}-s_{j}|$ is uniformly bounded independent of $j$. Thus the ratios $n_{j}/s_{j}$ and $(s_{j}-n_{j})/s_{j}$ both limit to $\frac{1}{2}$. It follows that 
\[
2(k-l)(1-\e) \leq \frac{1}{2}\left( 2(k-l)+1 - \frac{l+2}{l+1} + (6k+l+1)\e\right) + (k-l)(1+\e). 
\]
By canceling the $(k-l)$ terms from each side, collecting all the $\e$ terms on one side, and using both $l \leq k$ and $k \geq 1$, we can conclude from this inequality that
\[
\frac{l+2}{l+1} - 1 < 22k \e. 
\]
But this is precisely the inequality \eqref{bad inequality} we chose $\e$ to violate at the beginning of the proof. This produces a contradiction to our hypothesis that $\bar{D}\hat{f}^{t}$ does not admit a dominated splitting of index $l$. 

We have thus shown that $\hat{f}^{t}$ admits a $u$-splitting $\hat{E}^{u} = \hat{L}^{u} \oplus \hat{V}^{u}$. The inequality \eqref{ratio 2} implies that for every $\hat{f}^{t}$-invariant measure $\nu$ and all $1 \leq i \leq l$, 
\[
|\la_{i}(D\hat{f}|_{\hat{L}^{u}},\nu)- 1| < \frac{\e}{2}, 
\]
and for $l+1 \leq i \leq k$, 
\[
|\la_{i}(D\hat{f}|_{\hat{V}^{u}},\nu)- 2| < \frac{\e}{2}.
\]
Applying Lemma \ref{KS lemma} to the four subadditive sequences
\begin{enumerate}
\item $a_{n}(x) = \log \|D\hat{f}^{n}_{x}|_{\hat{V}^{u}_{x}}\| - (2 + \e)n$,
\item  $a_{n}(x) = -\log \mathfrak{m}(D\hat{f}^{n}_{x}|_{\hat{V}^{u}_{x}}) + (2- \e)n$,
\item $a_{n}(x) =  \log \|D\hat{f}^{n}_{x}|_{\hat{L}^{u}_{x}}\|  - (1  +\e)n$,
\item $a_{n}(x) = -\log \mathfrak{m}(D\hat{f}^{n}_{x}|_{\hat{L}^{u}_{x}}) + (1 - \e)n$,
\end{enumerate} 
gives inequalities \eqref{first periodic inequality} and \eqref{second periodic inequality} in the statement of the proposition with $2\e$ in place of $\e$; since $\e$ can be taken to be arbitrarily small it is harmless to replace $2\e$ with $\e$ in the notation. The inequalities for $\hat{f}^{-t}$ with $\hat{L}^{s}$ and $\hat{V}^{s}$ follow by applying the same calculation to $\hat{f}^{-t}$. 
\end{proof}

\begin{proof}[Theorem \ref{core theorem} $\Rightarrow$ Theorem \ref{periodic rigid}]:
For every $\e > 0$, we obtain from Proposition \ref{periodic dom split} a smooth time change $\hat{f}^{t}$ with $u$-splitting $\hat{E}^{u} = \hat{L}^{u} \oplus \hat{V}^{u}$ and $s$-splitting $\hat{E}^{s} = \hat{L}^{s} \oplus \hat{V}^{s}$ satisfying the inequalities \eqref{first periodic inequality} and \eqref{second periodic inequality} of Proposition \ref{periodic dom split}. The same calculations as those in Lemma \ref{neighborhood lemma} show that for $\e$ small enough $\hat{f}^{t}$ satisfies (1)-(4) of Theorem \ref{core theorem} with $\beta = \frac{1}{5}$; this is because as $\e \rightarrow 0$ the exponents in the growth inequalities for $D\hat{f}^{t}$ derived from Proposition \ref{periodic dom split} converge to the corresponding exponents in the growth inequalities for $Dg^{t}_{X}$. 

We conclude by Theorem \ref{core theorem} that $g^{t}_{X}$ is smoothly orbit equivalent to $\hat{f}^{t}$, with the orbit equivalence $\hat{\varphi}$ being flow related to $\varphi$. Since $\hat{f}^{t}$ is a smooth time change of $f^{t}$, this implies that $g^{t}_{X}$ is smoothly orbit equivalent to  $f^{t}$ by $\hat{\varphi}$.
\end{proof}

\begin{rem}\label{smoothness degree}
Smoothness of the flow $f^{t}$ is only assumed in Theorems \ref{dyn rigidity measure}, \ref{dyn rigidity pinching}, and \ref{periodic rigid} to simplify the exposition. A finite degree of smoothness suffices for the proofs; a careful examination reveals that these theorems hold under the hypothesis that $f^{t}$ is $C^r$ for some $r \geq 4$. In this case one ultimately concludes that the orbit equivalence $\varphi$ is $C^{r-1}$. 
\end{rem}

\subsection{The geometric rigidity theorems}\label{subsec:geom rigidity} We now show how Theorems \ref{periodic loc symm}, \ref{loc symmetric rigidity}, and \ref{geom pinching rigidity} follow from their respective dynamical analogues \ref{periodic rigid}, \ref{dyn rigidity measure}, and \ref{dyn rigidity pinching}.

We will first assume Theorem \ref{dyn rigidity measure} and use this to establish Theorem \ref{loc symmetric rigidity}.  Let $S$ denote the common underlying smooth manifold for $X$ and $Y$. $T^{1}X$ and $T^{1}Y$ are smooth submanifolds of $TS$ which will be $C^2$ close and the flows $g^{t}_{X}$ and $g^{t}_{Y}$ will be $C^1$ close if the Riemannian metrics for $X$ and $Y$ are $C^2$ close.  Let $\pi: T^{1}Y \rightarrow T^{1}X$ denote the smooth diffeomorphism given by radial projection in each of the fibers of $TS$. We now consider the smooth flow $f^{t} = \pi \circ g^{t}_{Y} \circ \pi^{-1}$ on $T^{1}X$. We define the neighborhood $\mathcal{U}_{X}$ in Theorem \ref{loc symmetric rigidity} by declaring $Y \in \mathcal{U}_{X}$ if $f^{t} \in \mathcal{V}_{X}$, where $\mathcal{V}_{X}$ is defined as in Theorem \ref{dyn rigidity measure}.

Assuming Theorem \ref{dyn rigidity measure}, we have the following.

\begin{lem}\label{implication}
Let $Y \in \mathcal{U}_{X}$ and suppose that $\vec{\la}(g_{Y},\mu_{Y}) = \vec{\la}(g_{X})$. Then $g^{t}_{X}$ is $C^{\infty}$ orbit equivalent to $g^{t}_{Y}$. 
\end{lem}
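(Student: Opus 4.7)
The plan is to translate the geometric hypothesis into the dynamical setup of Theorem \ref{dyn rigidity measure} and apply it directly to $f^{t}$. By construction $\pi: T^{1}Y \rightarrow T^{1}X$ is a smooth diffeomorphism with $\pi \circ g^{t}_{Y} = f^{t} \circ \pi$, so it is a smooth conjugacy from $g^{t}_{Y}$ to $f^{t}$. Since $f^{t} \in \mathcal{V}_{X}$, Lemma \ref{neighborhood lemma} supplies a $u$-splitting $E^{u,f} = L^{u,f} \oplus V^{u,f}$ of index $l(X)$ for $f^{t}$; pulling back by $D\pi^{-1}$ gives a $u$-splitting of the same index for $g^{t}_{Y}$, so the horizontal measure $\mu_{Y}$ is defined. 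I will then invoke Proposition \ref{invariance hor measure} on the smooth conjugacy $\pi$ to obtain $\pi_{*}\mu_{Y} = \mu_{f}$, and use the conjugacy-invariance of Lyapunov spectra together with the hypothesis $\vec{\la}(g_{Y},\mu_{Y}) = \vec{\la}(g_{X})$ to conclude
\[
\vec{\la}^{u}(f,\mu_{f}) = \vec{\la}^{u}(g_{Y},\mu_{Y}) = \vec{\la}^{u}(g_{X}),
\]
which verifies the first hypothesis of Theorem \ref{dyn rigidity measure} with $c_{1} = 1$.

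For the time-reversed hypothesis I will use the flip $\iota: v \mapsto -v$ on $T^{1}Y$, which is a smooth conjugacy from $g^{t}_{Y}$ to $g^{-t}_{Y}$ exchanging stable and unstable bundles. As recorded at the end of Section \ref{subsec:hormeasure}, this gives $\iota_{*}\mu_{Y} = \mu_{g^{-1}_{Y}}$, whence $\vec{\la}(g^{-1}_{Y},\mu_{g^{-1}_{Y}}) = \vec{\la}(g_{Y},\mu_{Y}) = \vec{\la}(g_{X})$; the same flip on $T^{1}X$ shows $\vec{\la}^{u}(g^{-1}_{X}) = \vec{\la}^{u}(g_{X})$. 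Since $\pi$ also conjugates $g^{-t}_{Y}$ to $f^{-t}$, a second application of Proposition \ref{invariance hor measure} yields $\pi_{*}\mu_{g^{-1}_{Y}} = \mu_{f^{-1}}$ and therefore
\[
\vec{\la}^{u}(f^{-1},\mu_{f^{-1}}) = \vec{\la}^{u}(g^{-1}_{Y},\mu_{g^{-1}_{Y}}) = \vec{\la}^{u}(g_{X}),
\]
furnishing the second hypothesis of Theorem \ref{dyn rigidity measure} with $c_{2} = 1$. Theorem \ref{dyn rigidity measure} then yields a $C^{\infty}$ orbit equivalence from $g^{t}_{X}$ to $f^{t}$, and precomposing with $\pi$ produces a $C^{\infty}$ orbit equivalence from $g^{t}_{X}$ to $g^{t}_{Y}$.

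I do not expect any genuine obstacle in this step: the argument is essentially bookkeeping that transports the Lyapunov data from $g^{t}_{Y}$ across the smooth fiberwise radial diffeomorphism $\pi$ to $f^{t}$. All the analytic content has already been absorbed into the construction of $\mathcal{U}_{X}$ via $\mathcal{V}_{X}$ and Lemma \ref{neighborhood lemma}, and into the statement of Theorem \ref{dyn rigidity measure}, whose eventual reduction to Theorem \ref{core theorem} is where the real work will occur.
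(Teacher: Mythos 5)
Your proof is correct and takes essentially the same approach as the paper: transport the hypothesis through the smooth radial projection $\pi$ to $f^{t}$, handle the time-reversed hypothesis of Theorem \ref{dyn rigidity measure} via the flip map, and compose the resulting $C^{\infty}$ orbit equivalence with $\pi$. The only cosmetic difference is that you track the relation between $\mu_{Y}$ and $\mu_{g_{Y}^{-1}}$ as $\iota_{*}\mu_{Y} = \mu_{g_{Y}^{-1}}$ (which is literally what Proposition \ref{invariance hor measure} gives), whereas the paper writes the shorthand $\mu_{Y} = \mu_{g_{Y}^{-1}}$ and extracts the additional consequence $\mu_{f} = \mu_{f^{-1}}$ before computing the Lyapunov spectra; both routes produce identical conclusions about $\vec{\la}^{u}(f^{-1},\mu_{f^{-1}})$.
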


\begin{proof}
The flow $f^{t} = \pi \circ g^{t}_{Y} \circ \pi^{-1}$ belongs to $\mathcal{V}_{X}$ by hypothesis. Recalling that $\mu_{Y}=\mu_{g_{Y}} = \mu_{g_{Y}^{-1}}$, by Proposition \ref{invariance hor measure} we have $\pi_{*}\mu_{Y} = \mu_{f}$ and $\pi_{*}\mu_{Y} = \mu_{f^{-1}}$, from which it follows that $\mu_{f} = \mu_{f^{-1}}$. Furthermore $\vec{\la}(g_{Y},\mu_{Y}) = \vec{\la}(f,\mu_{f})$. It follows that
\[
\vec{\la}(f,\mu_{f}) = \vec{\la}(g_{X}),
\]
and therefore $g^{t}_{X}$ is $C^{\infty}$ orbit equivalent to $f^{t}$ by Theorem \ref{dyn rigidity measure}. Since $f^{t}$ is smoothly conjugate to $g^{t}_{Y}$, the conclusion follows. 
\end{proof}

By a lemma of Hamenst\"adt \cite[Lemma 4.5]{Ham99}, if $g^{t}_{X}$ is $C^{\infty}$ orbit equivalent to $g^{t}_{Y}$ then there is a constant $c > 0$ such that $g^{ct}_{X}$ is $C^{1}$ conjugate to $g^{t}_{Y}$, which implies that $g^{t}_{X_{c}}$ is $C^{1}$ conjugate to $g^{t}_{Y}$. To conclude the proof we apply the following theorem, which is a consequence of the minimal entropy rigidity theorem of Besson, Courtois, and Gallot.


\begin{thm}\cite[Theorem 1.3]{BCG2}\label{minimal rigidity}
Let $X$ be a closed negatively curved locally symmetric space of dimension $n \geq 3$ and let $Y$ be an $n$-dimensional closed negatively curved Riemannian manifold. Suppose that the geodesic flows $g^{t}_{X}$ and $g^{t}_{Y}$ are $C^1$  conjugate. Then $Y$ is isometric to $X$. 
\end{thm}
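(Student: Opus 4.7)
The plan is to deduce Theorem \ref{minimal rigidity} from the minimal entropy rigidity theorem of Besson, Courtois, and Gallot \cite{BCG2} that was stated in the introduction: if $X$ is closed negatively curved locally symmetric of dimension $\geq 3$ and $Y$ is a homotopy equivalent closed negatively curved Riemannian manifold with $h_{\mathrm{top}}(g_X) = h_{\mathrm{top}}(g_Y)$ and $\mathrm{vol}(X) = \mathrm{vol}(Y)$, then $Y$ is isometric to $X$. So I need to extract from the hypothesis of $C^1$ conjugacy the three ingredients: a homotopy equivalence, equality of topological entropies, and equality of volumes.

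First I would produce the homotopy equivalence. A $C^1$ conjugacy $\Phi \colon T^1 X \to T^1 Y$ sets up a period-preserving bijection between closed orbits of $g^t_X$ and $g^t_Y$. Since closed orbits of a geodesic flow on a closed negatively curved manifold correspond bijectively to free homotopy classes of loops (each class containing a unique closed geodesic), $\Phi$ induces a length-preserving bijection between free homotopy classes of $X$ and $Y$, and in particular an isomorphism $\pi_1(X) \cong \pi_1(Y)$. Both $X$ and $Y$ are aspherical, since Cartan--Hadamard gives them contractible universal covers, so this isomorphism is realized by a homotopy equivalence $f \colon X \to Y$. Equality of topological entropies is then immediate: topological entropy is a conjugacy invariant, so $h_{\mathrm{top}}(g_X) = h_{\mathrm{top}}(g_Y)$.

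The substantive step is showing $\mathrm{vol}(X) = \mathrm{vol}(Y)$, and this is where the $C^1$ regularity of $\Phi$ is essential. Because $X$ is locally symmetric of negative curvature, the Liouville measure $m_X$ on $T^1 X$ coincides with the Bowen--Margulis measure of maximal entropy for $g^t_X$, and the Pesin entropy formula holds with equality: $h_{m_X}(g_X) = h_{\mathrm{top}}(g_X)$. Since $\Phi$ is $C^1$, the derivative cocycles $Dg^t_X$ and $Dg^t_Y$ are conjugated by $D\Phi$, so Lyapunov spectra are transported exactly. Consequently $\Phi_* m_X$ is a $g^t_Y$-invariant probability measure with $h_{\Phi_* m_X}(g_Y) = h_{\mathrm{top}}(g_Y)$, which by uniqueness of the measure of maximal entropy forces $\Phi_* m_X$ to be the Bowen--Margulis measure of $g^t_Y$. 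Because $\Phi$ is a $C^1$ diffeomorphism, $\Phi_* m_X$ is in the Lebesgue class of $m_Y$. Then the characterization of the Bowen--Margulis measure as the Gibbs equilibrium state of the unstable Jacobian potential, combined with Pesin's formula applied to $m_Y$, forces $\Phi_* m_X$ to coincide with a constant multiple of $m_Y$; comparing total masses and using that the Riemannian volume of the unit sphere bundle disintegrates as $\mathrm{vol}_{S^{n-1}} \cdot \mathrm{vol}(X)$ (and similarly for $Y$) yields $\mathrm{vol}(X) = \mathrm{vol}(Y)$.

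With the homotopy equivalence $f$ and the two equalities $h_{\mathrm{top}}(g_X) = h_{\mathrm{top}}(g_Y)$ and $\mathrm{vol}(X) = \mathrm{vol}(Y)$ in hand, applying the BCG minimal entropy rigidity theorem concludes that $Y$ is isometric to $X$. The main obstacle in this plan is the volume equality: the homotopy equivalence and entropy equality are essentially formal consequences of the conjugacy, but extracting equal volumes genuinely uses the $C^1$ (not merely $C^0$) hypothesis through the interplay between absolute continuity of $\Phi_* m_X$ and the characterization of the Bowen--Margulis measure on the locally symmetric side as Liouville itself.
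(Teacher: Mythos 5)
The statement in question is cited verbatim from \cite[Theorem 1.3]{BCG2}; the paper supplies no proof of its own, so the only issue is whether your proposed derivation from the entropy--volume form of the minimal entropy theorem is sound. The homotopy equivalence and the equality of topological entropies are fine, but the volume equality step has a genuine gap. You correctly conclude that $\Phi_* m_X$ is an absolutely continuous $g^{t}_{Y}$-invariant ergodic probability measure of maximal entropy, hence equals $m_Y$ by mutual singularity of distinct ergodic measures. But this gives no volume comparison. Writing $\mathcal{L}_X$, $\mathcal{L}_Y$ for the unnormalized Liouville measures, so that $\mathcal{L}_X(T^1X) = \omega_{n-1}\,\mathrm{vol}(X)$, ergodicity forces $\Phi_* \mathcal{L}_X = c\,\mathcal{L}_Y$ with the Radon--Nikodym constant $c > 0$; evaluating on $T^1Y$ gives $c = \mathcal{L}_X(T^1X)/\mathcal{L}_Y(T^1Y) = \mathrm{vol}(X)/\mathrm{vol}(Y)$, identically. ``Comparing total masses'' only recovers this tautology and does not force $c = 1$. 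What you have actually shown is that Liouville is the measure of maximal entropy on $T^1Y$ --- exactly the hypothesis of Katok's entropy rigidity conjecture, which the introduction of this paper records as open in dimension $\ge 3$ --- so this consequence cannot suffice on its own. (There is also a factual slip: the Bowen--Margulis measure is the equilibrium state of the zero potential, not the unstable Jacobian potential; the latter gives the SRB/Liouville measure. The conclusion $\Phi_*m_X = m_Y$ is still right, but it follows from mutual singularity, not from the characterization you quote.)

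The actual argument in \cite{BCG2} for their Theorem 1.3 does not proceed by first proving $\mathrm{vol}(X) = \mathrm{vol}(Y)$ and then citing the entropy--volume theorem as a black box. The equal-entropy natural map $F\colon Y \to X$ has $\lvert\mathrm{Jac}\,F\rvert \le 1$ and degree one, which gives only $\mathrm{vol}(X) \le \mathrm{vol}(Y)$; the equality case is closed using the consequence of $C^1$ conjugacy that you derived --- that the Bowen--Margulis measure of $g^t_Y$, and hence the Patterson--Sullivan density on $\partial\tilde{Y}$, lies in the Lebesgue class --- fed into the barycenter machinery to show $F$ is an isometry. Your reduction would in addition need the reverse inequality $\mathrm{vol}(Y) \le \mathrm{vol}(X)$, which is the hard direction and is not produced by the absolute continuity of $\Phi_*m_X$.
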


We conclude from Theorem \ref{minimal rigidity} that $Y$ is isometric to $X_{c}$. We then have 
\[
c\vec{\la}(g_{X}) = \vec{\la}(g_{X_{c}}) = \vec{\la}(g_{Y},\mu_{Y}) = \vec{\la}(g_{X}), 
\]
from which we conclude that $c = 1$. Therefore $Y$ is isometric to $X$. 

For Theorem \ref{geom pinching rigidity} we take $\mathcal{U}_{X}$ to be the same as in Theorem \ref{loc symmetric rigidity}. We will deduce Theorem \ref{geom pinching rigidity} directly from Theorem \ref{loc symmetric rigidity}. The hypotheses of Theorem \ref{geom pinching rigidity} imply that $f^{t} = \pi \circ g^{t}_{Y} \circ \pi^{-1}$ satisfies the hypotheses of Theorem \ref{dyn rigidity pinching}. We conclude from the argument directly below the statement of Theorem \ref{dyn rigidity pinching} that there is a constant $c > 0$ such that 
\[
\vec{\la}(g_{Y},\mu_{Y}) = c\vec{\la}(g_{X}) = \vec{\la}(g_{X_{c}}).   
\]
Applying Theorem \ref{loc symmetric rigidity}, we conclude that $Y$ is isometric to $X_{c}$, hence homothetic to $X$. 


Lastly we show that Theorem \ref{periodic rigid} implies Theorem \ref{periodic loc symm}. The hypothesis that $X$ is homotopy equivalent to $Y$ implies the existence of an orbit equivalence $\varphi$ from $g^{t}_{X}$ to $g^{t}_{Y}$ by a well-known argument. Since we will require an explicit description of this orbit equivalence in Section \ref{mostow mod}, we recall the construction here. 

Given two closed negatively curved manifolds $X$ and $Y$, a homotopy equivalence $\kappa: X \rightarrow Y$ lifts to a \emph{quasi-isometry} $\t{\kappa}:\t{X} \rightarrow \t{Y}$, in the sense that there are constants $K \geq 1$, $C > 0$ such that for every $x$, $y \in \t{X}$, 
\[
K^{-1}d_{\t{Y}}(\t{\kappa}(x),\t{\kappa}(y)) - C \leq d_{\t{X}}(x,y) \leq Kd_{\t{Y}}(\t{\kappa}(x),\t{\kappa}(y)) + C 
\]
For $v \in T^{1}\t{X}$ let $\gamma_{v}$ denote the unit-speed geodesic with $\gamma_{v}'(0) = v$. By the Morse-Mostow lemma \cite{Bou18}, there is a unique oriented geodesic $\sigma$ in $\t{Y}$ that is a bounded distance from $\t{\kappa}(\gamma_{v})$.

Fix a basepoint $x \in \t{X}$. Let $p_{v}$ denote the orthogonal projection of $x$ onto $\gamma_{v}$ and let $s_{v}$ denote the unique time such that $\gamma_{v}(s_{v}) = p_{v}$. Let $q_{v}$ denote the orthogonal projection of $\t{\kappa}(x)$ onto $\sigma$. We take unit speed parametrization of $\sigma$ such that $\sigma(0) = q_{v}$ and set $\t{\varphi}(v) = \sigma'(s_{v})$. Then $\t{\varphi}: T^{1}\t{X} \rightarrow T^{1}\t{Y}$ defines a homeomorphism giving an orbit equivalence between the geodesic flows of $\t{X}$ and $\t{Y}$. This homeomorphism is equivariant with respect to the action of $\pi_{1}(X)$ and $\pi_{1}(Y)$ on $T^{1}\t{X}$ and $T^{1}\t{Y}$ respectively and thus descends to an orbit equivalence $\varphi: T^{1}X \rightarrow T^{1}Y$ from $g^{t}_{X}$ to $g^{t}_{Y}$. 

Under the hypotheses of Theorem \ref{periodic loc symm} we thus have an orbit equivalence $\varphi:T^{1}X \rightarrow T^{1}Y$ from $g^{t}_{X}$ to $g^{t}_{Y}$. Taking $M = T^{1}Y$ and $f^{t} = g^{t}_{Y}$ in Theorem \ref{periodic rigid}, we conclude that $g^{t}_{X}$ is $C^{\infty}$ orbit equivalent to $g^{t}_{Y}$. The same argument with Theorem \ref{minimal rigidity} that was used above to show that Theorem \ref{dyn rigidity measure} implies Theorem \ref{loc symmetric rigidity} then establishes that $Y$ is homothetic to $X$. In particular, the Lyapunov spectrum of $g^{t}_{Y}$ with respect to an invariant ergodic probability measure $\nu$ is independent of $\nu$, so if $Y$ is isometric to $X_{c}$, $c > 0$, and there is any periodic point $p$ such that $\vec{\la}(g_{Y},\nu^{(p)}) = \vec{\la}(g_{X})$ then 
\[
c\vec{\la}(g_{X}) = \vec{\la}(g_{X_{c}}) = \vec{\la}(g_{Y},\nu^{(p)}) = \vec{\la}(g_{X}),
\]
from which we conclude that $c = 1$ and therefore $Y$ is isometric to $X$.

\section{Quotient normal forms}\label{sec:uniform}

\subsection{Holonomies for linear cocycles} 
Let $M$ be a $C^{r+2}$ closed Riemannian manifold and let $\mathcal{W}$ be a foliation of $M$ with uniformly $C^{r+2}$ leaves, $r \geq 1$. Let $0 < \alpha \leq 1^{-}$. We let $f: M \rightarrow M$ be $C^{\alpha}$ and uniformly $C^{r+2}$ along $\W$, with $\W$ as an expanding foliation for $f$. Let $E \subseteq TM$ be a $C^{\alpha}$ vector bundle over $M$ that is a subbundle of $TM$ and let $A$ be a $C^{\alpha}$ linear cocycle on $E$. We note that with minor modification the claims of this subsection apply equally well to the case that $E$ is a subbundle of a trivial bundle $M \times \R^{q}$.

\begin{defn}[Fiber bunching along a foliation]\label{second fiber bunched definition}We say that $A$ is \emph{$\alpha$-fiber bunched along $\mathcal{W}$} if there is a positive integer $N$ and a constant $0 < b < 1$ such that 
\begin{equation}\label{fiber bunching}
\|A^{N}_{x}\|\|(A^{N}_{x})^{-1}\|\mathfrak{m}(Df^{N}_{x}|T\mathcal{W}_{x})^{-\alpha}< b.
\end{equation}
\end{defn}

Our  definition of $\alpha$-fiber bunching for Anosov flows in Definition \ref{first fiber bunched definition} corresponds to the linear cocycle $A^{t}$ over the Anosov flow $f^{t}:M \rightarrow M$ being $\alpha$-fiber bunched along $\W^{u}$ \emph{and} the linear cocycle $A^{-t}$ over $f^{-t}$ being $\alpha$-fiber bunched along $\W^{s}$, which is the unstable foliation for $f^{-t}$. 

Note that inequality \eqref{fiber bunching} becomes more difficult to satisfy as $\alpha$ decreases. This condition is necessary to produce a continuous, $A$-equivariant collection of identifications $H_{xy}: E_{x} \rightarrow E_{y}$, $y \in \mathcal{W}(x)$, of the fibers of $E$ along $\mathcal{W}$, which we refer to as \emph{holonomies}. The construction of these maps is standard; while we work in a slightly different setting here, the required computations are exactly the same as those done by Kalinin-Sadovskaya \cite{KS}, and their proof can be adapted without modification to give the proposition below. As in \cite[Section 2.2]{KS}, we write $\{I_{xy}\}_{x,y\in \Delta}$ for a collection of identifications of $E_{x}$ with $E_{y}$ when $x$ is close to $y$ that vary in an $\alpha$-H\"older manner. Here $\Delta \subseteq M \times M$ denotes a neighborhood of the diagonal. 

\begin{prop}[\cite{KS}, Proposition 4.2]\label{existuhol}
Let $A$ be a $C^{\alpha}$ linear cocycle over $f$ on $E$ that is $\alpha$-fiber bunched along $\mathcal{W}$. Then for each $x \in M$, $y \in \mathcal{W}(x)$ there is a linear isomorphism $H_{xy}: E_{x} \rightarrow E_{y}$ with the following properties.
\begin{enumerate}
\item For $x \in M$ and $y$,$z \in \mathcal{W}(x)$ we have $H_{xx} = I_{x}$ and $H_{yz} \circ H_{xy} = H_{xz}$.
\item For every $x \in M$, $y \in \mathcal{W}(x)$, and $n \in \Z$ we have
\[
H_{xy} = (A^{n}_{y})^{-1} \circ H_{f^{n}x f^{n}y} \circ A^{n}_{x}.
\] 
\item There is an $r > 0$ and a constant $C > 0$ such that for $x \in M$ and $y \in \W(x)$ with $d(x,y) \leq r$ we have 
\[
\|H_{xy} - I_{xy}\| \leq C d(x,y)^{\alpha}. 
\]
\end{enumerate}
The family of linear maps $H$ satisfying the above properties is unique.
\end{prop}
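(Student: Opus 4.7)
The plan is to construct the maps $H_{xy}$ as a limit of approximate holonomies obtained by transporting along backward orbits, in the style of Kalinin-Sadovskaya \cite{KS}. Since $\W$ is expanding for $f$, for $y \in \W(x)$ the backward iterates satisfy $d(f^{-n}x, f^{-n}y) \leq Ce^{-an}d(x,y)$ along leaves (for large $n$, using inequality \eqref{expanding foliation}). Motivated by the desired equivariance (2), I would define
\[
H^{(n)}_{xy} := A^{n}_{f^{-n}y} \circ I_{f^{-n}x,\, f^{-n}y} \circ A^{-n}_{x},
\]
and set $H_{xy} = \lim_{n \to \infty} H^{(n)}_{xy}$, provided this limit exists.

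To establish convergence I would set up a telescoping estimate. Writing
\[
H^{(n+1)}_{xy} - H^{(n)}_{xy} = A^{n}_{f^{-n}y} \circ \bigl[A_{f^{-n-1}y} \circ I_{f^{-n-1}x,\, f^{-n-1}y} \circ A^{-1}_{f^{-n}x} - I_{f^{-n}x,\, f^{-n}y}\bigr] \circ A^{-n}_{x},
\]
the bracketed term measures the failure of $A$ to commute with orthogonal identification over a short distance. Using that $A$ is $C^{\alpha}$ and that $E$ is a $C^{\alpha}$ subbundle, this bracket is bounded by $C' d(f^{-n-1}x, f^{-n-1}y)^{\alpha} \leq C'' e^{-a\alpha(n+1)} d(x,y)^{\alpha}$ along the leaf. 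The remaining factors contribute at most $\|A^{n}_{f^{-n}y}\| \cdot \|A^{-n}_{x}\|$, which by the $\alpha$-fiber bunching hypothesis \eqref{fiber bunching} applied to iterates of $f^{-N}$ grows like $b^{\lfloor n/N \rfloor} \cdot \mathfrak{m}(Df^{n}|T\W)^{\alpha}$, i.e., slower than $e^{a\alpha n}$ by a definite geometric factor. Consequently $\sum_{n} \|H^{(n+1)}_{xy} - H^{(n)}_{xy}\| \leq \mathrm{const} \cdot d(x,y)^{\alpha}$, proving both convergence and estimate (3) with a single stroke; continuity in $(x,y)$ follows from uniformity of the estimate over compact pieces of leaves and the fact that $E$, $A$, and the foliation charts are continuous.

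Property (1) is immediate from the definition: $H_{xx}$ is the identity because each $H^{(n)}_{xx}$ is, and the composition identity $H_{yz} \circ H_{xy} = H_{xz}$ follows by recognizing that the same telescoping limit computes both sides. Property (2) is essentially built into the definition by a shift of index: evaluating $H^{(n)}_{f^{k}x,\,f^{k}y}$ and using $A^{-n}_{f^{k}x} = A^{-(n+k)}_{x} \circ A^{k}_{x}$ reduces it (after reindexing $n \to n-k$) to $A^{k}_{y} \circ H^{(n-k)}_{xy} \circ (A^{k}_{x})^{-1}$, which in the limit gives (2). For uniqueness, suppose $\hat H$ is another family satisfying (1)--(3); then $K_{xy} := \hat H_{xy} \circ H_{xy}^{-1}$ is a linear automorphism of $E_{y}$ depending continuously on $(x,y)$ with $K_{xx} = I_{x}$, and by the equivariance property (2) applied to both families,
\[
K_{xy} = A^{n}_{y} \circ K_{f^{-n}x,\, f^{-n}y} \circ (A^{n}_{y})^{-1}.
\]
The H\"older estimate (3) for each family gives $\|K_{f^{-n}x,\, f^{-n}y} - I\| \leq C d(f^{-n}x, f^{-n}y)^{\alpha} \to 0$, so the conjugation by $A^{n}_{y}$ must also vanish in the limit, forcing $K_{xy} = I$.

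The main obstacle is the quantitative convergence bound: one must verify carefully that the combination of the H\"older exponent $\alpha$ of $E$ and $A$, the leafwise contraction rate $e^{-an}$ under backward iteration, and the norm growth of $A^{\pm n}$ interact so that the fiber bunching inequality \eqref{fiber bunching} yields a summable geometric series. This is where the tension between expansion of $\W$ and growth of $A$ is resolved, and it is exactly the content of the fiber bunching hypothesis; all other steps are formal consequences of the limiting definition.
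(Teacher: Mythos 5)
Your argument follows exactly the Kalinin--Sadovskaya construction of holonomies, which is precisely what the paper invokes (the paper gives no independent proof, stating only that the computations are identical to \cite{KS} adapted to the expanding-foliation setting). So the approach is the same, and the structure -- define $H^{(n)}_{xy}$ by pushing forward the orthogonal identification at $f^{-n}$, telescope, verify fiber bunching yields a summable series, read off (1)--(3) and uniqueness from the construction -- is the right one.

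Two small points deserve tightening. In the telescoping estimate you bound the bracket by $e^{-a\alpha(n+1)}d(x,y)^\alpha$ and then multiply by $b^{\lfloor n/N\rfloor}\mathfrak{m}(Df^n|T\W)^\alpha$, but the product $\mathfrak{m}(Df^n)^\alpha e^{-a\alpha n}$ need not be bounded since $e^{an}$ is only a \emph{lower} bound for $\mathfrak{m}(Df^n)$. The clean way to run the argument is to bound the bracket by $\mathfrak{m}(Df^{n}_{f^{-n}x}|T\W)^{-\alpha}d(x,y)^\alpha$ directly (the leafwise backward contraction is governed exactly by $\mathfrak{m}(Df^n)^{-1}$), so that the $\mathfrak{m}^{\pm\alpha}$ factors cancel exactly against the fiber-bunching bound on $\|A^n_{f^{-n}y}\|\,\|A^{-n}_x\|$, leaving precisely $b^{n/N}$; one also needs the standard bounded-distortion observation that the products along the backward orbits of $x$, $y$, and intermediate leaf points are mutually comparable. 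Second, in the uniqueness step the intertwining should read $K_{xy}=A^{n}_{f^{-n}y}\circ K_{f^{-n}x,\,f^{-n}y}\circ (A^{n}_{f^{-n}y})^{-1}$ (your $A^n_y$ does not type-check), and the conclusion is not that the conjugation "vanishes" but that $\|A^{n}_{f^{-n}y}\|\,\|(A^{n}_{f^{-n}y})^{-1}\|\cdot\|K_{f^{-n}x,f^{-n}y}-I\|$ is again controlled by the fiber-bunching product and goes to zero. These are presentation issues rather than gaps; the argument is sound.
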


We refer to the maps $H_{xy}$ as \emph{holonomies}. We will require a  more explicit statement of the uniqueness of the holonomy maps. The following may be extracted from the proofs of \cite{KS} (or alternatively derived directly from the statements of Proposition \ref{existuhol} above), 

\begin{prop}\label{unique uhol}
Let $A$ be a $C^{\alpha}$ linear cocycle over $f$ on $E$ that is $\alpha$-fiber bunched along $\mathcal{W}$. Let $x \in M$, $y \in \mathcal{W}(x)$ be given. Let $i_{n} \rightarrow \infty$ be a sequence of integers and let $J_{n}$ be a sequence of linear maps $J_{n}: E_{f^{-i_{n}}x} \rightarrow E_{f^{-i_{n}}y}$ satisfying 
\[
\|J_{n}-I_{f^{-i_{n}}x f^{-i_{n}y}}\| \leq C d(f^{-i_{n}}x,f^{-i_{n}}y)^{\alpha},
\]
for some constant $C > 0$. Then $A^{i_{n}}_{f^{-i_{n}}y} \circ J_{n} \circ A^{-i_{n}}_{x} \rightarrow H_{xy}$ as $t_{n} \rightarrow \infty$.
\end{prop}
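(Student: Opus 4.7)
The plan is to reduce the claim to convergence of the canonical approximating sequence built with orthogonal projections (which is how $H_{xy}$ is constructed in Proposition \ref{existuhol}) and then absorb the perturbation by $J_{n} - I_{f^{-i_{n}}x, f^{-i_{n}}y}$ into the same exponential estimate that proves existence. Specifically, by the Kalinin--Sadovskaya construction one has
$$H_{xy} = \lim_{n \to \infty} A^{n}_{f^{-n}y} \circ I_{f^{-n}x, f^{-n}y} \circ A^{-n}_{x},$$
so it suffices to show that
$$\Delta_{n} := A^{i_{n}}_{f^{-i_{n}}y} \circ \bigl(J_{n} - I_{f^{-i_{n}}x, f^{-i_{n}}y}\bigr) \circ A^{-i_{n}}_{x} \longrightarrow 0.$$

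The hypothesis on $J_{n}$ yields
$$\|\Delta_{n}\| \leq C\, \|A^{i_{n}}_{f^{-i_{n}}y}\| \cdot d(f^{-i_{n}}x, f^{-i_{n}}y)^{\alpha} \cdot \|A^{-i_{n}}_{x}\|.$$
I would bound each factor separately. Since $\mathcal{W}$ is an expanding foliation for $f$, backward iteration contracts leaf distance, giving
$$d(f^{-i_{n}}x, f^{-i_{n}}y)^{\alpha} \leq C_{1}\, \mathfrak{m}\bigl(Df^{i_{n}}_{f^{-i_{n}}x}|_{T\mathcal{W}}\bigr)^{-\alpha} d_{\mathcal{W}}(x,y)^{\alpha}$$
once $n$ is large enough that the two points lie in a common foliation chart. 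Writing $A^{-i_{n}}_{x} = (A^{i_{n}}_{f^{-i_{n}}x})^{-1}$ and using continuity of $A$ along $\mathcal{W}$, the norm $\|A^{i_{n}}_{f^{-i_{n}}y}\|$ is comparable to $\|A^{i_{n}}_{f^{-i_{n}}x}\|$ with a constant independent of $n$. The problem thereby reduces to showing that
$$\|A^{i_{n}}_{f^{-i_{n}}x}\| \cdot \|(A^{i_{n}}_{f^{-i_{n}}x})^{-1}\| \cdot \mathfrak{m}\bigl(Df^{i_{n}}_{f^{-i_{n}}x}|_{T\mathcal{W}}\bigr)^{-\alpha}$$
decays exponentially in $i_{n}$.

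At this point I would invoke $\alpha$-fiber bunching along $\mathcal{W}$: iterating inequality \eqref{fiber bunching} along the orbit $f^{-kN}x, f^{-(k-1)N}x, \ldots, x$ and using submultiplicativity of operator norms together with supermultiplicativity of the leaf-expansion conorm $\mathfrak{m}(Df^{n}|_{T\mathcal{W}})$, one obtains the bound $b^{k}$ at time $i_{n} = kN$, with uniformly bounded error in the intermediate steps. This gives $\|\Delta_{n}\| \leq C_{2}\, b^{i_{n}/N} d_{\mathcal{W}}(x,y)^{\alpha} \to 0$, as required.

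The main technical subtlety, and the step to watch, is the uniform comparison of cocycle and derivative norms at the two nearby base points $f^{-i_{n}}x$ and $f^{-i_{n}}y$. This is handled by a telescoping argument that uses the H\"older regularity of $A$ along $\mathcal{W}$ together with the exponential contraction of leaf distance under $f^{-1}$, so that the accumulated multiplicative error sums to a convergent geometric series independent of $n$. This is precisely the bookkeeping carried out in \cite{KS} to establish the existence of the holonomy, and it transfers to the present uniqueness statement without modification.
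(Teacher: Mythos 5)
Your argument is correct and follows the route the paper gestures at when it says the proposition can be ``extracted from the proofs of \cite{KS}'': split off the canonical approximating sequence $A^{i_n}_{f^{-i_n}y}\circ I_{f^{-i_n}x\,f^{-i_n}y}\circ A^{-i_n}_x$, bound the remainder by iterating fiber bunching along the backward orbit, and convert $d(f^{-i_n}x,f^{-i_n}y)^\alpha$ into $\mathfrak{m}(Df^{i_n}_{f^{-i_n}x}|_{T\mathcal{W}})^{-\alpha}$ using the expanding foliation. All of that is sound.

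The one place to tighten is the step you yourself flag. ``$\|A^{i_n}_{f^{-i_n}y}\|$ is comparable to $\|A^{i_n}_{f^{-i_n}x}\|$ by continuity of $A$'' is too quick: continuity controls each factor of the product, but operator norms of long compositions are only submultiplicative, with no dual lower bound, so one cannot telescope the two norms against each other as a standalone comparison. What does go through is to estimate the full product $\|A^{i_n}_{f^{-i_n}y}\|\, d(f^{-i_n}x,f^{-i_n}y)^\alpha\,\|(A^{i_n}_{f^{-i_n}x})^{-1}\|$ in one shot: group the orbit into blocks of length $N$, apply fiber bunching at the $x$-orbit, and at each block absorb the base-point swap $\|A^{N}_{f^{-m}y}\|\leq\|A^{N}_{f^{-m}x}\|+K\,d(f^{-m}x,f^{-m}y)^\alpha$ into a factor $b\bigl(1+K'd(f^{-m}x,f^{-m}y)^\alpha\bigr)$, whose infinite product converges because $f^{-1}$ contracts leaf distance geometrically. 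That is the bookkeeping in \cite{KS}, and it is compatible with your outline, but it is a bound on the product, not a pointwise norm comparison.

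The paper's other suggested route --- ``derived directly from the statements of Proposition \ref{existuhol}'' --- is worth knowing because it bypasses the mixed-base-point comparison entirely. Property (2) of Proposition \ref{existuhol}, applied at the points $f^{-i_n}x$, $f^{-i_n}y$ with $n$-fold iteration, gives the exact identity
\[
A^{i_n}_{f^{-i_n}y}\circ H_{f^{-i_n}x\,f^{-i_n}y}\circ A^{-i_n}_x = H_{xy}
\]
for every $n$, so
\[
A^{i_n}_{f^{-i_n}y}\circ J_n\circ A^{-i_n}_x - H_{xy} = A^{i_n}_{f^{-i_n}y}\circ\bigl(J_n - H_{f^{-i_n}x\,f^{-i_n}y}\bigr)\circ A^{-i_n}_x.
\]
Substituting $A^{i_n}_{f^{-i_n}y}=H_{xy}\circ A^{i_n}_{f^{-i_n}x}\circ H_{f^{-i_n}x\,f^{-i_n}y}^{-1}$ (again from (2)) and bounding $\|J_n-H_{f^{-i_n}x\,f^{-i_n}y}\|\leq (C+C')\,d(f^{-i_n}x,f^{-i_n}y)^\alpha$ via your hypothesis and property (3), everything lands on the single orbit of $x$ with uniformly bounded conjugating factors, and the iterated fiber-bunching decay you set up applies verbatim. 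This removes the delicate base-point comparison altogether.
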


From this point forward we will assume that both $E$ and $A$ are uniformly $C^{r+1}$ along $\W$ and that $A$ is $1$-fiber bunched. We will further assume that $E \subseteq T\W$. It is then possible to establish uniform $C^r$ regularity of the holonomy maps. We will do this by constructing an $A$-invariant flat connection $\nabla$ on $E$ along $\W$ that is uniformly $C^r$ along $\W$ and whose parallel transport maps on $E$ are the holonomies $H$.  

We denote by $\Gamma^{s}_{\mathcal{W}}(E)$ the space of sections of $E$ over $M$ that are uniformly $C^{s}$ along $\W$, $s$ an integer satisfying $0 \leq s \leq r+1$. A connection $\nabla$ on $E$ along $\W$ is defined to be a linear map 
\[
\nabla: \Gamma_{\mathcal{W}}^{1}(E) \rightarrow \Gamma_{\mathcal{W}}^{0}(T^{*}\W \otimes E),
\]
which satisfies the \emph{Leibniz} rule for functions $g: M \rightarrow \R$ that are uniformly $C^1$ along $\W$ and sections $X \in \Gamma_{\mathcal{W}}^{1}(E)$,
\[
\nabla(gX) = g \nabla(X) + dg \otimes X. 
\]
For $0 \leq s \leq r$ we say that $\nabla$ is \emph{uniformly $C^s$ along $\W$} if $\nabla(X)$ is uniformly $C^s$ along $\W$ whenever $X$ is uniformly $C^{s+1}$ along $\W$, or equivalently, 
\[
\nabla(\Gamma_{\mathcal{W}}^{s+1}(E)) \subseteq \Gamma_{\mathcal{W}}^{s}(T^{*}\W \otimes E).
\]
If $\nabla$ is uniformly $C^s$ along $\W$ with $s \geq 1$ then we can define the \emph{curvature tensor} $R^{\nabla}$ of $\nabla$ $X$, $Y \in \Gamma_{\W}^{2}(T\W)$ and $Z \in \Gamma_{\W}^{2}(E)$, 
\[
R^{\nabla}(X,Y)Z = \nabla_{X}\nabla_{Y}Z - \nabla_{Y}\nabla_{X}Z - \nabla_{[X,Y]}Z.
\]
This defines a section of $\mathrm{Hom}(T\W \otimes T\W  \otimes E, E)$ that is uniformly $C^{0}$ along $\W$. We say that $\nabla$ is \emph{flat} if $R^{\nabla} \equiv 0$.
\begin{rem}\label{simple flat}
Since the leaves $\mathcal{W}(x)$ for $x \in M$ are each diffeomorphic to $\R^{\dim E}$ (see for instance \cite[Theorem 6.1(d)]{HPS}) flatness of $\nabla$ implies that for any $C^1$ curve $\gamma$ joining $x \in M$ to $y \in \mathcal{W}(x)$ the induced parallel transport map $P_{\gamma}:E_{x} \rightarrow E_{y}$ does not depend on $\gamma$, where $P_{\gamma}$ is defined for $v \in E_{x}$ by taking $P_{\gamma}(v) \in E_{y}$ to be the value at $y$ of the unique vector field $V$ on $\gamma$ with $V(x) = v$ and $\nabla_{\gamma}V \equiv 0$.
\end{rem}

Given a connection $\nabla$ on $E$ that is $C^s$ along $\mathcal{W}$ for some $1 \leq s \leq r$, we define a new connection $A \cdot \nabla$ on $E$ by setting, for $X \in \Gamma^{1}_{\mathcal{W}}(E)$, 
\[
(A\cdot \nabla)(X) = ((Df^{*})^{-1} \otimes A^{-1})(\nabla(A(X)).
\]
Then $A \cdot \nabla$ defines a connection on $E$ that is uniformly $C^r$ along $\mathcal{W}$ as well. We say that $\nabla$ is \emph{$A$-invariant} if $A \cdot \nabla = \nabla$. 


If $\nabla_{1}$, $\nabla_{2}$ are two uniformly $C^r$ connections along $\mathcal{W}$ on $E$, their difference $S:=\nabla_{1}-\nabla_{2}$ is linear over functions $\psi: M \rightarrow \R$ that are uniformly $C^{1}$ along $\mathcal{W}$, and thus can be viewed as a section of the bundle $\mathrm{Hom}(E,T^{*}\mathcal{W} \otimes E)$, which is also uniformly $C^r$ along $\mathcal{W}$. Conversely, if $\nabla$ is a uniformly $C^r$ connection along $\mathcal{W}$ on $E$ and $S \in  \Gamma^{r}_{\mathcal{W}}(\mathrm{Hom}(E,T^{*}\mathcal{W} \otimes E))$ then $\nabla + S$ defines a uniformly $C^r$ connection along $\mathcal{W}$.

\begin{prop}\label{holonomy connect}
Suppose that $A$ is $1$-fiber bunched along $\mathcal{W}$. Then there exists a unique $A$-invariant connection $\nabla$ on $E$ that is uniformly $C^r$ along $\mathcal{W}$ which is flat on each leaf of $\W$. The parallel transport maps of $\nabla$ along  $\mathcal{W}$ coincide with the holonomy maps of Proposition \ref{existuhol}. 
\end{prop}

\begin{proof}
Let $\t{\nabla}$ be the connection on $T\W$ along $\W$ defined by taking the Levi-Civita connection of the Riemannian metrics on the leaves $\W(x)$ of $\W$ induced by the Riemannian metric on $M$. Then $\t{\nabla}$ is uniformly $C^{r}$ along $\W$ since $\W$ has uniformly $C^{r+2}$ leaves. Using the assumption that $E \subseteq T\W$, let $\nabla$ be the connection on $E$ along $\W$ defined by differentiating sections of $E$ using $\t{\nabla}$ and then linearly projecting the resulting section onto $E$. Then $\nabla$ is uniformly $C^r$ along $\W$ since $E$ is uniformly $C^{r+1}$ along $\W$. Let $k = \dim E$ and $m = \dim \W$. Let $N$ and $\theta$ be as in the fiber bunching inequality \eqref{fiber bunching} and set $S:= A^{N} \cdot \nabla - \nabla$.  Then $S$ defines a section of $\mathrm{Hom}(E,T^{*}M \otimes E)$ which is uniformly $C^{r}$ along $\mathcal{W}$. In particular $S$ is a continuous section of this bundle, so that $K_{0}= \sup_{x \in M} \|S_{x}\|$ is finite. For $j \geq 1$ and $v \in E$ we have 
\begin{align*}
\|A^{jN} \cdot S\| &\leq kl \|Df^{-jN}_{f^{jN}x}|_{E}\| \|A^{-jN}_{x}\| \|S_{f^{jN}x}\|\|A^{jN}_{x}\| \\
&\leq kl K_{0}\prod_{i=0}^{j-1}\|(Df^{N}_{f^{iN}x}|_{E})^{-1}\| \|(A^{N}_{f^{iN}x})^{-1}\| \|A^{N}_{f^{iN}x}\| \\
&< C_{0} b^{j}, 
\end{align*}
for a uniform constant $C_{0}$, by the fiber bunching inequality. Hence the infinite series $\sum_{j=0}^{\infty} A^{jN} \cdot S$ converges uniformly to a $C^0$ section $T$ of $\mathrm{Hom}(E,T^{*}\W \otimes E)$. 

We will show that $T$ is uniformly $C^r$ along $\mathcal{W}$. By the compactness of $M$ we can cover $M$ by finitely many open sets $U_{1},\dots,U_{p}$ such that each $U_{q}$ belongs to a  foliation chart for $\mathcal{W}$ defining it as a foliation with uniformly $C^{r+2}$ leaves as in Definition \ref{uniform foliation} and such that both the tangent bundle $T\mathcal{W}$ and $E$ can be trivialized in a uniformly $C^{r+1}$ fashion over each $U_{q}$. We then consider the neighborhood $B(s) \subset T\W$ of the zero section in $T\W$ whose fiber over each $x \in M$ is the ball $B_{x}(s)$ of radius $s$ inside of $T\W_{x}$, with $s$ chosen small enough that for each $x \in M$ there is some $U_{q}$ such that $B(x,s) \subset U_{q}$. We identify $B_{x}(s)$ with the ball $B(x,s)$ centered at $x$ of radius $s$ inside of $\W(x)$ via the $C^{r+1}$ Riemannian exponential map $\exp_{x}:B_{x}(s) \rightarrow B(x,s)$. We consider this as a coordinate chart on $B(x,s)$ and pull back all objects under consideration from $B(x,s)$ to $B_{x}(s)$ via $\exp_{x}$. 

Let $x \in M$ be fixed. Using our trivializations, on each $U_{q}$ we can find a family of sections $O_{q}$ of the orthonormal frame bundle of $T\W|_{U_{q}}$ that are uniformly $C^{r+1}$ along $\W$, and the same for $E|_{U_{q}}$. For a given $n \geq 0$, if we are given $x \in M$, by our choice of $s$ for each $0 \leq i \leq n$ we can find an integer $q(i)$ such that $B(f^{iN}x,s) \subset U_{q}$. We can thus isometrically identify each of the tangent spaces $T\W_{f^{iN}x}$ with $\R^{m}$ using the orthonormal frames constructed previously, and can identify each ball $B(f^{iN}x,s)$ with the ball $B_{\R^{m}}(0,s)$ of radius $s$ centered at $0$ in $\R^{m}$. The maps $f^{N}$ are defined in these coordinates at $f^{iN}x$, $0 \leq i \leq n-1$ on a sufficiently small neighborhood $V$ of $0$ of uniform size independent of $i$. We write $F_{i}$ for the map $f^{N}$ from this neighborhood of $f^{iN}x$ to $B_{\R^{m}}(0,s)$ in these coordinates. For a \emph{fixed} $n \in \N$, the composition $F^{n} = F_{n-1} \circ \dots \circ F_{0}$ will be well defined on a small enough neighborhood $V_{n} \subset V$ of $0$ in $B_{\R^{m}}(0,s)$. We write $GL_{m}(\R)$ for the group of invertible linear transformations of $\R^{m}$, and write $D_{i}: V \rightarrow GL(\R^{m})$ for the $C^{r+1}$ map whose image is the derivative matrix $D_{y}F_{i}$ at $y \in V$. We then write for $y \in V$, 
\[
D^{n}_{0}(y) = D_{y}F^{n} = D_{n-1}(f^{n-1}y) \cdots  D_{0}(y). 
\]

Since $E\subseteq T\W$ is a subbundle, the $C^{r}$ bundle map $A^{N}:E\rightarrow E$ then determines a $C^r$ section $A_{i}: V \rightarrow GL_{k}(\R)$ in these coordinates. We write $A^{n}_{0}(y) = A_{n-1}(f^{n-1}y)  \cdots A_{0}(y)$. The fiber bunching inequality translates into these coordinates as, for $y \in U$,
\[
\|D_{i}(y)^{-1}\| \|A_{i}(y)^{-1}\| \|A_{i}(y)\| < b.  
\]

Let $S_{n}$ denote the $C^r$ section of $\mathrm{Hom}(\R^{k}, \R^{m} \otimes \R^{k})$ corresponding to $S$ near $f^{nN}x$ in this coordinate system. Then the section $A^{(n-1) N} \cdot S$ near $x$ can be written in these charts as
\[
T_{n}(y) = ((D^{n}_{0}(y))^{-1} \otimes (A^{n}_{0}(y))^{-1})S_{n}(y)A^{n}_{0}(y). 
\]

We now recall the $C^r$ norm on $C^r$ functions $g: Z \rightarrow \R^{j'}$, where $Z \subseteq \R^{j}$ is an open subset of a Euclidean space of dimensions $j$ being mapped into one of dimension $j'$.  Let $I = (i_{1},\dots,i_{n_{1}})$ be a multi-index, with $i_{j} \geq 0$ being nonnegative integers. We set $|I| = \sum_{j=1}^{n_{1}}i_{j}$. For a multi-index $I$ with $|I| = r$, we define
\[
\frac{\p^{r} g}{\p x^{I}} = \frac{\p^{r}g}{\p x_{1}^{i_{1}}\dots \p x_{n_{1}}^{i_{n_{1}}}}.
\]
The $C^r$ norm of $g$ on $U$ is then defined by
\[
\|g\|_{r} = \sup_{|I| \leq r} \sup_{y \in Z} \left\|\frac{\p^{r}g}{\p x^{I}}(y)\right\|.
\]
We omit the domain $Z$ from the notation as it will be understood from the context. 

Applying the Leibniz rule, we have a bound on the $C^r$ norm in terms of the product of $C^r$ norms measured on $V_{n}$, 
\[
\|T_{n}\|_{r} \leq C\|(D^{n}_{0})^{-1}\|_{r}\|(A^{n}_{0})^{-1}\|_{r}\|S_{n}\|_{r}\|A^{n}_{0}\|_{r},
\]
with the constant $C = C_{r,k,m}$ depending only on $r$, $k$, and $m$. Since all of our coordinate charts and trivializations are uniformly $C^r$ along $\W$ and $S$ is uniformly $C^r$ along $\W$ as well, by the compactness of $M$ we obtain a uniform bound $\|S_{n}\|_{r} \leq C$ independent of $i$. Thus we have
\[
\|T_{n}\|_{r} \leq C\|(D^{n}_{0})^{-1}\|_{r}\|(A^{n}_{0})^{-1}\|_{r}\|A^{n}_{0}\|_{r},
\]
for a constant $C > 0$. 

Our key estimate is given in the following lemma. For a natural number $n$ we write $[0,n]$ for the set of integers $j$ satisfying $0 \leq j \leq i$. For $J \subseteq [0,n]$ we write $|J|$ for the cardinality of $J$. 

\begin{lem}\label{deriv product}
Let $r \geq 1$, $k \geq 1$, $n \geq 1$, and $m \geq 1$ be given with $n > r$. Let $A_{0},\dots,A_{n-1}$ be $GL_{m}(\R)$-valued $C^r$ functions defined on an open set $ \subseteq \R^{m}$. Suppose that there is a constant $K > 0$ such that $\|A_{j}\|_{r} \leq K$ for each $0 \leq j \leq n-1$, with the $C^r$ norm being measured on $U$. Then there is a constant $C_{m,r}$ depending only on $m$ and $r$ for which we have
\[
\left\| A_{n-1}\cdots A_{0}\right\|_{r} \leq C_{m,r}K^{r} \sup_{\substack{J \subseteq [0,n-1] \\ |J| = n-r}} \prod_{J} \|A_{j}\|_{0}.
\]
\end{lem}

\begin{proof}
Let $A_{0},\dots,A_{n-1}$ be $GL_{m}(\R)$-valued $C^r$ functions defined on $U$ with $\|A_{j}\|_{r} \leq K$ for each $0 \leq j \leq n-1$. For a multi-index $I = (i_{1},\dots,i_{m})$, set $A_{j}^{(I)} := \frac{\p}{\p x^{I}}A_{j}$. Then 
\begin{equation}\label{leibniz}
\frac{\p}{\p x^{I}}A_{n-1}\cdots A_{0} = \sum_{I_{0}+\dots + I_{n-1} = I} A_{n-1}^{(I_{n-1})}\cdots A_{0}^{(I_{0})},
\end{equation}
with the sum being taken over all multi-indices $I_{0},\dots,I_{n-1}$ which sum to $I$. Since $|I| \leq r$, we have that $I_{j} = (0,\dots,0)$ for all but at most $r$ indices $j$ for each individual  term of the sum on the right side of equation \eqref{leibniz}. From this it follows that for each individual product on the right-hand side, using the bound $\|A_{j}^{(I_{j})}\|_{0} \leq K$ for those $I_{j} \neq (0,\dots,0)$, 
\[
\left\|A_{n-1}^{(I_{n-1})}\cdots A_{0}^{(I_{0})}\right\|_{0} \leq  \sup_{\substack{J \subseteq [0,n-1] \\ |J| \geq n-r}} K^{|J|}\prod_{J} \|A_{j}\|_{0}.
\]
Since $\|A_{j}\|_{0} \leq K$ as well, this implies that
\[
\left\|A_{n-1}^{(I_{n-1})}\cdots A_{0}^{(I_{0})}\right\|_{0} \leq  K^{r}\sup_{\substack{J \subseteq [0,n-1] \\ |J| = n-r}} \prod_{J} \|A_{j}\|_{0}.
\]

Hence we conclude by the triangle inequality that 
\[
\left\|\frac{\p}{\p x^{I}}A_{n-1}\cdots A_{0}\right\|_{0} \leq C_{m,r}K^{r} \sup_{\substack{J \subseteq [0,n-1] \\ |J| = n-r}} \prod_{J} \|A_{j}\|_{0},
\]
with $C_{m,r}$ depending only on $k$ and $r$. This implies that
\[
\left\| A_{n-1}\cdots A_{0}\right\|_{r} \leq C_{m,r}K^{r} \sup_{\substack{J \subseteq [0,n-1] \\ |J| = n-r}} \prod_{J} \|A_{j}\|_{0}.
\]
\end{proof}

Using Lemma \ref{deriv product}, we obtain the upper bound for $i > r$, with a uniform constant $K$, 
\begin{equation}\label{triple count}
\|T_{n}\|_{r} \leq K\sup_{\substack{J_{1},J_{2},J_{3} \subset [0,n-1] \\ |J_{q}| = n-r}} \prod_{j \in J_{1}}\|D_{j}^{-1}\|_{0}  \prod_{j \in J_{2}}\|A_{j}^{-1}\|_{0}\prod_{j \in J_{3}}\|A_{j}\|_{0},
\end{equation}
with the $C^0$ norms also being measured on the neighborhood $U$. Assume now that $n \geq 3r$ and set $J'=J_{1} \cap J_{2} \cap J_{3}$.  Then $|J_{1} \cap J_{2} \cap J_{3}| \geq n-3r$ since $|J_{i}| = n-r$ for each $i$. The complements $\hat{J}_{i} = J_{i} \backslash J'$ then satisfy $|\hat{J}_{i}| \leq 2r$ for each $i$. Note that
\[
\sup_{\hat{J}_{1},\hat{J}_{2},\hat{J}_{3}} \prod_{j \in \hat{J}_{1}}\|D_{j}^{-1}\|_{0}  \prod_{j \in \hat{J}_{2}}\|A_{j}^{-1}\|_{0}\prod_{\hat{J} \in J_{3}}\|A_{j}\|_{0} \leq K'
\]
for some uniform constant $K'$ that depends only on $r$ and the uniform upper bounds on $\|D_{j}^{-1}\|_{0}$, $\|A_{j}^{-1}\|_{0}$ and $\|A_{j}\|_{0}$ that we obtain from the compactness of $M$ and the facts that $f$ is uniformly $C^{r+2}$ along $\W$ and $A$ is uniformly $C^{r+1}$ along $\W$. 

We can thus remove the integers in each $\hat{J}_{i}$ from the product $\eqref{triple count}$ at the cost of increasing the constant $K$ by a uniform amount. Hence, by the fiber bunching inequality, for a (possibly larger) constant $K$ we have
\begin{align*}
\|T_{n}\|_{r} &\leq K\sup_{\substack{J \subset [0,n-1]\\|J| =n-3r}}\prod_{j \in J}\|D_{j}^{-1}\|_{0} \|A_{j}^{-1}\|_{0}\|A_{j}\|_{0} \\
&< K b^{n-3r}. 
\end{align*}

The computation above is valid for any $n \geq 3r$, though the neighborhood $V_{n}$ of $0$ must be chosen smaller as $n$ becomes larger. However, the point $0$ lies in all of these neighborhoods, and the above bound shows that for all $n \geq 3r$ the $r$th order derivatives of $T_{n}$ at $0$ are bounded by $K b^{n-3r}$. This estimate is independent of the point $x$, and therefore we conclude that the $r$th order derivatives along $\W$ of $A^{nN} \cdot S$ are all bounded in norm by $K b^{n-3r}$. It follows that all $r$th order derivatives of the series 
\[
T=\sum_{n=0}^{\infty} A^{nN} \cdot S,
\]
converge uniformly along $\W$, and therefore $T$ also defines a section of $\mathrm{Hom}(E, T^{*}\W \otimes E)$ which is uniformly $C^r$ along $\W$. 

We claim that $T$ satisfies 
\begin{equation}\label{connect invariance}
A \cdot T = T - A \cdot \nabla + \nabla.
\end{equation}
The $n$th partial sum of the series $\sum_{n=0}^{\infty}A^{n N} \cdot S$ is represented by the difference of connections $A^{nN} \cdot \bar{\nabla} - \bar{\nabla}$. Set $\check{S} = A \cdot \nabla - A$. Then we also have
\[
\sum_{j=0}^{(n-1)N} A^{j} \cdot \check{S} = A^{nN} \cdot \bar{\nabla} - \bar{\nabla} = \sum_{j=0}^{n-1}A^{jN} \cdot S.
\]
From this it follows that the series $\sum_{n=0}^{\infty} A^{n} \cdot S$ converges uniformly to the same limit $T$. Equation \eqref{connect invariance} clearly follows from this series representation of $T$. Set $\nabla:=\bar{\nabla} + T$; then $\nabla$ defines a uniformly $C^r$ connection on $E$ that is uniformly $C^r$ along $\mathcal{W}$ which is $A$-invariant by equation \eqref{connect invariance}. 

Now that we know $\nabla$ is at least $C^1$ along $\mathcal{W}$, its curvature tensor $R^{\nabla}$ along $\mathcal{W}$ is defined. We claim that $R^{\nabla}\equiv 0$, i.e., $\nabla$ is flat. The $A$-invariance of $\nabla$ implies that for $X$, $Y \in \Gamma^{r+1}_{\mathcal{W}}(T\mathcal{W})$ and $Z \in \Gamma^{r+1}_{\mathcal{W}}(\E)$, 
\[
R^{\nabla}(X,Y)(Z) = A^{n}(R^{\nabla}(Df^{-n}(X),Df^{-n}(Y))(A^{-n}(Z))), 
\] 
which implies that at $x \in M$, for any $n \geq 1$,
\begin{align*}
\|R^{\nabla}(X,Y)(Z)(x)\| &\leq \|A^{n}_{x}\| \|R^{\nabla}\| \|(Df^{n}_{x})^{-1}\|^{2} \|(A^{n}_{x})^{-1}\| \\
&\leq C b^{n},
\end{align*}
by the fiber bunching inequality. Letting $n \rightarrow \infty$, we conclude that $R^{\nabla}(X,Y)(Z) = 0$. Hence $R^{\nabla}\equiv 0$.

We thus have unique parallel transport maps $P_{xy}: E_{x} \rightarrow E_{y}$ for $y \in \W(x)$ defined as in Remark \ref{simple flat}. By the $A$-invariance of $\nabla$, these maps satisfy the same equations as the holonomy maps $H_{xy}$ satisfy in the statement of Proposition \ref{existuhol}. Furthermore these maps depend in a Lipschitz fashion on $x$ and $y$ since $\nabla$ is $C^1$ along $\W$. Thus by the uniqueness part of Proposition \ref{existuhol} we conclude that $P_{xy} = H_{xy}$.
\end{proof}




\subsection{Quotient normal forms}  


We start with a general lemma. Given a $k$-dimensional $C^r$ foliation $\mathcal{F}$ of an $m$-dimensional $C^r$ manifold $M$, $r \geq 0$, we let $M/\F$ denote the topological space obtained as the quotient of $M$ by the equivalence relation $\sim$ given by $x \sim y$ if $y \in \F(x)$. We let $\pi_{\F}: M \rightarrow M/\F$ denote the quotient map. We let $B:=B_{m-k}$ denote the open unit ball in $\R^{m-k}$.

\begin{lem}\label{quotient manifold}
Suppose that $M/\F$ is Hausdorff and that for each $x \in M$ there is an open neighborhood $U$ of $x$ and a $C^r$ embedding $\zeta_{x}: B \rightarrow U$, with $\zeta_{x}(0) = x$, such that $\pi_{\F} \circ \zeta_{x}: B \rightarrow M/\F$ is injective. Then $M/\mathcal{F}$ is a $C^r$ manifold and the quotient map $\pi_{\F}$ is $C^r$. 
\end{lem}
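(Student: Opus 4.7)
The plan is to construct $C^r$ charts on $M/\F$ from the transversals $\zeta_x$ and verify that their transition maps are $C^r$ by interpreting them as $\F$-holonomies.

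First I would establish that $\pi_\F : M \to M/\F$ is an open map. This is immediate from the local $C^r$-foliation chart structure of $\F$: for any open $U \subseteq M$, the $\F$-saturation $\pi_\F^{-1}(\pi_\F(U)) = \bigcup_{y \in U} \F(y)$ is open, since within any foliation chart $\psi: W \to B_k \times B_{m-k}$ the saturation of an open set is a union of full plaques, which is again open. Consequently each $V_x := \pi_\F(\zeta_x(B))$ is an open neighborhood of $[x]$ in $M/\F$, and the continuous injection $\pi_\F \circ \zeta_x : B \to V_x$ is also an open map, hence a homeomorphism. I would then take the family $\phi_x := (\pi_\F \circ \zeta_x)^{-1} : V_x \to B$ as a candidate atlas; since $\zeta_x(0) = x$ for every $x$, the sets $V_x$ cover $M/\F$.

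Next I would show that each transition map $\phi_y \circ \phi_x^{-1}$ is $C^r$. Unwinding definitions, this sends $p \in \phi_x(V_x \cap V_y)$ to the unique $q \in B$ with $\zeta_y(q) \in \F(\zeta_x(p))$. In a $C^r$-foliation chart of $\F$ containing both $\zeta_x(p)$ and $\zeta_y(q)$ (which lie on the same plaque after shrinking domains if necessary), this is expressed as $\zeta_y^{-1} \circ h \circ \zeta_x$, where $h$ is the $\F$-holonomy from the transversal $\zeta_x(B)$ to the transversal $\zeta_y(B)$ inside the foliation box. Since $\F$ is a $C^r$ foliation, these holonomies between $C^r$ transversals are $C^r$, and $\zeta_x$, $\zeta_y^{-1}$ are $C^r$; hence the transition is $C^r$. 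Implicit here is that each $\zeta_x(B)$ is transverse to $\F$, which follows from the injectivity of $\pi_\F \circ \zeta_x$ together with the dimension count $\dim B = m - k = \mathrm{codim}\, \F$.

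The main obstacle will be verifying the Hausdorff property of $M/\F$. The strategy is to use the injectivity of each $\pi_\F \circ \zeta_x$ together with the local product structure of $\F$. Given $[x] \neq [y]$ one shrinks the transversals $\zeta_x, \zeta_y$ to subballs $B_\delta \subset B$ and argues by contradiction: a failure of Hausdorffness would produce sequences $p_n, q_n \in B_\delta$ with $p_n, q_n \to 0$ and $\zeta_x(p_n)$, $\zeta_y(q_n)$ on common leaves; passing to foliation boxes around $x$ and $y$ and using continuity of $\F$-holonomy, one extracts a convergent holonomy transporting a neighborhood of $x$ to a neighborhood of $y$ along $\F$, which forces $y \in \F(x)$ and contradicts $[x] \neq [y]$. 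Second countability of $M/\F$ is inherited from $M$ via the openness of $\pi_\F$.

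Finally, to see that $\pi_\F$ itself is $C^r$ one argues locally. Near any $x \in M$, choose a $C^r$-foliation chart $\psi : W \to B_k \times B_{m-k}$ for $\F$ containing $\zeta_x(B)$; post-composing with $\phi_x$ on the target identifies $\pi_\F|_W$ with the $C^r$ projection $B_k \times B_{m-k} \to B_{m-k}$ followed by a $C^r$ reparametrization coming from $\zeta_x^{-1}$, and is therefore $C^r$. This completes the construction of the $C^r$ manifold structure on $M/\F$ together with the $C^r$ regularity of $\pi_\F$.
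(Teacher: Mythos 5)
Your core approach --- taking the maps $\pi_\F\circ\zeta_x$ as charts, showing they are local homeomorphisms via openness of the saturation, and identifying the transition maps with $\F$-holonomies --- is essentially the paper's. Your explicit check that $\pi_\F$ itself is $C^r$ spells out something the paper leaves implicit. But two of your supporting claims are incorrect.

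First, your Hausdorffness argument fails, and in fact Hausdorffness does not follow from the stated hypotheses. Take $M=\R^2\setminus\{0\}$ with the horizontal foliation $\F$: every point has a small vertical arc $\zeta_x$ along which $\pi_\F\circ\zeta_x$ is injective (distinct heights give distinct leaves), yet $M/\F$ is a line with a doubled origin, which is not Hausdorff. In your contradiction argument, the ``convergent holonomy'' you wish to extract would have to carry a neighborhood of $(1,0)$ to a neighborhood of $(-1,0)$ along leaves that pass near the deleted point --- there is no such holonomy, and that is precisely why the two classes cannot be separated. The paper's proof does not address Hausdorffness at all; in the sole application (Proposition \ref{prop: quotient reg}), it is supplied by a global dynamical argument (each leaf of $\V^u$ meets the chosen transversal exactly once), not by the local hypothesis of the lemma. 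You should not attempt to derive it here.

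Second, your parenthetical assertion that geometric transversality of $\zeta_x(B)$ to $\F$ ``follows from the injectivity of $\pi_\F\circ\zeta_x$ together with the dimension count'' is false. In $\R^2$ with the horizontal foliation, $\zeta(s)=(s,s^3)$ is a smooth embedded arc with $\pi_\F\circ\zeta$ injective, yet $\zeta'(0)$ is horizontal, so the image is tangent to $\F$ at the origin. The holonomy from this arc to a vertical transversal is $p\mapsto p^3$, whose inverse $q\mapsto q^{1/3}$ is not $C^1$, so the resulting charts are not $C^r$-compatible. Geometric transversality is genuinely extra data; the paper's definition of ``transversal'' in Section \ref{foliation section} is purely set-theoretic, and both the paper's terse proof and yours silently rely on the geometric version --- which the application does provide, since the transversals $J$ in Proposition \ref{prop: quotient reg} are taken tangent to $L^u$ and hence complementary to $V^u$.
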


\begin{proof}
Fix $x \in M$. By continuity of $\F$, for any open subset $W \subseteq B$ the set 
\[
\{y \in M: \F(y) \cap \zeta(W) \neq \emptyset \},
\]
is open in $M$. Hence $\pi_{\F} \circ \zeta_{x}$ is an open map, which implies that it is a homeomorphism onto its image because of the injectivity hypothesis. This shows that $M/\F$ is locally Euclidean, which implies that it is a manifold since we assume that $M/\F$ is Hausdorff.

Now assume that $r \geq 1$ and consider $\zeta_{x}$, $\zeta_{y}$ such that $\pi_{\F}( \zeta_{x}(B)) \cap \pi_{\F}( \zeta_{y}(B)) \neq \emptyset$. The transition map on the intersection is given by the $\F$-holonomy map $h^{\F}: S_{x} \rightarrow S_{y}$ between appropriate open subsets $S_{x} \subseteq \zeta_{x}(B)$ and $S_{y} \subseteq \zeta_{y}(B)$. Since $\F$ is $C^r$, we have that $h^{\F}$ is $C^r$ and therefore the lemma follows.
\end{proof}

We now return to the setting of Proposition \ref{prop: splitting reg} with a higher regularity assumption. Specifically, $M$ be a closed $C^{r+3}$ Riemannian manifold for an $r \geq 1$. Let $\W^{u}$ be a foliation of $M$ with uniformly $C^{r+3}$ leaves and let $f: M \rightarrow M$ be uniformly $C^{r+3}$ along $\W^{u}$ and have $\W^{u}$ as an expanding foliation. Set $E^{u}:=T\W^{u}$ and assume that we have a dominated splitting $E^{u} = L^{u} \oplus V^{u}$ for $Df|_{E^{u}}$ as in that proposition. Then Proposition \ref{prop: splitting reg} implies that $V^{u}$ is $C^{r+2}$ along $\W^{u}$ and therefore $\V^{u}$ defines a foliation of $M$ which is uniformly $C^{r+2}$ along $\W^{u}$. We consider $\V^{u}$ as a $C^{r+2}$ foliation of the $C^{r+2}$ manifold $M_{\W^{u}}$ whose connected components are the leaves of $\W^{u}$. We set $\mathcal{Q}^{u}:=M_{\W^{u}}/\mathcal{V}$ and write $\pi_{\V^{u}}: M_{\W^{u}} \rightarrow \mathcal{V}^{u}$ for the quotient map. We set $Q^{u} = E^{u}/V^{u}$ to be the quotient bundle over $M$, which is uniformly $C^{r+2}$ along $\W^{u}$ and is therefore $C^{r+2}$ over $M_{\W^{u}}$. We let $\mathcal{Q}^{u}(x) = \W^{u}(x)/\mathcal{V}^{u}$ be the quotient of the leaf $\W^{u}(x)$ by the foliation $\mathcal{V}^{u}$, for $x \in M$.

\begin{prop}\label{prop: quotient reg}
The space $\mathcal{Q}^{u}$ is a $C^{r+2}$ manifold and the projection $\pi_{\mathcal{V}^{u}}: M_{\W^{u}} \rightarrow \mathcal{Q}^{u}$ is $C^{r+2}$. The derivative $D\pi_{\mathcal{V}^{u}}: TM_{\W^{u}} \rightarrow T\mathcal{Q}^{u}$ induces a $C^{r+1}$ bundle map $\bar{D}\pi_{\mathcal{V}^{u}}: Q^{u} \rightarrow T\mathcal{Q}^{u}$ that is an isomorphism on each fiber. 
\end{prop}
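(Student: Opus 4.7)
The plan is to apply Lemma~\ref{quotient manifold} with the ambient space taken to be the $C^{r+3}$ manifold $M_{\W^u}$ and the foliation taken to be $\V^u$, which by Proposition~\ref{prop: splitting reg} (invoked with $r$ there replaced by $r+2$) is a $C^{r+2}$ foliation of $M_{\W^u}$. Once the lemma applies, it delivers the $C^{r+2}$ manifold structure on $\mathcal{Q}^u = M_{\W^u}/\V^u$ together with $C^{r+2}$ regularity of $\pi_{\V^u}$, and the derivative statement will then follow from a quotient factorization.

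To verify the hypothesis of Lemma~\ref{quotient manifold}, fix $x \in M$ and set $l = \dim L^u$, $k = \dim E^u$. Choose a $C^{r+2}$ foliation chart $\psi: U \to B_l \times B_{k-l}$ for $\V^u$ around $x$ in $M_{\W^u}$, normalized so that $\psi(x) = (0,0)$, and set $\zeta_x = \psi^{-1} \circ i_0$ where $i_0: B_l \to B_l \times B_{k-l}$ is the inclusion $i_0(t) = (t,0)$. Then $\zeta_x: B_l \to U$ is a $C^{r+2}$ embedding parametrizing a transversal to $\V^u$ inside $\W^u(x)$, and by construction of the foliation chart it meets each plaque of $\V^u|_U$ exactly once. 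To promote this to global injectivity of $\pi_{\V^u} \circ \zeta_x$ on $B_l$, shrink $U$ so that each leaf of $\V^u$ meets $U$ in a single plaque; this is possible because the leaves of $\V^u$, being the leaves of the strongly expanded direction of the dominated splitting $E^u = L^u \oplus V^u$, are injectively immersed submanifolds of the ambient leaves $\W^u(x)$ that are closed inside them, hence do not accumulate near $x$.

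For the derivative statement, note that since $\pi_{\V^u}$ is $C^{r+2}$, its derivative $D\pi_{\V^u}: TM_{\W^u} = E^u \to T\mathcal{Q}^u$ is a $C^{r+1}$ bundle map. Because $\pi_{\V^u}$ is constant along leaves of $\V^u$, we have $V^u = T\V^u \subseteq \ker D\pi_{\V^u}$, and equality follows from the dimension count $\dim E^u - \dim V^u = \dim L^u = \dim \mathcal{Q}^u$. Hence $D\pi_{\V^u}$ descends to a well-defined bundle map $\bar{D}\pi_{\V^u}: Q^u \to T\mathcal{Q}^u$ that is fiberwise a linear isomorphism. To obtain $C^{r+1}$ regularity of this bundle map, identify $Q^u$ with the subbundle $(V^u)^\perp \subset E^u$; this identification is $C^{r+2}$ since $V^u$ is $C^{r+2}$ along $\W^u$, and $\bar{D}\pi_{\V^u}$ is obtained by restricting the $C^{r+1}$ map $D\pi_{\V^u}$ to this $C^{r+2}$ subbundle.

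The main obstacle is arranging the injectivity condition of Lemma~\ref{quotient manifold}, which requires ruling out the return of $\V^u$-leaves near $x$ inside $\W^u(x)$; beyond that, the proof is a bookkeeping of regularity through the factorization $E^u \to Q^u \to T\mathcal{Q}^u$.
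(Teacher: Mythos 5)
Your reduction to Lemma~\ref{quotient manifold} and your treatment of the derivative statement are both fine, and you correctly identify the one nontrivial point: the injectivity of $\pi_{\V^u}\circ\zeta_x$, which amounts to ruling out a $\V^u$-leaf returning to the transversal more than once. But your resolution of this obstacle has a genuine gap.

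You assert that each $\V^u$-leaf meets a small enough $U$ in a single plaque ``because the leaves of $\V^u$, being the leaves of the strongly expanded direction of the dominated splitting $E^u = L^u\oplus V^u$, are injectively immersed submanifolds of the ambient leaves $\W^u(x)$ that are closed inside them, hence do not accumulate near $x$.'' This statement has two problems. First, the claim that $\V^u$-leaves are closed (properly embedded) inside $\W^u(x)$ is itself a nontrivial consequence of the dynamics, not an automatic feature of a dominated splitting; it is precisely what needs to be demonstrated. Second, and more importantly, even granting closedness, a single properly embedded leaf can still pass through any fixed neighborhood $U$ of $x$ in several distinct plaques (think of a long embedded curve that leaves $U$ and comes back); closedness prevents accumulation at $x$, but it does not prevent a finite number of returns, and even one return spoils the injectivity of $\pi_{\V^u}\circ\zeta_x$. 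So ``does not accumulate near $x$'' is neither an established premise nor a sufficient conclusion here. The paper closes exactly this gap with a dynamical argument: it fixes a small transversal $J$ tangent to $L^u$ at $x$ so that local uniqueness of intersections with $\V^u$-plaques holds inside $B(x,s)$, then supposes $y\neq z\in J$ lie on the same $\V^u$-leaf and applies $f^{-n}$. Since $f^{-n}$ exponentially contracts $\V^u$-leaves, for $n$ large the geodesic in $\V^u(f^{-n}y)$ joining $f^{-n}y$ to $f^{-n}z$ lies entirely inside $B(f^{-n}x,s)$; meanwhile the domination forces $f^{-n}(J)$ to converge in angle toward $L^u$, so $f^{-n}(J)$ is a valid small transversal, and the two intersection points $f^{-n}y,f^{-n}z\in f^{-n}(J)$ then lie in the same $\V^u$-plaque of $B(f^{-n}x,s)$ — contradicting the local uniqueness. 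Your proof needs this (or an equivalent) argument at the step where you shrink $U$.
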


\begin{proof}
We give each leaf of $\V^{u}$ the induced Riemannian metric from $M$. By continuity and compactness, for each sufficiently small $\delta > 0$ there is an $s > 0$ such that for any $x \in M$ the ball $B(x,s)$ of radius $s$ centered at $x$ inside of $\W^{u}(x)$ is contained inside a foliation box for $\V^{u}$ and for any $C^{r+2}$ transversal $J$ inside of $B(x,s)$ to $\mathcal{V}^{u}$ through $x$ which is tangent to $L^{u}$ at $x$ and whose tangent spaces make angle at most $\delta$ with $L^{u}$ inside of $B(x,s)$, each leaf of the restriction of $\mathcal{V}^{u}$ to $B(x,s)$ intersects $J$ exactly once. We will show that each leaf of $\mathcal{V}^{u}$ itself intersects $J$ exactly once. 

Suppose that there is some $y \neq z \in J$ such that $z \in \mathcal{V}^{u}(y)$. Since $f^{-n}$ exponentially contracts the leaves of $\W$, there is some $n$ such that $d(f^{-n}y,f^{-n}z) < s$ and such that there is a geodesic $\gamma$ from $f^{-n}y$ to $f^{-n}z$ inside of $\mathcal{V}^{u}(f^{-n}y)$ that remains entirely inside of $B(f^{-n}x,s)$. Since the splitting $E^{u} = L^{u} \oplus V^{u}$ is dominated, the angle that $J$ makes with $L^{u}$ is strictly decreased under iteration by $Df^{-n}$ for $n$ large enough. Since $f^{-n}(J)$ is tangent to $L^{u}$ at $f^{-n}x$ and $f^{-n}y  \in f^{-n}(J)$, we conclude that $\mathcal{V}^{u}(f^{-n}y)$ intersects $f^{-n}(J)$ exactly once inside of $B(f^{-n}x,s)$. But by the choice of $n$, $f^{-n}z \in f^{-n}(J)$ as well and lies in the same component of the restriction of $\mathcal{V}^{u}$ to $B(f^{-n}x,s)$ as $f^{-n}y$. This is a contradiction.

Thus the projection $\pi_{\V^{u}}: J \rightarrow \mathcal{Q}^{u}(x)$ is injective. The conclusions of this proposition will then follow from Lemma \ref{quotient manifold} once we also show that $\mathcal{Q}^{u}(x)$ is Hausdorff. Let $x \in M$, $y \in \W^{u}(x)$ with $y \notin \V^{u}(x)$. Letting $\delta$ and $s$ be as in the first paragraph, by applying $f^{-n}$ for a sufficiently large power of $n$ we can assume that $d(x,y) < s$ and that there is a transversal $J$ to $\V^{u}$ through $x$, making angle at most $\delta$ with $L^{u}$ inside of $B(x,s)$, such that the restriction $\V^{u}_{B(x,s)}(y)$ of $\V^{u}(y)$ to $B(x,s)$ intersects $J$ in exactly one point $z \neq x$. 

Let $J_{x}$ and $J_{z}$ be disjoint open sets inside of $J$ with $x \in J_{x}$, $z \in J_{z}$. Since each leaf of $\mathcal{V}^{u}$ intersects $J$ exactly once, we have that $\pi_{\V^{u}}(J_{x}) \cap \pi_{\V^{u}}(J_{z}) = \emptyset$. Since $\pi_{\V^{u}}(z) = \pi_{\V^{u}}(y)$, it follows that the projections of $x$ and $y$ to $\mathcal{Q}^{u}(x)$ have disjoint open neighborhoods and therefore that $\mathcal{Q}^{u}(x)$ is Hausdorff.
\end{proof}

The $C^{r+2}$ manifolds $\mathcal{Q}^{u}(x)$ are the connected components of $\mathcal{Q}^{u}$. We write $\bar{f}: \mathcal{Q}^{u} \rightarrow \mathcal{Q}^{u}$ for the induced $C^{r+2}$ diffeomorphism from $f$, which satisfies $\pi_{\V^{u}} \circ f = \bar{f} \circ \pi_{\V^{u}}$.

We now assume that $Df|_{L^{u}}$ is $1$-fiber bunched along $\W^{u}$. This implies that the action $Df|_{Q^{u}}$ of $Df$ on the quotient bundle  $Q^{u}$ is $1$-fiber bunched along $\W^{u}$ as well when we give $Q^{u}$ the quotient Riemannian structure from the projection isomorphism $(V^{u})^{\perp} \rightarrow Q^{u}$. The bundle $Q^{u}$ is $C^{r+2}$ along $\W^{u}$ by Proposition \ref{prop: splitting reg}, hence we can apply Proposition \ref{connect invariance} to obtain a  $Df|_{Q^{u}}$-invariant flat connection $\nabla$ on $Q^{u}$ that is uniformly $C^{r}$ along $\W^{u}$, for which the parallel transport of $\nabla$ coincides with the holonomies of $Df|_{L^{u}}$ under the quotient identification of $L^{u}$ with $Q^{u}$. For $x\in M$, $y \in \W^{u}(x)$ we write $P_{xy}^{u}: Q_{x}^{u} \rightarrow Q_{y}^{u}$ for the parallel transport maps of $\nabla$ on $Q^{u}$. 

\begin{lem}\label{holonomy vert}
Let $x \in M$ and let $S_{1}, S_{2} \subseteq \W^{u}(x)$ be any two local transversals to $\mathcal{V}^{u}$  such that $\pi_{\mathcal{V}^{u}}(S_{1}) = \pi_{\mathcal{V}^{u}}(S_{2}) \subseteq \mathcal{Q}^{u}(x)$. Identify the tangent bundles $TS_{1}$ and $TS_{2}$ with the restrictions of $Q^{u}$ to $S_{1}$ and $S_{2}$ respectively. Then the derivative of the $\mathcal{V}^{u}$-holonomy map $h^{\mathcal{V}^{u}}$ from $S_{1}$ to $S_{2}$ is given by, for $x \in S_{1}$ and $y = h^{\V^{u}}(x) \in S_{2}$, 
\[
Dh^{\mathcal{V}^{u}}_{x} = P_{xy}^{u}:Q_{x}^{u} \rightarrow Q_{y}^{u}.
\]
\end{lem}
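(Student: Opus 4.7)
The plan is to identify the derivative of the $\V^u$-holonomy with the parallel transport $P^u_{xy}$ of the flat connection $\nabla$ on $Q^u$, which by construction coincides with the fiber-bunched holonomy $H^u_{xy}$ from Proposition \ref{existuhol}. The central tool will be the uniqueness criterion in Proposition \ref{unique uhol}: any sequence of linear maps between fibers that is suitably close to the orthogonal identification at backward iterates will, after equivariant pushforward by $Df|_{Q^u}$, converge to $H^u_{xy}$.

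First I will observe that since $V^u$ is $Df$-invariant, the foliation $\V^u$ is $f$-invariant, so the $\V^u$-holonomies enjoy the naturality $f^n \circ h^{\V^u}_{-n} = h^{\V^u} \circ f^n$ where $h^{\V^u}_{-n}$ denotes the holonomy between $f^{-n}(S_1)$ and $f^{-n}(S_2)$. Differentiating and descending to $Q^u$ yields the identity
\[
\bar D h^{\V^u}_x \;=\; \bar D f^{n}_{f^{-n}y} \circ \bar D h^{\V^u}_{-n,\,f^{-n}x} \circ \bar D f^{-n}_x,
\]
valid for every $n \ge 0$. The plan is to let $n \to \infty$ and apply Proposition \ref{unique uhol} to the inner term.

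For this, I will use the domination inequality for $E^u = L^u \oplus V^u$: since $x$ and $y$ lie in a common $\V^u$-leaf and $\V^u$ is tangent to $V^u$, the points $f^{-n}x,\, f^{-n}y$ approach each other exponentially fast inside $\W^u$. Combined with the fact that $\V^u$ is uniformly $C^{r+2}$ along $\W^u$ (Proposition \ref{prop: splitting reg}), a standard argument in a smooth foliation chart flattening $\V^u$ inside $\W^u$ shows that the holonomy between two nearby $\V^u$-transversals has derivative that differs from the orthogonal identification $I_{\cdot\,\cdot}$ by at most a constant times the distance between the transversals. Therefore, setting $J_n := \bar D h^{\V^u}_{-n,\,f^{-n}x}$, we get
\[
\bigl\|J_n - I_{f^{-n}x,\,f^{-n}y}\bigr\| \;\le\; C\, d(f^{-n}x, f^{-n}y),
\]
which is the hypothesis of Proposition \ref{unique uhol} with Hölder exponent $\alpha = 1$, exactly the regime covered by $1$-fiber bunching of $Df|_{Q^u}$.

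Invoking Proposition \ref{unique uhol}, the right-hand side of the naturality identity converges to $H^u_{xy}$ as $n \to \infty$; but the left-hand side is the constant $\bar D h^{\V^u}_x$, so $\bar D h^{\V^u}_x = H^u_{xy} = P^u_{xy}$, as desired. The main obstacle I anticipate is the bookkeeping behind the Lipschitz bound on $J_n$: one must verify carefully that the chart flattening $\V^u$ inside $\W^u$ can be chosen uniformly in $x$ and that the identification of $T_xS_i$ with $Q^u_x$ via orthogonal projection interacts correctly with this chart. Once this uniform estimate is in hand, the rest of the argument is a direct application of the uniqueness of holonomies.
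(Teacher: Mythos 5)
Your approach is the same as the paper's: push the holonomy back by the flow, bound the derivative of the backward holonomies against the orthogonal identifications, and apply Proposition \ref{unique uhol}. The ``main obstacle'' you flag at the end is indeed the one real gap, though the precise issue is slightly different from the one you name. The Lipschitz bound $\|J_n - I_{f^{-n}x,\,f^{-n}y}\| \le C\,d(f^{-n}x,f^{-n}y)$ requires a constant $C$ uniform in $n$, and the generic estimate for a $C^1$ foliation's holonomy derivative is only uniform over transversals that stay in a fixed cone away from the foliation's tangent bundle; a priori the tangent planes of $f^{-n}(S_i)$ could degenerate toward $V^u$ as $n \to \infty$. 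What the paper observes, and your argument needs, is that the domination of $E^u = L^u \oplus V^u$ forces $T(f^{-n}(S_i))$ to converge uniformly (in $x$ and in $i$) to $L^u$ as $n \to \infty$, so that after finitely many backward iterates the transversals lie in a fixed small cone about $L^u$; combined with the uniform $C^1$ regularity of $\V^u$ along $\W^u$ from Proposition \ref{prop: splitting reg}, this yields a Lipschitz constant independent of $n$. The domination is thus used twice: once to shrink $d(f^{-n}x,f^{-n}y)$, which you use, and once to straighten the transversals onto $L^u$, which you omit. Adding this second observation closes the gap and completes the argument exactly as in the paper.
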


\begin{proof}
Since $Q^{u}$ is uniformly $C^{r+2}$ along $\W^{u}$, for $s > 0$ small enough we can find a family of identifications $I_{xy}: Q_{x}^{u} \rightarrow Q_{y}^{u}$, $y \in \W^{u}(x)$, $d(x,y) < s$ that is uniformly $C^{r+2}$ along $\W^{u}$. Let $S_{1}$, $S_{2}$ be two given local transversals to $\mathcal{V}^{u}$ inside of $\W^{u}(x)$ with $\pi_{\V^{u}}(S_{1}) = \pi_{\V^{u}}(S_{2})$. Let $h^{\V^{u}}: S_{1} \rightarrow S_{2}$ denote the $\V^{u}$-holonomy map, and for each $x \in M$ let $y=h^{\V^{u}}(x)$ and let $Dh^{\V^{u}}_{x}$ denote the derivative considered as a map $Q_{x}^{u} \rightarrow Q_{y}^{u}$. Our goal is to show that $Dh^{\V^{u}}_{x} = P_{x y}^{u}$.  

For each $n \geq 0$ we consider the holonomy map $h^{\V^{u},n}: f^{-n}(S_{1}) \rightarrow f^{-n}(S_{2})$ defined by $h^{\V^{u}}  = f^{n} \circ  h^{\V^{u},n} \circ f^{-n}$. Since $E^{u} = L^{u} \oplus V^{u}$ is a dominated splitting, for $i = 1,2$, all tangent spaces of the transversal $f^{-n}(S_{i})$ make a uniformly small angle with $L^{u}$ for $n$ large enough. Since $\mathcal{V}^{u}$ is a uniformly $C^1$ subfoliation of $\W^{u}$, this implies that the holonomy maps $h^{\V^{u},n}$ are uniformly $C^1$ in $n$. In particular, there is a constant $C \geq 1$ such that,
\[
\left\|Dh^{\V^{u},n}_{f^{-n}x} - I_{f^{-n}x f^{-n}y}\right\| \leq Cd(f^{-n}x,f^{-n}y).
\]
Then, by Proposition \ref{unique uhol}, for each $x \in S_{1}$ we have 
\[
Dh^{\V^{u}}_{x} = Df^{n}_{f^{-n}y} \circ Dh^{\V^{u},n}_{f^{-n}x}\circ Df^{-n}_{x} \rightarrow P_{x y}^{u},
\]
as $n \rightarrow \infty$, which completes the proof.
\end{proof}

Let $X$ and $Y$ be $C^{r+1}$ vector fields on a component $\mathcal{Q}^{u}(x)$ of $\mathcal{Q}^{u}$, $x \in M$. Let $S$ be a $C^{r+2}$ local transversal to $\V^{u}$ inside of $\W^{u}(x)$. As above, we identify the tangent bundle $TS$ with the restriction $Q^{u}|_{S}$ of $Q^{u}$ to $S$. With this identification $\nabla$ induces a flat connection $\t{\nabla}^{S}$ on $TS$. Over the projection $\pi(S)$, we uniquely lift $X$ and $Y$ to $C^{r+1}$ vector fields $\t{X}$, $\t{Y}$ on $TS$. We then define
\begin{equation}\label{project connect}
\bar{\nabla}_{X}^{S}Y = D\pi(\t{\nabla}_{\t{X}}^{S}\t{Y}). 
\end{equation}
This defines a $C^r$ flat connection $\nabla^{S}$ on $\pi_{\V^{u}}(S)$. 

Consider two transversals $S_{1}$, $S_{2}$ with $\pi(S_{1}) \cap \pi(S_{2}) \neq \emptyset$. By enlarging these transversals, we may assume for simplicity that $\pi(S_{1}) = \pi(S_{2})$. By Lemma \ref{holonomy vert}, the holonomy $h^{\V^{u}}: S_{1} \rightarrow S_{2}$ maps $\t{\nabla}^{S_{1}}$ to $\t{\nabla}^{S_{2}}$. Hence $\bar{\nabla}^{S_{1}} = \bar{\nabla}^{S_{2}}$. We conclude that we have a flat, $D\bar{f}$-invariant $C^r$ connection $\bar{\nabla}$ on $\mathcal{Q}^{u}(x)$ for which equation \eqref{project connect} holds for any smooth local transversal $S$ to $\V^{u}$.

For a $C^1$ connection $\nabla$ on the tangent bundle $TM$ of a $C^3$ manifold $M$, the \emph{torsion tensor} is the section $T^{\nabla}$ of $\mathrm{Hom}(TM \otimes TM, TM)$ defined on $C^2$ vector fields $X$, $Y$ on $M$ by 
\[
T^{\nabla}(X,Y) = \nabla_{X}Y-\nabla_{Y}X - [X,Y].
\]
$\nabla$ is \emph{torsion-free} if $T^{\nabla} \equiv 0$. 

\begin{prop}\label{quotient connect}
The connection $\bar{\nabla}$ on $T\mathcal{Q}^{u}$ is torsion-free.
\end{prop}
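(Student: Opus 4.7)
The plan is to transfer the torsion tensor $T^{\bar{\nabla}}$ of $\bar{\nabla}$ on $T\mathcal{Q}^{u}$ into a tensor $\widetilde{T}$ on the bundle $Q^{u}$ over the \emph{compact} base $M$, and then to use the $1$-fiber-bunching of $A := Df|_{Q^{u}}$ along $\W^{u}$ together with backward iteration by $f^{-n}$ to force $\widetilde{T}$ to vanish identically.

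Using the fiber isomorphisms $\bar{D}\pi_{\mathcal{V}^{u},z}: Q^{u}_{z} \to T_{\pi_{\mathcal{V}^{u}}(z)}\mathcal{Q}^{u}$ from Proposition~\ref{prop: quotient reg}, I define, for $z \in M$ and $u,v \in Q^{u}_{z}$,
\[
\widetilde{T}_{z}(u,v) \;=\; (\bar{D}\pi_{\mathcal{V}^{u},z})^{-1}\,T^{\bar{\nabla}}_{\pi_{\mathcal{V}^{u}}(z)}\!\bigl(\bar{D}\pi_{\mathcal{V}^{u},z}(u),\,\bar{D}\pi_{\mathcal{V}^{u},z}(v)\bigr).
\]
Lemma~\ref{holonomy vert} guarantees consistency across the fibers: for two points $z_{1}, z_{2}$ in the same $\mathcal{V}^{u}$-leaf, the isomorphisms $\bar{D}\pi_{\mathcal{V}^{u},z_{i}}$ are intertwined by the $\nabla$-parallel transport $P^{u}_{z_{1}z_{2}}$, so the recipe above yields a well-defined section of $\mathrm{Hom}(Q^{u}\otimes Q^{u},Q^{u})$ which is $\nabla$-parallel along $\mathcal{V}^{u}$. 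Since $\nabla$ depends continuously on basepoints across leaves of $\W^{u}$ (via continuity of the holonomies from Proposition~\ref{existuhol} and the H\"older regularity of the splitting $E^{u}=L^{u}\oplus V^{u}$), compactness of $M$ gives $\|\widetilde{T}\|_{\infty} := \sup_{z \in M}\|\widetilde{T}_{z}\|_{\mathrm{op}} < \infty$.

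The $Df$-invariance of $\nabla$ (Proposition~\ref{holonomy connect}), together with the intertwining $\bar{D}\pi_{\mathcal{V}^{u}}\circ A = D\bar{f}\circ \bar{D}\pi_{\mathcal{V}^{u}}$, translates into the equivariance $A_{z}\widetilde{T}_{z}(u,v) = \widetilde{T}_{f(z)}(A_{z}u,A_{z}v)$. Iterating this relation backward along the orbit of $f^{-1}$ and taking norms gives
\[
\|\widetilde{T}_{z}(u,v)\| \;\leq\; \|\widetilde{T}\|_{\infty}\,\|u\|\,\|v\|\,\cdot\,\frac{\|A^{n}_{f^{-n}z}\|}{\mathfrak{m}(A^{n}_{f^{-n}z})^{2}}.
\]
The $1$-fiber bunching of $A$ along $\W^{u}$ gives $\|A^{N}\|\|(A^{N})^{-1}\| < b\,\mathfrak{m}(Df^{N}|_{E^{u}})$ for some $b<1$; the submultiplicativity of $\mathfrak{m}$ gives $\|A^{n}\|/\mathfrak{m}(A^{n}) \leq b^{n/N}\mathfrak{m}(Df^{n}|_{E^{u}})$ uniformly in the basepoint. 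Since the quotient projection conjugates $A$ to $Df|_{L^{u}}$ with bounded distortion, and since the domination of $V^{u}$ over $L^{u}$ forces $\mathfrak{m}(Df^{n}|_{E^{u}}) \asymp \mathfrak{m}(A^{n})$ uniformly on $M$, this yields the key bound
\[
\frac{\|A^{n}\|}{\mathfrak{m}(A^{n})^{2}} \;\leq\; C\,b^{n/N}\;\longrightarrow\;0.
\]
Letting $n\to\infty$ in the iteration forces $\widetilde{T}_{z}\equiv 0$ for every $z \in M$, and hence $T^{\bar{\nabla}}\equiv 0$.

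The hard part is the bookkeeping around the sup-norm $\|\widetilde{T}\|_{\infty}$: the tensor $\widetilde{T}$ is built from the leafwise connection $\bar{\nabla}$ on the (noncompact) components of $\mathcal{Q}^{u}$, so finiteness of its norm requires the continuous dependence of $\nabla$ and of the quotient isomorphism $\bar{D}\pi_{\mathcal{V}^{u}}$ on the $M$-basepoint. This is where the H\"older regularity of the dominated splitting and the uniform construction of $\nabla$ through the fiber-bunching condition are essential; once this is in place the iteration argument is straightforward.
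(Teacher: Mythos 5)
Your proof is correct and takes essentially the same approach as the paper: lift (or work directly with) the torsion tensor, bound it uniformly via compactness of $M$ and the uniform regularity of $\nabla$ along $\W^{u}$, and then use the $A$-equivariance of the torsion together with the $1$-fiber bunching inequality to show the bound $\|A^{n}\|\|(A^{n})^{-1}\|^{2} \lesssim b^{n} \to 0$ forces the torsion to vanish. The only cosmetic difference is that you pull the tensor back to $Q^{u}$ over the compact base $M$ and spell out the fiber-bunching estimate, whereas the paper works on $T\mathcal{Q}^{u}$ directly and states the decay $\|D\bar{f}^{n}\|\|(D\bar{f}^{n})^{-1}\|^{2} < b^{n}$ as an immediate consequence of fiber bunching.
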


\begin{proof}
The torsion tensor $T^{\bar{\nabla}}$ is a continuous section of $\mathrm{Hom}(T\mathcal{Q}^{u} \otimes T\mathcal{Q}^{u},T\mathcal{Q}^{u})$. By considering lifts of $\bar{\nabla}$ on a transversal to $\V^{u}$ as in equation \eqref{project connect}, we conclude from $\nabla$ being uniformly $C^r$ along $\W^{u}$ (and the compactness of $M$) that there is a constant $C$ such that $\|T^{\nabla}_{p}\| \leq C$ for all $p \in \mathcal{Q}^{u}$.  

The fiber bunching inequality \eqref{fiber bunching} for $Df$ on $Q^{u}$ implies that there is a constant $0<b < 1$ such that for $n$ large enough we have for all $x \in M$, 
\[
\|D\bar{f}^{n}_{x}\|\|(D\bar{f}^{n}_{x})^{-1}\|^{2} < b^{n}.
\]
For $n$ large enough and for each pair of unit vectors $v$, $w \in T\mathcal{Q}_{p}^{u}$,  the $D\bar{f}$-invariance of $T^{\nabla}$ implies that
\begin{align*}
\|T_{p}(v,w)\| &= \|D\bar{f}^{n}(T_{\bar{f}^{-n}p}^{\nabla}(D\bar{f}^{-n}(v),D\bar{f}^{-n}(w))\| \\
&\leq \|D\bar{f}^{n}_{p}\| \|T_{\bar{f}^{-n}p}^{\nabla}\| \|(D\bar{f}^{n}_{p})^{-1}\|^{2} \\
&\leq Cb^{n}.
\end{align*}
Letting $n \rightarrow \infty$, we conclude that $T_{p}^{\nabla} = 0$ for all $p \in \mathcal{Q}^{u}$, i.e., $T^{\nabla}\equiv 0$. We conclude that $\bar{\nabla}$ is torsion-free. 
\end{proof}

We obtain from $\bar{\nabla}$ a special family of charts on the connected components of $\mathcal{Q}^{u}$. For each $x \in M$ we endow $T\mathcal{Q}^{u}(x)$ with the unique $\bar{\nabla}$-invariant Riemannian structure $\{\langle \; , \; \rangle^{x}_{y}\}_{y \in \mathcal{Q}^{u}(x)}$ for which $D\pi_{\V^{u}}:Q_{x}^{u} \rightarrow T\mathcal{Q}_{\bar{x}}^{u}(x)$, $\bar{x} = \pi_{\V^{u}}(x)$, is an isometry. We write $\bar{d}_{x}$ for the resulting Riemannian metric on $\mathcal{Q}^{u}(x)$. Note $\bar{d}_{x} = \bar{d}_{y}$ for each $y \in \mathcal{Q}^{u}(x)$ by the flatness of $\bar{\nabla}$. Since $\bar{\nabla}$ is also torsion-free in addition to being $\bar{\nabla}$-invariant, it is the Levi-Civita connection for the metric $\bar{d}_{x}$. 

\begin{lem}\label{quotient complete}
For each $x \in M$ the metric $\bar{d}_{x}$ on $\mathcal{Q}^{u}(x)$ is complete.
\end{lem}

\begin{proof}
We can cover $M$ by finitely many open boxes $U_{1},\dots,U_{p}$ such that each box $U_{i}$ is subfoliated by a foliation $\mathcal{S}_{i}$ with uniformly $C^{r+2}$ leaves that is uniformly transverse to the foliation $\mathcal{V}^{u}$ within $U_{i}$; this can be done for instance by covering $M$ with foliation boxes for $\mathcal{V}^{u}$ and taking $\mathcal{S}_{i}$ to be the pullback by the foliation chart of the foliation of $\R^{m}$ by planes perpendicular to the image of the foliation $\V^{u}$. On a leaf $S$ of one of the local foliations $\mathcal{S}_{i}$,  $\bar{\nabla}$ has the relation \eqref{project connect} to $\t{\nabla}^{S}$. Since $\nabla$ is uniformly $C^r$ along $\W$ and since each of the $\mathcal{S}_{i}$ are uniformly transverse to $\mathcal{V}^{u}$, by the compactness of $M$ there is a constant $s > 0$ such that for each $x \in M$ there is some $i$ such that $x$ belongs to one of the leaves $S$ of the local foliation $\mathcal{S}_{i}$, the ball $B_{x,i} \subset$ of radius $s$ in the induced Riemannian metric on $S$ fully belongs to $S$, and $\t{\nabla}^{S}$ is complete on $B_{x,i}$. 

Suppose there is some $x \in M$ and some geodesic $\gamma:[0,q) \rightarrow \mathcal{Q}^{u}(x)$ in the metric $\bar{d}_{x}$ starting from $x$ that cannot be extended further. The $Df|_{Q^{u}}$-invariance of $\nabla$ implies that $f^{-n}\circ \gamma:[0,q_{n})$ is a geodesic in $\mathcal{Q}^{u}(f^{-n}x)$ with the metric $\bar{d}_{f^{-n}x}$ for each $n \geq 0$ (with the curve reparametrized $f^{-n}\circ \gamma$ being reparametrized by arc length). Since $Df^{-1}|_{Q^{u}}$ is uniformly contracting, as $n \rightarrow \infty$ we have $q_{n} \rightarrow 0$. Thus when $n$ is large enough we can assume that $f^{-n}\circ \gamma$ is contained within a leaf $S$ of one of the local foliations $\mathcal{S}_{i}$, and we can further assume that $f^{-n}\circ \gamma \subset B_{x,i}$.  This contradicts the completeness of the connection $\t{\nabla}^{S}$ on $B_{x,i}$. 
\end{proof}

By Lemma \ref{quotient complete} we can write $\exp_{x}: T\mathcal{Q}^{u}_{\bar{x}}(x) \rightarrow \mathcal{Q}^{u}(x)$ for the exponential map and write 
\[
e_{x} = \exp_{x} \circ \pi_{\V^{u}}: Q_{x}^{u} \rightarrow \mathcal{Q}^{u}(x). 
\]

\begin{prop}\label{quotient charts}
The following holds for the charts $e_{x}$, $x \in M$, 
\begin{enumerate}
\item The metric $\bar{d}_{x}$ on $\mathcal{Q}^{u}(x)$ is flat. 
\item The map $e_{x}:Q_{x}^{u} \rightarrow \mathcal{Q}^{u}(x)$ is an isometry.
\item The maps $e_{y}$ for $y \in \W^{u}(x)$ depend uniformly continuously on $y$ in the $C^{r}$ topology. 
\item We have $\bar{f} \circ e_{x} =  e_{f(x)} \circ Df|_{Q_{x}^{u}}$. 
\item For $y \in \W^{u}(x)$ the map $e_{y}^{-1} \circ e_{x}:Q_{x}^{u} \rightarrow Q_{y}^{u}$ is an affine transformation. 
\end{enumerate}
\end{prop}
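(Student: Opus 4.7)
The plan is to deduce all five properties from the global structural features of $\bar\nabla$ on $T\mathcal{Q}^u$: flatness, torsion-freeness (Proposition \ref{quotient connect}), $D\bar f$-invariance, and its compatibility with the per-orbit Riemannian structures. Since each Riemannian structure $\{\langle\cdot,\cdot\rangle^x_y\}_{y\in\mathcal{Q}^u(x)}$ is $\bar\nabla$-parallel by construction, and $\bar\nabla$ is torsion-free, it coincides with the Levi-Civita connection of $\bar d_x$. Flatness of $\bar\nabla$ (inherited from the flatness of $\nabla$ on $Q^u$ established in Proposition \ref{holonomy connect}) then gives flatness of $\bar d_x$, so $\exp_{\bar x}$ is a local isometry wherever defined. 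Since $D\pi_{\V^u}|_{Q_x^u}$ is an isometry by definition, $e_x = \exp_{\bar x}\circ D\pi_{\V^u}|_{Q_x^u}$ is a local isometry on a neighborhood of $0\in Q_x^u$. The $D\bar f$-invariance of $\bar\nabla$ means $\bar f$ carries $\bar\nabla$-geodesics to $\bar\nabla$-geodesics, yielding locally $\bar f\circ \exp_{\bar x}=\exp_{\overline{f(x)}}\circ D\bar f|_{\bar x}$; composing with $D\pi_{\V^u}$ and using $D\bar f\circ D\pi_{\V^u}=D\pi_{\V^u}\circ Df$ gives the local version of (4).

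The main step is to extend $e_x$ to a global isometry by using the expansion of $Df$ on $Q^u$ together with this equivariance. For $v\in Q_x^u$, I would pick $n\geq 0$ large enough that $Df^{-n}(v)$ lies in the small ball around $0\in Q_{f^{-n}(x)}^u$ on which $e_{f^{-n}(x)}$ is already defined as a local isometry, and set
\[
e_x(v):=\bar f^n\bigl(e_{f^{-n}(x)}(Df^{-n}(v))\bigr).
\]
Independence of the choice of $n$ follows from the local form of (4). Injectivity: if $e_x(v)=e_x(w)$, applying $\bar f^{-n}$ and using that $e_{f^{-n}(x)}$ is a diffeomorphism on a small ball around $0$ forces $Df^{-n}(v)=Df^{-n}(w)$ for all large $n$, hence $v=w$. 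Surjectivity: for $\bar z\in\mathcal{Q}^u(x)$, the contraction of $\bar f^{-n}$ on a shrinking neighborhood of the orbit brings $\bar z$ into the image of $e_{f^{-n}(x)}$ on its small ball of definition. Since $Q_x^u$ is a Euclidean space and $e_x$ is a surjective local isometry, $(\mathcal{Q}^u(x),\bar d_x)$ is flat and complete, and $e_x$ is a global isometry. This proves (1), (2), and (4) simultaneously.

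Property (3) follows from the uniform $C^r$ regularity of $\bar\nabla$ along $\W^u$, which gives uniform $C^r$ continuity of $\exp_{\bar x}$, combined with the $C^{r+2}$ regularity of $\pi_{\V^u}$ from Proposition \ref{prop: quotient reg}; on any fixed ball in $Q_x^u$ the extension formula uses only boundedly many iterates of $\bar f$, preserving continuous dependence in the $C^r$ topology. For (5), with $y\in\W^u(x)$ the two charts $e_x,e_y$ both land in the connected complete flat manifold $\mathcal{Q}^u(x)=\mathcal{Q}^u(y)$, and both carry $\bar\nabla$-parallel Riemannian structures. Since $e_x$ and $e_y$ are inverses of Levi-Civita exponentials based at $\bar x$ and $\bar y$ respectively, their composition takes straight lines in $Q_x^u$ to straight lines in $Q_y^u$, so $e_y^{-1}\circ e_x$ is affine: its linear part, by Lemma \ref{holonomy vert}, is the $\nabla$-parallel transport $P_{xy}^u$, and its translation part is $e_y^{-1}(\bar x)$. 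The main obstacle will be the global extension step, where one must verify that iterating $\bar f^n$ on the local isometry does indeed produce a globally injective surjective isometry rather than a nontrivial covering; this is where the expansion of $Df$ on $Q^u$, rather than just its invertibility, is essential.
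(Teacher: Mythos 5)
Your proof is correct, but it takes a genuinely different route from the paper for the central step of upgrading $e_x$ from a local isometry to a global one. The paper first establishes completeness of $\bar d_x$ by extending geodesics step-by-step using the uniform bound on the Christoffel symbols of $\bar\nabla$, then proves simple connectivity of $\mathcal{Q}^u(x)$ by pulling any potential nontrivial loop back under $\bar f^{-n}$ until it sits in a contractible ball, and finally invokes the Cartan--Hadamard theorem to conclude $\exp_{\bar x}$ is a global isometry. You instead bootstrap directly: starting from the local isometry on a ball of uniform radius, you use the equivariance $\bar f\circ e_x = e_{f(x)}\circ Df$ together with the expansion of $Df$ on $Q^u$ to define a globally bijective extension, and then observe that a bijective local isometry from a Euclidean vector space is a global isometry, which yields flatness, completeness, and simple connectivity of $\mathcal{Q}^u(x)$ all at once. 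Both arguments rest on the same two ingredients — uniform $C^r$ control of $\bar\nabla$ (giving a uniform chart size) and exponential contraction under $\bar f^{-1}$ — but the paper packages them via a named theorem whereas you run the extension by hand. Your version is a bit more self-contained; the paper's is a bit more modular. Two small points worth tightening in your write-up: in the surjectivity step, you need not just that $\bar f^{-n}$ contracts, but that the image $e_{f^{-n}(x)}(B(0,s))$ contains a $\bar d_{f^{-n}(x)}$-ball of uniform radius around $\bar{f^{-n}(x)}$, which again comes from the uniform Christoffel bound; and in (5), the clean statement is that both $e_x$ and $e_y$ pull the connection $\bar\nabla$ back to the trivial flat connection (rather than "takes straight lines to straight lines," which presupposes what you are proving), so $e_y^{-1}\circ e_x$ is a connection-preserving diffeomorphism between Euclidean spaces, hence affine — this matches the paper's phrasing.
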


\begin{proof}
Let $x \in M$ be given. Since $\bar{\nabla}$ is flat and is the Levi-Civita connection for $\bar{d}_{x}$, we conclude that $\bar{d}_{x}$ is flat. 

We next check that $\mathcal{Q}^{u}(x)$ is simply connected. There is clearly an $s > 0$ such that for any $y \in \mathcal{Q}^{u}$ the ball $B(y,s)$ centered at $y$ is contractible. If $\gamma$ is any closed loop in $\mathcal{Q}^{u}(x)$ which is nontrivial in $\pi_{1}(\mathcal{Q}^{u}(x))$, then $\bar{f}^{-n} \circ \gamma$ will be nontrivial in $\pi_{1}(\mathcal{Q}^{u}(f^{-n}x))$ for each $n \geq 1$. But since $\bar{f}^{-1}$ exponentially contracts $\mathcal{Q}^{u}$, for $n$ large enough $f^{-n} \circ \gamma$ will be contained inside of $B(f^{-n}(\gamma(0)),s)$ and will therefore be contractible, a contradiction.

Hence $\mathcal{Q}^{u}(x)$ is simply connected. Since $D\pi_{\V^{u}}$ induces an isometry of $T\mathcal{Q}_{\bar{x}}^{u}(x)$ with the vector space $Q^{u}_{x}$ which is isometric to a Euclidean space, by the Cartan-Hadamard theorem and the flatness of $\bar{d}_{x}$ the exponential map $\exp_{x}:T\mathcal{Q}_{x}^{u} \rightarrow \mathcal{Q}^{u}(x)$ is an isometry. Claim (2) follows immediately from this. Claim (3) then follows from the fact that $\bar{\nabla}$ descended from the connection $\nabla$ that was uniformly $C^r$ along $\W^{u}$. 
 
 For (4), since $\bar{f}$ preserves $\nabla$ we must have that $e_{f(x)}^{-1} \circ \bar{f} \circ e_{x}$ preserves an affine connection on $Q_{x}^{u}$, and is therefore an affine transformation of $Q_{x}^{u}$. Since it preserves $0$, it is actually a linear transformation. Taking the derivative at $0$  and noting that $D_{0}\exp_{\bar{x}} = I$, we obtain
 \begin{align*}
 D_{0}(e_{f(x)}^{-1} \circ \bar{f} \circ e_{x}) &= D_{\bar{f}(\bar{x})}\pi_{\V}^{-1} \circ D\bar{f}_{\bar{x}} \circ D_{x}\pi_{\V} \\
 &= Df_{x}|_{Q_{x}^{u}}.
 \end{align*}
 This establishes (4). Claim (5) follows from the fact that $e_{y}^{-1} \circ e_{x}$ maps an affine connection on $Q_{x}^{u}$ to an affine connection on $Q_{y}^{u}$, hence must itself be affine. 
\end{proof}

The discussion of this section applies equally well to the case in which $Df|_{E^{u}}$ is itself $1$-fiber bunched. We will view this as analogous to the degenerate case in which $V^{u} = \{0\}$, so that $Q^{u} = E^{u}$ and $\mathcal{Q}^{u}(x) = \W^{u}(x)$ for each $x \in M$. We obtain a $Df|_{E^{u}}$-invariant uniformly $C^{r}$ connection $\nabla$ on $E^{u}$ along $\W^{u}$ which is flat and torsion-free on each leaf $\W^{u}(x)$, since $E^{u}|_{\W^{u}(x)} = T\W^{u}(x)$ is the tangent bundle to the leaf. For each $x \in M$ we then obtain $C^r$ charts $e_{x}: E_{x}^{u} \rightarrow \W^{u}(x)$ satisfying the properties of Proposition \ref{quotient charts}.

\begin{rem}
In the case that $Df|_{E^{u}}$ satisfies an absolute form of the $1$-fiber bunching inequality \eqref{fiber bunching}, the case $Q^{u} = E^{u}$ in Proposition \ref{quotient charts} is a well-known result in normal forms theory going back to Katok and Guysinsky \cite{GK98}. The geometric approach to the construction of normal forms using invariant connections is due to Feres \cite{Fer04}. Remaining in the case $Q^{u} = E^{u}$, under our pointwise fiber bunching inequality the normal forms of \ref{quotient charts} were constructed by Sadovskaya \cite{S05} using a Taylor series method. To our knowledge our approach is the first construction of these normal forms in the pointwise fiber bunched case using an invariant connection instead. 

In the case $Q^{u} = E^{u}/V^{u}$, with $V^{u} \neq \{0\}$, that we consider in this paper, we point also to work of Melnick \cite{Mel19} in which smooth invariant connections on quotient bundles are constructed in the case of nonuniformly contracting foliations. These results resemble ours, however we are able to derive much stronger properties of these connections under weaker hypotheses in our setting because we assume the existence of a dominated splitting instead of a measurable invariant splitting. 
\end{rem}

\subsection{Extension to flows with weak expanding foliations}

The results from the previous section extend directly to flows $f^{t}$ with weak expanding foliations by consideration of the time-1 map $f = f^{1}$. The key observation is that for any cocycle $A^{t}$ over $f^{t}$, the holonomies for $A = A^{1}$ are also the holonomies for the time $t$-map $A^{t}$ for any $t > 0$. We let $f^{t}:M \rightarrow M$ be a flow on $M$ which is uniformly $C^r$ along a foliation $\W^{cu}$ with uniformly $C^r$ leaves, and suppose that $\W^{cu}$ is a weak-expanding foliation for $f^{t}$. We let $E$ be a $C^{\alpha}$ vector bundle over $M$.

\begin{prop}\label{flow holonomies} 
Suppose that the linear cocycle $A$ over $f$ is $C^{\alpha}$ and $\alpha$-fiber bunched along $\W^{u}$. Let $H^{u}_{xy}$, $x \in M$, $y \in \W^{u}(x)$ be the holonomies for $A$. Then for all $t \in \R$ we have
\begin{equation}\label{holonomy equivariance flow}
H_{xy}^{u} = (A^{t}_{y})^{-1} \circ H_{f^{t}x f^{t}y}^{u} \circ A^{t}_{x},
\end{equation}
If, furthermore, $A^{t}$ is $1$-fiber bunched and both $A^{t}$ and $E$ are uniformly $C^r$ along $\W^{cu}$, $r \geq 3$, then the invariant connection $\nabla$ for $A$ from Proposition \ref{holonomy connect} is also $A^{t}$-invariant. 
\end{prop}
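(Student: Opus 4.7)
My plan is to prove part~1 by verifying that the family $\Psi^t_{xy} := (A^t_y)^{-1} \circ H^u_{f^tx,\, f^ty} \circ A^t_x$ satisfies the three defining conditions of Proposition~\ref{existuhol}, and then appealing to the uniqueness clause of that proposition. Condition~(1) (identity at $y=x$ and transitivity) is immediate from the corresponding condition for $H^u$. Condition~(2), the $A^n$-equivariance of $\Psi^t$ for integer $n$, follows from a short calculation using the flow identity $A^{n+t}_x = A^n_{f^tx}\circ A^t_x = A^t_{f^nx}\circ A^n_x$ together with the integer equivariance of $H^u$ applied at $(f^tx, f^ty)$: both sides of the desired equation simplify to $(A^{n+t}_y)^{-1}\circ H^u_{f^{n+t}x,\, f^{n+t}y}\circ A^{n+t}_x$. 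For condition~(3), the local $\alpha$-H\"older bound, I would apply Proposition~\ref{unique uhol} to $H^u_{f^tx,f^ty}$ with the trivial approximating family $J_n = I_{f^{t-n}x,\,f^{t-n}y}$, using the flow identities to absorb the $A^t$ factors appearing in the definition of $\Psi^t$; this expresses
\[
\Psi^t_{xy} = \lim_{n \to \infty} A^{n-t}_{f^{t-n}y} \circ I_{f^{t-n}x,\,f^{t-n}y} \circ (A^{n-t}_{f^{t-n}x})^{-1}.
\]
The Cauchy-style argument underlying Proposition~\ref{unique uhol} depends only on the $\alpha$-fiber bunching inequality~\eqref{fiber bunching} and the exponential contraction of $d(f^{-s}x, f^{-s}y)$ as $s \to \infty$, both of which are valid for the continuous family $\{A^s\}_{s\geq 0}$. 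Consequently the same argument, applied along the real sequence $\tau_n = n - t \to \infty$, identifies this limit with $H^u_{xy}$, yielding equation~\eqref{holonomy equivariance flow}.

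For part~2, my plan is to appeal to the uniqueness clause of Proposition~\ref{holonomy connect}, which characterizes $\nabla$ as the unique $A$-invariant connection on $E$ along $\W^u$ in the appropriate uniform $C^{r-2}$ regularity class that is flat on each leaf. I would show that $A^t \cdot \nabla$ belongs to the same class, and then conclude $A^t \cdot \nabla = \nabla$ by uniqueness. The three properties to verify are: (i) uniform $C^{r-2}$ regularity of $A^t \cdot \nabla$ along $\W^u$, which follows from the uniform $C^r$ regularity of $A^t$ and $E$ along $\W^{cu} \supseteq \W^u$, combined with the fact, recorded in the discussion preceding Proposition~\ref{holonomy connect}, that the operation $A \cdot \nabla$ preserves the uniform $C^{r-2}$ regularity class of connections; (ii) leafwise flatness, since conjugation by a bundle automorphism sends the zero curvature tensor to the zero curvature tensor; and (iii) $A$-invariance, which is a formal consequence of the flow commutativity $A \circ A^t = A^t \circ A$, giving $A \cdot (A^t \cdot \nabla) = A^t \cdot (A \cdot \nabla) = A^t \cdot \nabla$. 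Since the parallel transport maps of $\nabla$ coincide with the holonomies $H^u$, the $A^t$-invariance of $\nabla$ also yields an independent derivation of equation~\eqref{holonomy equivariance flow} in this stronger-regularity setting.

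The main obstacle is the real-time extension of Proposition~\ref{unique uhol} invoked to establish condition~(3) for $\Psi^t$. That proposition is stated only for integer sequences $i_n$, so the argument must either be repeated explicitly for real times $\tau_n \to \infty$ (a direct adaptation, since the underlying fiber-bunching Cauchy estimate is insensitive to integrality), or replaced by an argument that remains within integer iterations --- for example, by combining the already-established integer equivariance of $\Psi^t$ with the pointwise continuity of $\Psi^t$ at the diagonal to reduce the desired H\"older bound to a fixed small scale. Either route is technically routine but requires careful bookkeeping of the flow identities.
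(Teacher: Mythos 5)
Your approach to part~1 is genuinely different from the paper's, and the technical gap you flag is real, though it admits a cleaner resolution than either of your two proposed repairs. The paper sidesteps the whole issue of verifying the H\"older bound for $\Psi^t_{xy}:=(A^t_y)^{-1}H^u_{f^tx\,f^ty}A^t_x$ by a fractional-time trick: for each integer $n\geq 1$, the cocycle $A^{1/n}$ over $f^{1/n}$ is again $C^\alpha$ and $\alpha$-fiber bunched along $\W^u$, so Proposition~\ref{existuhol} yields a holonomy family $H^{u,1/n}$ for it. But $H^{u,1/n}$ already satisfies conditions (1)--(3) for the time-$1$ cocycle $A=(A^{1/n})^n$, so by uniqueness $H^{u,1/n}=H^u$. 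Condition (2) for $A^{1/n}$ then reads exactly as \eqref{holonomy equivariance flow} with $t=1/n$; composing gives all rational $t$, and continuity of $t\mapsto A^t$ gives all real $t$. This never touches a real-time extension of the Cauchy estimate.

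If you prefer to stay with $\Psi^t$, your vaguer ``alternative route'' can be sharpened so as to use Proposition~\ref{unique uhol} exactly as stated, with integer times. From the integer equivariance you established, $\Psi^t_{xy}=A^n_{f^{-n}y}\circ J_n\circ A^{-n}_x$ for every $n\geq 1$ where $J_n:=\Psi^t_{f^{-n}x\,f^{-n}y}$. The point is that $J_n$, not $I_{f^{t-n}x\,f^{t-n}y}$, is the correct approximating family: writing $J_n-I_{f^{-n}x\,f^{-n}y}$ as the sum of $(A^t_{f^{-n}y})^{-1}(H^u_{f^{t-n}x\,f^{t-n}y}-I_{f^{t-n}x\,f^{t-n}y})A^t_{f^{-n}x}$ and $(A^t_{f^{-n}y})^{-1}I_{f^{t-n}x\,f^{t-n}y}A^t_{f^{-n}x}-I_{f^{-n}x\,f^{-n}y}$, the first term is controlled by condition~(3) for $H^u$ and the uniform bound on $\|A^t\|$, $\|(A^t)^{-1}\|$ for fixed $t$, while the second is controlled by the $\alpha$-H\"older continuity of $A^t$. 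Both are $O\bigl(d(f^{-n}x,f^{-n}y)^\alpha\bigr)$, so Proposition~\ref{unique uhol} applies verbatim with integer $i_n=n$ and gives $\Psi^t_{xy}=H^u_{xy}$. Either patch closes the gap.

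For part~2, the paper again does less work than you propose: since the parallel transport of $\nabla$ along $\W^u$ equals $H^u$, and since the parallel transport of $A^t\cdot\nabla$ is $(A^t_y)^{-1}H^u_{f^tx\,f^ty}A^t_x$, equation~\eqref{holonomy equivariance flow} says these agree; two leafwise flat connections with identical parallel transport on each leaf are equal, so $A^t\cdot\nabla=\nabla$. Your route through the uniqueness clause of Proposition~\ref{holonomy connect} is also sound (and the regularity bookkeeping you sketch is the right one, given that Proposition~\ref{holonomy connect} is applied with $r-2$ in place of $r$), but it duplicates work that the holonomy identity already encodes.
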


\begin{proof}
For each $n \geq 1$, the linear cocycle $A^{1/n}$ over $f^{1/n}$ is uniformly $C^{\alpha}$ along $\W^{cu}$ and $\alpha$-fiber bunched along $\W^{u}$. Applying Proposition \ref{existuhol}, we obtain holonomies for $A^{1/n}$, which must also be holonomies for $(A^{1/n})^{n} = A$. Hence we conclude by the uniqueness part of \ref{existuhol} that equation \eqref{holonomy equivariance flow} holds for $t = 1/n$ for each $n \in \N$, and thus holds for all rational $t \in \R$. By continuity we conclude that this equation holds for all $t$. The second statement follows from equation \eqref{holonomy equivariance flow} since the parallel transport maps of $\nabla$ are given by the holonomy maps $H^{u}$.   
\end{proof}

As a consequence, if we have a dominated splitting $E^{u} = L^{u} \oplus V^{u}$ for the linear cocycle $Df^{t}|_{E^{u}}$ over $f^{t}$ such that $Df^{t}|_{L^{u}}$ is $1$-fiber bunched along $\W^{u}$, then by viewing this as a dominated splitting for the time-$t$ map $Df^{t}|_{E^{u}}$ over $f^{t}$ for any $t > 0$, which has $\W^{u}$ as an expanding foliation, we can apply all of the results of this section to this time-$t$ map. In particular, setting $\mathcal{Q}^{u}(x) = \W^{u}(x)/\mathcal{V}^{u}$, we obtain charts $e_{x}: Q_{x}^{u} \rightarrow \mathcal{Q}^{u}(x)$ from Proposition \ref{quotient charts} conjugating $\bar{f}^{t}:\mathcal{Q}^{u}(x)\rightarrow \mathcal{Q}^{u}(f^{t}x)$ to $Df^{t}|_{Q^{u}_{x}}$. By Proposition \ref{flow holonomies} these charts work for all values of $t$ simultaneously.

\section{From horizontal exponents to horizontal quasiconformality}\label{sec:exponents} 

We open this section with a definition. Let $M$ be a compact metric space and let $E$ be a vector bundle over $M$. 

\begin{defn}[Uniform quasiconformality]\label{defn: uniform quasi}
Let $A: E \rightarrow E$ be a linear cocycle over a homeomorphism $f:M \rightarrow M$. We say that $A$ is \emph{uniformly quasiconformal} if there is a constant $C \geq 1$ such that for every $x \in M$ and $n \geq 1$, 
\begin{equation}\label{equation uniform quasi}
C^{-1} \leq \frac{\|A_{x}^{n}\|}{\mathfrak{m}(A_{x}^{n})}\leq C. 
\end{equation}
We say that $A$ is \emph{conformal} if $\mathfrak{m}(A_{x}) = \|A_{x}\|$ for every $x \in M$. 
\end{defn}

We devote this section to the proof of Proposition \ref{exp to conf} below.  We write $\t{M}$ for the universal cover of $M$ below and write $\t{f}^{t}$ for the lift of $f^{t}$ to a flow on $\t{M}$. We also recall the definition \ref{local product structure} of local product structure for an $f^{t}$-invariant measure $\mu$. Below we set $k = \dim E^{u}$ and recall that $L^{cu}:=E^{c} \oplus L^{u}$.

\begin{prop}\label{exp to conf}
Let $f^{t}:M \rightarrow M$ be a smooth Anosov flow on a closed Riemannian manifold $M$ with a $u$-splitting $E^{u} = L^{u} \oplus V^{u}$ of index $l$, $2 \leq l < k$. Suppose that there is some $\beta > 0$ such that $f^{t}$ is $\beta$-$u$-bunched, that $L^{cu}$ is $\beta$-H\"older, and that $Df^{t}|_{L^{u}}$ is $\beta$-fiber bunched. Suppose further that there is a simply connected Riemannian manifold $Y$ of pinched negative curvature for which there is a uniformly continuous orbit equivalence $\varphi: \t{M} \rightarrow  T^{1}Y$ from $\t{f}^{t}$ to the geodesic flow $g^{t}_{Y}$. Lastly, suppose that there is a fully supported $f^{t}$-invariant ergodic probability measure $\mu$ with local product structure such that $\la_{1}^{u}(f,\mu) = \la_{l}^{u}(f,\mu)$. 

Then $Df^{t}|_{L^{u}}$ is uniformly quasiconformal. 
\end{prop}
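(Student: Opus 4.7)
The plan is to construct a continuous $f^{t}$-invariant conformal structure on $L^{u}$; this immediately yields uniform quasiconformality of $Df^{t}|_{L^{u}}$. Since $L^{u}$ is canonically isomorphic as a linear cocycle to the quotient $Q^{u} = E^{u}/V^{u}$, and $V^{u}$ is uniformly $C^{r-1}$ along $\W^{cu}$ by Proposition \ref{prop: splitting reg}, it suffices to produce such a structure on $Q^{u}$ where the affine normal form charts $e_{x}: Q^{u}_{x} \to \mathcal{Q}^{u}(x)$ of Proposition \ref{quotient charts} are available to linearize the action.

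The first step is to extract a measurable $Df^{t}$-invariant conformal structure. The hypothesis $\la^{u}_{1}(f,\mu) = \la^{u}_{l}(f,\mu)$, combined with the ordering of the exponents, forces $\la^{u}_{1}(f,\mu) = \cdots = \la^{u}_{l}(f,\mu)$ to equal a single value $\lambda > 0$. Applying Oseledets' theorem to $Df^{t}|_{L^{u}}$ produces, on a set $M_{0} \subseteq M$ of full $\mu$-measure, a measurable section $x \mapsto \sigma_{x}$ of the bundle of conformal structures on $L^{u}$ for which $Df^{t}_{x} : (L^{u}_{x}, \sigma_{x}) \to (L^{u}_{f^{t}x}, \sigma_{f^{t}x})$ is a similarity of scale $e^{\la t}$.

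The second step is to promote $\sigma$ to a continuous structure along unstable leaves. The $\beta$-fiber bunching of $Df^{t}|_{L^{u}}$ together with the $\beta$-H\"older regularity of $L^{cu}$ allows Proposition \ref{existuhol} and Proposition \ref{flow holonomies} to produce H\"older holonomies $H^{u}_{xy}: L^{u}_{x} \to L^{u}_{y}$ for $y \in \W^{u}(x)$ which are $Df^{t}$-equivariant. The equivariance combined with the uniqueness of the measurable equivariant family forces $\sigma$ to be $H^{u}$-invariant on a full $\mu$-measure set $M_{1} \subseteq M_{0}$. By the local product structure of $\mu$, for $\mu$-a.e.\ $x$ the conditional $\mu^{u}_{x}$ assigns full measure to $M_{1} \cap \W^{u}(x)$; transporting $\sigma_{y}$ via $H^{u}$ from any such $y$ then produces a H\"older continuous conformal structure $\hat\sigma^{x}$ on all of $\W^{u}(x)$ agreeing with $\sigma$ $\mu^{u}_{x}$-a.e. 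Since $\mu$ is fully supported, the saturation of $\mu$-typical leaves is dense.

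The third step, which is the core difficulty, is to transverse-extend these leafwise conformal structures to a globally continuous section of the conformal structure bundle on $L^{u}$. Since only $Df^{t}|_{L^{u}}$ is fiber bunched and not $Df^{t}|_{L^{s}}$, we cannot move $\sigma$ across $\W^{u}$-leaves by direct stable holonomy on $L^{u}$. Instead, we use the orbit equivalence $\varphi: \t M \to T^{1}Y$ to inherit from $g^{t}_{Y}$ the global topological structure of the invariant foliations: on $T^{1}Y$ every pair of strong unstable leaves is joined by a center-stable holonomy because $Y$ is simply connected and negatively curved with well-defined boundary at infinity, and this topological structure descends to $M$. Using the local product structure of $\mu$ with respect to $(\W^{u}, \W^{c}, \W^{s})$, the family of leafwise structures $\{\hat\sigma^{x}\}$ fits together $\mu$-measurably into a single section $\sigma$ on $M$ that is $f^{t}$-invariant; by leafwise H\"older continuity in $\W^{u}$, continuous dependence of the local product disintegrations, and the density of $\mu$-typical points, this section is forced to be continuous. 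The $f^{t}$-invariance and continuity of $\sigma$ on the compact manifold $M$ give a norm on $L^{u}$ uniformly comparable to the original and under which $Df^{t}|_{L^{u}}$ is a scalar multiple of an isometry, which is exactly uniform quasiconformality. The main obstacle is carrying out the transverse gluing in Step 3 without stable holonomies; it is here that the pinched negatively curved geodesic flow structure inherited via $\varphi$ is indispensable, both for the global topological rigidity of the foliations and for the compatibility of local product disintegrations across nearby unstable leaves.
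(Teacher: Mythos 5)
Your Step 1 contains a genuine error that propagates through the rest of the argument. Oseledets' theorem applied to $Df^{t}|_{L^{u}}$ with $\la_{1}^{u}(f,\mu)=\la_{l}^{u}(f,\mu)$ does \emph{not} yield a measurable invariant conformal structure. Equality of all Lyapunov exponents gives a trivial Oseledets filtration, but nothing more: the standard counterexample is a cocycle cohomologous to $e^{\la t}\left(\begin{smallmatrix}1 & t\\ 0 & 1\end{smallmatrix}\right)$, which has all exponents equal to $\la$, preserves no measurable bounded conformal structure, and leaves invariant a proper subbundle. This is precisely the obstruction you have to contend with, and the paper handles it through the dichotomy of \cite[Lemma 2.5]{Bu1}: under the exponent equality, either $Df^{t}|_{L^{u}}$ is uniformly quasiconformal, or there exists a proper nontrivial $Df^{t}$-invariant continuous subbundle $F\subset L^{u}$ invariant under both the stable holonomies $H^{s}$ and the unstable holonomies $H^{u}$. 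You cannot bypass this dichotomy by appealing to Oseledets.

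Step 3 also rests on a misconception. You write that ``only $Df^{t}|_{L^{u}}$ is fiber bunched and not $Df^{t}|_{L^{s}}$, so we cannot move $\sigma$ across $\W^{u}$-leaves by direct stable holonomy on $L^{u}$.'' But the $\beta$-fiber bunching condition of Definition \ref{first fiber bunched definition} is symmetric in $f^{t}$ and $f^{-t}$ (note the $\min$ over the stable and unstable contraction rates), so $\beta$-fiber bunching of $Df^{t}|_{L^{u}}$ already produces \emph{both} $H^{s}$ and $H^{u}$ holonomies on $L^{u}$ via Proposition \ref{existuhol}. The difficulty is not the absence of stable holonomies; it is that, even with both holonomies present, the exponent equality alone does not rule out the invariant subbundle $F$. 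Your proposed ``transverse gluing'' via the orbit equivalence is therefore aimed at the wrong problem and does not contain an actual argument.

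The correct role of the orbit equivalence $\varphi$ is quite different. Once the dichotomy gives a candidate subbundle $F$ invariant under both holonomies, $F$ projects (via the flat connection $\bar\nabla$ from Propositions \ref{holonomy connect}--\ref{quotient charts}) to a $\bar\nabla$-parallel, hence $C^{\infty}$, distribution $\bar F$ on each quotient leaf $\mathcal{Q}^{u}(x)$, which by flatness and Frobenius integrates to a proper smooth foliation $\mathcal{F}$. The orbit equivalence enters through Proposition \ref{accessible curves}: using the boundary at infinity of $Y$ and the differentiability of center-stable holonomy along $L^{u}$ (Proposition \ref{wcs diff}), one joins \emph{any} pair of points of $\W^{u}(x)$ by a curve tangent to $F$. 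Projecting such a curve to $\mathcal{Q}^{u}(x)$ produces a curve tangent to $\bar F$ reaching a point outside $\mathcal{F}(\pi(x))$, which is impossible. This contradiction rules out the bad horn of the dichotomy and is the mechanism you need; it is an accessibility argument in the spirit of Hamenst\"adt and Connell, not a measure-theoretic gluing of leafwise structures.
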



We will require several preparatory steps. The first few steps will not require use of the orbit equivalence $\varphi$. 

Since $Df^{t}|_{L^{u}}$ is $\beta$-fiber bunched, it is $\beta$-fiber bunched along $\W^{u}$ and $\W^{s}$ in the sense of inequality \eqref{fiber bunching}. Hence by Proposition \ref{existuhol} we have stable and unstable holonomy maps $H^{s}_{xy}: L^{s}_{x} \rightarrow L^{s}_{y}$ for $y \in \W^{s}(x)$ and $H^{u}_{xz}: L^{u}_{x} \rightarrow L^{u}_{z}$ for $z \in \W^{u}(x)$ respectively. 

We next recall a crucial proposition from \cite{Bu1}. For the center-stable foliation $\W^{cs}$ of $f^{t}$, we write $h^{cs}:=h^{\W^{cs}}:S_{1} \rightarrow S_{2}$ for the holonomy map of this foliation between transversals $S_{1}$ and $S_{2}$. On a foliation box $U \subset M$ for $\W^{cs}$, $\W^{s}$, and $\W^{c}$, for any $x \in U$ and $y \in \W^{cs}_{U}(x)$ we let $\{z\} = \W^{s}_{U}(x) \cap \W^{c}_{U}(y)$ and define
\begin{equation}\label{defn center holonomy}
H^{cs}_{xy} = H^{c}_{zy} \circ H^{s}_{xz},
\end{equation}
where $H^{c}_{zy} = Df^{t_{0}}_{z}$, with $t_{0}$ close to $0$ defined by $f^{t_{0}}z = y$.

Below we take the plaques $\W^{u}_{U}$ of the unstable foliation for $f^{t}$ as transversals to the center-stable foliation $\W^{cs}_{U}$ for $f^{t}$ on the foliation box $U$. 

\begin{prop}\label{wcs diff}\cite[Lemma 4.4]{Bu1}
Let $f^{t}: M \rightarrow M$ be a $C^2$ Anosov flow. Let $E^{u} = L^{u} \oplus V^{u}$ be a dominated splitting for $Df^{t}|_{E^{u}}$. Suppose that for some $\beta > 0$ we have that $f^{t}$ is $\beta$-$u$-bunched, that $L^{cu}$ is $\beta$-H\"older, and that $Df^{t}|_{L^{u}}$ is $\beta$-fiber bunched.

Let $U$ be a foliation box for all of the invariant foliations $\W^{*}$ of $f^{t}$. Then for each $x \in M$, $y \in \W^{cs}_{U}(x)$ the center-stable holonomy map $h^{cs}:\W^{u}_{U}(x) \rightarrow \W^{u}_{U}(y)$ is differentiable along $L^{u}$. For $z \in W^{u}_{U}(x)$ and $w = h^{cs}(z)$ the derivative is given by
\[
Dh^{cs}_{z}|_{L^{u}} = H^{cs}_{z w}:L^{u}_{z} \rightarrow L^{u}_{w}.
\]
In particular $h^{cs}$ maps $C^1$ curves tangent to $L^{u}$ to $C^1$ curves tangent to $L^{u}$. 
\end{prop}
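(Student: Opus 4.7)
The plan is to reduce to the case of pure $\W^s$-holonomy and then identify $Dh^s|_{L^u}$ with the cocycle holonomy $H^s$ via the uniqueness criterion of Proposition \ref{unique uhol}. First I would factor $h^{cs}$ inside the foliation box $U$ as $h^{cs} = h^c \circ h^s$, where for $z \in \W^u_U(x)$ and $y \in \W^{cs}_U(x)$ I set $z' = \W^s_U(z) \cap \W^c_U(y)$, let $h^s$ be the $\W^s$-holonomy from $\W^u_U(x)$ to $\W^u_U(w')$ with $w' = h^s(z)$, and let $h^c$ be the (smooth) short-time flow map along $\W^c$ from a neighborhood of $w'$ in $\W^u_U(w')$ to $\W^u_U(y)$. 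Since $h^c$ is $C^\infty$ with derivative $H^c_{w'w}$ on $L^u$, matching equation \eqref{defn center holonomy}, the whole problem reduces to showing that for each $y \in \W^s_U(x)$ the map $h^s:\W^u_U(x) \to \W^u_U(y)$ is differentiable along $L^u$ with derivative $H^s_{zw}:L^u_z \to L^u_w$ at each $z$, $w = h^s(z)$.

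To verify this, I would take a $C^1$ curve $\gamma:(-\delta,\delta) \to \W^u_U(x)$ with $\gamma(0)=z$ and $\gamma'(0)=v\in L^u_z$, set $\eta(s) = h^s(\gamma(s))$, and write $\eta(s) = f^{t}\circ h^{s}_{t} \circ f^{-t}(\gamma(s))$ for each $t>0$, where $h^s_t$ is the $\W^s$-holonomy between small transversals to $\W^s$ near $f^{-t}x$ and $f^{-t}y$. Because $f^{-t}y\in\W^s(f^{-t}x)$, we have $d(f^{-t}x, f^{-t}y) \leq Ce^{-at}$, so for large $t$ the two transversals lie in a common foliation chart and $h^s_t$ is $C^1$ close to the orthogonal identification $I_{f^{-t}z,f^{-t}w}:\W^u(f^{-t}x)\to \W^u(f^{-t}y)$ coming from the product chart. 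Differentiating and restricting to $L^u$, the candidate derivative of $\eta$ at $0$ is the composition
\begin{equation*}
D\eta(0) \;=\; Df^{t}_{f^{-t}w}\big|_{L^u}\,\circ\,J_t\,\circ\,Df^{-t}_{z}\big|_{L^u}(v),
\end{equation*}
where $J_t := Dh^s_t|_{L^u_{f^{-t}z}}$.

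The main analytic input is then to show that $J_t$ differs from the orthogonal identification $I_{f^{-t}z, f^{-t}w}|_{L^u}$ by at most $C\,d(f^{-t}z,f^{-t}w)^{\beta}$. This is precisely where the $\beta$-Hölder regularity of $L^{cu}$ (supplied by the $\beta$-$u$-bunching hypothesis via the Hasselblatt--style bunching theory cited after Definition \ref{beta bunching}) enters: the obstruction to $Dh^s_t$ being the identification $I$ block-diagonally with respect to $L^u \oplus V^u$ is controlled by the Hölder variation of the $L^{cu}$-planes along $\W^{cs}$, because $\W^{cs}$-holonomy preserves the $L^{cu}$-direction modulo a $C^\beta$ perturbation. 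Once this estimate is in place, $J_t$ is a valid sequence of near-identity linear maps in the sense required by Proposition \ref{unique uhol}, and the $\beta$-fiber bunching of $Df^{t}|_{L^u}$ (matched to the same $\beta$) makes that proposition applicable. Conjugating by $Df^{\pm t}$ and letting $t\to\infty$, the right-hand side of the display above converges to $H^s_{zw}(v)$, which shows simultaneously that the limit defining $\eta'(0)$ exists and equals $H^s_{zw}(v)$, independent of $\gamma$, as required.

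The main obstacle is step (a): establishing the $C^{1+\beta}$-type expansion of the small-scale $\W^{cs}$-holonomy $h^s_t$ in the $L^u$-direction, not just $C^\beta$-continuity of the holonomy itself. The trick is to avoid differentiating the full $h^s_t$ (which need not be $C^1$ on $\W^u$) by only controlling its restriction to curves tangent to $L^u$: domination $E^u = L^u \oplus V^u$ ensures that such curves remain tangent to $L^u$ under $Df^{-t}$, and the $V^u$-component of the perturbation by $h^s_t$ contributes to higher order once projected back to $L^u$. The output of this analysis, together with the holonomy uniqueness statement of Proposition \ref{unique uhol}, yields both differentiability of $h^{cs}$ along $L^u$ and the identification of the derivative with $H^{cs}_{zw}$ in equation \eqref{defn center holonomy}.
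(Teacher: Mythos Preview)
The paper does not give a proof of this proposition; it is quoted from \cite[Lemma~4.4]{Bu1}, and the only argument in the paper is the paragraph explaining why the slightly weaker hypotheses here (that $E^{cu}$ and $L^{cu}$ are $\beta$-H\"older, rather than $E^u$ and $L^u$) still suffice, because the proof in \cite{Bu1} only uses the H\"older regularity of the projections of these bundles to a smooth transversal to $\W^c$. Your sketch is a reasonable reconstruction of the underlying argument and lines up with that hint: passing to a transversal is essentially your factorization $h^{cs}=h^c\circ h^s$, and the iteration $h^s=f^t\circ h^s_t\circ f^{-t}$ together with Proposition~\ref{unique uhol} is the standard mechanism.

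One point deserves tightening. You set $J_t:=Dh^s_t|_{L^u}$ and then aim to show $\|J_t-I\|\le C\,d(f^{-t}z,f^{-t}w)^\beta$; but $h^s_t$ is just $h^s$ conjugated by the smooth diffeomorphism $f^t$, so its differentiability along $L^u$ is precisely what is in question. The clean way around this, which your last paragraph gestures at, is not to differentiate $h^s_t$ at all: replace $J_t$ by the linear identification $I_{f^{-t}z,f^{-t}w}$, and bound the \emph{nonlinear} discrepancy between $\eta(s)=h^s(\gamma(s))$ and the comparison curve $s\mapsto f^t\big(I_{f^{-t}z,f^{-t}w}(f^{-t}\gamma(s))\big)$ using only the $C^0$ estimate $\|h^s_t-I\|_{C^0}\le C\,d(f^{-t}z,f^{-t}w)^\beta$ (from $\beta$-H\"older continuity of $L^{cu}$) together with the $\beta$-fiber bunching of $Df^t|_{L^u}$. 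Domination kills the $V^u$-contribution as you say. This yields $\eta'(0)=\lim_t Df^t\circ I\circ Df^{-t}(v)$, and Proposition~\ref{unique uhol} identifies the limit with $H^s_{zw}(v)$. That is presumably the content of the cited lemma in \cite{Bu1}.
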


For a general $y \in \W^{cs}(x)$ we still have a well-defined center-stable holonomy map $h^{cs}: U_{x} \rightarrow U_{y}$ for $U_{x}$, $U_{y}$ small enough neighborhoods of $x$ and $y$ inside of $\W^{u}(x)$ and $\W^{u}(y)$ respectively. We can choose $\eta$ such that $f^{\eta}y \in \W^{s}(x)$ and then choose $t$ large enough that $f^{t+\eta}y$ and $f^{t}x$ lie in a common foliation box for all of the invariant foliations of $f^{t}$. Proposition \ref{wcs diff} then implies that the center-stable holonomy map $h^{cs}$ from a neighborhood of $f^{t}x$ inside of $f^{t}(U_{x})$ to a neighborhood of $f^{t+\eta}y$ inside of $f^{t+\eta}(U_{y})$ is differentiable along $L^{u}$ with derivative $H^{cs}_{zw}$, $z$ near $f^{t}x$, $w = h^{cs}(z)$. This establishes that the original center-stable holonomy map $h^{cs}$ is also differentiable along $L^{u}$ on a neighborhood of $x$, with derivative $Df^{t+\eta} \circ H^{cs}_{zw} \circ Df^{-t}$. Hence Proposition \ref{wcs diff} establishes differentiability along $L^{u}$ for $\W^{cs}$ holonomy over arbitrary distances, with this derivative being given by pre- and post-composition of $H^{cs}$ with iterates of $Df^{t}$. 

There is a minor discrepancy between the hypotheses of Proposition \ref{wcs diff} above and the hypotheses of Lemma 4.4 in \cite{Bu1}. In \cite{Bu1} we assumed that $E^{u}$ is $\beta$-H\"older and that $L^{u}$ is $\beta$-H\"older, whereas above we assume that $f^{t}$ is $\beta$-$u$-bunched, which only implies that $E^{cu}$ is $\beta$-H\"older, and we only assume that $L^{cu}$ is $\beta$-H\"older, which is weaker than assuming that $L^{u}$ is $\beta$-H\"older . However, in the proof of Lemma 4.4 only the $\beta$-H\"older continuity of the projections of $E^{u}$ and $L^{u}$ to the tangent space $TS$ of a smooth transversal $S$ to the orbit foliation $\W^{c}$ of $f^{t}$ is used. The $\beta$-H\"older continuity of these projections is directly implied by the $\beta$-H\"older continuity of $E^{cu}$ and $L^{cu}$. Hence Lemma 4.4 of \cite{Bu1} holds under these slightly weaker hypotheses. 

Since we have a dominated splitting $E^{u} = L^{u} \oplus V^{u}$ for $Df^{t}|_{E^{u}}$ and $Df^{t}|_{L^{u}}$ is $1$-fiber bunched along $\W^{u}$ (since it is $\beta$-fiber bunched along $\W^{u}$ for some $0 < \beta \leq 1$), we can apply the results of Section \ref{sec:uniform} to the action of $f^{t}$ on $\W^{u}$. We use the notation of that section, in particular letting $\V^{u}$ denote the resulting foliation tangent to $V^{u}$ and letting $\mathcal{Q}^{u}(x) = \W^{u}(x)/\V^{u}$ denote the quotients of the unstable foliation by the vertical foliation.

For a continuous curve $\gamma: [0,\infty) \rightarrow M$ in a space $M$, we write $\gamma(s) \rightarrow \infty$ as $s\rightarrow \infty$ if for each compact set $K \subset M$ there is an $N > 0$ such that $\gamma(s) \notin K$ for $s \geq N$. We first prove two lemmas within the context of the assumptions of Proposition \ref{exp to conf}.

\begin{lem}\label{go infinity}
Let $x \in M$ and let $\gamma:[0,\infty) \rightarrow \mathcal{Q}^{u}(x)$ be a geodesic in the metric $\bar{d}_{x}$. Let $\alpha: [0,\infty) \rightarrow \W^{u}(x)$ be a $C^1$ curve such that $\pi_{\V^{u}} \circ \alpha = \gamma$. Then $\gamma(s) \rightarrow \infty$ in $\mathcal{Q}^{u}(x)$ and $\alpha(s) \rightarrow \infty$ in $\W^{u}(x)$ as $t \rightarrow \infty$.  
\end{lem}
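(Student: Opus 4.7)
The plan is to handle the two claims in sequence: first establish that $\gamma(s) \to \infty$ in $\mathcal{Q}^u(x)$ by pulling back to the flat model space $Q^u_x$, and then deduce the corresponding statement for $\alpha$ purely from the continuity of the projection $\pi_{\V^u}$. Since both statements use only structural properties already proved, I do not expect any serious obstacle, but some care is needed to use the right input for each step.

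For the first step, I would invoke Proposition~\ref{quotient charts}: by part (1) the Riemannian metric $\bar d_x$ on $\mathcal{Q}^u(x)$ is flat and complete, and by part (2) the chart $e_x : Q^u_x \to \mathcal{Q}^u(x)$ is a global isometry with $Q^u_x$ carrying the Euclidean metric induced from $(V^u_x)^\perp$. Under this isometry the geodesic $\gamma$ pulls back to a straight line $\tilde\gamma(s) = e_x^{-1}(\gamma(s))$ in the Euclidean space $Q^u_x$, parametrized with some constant speed $c > 0$ (the lemma is vacuous when $\gamma$ is constant, so we may assume $c>0$). Then $\|\tilde\gamma(s)-\tilde\gamma(0)\| = cs \to \infty$, so $\tilde\gamma$ eventually leaves every compact set of $Q^u_x$; transporting this back through the isometry $e_x$, the curve $\gamma$ leaves every compact set of $\mathcal{Q}^u(x)$.

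For the second step, the key input is Proposition~\ref{prop: quotient reg}, which asserts that the quotient map $\pi_{\V^u} : M_{\W^u} \to \mathcal{Q}^u$ is $C^{r+2}$; restricted to the leaf $\W^u(x)$ it is in particular continuous as a map $\W^u(x) \to \mathcal{Q}^u(x)$. Consequently the $\pi_{\V^u}$-image of any compact subset of $\W^u(x)$ is compact in $\mathcal{Q}^u(x)$. Suppose towards contradiction that $\alpha(s)$ did not tend to infinity in $\W^u(x)$; then there would exist a compact $K\subset \W^u(x)$ and a sequence $s_n\to\infty$ with $\alpha(s_n)\in K$. Applying $\pi_{\V^u}$ and using $\pi_{\V^u}\circ\alpha=\gamma$ gives $\gamma(s_n)\in\pi_{\V^u}(K)$, a compact subset of $\mathcal{Q}^u(x)$, which contradicts the first step. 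This completes the plan; crucially, no assumption on $\alpha$ beyond its existence as a continuous (or $C^1$) lift of $\gamma$ is needed, and in particular we do not need to control the vertical component $\alpha'(s) \in V^u$ along the curve.
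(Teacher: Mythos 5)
Your proof is correct and takes essentially the same approach as the paper: pull $\gamma$ back through the isometric chart $e_x$ of Proposition~\ref{quotient charts} to a straight line in $Q^u_x$, conclude $\gamma(s)\to\infty$, and then use that $\pi_{\V^u}$ maps compact subsets of $\W^u(x)$ to compact subsets of $\mathcal{Q}^u(x)$ to conclude $\alpha(s)\to\infty$. One small imprecision: you say the lemma is ``vacuous'' when $\gamma$ is constant, but in that case the conclusion is simply false rather than vacuous; the paper implicitly treats geodesics as nonconstant, and indeed in the only application (Lemma~\ref{u foliation}) $\gamma$ is the projection of a curve tangent to a nowhere-vanishing vector field, so it has nonzero speed.
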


\begin{proof}
Consider the chart $e_{x}: Q^{u}_{x} \rightarrow \mathcal{Q}^{u}(x)$ of Proposition \ref{quotient charts}. The curve $e_{x}^{-1} \circ \gamma$ inside of $Q^{u}_{x}$ is a geodesic in the metric on $Q^{u}_{x}$ induced by the inner product, hence is a straight line in $Q^{u}_{x}$. It follows that $e_{x}^{-1}(\gamma(s)) \rightarrow \infty$ as $s \rightarrow \infty$. Since $e_{x}$ is an isometry, we conclude that $\gamma(s) \rightarrow \infty$ as $t \rightarrow \infty$. 

Let $K$ be a compact subset of $\W(x)$. Its projection $\pi_{\V}(K)$ is compact in $\mathcal{Q}(x)$, hence there is an $N > 0$ such that $\gamma(s) \notin \pi_{\V^{u}}(K)$ for $s \geq N$. This implies that $\alpha(s) \notin K$ for $s \geq N$. The lemma follows. 
\end{proof}

\begin{lem}\label{u foliation}
Let $x \in M$ and let $X: \W^{u}(x) \rightarrow L^{u}$ be a nowhere vanishing $H^{u}$-invariant vector field. Then there exists a continuous foliation $\mathcal{X}$ of $W^{u}(x)$ by $C^1$ curves tangent to $X$. Furthermore, every curve of the foliation $\mathcal{X}$ is properly embedded in $\W^{u}(x)$. 
\end{lem}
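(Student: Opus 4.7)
The plan is to reduce the problem to the flat, simply connected quotient $\mathcal{Q}^u(x) = \W^u(x)/\mathcal{V}^u$ produced by Propositions \ref{prop: quotient reg}--\ref{quotient charts}, where the foliation is literally by parallel lines, and then lift back to $\W^u(x)$ using that $L^u$ is transverse to $V^u$. Since $X$ takes values in $L^u$ and the projection $L^u \to Q^u$ is a bundle isomorphism, the projected field $\bar X := D\pi_{\mathcal{V}^u}(X)$ on $\mathcal{Q}^u(x)$ is nowhere zero. The $H^u$-invariance of $X$ combined with Propositions \ref{holonomy connect} and \ref{quotient connect}, which identify $H^u$ on the quotient with parallel transport for the flat, torsion-free connection $\bar\nabla$, shows that $\bar X$ is $\bar\nabla$-parallel. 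In the isometric chart $e_x : Q^u_x \to \mathcal{Q}^u(x)$, $\bar X$ therefore pulls back to a constant vector $v \ne 0$, and its integral curves are the affine lines parallel to $v$, defining a smooth $1$-dimensional foliation $\bar{\mathcal{X}}$ of $\mathcal{Q}^u(x)$ by complete $\bar\nabla$-geodesics.

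I would then lift $\bar{\mathcal{X}}$ to the desired foliation $\mathcal{X}$ of $\W^u(x)$. Since $L^u \oplus V^u = E^u$ and $V^u = \ker D\pi_{\mathcal{V}^u}$, the map $D\pi_{\mathcal{V}^u}$ restricts at every $y$ to a continuous isomorphism $L^u_y \to T\mathcal{Q}^u(x)_{\pi_{\mathcal{V}^u}(y)}$, and $X(y)$ is characterized as the unique vector in $L^u_y$ projecting to $v$. Consequently, a $C^1$ curve $\gamma$ in $\W^u(x)$ is tangent to $X$ if and only if $\pi_{\mathcal{V}^u}\circ \gamma$ parametrizes a leaf of $\bar{\mathcal{X}}$ and $\gamma' \in L^u$ everywhere. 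I take $\mathcal{X}(y)$ to be the maximal such lift through $y$ of the $\bar{\mathcal{X}}$-leaf through $\pi_{\mathcal{V}^u}(y)$.

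The main obstacle is the lifting step: establishing existence, uniqueness, and $C^1$ regularity of these lifts. In smooth local coordinates $(q,u)$ adapted to $\pi_{\mathcal{V}^u}$ (with $\mathcal{V}^u$ vertical), the lift equation reduces to $\dot q = v$ together with a continuous ODE $\dot u = F(q,u)$, where $F$ is only H\"older because $L^u$ is only H\"older. Existence follows from Peano's theorem; uniqueness does not. To resolve this I plan to exploit the $H^u$-invariance of $X$ together with the backward dynamics of $f^t$: any two integral curves $\gamma_1, \gamma_2$ starting at $y_0$ must have the same projection to $\bar{\mathcal{X}}$, hence $\gamma_1(t),\gamma_2(t)$ always lie on a common $\mathcal{V}^u$-leaf; their separation inside the leaf is contracted by $Df^{-n}$ at a strictly faster rate than the $L^u$-length of either curve (by the domination of $V^u$ over $L^u$), and propagating the tangent vectors backwards via the $H^u$-equivariance then forces $\gamma_1 \equiv \gamma_2$. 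Once existence and uniqueness are in hand, proper embeddedness is immediate from Lemma \ref{go infinity}: each $\mathcal{X}(y)$ covers a complete $\bar\nabla$-geodesic in the flat complete manifold $\mathcal{Q}^u(x)$, which escapes every compact subset, and the lifting statement of Lemma \ref{go infinity} transfers this property to $\W^u(x)$.
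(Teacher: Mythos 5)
Your plan diverges from the paper's, and the divergence is precisely where the gap lies. The paper never tries to integrate the H\"older vector field $X$ directly: it smooths $X^{t} = Df^{t}(X)$ to a $C^1$ field $Z^{t}$ on $\W^{u}(f^{t}x)$ with $\|Z^{t}-X^{t}\| \leq 1$, integrates $Z^t$ to a genuine $C^1$ foliation $\mathcal{Z}^t$, observes that $\|Df^{-t}(Z^{t}) - X\| \leq e^{-at}$, and takes the dynamical limit of the foliations $f^{-t}(\mathcal{Z}^t)$. Existence of a limiting foliation tangent to $X$ needs no uniqueness statement about integral curves of $X$; one only needs (sub)sequential convergence. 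Your approach, by contrast, needs to integrate $X$ leaf-by-leaf in the lifting step, and therefore needs uniqueness of solutions to a genuinely H\"older ODE, which is exactly the thing that can fail.

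Concretely, in coordinates $(q,u)$ adapted to $\pi_{\mathcal{V}^u}$ in which $L^u$ is the graph of a map $\phi$ that is only $C^\alpha$, two lifts with matched projections satisfy, for the vertical separation $\delta(s)$, an estimate of the form $\delta'(s) \leq C\,\delta(s)^\alpha$ with $\delta(0)=0$. For $\alpha < 1$ this inequality has nonzero solutions, so Peano existence gives you curves but not a well-defined ``maximal lift through $y$,'' and without uniqueness the lifts need not partition $\W^u(x)$ into a foliation. Your proposed repair --- push by $f^{-n}$ and compare the contraction rate $e^{-\chi n}$ of $V^u$ to the rate $e^{-an}$ of $L^u$ --- does beat down the H\"older constant, but only yields $\delta'(s) \leq C e^{(\chi(1-\alpha)-a)n}\delta(s)^\alpha$ after reparametrizing by $L^u$-arclength. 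Sending $n\to\infty$ forces $\delta\equiv 0$ only if $\alpha > 1 - a/\chi$, a quantitative rate/bunching condition that is neither stated in your argument nor obviously guaranteed by the paper's standing hypotheses at the point where this lemma is invoked (before synchronization). So the uniqueness step is a genuine gap: either you must verify that rate condition, or (more robustly) adopt the paper's smooth-and-iterate construction, which sidesteps integrability of $X$ entirely. Your projection argument for proper embeddedness via Lemma \ref{go infinity} is the same as the paper's and is fine.
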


\begin{proof}

Let $x \in M$ be given and let $X: \W^{u}(x) \rightarrow L^{u}$ be an $H^{u}$-invariant vector field. For $t \geq 0$ let $X^{t} = Df^{t}(X): \W^{u}(f^{t}(x)) \rightarrow L^{u}$. We can smooth $X^{t}$ to obtain a $C^1$ vector field $Z^{t}$ satisfying $\|Z^{t}-X^{t}\| \leq 1$ -- one can perform the smoothings locally and then glue using a partition of unity. Since $Z^{t}$ is a $C^1$ vector field it is uniquely integrable; let $\mathcal{Z}^{t}$ be the  $C^1$ foliation of $\W^{u}(f^{t}(x))$ by $C^2$ curves tangent to $Z^{t}$.

Observe that $Df^{-t}(Z^{t}) \rightarrow X$ as $t \rightarrow \infty$, since we have 
\[
\|Df^{-t}(Z^{t}) - X\| = \|Df^{-t}(Z^{t}-X^{t})\| \leq e^{-at}\|Z^{t}-X^{t}\| \leq e^{-at}.
\]
for some constant $a > 0$. This implies that the curves of the foliation $\mathcal{Z}^{t}$  converge to a continuous foliation $\mathcal{X}$ of $\W^{u}(x)$ by curves tangent to $X$. 

To prove the last claim, let $\gamma$ be any curve in the foliation $\mathcal{X}$. The vector field $X$ projects to a $\bar{\nabla}$-parallel vector field $\bar{X}$, where $\bar{\nabla}$ is the invariant connection on $\mathcal{Q}^{u}(x)$ constructed in Proposition \ref{quotient connect}. Thus $\bar{\gamma} = \pi_{\V^{u}} \circ \gamma$ is a $\bar{\nabla}$-geodesic. By Lemma \ref{go infinity} we conclude that $\bar{\gamma}(s) \rightarrow \infty$ in $\mathcal{Q}^{u}(x)$ as $t \rightarrow \infty$ and that $\gamma(s) \rightarrow \infty$ in $\W^{u}(x)$. It follows that the ray $\gamma:[0,\infty) \rightarrow \W^{u}(x)$ is properly embedded in $\W^{u}(x)$; an analogous argument considering $s \rightarrow -\infty$ as well establishes that the full curve $\gamma$ is properly embedded in $\W^{u}(x)$. 
\end{proof}

We will now incorporate the orbit equivalence $\varphi$. We write $\t{\W}^{*,f}$ for the invariant foliations of $f^{t}$ lifted to $\t{M}$ and write $\W^{*,g}$ for the invariant foliations of the geodesic flow $g^{t} = g^{t}_{Y}$ on $Y$, as described in Section \ref{subsec:neg curved}.

\begin{lem}\label{one intersection}
Let $f^{t}: M \rightarrow M$ be a smooth Anosov flow on a closed Riemannian manifold $M$ and assume that there is an orbit equivalence $\varphi: \t{M} \rightarrow T^{1}Y$ of $\t{f}^{t}$ with $g^{t}_{Y}$, where $Y$ is a complete simply connected Riemannian manifold of pinched negative curvature. Then for each $x \in \t{M}$ and $y \in \t{\W}^{cu,f}(x)$ there is a unique intersection point $\{z\} = \t{\W}^{c,f}(y) \cap \t{\W}^{u,f}(x)$. 
\end{lem}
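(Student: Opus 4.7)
The plan is to use the orbit equivalence $\varphi$ to transfer the statement to the center-unstable leaves of the geodesic flow on $T^{1}Y$, where the geometry of horospheres gives a clean global product structure. Since $\varphi$ maps $\t{f}^{t}$-orbits to $g^{t}_{Y}$-orbits and sends $\t{\W}^{cu,f}(x)$ homeomorphically onto $\W^{cu,g}(\varphi(x))$, it descends to a homeomorphism between the orbit spaces $\t{\W}^{cu,f}(x)/\t{f}^{t}$ and $\W^{cu,g}(\varphi(x))/g^{t}$. Using the description in Section~\ref{subsec:neg curved}, $\W^{cu,g}(\varphi(x))$ consists of all unit vectors with backward endpoint $\theta_{-}(\varphi(x))$ and fibers over the unstable horosphere $\mathcal{H}^{u}(\varphi(x))$ via the geodesic orbit projection, so the quotient is identified with $\mathcal{H}^{u}(\varphi(x))$, which is diffeomorphic to Euclidean space and in particular simply connected.

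Write $P \colon \t{\W}^{cu,f}(x) \to \t{\W}^{cu,f}(x)/\t{f}^{t}$ for the orbit projection; the lemma is equivalent to asserting that $P|_{\t{\W}^{u,f}(x)}$ is a bijection. Since $E^{cu,f} = E^{c,f} \oplus E^{u,f}$ is a transverse splitting and $\t{\W}^{u,f}$ is tangent to $E^{u,f}$, the restriction $P|_{\t{\W}^{u,f}(x)}$ is a local homeomorphism. For injectivity, if $z_{1} \neq z_{2} \in \t{\W}^{u,f}(x)$ satisfied $z_{2} = \t{f}^{s} z_{1}$ with $s \neq 0$, then backward-flow contraction along unstable leaves would give $d_{\t{\W}^{u,f}}(\t{f}^{-n}z_{1}, \t{f}^{-n}z_{2}) \to 0$ while the orbital separation $\t{f}^{-n}z_{2} = \t{f}^{s-n}z_{1}$ is preserved; this contradicts local transversality of $\t{\W}^{u,f}$ and $\t{\W}^{c,f}$ once the iterates are projected to a subsequentially convergent sequence in the compact quotient $M$.

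Surjectivity of $P|_{\t{\W}^{u,f}(x)}$ would then follow from showing this restriction is proper: a proper local homeomorphism onto a simply connected space is a covering map, and since $\t{\W}^{u,f}(x)$ is connected it must be a homeomorphism. To prove properness I would combine the intrinsic characterization $\t{\W}^{u,f}(x) = \{y \in \t{\W}^{cu,f}(x) : d(\t{f}^{-t}x, \t{f}^{-t}y) \to 0 \text{ as } t \to \infty\}$ with the uniform continuity of $\varphi$, translating orbit-space convergence of a sequence $y_{n} \in \t{\W}^{u,f}(x)$ into a backward-asymptotic condition in $T^{1}Y$. The foliation of $\W^{cu,g}(\varphi(x))$ by the horospherical sections $g^{t}(\W^{u,g}(\varphi(x)))$ (level sets of a Busemann function based at $\theta_{-}(\varphi(x))$) then forces the $\varphi(y_{n})$ to remain in a compact orbit-interval, hence the $y_{n}$ to remain in a compact subset of $\t{\W}^{u,f}(x)$.

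The main obstacle is this properness step. The difficulty is that $\varphi$ does not send $\t{\W}^{u,f}(x)$ to $\W^{u,g}(\varphi(x))$, so one cannot directly identify the two horospherical sections. Instead one must control the reparametrization cocycle $\tau_{y}(t)$ defined by $\varphi(\t{f}^{t} y) = g^{\tau_{y}(t)}\varphi(y)$, exploiting the bounded variation of $\tau_{y}(t) - \tau_{x}(t)$ for $y$ in a fixed orbit-compact subset (a standard feature of orbit equivalences between Anosov flows) together with the geometric structure of backward-asymptotic geodesics in the negatively curved space $Y$ to preclude $\varphi(y_{n})$ from escaping to infinity along its orbit while remaining in $\varphi(\t{\W}^{u,f}(x))$.
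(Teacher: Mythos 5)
Your proposal departs substantially from the paper's proof, and both of its two main steps have problems, one of which is a genuine gap.

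The paper's proof is much shorter and direct. It addresses only uniqueness (existence is automatic: by definition a center-unstable leaf is the saturation $\t{\W}^{cu,f}(x) = \bigcup_{t\in\R}\t{f}^{t}(\t{\W}^{u,f}(x))$, so every $y\in\t{\W}^{cu,f}(x)$ already has the form $\t{f}^{t}z$ for some $z\in\t{\W}^{u,f}(x)$). For uniqueness, it supposes $z_{1}\neq z_{2}\in\t{\W}^{u,f}(x)$ lie on the same orbit, so $z_{2}=\t{f}^{s}z_{1}$ with $s>0$, observes that this forces $\t{f}^{s}(\t{\W}^{u,f}(x))=\t{\W}^{u,f}(x)$, and then applies the contraction mapping theorem: $\t{f}^{-s}$ is a Lipschitz contraction of the complete metric space $\t{\W}^{u,f}(x)$, hence has a fixed point $p$, which is a periodic point of $\t{f}^{t}$ \emph{in the cover $\t{M}$}. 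The compact orbit $\t{\W}^{c,f}(p)$ is then carried by $\varphi$ to a closed $g^{t}_{Y}$-orbit, i.e.\ a closed geodesic in $Y$, contradicting the absence of closed geodesics in a complete simply connected nonpositively curved manifold.

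The gap in your injectivity step is that it loses exactly the information the paper's argument exploits. Your chain of reasoning starts in $\t{M}$ with $d_{\t{\W}^{u,f}}(\t{f}^{-n}z_{1},\t{f}^{-n}z_{2})\to 0$, projects to $M$ to exploit compactness, and extracts a subsequential limit $q$ satisfying $f^{s}q=q$. But that conclusion — a periodic point of $f^{t}$ \emph{in $M$} — is not a contradiction; a transitive Anosov flow has a dense set of them, and ``local transversality of $\t{\W}^{u,f}$ and $\t{\W}^{c,f}$'' is not violated by a closed orbit passing through a small unstable plaque at two different times. What rules out the configuration is that $z_{2}=\t{f}^{s}z_{1}$ with $z_{1},z_{2}\in\t{\W}^{u,f}(x)$ forces a periodic orbit of $\t{f}^{t}$ \emph{in the cover $\t{M}$}, and this must be detected without descending to $M$. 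The fixed-point argument does exactly that, and the orbit equivalence to $g^{t}_{Y}$ then supplies the contradiction; you need this hypothesis for the uniqueness half too, not only for the properness half as your sketch suggests.

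On the properness/surjectivity step: as noted, this is the part that needs no work at all, since $y\in\t{\W}^{cu,f}(x)$ already means $y=\t{f}^{t}z$ for some $z\in\t{\W}^{u,f}(x)$ and $t\in\R$, which \emph{is} a point of $\t{\W}^{c,f}(y)\cap\t{\W}^{u,f}(x)$. The apparatus of covering maps over $\mathcal{H}^{u}(\varphi(x))$ and control of the reparametrization cocycle is not needed; it is introducing a real analytic difficulty in place of a triviality. If you were to pursue your route anyway, note that to show $P|_{\t{\W}^{u,f}(x)}$ is a covering you would still need the absence of deck transformations acting on $\t{\W}^{cu,f}(x)$, which again reduces to ruling out closed orbits of $\t{f}^{t}$ — the same fact, obtained more directly by the contraction argument.
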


\begin{proof}
Suppose that there were points $z_{1} \neq z_{2} \in \t{\W}^{c}(y)$ such that $z_{1}, z_{2} \in \t{\W}^{u}(x)$. We can then write $z_{2} = \t{f}^{t}z_{1}$ for some $t \neq 0$; without loss of generality we may assume that $t > 0$. Then $\t{f}^{t}(\t{\W}^{u}(x)) = \t{\W}^{u}(x)$. Hence $\t{f}^{-t}: \W^{u}(x) \rightarrow \W^{u}(x)$ is a Lipschitz contraction of a complete metric space, which by the contraction mapping theorem must have a fixed point $p$.  Then $p$ is a periodic point of $\t{f}^{t}$, from which it follows that $\t{\W}^{c,f}(p)$ is compact. Since the orbit equivalence $\varphi$ maps the orbit foliation $\t{\W}^{c,f}$ of $\t{f}^{t}$ to the orbit foliation $\W^{c,g}$ of $g^{t}$, we conclude that $g^{t}$ has a closed orbit, or equivalently $Y$ has a closed geodesic. Since a complete simply connected nonpositively curved Riemannian manifold has no closed geodesics \cite{BGS}, this is a contradiction.  
\end{proof}

\begin{prop}\label{accessible curves}
Suppose that $f^{t}: M \rightarrow M$ is a smooth Anosov flow satisfying the hypotheses of Proposition \ref{exp to conf}. Let $F \subseteq L^{u}$ be a nontrivial $Df^{t}$-invariant subbundle that is invariant under both $H^{u}$ and $H^{s}$ holonomies. Then for each $x \in M$ and $y \in \W^{u}(x)$ with $y \neq x$ there exists a continuous curve $\gamma: [0,1] \rightarrow \W^{u}(x)$, which is $C^1$ on $[0,1)$, such that $\gamma(0) = x$, $\gamma(1) = y$, and  $\gamma'(s) \in F$ for each $s \in [0,1)$. 
\end{prop}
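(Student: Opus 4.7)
The plan is to build $\gamma$ as a (possibly infinite) concatenation of $F$-tangent segments together with ``teleports'' along the center-stable foliation, using backward iteration under $f^{t}$ to compress the latter.

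First, I would extract from $F$ a continuous foliation $\mathcal{F}$ of each unstable leaf $\W^{u}(x)$ by $C^{1}$ curves tangent to $F$. For a line subbundle this is exactly Lemma \ref{u foliation}: $H^{u}$-invariance of $F$ means a single nonzero vector at one point extends, via the $H^{u}$-holonomies of Proposition \ref{existuhol}, to a continuous nowhere-vanishing $H^{u}$-invariant section, and then the smoothing-and-pulling-back argument produces a foliation with properly embedded leaves. For higher-dimensional $F$ one passes to $\mathcal{Q}^{u}(x)=\W^{u}(x)/\mathcal{V}^{u}$: the projection $L^{u}\to Q^{u}$ identifies $F$ with a parallel subbundle $\bar F \subseteq T\mathcal{Q}^{u}$ for the flat torsion-free invariant connection $\bar\nabla$ of Proposition \ref{quotient connect}, and $\bar F$ integrates to an affine foliation $\bar{\mathcal F}$ of $\mathcal{Q}^{u}(x)$ whose leaves are linear subspaces in the normal-form charts $e_{x}$ of Proposition \ref{quotient charts}; the same smoothing argument, applied to a basis of local sections, then lifts $\bar{\mathcal F}$ to the required foliation $\mathcal{F}$ of $\W^{u}(x)$.

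Second, I would record that center-stable holonomies permute leaves of $\mathcal{F}$. By Proposition \ref{wcs diff} together with the $\beta$-bunching hypotheses, the holonomy $h^{cs}:\W^{u}(x) \to \W^{u}(y)$ is differentiable along $L^{u}$ with derivative the cocycle $H^{cs}$ from \eqref{defn center holonomy}; and since $F$ is both $H^{s}$-invariant and $Df^{t}$-invariant, $H^{cs}$ preserves $F$. Hence $h^{cs}$ sends $C^{1}$ curves tangent to $F$ to $C^{1}$ curves tangent to $F$.

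Third, given $y \in \W^{u}(x)$, I would construct $\gamma$ as an infinite concatenation of $\mathcal{F}$-segments with finite sequences of $h^{cs}$-jumps that depart and return to $\W^{u}(x)$. In the universal cover, Lemma \ref{one intersection} ensures that the center leaf through any point of $\tilde{\W}^{cu,f}(x)$ meets $\tilde{\W}^{u,f}(x)$ exactly once, and the uniformly continuous orbit equivalence $\varphi$ to $g^{t}_{Y}$ lets us transport the horospherical coordinate charts $P_{v}$ of Section \ref{subsec:neg curved} onto $\tilde{M}$; this supplies enough moves to reach a dense set of approximants $y_{n}\to y$ in $\W^{u}(x)$ by finite $\mathcal{F}$-plus-$h^{cs}$ concatenations. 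To upgrade density to an actual curve terminating at $y$, I would push each successive $h^{cs}$-jump to a deep backward iterate $f^{-t_{n}}$: the Anosov contraction on $E^{s}$ shrinks the stable distance of consecutive jump endpoints exponentially, while the $\beta$-fiber-bunching on $L^{u}$ controls the distortion of these jumps when reinserted at time zero, making the series of displacements summable. Reparametrising the resulting telescope on intervals $[1-2^{-n+1},1-2^{-n}]$ yields a curve that is $C^{1}$ on $[0,1)$ and continuous at $s=1$ with $\gamma(1)=y$.

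The main obstacle will be this final quantitative compression. The $F$-segments and $h^{cs}$-jumps must be chained so that both the total Riemannian length in $\W^{u}(x)$ is controlled and the $C^{1}$ regularity survives at each matching point; balancing the backward-iteration depth against the exponent $\beta$ in inequality \eqref{fiber bunching inequality}, in the spirit of the accessibility constructions of Hamenst\"adt \cite{Ham91} and Connell \cite{Con}, is the technical heart of the proof.
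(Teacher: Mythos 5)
Your Steps 1 and 2 are fine and correspond to Lemma \ref{u foliation} and Proposition \ref{wcs diff} together with the $H^{cs}$-invariance of $F$. But Step 3 diverges from the paper's argument and contains the real gap.

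The paper does not construct $\gamma$ as an infinite concatenation of $\mathcal{F}$-segments alternating with compressed $h^{cs}$-jumps. It uses a \emph{single} center-stable holonomy applied to a \emph{single} $\mathcal{F}$-curve living in a different unstable leaf. Concretely: lift to $\t{M}$, and using the orbit equivalence $\varphi$, choose $z$ so that $\theta_{-}(\varphi(z)) = \theta_{+}(\varphi(y))$ and $\theta_{+}(\varphi(z)) = \theta_{+}(\varphi(x))$; after adjusting by the flow, $x \in \t{\W}^{s,f}(z)$. Take a properly embedded $\mathcal{F}$-curve $\eta: [0,\infty) \to \t{\W}^{u,f}(z)$ starting at $z$ (Lemma \ref{u foliation}). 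Its projection via $\varphi_{z}$ and $\theta_{+}$ yields a curve $\kappa_{+}$ in $\partial \t{Y}$ that converges to $\theta_{-}(\varphi(z)) = \theta_{+}(\varphi(y))$, so pulling back along $\theta_{+}^{-1}$ into $\W^{u,g}(\varphi(x))$ and then along $\varphi^{-1}_{\varphi(x)}$ gives a curve $\sigma$ in $\t{\W}^{u,f}(x)$ with $\sigma(0) = x$ and $\sigma(s) \to y$ as $s \to \infty$. By the remarks after Proposition \ref{wcs diff}, $\sigma$ is exactly the $\t{\W}^{cs,f}$-holonomy image of $\eta$ and hence a $C^1$ curve tangent to $\t{F}$. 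Reparametrizing $[0,\infty)$ to $[0,1)$ finishes. No iteration, no summability argument, no balancing of bunching exponents.

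Your proposal is missing the essential insight that the endpoint $y$ can be realized as the \emph{boundary limit} of a single holonomy image, which is what removes the need for your infinite concatenation. Beyond this, your Step 3 has two concrete defects you should not elide. First, it is ambiguous what the $h^{cs}$-jumps are doing in the final curve: if they are literal pieces of $\gamma$, then $\gamma'$ leaves $F$; if they are only auxiliary, then each $\mathcal{F}$-segment of $\gamma$ between two chain points must itself be the holonomy image of some $\mathcal{F}$-curve in another leaf, and you have not shown such segments can be chained to reach arbitrary $y$. Second, the crucial question — why $F$-tangent curves can escape the leaf $\mathcal{F}(x)$ of the integral foliation at all — is left unaddressed; you assert the orbit equivalence ``supplies enough moves'' but never specify them. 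In the paper's argument this is resolved at a single stroke by the choice of $z$ and the boundary projection, and you should either reproduce that device or supply a genuinely different mechanism.
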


\begin{proof}
We will use the geometric properties of $Y$ discussed in Section \ref{subsec:neg curved}. We pass to the universal cover $\t{M}$ of $M$ and consider the lift $\t{f}^{t}$ of  $f^{t}$ on $\t{M}$, together with the lifts $\t{\W}^{*,f}$ of the invariant foliations of $f^{t}$ to $\t{M}$. We consider the lifted $u$-splitting $\t{E}^{u} = \t{L}^{u} \oplus \t{V}^{u}$ and the lift $\t{F} \subset \t{L}^{u}$ of $F$. By the uniform continuity of $\varphi$ we conclude that for each $x \in \t{M}$ we have $\varphi(\t{\W}^{cs,f}(x)) = \W^{cs,g}(\varphi(x))$ and $\varphi(\t{\W}^{cu,f}(x)) = \W^{cu,g}(\varphi(x))$, as for instance if $y \in \t{\W}^{cs,f}(x)$ and $\varphi(y) \notin \W^{cs,g}(\varphi(x))$ then as $t \rightarrow \infty$ we would have since $\varphi$ is an orbit equivalence,
\[
d_{Y}(\varphi(f^{t}x),\varphi(f^{t}y)) = d_{Y}(g^{\alpha(t,x)}(\varphi(x)),g^{\alpha(t,y)}(\varphi(y)))  \rightarrow \infty,
\]
while $d_{\t{M}}(f^{t}x,f^{t}y)$ remains bounded independently of $t$, contradicting the uniform continuity of $\varphi$. Here $\alpha$ is an additive cocycle over $f^{t}$, for which the uniform continuity property of $\varphi$ implies that $\alpha(t,p) \rightarrow \infty$ as $t \rightarrow \infty$ for each $p \in T^{1}Y$. This shows that $\varphi(\t{\W}^{cs,f}(x)) \subseteq \W^{cs,g}(\varphi(x))$, and similar arguments give the other inclusions. 

For $x \in \t{M}$ we define a homeomorphism $\varphi_{x}:\t{\W}^{u,f}(x) \rightarrow \W^{u,g}(\varphi(x))$ by setting $\varphi_{x}(y)$ to be the unique intersection point of $\W^{c,g}(\varphi(y))$ with $\W^{u,g}(\varphi(x))$; the uniqueness of this intersection point is guaranteed by the structure of the invariant foliations $\W^{*,g}$ for $g^{t}$ discussed in Section \ref{subsec:neg curved}.  For $p \in T^{1}Y$ we likewise define $\varphi_{p}^{-1}: \W^{u,g}(p) \rightarrow \t{\W}^{u,f}(\varphi^{-1}(p))$ by setting $\varphi_{p}^{-1}(q)$ to be the unique intersection point of $\t{\W}^{c,f}(\varphi^{-1}(q))$ with $\t{\W}^{u,f}(\varphi^{-1}(p))$. The uniqueness of this intersection point is guaranteed by Lemma \ref{one intersection}.

It suffices to prove the proposition in the lifted foliations for an arbitrary $x \in \t{M}$ and $y \in \t{\W}^{u,f}(x)$. Choose a point $z \in \t{M}$ such that $\varphi(z) \in T^{1}Y$ satisfies $\theta_{-}(\varphi(z)) = \theta_{+}(\varphi(y))$ and $\theta_{+}(\varphi(z)) = \theta_{+}(\varphi(x))$. By construction we then have $\varphi(x) \in \W^{cs,g}(\varphi(z))$ and therefore $x \in \t{\W}^{cs,f}(z)$. By replacing $z$ with $\t{f}^{s}z$ for an appropriate $s \in \R$, we may assume that $x \in \t{\W}^{s,f}(z)$; here we are using the fact that the projections $\theta_{+}(\varphi(z))$ and $\theta_{-}(\varphi(z))$ depend only on the $g^{t}$ orbit of $\varphi(z)$, which is in turn determined only by the $\t{f}^{t}$-orbit of $z$ since $\varphi$ is an orbit equivalence. 

Choose a nowhere vanishing $H^{u}$-invariant vector field $X: \t{\W}^{u,f}(z) \rightarrow \t{L}^{u}$ that is tangent to $\t{F}$. By Lemma \ref{u foliation}, there exists a continuous foliation $\mathcal{X}$ of $\t{\W}^{u,f}(z)$ by $C^1$ curves tangent to $X$. Consider the curve $\eta:[0,\infty) \rightarrow \t{\W}^{u,f}(z)$ in this foliation $\mathcal{X}$ starting at  $\eta(0) = z$. By Lemma \ref{u foliation}, $\eta$ is properly embedded in $\t{\W}^{u,f}(z)$ and therefore $\eta(s) \rightarrow \infty$ as $t \rightarrow \infty$. 

Let $\kappa = \varphi_{z} \circ \eta$. Then $\kappa$ is a continuous curve in $\W^{u,g}(\varphi(z))$ with $\kappa(0) = \varphi(z)$ and $\kappa(s) \rightarrow \infty$ as $t \rightarrow \infty$. Let $\kappa_{+} = \theta_{+} \circ \kappa$ be the projection of $\kappa$ to $\p Y$. Then we have $\kappa_{+}(s) \rightarrow \theta_{-}(\varphi(z)) = \theta_{+}(\varphi(y))$ as $s \rightarrow \infty$.  We now take the inverse projection 
\[
\theta_{+}^{-1}: \p Y \backslash \{\theta_{-}(\varphi(x))\} \rightarrow \W^{u,g}(\varphi(x)). 
\]
If the curve $\kappa_{+}$ contains $\theta_{-}(\varphi(x))$, we exchange $\eta$ for the ray $\eta_{-}:[0,\infty) \rightarrow \t{\W}^{u,f}(z)$ in the foliation $\mathcal{X}$ starting at $z$ and pointing in the direction of $-X$ instead. This curve is disjoint from $\eta$ except for its starting point that has projection 
\[
\theta_{-}(\varphi(z)) = \theta_{-}(\varphi(y)) \neq \theta_{-}(\varphi(x)),
\]
hence if we construct $\kappa_{+}$ using $\eta_{-}$ instead then it will not contain $\theta_{-}(\varphi(x))$. 

Hence we may assume that $\kappa_{+}$ does not contain $\theta_{-}(\varphi(x))$. Then $\theta_{+}^{-1} \circ \kappa_{+}$ defines a continuous curve on $\W^{u,g}(\varphi(x))$ with $\theta_{+}^{-1}(\kappa_{+}(0)) = \varphi(x)$ and $\theta_{+}^{-1}(\kappa_{+}(s)) \rightarrow \varphi(y)$ as $s \rightarrow \infty$. Setting $\sigma = \varphi_{\varphi(x)}^{-1} \circ \theta_{+}^{-1} \circ \kappa_{+}$ and recalling that $\varphi_{\varphi(x)}^{-1}: \W^{u,g}(\varphi(x)) \rightarrow \t{\W}^{u,f}(x)$ is a homeomorphism, we conclude that $\sigma(0) = x$ and $\sigma(s) \rightarrow y$ as $s \rightarrow \infty$. 

By the discussion after \eqref{transition maps}, the curve $\theta_{+}^{-1} \circ \kappa_{+}$ may equivalently be thought of as the image of $\kappa$ under $\W^{cs,g}$-holonomy from $\W^{u,g}(\varphi(z))$ to $\W^{u,g}(\varphi(x))$. This implies that $\sigma$ is the image of $\eta$ under $\t{\W}^{cs,f}$-holonomy, since $\varphi$ preserves the center-stable foliations. By Proposition \ref{wcs diff} and the hypothesis that $\t{F}$ is invariant under $H^{cs}$-holonomy, we conclude that $\sigma$ is a $C^1$ curve tangent to $\t{F}$ with $\sigma(0) = x$ and $\sigma'(s) \neq 0$ for all $s \in [0,\infty)$. The curve $\gamma(s) = \sigma((2/\pi)\arctan(s))$, $s \in [0,1]$, then has the desired properties of the proposition.
\end{proof}

\begin{proof}[Proof of Proposition \ref{exp to conf}]
By \cite[Lemma 2.5]{Bu1}, if there exists a fully supported $f^{t}$-invariant ergodic probability measure $\mu$ with local product structure such that we have $\la_{1}^{u}(f^{t},\mu) = \la_{l}^{u}(f^{t},\mu)$ -- or equivalently, in the terminology of \cite{Bu1}, the extremal Lyapunov exponents of $Df^{t}|_{L^{u}}$ with respect to $\mu$ are equal -- then either $Df^{t}|_{L^{u}}$ is uniformly quasiconformal, or there is a proper nontrivial $Df^{t}$-invariant subbundle $F \subset L^{u}$ such that $F$ is both $H^{s}$- and $H^{u}$-holonomy invariant.

Fix a point $x \in M$. Since $F$ is $H^{u}$-invariant, it projects to a $\bar{\nabla}$-parallel $C^{1}$ subbundle $\bar{F}$ of $T\mathcal{Q}^{u}(x)$. Since $\bar{\nabla}$ is a flat connection, by the Frobenius theorem there is a $C^{1}$ foliation $\mathcal{F}$ of $\mathcal{Q}^{u}(x)$ that is tangent to $\bar{F}$. If $F$ is a proper subbundle of $L^{u}$ then $\mathcal{F}$ is a nontrivial foliation of $\mathcal{Q}^{u}(x)$. 

Write $\pi:=\pi_{\V^{u}}$. There thus exists a point $y \in \W^{u}(x)$ such that $\pi(y) \notin \mathcal{F}(x)$. On the other hand, by Proposition \ref{accessible curves} there is a curve $\gamma:[0,1] \rightarrow \W^{u}(x)$ tangent to $F$ with $\gamma(0) = x$, $\gamma(1) = y$, and such that $\gamma$ is $C^1$ on $[0,1)$. Then $\pi \circ \gamma:[0,1] \rightarrow \mathcal{Q}^{u}(x)$ is a curve which is $C^1$ on $[0,1)$, tangent to $F$ on $[0,1)$, and satisfies $\pi(\gamma(s)) \rightarrow \pi(y)$ as $s \rightarrow 1$. This implies that $\pi\circ \gamma$ is contained entirely inside of $\mathcal{F}(\pi(x))$, which is impossible since we have assumed that $y \notin\mathcal{F}(\pi(x))$. Thus by contradiction we conclude that $Df^{t}|_{L^{u}}$ is uniformly quasiconformal. 
\end{proof}

We will need a variant of Proposition \ref{exp to conf} in Section \ref{sec: vert}, which we establish below. Let $k = \dim E^{u}$, assume that $k \geq 3$, and let $1 \leq l \leq k-2$.

\begin{prop}\label{one exp to conf}
Let $f^{t}:M \rightarrow M$ be a smooth Anosov flow on a closed Riemannian manifold $M$ with $u$-splitting $E^{u} = L^{u} \oplus V^{u}$ and $s$-splitting $E^{s} = L^{s} \oplus V^{s}$ each of index $l$. Suppose that $Df^{t}|_{V^{u}}$ and $Df^{t}|_{V^{s}}$ are $1$-fiber bunched and that there is a complete smooth Riemannian submanifold $S$ of $M$ which is tangent to $V^{u} \oplus E^{c} \oplus V^{s}$. Assume that the inclusion $S \hookrightarrow M$ is uniformly bounded in the $C^2$ norm. Suppose that there is a simply connected Riemannian manifold $Y$ of pinched negative curvature and a uniformly continuous orbit equivalence $\varphi: S \rightarrow T^{1}Y$ from $f^{t}|_{S}$ to  $g^{t}_{Y}$.

Under these hypotheses, if there is a fully supported $f^{t}$-invariant ergodic probability measure $\mu$ with local product structure such that we have $\la_{l+1}^{u}(f^{t},\mu) = \la_{k}^{u}(f^{t},\mu)$ then $Df^{t}|_{V^{u}}$ is uniformly quasiconformal. 
\end{prop}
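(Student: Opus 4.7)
The strategy mirrors the proof of Proposition \ref{exp to conf}: we produce a proper invariant subbundle and derive a contradiction from accessibility inside unstable leaves. First, I would apply \cite[Lemma 2.5]{Bu1} to the cocycle $Df^{t}|_{V^{u}}$ with invariant measure $\mu$. The hypothesis $\la_{l+1}^{u}(f,\mu) = \la_{k}^{u}(f,\mu)$ asserts that the extremal Lyapunov exponents of $Df^{t}|_{V^{u}}$ coincide, while the $1$-fiber bunching of $Df^{t}|_{V^{u}}$ supplies both the $H^{u}$-holonomies along $\W^{u}$ and the $H^{s}$-holonomies along $\W^{s}$ via Proposition \ref{existuhol} (the pointwise bunching of Definition \ref{first fiber bunched definition} dominates the fiber bunching along either foliation in Definition \ref{second fiber bunched definition}). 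The cited lemma therefore yields either that $Df^{t}|_{V^{u}}$ is uniformly quasiconformal, which is the desired conclusion, or that there is a proper nontrivial $Df^{t}$-invariant subbundle $F \subset V^{u}$ invariant under both families of holonomies. Suppose for contradiction that we are in the latter case.

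Next I would restrict to $S$. The flow $f^{t}|_{S}$ is a smooth Anosov flow on $S$ with unstable bundle $V^{u}|_{S}$, stable bundle $V^{s}|_{S}$, and center $E^{c}$, whose unstable and stable foliations are $\V^{u}|_{S}$ and $\V^{s}|_{S}$. The $1$-fiber bunching of $Df^{t}|_{V^{u}}$ and $Df^{t}|_{V^{s}}$ translates into $1$-$u$- and $1$-$s$-bunching of $f^{t}|_{S}$ as an Anosov flow, so in particular $E^{cu,f|_{S}} = E^{c} \oplus V^{u}|_{S}$ is $C^{1}$. Viewing $f^{t}|_{S}$ through the degenerate case ($V^{u,f|_{S}} = \{0\}$) of the constructions of Section \ref{sec:uniform}, Proposition \ref{holonomy connect} gives a $Df^{t}$-invariant leafwise flat connection on $V^{u}|_{S}$ along $\V^{u}|_{S}$, and Proposition \ref{quotient charts} identifies each leaf $\V^{u}(x)$, $x \in S$, isometrically with $V^{u}_{x}$ via a global chart $e_{x}$. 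The restriction $F|_{S}$ is parallel for this connection on each $\V^{u}(x)$ by $H^{u}$-invariance, so the Frobenius theorem produces a proper $C^{\infty}$ foliation $\mathcal{F}$ of each $\V^{u}(x)$ tangent to $F$.

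Finally I would adapt Proposition \ref{accessible curves} to $f^{t}|_{S}$: for every $x \in S$ and every $y \in \V^{u}(x)$, I would construct a continuous curve $\gamma:[0,1] \to \V^{u}(x)$, $C^{1}$ on $[0,1)$, tangent to $F$, with $\gamma(0)=x$ and $\gamma(1)=y$. The argument runs verbatim as before, using the uniformly continuous orbit equivalence $\varphi:S \to T^{1}Y$ and the boundary identifications on the geodesic flow side, the analogue of Lemma \ref{u foliation} applied to an $H^{u}$-invariant vector field taking values in $F$, and Proposition \ref{wcs diff} applied to $f^{t}|_{S}$ in the trivial-splitting case; its hypotheses hold with $\beta=1$ thanks to $1$-fiber bunching and the $C^{1}$ regularity of $E^{cu,f|_{S}}$, and the $H^{s}$-invariance of $F$ ensures that the $\W^{cs,f|_{S}}$-holonomy preserves tangency to $F$. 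Choosing $y \in \V^{u}(x)$ outside the $\mathcal{F}$-leaf through $x$, which is possible since $F$ is a proper subbundle, the resulting curve is tangent to $F$ and thus trapped inside $\mathcal{F}(x)$, contradicting $\gamma(1)=y$. The principal technical obstacle will be verifying that the entire package of Section \ref{sec:uniform} together with the center-stable differentiability of Proposition \ref{wcs diff} transfers cleanly to the restricted flow $f^{t}|_{S}$ with its trivial $u$-splitting; once this bookkeeping is settled, the geometric contradiction runs in complete parallel with Proposition \ref{exp to conf}.
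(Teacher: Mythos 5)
Your proposal follows essentially the same route as the paper's proof: apply \cite[Lemma 2.5]{Bu1} to $Df^{t}|_{V^{u}}$ to obtain either the desired quasiconformality or a proper $H^{u}$- and $H^{s}$-invariant subbundle $F$; use the normal-form machinery of Section \ref{sec:uniform} in the degenerate case (trivial vertical subbundle) to get flat charts on $\V^{u}$-leaves and hence a smooth foliation $\mathcal{F}$ tangent to $F$; then run the accessibility argument of Proposition \ref{accessible curves} inside $S$, using the orbit equivalence $\varphi: S \to T^{1}Y$ and the $\W^{cs}$-holonomy differentiability in the style of Proposition \ref{wcs diff}, to contradict the properness of $F$.

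The single bookkeeping point you flag at the end --- whether the Section \ref{sec:uniform} package and Proposition \ref{wcs diff} transfer cleanly to $f^{t}|_{S}$ --- is resolved in the paper by a slightly different ordering. The normal-form constructions (the invariant flat connection on $V^{u}$ and the charts $e_{x}: V^{u}_{x} \to \V^{u}(x)$) are carried out on the \emph{compact} manifold $M$, taking $\V^{u}$ itself as the expanding foliation and $Df^{t}|_{V^{u}}$ as the $1$-fiber-bunched cocycle, so compactness is available and Proposition \ref{quotient charts} applies verbatim. The submanifold $S$ only enters afterward: restricting to $S$, one has a $1$-bunched Anosov flow, and the uniform $C^{2}$ bound on the inclusion $S \hookrightarrow M$ is precisely what replaces compactness in Hasselblatt's graph-transform argument (giving $C^{1}$ regularity of $V^{cu}$, $V^{cs}$ in $TS$) and in the differentiability of $\W^{cs}$-holonomy along $V^{u}$. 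Your proposal as written would try to run Proposition \ref{holonomy connect} and \ref{quotient charts} directly on the noncompact $S$, which is where the gap you anticipated lives; constructing on $M$ and then restricting sidesteps it entirely.
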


\begin{proof}
Since $Df^{t}|_{V^{u}}$ is $1$-fiber bunched, we have stable and unstable holonomies $H^{s}$ and $H^{u}$ along $\W^{s}$ and $\W^{u}$ respectively for this linear cocycle over $f^{t}$. By \cite[Lemma 2.5]{Bu1}, under the hypothesis that $\la_{l+1}^{u}(f^{t},\mu) = \la_{k}^{u}(f^{t},\mu)$ we conclude that either $Df^{t}|_{V^{u}}$ is uniformly quasiconformal or we have a proper $Df^{t}$-invariant continuous subbundle $F \subset V^{u}$ which is invariant under both $H^{s}$ and $H^{u}$. 

We apply the results of Section \ref{sec:uniform} to the action of $f^{t}$ on $\mathcal{V}^{u}$, treating $\mathcal{V}^{u}$ as an expanding foliation for $f^{t}$ along which $Df^{t}|_{V^{u}}$ is $1$-fiber bunched. We obtain smooth flat charts $e_{x}: V^{u}_{x} \rightarrow \mathcal{V}^{u}(x)$ from Proposition \ref{quotient charts}, from which we conclude as we did previously that $F$ is $C^{\infty}$ along $\mathcal{V}^{u}$ and tangent to a foliation $\mathcal{F}$ which is also $C^{\infty}$ along $\mathcal{V}^{u}$. 

The restriction of $f^{t}$ to $S$ is an Anosov flow on $S$ with Anosov splitting $V^{u} \oplus E^{c} \oplus V^{s}$. The $1$-fiber bunching conditions on $Df^{t}|_{V^{u}}$ and $Df^{t}|_{V^{s}}$ imply that $f^{t}|_{S}$ is $1$-bunched. While $S$ is not compact, the graph transform method of Hasselblatt \cite{Has2} requires only uniform bounds on the $C^2$ norm of $f^{t}$; these bounds are supplied by the fact that we are restricting our flow to a smooth submanifold $S$ whose inclusion into $M$ has uniformly bounded $C^2$ norm. We conclude that $V^{cu} = V^{u} \oplus E^{c}$ and $V^{cs} = V^{s} \oplus E^{c}$ are $C^1$ subbundles of $TS$. Furthermore, for $y \in \V^{cs}(x)$ the derivative $Dh^{\V^{cs}}: V^{u}_{x} \rightarrow V^{u}_{y}$ of the center-stable holonomy along $V^{cs}$ is given by the center-stable holonomy $H^{cs}$ of the linear cocycle $Df^{t}|_{V^{u}}$, as in Proposition \ref{wcs diff}.

We apply the proof of Proposition \ref{accessible curves}, replacing $\t{M}$ and $\t{f}^{t}$ with $S$ and $f^{t}|_{S}$ respectively, to obtain for any $x \in S$, $y \in \V^{u}(x)$ a curve $\gamma:[0,1] \rightarrow \V^{u}(x)$ joining $x$ to $y$ which is $C^1$ and tangent to $\mathcal{F}(x)$ on $[0,1)$. Choosing $y \notin \mathcal{F}(x)$ gives a contradiction, from which it follows that $Df^{t}|_{V^{u}}$ is uniformly quasiconformal. 
\end{proof}

Lastly we remark that Propositions \ref{exp to conf} and \ref{one exp to conf} hold if we assume an orbit equivalence to $g^{-t}_{Y}$ instead of $g^{t}_{Y}$. The flip map $v \rightarrow -v$ on $T^{1}Y$ conjugates $g^{-t}_{Y}$ to $g^{t}_{Y}$, and therefore a flow which is orbit equivalent to $g^{-t}_{Y}$ is also orbit equivalent to $g^{t}_{Y}$.

\section{From orbit equivalence to conjugacy}\label{sec:orbit to conj}

\subsection{Hamenst\"adt metrics}\label{subsec:metrics}
We consider a closed $C^{r+1}$ Riemannian manifold $M$, $r \geq 1$, with a foliation $\W^{cu}$ with uniformly $C^{r+1}$ leaves, and a flow $f^{t}$ that is uniformly $C^r$ along $\W^{cu}$ and has $\W^{cu}$ as a weak expanding foliation. We consider each leaf $\W^{u}(x)$ with its induced Riemannian metric $d_{x}$, noting that $d_{x} = d_{y}$ for $y \in \W^{u}(x)$.

We suppose that $f^{t}$ satisfies the following inequality for some $a > 0$, all $x \in M$ and all $t \geq 0$, 
\begin{equation}\label{Hamenstadt inequality}
\mathfrak{m}(Df^{t}_{x}|_{E^{u}_{x}}) \geq e^{a t}.
\end{equation}
Note that if this inequality is satisfied only up to some constant $C$, we can always use Proposition \ref{new metric} to choose some new Riemannian structure on $E^{u}$ for which the inequality holds without this constant. Inequality \eqref{Hamenstadt inequality} implies that we have 
\begin{equation}\label{distance growth}
d_{f^{t}x}(f^{t}y,f^{t}z) \geq e^{at}d_{x}(y,z),
\end{equation}
for every $x \in M$, $y,z \in \W^{u}(x)$, and $t \geq 0$. 

We define for $x \in M$ and $y \in \W^{u}(x)$, 
\begin{equation}\label{beta equation}
\beta(x,y) = \sup\{t \in \R: d_{f^{t}x}(f^{t}(x), f^{t}(y)) \leq 1\}, 
\end{equation}
which is finite due to \eqref{distance growth}. 

\begin{defn}[Hamenst\"adt metric]\label{defn: Hamenstadt metric} The \emph{Hamenst\"adt metric} $\rho_{x}$ on $\W^{u}(x)$ is defined by, for $y, z \in \W^{u}(x)$, 
\begin{equation}\label{metric definition}
\rho_{x}(y,z) = e^{-a\beta(y,z)}.
\end{equation}
\end{defn}

Note that $\rho_{x}$ depends on the choice of parameter $a > 0$, which we will fix to be the maximal $a$ such that inequality \eqref{Hamenstadt inequality} is satisfied. For a proof that $\rho_{x}$ satisfies the triangle inequality we refer to \cite{Has4}, in which the case that $\W^{u}$ is the unstable foliation of an Anosov flow is treated. The proof of the triangle inequality is identical in this setting. 

These metrics were introduced by Hamenst\"adt \cite{Ham90} in the context where $f^{t}$ is a geodesic flow and $\W^{u}$ is the unstable foliation. Hasselblatt \cite{Has4} showed that her formulation of this metric extends to the setting of Anosov flows.  The metric $\rho_{x}$ satisfies 
\begin{equation}\label{conformal scaling}
\rho_{f^{t}x}(f^{t}y,f^{t}z) = e^{at}\rho_{x}(y,z)
\end{equation}
for every $t \in \R$, i.e., $f^{t}$ is conformal on the $\W^{u}$ foliation in this family of metrics. We note that $\rho_{x} = \rho_{y}$ as metrics on $\W^{u}(x)$ for $y \in \W^{u}(x)$. We denote the ball of radius $r$ centered at $y$ in the metric $\rho_{x}$ by $B_{\rho}(y,r)$. Similarly, we  denote the ball of radius $r$ centered at $y$ in the metric $d_{x}$ by $B_{d}(y,r)$.

The map $(x,y,z) \rightarrow \rho_{x}(y,z)$ for $x \in M$, $y,z \in \W^{u}(x)$ is clearly  continuous in $(x,y,z)$, since $\beta(y,z)$ is continuous in $y$, $z \in \W^{u}(x)$. This implies that the Hamenst\"adt metrics induce the Euclidean topology on the leaves of $\W^{u}$. Since closed balls in the metric $d_{x}$ are compact, we conclude that these assignments are uniformly continuous on all closed $d_{x}$-balls in $\W^{u}(x)$. This implies that for any \emph{fixed} $r > 0$ there is a constant $C(r) \geq 1$ such that for all $x \in M$ and $y \in \W^{u}(x)$ 
\begin{equation}\label{Hamenstadt inclusion 1}
B_{d}(y,C(r)^{-1}) \subseteq B_{\rho}(y,r) \subseteq B_{d}(y,C(r) ),
\end{equation}
and
\begin{equation}\label{Hamenstadt inclusion 2}
B_{\rho}(y,C(r)^{-1}) \subseteq B_{d}(y,r) \subseteq B_{\rho}(y,C(r)).
\end{equation}

This can be used to verify a useful property of the Hamenst\"adt metrics. A metric space $(W,\rho)$ is \emph{doubling} if there is an  $\l \in \N$ such that for every $r > 0$ and any $x \in W$ the ball $B(x,2r)$ may be covered by at most $\l$ balls of radius $r$. We refer to $\l$ as the \emph{doubling constant} of $(W,\rho)$.  

\begin{lem}\label{doubling spaces}
The metric spaces $(\W^{u}(x),\rho_{x})$ are doubling. The doubling constant may be chosen independent of $x$.
\end{lem}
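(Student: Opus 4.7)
The plan is to exploit the conformal scaling relation \eqref{conformal scaling} for $f^{t}$ on the Hamenst\"adt metrics to reduce the doubling condition at an arbitrary scale $r > 0$ to the doubling condition at a single fixed scale, then use the comparisons \eqref{Hamenstadt inclusion 1} and \eqref{Hamenstadt inclusion 2} with the Riemannian metric $d_x$ to finish. Concretely, given $y \in \W^{u}(x)$ and $r > 0$, I would pick $t = -a^{-1} \log r$, so that by \eqref{conformal scaling} the map $f^t:\W^u(x) \to \W^u(f^t x)$ carries $(B_\rho(y, 2r), \rho_x)$ isometrically (up to rescaling) onto $(B_\rho(f^t y, 2), \rho_{f^t x})$. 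Thus it suffices to produce a uniform $\ell$ such that, for every $p \in M$, the ball $B_\rho(p, 2) \subset \W^u(p)$ can be covered by at most $\ell$ balls of $\rho_p$-radius $1$; pulling the cover back through $f^{-t}$ then yields a cover of $B_\rho(y, 2r)$ by $\ell$ balls of $\rho_x$-radius $r$.

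For this bounded-scale doubling, I would use the two-sided comparison \eqref{Hamenstadt inclusion 1}--\eqref{Hamenstadt inclusion 2}: with $C := C(2)$ and $C' := C(1)^{-1}$ as supplied there, one has
\[
B_\rho(p,2) \subseteq B_d(p, C), \qquad B_d(q, C') \subseteq B_\rho(q, 1)
\]
for every $p \in M$ and $q \in \W^u(p)$. Hence it is enough to cover $B_d(p, C)$ by a uniformly bounded number of $d_p$-balls of radius $C'$. This is where I invoke the hypothesis that $\W^u$ has uniformly $C^{r}$ leaves in the closed manifold $M$: leaves carry Riemannian metrics of uniformly bounded geometry (curvature and injectivity radius bounded independently of the leaf and of the basepoint along the leaf), and any such family of Riemannian manifolds of a fixed dimension $k = \dim E^u$ is uniformly doubling at a fixed bounded scale, with constant depending only on $k$ and on the geometry bounds.

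The main obstacle, though a mild one, is simply packaging the uniform doubling of $d_x$-balls of a fixed radius $C$ by $d_x$-balls of a fixed smaller radius $C'$ with constants independent of $x$. One clean way is to cover $\W^u(p)$ near $p$ by the image under the Riemannian exponential map of a Euclidean ball of uniform radius, so that via this chart the $d_p$-balls of radius $\leq C$ are bi-Lipschitz-equivalent (with uniform constants) to Euclidean balls in $\R^k$; standard Euclidean doubling in dimension $k$ then gives the required $\ell$. Combining this with the two-step reduction above produces a doubling constant independent of $x$, completing the proof.
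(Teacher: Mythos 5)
Your proposal is correct and follows essentially the same route as the paper: reduce to a single fixed scale via the conformal scaling relation for $f^{t}$, then combine the two-sided comparison with the Riemannian metric and uniform bi-Lipschitz equivalence of bounded Riemannian balls to Euclidean balls to obtain a doubling constant independent of $x$. The paper merely presents the bounded-scale step first and the rescaling step second, so the argument is the same up to reordering.
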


\begin{proof}
We start with the case of unit scale. Set $k = \dim \W^{u}$. There is a constant $R > 0$ such that $B_{\rho}(x,1) \subseteq B_{d}(x,R)$ and $B_{d}(x,R^{-1}) \subseteq  B_{\rho}(x,\frac{1}{2}) $  for all $x \in M$. The Riemannian ball $B_{d}(x,2R)$ is bi-Lipschitz to the unit ball in $\R^{k}$, with bi-Lipschitz constant depending only on $R$ ( independent of $x$). Hence, since the unit ball in $\R^{k}$ is doubling with respect to the Euclidean metric, the metric space $B_{d}(x,2R)$ is doubling for some constant $\l$ that may be taken independent of $x$. 

Let $j \in \N$ be the least integer such that $2^{j}R^{-1} \geq R$. Then $B_{d}(x,R)$ may be covered by $\l^{j}$ $d_{x}$-balls of radius $R^{-1}$ by repeated application of the doubling property. It follows that $B_{\rho}(y,1)$ may be covered by $\l^{j}$ $\rho_{x}$-balls of radius $\frac{1}{2}$. Consider now the ball $B_{\rho}(y,r)$ for some $r > 0$, $y \in \W^{u}(x)$. Set $t = -a^{-1}\log r$ so that $r = e^{-at}$. Then by the conformality of $f^{t}$ in the Hamenst\"adt metrics,   
\[
f^{t}(B_{\rho}(y,e^{-at})) = B_{\rho}(f^{t}y,1). 
\]
We then cover $B_{\rho}(f^{t}y,1)$ by $\l^{j}$ $\rho_{f^{t}x}$-balls $B_{\rho}(z_{i},\frac{1}{2})$ of radius $\frac{1}{2}$. Since
\[
f^{-t}\left(B_{\rho}\left(z_{i},\frac{1}{2}\right)\right) = B_{\rho}\left(f^{-t}z_{i},\frac{e^{-at}}{2}\right) = B_{\rho}\left(f^{-t}z_{i},\frac{r}{2}\right),
\]
this gives a covering of $B_{\rho_{x}}(y,r)$ by $\l^{j}$ balls of radius $\frac{r}{2}$, which completes the proof. 
\end{proof}

The comparisons \eqref{Hamenstadt inclusion 1} and \eqref{Hamenstadt inclusion 2} can also be used to show that the Hamenst\"adt metrics transform nicely under flow conjugacies. Let $g^{t}$ and $f^{t}$ be two flows on $M$ and $N$ respectively which are uniformly $C^r$ along foliations $\W^{cu,g}$ and $\W^{cu,f}$ of $M$ and $N$ respectively that each have uniformly $C^{r+1}$ leaves. We assume that $\W^{cu,g}$ and $\W^{cu,f}$ are weak expanding foliations for $g^{t}$ and $f^{t}$ respectively.  We let $a > 0$ be a common constant such that $\mathfrak{m}(Dg^{t}_{x}|_{E^{u,g}_{x}}) \geq e^{at}$ and $\mathfrak{m}(Df^{t}_{y}|_{E^{u,f}_{y}}) \geq e^{at}$ for all $x \in M$, $y \in N$, and $t \geq 0$, and define the Hamenst\"adt metrics $\rho_{x,g}$ and $\rho_{y,f}$ on leaves $\W^{u,g}(x)$ and $\W^{u,f}(y)$ accordingly. 

Let $\varphi: M \rightarrow N$ be a conjugacy from $g^{t}$ to $f^{t}$  such that $\varphi(\W^{u,g}(x)) = \W^{u,f}(\varphi(x))$ for each $x \in M$.

\begin{prop}\label{conjugacy invariant Hamenstadt}
For each $x \in M$ the homeomorphism $\varphi: (\W^{u,g}(x),\rho_{x,g}) \rightarrow (\W^{u,f}(\varphi(x)),\rho_{x,f})$ is bi-Lipschitz, with Lipschitz constant independent of $x$. 
\end{prop}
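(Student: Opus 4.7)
The plan is to reduce the bi-Lipschitz estimate to a unit-scale claim via the conformal scaling property \eqref{conformal scaling} of the Hamenst\"adt metrics, and then to establish this unit-scale claim by a partitioning argument using the uniform continuity of $\varphi$. Since $\varphi^{-1}: N \to M$ is also a conjugacy (from $f^{t}$ to $g^{t}$) and is uniformly continuous by compactness of $N$, it suffices to exhibit a constant $R \geq 1$, independent of $x$, such that $\rho_{\varphi(x),f}(\varphi(y),\varphi(z)) \leq R\,\rho_{x,g}(y,z)$ for every $x \in M$ and $y,z \in \W^{u,g}(x)$; the reverse inequality with the same constant then follows by applying the same argument to $\varphi^{-1}$.

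The first step is the following \emph{unit-scale claim}: there is $R \geq 1$ such that for every $u \in M$ and $v,w \in \W^{u,g}(u)$ with $d_{u}(v,w) \leq 1$, one has $d_{\varphi(u)}(\varphi(v),\varphi(w)) \leq R$. Granting this, pick any $y,z \in \W^{u,g}(x)$ and set $t = \beta_{g}(y,z)$, so that $d_{g^{t}x}(g^{t}y,g^{t}z) = 1$ and $\rho_{x,g}(y,z) = e^{-at}$. Applying the unit-scale claim at $u = g^{t}x$ and using $\varphi \circ g^{t} = f^{t} \circ \varphi$ gives $d_{f^{t}\varphi(x)}(f^{t}\varphi(y),f^{t}\varphi(z)) \leq R$. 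Running $f^{t}$ backwards by $s = \tfrac{1}{a}\log R$ and invoking \eqref{distance growth} then contracts this intrinsic leaf distance to at most $1$, so $\beta_{f}(\varphi(y),\varphi(z)) \geq t - \tfrac{1}{a}\log R$, which yields $\rho_{\varphi(x),f}(\varphi(y),\varphi(z)) \leq R\,\rho_{x,g}(y,z)$.

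To prove the unit-scale claim I would fix $\delta_{1} > 0$, uniform in $N$, small enough that any open ball of ambient $N$-radius $\delta_{1}$ lies inside a foliation chart for $\W^{u,f}$ and such that each plaque of $\W^{u,f}$ in such a box has intrinsic leaf diameter at most a uniform constant $K$; both $\delta_{1}$ and $K$ exist by compactness of $N$ and the uniform $C^{r+1}$ regularity of the leaves of $\W^{u,f}$. By uniform continuity of $\varphi$ on $M$, there is $\eta > 0$ with $d_{M}(a,b) < \eta$ implying $d_{N}(\varphi(a),\varphi(b)) < \delta_{1}/2$. Given $v,w \in \W^{u,g}(u)$ with $d_{u}(v,w) \leq 1$, choose an arc-length $C^{1}$ path $\gamma$ from $v$ to $w$ in $\W^{u,g}(u)$ of length $L \leq 1$, and partition its domain into $N_{0} := \lceil 1/\eta \rceil$ subintervals $[t_{i},t_{i+1}]$ of length at most $\eta$. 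Each restriction $\gamma|_{[t_{i},t_{i+1}]}$ has ambient $M$-diameter at most $\eta$, so each image $\varphi(\gamma|_{[t_{i},t_{i+1}]})$ is a connected subset of $\W^{u,f}(\varphi(u))$ of ambient $N$-diameter less than $\delta_{1}$.

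The main subtlety is the assertion that any connected subset $E$ of a single leaf of $\W^{u,f}$ with ambient diameter less than $\delta_{1}$ lies in a single plaque of a foliation box. This holds because plaques are, by definition, the connected components of the intersection of the leaf with the foliation box, and a connected subset of that intersection cannot cross between components; the choice of $\delta_{1}$ ensures $E$ lies in an ambient ball contained in some foliation box. Applying this to each image $\varphi(\gamma|_{[t_{i},t_{i+1}]})$ gives that it lies in a single plaque of intrinsic leaf diameter at most $K$, so $d_{\varphi(u)}(\varphi(\gamma(t_{i})),\varphi(\gamma(t_{i+1}))) \leq K$, and the triangle inequality yields $d_{\varphi(u)}(\varphi(v),\varphi(w)) \leq N_{0}K =: R$. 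The constant $R$ depends only on $\varphi$ and the two foliations, not on $u$, which completes the proposed proof.
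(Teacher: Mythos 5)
Your proof is correct. Structurally it follows the same overall strategy as the paper --- reduce to a unit-scale bound and then propagate to all scales via the conformal scaling property \eqref{conformal scaling} --- but it differs from the paper's argument in two genuine ways. First, the paper asserts the unit-scale two-sided bound $C^{-1}\leq d_{\varphi(x),f}(\varphi(y),\varphi(z))\leq C$ directly, appealing to the ``uniform continuity of the Hamenst\"adt metrics'' and the comparison inclusions \eqref{Hamenstadt inclusion 1}--\eqref{Hamenstadt inclusion 2}; this is implicitly a compactness argument (the set of pairs at bounded leaf distance is compact, and the intrinsic leaf distance of the images is a continuous function on it). You instead give an explicit, more elementary proof of the upper bound by partitioning a short arc-length path into $\lceil 1/\eta\rceil$ ambient-$\eta$-short pieces, each of whose images is a connected set of small ambient diameter and hence is trapped in a single plaque of uniformly bounded leaf diameter; this avoids invoking compactness of the space of pairs and makes the uniformity in $x$ transparent. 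Second, you prove only the one-sided estimate $\rho_{\varphi(x),f}(\varphi(y),\varphi(z))\leq R\,\rho_{x,g}(y,z)$ and obtain the reverse inequality by running the same argument for the inverse conjugacy $\varphi^{-1}$, whereas the paper produces the two-sided unit-scale estimate at once. Both of these variants are valid; the only small points worth flagging are (i) the path $\gamma$ of leaf-length $\leq 1$ joining $v$ to $w$ need not literally exist since $d_u$ is an infimum, so take length $\leq 1+\varepsilon$ (or $\leq 2$) and adjust $N_0$ accordingly, and (ii) you implicitly use $\varphi(\W^{u,g}(x))=\W^{u,f}(\varphi(x))$, which is part of the standing hypothesis of the proposition.
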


\begin{proof}
By the local uniform continuity of the Hamenst\"adt metrics from the perspective of the Riemannian metrics, there is a constant $C \geq 1$ such that if $x \in M$, $y,z \in \W^{u,g}(x)$ are such that $\rho_{x,g}(y,z) = 1$ then 
\[
C^{-1} \leq d_{\varphi(x),f}(\varphi(y),\varphi(z)) \leq C.
\]
By the comparisons \eqref{Hamenstadt inclusion 1} and \eqref{Hamenstadt inclusion 2}, we conclude that there is a constant $K \geq 1$ such that
\[
K^{-1} \leq \rho_{\varphi(x),f}(\varphi(y),\varphi(z)) \leq K,
\]
for all $x \in M$, $y,z \in \W^{u,g}(x)$ with $\rho_{x,g}(y,z) = 1$. 

Now suppose that $\rho_{x,g}(y,z) = r$, $r > 0$. Set $t = -a^{-1}\log r$ so that $r = e^{-at}$. Then by the conformality of $g^{t}$ in the Hamenst\"adt metrics,
\[
\rho_{g^{-t}x,g}(g^{-t}y,g^{-t}z) = 1.
\]
Thus 
\[
K^{-1} \leq \rho_{\varphi(g^{-t}x),f}(\varphi(g^{-t}y),\varphi(g^{-t}z)) \leq K.
\]
But
\begin{align*}
\rho_{\varphi(g^{-t}x),f}(\varphi(g^{-t}y),\varphi(g^{-t}z)) &= \rho_{f^{-t}\varphi(x),f}(f^{-t}\varphi(y),f^{-t}\varphi(z)) \\
&= e^{-at}\rho_{\varphi(x),f}(\varphi(y),\varphi(z)) \\
&= r^{-1}\rho_{\varphi(x),f}(\varphi(y),\varphi(z)).
\end{align*}
Thus, recalling that $\rho_{x,g}(y,z) = r$,
\[
K^{-1} \rho_{x,g}(y,z) \leq \rho_{\varphi(x),f}(\varphi(y),\varphi(z)) \leq K \rho_{x,g}(y,z),
\]
which establishes the desired claim. 
\end{proof}

When we consider orbit equivalences instead of conjugacies, the situation is more complicated. Let $(W,\rho_{W})$ and $(Z,\rho_{Z})$ be two metric spaces. 

\begin{defn}[Quasisymmetric homeomorphism]\label{quasisymmetry defn}
A homeomorphism $\varphi: W \rightarrow Z$ is \emph{quasisymmetric} if there is a homeomorphism  $\eta:[0,\infty) \rightarrow [0,\infty)$ such that, for all $x, y ,z \in W$, 
\[
\rho_{W}(x,y)  \leq s\rho_{W}(x,z) \Rightarrow \rho_{Z}(\varphi(x),\varphi(y)) \leq \eta(s)\rho_{Z}(\varphi(x),\varphi(z)).
\]
When the specific form of $\eta$ is important, we will say that $\varphi$ is $\eta$-quasisymmetric. 
\end{defn}

For a constant $C > 0$ we say that a homeomorphism $\varphi: W \rightarrow Z$ is \emph{weakly $C$-quasisymmetric} if for all $x,y,z \in W$, 
\[
\rho_{W}(x,y)  \leq \rho_{W}(x,z) \Rightarrow \rho_{Z}(\varphi(x),\varphi(y)) \leq C\rho_{Z}(\varphi(x),\varphi(z)).
\]
When the metric spaces under consideration are \emph{doubling}, weak quasisymmetry implies quasisymmetry. 

\begin{lem}\cite[Theorem 10.19]{Hein01}\label{weak quasisymmetry}
Let $(W,\rho_{W})$ and $(Z,\rho_{Z})$ be doubling metric spaces, with $W$  connected. Let $\varphi: W \rightarrow Z$ be a weakly $C$-quasisymmetric homeomorphism. Then $\varphi$ is quasisymmetric, quantitatively.
\end{lem}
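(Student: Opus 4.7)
The plan is to establish quantitative control of the ratio
\[
\theta(s) := \sup\left\{ \frac{\rho_{Z}(\varphi(x),\varphi(y))}{\rho_{Z}(\varphi(x),\varphi(z))} : x,y,z \in W,\; x \neq z,\; \rho_{W}(x,y) \leq s\,\rho_{W}(x,z) \right\}
\]
in terms of the weak quasisymmetry constant $C$ and the doubling constants of $W$ and $Z$. Weak $C$-quasisymmetry immediately gives $\theta(1)\leq C$, and interchanging the roles of $y$ and $z$ in the weak quasisymmetry condition also yields a lower bound that translates into a weak quasisymmetry estimate for $\varphi^{-1}$. The task then reduces to showing $\theta(s)<\infty$ for each $s>0$ and that $\theta(s)\to 0$ as $s\to 0$, with rates depending only on $C$ and the doubling data.

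The geometric input is a chain construction in $W$. Because $W$ is connected, for any $x\in W$ the image of the continuous function $\rho_{W}(x,\cdot)$ is a connected subset of $[0,\infty)$, hence an interval, so points at every intermediate distance from $x$ exist. Combining this with the doubling property I would prove that for any two points $a,b\in W$ with $\rho_{W}(a,b)=r$ and any $\delta>0$, there exists a chain $a=w_{0},w_{1},\ldots,w_{N}=b$ with $\rho_{W}(w_{i-1},w_{i})\leq \delta r$ and length $N\leq N_{0}(\delta)$ depending only on $\delta$ and the doubling constant of $W$. The doubling bound on $N$ comes from covering $B_{W}(a,r)$ by controlled numbers of balls of radius $\delta r$ and picking successive chain elements within overlapping cover elements, where connectedness ensures that no gap in the cover can separate $a$ from $b$.

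Given this chain lemma, the core step is an iteration in the small-$s$ regime. Starting from a triple $(x,y,z)$ with $\rho_{W}(x,y)\leq s\rho_{W}(x,z)$ for $s$ small, one chains from $y$ out to a point $z'$ with $\rho_{W}(x,z')\asymp\rho_{W}(x,z)$ using the chain lemma at scale comparable to $\rho_{W}(x,y)$, and then repeats the construction at geometrically increasing scales. At each stage, weak quasisymmetry bounds the multiplicative growth of $\rho_{Z}(\varphi(x),\varphi(w_{i}))/\rho_{Z}(\varphi(x),\varphi(w_{i+1}))$ by a constant depending only on $C$. Since the number of scales is $O(\log(1/s))$ and the number of steps per scale is $N_{0}$, telescoping yields an estimate of the form $\theta(s)\leq C's^{\alpha}$ with $\alpha>0$ and $C'$ depending only on $C$ and the doubling data. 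The large-$s$ regime then follows by applying this small-$s$ bound to $\varphi^{-1}$, whose weak quasisymmetry constant is controlled in terms of $C$ by the symmetric observation above.

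The main obstacle is the quantitative chain construction: one must combine the mere existence of intermediate-distance points, which follows from connectedness, with the doubling property in order to produce chains whose length depends \emph{only} on $\delta$ and the doubling constant and not on the particular geometry of $W$. Connectedness alone gives no uniform control on chain length, and doubling alone does not produce chains at all. Once this chain lemma is established, propagating the weak quasisymmetry estimate along the chain and summing geometrically is routine, and the explicit form of $\eta$ drops out of the telescoping.
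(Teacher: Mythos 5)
This lemma is cited from Heinonen's book and the paper gives no proof of its own, so I am comparing your proposal to the standard proof of that result.

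The overall structure---reduce to the small-$s$ regime, build chains using connectedness and doubling of $W$, and iterate---is sensible, but there is a genuine gap at the crux: the claim that telescoping along the chain gives $\theta(s)\le C's^{\alpha}$ with $\alpha>0$. Your chain runs outward from $y$ to a point $z'$ near the sphere $\{\rho_W(x,\cdot)=\rho_W(x,z)\}$, so $\rho_W(x,w_{i})<\rho_W(x,w_{i+1})$. Weak quasisymmetry applied to the triple $(x,w_{i},w_{i+1})$ gives only $\rho_Z(\varphi x,\varphi w_{i})\le C\,\rho_Z(\varphi x,\varphi w_{i+1})$; that is, each factor $\rho_Z(\varphi x,\varphi w_{i})/\rho_Z(\varphi x,\varphi w_{i+1})$ in the telescope is bounded above by $C$, and $C\ge 1$. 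Telescoping $\sim N_{0}\log(1/s)$ such factors therefore gives $\theta(s)\lesssim C^{N_{0}\log(1/s)}=s^{-N_{0}\log C}$, which \emph{grows} as $s\to 0$ rather than decaying. Nothing in the estimates you describe forces any individual factor below $1$, so the claimed power decay does not follow; the one-sided nature of weak quasisymmetry is exactly what makes this theorem nontrivial.

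The missing ingredient is the doubling property of the target $Z$, which appears in the hypotheses but is never used in your proposal (you use doubling of $W$ for the chain lemma and never invoke doubling of $Z$). The standard argument observes that weak quasisymmetry, applied at the chain points rather than at $x$, makes the images $\varphi(w_{i})$ pairwise separated: for $i<j$ one has $\rho_W(w_{i},x)\le\rho_W(w_{i},w_{j})$, so $\rho_Z(\varphi w_{i},\varphi w_{j})\ge C^{-1}\rho_Z(\varphi w_{i},\varphi x)$. Since all $\varphi(w_{i})$ also lie in a ball of radius $\lesssim C\,\rho_Z(\varphi x,\varphi z)$ about $\varphi(x)$, if $\rho_Z(\varphi x,\varphi y)$ were \emph{not} small compared to $\rho_Z(\varphi x,\varphi z)$ one would have many points packed in a fixed annulus and pairwise separated by a definite fraction of its radius, which the doubling of $Z$ forbids once the chain has more than a bounded number of scales. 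This packing estimate is what produces a definite decay factor after $k_0(C,\text{doubling of }Z)$ dyadic scales, and iterating it gives the quantitative $\eta(s)\to 0$. Without it, the telescoping cannot close. A secondary issue: your claim that swapping $y$ and $z$ in the weak quasisymmetry inequality yields weak quasisymmetry of $\varphi^{-1}$ does not hold directly, since weak quasisymmetry is a one-sided implication and is not preserved under inversion without the (as-yet-unproved) quasisymmetry; the $s>1$ regime is better handled by finite chains from $z$ outward with a step count depending on $s$, giving $\theta(s)<\infty$ without invoking $\varphi^{-1}$.
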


By ``quantitatively" we mean that the function $\eta$ of Definition \ref{quasisymmetry defn} may be determined purely from the constant $C$ and the doubling constants of $W$ and $Z$.

As before, we let $g^{t}$ and $f^{t}$ be two flows on closed $C^{r+1}$ Riemannian manifolds $M$ and $N$ respectively which are uniformly $C^r$ along foliations $\W^{cu,g}$ and $\W^{cu,f}$ of $M$ and $N$ respectively that each have uniformly $C^{r+1}$ leaves. We assume that $\W^{cu,g}$ and $\W^{cu,f}$ are weak expanding foliations for $g^{t}$ and $f^{t}$ respectively.  We let $a > 0$ be a common constant such that $\mathfrak{m}(Dg^{t}_{x}|_{E^{u,g}_{x}}) \geq e^{at}$ and $\mathfrak{m}(Df^{t}_{y}|_{E^{u,f}_{y}}) \geq e^{at}$ for all $x \in M$, $y \in N$, and $t \geq 0$, and define the Hamenst\"adt metrics $\rho_{x,g}$ and $\rho_{y,f}$ on leaves $\W^{u,g}(x)$ and $\W^{u,f}(y)$ accordingly. 

Let $\varphi: M \rightarrow N$ be an orbit equivalence from $g^{t}$ to $f^{t}$  such that $\varphi(\W^{cu,g}(x)) = \W^{cu,f}(\varphi(x))$ for each $x \in M$. For all $t \in \R$ and $ x\in M$ we may write $\varphi (g^{t}(x))= f^{\alpha(t,x)}(\varphi(x))$, with $\alpha(t,x)$ being an additive cocycle over $g^{t}$.

Let $x \in M$. Assume for the moment that the leaf $\W^{cu,f}(\varphi(x))$ has the property that for any $y$, $z \in \W^{cu,f}(\varphi(x))$ there is a unique intersection point $\{w\} = \W^{c,f}(y) \cap \W^{u,f}(z)$. This will be true if and only if $\W^{cu,f}(\varphi(x))$ contains no periodic points of $f^{t}$ by an argument similar to the proof of Lemma \ref{one intersection}. In this case we define a homeomorphism $\varphi_{x}: \W^{u,g}(x) \rightarrow \W^{u,f}(\varphi(x))$ by setting $\varphi_{x}(y)$ to be the unique intersection point of $\W^{c,f}(\varphi(y))$ and $\W^{u,f}(\varphi(x))$. Directly from this definition we obtain 
\begin{equation}\label{equivariance}
\varphi_{g^{t}x} (g^{t}(y)) = f^{\alpha(t,x)} (\varphi_{x}(y)).
\end{equation}
Furthermore, for any $y \in \W^{u,g}(x)$, if we define $\xi(x,y) \in \R$ to be the unique number such that  $f^{\xi(x,y)}\varphi_{x}(y) = \varphi(y)$ then 
\begin{equation}\label{base change}
\varphi_{y} = f^{\xi(x,y)} \circ \varphi_{x},
\end{equation}
since if $z \in \W^{u,g}(x)$ then $\varphi_{x}(z) \in \W^{u,f}(\varphi(x))$ and therefore $f^{\xi(x,y)}(\varphi_{x}(z)) \in \W^{u,f}(\varphi(y))$, which implies that $f^{\xi(x,y)}(\varphi_{x}(z))$ is the intersection point of $\W^{c,f}(\varphi(z))$ with $\W^{u,f}(\varphi(y))$. 

In the general case in which $\W^{u,f}(\varphi(x))$ may contain a periodic orbit, we define $\varphi_{x}$ locally and extend by the equivariance property \eqref{equivariance}.

\begin{prop}\label{induced map}
For each $x \in M$ there is a unique homeomorphism $\varphi_{x}:\W^{u,g}(x) \rightarrow \W^{u,f}(\varphi(x))$ such that $\varphi_{x}(x) = \varphi(x)$ and for every $y \in \W^{u,g}(x)$ and $t \in \R$, 
\begin{equation}\label{equivariance 2}
\varphi_{g^{t}x} (g^{t}(y)) = f^{\alpha(t,x)} (\varphi_{x}(y)).
\end{equation}
Furthermore, for each $x \in M$, $y \in \W^{u,g}(x)$ there is $\xi(x,y) \in \R$ such that
\begin{equation}\label{base change 2}
\varphi_{y} = f^{\xi(x,y)} \circ \varphi_{x}.
\end{equation}
\end{prop}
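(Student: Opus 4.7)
\medskip

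\noindent\textbf{Proof proposal.} The plan is to first define $\varphi_{x}$ in a small neighborhood of $x$ inside $\W^{u,g}(x)$ by the local intersection rule, then propagate the definition to the whole leaf by backward iteration under $g^{t}$, exploiting the contraction of $\W^{u,g}(x)$ under negative time. Concretely, for any $x \in M$, take a foliation box $U$ for $\W^{c,f}$ and $\W^{u,f}$ around $\varphi(x)$. Then, by uniform continuity of $\varphi$ and local product structure of the $f^{t}$-invariant foliations inside $\W^{cu,f}$, there is a neighborhood $N_{x}\subset\W^{u,g}(x)$ of $x$ such that for every $y\in N_{x}$ the intersection
\[
\{\varphi_{x}(y)\}:=\W^{c,f}_{U}(\varphi(y))\cap \W^{u,f}_{U}(\varphi(x))
\]
consists of a single point; this reproduces the local definition given in the excerpt, but now requires only product structure in a small box, so it works even when $\W^{cu,f}(\varphi(x))$ carries periodic orbits. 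A direct computation — using that $\varphi$ conjugates $\W^{c,g}$ to $\W^{c,f}$, $\W^{u,g}$ to $\W^{u,f}$ on a \emph{local} scale, and that $f^{\alpha(t,x)}$ maps $\W^{u,f}(\varphi(x))$ to $\W^{u,f}(\varphi(g^{t}x))$ and $\W^{c,f}(\varphi(y))$ to $\W^{c,f}(\varphi(g^{t}y))$ — shows that this local assignment already satisfies the equivariance relation whenever both sides are defined and fall within the product box.

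Next, I will extend $\varphi_{x}$ to all of $\W^{u,g}(x)$. Given an arbitrary $y\in\W^{u,g}(x)$, Proposition \ref{unstable manifold flow} (equivalently the definition of the weak expanding foliation $\W^{cu,g}$) gives $d(g^{-T}x,g^{-T}y)\to 0$ as $T\to\infty$, so for all sufficiently large $T$ the point $g^{-T}y$ lies in the local neighborhood $N_{g^{-T}x}$ and $\varphi_{g^{-T}x}(g^{-T}y)$ is defined by the local rule. I will then set
\[
\varphi_{x}(y):=f^{\alpha(T,g^{-T}x)}\bigl(\varphi_{g^{-T}x}(g^{-T}y)\bigr).
\]
Independence of $T$ is the first thing to check: for $T_{1}<T_{2}$ both large, apply the local equivariance at the base point $g^{-T_{2}}x$ over the flow time $T_{2}-T_{1}$, which is legitimate because $g^{-T_{2}}y$ and $g^{-T_{1}}y$ both sit in local boxes around $g^{-T_{2}}x$ and $g^{-T_{1}}x$ respectively; the cocycle identity $\alpha(T_{2},g^{-T_{2}}x)=\alpha(T_{2}-T_{1},g^{-T_{2}}x)+\alpha(T_{1},g^{-T_{1}}x)$ then yields equality of the two candidate values. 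Continuity of $\varphi_{x}$ follows from continuity of the local definition and of $f^{t}$ and $g^{t}$, and the same construction applied to $\varphi^{-1}$ produces a continuous inverse, so $\varphi_{x}$ is a homeomorphism.

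The global equivariance \eqref{equivariance 2} follows immediately from the definition and from the cocycle identity for $\alpha$: substituting $x\mapsto g^{s}x$, the value $\varphi_{g^{s}x}(g^{s}y)$ is, by construction, $f^{\alpha(T,g^{s-T}x)}\varphi_{g^{s-T}x}(g^{s-T}y)$, and comparing with $f^{\alpha(s,x)}\varphi_{x}(y)$ via the cocycle identity gives agreement. Uniqueness follows by the same backward-iteration argument: if $\tilde\varphi_{x}$ is another homeomorphism satisfying the normalization and equivariance, then for every $y\in\W^{u,g}(x)$ and every large $T$, the equivariance forces $\tilde\varphi_{x}(y)=f^{\alpha(T,g^{-T}x)}\tilde\varphi_{g^{-T}x}(g^{-T}y)$, while continuity of $\tilde\varphi_{g^{-T}x}$ at the base point $g^{-T}x$ together with the normalization forces $\tilde\varphi_{g^{-T}x}(g^{-T}y)$ to lie in the local box around $\varphi(g^{-T}x)$ for $T$ large, where the local definition is unique. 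Hence $\tilde\varphi_{g^{-T}x}(g^{-T}y)=\varphi_{g^{-T}x}(g^{-T}y)$ and thus $\tilde\varphi_{x}=\varphi_{x}$.

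Finally, for the base-change formula \eqref{base change 2}, fix $x\in M$ and $y\in\W^{u,g}(x)$. The point $\varphi_{x}(y)$ lies on $\W^{u,f}(\varphi(x))$ and on the $f$-orbit $\W^{c,f}(\varphi(y))$, so there is a unique $\xi(x,y)\in\R$ with $\varphi(y)=f^{\xi(x,y)}\varphi_{x}(y)$. The map $f^{\xi(x,y)}$ then carries $\W^{u,f}(\varphi(x))$ onto $\W^{u,f}(\varphi(y))$, and it carries $\W^{c,f}(\varphi(z))$ onto itself for every $z\in\W^{u,g}(x)$, so $f^{\xi(x,y)}\circ\varphi_{x}$ is a homeomorphism $\W^{u,g}(x)=\W^{u,g}(y)\to\W^{u,f}(\varphi(y))$ that sends $y$ to $\varphi(y)$ and satisfies the equivariance; by uniqueness it equals $\varphi_{y}$. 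The main obstacle I expect is keeping the local-versus-global distinction honest, in particular verifying that the candidate in the displayed extension depends only on $y$ and not on the choice of $T$; this is purely a cocycle bookkeeping check once one is careful about which local product boxes are in use, but it is where all of the non-uniqueness in the leafwise intersection (due to possible periodic orbits in $\W^{cu,f}$) has to be absorbed.
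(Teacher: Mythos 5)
Your proposal follows the same route as the paper's proof: define $\varphi_x$ locally via the product structure of $\W^{c,f}$ and $\W^{u,f}$ in a small box, verify the equivariance locally, and extend to the whole leaf by backward iteration using the contraction of $\W^{u,g}$ under $g^{-t}$. The homeomorphism and base-change portions are where you diverge slightly, and I'll flag the one place where more care is needed.

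A small but welcome correction: your extension formula
\[
\varphi_{x}(y):=f^{\alpha(T,g^{-T}x)}\bigl(\varphi_{g^{-T}x}(g^{-T}y)\bigr)
\]
is the one actually forced by \eqref{equivariance 2} applied at the base point $g^{-T}x$ with flow time $T$; the displayed equation \eqref{equivariant extension} in the paper writes $f^{\alpha(t,x)}$ where the cocycle bookkeeping requires $f^{\alpha(t,g^{-t}x)}$, so your version is the precise one.

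The genuine difference is in the derivation of \eqref{base change 2}. The paper verifies it directly (first for $y$ near $x$, by the explicit local projection rule, then propagates it by equivariance). You instead invoke uniqueness: you check that $f^{\xi(x,y)}\circ\varphi_{x}$ has the right domain, codomain, and normalization, and then appeal to uniqueness to conclude it equals $\varphi_y$. This is tidy, but note that the normalization $\varphi_z(z)=\varphi(z)$ together with \eqref{equivariance 2} alone does \emph{not} uniquely pin down the family: \eqref{equivariance 2} with base $x$ only couples $\varphi_x$ to $\varphi_{g^t x}$ along the $g^t$-orbit of $x$, and places no constraint linking $\varphi_x$ to $\varphi_y$ for $y\in\W^{u,g}(x)$ off that orbit. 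What actually singles out your constructed $\varphi_x$ is the additional property that $\varphi_x(y)$ lies on the $f^t$-orbit of $\varphi(y)$, which is built into the local definition and is preserved by the extension. This same orbit-compatibility is what makes your uniqueness argument work (it is what forces $\tilde\varphi_{g^{-T}x}(g^{-T}y)$ to agree with the local projection inside the box, not merely to lie in the box). If you make the orbit-compatibility explicit as part of the characterization, both your uniqueness paragraph and the base-change paragraph close cleanly; otherwise the argument is circular (checking that $f^{\xi(x,y)}\circ\varphi_x$ ``satisfies the equivariance'' for $\varphi_y$ already requires the relation you are trying to prove). The paper's direct verification of \eqref{base change 2} sidesteps the issue, so if you want to keep the shorter uniqueness route you should state the orbit-compatibility as part of the defining property of the family.
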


\begin{proof}
We write $B^{cu}$ for balls in $\W^{cu}$ in the Riemannian metric and $B^{u}$ for balls in $\W^{u}$ in the Riemannian metric. Choose an $r > 0$ small enough that for each $x \in M$ and each point $y \in B^{cu}(x,r)$ there is a unique intersection point $z$ of $\W^{c,g}(y)$ with $\W^{u,g}(x)$ when these foliations are restricted to the ball $B^{cu}(x,r)$. We assume the analogous statements are true for $f^{t}$ on $N$ with respect to this $r$ as well. By the  continuity of $\varphi$, there is an $r' < r$ such that for all $x \in M$ we have 
\[
\varphi(B^{u}(x,r')) \subset B^{cu}(\varphi(x),r).
\]
We define $\varphi_{x}$ on $\W^{u,g}(x) \cap B^{cu}(x,r')$ to be the composition of $\varphi$ with the uniquely defined projection along $\W^{c,f}$ inside of $B^{cu}(\varphi(x),r)$ onto $\W^{u,f}(\varphi(x))$. It's clear that $\varphi_{x}(x) = x$. 

Equation \eqref{equivariance 2} can be verified exactly as in \eqref{equivariance} when $y \in \W^{u,g}(x) \cap B^{u}(x,r')$ and $g^{t}y \in \W^{u,g}(g^{t}x) \cap B^{u}(g^{t}x,r')$.
We then extend $\varphi_{x}$ to all of $\W^{u,g}(x)$ by equation \eqref{equivariance 2}: for $y \in \W^{u,g}(x)$ we choose $t$ large enough that $g^{-t}y \in B^{u}(g^{-t}x,r')$ and define
\begin{equation}\label{equivariant extension}
\varphi_{x}(y) = f^{\alpha(t,x)}(\varphi_{g^{-t}x}(g^{-t}y)).
\end{equation}
It's easily checked, using equation \eqref{equivariance 2} on the balls of radius $r'$, that this definition is independent of the choice of $t$ and thus gives an extension of $\varphi_{x}$ to $\W^{u,g}(x)$ which continues to satisfy equation \eqref{equivariance 2}. Equation \eqref{base change 2} clearly also holds for $y \in B^{u}(x,\frac{r'}{2})$ (so that $x \in B^{u}(y,r')$)  by the same reasoning as was used to verify \eqref{base change}, and one can easily verify that \eqref{base change 2} in fact holds for all $y \in \W^{u,g}(x)$ using the  extension \eqref{equivariant extension} by equivariance. 

We next show that $\varphi_{x}$ is a homeomorphism. It's clearly a homeomorphism onto its image when restricted to $B^{u}(x,r')$, and using equation \eqref{equivariant extension} we conclude from this that $\varphi_{x}$ is a local homeomorphism. Hence it suffices to show that $\varphi_{x}$ is injective and surjective. For injectivity, suppose that $\varphi_{x}(y) = \varphi_{x}(z)$ for some $y$, $z \in \W^{u,g}(x)$. Then by \eqref{equivariance 2} we have $\varphi_{g^{-t}x}(g^{-t}y) = \varphi_{g^{-t}x}(g^{-t}z)$ for all $t \geq 0$. But once $t$ is large enough that $ g^{-t}y, g^{-t}z \in B^{u}(g^{-t}x,r')$, the fact that $\varphi_{g^{-t}x}$ is a homeomorphism onto its image in this ball implies that $g^{-t}y = g^{-t}z$, and therefore $y = z$.

For surjectivity, let $y \in \W^{u,f}(\varphi(x))$. Choose $t > 0$ large enough that $f^{\alpha(-t,x)}y$ is in the image of $\varphi_{g^{-t}x}$; this is possible because $\varphi_{g^{-t}x}$ is a homeomorphism of $B^{u}(g^{-t}x,r')$  onto a neighborhood of $\varphi(g^{-t}x)$ inside of $\W^{u,f}(\varphi(g^{-t}x))$, which contains a ball of radius $r''$ independent of $x$ and $t$. Equation \eqref{equivariance 2} then implies that $y$ is in the image of $\varphi_{x}$. 

\end{proof}


We will now show that the homeomorphisms $\varphi_{x}$ are $\eta$-quasisymmetric with respect to the Hamenst\"adt metrics on each of these leaves, with $\eta$ being independent of the choice of $x$. 

\begin{prop}\label{quasisymmetry orbit}
There is a homeomorphism $\eta: [0,\infty) \rightarrow [0,\infty)$ such that, for every $x \in M$, the map $\varphi_{x}: (\W^{u,g}(x), \rho_{x,g}) \rightarrow (\W^{u,f}(\varphi(x)), \rho_{\varphi(x),f})$ is $\eta$-quasisymmetric. One may  take $\eta(s) = C\max\{s^{\nu},s^{1/\nu}\}$ for some $C \geq 1$ and $\nu \in (0,1]$ independent of $x$. 
\end{prop}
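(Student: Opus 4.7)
The plan is to first establish that each $\varphi_x$ is weakly $C$-quasisymmetric for a uniform constant $C$, and then upgrade to $\eta$-quasisymmetry (with $\eta$ of the stated power form) by combining Lemma \ref{doubling spaces} with Lemma \ref{weak quasisymmetry} and a standard result of Tukia--V\"ais\"al\"a on power quasisymmetry in doubling spaces.

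The first key observation is that the ratio
\[
R(x, y, z) := \frac{\rho_{\varphi(x), f}(\varphi(x), \varphi_{x}(y))}{\rho_{\varphi(x), f}(\varphi(x), \varphi_{x}(z))}
\]
is invariant under flowing $(x, y, z) \mapsto (g^{t}x, g^{t}y, g^{t}z)$. Indeed, from the equivariance \eqref{equivariance 2} applied with base point $g^{-t}x$, together with the identity $\alpha(t, g^{-t}x) = -\alpha(-t, x)$, one obtains $\varphi_{g^{-t}x}(g^{-t}w) = f^{-\alpha(t, g^{-t}x)}(\varphi_{x}(w))$ for $w \in \{x, y, z\}$, so the scalar factor $e^{a\alpha(t, g^{-t}x)}$ arising from conformality \eqref{conformal scaling} cancels in numerator and denominator. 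Likewise, the analogous conformality for $\rho_{x,g}$ shows that the normalized set
\[
\Omega := \{(x,y,z) : x \in M, \; y, z \in \mathcal{W}^{u,g}(x), \; \rho_{x,g}(x, y) \leq 1 = \rho_{x,g}(x, z)\}
\]
is invariant under the same flow action, so every triple $(x,y,z)$ with $\rho_{x,g}(x,y) \leq \rho_{x,g}(x,z)$ is obtained by flowing an element of $\Omega$.

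Next I would show that $R$ is uniformly bounded above on $\Omega$, which gives weak $C$-quasisymmetry. By the comparisons \eqref{Hamenstadt inclusion 1}--\eqref{Hamenstadt inclusion 2}, on $\Omega$ the points $y, z$ lie in a Riemannian ball of radius at most $C(1)$ around $x$ in the leaf $\mathcal{W}^{u,g}(x)$. Under the hypothesis (implicit in the intended application) that $M$ and $N$ are compact and $\varphi$ a homeomorphism, $\varphi$ and $\varphi^{-1}$ are uniformly continuous, and the same is true of the projection along $\mathcal{W}^{c,f}$ from a small center-unstable ball in $N$ onto an unstable plaque. Consequently $\varphi_x(y)$ and $\varphi_x(z)$ lie within uniformly bounded Riemannian distance of $\varphi(x)$, and moreover $\varphi_x(z)$ is uniformly bounded \emph{below} away from $\varphi(x)$: indeed, $\rho_{x,g}(x,z) = 1$ forces $d_{x,g}(x,z) \geq C(1)^{-1}$, hence $d_{\varphi(x),f}(\varphi(x), \varphi(z))$ and therefore $d_{\varphi(x),f}(\varphi(x), \varphi_x(z))$ is bounded below by uniform continuity of $\varphi^{-1}$ together with uniform transversality of $\mathcal{W}^{u,f}$ and $\mathcal{W}^{c,f}$ to $\mathcal{W}^{cs,f}$. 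Using \eqref{Hamenstadt inclusion 1}--\eqref{Hamenstadt inclusion 2} once more converts these Riemannian bounds into Hamenst\"adt bounds, yielding a uniform upper bound on $R$ over $\Omega$, and hence a weak $C$-quasisymmetry constant $C$ independent of $x$.

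Finally, by Lemma \ref{doubling spaces} every $(\mathcal{W}^{u,g}(x), \rho_{x,g})$ and $(\mathcal{W}^{u,f}(\varphi(x)), \rho_{\varphi(x),f})$ is doubling with a uniform constant, so Lemma \ref{weak quasisymmetry} upgrades the weak $C$-quasisymmetry to $\eta$-quasisymmetry with $\eta$ depending only on $C$ and the doubling constants, hence uniform in $x$. The promotion to the power form $\eta(s) = C \max\{s^{\alpha}, s^{1/\alpha}\}$ follows from the Tukia--V\"ais\"al\"a theorem that a quasisymmetric homeomorphism between connected doubling metric spaces is in fact power quasisymmetric, with exponents depending only on the data above. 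The part of the argument most deserving of care is the uniform \emph{lower} bound on $\rho_{\varphi(x),f}(\varphi(x), \varphi_x(z))$ for $(x,y,z) \in \Omega$, since na\"ively the projection along $\mathcal{W}^{c,f}$ could in principle collapse distances; this is handled by the uniform transversality of the invariant foliations together with the compactness of $M$ and $N$.
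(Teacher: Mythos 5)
Your overall plan mirrors the paper's, but there is one genuine gap that you need to close, plus one step where the paper takes a cleaner route than the one you sketched.

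The gap: you define $R(x,y,z)$ with $x$ playing two roles simultaneously — as the chart center (so $\varphi_x$ is used) and as the base point of the ratio (so both distances are measured from $\varphi(x)$). Bounding $R$ on $\Omega$ therefore only controls ratios whose base point coincides with the chart center. But weak quasisymmetry of $\varphi_x$ requires a bound for \emph{all} triples $(w,y,z)$ with $w,y,z \in \W^{u,g}(x)$: you must control
\[
\frac{\rho_{\varphi(x),f}(\varphi_x(w),\varphi_x(y))}{\rho_{\varphi(x),f}(\varphi_x(w),\varphi_x(z))}
\]
with $w$ arbitrary. You never pass from the chart-centered ratio to this one. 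The fix is exactly equation \eqref{base change 2}: since $\varphi_w = f^{\xi(x,w)}\circ \varphi_x$ and $f^{\xi(x,w)}$ scales the Hamenst\"adt metric by $e^{a\xi(x,w)}$, the ratio above equals $R(w,y,z)$, and your $\Omega$ quantifies over all base points $w$. Without stating this you do not have weak quasisymmetry, only what one might call quasisymmetry at the chart center. The paper makes this step explicit.

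On the other points your approach is essentially parallel but not identical. You normalize by flowing all three points into the compact set $\Omega$ and then invoke compactness/uniform continuity to bound the ratio. The paper instead derives a two-sided comparison $\rho_{\varphi(x),f}(\varphi_x(x),\varphi_x(y)) \asymp e^{-a\alpha(\beta(x,y),x)}$ from the unit-scale uniform continuity estimate and the equivariance \eqref{equivariance 2}, and then uses the lower bound $\alpha(t,x)\geq bt$ on the time-change cocycle (valid by compactness of $M$) to obtain the power estimate
\[
\frac{\rho_{\varphi(x),f}(\varphi_x(x),\varphi_x(y))}{\rho_{\varphi(x),f}(\varphi_x(x),\varphi_x(z))} \leq C\left(\frac{\rho_{x,g}(x,y)}{\rho_{x,g}(x,z)}\right)^{b}
\]
directly, before invoking Lemma \ref{weak quasisymmetry} and \cite[Theorem 11.3]{Hein01}. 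Your route trades the cocycle estimate for the normalization-plus-compactness argument, which is a legitimate alternative and arguably slightly slicker once completed. Finally, the lower bound that you flag as ``deserving of care'' is handled in the paper by observing (via uniform continuity of $\varphi$, $\varphi^{-1}$, and the continuity of the Hamenst\"adt metrics in the base point) that $\rho_{\varphi(x),f}(\varphi_x(x),\varphi_x(y)) \asymp 1$ at the normalized scale $\rho_{x,g}(x,y)=1$; the transversality discussion you invoke is already packaged into Proposition \ref{induced map}, which constructs $\varphi_x$ globally on the leaf with uniform control. Also, a minor phrasing issue: $\Omega$ is not itself flow-invariant (the normalization is destroyed by flowing); what you want, and what you use, is that every triple can be flowed into $\Omega$.
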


\begin{proof}
We will show that $\varphi_{x}$ is weakly $K$-quasisymmetric for some $K > 0$ that is independent of $x \in M$. The conclusion then follows from Lemma \ref{weak quasisymmetry} and \ref{doubling spaces}. Throughout this proof $C$ will denote a uniform constant independent of the parameters under consideration; its value may change from line to line. 

By the uniform continuity of $\varphi$, $f^{t}$, and $g^{t}$, as well as the fact that the Hamenst\"adt metrics $\rho_{x,g}$ and $\rho_{x,f}$ depend continuously on the choice of point $x \in M$, for every $x \in M$ and $y \in \W^{u,g}(x)$ we have that if $\rho_{x,g}(x,y) = 1$ then 
\begin{equation}\label{comparability}
\rho_{\varphi(x),f}(\varphi_{x}(x), \varphi_{x}(y)) \asymp 1.
\end{equation}
Let $x \in M$, and let $y \in \W^{u,g}(x)$. Define $\beta = \beta(x,y)$ as in equation \eqref{beta equation}, and observe by the conformal scaling property \eqref{conformal scaling} that
\begin{align*}
\rho_{g^{\beta}x,g}(g^{\beta}x,g^{\beta}y) = e^{a\beta}\rho_{x,g}(x,y) = e^{a\beta}e^{-a \beta} = 1, 
\end{align*}
by the defining equation \eqref{metric definition} for the Hamenst\"adt metric. We then have from \eqref{comparability} above that 
\[
\rho_{\varphi(g^{\beta}x),f}(\varphi_{g^{\beta}x}(g^{\beta}x),\varphi_{g^{\beta}x}(g^{\beta}y)) \asymp 1.
\]
By using equivariance \eqref{equivariance 2} and the conformal scaling property \eqref{conformal scaling} for $f^{t}$, we deduce that
\begin{equation}\label{2nd alpha inequality}
\rho_{\varphi(x),f}(\varphi_{x}(x),\varphi_{x}(y)) \asymp e^{-a \alpha(\beta,x)}.
\end{equation}

We next take two points $y, z \in \W^{u,g}(x)$ with $\rho_{x,g}(x,y) \leq \rho_{x,g}(x,z)$, or equivalently $\beta(x,y) \geq \beta(x,z)$.  Inequality \eqref{2nd alpha inequality}  implies that
\[
\frac{ \rho_{\varphi(x),f}(\varphi_{x}(x),\varphi_{x}(y)) }{ \rho_{\varphi(x),f}(\varphi_{x}(x),\varphi_{x}(z)) } \asymp  e^{-a (\alpha(\beta(x,y),x) - \alpha(\beta(x,z),x))}.
\]
By the additivity of $\alpha$ we have 
\[
\alpha(\beta(x,y),x) - \alpha(\beta(x,z),x) = \alpha(\beta(x,y)-\beta(x,z),g^{\beta(x,z)}x).
\]
Continuity of the generator of $\alpha$ and the compactness of $M$ implies that there is a constant $b > 0$ such that $\alpha(t,x) \geq bt$ for all $t \geq 0$ and $x \in M$. This implies in particular that 
\[
 \alpha(\beta(x,y)-\beta(x,z),g^{\beta(x,z)}x) \geq b(\beta(x,y)-\beta(x,z)),
\]
which implies that for some constant $C \geq 1$, 
\[
\frac{ \rho_{\varphi(x),f}(\varphi_{x}(x),\varphi_{x}(y)) }{ \rho_{\varphi(x),f}(\varphi_{x}(x),\varphi_{x}(z)) } \leq C\frac{e^{-ab\beta(x,y)}}{e^{-ab\beta(x,z)}} = C\left(\frac{\rho_{x,g}(x,y)}{\rho_{x,g}(x,z)}\right)^{b}.
\]

Now let $w, y, z \in \W^{u,g}(x)$ be any given triple of points. By applying the previous computations with $w$ replacing $x$ and recalling that $\rho_{w ,g} = \rho_{x,g}$, we have the inequality
\begin{equation}\label{w inequality}
\frac{ \rho_{\varphi(w),f}(\varphi_{w}(w),\varphi_{w}(y)) }{ \rho_{\varphi(w),f}(\varphi_{w}(w),\varphi_{w}(z)) } \leq C\left(\frac{\rho_{x,g}(w,y)}{\rho_{x,g}(w,z)}\right)^{b},
\end{equation}
for $\rho_{x,g}(w,y) \leq \rho_{x,g}(w,z)$. By equation \eqref{base change 2}  we have 
\[
\rho_{\varphi(w),f}(\varphi_{w}(w),\varphi_{w}(y))  =  e^{a\xi(x,w)} \rho_{\varphi(x),f}(\varphi_{x}(w),\varphi_{x}(y))
\]
This implies, going back to inequality \eqref{w inequality}, that we have 
\[
\frac{ \rho_{\varphi(x),f}(\varphi_{x}(w),\varphi_{x}(y)) }{ \rho_{\varphi(x),f}(\varphi_{x}(w),\varphi_{x}(z)) } \leq C\left(\frac{\rho_{x,g}(w,y)}{\rho_{x,g}(w,z)}\right)^{b},
\]
if $\rho_{x,g}(w,y) \leq \rho_{x,g}(w,z)$, as the factors of  $e^{a\xi(x,w)}$ introduced on the top and bottom of the left side cancel. We conclude that there is a constant $C > 0$ such that, for each $x \in M$ and any $w,y,z \in W^{u,g}(x)$, 
\[
\rho_{x,g}(w,y) \leq \rho_{x,g}(w,z) \Rightarrow \rho_{\varphi(x),f}(\varphi_{x}(w),\varphi_{x}(y)) \leq C\rho_{\varphi(x),f}(\varphi_{x}(w),\varphi_{x}(z)), 
\]
which establishes that $\varphi_{x}$ is weakly $C$-quasisymmetric, as desired. The final claim on the specific form of $\eta$ follows from \cite[Theorem 11.3]{Hein01}.
\end{proof}

\subsection{Synchronization}\label{subsec:synchro} 
With our preparatory work complete, we begin the proof of Theorem \ref{core theorem}. We let $X$ be our given closed negatively curved locally symmetric space of nonconstant negative curvature and set $g^{t}:=g^{t}_{X}$. We let $f^{t}: M \rightarrow M$ be our smooth Anosov flow satisfying the hypotheses of the theorem. We let $\varphi: T^{1}X \rightarrow M$ be the given orbit equivalence from $g^{t}$ to $f^{t}$, which we may assume to be H\"older by Fact \ref{structural stability}. Let $TM = E^{u} \oplus E^{c} \oplus E^{s}$ be the Anosov splitting for $f^{t}$ and set $k = \dim E^{u} = \dim E^{s}$. Let $E^{u} = L^{u} \oplus V^{u}$ and $E^{s} = L^{s} \oplus V^{s}$ be the $u$-splitting and $s$-splitting for $f^{t}$ respectively, and set $l = \dim L^{u} = \dim L^{s}$. 

Hypotheses (1), (2), and (4) of Theorem \ref{core theorem} imply that there exists a $\beta > 0$ such that $f^{t}$ is $\beta$-$u$-bunched, $Df^{t}|_{L^{u}}$ is $\beta$-fiber bunched, and $L^{cu}$ is $\beta$-H\"older. Since $\vec{\la}^{u}(f,\mu_{f}) = c_{1}\vec{\la}^{u}(g_{X})$ and the first $l$ coordinates of $\vec{\la}^{u}(g_{X})$ are $1$, we conclude that $\la_{1}^{u}(f,\mu_{f}) = \la_{l}^{u}(f,\mu_{f})$. Lifting to the universal covers, we obtain a uniformly continuous orbit equivalence $\t{\varphi}: T^{1}\t{X} \rightarrow \t{M}$ between the lifted flows $\t{g}^{t}$ and $\t{f}^{t}$. We take $\t{X}$ to be the simply connected Riemannian manifold of pinched negative curvature in Proposition \ref{exp to conf}, and recall that since $\mu_{f}$ is an equilibrium state it has local product structure. We thus conclude by Proposition \ref{exp to conf} that $Df^{t}|_{L^{u}}$ is uniformly quasiconformal. 

Consider the quotient action $\bar{D}f^{t}$ of $Df^{t}$ on the bundle $\bar{E}^{u} = E^{cu}/E^{c}$ with dominated splitting $\bar{E}^{u} = \bar{L}^{u} \oplus \bar{V}^{u}$ induced from the $u$-splitting $E^{u} = L^{u} \oplus V^{u}$. The linear cocycles $Df^{t}|_{L^{u}}$ and $\bar{D}f^{t}|_{\bar{L}^{u}}$ over $f^{t}$ are conjugate by the projection identification $L^{u} \rightarrow \bar{L}^{u}$ and therefore $\bar{D}f^{t}|_{\bar{L}^{u}}$ is also uniformly quasiconformal. Setting $\bar{Q}^{u} = \bar{E}^{u}/\bar{V}^{u}$ and writing $\bar{D}f^{t}|_{\bar{Q}^{u}}$ for the quotient action on this bundle, we conclude that $\bar{D}f^{t}|_{\bar{Q}^{u}}$ is uniformly quasiconformal as well since it is conjugate to  $\bar{D}f^{t}|_{\bar{L}^{u}}$ via the projection $\bar{L}^{u} \rightarrow \bar{Q}^{u}$. 

For this next step we briefly return to the more general setting considered in Section \ref{subsec: linear}. Let $A^{t}: E \rightarrow E$ be a linear cocycle on a vector bundle $E$ over a continuous flow  $f^{t}:M \rightarrow M$ on a compact metric space $M$. For each $x \in M$ we write $\mathcal{R}E_{x}$ for the space of all inner products on $E_{x}$, which forms a fiber bundle $\mathcal{R}E$ over $M$. We write $\mathcal{C}E_{x}$ for the quotient of $\mathcal{R}E_{x}$ by scaling the inner products by positive constants, and write $\mathcal{C}E$ for the resulting fiber bundle over $M$. A Riemannian structure on $E$ is simply a continuous section of $\mathcal{R}E$ over $M$. We refer to a continuous section $\delta: M \rightarrow \mathcal{C}E$ as a \emph{conformal structure} on $E$. A Riemannian structure on $E$ induces a unique conformal structure on $E$. 

For each $t \in \R$, the linear map $A_{x}^{t}: E_{x} \rightarrow E_{f(x)}$ induces a map $(A_{x}^{t})^{*}: \mathcal{R}E_{f^{t}x} \rightarrow \mathcal{R}E_{x}$ by defining for an inner product $\kappa_{f^{t}x} \in \mathcal{R}E_{f^{t}x}$ and $v$, $w \in E_{f^{t}x}$,  
\begin{equation}\label{inner product map}
(A_{x}^{t})^{*}\kappa_{f^{t}x}(v,w) = \kappa_{f^{t}x}(A_{x}^{t}(v), A_{x}^{t}(w)). 
\end{equation}
Formula \eqref{inner product map} induces a corresponding quotient action $(A^{t}_{x})^{*}: \mathcal{C}E_{f(x)} \rightarrow \mathcal{C}E_{x}$. A conformal structure $\delta: M \rightarrow \mathcal{C}E$ is \emph{$A^{t}$-invariant} if $(A^{t}_{x})^{*}\delta_{f^{t}x} = \delta_{x}$ for each $x \in M$ and $t \in \R$. If $A^{t}$ preserves a conformal structure $\delta$, then $A^{t}$ will be conformal in any  Riemannian structure on $E$ that induces the conformal structure $\delta$.

Fix a Riemannian structure $\kappa$ on $E$. For each $x \in M$ we consider the space of inner products $\mathcal{R}_{1}E_{x} \subset \mathcal{R}E_{x}$ which are \emph{unimodular} with respect to $\kappa_{x}$, i.e., $\eta \in \mathcal{R}_{1}E_{x}$ if and only if in some (hence any) $\kappa$-orthonormal basis $\{e_{1},\dots,e_{k}\}$ of $E_{x}$ we have 
\[
\det(\kappa_{x}(e_{i},e_{j}))_{1\leq i,j\leq k} = 1.
\]
The projection $\mathcal{R}E_{x} \rightarrow \mathcal{C}E_{x}$ restricts to a smooth diffeomorphism $\mathcal{R}_{1}E_{x} \rightarrow \mathcal{C}E_{x}$. Under this identification a conformal structure $\delta$ on $E$ can be uniquely realized as a Riemannian structure $\delta: M \rightarrow \mathcal{R}_{1}E$. 

The fixed choice of background Riemannian structure $\kappa$ induces a Riemannian distance on each of the fibers $\mathcal{C}E_{x}$ of the bundle $\mathcal{C}E$ in which the induced maps $(A^{t}_{x})^{*}$ are isometries for each $x \in M$ and all $t \in \R$, see \cite[Section 2.3]{KS}. A measurable section $\delta: M \rightarrow \mathcal{C}E$ is \emph{bounded} if there is a uniform bound on the distance of $\delta_{x}$ from the conformal structure induced by $\kappa_{x}$ for all points $x \in M$ at which $\delta$ is defined.

We return to the setting of this section, taking $M$ to be our smooth closed Riemannian manifold, taking $E$ to be the bundle $\bar{Q}^{u}$ over $M$, and taking $A^{t} = \bar{D}f^{t}|_{\bar{Q}^{u}}$. Since $\bar{D}f^{t}|_{\bar{Q}^{u}}$ is uniformly quasiconformal, we conclude by \cite[Proposition 2.4]{KS2} that there is a $\mu_{f}$-a.e. defined measurable bounded $A^{t}$-invariant section $\delta_{0}:M \rightarrow \mathcal{C}\bar{Q}^{u}$.  By \cite[Lemma 2.4]{Bu1}, the section $\delta_{0}$ coincides $\mu_{f}$-a.e.~ with a $\bar{D}f^{t}$-invariant conformal structure (which we will also denote by $\delta_{0}$) on $\bar{Q}^{u}$ that is invariant under the stable and unstable holonomies $\bar{H}^{s}$ and $\bar{H}^{u}$ of $\bar{D}f^{t}|_{\bar{Q}^{u}}$ acting on $\mathcal{C}\bar{Q}^{u}$. Since these holonomy maps have a H\"older dependence on pairs of points within $\W^{s}$ and $\W^{u}$ by Proposition \ref{existuhol}(3) we conclude that $\delta_{0}$ is H\"older along $\W^{s}$ and $\W^{u}$ and is therefore H\"older as a section $\delta_{0}:M\rightarrow M$. The $\bar{H}^{u}$ holonomies along $\W^{u}$ are given by the smooth parallel transport maps of the $\bar{D}f^{t}$-invariant connection $\nabla$ on $\bar{Q}^{u}$ constructed in Proposition \ref{holonomy connect}. We conclude that $\delta_{0}$ is smooth along $\W^{u}$. Since $\bar{D}f^{t}|_{\bar{Q}^{u}}$ is smooth along $\W^{c}$ and $\delta_{0}$ is $\bar{D}f^{t}|_{\bar{Q}^{u}}$-invariant, we conclude that $\delta_{0}$ is also smooth along $\W^{c}$ and therefore $\delta_{0}$ is smooth along $\W^{cu}$. 

We consider $\bar{Q}^{u}$ with its induced Riemannian structure from the smooth Riemannian structure on $TM$. This induces an identification of $\mathcal{C}\bar{Q}^{u}$  with $\mathcal{R}_{1}\bar{Q}^{u}$ that is smooth along $\W^{cu}$. We let $\delta:M \rightarrow \mathcal{R}_{1}\bar{Q}^{u}$ be the Riemannian structure corresponding to the conformal structure $\delta_{0}$ above. We let 
\begin{equation}\label{unstable quotient potential}
\bar{\zeta}_{f}(x) = \left.\frac{d}{dt}\right|_{t=0} \log \mathrm{Jac}_{\delta}\left(\bar{D}f^{t}_{x}|_{\bar{Q}^{u}_{x}}\right),
\end{equation}
with the Jacobian being taken in the Riemannian structure $\delta$. The potential $q_{f}\bar{\zeta}_{f}$ is H\"older and smooth along $\W^{cu}$. Furthermore it is cohomologous to the potential $q_{f}\zeta_{f}$ \eqref{potential} whose equilibrium state is the horizontal measure $\mu_{f}$ since the cocycles $Df^{t}|_{Q^{u}}$ and $\bar{D}f^{t}|_{Q^{u}}$ are conjugate by the identification $Q^{u} \rightarrow \bar{Q}^{u}$ induced from the identification $E^{u} \rightarrow \bar{E}^{u}$ given by the projection $E^{u} \rightarrow E^{cu}/E^{c}$. Hence $q_{f}\bar{\zeta}_{f}$ has the same equilibrium state $\mu_{f}$. We let $h^{t}_{u}$ be the time change of $f^{t}$ with speed mutliplier $l \bar{\zeta}_{f}^{-1}$. Then $h^{t}_{u}$ is a $u$-smooth Anosov flow.

We may carry out this entire discussion replacing $f^{t}$ with $f^{-t}$. The hypotheses of Theorem \ref{core theorem} imply that $f^{-t}$ is $\beta$-$u$-bunched, that $Df^{-t}|_{L^{s}}$ is $\beta$-fiber bunched, that $L^{cs}$ is $\beta$-H\"older, and that $\la_{1}^{u}(f^{-1},\mu_{f^{-1}}) = \la_{l}^{u}(f^{-1},\mu_{f^{-1}})$. Applying Proposition \ref{exp to conf}, we conclude that $Df^{t}|_{L^{s}}$ is uniformly quasiconformal, from which it follows that $\bar{D}f^{t}|_{\bar{Q}^{s}}$ is uniformly quasiconformal. We then construct a H\"older Riemannian structure $\delta'$ on $\bar{Q}^{s}$, smooth along $\W^{cs}$, with respect to which $\bar{D}f^{t}|_{\bar{Q}^{s}}$ is conformal, in exactly the same manner as we constructed $\delta$ above. We then have a corresponding potential
\begin{equation}\label{stable quotient potential}
\bar{\zeta}_{f^{-1}}(x) = \left.\frac{d}{dt}\right|_{t=0} \log \mathrm{Jac}_{\delta'}\left(\bar{D}f^{-t}_{x}|_{\bar{Q}^{s}_{x}}\right),
\end{equation}
We let $h^{t}_{s}$ be the time change of $f^{t}$ with speed mutliplier $l \bar{\zeta}_{f^{-1}}^{-1}$. Then $h^{t}_{s}$ is a $u$-smooth Anosov flow that is smooth along $\W^{cs}$ and has $\W^{cs}$ as a weak expanding foliation.

Set $h^{t} = h^{t}_{u}$ and set $\gamma =  l \bar{\zeta}_{f}^{-1}$. We will only be considering this flow for the rest of the section; the arguments for $h^{t}_{s}$ are analogous. We let $\tau$ be the additive cocycle over $h^{t}$ generated by $\gamma$ and let $\omega$ be the additive cocycle over $f^{t}$ generated by $\gamma^{-1} = l^{-1} \bar{\zeta}_{f}$. We write $\|\cdot \|_{\delta}$ for the norm in the Riemannian structure $\delta$ on $\bar{Q}^{u}$. By conformality, for any $t \in \R$, $x \in M$, and $v \in \bar{Q}^{u}_{x}$ we have
\[
\|\bar{D}f^{t}(v)\|_{\delta} = \mathrm{Jac}_{\delta}\left(\bar{D}f^{t}_{x}|_{\bar{Q}^{u}_{x}}\right)^{l^{-1}}\|v\|_{\delta} = e^{\omega(t,x)}\|v\|_{\delta}.
\]
Therefore
\begin{align*}
\|\bar{D}h^{t}(v)\|_{\delta} &= \|\bar{D}f^{\tau(t,x)}(v)\|_{\delta}  \\
&= e^{\omega(\tau(t,x),x)}\|v\|_{\delta} \\
&= e^{t}\|v\|_{\delta},
\end{align*}
by Proposition \ref{inverse additive}. After pulling $\delta$ back to $\bar{L}^{u}$ by the projection $\bar{L}^{u} \rightarrow \bar{Q}^{u}$, we have the equation for any $t \in \R$ and $v \in \bar{L}^{u}$, 
\begin{equation}\label{conformal L}
\|\bar{D}h^{t}(v)\|_{\delta} = e^{t}\|v\|_{\delta}.
\end{equation}
Since $\bar{E}^{u} = \bar{L}^{u} \oplus \bar{V}^{u}$ is a dominated splitting for $\bar{D}h^{t}$, there are constants $\chi > 1$, $c > 0$ such that for all $t \geq 0$, 
\begin{equation}\label{chi inequality}
\mathfrak{m}(D\bar{f}^{t}|_{\bar{V}^{u}}) \geq c e^{\chi t}.
\end{equation}
Using Proposition \ref{new metric}, we can choose a new Riemannian structure on $\bar{V}^{u}$ for which this inequality holds with $c = 1$. We will denote this Riemannian structure by $\delta$ as well. We then extend $\delta$ to the full bundle $\bar{E}^{u}$ by declaring $\bar{L}^{u}$ and $\bar{V}^{u}$ to be orthogonal. We then have for all $x \in M$ and $t \geq 0$,
\begin{equation}\label{hamenstadt synchro inequality}
\mathfrak{m}_{\delta}(\bar{D}h^{t}_{x}|_{\bar{E}^{u}_{x}}) = e^{t}, 
\end{equation}
with $\mathfrak{m}_{\delta}$ denoting the conorm in $\delta$. 

By Proposition \ref{time change splitting} we have a $Dh^{t}$-invariant splitting $E^{cu} = E^{u,h} \oplus E^{c}$ and by Proposition \ref{unstable manifold flow} there is an unstable foliation $\W^{u,h}$ for $h^{t}$ that is smooth inside of $\W^{cu}$. The projection $E^{cu} \rightarrow \bar{E}^{u}$ induces an identification $E^{u,h} \rightarrow \bar{E}^{u}$ that is smooth along $\W^{cu}$. The dominated splitting $E^{u,h} = L^{u,h} \oplus V^{u,h}$ for $Dh^{t}$ corresponds to the dominated splitting $\bar{E}^{u} = \bar{L}^{u} \oplus \bar{V}^{u}$ for $\bar{D}h^{t}$ under this identification. We give $E^{u,h}$ the Riemannian structure $\{\langle \; , \; \rangle_{x}\}_{x \in M}$ that is isometric to $\delta$ under the identification $E^{u,h} \cong \bar{E}^{u}$.

By Propositions \ref{proto synchro} and \ref{time change invariance} we have that $q_{h} = q_{f}$, that the horizontal measure $\mu_{h}$ for $h^{t}$ is equivalent to the horizontal measure for $f^{t}$ and that the horizontal measure is the measure of maximal entropy for $h^{t}$ with $h_{\mathrm{top}}(h) = q_{f} l$. Furthermore, for all $x \in M$ and $t \in \R$, in the induced Riemannian structure on $Q^{u,h}$ from $E^{u,h}$, 
\[
q_{f}\log \mathrm{Jac}\left(Dh^{t}_{x}|_{Q^{u,h}_{x}}\right) = q_{f}l t.
\]
By \eqref{hamenstadt synchro inequality} we have for $x \in M$ and $t \geq 0$, 
\[
\mathfrak{m}(Dh^{t}_{x}|_{Q^{u,h}_{x}}) = e^{t}
\]
Hence the Hamenst\"adt metrics $\rho_{x,h}$ for $h^{t}$ on $\W^{u,h}(x)$ are defined with parameter $a = 1$ in \eqref{Hamenstadt inequality}.

\begin{rem}
The method of synchronizing an Anosov flow with respect to a certain potential has a long history, going back to Parry \cite{Par86} and Ghys \cite{Ghys87} in the case of the potential $J_{f}$ for the SRB measure in Definition \ref{SRB} when the Anosov splitting $TM = E^{u} \oplus E^{c} \oplus E^{s}$ is $C^1$ (and therefore $J_{f}$ is $C^1$). Our use of the synchronization method expands on the techniques of Hamenst\"adt \cite{Ham97}, who considered synchronizations for geodesic flows of negatively curved manifolds with respect to H\"older potentials that were only smooth along the center-unstable foliation. 
\end{rem}

We recall the notion of Hausdorff dimension for metric spaces. Given a metric space $(Z,\rho)$, for each $\alpha > 0$ and $\e > 0$ we can define a premeasure $\mathcal{H}_{\alpha}^{\e}$ on $(Z,\rho)$ by, for any subset $A \subset Z$, 
\[
\mathcal{H}_{\alpha}^{\e}(A) = \inf \sum_{B \in \mathcal{B}} (\mathrm{diam} B)^{\alpha},
\]
where $\mathcal{B}$ denotes any covering of $A$ by balls $B$ of radius at most $\e$. We then define $\mathcal{H}_{\alpha}(A) = \lim_{\e \rightarrow 0}\mathcal{H}_{\alpha}^{\e}(A)$ to be the \emph{$\alpha$-Hausdorff measure} of $A$. The \emph{Hausdorff dimension} of $(Z,\rho)$ is then defined to be
\[
\mathrm{Hd}(Z,\rho) = \inf\{\alpha > 0:\mathcal{H}_{\alpha}(Z) = 0\}.
\]

We will need the following lemma. As before we write $B_{\rho}(x,r)$ for the ball of radius $r$ in the Hamenst\"adt metric $\rho_{x,h}$ on $\W^{u,h}(x)$. Set $N_{f} = q_{f}l$.

\begin{lem}\label{Hausdorff dim computation}
For each $x \in M$ we have
\[
\mathrm{Hd}(B_{\rho}(x,1),\rho_{x,h}) = N_{f}. 
\]
\end{lem}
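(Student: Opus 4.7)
The plan is to establish that the unstable conditionals $\mu_h^{u}$ of the horizontal measure $\mu_h$ of $h^{t}$ are $N_f$-Ahlfors regular with respect to the Hamenst\"adt metric $\rho_{x,h}$, after which the Hausdorff dimension computation is routine. Concretely, I will prove that there is a uniform $r_0>0$ such that for all $y \in M$ and $r \leq r_0$,
\[
\mu_h^{u}(B_{\rho}(y,r)) \asymp r^{N_f}.
\]

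The starting point is to invoke Proposition \ref{time change Gibbs property} with the choice $\zeta = l^{-1}\bar{\zeta}_f$. Because $q_f \bar{\zeta}_f$ is cohomologous to $q_f\zeta_f$, both potentials share the equilibrium state $\mu_f$ and satisfy $P(q_f\bar{\zeta}_f)=0$, so the pressure equation $P(q_\zeta \zeta)=0$ forces $q_\zeta = q_f l = N_f$. The associated time-changed flow is exactly $h^{t}$ (speed multiplier $\zeta^{-1}=\gamma$) and the associated invariant measure is $\mu_h$, which is the measure of maximal entropy for $h^{t}$. Proposition \ref{time change Gibbs property} then yields, for sufficiently large $t$,
\[
\mu_h^{u}\bigl(h^{-t}(B^{u,h}(x,r))\bigr) \;\asymp\; e^{-N_f t}\,\mu_h^{u}(B^{u,h}(x,r)).
\]
The next step is to convert this Riemannian Gibbs estimate into a Hamenst\"adt Gibbs estimate by using the conformal scaling $h^{-t}(B_{\rho}(x,1)) = B_{\rho}(h^{-t}x, e^{-t})$ together with the uniform comparison of balls \eqref{Hamenstadt inclusion 1}--\eqref{Hamenstadt inclusion 2}, which trap each Hamenst\"adt unit ball between two Riemannian balls of uniformly bounded radius. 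The continuity of the conditional measures on the compact manifold $M$ combined with the normalization in Proposition \ref{time change Gibbs property} gives the uniform two-sided bound $\mu_h^{u}(B_{\rho}(y,1)) \asymp 1$. Substituting $r = e^{-t}$ and $y = h^{-t}x$ then produces the claimed scaling $\mu_h^{u}(B_{\rho}(y,r)) \asymp r^{N_f}$.

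With Ahlfors $N_f$-regularity in hand, the conclusion follows from the mass distribution principle: the upper bound $\mu_h^{u}(B_{\rho}(y,r)) \leq C r^{N_f}$ implies $\mathcal{H}^{N_f}(B_{\rho}(x,1)) \geq C^{-1}\mu_h^{u}(B_{\rho}(x,1)) > 0$, while the lower bound $\mu_h^{u}(B_{\rho}(y,r)) \geq C^{-1} r^{N_f}$ combined with the uniform doubling of $(\W^{u,h}(x),\rho_{x,h})$ from Lemma \ref{doubling spaces} yields, via a standard Vitali-covering argument, $\mathcal{H}^{N_f}(B_{\rho}(x,1)) < \infty$. Hence $\mathrm{Hd}(B_{\rho}(x,1),\rho_{x,h}) = N_f$. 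The main technical obstacle is the passage between the Riemannian ball language of Proposition \ref{time change Gibbs property} and the Hamenst\"adt ball language required here; this is mediated entirely by the conformal scaling of $h^{t}$ and the uniform comparison of the two ball families at a bounded scale, so it amounts to careful bookkeeping rather than new analysis.
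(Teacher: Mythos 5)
Your proposal is correct and follows essentially the same route as the paper: invoke Proposition \ref{time change Gibbs property} for the potential $\zeta = l^{-1}\bar\zeta_f$ (so that $q_\zeta = q_f l = N_f$ and the time change is $h^t$), transfer the resulting Riemannian Gibbs estimate to Hamenst\"adt balls using the conformal scaling of $h^t$ together with the unit-scale comparison of the two ball families, and conclude Ahlfors $N_f$-regularity of the unstable conditionals, from which the Hausdorff dimension is immediate. The only cosmetic difference is that you split the final step into a mass-distribution lower bound and a Vitali-covering upper bound, whereas the paper obtains both bounds at once by comparing $\mu_{x,h}^u(B_\rho(x,1))$ to $\sum_i r_i^{N_f}$ after applying the $5r$-covering lemma; the two packagings are equivalent.
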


\begin{proof}
By Proposition \ref{time change Gibbs property}, there is an $R > 0$ and a $T > 0$ such that for any $x \in M$ and $t \geq T$ we have
\begin{equation}\label{Gibbs use}
\mu_{h^{-t}x,h}^{u}(h^{-t}(B_{d}(x,R))) \asymp e^{-N_{f} t} \mu_{x,h}^{u}(B_{d}(x,R)). 
\end{equation}
Here we are using the Riemannian balls $B_{d}$ inside $\W^{u,h}(x)$. Applying $h^{-t}$ to the comparison \eqref{Hamenstadt inclusion 1} of the Riemannian metric and Hamenst\"adt metric on $\W^{u,h}(x)$ at scale $R$, we obtain for any $y \in \W^{u,h}(x)$ and $t \in \R$,
\[
h^{-t}(B_{d}(y,C(R)^{-1})) \subseteq B_{\rho}(h^{-t}y,e^{-t}R) \subseteq h^{-t}(B_{d}(y,C(R))).
\]
Combining this with \eqref{Gibbs use}, we obtain for $t \geq T$,
\[
\mu_{h^{-t}x,h}^{u}(B_{\rho}(h^{-t}x,e^{-t}R)) \asymp e^{-N_{f}t},
\]
with the implied constant depending only on $R$. Setting $r = e^{-t}R$ and $y = h^{-t}x$, this then becomes
\[
\mu_{y,h}^{u}(B_{\rho}(y,r)) \asymp r^{N_{f}},
\]
which is valid for any $y \in M$ and $r \leq R' = e^{-T}R$, with implied constant depending only on $R$. 

Since the ball $B_{\rho}(x,1)$ has compact closure, the conditional measures $\mu_{y,h}^{u}$ for $y \in B_{\rho}(x,1)$ are all uniformly comparable, since they are all multiples of one another. Hence we obtain for $y \in B_{\rho}(x,1)$ and $r \leq R'$, 
\begin{equation}\label{r proportionality}
\mu_{x,h}^{u}(B_{\rho}(y,r)) \asymp r^{N_{f}},
\end{equation}
Let $ 0 < \e \leq R'$ be given, and let $B_{\rho}(x_{i},r_{i})$ be a covering of $B_{\rho}(x,1)$ by balls of radius $r_{i} \leq \e$. By a standard covering lemma \cite[Theorem 1.2]{Hein01}, by passing to a subcover we may assume that
\[
B_{\rho}\left(x_{i},\frac{r_{i}}{5}\right) \cap B_{\rho}\left(x_{j},\frac{r_{j}}{5}\right) = \emptyset,
\]
for $i \neq j$. Then 
\begin{align*}
\mu^{u}_{x,h}(B_{\rho}(x,1)) &\geq  \sum_{i} \mu^{u}_{x,h}\left(B_{\rho}\left(x_{i},\frac{r_{i}}{5}\right) \right) \\
&\geq c\sum_{i} \mu^{u}_{x,h}\left(B_{\rho}\left(x_{i},r_{i}\right) \right) \\
&\geq c\mu_{x,h}(B_{\rho}(x,1)),
\end{align*}
for some constant $c > 0$. The second inequality follows from \eqref{r proportionality}. Using \eqref{r proportionality} on the sum in that second line, we obtain
\[
\mu_{x,h}^{u}(B_{\rho}(x,1)) \asymp \sum_{i} r_{i}^{N_{f}}. 
\]
Letting $\e \rightarrow 0$, it follows that $0 < \mathcal{H}_{N_{f}}(B_{\rho}(x,1)) < \infty$ and therefore $\mathrm{Hd}(B_{\rho}(x,1),\rho_{x}) = N_{f}$. 
\end{proof}

A \emph{metric measure space} is a triple $(Z,\rho,\mu)$, with $(Z,\rho)$ a metric space and $\mu$ a Borel measure on $Z$. 

\begin{defn}[Ahlfors regular] A metric measure space is \emph{Ahlfors $N$-regular} for an exponent $N > 0$ if there is a constant $C \geq 1$ such that for every $x \in Z$ and $r \geq 0$ we have
\begin{equation}\label{Ahlfors regular}
C^{-1} r^{N}\leq \mu(B(x,r)) \leq C r^{N}.
\end{equation}
\end{defn}

An Ahlfors $N$-regular metric measure space has Hausdorff dimension $N$.

Let $G$ be the 2-step Carnot group on which the unstable manifolds of $g^{t}$ are modeled, from the discussion of Section \ref{subsec:neg curved}. The metric measure space $(G,\rho_{G},m_{G})$ is Ahlfors $N_{G}$-regular, where $\rho_{G}$ is the CC-metric on $G$, $m_{G}$ is a left invariant Riemannian volume on $G$, and $N_{G} = l + 2(k-l)$. The charts $T_{x}: (G,\rho_{G}) \rightarrow (\W^{u,g}(x),\rho_{x,g})$ for each $x \in T^{1}X$ are bi-Lipschitz with Lipschitz constant independent of $x \in T^{1}X$, and the metric measure space $(\W^{u,g}(x),\rho_{x,g},m_{x,g})$ is Ahlfors $N_{G}$-regular as well, where $m_{x,g}$ denotes the $N_{G}$-Hausdorff measure of $\rho_{x,g}$ on $\W^{u,g}(x)$ \cite{Ham90}.


\begin{prop}\label{lem:bound}
We have $q_{f} = q_{X}$ and consequently $N_{f} = N_{G}$. The horizontal measure $\mu_{h}$ is the SRB measure for $h^{t}$. 
\end{prop}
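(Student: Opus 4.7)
The plan is to deduce every conclusion of the proposition from Lemma~\ref{hor sup} applied to the horizontal measure $\mu_f$ of $f^t$. The pinching hypothesis of that lemma is automatic: the assumption $\vec{\lambda}^{u}(f,\mu_f) = c_1\vec{\lambda}^{u}(g_X) = c_1(1,\dots,1,2,\dots,2)$ gives $\lambda^u_k(f,\mu_f) = 2\lambda^u_1(f,\mu_f)$. Recalling from \eqref{hor dim formula} that $q_X = (l+2(k-l))/l = N_G/l$, the dimensional inequality \eqref{dim inequality} needed to invoke Lemma~\ref{hor sup} is exactly $q_f \geq q_X$. Once this is proved, Lemma~\ref{hor sup} will force $q_f = q_X$ (so $N_f = N_G$) and identify $\mu_f$ with the SRB measure $m_f$ of $f^t$. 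The statement about $\mu_h$ then falls out: by Proposition~\ref{time change invariance} we have $\mu_h = \mu_f^{\gamma}$, while the characterization of SRB measures via unstable conditionals noted after Definition~\ref{SRB} gives $m_h = m_f^{\gamma}$, so $\mu_f = m_f$ implies $\mu_h = m_h$.

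Thus the whole problem reduces to proving $N_f \geq N_G$, and for this I would make a quasisymmetric dimension comparison against the Carnot group $(G,\rho_G)$ modeling the horospheres of $\widetilde{X}$. The orbit equivalence $\varphi: T^{1}X \to M$ from $g^t$ to $f^t$ is also an orbit equivalence from $g^t$ to $h^t$, since $h^t$ is a time change of $f^t$ and so shares its orbits. Both $g^t$ on $T^{1}X$ (using the normalization $-4 \leq K \leq -1$) and $h^t$ on $M$ (by the identity \eqref{hamenstadt synchro inequality}) expand their unstable bundles at rate exactly $e^t$, so the Hamenst\"adt metrics $\rho_{y,g}$ on $\W^{u,g}(y)$ and $\rho_{\varphi(y),h}$ on $\W^{u,h}(\varphi(y))$ are built with the same parameter $a=1$. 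Proposition~\ref{quasisymmetry orbit} therefore supplies a uniformly $\eta$-quasisymmetric homeomorphism
\[
\varphi_y:(\W^{u,g}(y),\rho_{y,g}) \longrightarrow (\W^{u,h}(\varphi(y)),\rho_{\varphi(y),h}).
\]
Composing with the bi-Lipschitz chart $T_y:(G,\rho_G) \to (\W^{u,g}(y),\rho_{y,g})$ from Section~\ref{subsec:neg curved} yields a quasisymmetric homeomorphism $(G,\rho_G) \to (\W^{u,h}(\varphi(y)),\rho_{\varphi(y),h})$.

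The decisive ingredient is Pansu's rigidity theorem for the conformal dimension of the Iwasawa-type Carnot groups: any metric space quasisymmetric to $(G,\rho_G)$ has Hausdorff dimension at least $N_G$. Applying this to $(\W^{u,h}(\varphi(y)),\rho_{\varphi(y),h})$, and using the $h^t$-conformality \eqref{conformal scaling} of $\rho_{\cdot,h}$ to transport Hausdorff dimension from any ball to a unit ball, Lemma~\ref{Hausdorff dim computation} then yields $N_f \geq N_G$, i.e.\ $q_f \geq q_X$. Lemma~\ref{hor sup} upgrades this to equality and to $\mu_f = m_f$, and the passage $\mu_h = \mu_f^{\gamma} = m_f^{\gamma} = m_h$ completes the proof. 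The main obstacle is really the appeal to Pansu's conformal-dimension theorem: everything else (quasisymmetry, Ahlfors-regular Gibbs estimates, time-change formulas for entropy and SRB) is bookkeeping set up in the preceding sections, but without the rigid lower bound coming from the Carnot structure of horospheres one cannot rule out $q_f < q_X$.
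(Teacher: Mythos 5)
Your proposal is correct and follows essentially the same route as the paper's proof: reduce to the inequality $q_f \geq q_X$ (equivalently $N_f \geq N_G$), obtain this from a quasisymmetric dimension comparison against the model Carnot group $G$ via Proposition~\ref{quasisymmetry orbit} and Lemma~\ref{Hausdorff dim computation}, then invoke Lemma~\ref{hor sup} to force equality and to identify $\mu_f$ with the SRB measure, and finally transport this to $h^t$ using Proposition~\ref{time change invariance} and the time-change formula for SRB measures. The only real difference is the citation for the dimension lower bound: you appeal to Pansu's conformal-dimension rigidity for the Iwasawa Carnot groups, whereas the paper uses Tyson's theorem (Theorem~\ref{Tyson theorem}) together with the explicit positive-modulus curve family constructed in Lemma~\ref{positive modulus}. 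These encapsulate the same mathematical content --- Pansu's result is the special case for Carnot boundaries, Tyson's the abstraction to Ahlfors regular spaces with positive-modulus curve families --- and the paper's choice is made partly for self-containment and partly because the quasisymmetric map in its proof is naturally defined only on a bounded open subset $B \subset G$, so the ``localized'' form of Tyson's theorem is convenient; if you cite Pansu's theorem for all of $G$ you should at least observe that Hausdorff dimension is local so the conclusion transfers to unit balls without loss.
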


For the proof of this proposition we require some basic notions from quasiconformal analysis on metric spaces, which may be found in \cite{Hein01}.

Let $(Z,\rho)$ be a metric space and let $\gamma:I \rightarrow Z$ be a continuous map from an open interval $I \subseteq \R$ to $Z$, which we will refer to as a \emph{curve} in $Z$. A curve $\gamma$ is \emph{rectifiable} if it has finite length, and in this case we will usually assume that $\gamma$ is parametrized by arclength and identify it with its image. Given a Borel function $\beta: Z \rightarrow \R$, one may then define the line integral of $\beta$ on $\gamma$ in a manner analogous to that in Euclidean space. 

We assume now that $(Z,\rho,\mu)$ is a metric measure space. Let $1 \leq p < \infty$ be given. Consider a family $\Gamma$ of rectifiable curves in $Z$. We say that a Borel function $\beta: Z \rightarrow [0,\infty]$ is \emph{admissible} for $\Gamma$ if
\[
\int_{\gamma} \beta \, ds \geq 1,
\]
for each $\gamma \in \Gamma$. 
\begin{defn}[Modulus of a curve family]The \emph{$p$-modulus} of $\Gamma$ is defined by
\begin{equation}\label{mod defn}
\mathrm{mod}_{p}(\Gamma) = \inf_{\beta}\int_{Z} \beta^{p}\, d\mu,
\end{equation}
with the infimum being taken over all admissible functions $\beta$ for $\Gamma$. 
\end{defn}

The $p$-modulus is an outer measure on the set of all rectifiable curves in $Z$. A property is defined to hold for $\mathrm{mod}_{p}$-a.e.~ curve in $Z$ if the set of rectifiable curves for which this property fails has $p$-modulus zero. 

We will require the following simple lemma regarding curve families in the 2-step Carnot group $G$.

\begin{lem}\label{positive modulus}
For any open subset $U \subseteq G$ there is a family of rectifiable curves $\Gamma$, with $\gamma \subset U$ for each $\gamma \in \Gamma$, such that $\mathrm{mod}_{N_{G}}(\Gamma) > 0$. 
\end{lem}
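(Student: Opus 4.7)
The plan is to construct $\Gamma$ as a family of left translates of a short horizontal segment inside $U_{0}$ and then bound the $N_{G}$-modulus from below using left-invariance of $m_{G}$ together with H\"older's inequality.

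First, I would pick a unit horizontal vector $v \in \mathfrak{l}$ and a point $x_{0} \in U_{0}$, and consider the one-parameter curve $\alpha_{0}(t) = x_{0}\cdot \exp(tv)$ for $t \in [0,r]$, where $r > 0$ is chosen small enough that $\alpha_{0}([0,r]) \subset U_{0}$. Because the distribution $L$ is left-invariant and $v \in \mathfrak{l}$, the curve $\alpha_{0}$ is horizontal, hence rectifiable in $\rho_{G}$ with length $r$. Since left translation by any $g \in G$ is a $\rho_{G}$-isometry and maps horizontal curves to horizontal curves, each translate $g \cdot \alpha_{0}$ is a rectifiable curve of length $r$. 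By uniform continuity of left translation near the identity (in the Euclidean topology, which coincides with the $\rho_{G}$-topology on compact sets), I can choose a small open set $A \subset G$ with $m_{G}(A) > 0$ such that $g \cdot \alpha_{0}([0,r]) \subset U_{0}$ for every $g \in A$. Then set $\Gamma = \{g \cdot \alpha_{0} : g \in A\}$.

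Next, for an arbitrary admissible Borel function $\beta : G \to [0,\infty]$ for $\Gamma$, the admissibility condition gives
\[
\int_{0}^{r} \beta(g \cdot \alpha_{0}(t))\, dt \;\geq\; 1 \qquad \text{for every } g \in A.
\]
Integrating over $g \in A$ against $m_{G}$ and applying Fubini yields
\[
m_{G}(A) \;\leq\; \int_{0}^{r} \int_{A} \beta(g \cdot \alpha_{0}(t)) \, dm_{G}(g)\, dt.
\]
For each fixed $t$, left-invariance of $m_{G}$ together with the change of variables $h = g \cdot \alpha_{0}(t)$ identifies the inner integral with $\int_{A \cdot \alpha_{0}(t)} \beta \, dm_{G}$; but $A \cdot \alpha_{0}(t) \subset U_{0}$ by construction, so this integral is at most $\int_{U_{0}} \beta \, dm_{G}$. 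Consequently $m_{G}(A) \leq r \int_{U_{0}} \beta \, dm_{G}$.

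Finally, I would apply H\"older's inequality with exponents $N_{G}$ and $N_{G}/(N_{G}-1)$ to obtain
\[
\int_{U_{0}} \beta \, dm_{G} \;\leq\; \left(\int_{G} \beta^{N_{G}} \, dm_{G}\right)^{\!1/N_{G}} m_{G}(U_{0})^{(N_{G}-1)/N_{G}},
\]
and combine with the previous bound to conclude
\[
\int_{G} \beta^{N_{G}} \, dm_{G} \;\geq\; \frac{m_{G}(A)^{N_{G}}}{r^{N_{G}}\, m_{G}(U_{0})^{N_{G}-1}} \;>\; 0,
\]
with a lower bound independent of $\beta$. Taking the infimum over admissible $\beta$ gives $\mathrm{mod}_{N_{G}}(\Gamma) > 0$. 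There is essentially no serious obstacle here; the only point requiring care is ensuring that the translating parameter set $A$ really stays in $U_{0}$, which is handled by taking $r$ small compared to $\mathrm{dist}(\bar{U}_{0}, \partial U)$ in the $\rho_{G}$-metric and using left-invariance of $\rho_{G}$.
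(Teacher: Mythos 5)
Your proof is correct. It is built on the same geometric picture as the paper's — a ``thick bundle'' of short horizontal curves in $U_{0}$, combined with Fubini and H\"older — but the bookkeeping is organized differently, in a way that is arguably cleaner. The paper takes $\Gamma$ to be the intersections of the integral curves of a fixed unit horizontal left-invariant vector field $Z$ with $U_{0}$, rewrites $\int_{G}\beta^{N_{G}}\,dm_{G}$ via a Fubini decomposition over a smooth transversal $S$ to that foliation, and applies H\"older leaf-by-leaf (inside the transversal integral). That Fubini step implicitly requires knowing that the disintegration of $m_{G}$ along the foliation is arclength on leaves times a transversal measure, which the paper does not spell out. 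You instead fix a single short horizontal segment $\alpha_{0}$, take $\Gamma$ to be its left translates by a small open set $A$, and integrate the \emph{admissibility inequality} over the translating parameter $g\in A$. The key measure-theoretic input is then simply that $m_{G}$ is bi-invariant (Riemannian volume for a left-invariant metric on a unimodular group), which makes the substitution $h = g\cdot\alpha_{0}(t)$ immediate; H\"older is then applied once, globally on $U_{0}$. Since the leaves of the paper's foliation $\mathcal{Z}$ are precisely right translates of the one-parameter subgroup $\exp(tZ_{0})$, the two curve families are essentially the same up to reparametrizing leaves by a translating group element; the real difference is whether one integrates the constraint (your version) or decomposes the target (the paper's). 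Your approach buys a fully explicit and self-contained change of variables, while the paper's buys a curve family that covers all of $U_{0}$ rather than a small tube. Either works; there is no gap in your argument.
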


\begin{proof}
Let $L \subset TG$ denote the left-invariant horizontal distribution on $G$, tangent to $\mathfrak{l}$ at the identity element. Let $Z: G \rightarrow L$ be a left-invariant vector field of unit length and let $\mathcal{Z}$ denote the foliation of $G$ by the integral curves of $Z$. Each of these curves is rectifiable for $\rho_{G}$, and is in fact a geodesic between any two of its points. We equip $G$ with a choice of left-invariant Riemannian metric for reference below. 

Let $U \subseteq G$ be an open subset of $G$. Fix a point $x \in U$ and let $B$ be a foliation box for $\mathcal{Z}$ containing $x$, chosen small enough that $B \subset U$. We let $\Gamma$ be the curve family consisting of the intersections of curves from $\mathcal{Z}$ with $B$. Let $\beta: G \rightarrow [0,\infty]$ be an admissible function for $\Gamma$. We let $S \subset B$ be a smooth transversal to $\mathcal{Z}$ in the foliation box. Since we are bounding the modulus below, we can restrict to those $\beta$ for which $\beta \equiv 0$ on $G\backslash B$.  For $x \in S$ we let $\gamma_{x}$ be the intersection of $\mathcal{Z}(x)$ with $B$. Let $p$ be the conjugate exponent to $N_{G}$, so that $p^{-1} + N_{G}^{-1} = 1$. Then by H\"older's inequality,
\[
\int_{B} \beta\, d\mu_{G} \leq \mu_{G}(B)^{p^{-1}}\left(\int_{B} \beta^{N_{G}}\, d\mu_{G} \right)^{N_{G}^{-1}}
\]
By rearranging this, applying Fubini's theorem, and then using the admissibility condition on $\beta$ we obtain
\begin{align*}
\int_{U} \beta^{N_{G}}\, d\mu_{G} &\geq \mu_{G}(B)^{-N_{G}p^{-1}}\left(\int_{B} \beta\, d\mu_{G} \right)^{N_{G}} \\
&= \mu_{G}(B)^{-N_{G}p^{-1}}\int_{S}\int_{\gamma_{x}}\beta\, ds \,d\nu \\
&\geq \mu_{G}(B)^{-N_{G}p^{-1}} \nu(S),
\end{align*}
with $\nu$ being a volume on $S$ comparable to the Riemannian volume. Since this lower bound does not depend on $\beta$, we conclude that 
\[
\mathrm{mod}_{N_{G}}(\Gamma) \geq \mu_{G}(B)^{-N_{G}p^{-1}}\nu(S) > 0.
\]
\end{proof}

We will use the following theorem of Tyson \cite{Tys}, as formulated in \cite[Theorem 15.10]{Hein01}.

\begin{thm}\label{Tyson theorem}
Let $(Z,\rho,\mu)$ be a compact Ahlfors $N$-regular metric measure space, $N > 1$, which has a family of rectifiable curves $\Gamma$ satisfying $\mathrm{mod}_{N}(\Gamma) > 0$. Let $\varphi: Z \rightarrow Y$ be a quasisymmetric homeomorphism of $(Z,\rho)$ onto a metric space $(Y,d)$. Then $\mathrm{Hd}(Y,d) \geq N$. 
\end{thm}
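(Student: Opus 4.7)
The plan is to argue by contradiction. Assume $N' := \mathrm{Hd}(Y,d) < N$; I would then produce, for each $\varepsilon > 0$, a Borel function $\beta_\varepsilon : Z \to [0,\infty]$ which (up to an absolute multiplicative constant) is admissible for $\Gamma$ and satisfies $\|\beta_\varepsilon\|_{L^N(\mu)} \to 0$ as $\varepsilon \to 0$, contradicting $\mathrm{mod}_N(\Gamma) > 0$. As preparation, observe that $Z$ is doubling (being Ahlfors $N$-regular) and that, since $Z$ hosts rectifiable curves, its relevant parts are connected, so by \cite[Theorem 11.3]{Hein01} the quasisymmetry $\varphi$ admits a power-type distortion function $\eta(u) \le C \max\{u^{\alpha}, u^{1/\alpha}\}$ for some $C \ge 1$ and $\alpha \in (0,1]$.

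Fix $t \in (N',N)$, so that $\mathcal{H}_t(Y,d) = 0$. For each $\varepsilon > 0$, pick a covering of $Y$ by balls $\{B_Y(y_i, r_i)\}_{i}$ with $r_i < \varepsilon$ and $\sum_i r_i^t < \varepsilon$. Setting $x_i := \varphi^{-1}(y_i)$ and $R_i := \mathrm{diam}_Z\bigl(\varphi^{-1}(B_Y(y_i, r_i))\bigr)$, one has $\varphi^{-1}(B_Y(y_i, r_i)) \subseteq B_Z(x_i, R_i)$; applying the $\eta$-quasisymmetry with a pair of points in $\varphi^{-1}(B_Y(y_i, r_i))$ nearly realizing $R_i$ as reference yields the complementary containment $B_Z(x_i, R_i) \subseteq \varphi^{-1}(B_Y(y_i, \eta(1) r_i))$, so the balls $B_i := B_Z(x_i, R_i)$ cover $Z$ and are geometrically comparable to the preimages of the original $Y$-cover. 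I would then set $\beta_\varepsilon := \sum_i R_i^{-1} \chi_{2 B_i}$, where $2B_i$ is the concentric doubling. For $\varepsilon$ sufficiently small no curve $\gamma \in \Gamma$ can sit inside any single $2B_i$, so any $B_i$ meeting $\gamma$ is crossed within $2B_i$ at length $\ge R_i$, contributing $\ge 1$ to $\int_\gamma \beta_\varepsilon \, ds$; hence a uniform multiple of $\beta_\varepsilon$ is admissible. Ahlfors $N$-regularity gives $\mu(2B_i) \asymp R_i^N$.

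The crux and the main obstacle is the $L^N$ bound on $\beta_\varepsilon$, which requires transferring the smallness of $\sum_i r_i^t$ on $Y$ into smallness of $\sum_i R_i^N$ on $Z$. There is no scale-independent comparison between $R_i$ and $r_i$ — the local distortion of $\varphi^{-1}$ varies with basepoint and scale — so the argument must use the power-law shape $\eta(u) \le C \max\{u^\alpha, u^{1/\alpha}\}$ to interpolate between the gauges $r^t$ on $Y$ and $r^N$ on $Z$. Combining this interpolation with a Vitali refinement of $\{B_i\}$ of bounded overlap, one expects, after careful bookkeeping, an estimate of the form $\int_Z \beta_\varepsilon^N \, d\mu \lesssim \varepsilon^{\kappa}$ for some $\kappa = \kappa(\alpha, N-t) > 0$, whence $\mathrm{mod}_N(\Gamma) \to 0$ and the contradiction follows. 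The technical heart of the proof is thus an overlap-controlled comparison of Hausdorff gauges transported through a power-type quasisymmetry, and it is precisely the Ahlfors $N$-regularity of $(Z, \rho, \mu)$ that converts the diameter bound $\mu(B_i) \asymp R_i^N$ into a usable input for this transfer.
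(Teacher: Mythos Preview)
First a framing remark: the paper does not prove this statement. It is quoted from Tyson \cite{Tys} in the formulation of \cite[Theorem~15.10]{Hein01}, and the only thing the paper adds is the observation that Ahlfors regularity enters that proof solely through an \emph{upper} bound on $\mathrm{mod}_N(\Gamma)$ computed near the curves. So your attempt should be measured against the argument in Heinonen's book, and your overall architecture---assume $\mathrm{Hd}(Y)<N$, pull back a fine cover of $Y$, and manufacture an admissible function with small $L^N$ norm---is indeed the right one.

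There is, however, a concrete error in the test function and in the exponent you use on the $Y$ side, and the ``interpolation'' you invoke to repair it does not exist. With $\beta_\varepsilon=\sum_i R_i^{-1}\chi_{2B_i}$ and bounded overlap $M$, Ahlfors regularity gives
\[
\int_Z \beta_\varepsilon^{\,N}\,d\mu \;\le\; M^{N-1}\sum_i R_i^{-N}\mu(2B_i)\;\asymp\; M^{N-1}\sum_i R_i^{-N}R_i^{N}\;=\;M^{N-1}\cdot \#\{i\},
\]
which is the \emph{cardinality} of the cover, not $\sum_i R_i^N$. Since the $B_i$ cover a space of positive measure and their radii tend to $0$, this quantity diverges as $\varepsilon\to 0$. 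No comparison between $r_i$ and $R_i$ via a power-type $\eta$ can help here: $\sum_i r_i^t<\varepsilon$ places no bound whatsoever on the number of balls, and it is the number of balls that controls $\|\beta_\varepsilon\|_{L^N}$ with your choice of weight.

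The fix used in the cited proof is twofold. First, work with $t=N$: the hypothesis $\mathrm{Hd}(Y)<N$ gives $\mathcal H^N(Y)=0$, so one may take covers of $Y$ with $\sum_i r_i^{\,N}<\varepsilon$. Second, put the $Y$-radii into the weight: set
\[
g_\varepsilon \;=\; \sum_i \frac{r_i}{R_i}\,\chi_{\lambda B_i}
\]
for a fixed dilation $\lambda>1$. Then bounded overlap and Ahlfors regularity give $\int_Z g_\varepsilon^{\,N}\,d\mu \lesssim \sum_i (r_i/R_i)^N R_i^N=\sum_i r_i^{\,N}<\varepsilon$, which is exactly the smallness you need. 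Admissibility of a fixed multiple of $g_\varepsilon$ comes from the chain estimate: if $\gamma\in\Gamma$ exits each $B_i$ it meets, then $\int_\gamma g_\varepsilon\,ds\ge \sum_{i:\,B_i\cap\gamma\ne\emptyset} r_i\ge c\,\mathrm{diam}\,\varphi(\gamma)$, since $\varphi(\gamma)$ is a continuum covered by the $Y$-balls. One then needs a lower bound on $\mathrm{diam}\,\varphi(\gamma)$ uniformly over the relevant curves; this is where the argument in \cite{Hein01} does some additional work (passing to a subfamily of $\Gamma$, or using an upper-gradient formulation), and it is precisely this step---not a gauge interpolation---that constitutes the ``careful bookkeeping''. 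Your sketch identifies the wrong obstacle and the wrong weight; once both are corrected as above, the contradiction closes.
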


A simple inspection of the proof of Theorem \ref{Tyson theorem} given in \cite[Theorem 15.10]{Hein01} shows that it suffices to assume that there is an $R > 0$ such that the Ahlfors regularity inequalities \eqref{Ahlfors regular} for $(Z,\rho,\mu)$ hold only for balls of radius at most $R$ that meet some curve $\gamma \in \Gamma$ (see \cite[Remark 15.15]{Hein01}).  This is the version of Theorem \ref{Tyson theorem} that we will use in the proof of Proposition \ref{lem:bound} below. 

\begin{proof}[Proof of Proposition \ref{lem:bound}]
Fix $x \in T^{1}X$. By Proposition \ref{quasisymmetry orbit}, 
\[
\varphi_{x}: (\W^{u,g}(x),\rho_{x,g}, m_{x,g}) \rightarrow (\W^{u,h}(\varphi(x)), \rho_{\varphi(x),h},\mu_{\varphi(x),h}^{u}),
\]
is a quasisymmetric homeomorphism. Let $B_{x} = \varphi_{x}^{-1}(B_{\rho}(\varphi(x),1))$. We then have a restricted quasisymmetric homeomorphism $\varphi_{x}:B_{x} \rightarrow B_{\rho}(\varphi(x),1)$. 

Let $T_{x}: G \rightarrow \W^{u,g}(x)$ be our standard chart identifying $\W^{u,g}(x)$ with the 2-step Carnot group $G$ equipped with its left-invariant Carnot-Caratheodory metric $\rho_{G}$ and Lebesgue measure $m_{G}$, which is Alhfors $N_{G}$-regular. Let $B = T_{x}^{-1}(B_{x}) \subset G$ and consider the homeomorphism $\psi_{x} = \varphi_{x} \circ T_{x}$ from $B$ to $B_{\rho}(\varphi(x),1)$.

By Proposition \ref{Hausdorff dim computation}, the space $B_{\rho}(\varphi(x),1)$ has Hausdorff dimension $N_{f}$. The ball $B_{\rho}(\varphi(x),1)$ contains an open neighborhood of $\varphi(x)$, and therefore $B$ contains an open subset $U$ of $G$. Let $U_{0} \subset U$ be an open subset with closure contained in $U$. By Lemma \ref{positive modulus}, we can find a family of rectifiable curves $\Gamma$ inside of $U_{0}$ with $\mathrm{mod}_{N_{G}}(\Gamma) > 0$. Observe also that for $r$ sufficiently small we will have 
\[
m_{G}(B_{\rho_{G}}(y,r)) \asymp r^{N_{G}},
\]
for any $y \in U_{0}$. 

Hence by Theorem \ref{Tyson theorem} we conclude that $N_{f} \geq N_{G}$, from which it follows that $q_{f} \geq q_{X}$.  Since we assumed that $\vec{\la}^{u}(f,\mu_{f}) = c \vec{\la}^{u}(g_{X})$ for some $c > 0$, we have that $\la_{k}(f,\mu_{f}) = 2\la_{1}(f,\mu_{f})$ and therefore by Proposition \ref{hor sup} we must have that $q_{f} = q_{X}$ and that $\mu_{f}$ is the SRB measure for $f^{t}$. Hence $\mu_{h}$ is the SRB measure for $h^{t}$ and $N_{f} = N_{G}$, which completes the proof. 
\end{proof}

Set $N = N_{G} = q_{X}l$ in what follows. We have shown that the SRB measure, horizontal measure, and measure of maximal entropy for $h^{t}$ all coincide, with $h_{\mathrm{top}}(h) = N$. Since the SRB measure is the measure of maximal entropy, by \eqref{cohomological equation plus} there is a H\"older function $\xi: M \rightarrow (0,\infty)$ such that for all $x \in M$ and $t \in \R$, 
\[
\frac{\xi(h^{t}x)}{\xi(x)} = \frac{\mathrm{Jac}(Dh^{t}_{x}|_{E^{u,h}_{x}})}{e^{N t}} = \frac{\mathrm{Jac}(Dh^{t}_{x}|_{V^{u,h}_{x}})}{e^{2(k-l)t}},
\]
recalling that we declared the subbundles $L^{u,h}$ and $V^{u,h}$ to be orthogonal in our Riemannian structure. Hence after multiplying the Riemannian structure on $V^{u,h}$ by $\xi$, we may assume that
\begin{equation}\label{vertical Jacobian}
\mathrm{Jac}(Dh^{t}_{x}|_{V^{u,h}_{x}}) = e^{2(k-l)t},
\end{equation}
for every $x \in M$, $t \in \R$, and consequently
\begin{equation}\label{Jacobian scaling}
\mathrm{Jac}(Dh^{t}_{x}|_{E^{u,h}_{x}}) = e^{Nt}.
\end{equation}

For each $x \in M$, we then have a metric measure space $(\W^{u,h}(x),\rho_{x,h},m_{x,h})$, with $\rho_{x,h}$ being the Hamenst\"adt metric on $\W^{u,h}(x)$ and $m_{x,h}$ being the Riemannian volume $\W^{u,h}(x)$. We note that we have $m_{x,h} = m_{y,h}$ for $y \in \W^{u,h}(x)$, and that by equation \eqref{Jacobian scaling},
\begin{equation}\label{measure scaling}
h^{t}_{*}m_{x,h} = e^{Nt}m_{h^{t}x,h},
\end{equation}
for all $x \in M$ and $t \in \R$. 

\begin{prop}\label{Hamenstadt Ahlfors}
The metric measure space $(\W^{u,h}(x),\rho_{x,h},m_{x,h})$ is Ahlfors $N$-regular for each $x \in M$, with regularity constant independent of $x$. 
\end{prop}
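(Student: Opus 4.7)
My plan is to prove Ahlfors regularity first at the unit scale and then propagate to all scales by using the flow $h^{t}$, which scales the Hamenst\"adt metric conformally and scales the Riemannian volume by a fixed exponential factor.

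For the unit scale, I would apply the comparisons \eqref{Hamenstadt inclusion 1} and \eqref{Hamenstadt inclusion 2} with $r = 1$: there is a constant $C = C(1) \geq 1$ such that for every $x \in M$ and every $y \in \W^{u,h}(x)$,
\[
B_{d}(y,C^{-1}) \subseteq B_{\rho}(y,1) \subseteq B_{d}(y,C).
\]
Since $\W^{u,h}$ has uniformly $C^{1}$ leaves with bounded geometry (it is smooth along $\W^{cu}$) and $M$ is compact, the Riemannian volume of the Riemannian ball $B_{d}(y,s)$ inside $\W^{u,h}(y)$ is uniformly comparable to $s^{k}$ for $s$ in any fixed bounded range, where $k = \dim \W^{u,h}$. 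Consequently $m_{x,h}(B_{\rho}(y,1)) \asymp 1$ with an implied constant that is independent of $x$ and $y \in \W^{u,h}(x)$.

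For the scaling step, fix $r > 0$ and set $t = -\log r$, so that $e^{t} = r^{-1}$. By the conformal scaling identity \eqref{conformal scaling} (with $a = 1$) applied to $h^{-t}$,
\[
h^{-t}\bigl(B_{\rho}(h^{t}y,1)\bigr) = B_{\rho}(y,r),
\]
and by the Jacobian formula \eqref{Jacobian scaling} the map $h^{t} : \W^{u,h}(y) \to \W^{u,h}(h^{t}y)$ expands Riemannian volume by the factor $e^{Nt} = r^{-N}$. Applying the change of variables formula (equivalently \eqref{measure scaling}) yields
\[
m_{x,h}(B_{\rho}(y,r)) \;=\; r^{N} \, m_{h^{t}x,h}\bigl(B_{\rho}(h^{t}y,1)\bigr).
\]
The unit-scale estimate applied at the point $h^{t}y \in \W^{u,h}(h^{t}x)$ then gives $m_{h^{t}x,h}(B_{\rho}(h^{t}y,1)) \asymp 1$ uniformly, and hence $m_{x,h}(B_{\rho}(y,r)) \asymp r^{N}$ with an implied constant independent of $x$, $y$, and $r$. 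This is Ahlfors $N$-regularity.

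There is no real obstacle here; the substantive content has already been proved in Proposition \ref{lem:bound} (identifying $N_{f} = N$ and recognizing $\mu_{h}$ as the SRB measure, which after an adjustment of the Riemannian structure on $V^{u,h}$ produces the clean Jacobian identity \eqref{Jacobian scaling}) and in the conformality of $h^{t}$ in the Hamenst\"adt family. The only point requiring a small amount of care is verifying the sign in the volume-scaling step, but it is forced by the requirement that $m_{x,h}(B_{\rho}(y,r)) \to 0$ as $r \to 0$.
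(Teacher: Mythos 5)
Your proof is correct and takes essentially the same route as the paper: a unit-scale volume estimate for Hamenst\"adt balls obtained from the comparability with Riemannian balls, followed by propagation to all scales using the conformal scaling of $\rho_{x,h}$ and the Jacobian identity \eqref{Jacobian scaling}. Your bookkeeping in the scaling step is in fact cleaner than the paper's: taking $t=-\log r$ and applying $h^{-t}$ gives the identity $m_{x,h}(B_{\rho}(y,r)) = r^{N}m_{h^{t}x,h}(B_{\rho}(h^{t}y,1))$ directly, whereas equation \eqref{measure scaling} as printed in the paper has the exponent sign reversed (one has $h^{t}_{*}m_{x,h} = e^{-Nt}m_{h^{t}x,h}$, not $e^{Nt}m_{h^{t}x,h}$); the paper's conclusion is nonetheless correct.
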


\begin{proof}
By the continuity of the Hamenst\"adt metrics  there is a constant $C \geq 1$ such that for all $x \in M$
\[
C^{-1}\leq m_{x,h}(B_{\rho}(x,1)) \leq C.
\]
Since $m_{x,h} = m_{y,h}$ for $y \in \W^{u,h}(x)$, it follows that we also have
\[
C^{-1}\leq m_{x,h}(B_{\rho}(y,1)) \leq C.
\]
Let $r > 0$ and $y \in \W^{u,h}(x)$ be given and write $r = e^{t}$ for some $t \in \R$. Then by equation \eqref{measure scaling},
\[
m_{x,h}(B_{\rho}(y,r)) = e^{Nt}m_{f^{t}x}(B_{\rho}(h^{t}y,1)).
\]
We conclude that
\[
m_{x,h}(B_{\rho}(y,r)) \asymp e^{Nt} = r^{N},
\]
from which the proposition follows.
\end{proof}



\subsection{From orbit equivalence to conjugacy}
Our goal in this section is to show that $g^{t}$ and $h^{t}$ are conjugate. Let $m_{g}$ and $m_{h}$ be the measures of maximal entropy for $g^{t}$ and $h^{t}$ respectively, which are also their SRB measures. We have shown above that
\[
h_{\mathrm{top}}(g) = h_{\mathrm{top}}(h) = N.
\]
We let $\varphi$ be the orbit equivalence from $g^{t}$ to $f^{t}$ given to us in the hypotheses of Theorem \ref{core theorem}, which is also an orbit equivalence from $g^{t}$ to $h^{t}$. We define the homeomorphisms $\varphi_{x}:\W^{u,g}(x) \rightarrow \W^{u,h}(\varphi(x))$ for each $x \in T^{1}X$ as in Proposition \ref{induced map}. By the Lebesgue decomposition theorem and Radon-Nikodym theorem there is an $m_{x,g}$-integrable function $J_{x}: \W^{u,g}(x) \rightarrow [0,\infty]$, and a signed measure $\kappa_{x}$ that is mutually singular with respect to $m_{x,g}$, such that we have 
\[
d((\varphi_{x})^{-1}_{*}m_{\varphi(x),h}) = J_{x} dm_{x,g} + d\kappa_{x}. 
\]
We will use the following criterion to produce the conjugacy. We note that we will be using throughout Lemma \ref{conjugacy criterion} and Proposition \ref{prop:orbit to conj} that the Riemannian volumes $m_{x,h}$ on the unstable leaves $\W^{u,h}$ are equivalent to the conditional measures of the SRB measure $m_{h}$ on $\W^{u,h}$. 

\begin{lem}\label{conjugacy criterion}
Suppose that 
\[
m_{g}\left(\left\{x \in T^{1}X: J_{x}(y) = 0 \; \text{for $m_{x,g}$-a.e.~ $y \in \W^{u,g}(x)$}\right\}\right) = 0.
\]
Then there is a conjugacy $\hat{\varphi}$ from $g^{t}$ to $h^{t}$ that is flow related to $\varphi$. 
\end{lem}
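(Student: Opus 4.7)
The strategy is to extract period equality between every $g^{t}$-periodic orbit and its $\varphi$-image in $h^{t}$, and then to adjust $\varphi$ along the flow via a H\"older solution to a Livshits cohomological equation.

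\emph{Step 1 (Ergodic upgrade of the hypothesis).} Let $\alpha(t,x)$ denote the additive cocycle satisfying $\varphi(g^{t}x) = h^{\alpha(t,x)}\varphi(x)$. The equivariance $\varphi_{g^{t}x}\circ g^{t} = h^{\alpha(t,x)}\circ\varphi_{x}$ from Proposition~\ref{induced map}, combined with the $e^{Nt}$-scalings of $m_{x,g}$ and $m_{\varphi(x),h}$ under their respective flows, gives $J_{g^{t}x} = c(t,x)\cdot J_{x}\circ g^{-t}$ $m_{x,g}$-a.e.\ for a positive scalar $c(t,x)$. Hence $B := \{x: J_{x}\equiv 0 \text{ a.e.\ on }\W^{u,g}(x)\}$ is $g^{t}$-invariant. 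Since the SRB measure $m_{g}$ is ergodic for $g^{t}$ and $m_{g}(B)=0$ by hypothesis, $J_{x}$ is positive on a set of positive $m_{x,g}$-measure for $m_{g}$-a.e.\ $x$.

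\emph{Step 2 (Bi-Lipschitz rigidity at periodic leaves).} By Proposition~\ref{quasisymmetry orbit}, each $\varphi_{x}$ is $\eta$-quasisymmetric with $\eta$ independent of $x$. Both source and target are Ahlfors $N$-regular: the source via the chart $T_{x}$ that identifies $(\W^{u,g}(x),\rho_{x,g},m_{x,g})$ (up to uniform bi-Lipschitz equivalence) with the Carnot group $(G,\rho_{G},m_{G})$, and the target by Proposition~\ref{Hamenstadt Ahlfors}. The quasiconformal theory of Ahlfors-regular Loewner spaces, together with Pansu-type rigidity for the Carnot groups attached to the rank-one symmetric spaces $\mathbf{H}^{n}_{\mathbb{K}}$, ensures that a uniformly quasisymmetric map whose pull-back measure has a nontrivial absolutely continuous part is locally bi-Lipschitz. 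A standard measure-theoretic argument, combined with density of periodic orbits and the uniform quasisymmetric modulus, then yields a finite bi-Lipschitz constant $C_{p}$ for $\varphi_{p}$ on $\W^{u,g}(p)$ at every $g^{t}$-periodic $p$.

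\emph{Step 3 (Period equality via conformal scalings).} Let $p$ be a $g^{t}$-periodic point of period $T$, so that $\varphi(p)$ is $h^{t}$-periodic of period $T' := \alpha(T,p)$, and $\varphi_{p}\circ g^{T} = h^{T'}\circ\varphi_{p}$ on $\W^{u,g}(p)$. The flow $g^{T}$ dilates $\rho_{p,g}$ by $e^{T}$ while $h^{T'}$ dilates $\rho_{\varphi(p),h}$ by $e^{T'}$; combining these with the bi-Lipschitz bound $C_{p}$ on $\varphi_{p}$ yields $|T-T'|\leq 2\log C_{p}$. Iterating the conjugation $n$ times replaces $(T,T')$ by $(nT,nT')$ with the same constant $C_{p}$, giving $|T-T'|\leq 2(\log C_{p})/n$ for every $n\geq 1$, hence $T=T'$. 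Thus $\alpha(T,p)=T$ for every $g^{t}$-periodic $p$ of period $T$.

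\emph{Step 4 (Livshits and construction of the conjugacy).} The H\"older additive cocycle $(t,x)\mapsto t-\alpha(t,x)$ vanishes on every periodic orbit of the transitive Anosov flow $g^{t}$, so by Livshits' theorem \cite{Liv} there exists a H\"older $\eta: T^{1}X\to\R$ with $\eta(g^{t}x)-\eta(x) = t-\alpha(t,x)$. Setting $\hat\varphi(x) := h^{\eta(x)}\varphi(x)$ yields a H\"older map flow related to $\varphi$, and
\[
\hat\varphi(g^{t}x) = h^{\eta(g^{t}x)+\alpha(t,x)}\varphi(x) = h^{t+\eta(x)}\varphi(x) = h^{t}\hat\varphi(x),
\]
so $\hat\varphi$ is the desired conjugacy from $g^{t}$ to $h^{t}$.

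\textbf{Main obstacle.} The technical heart is the bi-Lipschitz rigidity in Step 2: passing from mere nonvanishing of the Radon-Nikodym derivative on a full $m_{g}$-measure set to a finite bi-Lipschitz constant on the unstable leaf of \emph{every} periodic point requires both the quasiconformal analysis on Ahlfors-regular Loewner spaces and a stability/extension argument compatible with the uniform quasisymmetric modulus. Once this regularity is in place, the remaining period comparison and Livshits reduction are relatively routine.
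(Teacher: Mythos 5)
Your overall strategy — extract period equality at periodic orbits, then apply the classical Livshits theorem — is a genuinely different route from the paper's. The paper's proof builds a measurable transfer function \emph{directly out of} the Radon–Nikodym derivative: it defines a function $J(z) := J_z(z)$ on $T^1X$, shows it is $m_g$-a.e.~finite and satisfies $J(g^tx) = e^{N\alpha(t,x)-Nt}J(x)$, observes $\{J \neq 0\}$ is $g^t$-invariant (so has full $m_g$-measure by ergodicity and the hypothesis), and then reads off $N^{-1}\log J$ as a measurable transfer function realizing $\alpha(t,x) - t$ as a coboundary. The upgrade to a H\"older transfer function then comes from the \emph{measurable} rigidity of the cohomological equation for equilibrium states (the reference \cite{Liv} in Section~\ref{thermo section}), not from periodic-orbit Livshits. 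This completely sidesteps any regularity analysis of $\varphi_x$ beyond what is already known.

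Your Step 2, which you yourself flag as the "technical heart," has a genuine gap that I do not see how to close. You assert that a uniformly quasisymmetric map between Ahlfors-regular spaces whose pull-back measure has a nontrivial absolutely continuous part must be locally bi-Lipschitz, invoking Loewner-space quasiconformal theory and Pansu rigidity. This is not correct as stated. First, Pansu rigidity applies to quasiconformal \emph{self}-maps of the boundary of $\mathbf{H}^n_{\mathbb{H}}$ or $\mathbf{H}^2_{\mathbb{O}}$; here the target $(\W^{u,h}(\varphi(x)),\rho_{\varphi(x),h})$ is not yet known to be a Carnot group at this stage of the argument, so the theorem does not apply. Second, Pansu rigidity fails outright in the complex hyperbolic case, where there are abundant non-bi-Lipschitz quasiconformal self-maps of the Heisenberg group (absolutely continuous ones included); so the claim is false even in the one case where the target structure would eventually turn out to be a Carnot group. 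Third, even granting bi-Lipschitzness on some full-measure set, nothing in the hypothesis gives you information at the $m_g$-null set of periodic orbits, and the stability argument you gesture at is not supplied. Without Step 2 you cannot run Step 3, so the proof does not go through. The paper's approach avoids all of these difficulties precisely because it never needs $\varphi_x$ to be bi-Lipschitz before the conjugacy is constructed; the absolute continuity encoded in $J$ does the work entirely through the cohomological equation.
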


\begin{proof}  
For $z \in \W^{u,g}(x)$ we first relate the measures $(\varphi_{z})^{-1}_{*}m_{\varphi(z),h}$ and $(\varphi_{x})^{-1}_{*}m_{\varphi(x),h}$ on $\W^{u,g}(x)$. By Proposition \ref{induced map} we have $\varphi_{z} = h^{\xi(x,z)}\circ \varphi_{x}$ for a certain $\xi(x,z) \in \R$. We then have, using equation \eqref{measure scaling},
\begin{align*}
(\varphi_{z})^{-1}_{*}m_{\varphi(z),h} &= (\varphi_{x})^{-1}_{*}(h^{-\xi(x,z)}_{*}m_{\varphi(x),h}) \\
&= e^{-N\xi(x,z)}(\varphi_{x})^{-1}_{*}m_{\varphi(x),h}.
\end{align*}

We also compute how these measures behave under iteration by $g^{t}$. Let $\alpha$ be the additive cocycle over $g^{t}$ such that for all $x \in T^{1}X$, $t \in \R$ we have $\varphi_{g^{t}x} \circ g^{t} = h^{\alpha(t,x)} \circ \varphi_{x}$. This equation implies that
\begin{align*}
(\varphi_{g^{t}x})^{-1}_{*}m_{\varphi(g^{t}x),h} &= g^{t}_{*}(\varphi_{x}^{-1})_{*}h^{-\alpha(t,x)}_{*}m_{\varphi(g^{t}x),h} \\
&=e^{Nt-N\alpha(t,x)}(\varphi_{x})^{-1}_{*}m_{\varphi(x),h}.
\end{align*}

Now consider another $z \in \W^{u,g}(x)$. As with $x$, there is a signed measure $\kappa_{z}$ and a locally $m_{x,g}$-integrable function $J_{z}: \W^{u,g}(x) \rightarrow [0,\infty]$ such that
\[
d(\varphi_{z})^{-1}_{*}m_{\varphi(z),h} = J_{z}dm_{x,g} + d\kappa_{z}.
\]
By our previous calculations, 
\begin{align*}
(\varphi_{z})^{-1}_{*}m_{\varphi(z),h} &= e^{-N\xi(x,z)}(\varphi_{x})^{-1}_{*}m_{\varphi(x),h} \\
&= e^{-N\xi(x,z)}(J_{x}dm_{x,g}+ d\kappa_{x}).
\end{align*}
We conclude that we may take $J_{z} = e^{-N\xi(x,z)}J_{x}$ and $d\kappa_{z} = e^{-N\xi(x,z)}\kappa_{x}$ in the Radon-Nikodym decomposition for $(\varphi_{z})^{-1}_{*}m_{\varphi(z),h}$. We now define, for $m_{x,g}$-a.e.~ $z \in \W^{u,g}(x)$, 
\[
J(z):=J_{z}(z) = e^{-N\xi(x,z)}J_{x}.
\]
The function $J$ is locally $m_{x,g}$-integrable on $\W^{u,g}(x)$, so as a consequence we have $J(z) < \infty$ for $m_{x,g}$-a.e.~ $z$. By the formula $J(z) = J_{z}(z)$ we can then extend $J$ to a well-defined locally $m_{g}$-integrable function $J: T^{1}X \rightarrow [0,\infty]$ that is finite $m_{g}$-a.e. We also have from our prior calculations that for $x \in T^{1}X$ and $t \in \R$,
\[
J(g^{t}x) = e^{Nt-N\alpha(t,x)}J(x). 
\]
We conclude that $K:=\{x: J(x) \neq 0\}$ is a $g^{t}$-invariant set. 

By ergodicity of $g^{t}$ with respect to $m_{g}$, we have $m_{g}(K) = 0$ or $m_{g}(K) = 1$. Suppose that $m_{g}(K) = 0$. Then for $m_{g}$-a.e.~ $x \in T^{1}X$ and $m_{x,g}$-a.e.~ $z \in \W^{u,g}(x)$, 
\begin{align*}
0 = e^{-N\xi(x,z)}J(z) = e^{-N\xi(x,z)}J_{z}(z) = J_{x}(z).
\end{align*}
Thus for $m_{g}$-a.e.~ $x \in T^{1}X$ and $m_{x,g}$-a.e.~ $z \in \W^{u,g}(x)$, we have $J_{x}(z) = 0$. This contradicts the hypothesis of the proposition.  

Thus $m_{g}(K) = 1$. Since $0 < J(x) < \infty$ for $m_{g}$-a.e.~ $x$, we have for $m_{g}$-a.e.~ $x$,
\[
N^{-1}\log(J(g^{t}x)) - N^{-1}\log J(x) =  t-\alpha(t,x).
\]
Thus the additive cocycles $\alpha(t,x)$ and $t$ are measurably cohomologous over $g^{t}$. By the measurably rigidity of the cohomological equation discussed in Section \ref{thermo section}, these additive cocycles are H\"older cohomologous. Hence we have a H\"older function $\eta: T^{1}X \rightarrow \R$ such that for all $x \in M$ and $t \in \R$, 
\begin{equation}\label{cohomology orbit equivalence}
\eta(g^{t}x) - \eta(x) = t-\alpha(t,x).
\end{equation}
By the definition of an orbit equivalence, $\{\hat{h}^{t}\}_{t \in \R} = \{\varphi  \circ g^{t} \circ \varphi^{-1}\}_{t\in\R}$ defines a continuous flow on $M$ that is a time change of $h^{t}$. Then $\tau(t,x) = \alpha(t,\varphi^{-1}(x))$ defines an additive cocycle over $\hat{h}^{t}$ for which $\hat{h}^{t} = h^{\tau(t,x)}x$. Set $\chi(y) = \eta(\varphi^{-1}(y))$ for $y \in M$. By substituting $x = \varphi^{-1}(y)$ into \eqref{cohomology orbit equivalence} for $ y \in M$, we obtain
\[
\chi(\hat{h}^{t}y)  - \chi(y) = t-\tau(t,y).
\]
Set $\psi(y) = h^{\chi(y)}y$. Then we have
\[
\psi(h^{t}y) = h^{t+\chi(y)}y = h^{\tau(t,y) + \chi(\hat{h}^{t}y)}y = \hat{h}^{t}(\psi(y)).
\]
Thus $\psi \circ h^{t} = \hat{h}^{t} \circ \psi$. Setting $\hat{\varphi} = \psi^{-1} \circ \varphi$, we note that $\hat{\varphi}$ is flow related to $\varphi$ and conclude from the definition of $\hat{h}^{t}$ that $\hat{\varphi} \circ g^{t} = f^{t} \circ \hat{\varphi}$, i.e., $g^{t}$ and $h^{t}$ are conjugate by $\hat{\varphi}$. 
\end{proof}

\begin{prop}\label{prop:orbit to conj}
There is a conjugacy $\varphi_{u}:T^{1}X \rightarrow M$ from $g^{t}$ to $h^{t}$ that is flow related to $\varphi$.
\end{prop}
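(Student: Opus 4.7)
The plan is to verify the hypothesis of Lemma \ref{conjugacy criterion}: for $m_g$-a.e.\ $x \in T^1X$, the function $J_x$ is not identically zero on $\W^{u,g}(x)$. Once this is established, Lemma \ref{conjugacy criterion} directly produces the desired flow-related H\"older conjugacy $\varphi_u$. My approach is to upgrade this from a measure-theoretic statement about $J_x$ to the stronger analytic statement that the pushforward measure $(\varphi_x)^{-1}_{*}m_{\varphi(x),h}$ is absolutely continuous with respect to $m_{x,g}$ for \emph{every} $x$. Given absolute continuity, the Lebesgue decomposition in Lemma \ref{conjugacy criterion} forces $\kappa_x \equiv 0$ and $J_x\,dm_{x,g} = (\varphi_x)^{-1}_{*}m_{\varphi(x),h}$; since $\varphi_x$ is a homeomorphism and $m_{\varphi(x),h}$ assigns positive mass to every nonempty open set by Proposition \ref{Hamenstadt Ahlfors}, $J_x$ must be positive on a set of positive $m_{x,g}$-measure, so the set of bad $x$ in the hypothesis of Lemma \ref{conjugacy criterion} is empty.

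To establish absolute continuity, I would first pre-compose with the charts $T_x : G \to \W^{u,g}(x)$ from Section \ref{subsec:neg curved}. Since $X$ is a rank-one locally symmetric space, with the Hamenst\"adt normalization $a = 1$ the map $T_x$ is a bi-Lipschitz equivalence between $(G,\rho_G)$ and $(\W^{u,g}(x),\rho_{x,g})$, with Lipschitz constant independent of $x$ and with $(T_x)_{*}m_G$ proportional to $m_{x,g}$. The map $\psi_x := \varphi_x \circ T_x : G \to \W^{u,h}(\varphi(x))$ is then $\eta$-quasisymmetric with $\eta$ independent of $x$ by Proposition \ref{quasisymmetry orbit}, and it goes between two Ahlfors $N$-regular metric measure spaces: the source by the structure of Carnot groups (with $N = N_G$), the target by Proposition \ref{Hamenstadt Ahlfors} together with the identification $N_f = N = N_G$ from Proposition \ref{lem:bound}.

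The decisive analytic input is the Heinonen-Koskela theory of quasiconformal analysis on metric spaces. The Carnot group $(G,\rho_G,m_G)$ is Ahlfors $N$-regular and $N$-Loewner (this is a standard result for Carnot groups of homogeneous dimension $N$, e.g.\ Heinonen-Koskela, and is explained in Heinonen's monograph \emph{Lectures on Analysis on Metric Spaces}). A theorem of Heinonen-Koskela then asserts that any quasisymmetric homeomorphism from an Ahlfors $Q$-regular $Q$-Loewner space into an Ahlfors $Q$-regular space is metrically quasiconformal and, in particular, maps sets of $Q$-Hausdorff measure zero to sets of $Q$-Hausdorff measure zero. Applying this to $\psi_x$ with $Q = N$ yields $(\psi_x)^{-1}_{*}m_{\varphi(x),h} \ll m_G$, and pushing forward through the bi-Lipschitz map $T_x$ gives $(\varphi_x)^{-1}_{*}m_{\varphi(x),h} \ll m_{x,g}$, as required.

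The main obstacle I anticipate is verifying that the invocation of Heinonen-Koskela is legitimate with the regularity at hand; concretely, checking that all the quantitative data (the quasisymmetry function $\eta$, the Ahlfors regularity constants on source and target, the Loewner constant) are uniform in $x$ so that the conclusion of absolute continuity applies simultaneously on every leaf. This reduces to the uniformity in $x$ already provided by Propositions \ref{quasisymmetry orbit} and \ref{Hamenstadt Ahlfors}, together with the uniform bi-Lipschitz equivalence $T_x$; once these are assembled, the rest is a direct citation of the metric quasiconformality theory plus the combinatorics of Lemma \ref{conjugacy criterion}.
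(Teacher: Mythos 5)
Your high-level strategy coincides with the paper's: parametrize the leaf by the chart $T_x$, use the uniform quasisymmetry of $\psi_x = \varphi_x \circ T_x$ from Proposition \ref{quasisymmetry orbit} and the Ahlfors $N$-regularity of source and target, and feed the resulting nonvanishing of $J_x$ into Lemma \ref{conjugacy criterion}. Where you diverge is in how the degenerate case $J_x \equiv 0$ $m_{x,g}$-a.e.\ is ruled out, and that step has a gap.

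You invoke ``a theorem of Heinonen--Koskela'' asserting that an $\eta$-quasisymmetric homeomorphism from an Ahlfors $Q$-regular $Q$-Loewner space into a space that is merely Ahlfors $Q$-regular satisfies Lusin's condition (N). I do not believe this statement holds in that generality, and the paper's proof is structured precisely to avoid relying on it. The results of Heinonen--Koskela, Tyson, and Balogh--Koskela--Rogovin give condition (N) only when the \emph{target} carries additional structure -- a Loewner or Poincar\'e estimate, or linear local connectivity -- not mere Ahlfors regularity. At this point in the argument all that is known about $(\W^{u,h}(\varphi(x)),\rho_{\varphi(x),h},m_{\varphi(x),h})$ is Ahlfors $N$-regularity from Proposition \ref{Hamenstadt Ahlfors}; the Loewner property of this leaf is a consequence of the theorem you are trying to prove (once the leaf is shown bi-Lipschitz to the Carnot group $G$), so assuming it would be circular. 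What Balogh--Koskela--Rogovin do supply when only the source is Loewner is the Newtonian Sobolev membership $\psi_x \in N^{1,Q}_{\mathrm{loc}}$ together with absolute continuity of $\psi_x$ along $\mathrm{mod}_N$-a.e.\ rectifiable curve, which is strictly weaker than condition (N) and does not by itself kill the singular part $\kappa_x$. The paper uses exactly this weaker regularity, combined with the path-integral inequality $\rho(\psi_x(\gamma(0)),\psi_x(\gamma(a))) \le C\int_\gamma \mathcal{J}^{1/N}\,ds$ from [BKR07, Proposition 4.3], and shows that if $\mathcal{J} = J_x \circ T_x^{-1}$ vanished $m_G$-a.e.\ then $\psi_x$ would be constant along a positive-modulus family of horizontal curves connecting any two points of $G$, contradicting injectivity. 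In other words, the paper proves by hand the only absolute-continuity consequence it needs, rather than appealing to a general theorem. Your route would be shorter if the citation could be made good, but as stated it rests on a result whose hypotheses are not met: you would need to first establish Loewner-type control on the target leaves, which the paper does not and likely cannot at this stage of the argument.
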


\begin{proof}
Fix $x \in T^{1}X$. From Proposition \ref{quasisymmetry orbit} we have a quasisymmetric homeomorphism of Ahlfors $N$-regular metric measure spaces,
\[
\psi_{x}:=\varphi_{x} \circ T_{x}: (G, \rho_{G}, m_{G}) \rightarrow (\W^{u,h}(\varphi(x)), \rho_{\varphi(x),h}, m_{\varphi(x),h}).
\]
Since $G$ is a Carnot group, since $\psi_{x}$ is quasisymmetric, and since both $G$ and $\W^{u,h}(\varphi(x))$ are Ahlfors $N$-regular, we may apply \cite[Theorem 5.2]{BKR07} to conclude that $\psi_{x}$ belongs to the Sobolev space $W^{1,N}_{loc}(G;\W^{u,h}(\varphi(x)))$, as defined in \cite{BKR07}. We will not require the particulars of the definition of this Sobolev space; we only require the standard consequence that $\psi_{x}$ is absolutely continuous on $\mathrm{mod}_{N}$-a.e. rectifiable curve in $G$. 

For $y \in G$ we define
\[
\mathcal{J}(y) = \lim_{r \rightarrow 0} \frac{m_{\varphi(x),h}(\psi_{x}(B_{\rho_{G}}(y,r)))}{m_{G}(B_{\rho_{G}}(y,r))}.
\]
As per the discussion in \cite[Section 4]{BKR07}, by the Radon-Nikodym theorem this limit exists and is finite for $m_{G}$-a.e.~ $y \in G$. By \cite[Proposition 4.3]{BKR07}, for every rectifiable curve $\gamma: [0,a] \rightarrow G$ on which $\psi_{x}$ is absolutely continuous we have the distance inequality
\begin{equation}\label{abs cont inequality}
\rho_{\varphi(x),f}(\psi_{x}(\gamma(0)),\psi_{x}(\gamma(a))) \leq C\int_{0}^{a} \mathcal{J}(\gamma(s))^{\frac{1}{N}}\, ds
\end{equation}
for some constant $C \geq 1$ depending only on the quasisymmetry control function $\eta$ of Proposition \ref{quasisymmetry orbit} and the Ahlfors regularity constants of the metric measure spaces $(G,\rho_{G},m_{G})$ and $(\W^{u,h}(\varphi(x)),\rho_{\varphi(x),h},m_{\varphi(x),h})$, which are independent of $x$ by Proposition \ref{Ahlfors regular}. 

We have $d(\varphi_{x}^{-1})_{*}m_{\varphi(x),h} = J_{x}dm_{x,g} + d\kappa_{x}$, where $\kappa_{x}$ is mutually singular with respect to $m_{x,g}$. Hence we can rewrite the limit defining $\mathcal{J}$ as  
\[
\mathcal{J}(y) = \lim_{r \rightarrow 0} \frac{1}{m_{G}(B_{\rho_{G}}(y,r))}\int_{B_{\rho_{G}}(y,r)} J_{x} \circ T_{x}^{-1}\,dm_{G} + \lim_{r \rightarrow 0} \frac{(T_{x}^{-1})_{*}\kappa_{x}(B_{\rho_{G}}(y,r))}{m_{G}(B_{\rho_{G}}(y,r))}
\]
Since $G$ is Ahlfors $N$-regular and $m_{G}$ is a $\sigma$-finite Borel measure, the Lebesgue differentiation theorem holds on $G$ \cite{Hein01}. Hence, for $m_{G}$-a.e.~ $y \in G$, the first limit is $J_{x}(T_{x}^{-1}(y))$ and the second limit is 0. We conclude that $\mathcal{J}(y) = J_{x}(T_{x}^{-1}(y))$ for $m_{G}$-a.e.~ $y \in G$. 

Suppose that the hypothesis of Proposition \ref{conjugacy criterion} does not hold, i.e., suppose that
\[
m_{g}\left(\left\{x \in T^{1}X: J_{x}(y) = 0 \; \text{for $m_{x,g}$-a.e.~ $y \in \W^{u,g}(x)$}\right\}\right) = 0.
\]
Let $x \in T^{1}X$ be a point such that $J_{x}(y) = 0$ for $m_{x,g}$-a.e.~ $y \in \W^{u,g}(x)$. Then $\mathcal{J}(y) = J_{x}(T_{x}^{-1}(y)) = 0$ for $m_{G}$-a.e.~ $y \in G$. 

Write $L \subset TG$ for the left-invariant contact distribution on $G$ which is tangent to $\mathfrak{l}$ at the identity in $G$. Let $y,z \in G$ be given. We join $y$ to $z$ by an embedded smooth curve $\gamma: [0,a] \rightarrow G$ tangent to $L$ that is parametrized by arclength and satisfies $a = \l(\gamma) = \rho_{G}(y,z)$. Let $B$ denote the closed ball of radius $r$ in $\R^{k-1}$; for a sufficiently small $r > 0$ we may extend $\gamma$ to a smooth embedding $\eta: B \times [0,a] \rightarrow G$ such that, for each $p \in B$, $\gamma_{p} = \eta|_{\{p\} \times [0,a]}$ is a smooth embedded curve that is tangent to $L$ and parametrized by arc length, and we have $\gamma_{0} = \gamma$. This can be done, for instance, by considering the tangent vector field to $\gamma$, taking a smooth extension of this vector field in a small neighborhood of $\gamma$ that is tangent to $L$, and then taking the integral curves of this vector field. We write $\Gamma = \{\gamma_{p}:p \in B\}$ for the resulting curve family in $G$. 

We next show that $\mathrm{mod}_{N}(\Gamma) > 0$ (where $N = N_{G}$) by a calculation similar to the one done in the proof of Lemma \ref{positive modulus}. Let $\beta:G \rightarrow [0,\infty]$ be an admissible function for $\Gamma$. Since we are bounding the modulus from below in \eqref{mod defn}, there is no harm in assuming that $\beta \equiv 0$ outside $P:=\eta(B \times [0,a])$. By the admissibility of $\beta$ and the fact that the curves $\gamma_{p}$ are parametrized by arclength we have by applying Fubini's theorem and then H\"older's inequality with conjugate exponent $p$ to $N$,
\begin{align*}
\int_{P} \beta^{N}\, d\mu_{G} &\geq \mu_{G}(P)^{-Np^{-1}}\left(\int_{P} \beta\, d\mu_{G} \right)^{N} \\
&= \mu_{G}(B)^{-Np^{-1}}\int_{B}\int_{\gamma_{p}}\beta\, ds \,d\nu \\
&\geq \mu_{G}(B)^{-Np^{-1}} \nu(B),
\end{align*}
where now $\nu$ is a smooth nonvanishing density on the ball $B$ with respect to the volume $m_{k-1}$ on $\R^{k-1}$, so that $\nu(B) > 0$. We conclude that $\mathrm{mod}_{N}(\Gamma) > 0$. 

Since $\psi_{x}$ is absolutely continuous on $\mathrm{mod}_{N}$-a.e.~ rectifiable curve in $G$, we conclude that $\psi_{x}$ is absolutely continuous on $\gamma_{p}$ for $\mathrm{mod}_{N}$-a.e. curve $\gamma_{p} \in \Gamma$ or, equivalently, for $m_{k-1}$-a.e.~ $p \in B$. We conclude by inequality \eqref{abs cont inequality} that for $m_{k-1}$-a.e.~ $p \in B$, 
\[
\rho_{\varphi(x),f}(\psi_{x}(\gamma_{p}(0)),\psi_{x}(\gamma_{p}(a))) \leq C \int_{0}^{a} \mathcal{J}(\gamma_{p}(s))^{\frac{1}{N}}\, ds. 
\]

By Fubini's theorem, for $m_{k-1}$-a.e.~ $p \in B$ we have  $\mathcal{J}(\gamma_{p}(s)) = 0$ for a.e.~ $s \in [0,a]$. Hence for $m_{k-1}$-a.e.~ $p \in B$,
\[
\rho_{\varphi(x),h}(\psi_{x}(\gamma_{p}(0)),\psi_{x}(\gamma_{p}(a))) = 0.
\]
Since full $m_{k-1}$-measure subsets of $B$ are dense, we conclude by the continuity of $\psi_{x}$ that this also holds for $p = y$. Hence
\begin{equation}\label{path inequality}
\rho_{\varphi(x),h}(\psi_{x}(y),\psi_{x}(z)) = 0.
\end{equation}
It follows that $\psi_{x}(y) = \psi_{x}(z)$ for any $z \in G$ and therefore the image of $\psi_{x}$ is a single point in $\W^{u,h}(\varphi(x))$. This contradicts the fact that $\psi_{x}$ is a homeomorphism. Thus the hypothesis of Proposition \ref{conjugacy criterion} holds and we obtain a conjugacy $\varphi_{u}$ from $g^{t}$ to $h^{t}$ that is flow related to $\varphi$. 
\end{proof}

By analogous arguments replacing $f^{t}$ with $f^{-t}$, Propositions \ref{lem:bound} and \ref{prop:orbit to conj} apply to $h^{t}_{s}$ as well. We conclude that $Q_{f^{-1}} = Q_{X}$ and that the horizontal measure for $h^{t}_{s}$ is the SRB measure $m_{h_{s}}$. Proposition \ref{prop:orbit to conj} then gives a conjugacy $\varphi_{s}$ from $g^{-t}$ to $h_{s}^{t}$.

\section{Horizontal differentiability}\label{sec:pansuderiv}

We continue to write $h^{t}:=h^{t}_{u}$ in this section and write $\varphi:=\varphi_{u}$ for the conjugacy obtained in Proposition \ref{prop:orbit to conj}. Since we will never again refer to the original conjugacy given in the hypotheses of Theorem \ref{core theorem}, this will not cause any confusion.  In this section we will construct a horizontal derivative map $\Phi: L^{u,g} \rightarrow L^{u,h}$ for $\varphi$. 

We let $d_{x}$ be the Riemannian metric on $\W^{u,h}(x)$ coming from the Riemannian structure we have fixed on $E^{u,h}$, and $\bar{d}_{x}$ the flat Riemannian metric on $\mathcal{Q}^{u,h}(x)$ given by its isometric identification with $Q^{u,h}_{x}$ via the charts $e_{x}: Q^{u,h}_{x} \rightarrow \mathcal{Q}^{u,h}(x)$ of Proposition \ref{quotient charts}.  Of course in our notation $d_{x} = d_{y}$ and $\bar{d}_{x} = \bar{d}_{y}$ for $y \in \W^{u,h}(x)$. Then $\pi: \W^{u,h}(x) \rightarrow \mathcal{Q}^{u,h}(x)$ is a Riemannian submersion; in particular we have for $y,z \in \W^{u,h}(x)$, 
\[
\bar{d}_{x}(\pi(y),\pi(z)) \leq d_{x}(y,z),
\]
and 
\[
\bar{d}_{x}(\pi(y),\pi(z)) \geq \inf_{\substack{\pi(p) = \pi(y) \\ \pi(q) = \pi(z)}} d_{x}(p,q).
\]
Since $Dh^{t}_{x}$ is conformal on $Q^{u,h}_{x}$ with expansion factor $e^{t}$ and the charts $e_{x}$ isometrically conjugate this derivative action to the quotient map $\bar{h}^{t}: \mathcal{Q}^{u,h}(x) \rightarrow \mathcal{Q}^{u,h}(h^{t}x)$, we have for each $y ,  z \in \mathcal{Q}^{u,h}(x)$,
\[
\bar{d}_{\bar{h}^{t}x}(\bar{h}^{t}(y),\bar{h}^{t}(z)) = e^{t} \bar{d}_{x} (y,z). 
\]

We show below that the projection $\pi$ is actually globally Lipschitz for the Hamenst\"adt metrics $\rho_{x,h}$ as well. 

\begin{lem}\label{lip proj}
There is a constant $C \geq 1$ independent of $x \in M$ such that, for each $y,z \in \W^{u,h}(x)$, we have
\[
\bar{d}_{x}(\pi(y),\pi(z)) \leq C\rho_{x,h}(y,z).
\]
\end{lem}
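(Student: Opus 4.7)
The plan is to exploit the conformal scaling of both metrics under the flow $h^t$ to reduce the inequality at arbitrary scale to a uniform estimate at scale one. First, observe that $\mathcal{V}^u$ is $h^t$-invariant (since $V^u$ is $Dh^t$-invariant) so $h^t$ descends to a map $\bar{h}^t$ on $\mathcal{Q}^{u,h}$, and $\pi \circ h^t = \bar{h}^t \circ \pi$. Because the charts $e_x$ conjugate $\bar{h}^t$ to the linear map $Dh^t|_{Q^{u,h}_x}$, which scales the norm by exactly $e^t$ (equation \eqref{conformal L} passed to the quotient), and because each $e_x$ is an isometry by Proposition \ref{quotient charts}, we have
\[
\bar{d}_{h^t x}(\bar{h}^t p, \bar{h}^t q) = e^t \bar{d}_x(p, q)
\]
for all $p, q \in \mathcal{Q}^{u,h}(x)$ and $t \in \mathbb{R}$.

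Next, I would establish a uniform Lipschitz estimate at scale one: there is a constant $C_0 > 0$, independent of $w \in M$, such that $d_w(p, q) \leq 1$ implies $\bar{d}_w(\pi(p), \pi(q)) \leq C_0$ for all $p, q \in \W^{u,h}(w)$. This comes from the fact that $\pi$ is uniformly $C^2$ along $\W^{cu}$ (see Proposition \ref{prop: quotient reg}): by compactness of $M$ one can choose $r_0 > 0$ so that on any $d_w$-ball of radius $r_0$ inside a $\W^{cu}$-plaque the map $\pi$ has uniformly bounded $C^1$ norm, and a geodesic segment in $\W^{u,h}(w)$ of length at most $1$ is covered by a bounded number of such balls.

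Finally, given $y, z \in \W^{u,h}(x)$, set $t = \beta(y, z)$ so that $\rho_{x,h}(y, z) = e^{-t}$ and $d_{h^t x}(h^t y, h^t z) \leq 1$. The uniform estimate at the point $h^t x$ yields $\bar{d}_{h^t x}(\pi(h^t y), \pi(h^t z)) \leq C_0$. Substituting $\pi \circ h^t = \bar{h}^t \circ \pi$ and applying the conformality relation,
\[
e^t \bar{d}_x(\pi(y), \pi(z)) = \bar{d}_{h^t x}\bigl(\bar{h}^t \pi(y), \bar{h}^t \pi(z)\bigr) \leq C_0,
\]
so that $\bar{d}_x(\pi(y), \pi(z)) \leq C_0 e^{-t} = C_0 \rho_{x,h}(y, z)$, as required.

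The main obstacle is making the uniform unit-scale Lipschitz bound airtight: although $\pi$ is smooth along $\W^{cu}$, the leaves $\W^{u,h}(w)$ are noncompact and the target metric $\bar{d}_w$ is built from parallel transport along the flat connection $\bar{\nabla}$. One must check that, as the base point $w$ ranges over $M$, the $\W^{cu}$-chart size, the norm of $D\pi$ in these charts, and the parallel transport of the target inner product all stay uniformly controlled; this is where the combination of the uniform $C^2$ regularity along $\W^{cu}$ from Proposition \ref{prop: quotient reg} and the compactness of $M$ does the work.
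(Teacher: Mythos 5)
Your proof is correct and follows essentially the same scaling strategy as the paper: establish a unit-scale bound and then use the conformal scaling of both metrics under $h^t$ to propagate it to all scales. The only structural difference is which metric you normalize — you flow to the time $t = \beta(y,z)$ at which the Riemannian distance in the leaf reaches $1$, while the paper flows to the time at which the quotient distance $\bar d$ reaches $1$ and bounds $\rho$ from below; both are interchangeable here. Your ``main obstacle'' paragraph is unnecessary: the paper has already established, in the text immediately preceding the lemma, that $\pi:\W^{u,h}(x)\to\mathcal{Q}^{u,h}(x)$ is a Riemannian submersion, which gives $\bar d_x(\pi(p),\pi(q))\leq d_x(p,q)$ pointwise and hence the unit-scale estimate with $C_0 = 1$, with no compactness or chart-size argument needed. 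In fact your route then yields the lemma with $C = 1$, which is slightly sharper than what the paper records.
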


\begin{proof}
Let $x \in M$ be given. By the uniform comparability of the Hamenst\"adt metric $\rho_{x,h}$ to the Riemannian metric $d_{x}$ \eqref{Hamenstadt inclusion 1}, \eqref{Hamenstadt inclusion 2}, together with the fact that $\pi$ is a Riemannian submersion, we conclude that there is a constant $c > 0$ independent of $x$ such that for any $y,z \in \W^{u,h}(x)$,
\[
\bar{d}_{x}(\pi(y),\pi(z)) = 1 \Rightarrow \rho_{x,h}(y,z) \geq c.
\]
Now let $y, z \in \W^{u,h}(x)$ be arbitrary and set $r = \bar{d}_{x}(\pi(y),\pi(z))$, $s = -\log r$. Then 
\[
\bar{d}_{h^{t}x}(\pi(h^{s}y),\pi(h^{s}z)) = e^{s}\bar{d}_{x}(\pi(y),\pi(z)) = 1.
\]
Thus we have 
\[
\rho_{h^{s}x,h}(h^{s}y,h^{s}z) \geq c,
\]
and so
\begin{align*}
\rho_{x,h}(y,z) &= e^{-s}\rho_{h^{s}x,h}(h^{s}y,h^{s}z) \\
&\geq ce^{-s} \\
&= c \bar{d}_{x}(\pi(y),\pi(z)).
\end{align*}
\end{proof}

We next give a lemma that characterizes rectifiable curves for the Hamenst\"adt metric on an unstable leaf in the presence of a $u$-splitting; our formulation generalizes previous versions of this lemma used in \cite{Ham91}, \cite{Con}. Below we let $I$ denote a compact subinterval of $\R$.

\begin{lem}\label{horizontal rectifiable}
Let $f^{t}$ be a $u$-smooth Anosov flow on a closed smooth Riemannian manifold $M$ with a $u$-splitting $E^{u} = L^{u} \oplus V^{u}$. Suppose we have a Riemannian structure on $E^{u}$ with associated norm $\|\cdot\|$ such that $\|Df^{t}(v)\| = e^{t}\|v\|$ for all $v \in L^{u}$ and $t \in \R$ and $\mathfrak{m}(Df^{t}_{x}|_{E^{u}_{x}}) \geq e^{t}$ for $x \in M$, $t \geq 0$. Then a continuous curve $\gamma: I \rightarrow \W^{u}(x)$  is rectifiable with respect to the Hamenst\"adt metric $\rho_{x,f}$ if and only if $\gamma$ is Lipschitz with respect to the Riemannian metric $d_{x}$ on $I$ and for a.e.~ $s \in I$ we have $\gamma'(s) \in L^{u}_{\gamma(s)}$. 
\end{lem}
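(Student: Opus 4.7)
The plan is to prove the two implications separately, exploiting the dominated splitting $E^u = L^u \oplus V^u$ in an essential way. The hypothesis forces $\|Df^t|_{L^u}\| = \mathfrak{m}(Df^t|_{L^u}) = e^t$, while domination combined with the lower bound $\mathfrak{m}(Df^t|_{E^u}) \geq e^t$ yields an exponent $\chi > 1$ and a constant $C_0 \geq 1$ such that $\mathfrak{m}(Df^t|_{V^u}) \geq C_0^{-1}e^{\chi t}$ for all $t \geq 0$; Proposition \ref{new metric} can be used to absorb $C_0$ into the Riemannian structure if desired.

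For the backward direction, I assume $\gamma$ is $d_x$-Lipschitz with $\gamma'(s) \in L^u_{\gamma(s)}$ a.e., and reparametrize by $d_x$-arclength so that $\|\gamma'(s)\| = 1$ a.e. For $s_1 < s_2$ sufficiently close, the Riemannian length of $f^t \circ \gamma|_{[s_1,s_2]}$ in $\W^u(f^tx)$ equals $\int_{s_1}^{s_2}\|Df^t(\gamma'(s))\|\,ds = e^t(s_2-s_1)$ by exact conformality on $L^u$. Setting $t = -\log(s_2-s_1)$ bounds $d_{f^tx}(f^t\gamma(s_1),f^t\gamma(s_2))$ by $1$, giving $\beta(\gamma(s_1),\gamma(s_2)) \geq -\log(s_2-s_1)$ and therefore $\rho_x(\gamma(s_1),\gamma(s_2)) \leq s_2-s_1$. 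Summing over a partition bounds the $\rho_x$-length of $\gamma$ by its $d_x$-length.

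For the forward direction, the elementary observation $\rho_x(y,z) \geq C_1 d_x(y,z)$ at small scales, immediate from $\mathfrak{m}(Df^t|_{E^u}) \geq e^t$, shows that a $\rho_x$-rectifiable curve is also $d_x$-rectifiable; after reparametrization by $\rho_x$-arclength the curve is $1$-Lipschitz for $\rho_x$ and hence Lipschitz for $d_x$, so by Rademacher it is differentiable almost everywhere with $\gamma'(s) \in E^u_{\gamma(s)}$. To rule out a nontrivial $V^u$-component of $\gamma'(s_0)$ at a differentiability point, I would introduce smooth local coordinates on $\W^u(x)$ adapted to the foliation $\V^u$ tangent to $V^u$ (Proposition \ref{prop: splitting reg}) and establish the sharper lower bound
\begin{equation*}
\rho_x(y,z) \geq C_2 |v(y) - v(z)|^{1/\chi},
\end{equation*}
where $v$ records the $V^u$-coordinate of a point. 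This holds because the $V^u$-component of the displacement alone expands at rate at least $e^{\chi t}$, forcing $d_{f^tx} \geq 1$ by time $t = \chi^{-1}\log(C_0/|v(y)-v(z)|)$ and hence $\beta \leq \chi^{-1}\log(C_0/|v(y)-v(z)|)$. If $\gamma'(s_0)$ had a nonzero $V^u$-component, then $|v(\gamma(s)) - v(\gamma(s_0))| \geq c|s-s_0|$ near $s_0$, and combining with $\rho_x(\gamma(s),\gamma(s_0)) \leq |s-s_0|$ coming from the $\rho_x$-arclength parametrization yields $|s-s_0|^{1-1/\chi} \geq C_2 c^{1/\chi}$, contradicting $1-1/\chi > 0$ as $s \to s_0$.

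The main obstacle will be making the key inequality $\rho_x(y,z) \geq C_2|v(y)-v(z)|^{1/\chi}$ uniform across a neighborhood and across base points $x \in M$, since the $V^u$-coordinate displacement in smooth $\V^u$-adapted charts must be compared to the intrinsic distance along $\V^u$ along which the lower bound on $\mathfrak{m}(Df^t|_{V^u})$ is naturally stated. The uniform $C^r$ regularity of $V^u$ along $\W^u$ from Proposition \ref{prop: splitting reg}, together with compactness of $M$ and the flow invariance of $\V^u$, should reduce this to a routine but technical verification using the equivariance of the charts.
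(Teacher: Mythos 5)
Your backward implication follows the paper's argument: reparametrize by $d_x$-arclength, use exact conformality of $Df^t$ on $L^u$ to compute $\ell_d(f^t \circ \gamma|_{[a,b]}) = e^t(b-a)$, deduce $\beta(\gamma(a),\gamma(b)) \geq -\log(b-a)$, and conclude $\rho_x(\gamma(a),\gamma(b)) \leq b-a$. For the forward direction you take a genuinely different route, and I want to explain why the paper's route is the safer one. The paper's argument is a chart-free \emph{length} estimate at a Lebesgue density point: if $K = \{s: \gamma'(s) \notin L^u_{\gamma(s)}\}$ has positive measure, pass to a compact subset with angle to $L^u$ bounded below, take a density point $s_0$, and restrict $\gamma$ to a small interval $B_r$ around $s_0$. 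Conformal scaling gives $\ell_\rho(f^t\circ\gamma_r) = e^t\ell_\rho(\gamma_r) \leq \kappa e^t r$; the domination estimate on $K$ gives $\ell_d(f^t\circ\gamma_r) \geq \delta e^{\chi t}|B_r \cap K| \geq \delta e^{\chi t} r$; and the unit-scale comparison $\ell_d \leq c^{-1}\ell_\rho$ then forces $\delta e^{\chi T} \leq \kappa c^{-1}e^T$ once one chooses a large $T$ and then $r$ small enough that $\ell_\rho(f^T\circ\gamma_r) \leq 1$. That is impossible, and no coordinate system was ever invoked.

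Your argument instead rests on the pointwise ball-box inequality $\rho_x(y,z) \geq C_2|v(y)-v(z)|^{1/\chi}$, and I want to sharpen your own caveat: this is much more than a ``routine but technical'' verification. First, the inequality as stated is false for a generic $\V^u$-adapted chart, because the Hamenst\"adt metric sees the group-theoretic vertical displacement rather than the raw coordinate difference; when $L^u$ is not integrable (the whole point of the nonconstant-curvature case), the two differ by a term quadratic in the horizontal displacement, and one can make $|v(y)-v(z)|$ arbitrarily larger than $\rho_x(y,z)^{\chi}$ by translating the chart origin. Second, even after recentering the chart at $\gamma(s_0)$, proving that the $v$-coordinate of $f^ty$ in a chart recentered at $f^t\gamma(s_0)$ grows at the clean rate $e^{\chi t}|v(y)|$ requires absorbing the nonlinear off-diagonal terms of $f^t$, and when $|v(y)| \lesssim |u(y)|^2$ (i.e. whenever the horizontal displacement dominates) those error terms are of the same order as the main term. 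What you would be proving is, in effect, that the Hamenst\"adt metric is bi-Lipschitz to a Carnot--Carath\'eodory metric in adapted coordinates --- a statement of the same depth as the Carnot structure the paper is ultimately trying to establish via normal forms and fiber bunching, and one that the hypotheses of this lemma alone do not support. The density-point length argument sidesteps this entirely: it compares only curve lengths before and after flowing, never coordinate differences, and those three scaling facts are exactly what the hypotheses of the lemma give.
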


\begin{proof}
We first suppose that $\gamma:I \rightarrow \W^{u}(x)$ is Lipschitz with respect to the metric $d_{x}$ on $\W^{u}(x)$ and that for a.e.~ $s \in I$ we have $\gamma'(s) \in L^{u}_{\gamma(s)}$. By reparametrizing $\gamma$ with respect to $d_{x}$-arc length (which does not affect $\rho_{x,f}$-rectifiability), we may assume that $\|\gamma'(s)\| = 1$ for a.e.~ $s \in I$. Since it suffices to prove local rectifiability we can assume that $\gamma:I \rightarrow \W^{u}(x)$ is injective on $I$ by restricting to smaller closed subintervals. 

Let $a,b \in I$ with $a < b$. For those $s$ for which $\gamma'(s)$ exists with $\gamma'(s) \in L^{u}_{\gamma(s)}$ we have $\|Df^{t}(\gamma'(s))\| = e^{t}\|\gamma'(s)\|$ for all $t \in \R$. This implies that, for $t \geq 0$, we have 
\begin{align*}
d_{f^{t}_{x}}(f^{t}(\gamma(b)),f^{t}(\gamma(a))) &\leq \int_{a}^{b} \|Df^{t}(\gamma'(s))\| \, ds\\
&= e^{t}\int_{a}^{b} \|\gamma'(s)\| \, ds \\
&= e^{t}(b-a),
\end{align*}
since $\gamma$ is parametrized by arc length. Thus by fixing a $T$ such that
\[
d_{f^{T}x}(f^{T}(\gamma(b)),f^{T}(\gamma(a))) = 1,
\]
we must have $e^{T}(b-a) \geq 1$, which implies that $T \geq -\log (b-a)$. This implies that 
\[
\beta(\gamma(a),\gamma(b)) \geq -\log (b-a),
\]
where $\beta$ is defined as in \eqref{beta equation}. This implies that 
\[
\rho(\gamma(a),\gamma(b)) = e^{-\beta(\gamma(a),\gamma(b))} \leq  b-a.
\]
This implies that the restriction of $\rho_{x,f}$ to $\gamma$ in the parametrization of $\gamma$ with respect to arc length is 1-Lipschitz with respect to the Riemannian metric $d_{x}$. It follows immediately that $\gamma$ is $\rho_{x,f}$-rectifiable. 

Now assume that $\gamma$ is $\rho_{x,f}$-rectifiable. By uniform comparability of the Hamenst\"adt metrics and Riemannian metrics at unit scale (\eqref{Hamenstadt inclusion 1}  and \eqref{Hamenstadt inclusion 2} with $r = 1$), there is a constant $c > 0$ such that, for all $x \in M$ and any $y,z \in W^{u,f}(x)$, 
\begin{equation}\label{comparison equation}
d_{x}(y,z) = 1 \Rightarrow \rho_{x,f}(y,z) \geq c.
\end{equation}
By reversing time in the hypothesis $\mathfrak{m}(Df^{t}_{x}|_{E^{u}_{x}}) \geq e^{t}$ for $t \geq 0$, we obtain the estimate $\|Df^{-t}|_{E^{u}}\| \leq e^{-t}$ for $t \geq 0$. By applying $f^{-t}$ to each side of \eqref{comparison equation} (still assuming $d_{x}(y,z)=1$) we conclude that
\begin{align*}
d_{f^{-t}x}(f^{-t}y,f^{-t}z) &\leq e^{-t}d_{x}(y,z) \\
&\leq c^{-1}e^{-t}\rho_{x,f}(y,z) \\
&= c^{-1}\rho_{f^{-t}x,f}(f^{-t}y,f^{-t}z).
\end{align*}
From this we see that $d_{x}(y,z) \leq c^{-1}\rho_{x,f}(y,z)$ for any $x \in M$ whenever $y,z \in \W^{u,f}(x)$ satisfy $d_{x}(y,z) \leq 1$. Thus rectifiability of $\gamma$ with respect to $\rho_{x,f}$ implies that $\gamma$ is also rectifiable with respect to $d_{x}$.  Hence $\gamma$ is Lipschitz with respect to $d_{x}$, and so $\gamma$ is differentiable at a.e.~ $s \in I$. By reparametrizing, we may assume that $\gamma$ is parametrized with respect to arc length for the metric $d_{x}$. Let $\l_{d}$ denote lengths of curves measured in the Riemannian metric $d_{x}$, and let $\l_{\rho}$ denote lengths of curves measured in the Hamenst\"adt metric $\rho_{x,f}$. Let $\kappa$ denote the Lipschitz constant of the curve $\gamma$ in the metric $\rho_{x,f}$.

Assume to a contradiction that there is a positive measure subset $K \subseteq I$ on which $\gamma'(s) \notin L^{u}$. By using Lusin's theorem we can pass to a positive measure compact subset of $K$ (which we will still denote by $K$) on which we may assume that there is a $\theta > 0$ such that $\measuredangle(\gamma'(s), L^{u}_{\gamma(s)}) \geq \theta$ for all $s \in K$. By our domination estimates \eqref{splitting inequality} in the norm $\| \cdot \|$, there is a  constant $\chi > 1$ and a constant $\delta > 0$ such that for every $s \in K$ and $t \geq 0$,
\[
\|Df^{t}(\gamma'(s))\| \geq \delta e^{\chi t}\|\gamma'(s)\|.
\]

Let $s_{0} \in K$ be a Lebesgue density point in $I$ for the set $K$. Let $B_{r}$ denote the interval of radius $r$ centered at $s_{0}$. For $r$ small enough we have 
\[
| B_{r} \cap  K | \geq \frac{|B_{r}|}{2} = r.
\]
Let $\gamma_{r}: B_{r} \rightarrow \W^{u,f}(x)$ denote the restriction of $\gamma$ to $B_{r}$. There is a constant $c > 0$ such that, for those $t \geq 0$ for which $\l_{\rho}(f^{t} \circ \gamma_{r}) \leq 1$, we have using the consequences of \eqref{comparison equation},
\begin{align}\label{first lipschitz}
\l_{d}(f^{t} \circ \gamma_{r}) &\leq c^{-1}\l_{\rho}(f^{t} \circ \gamma_{r}) \\
&= c^{-1} e^{t} \l_{\rho}(\gamma_{r}) \\
&\leq \kappa c^{-1}e^{t}r
\end{align}

On the other hand we have, recalling that $\gamma$ is parametrized with respect to $d_{x}$-arc length,
\begin{align}\label{second lipschitz}
\l_{d}(f^{t} \circ \gamma_{r}) &\geq e^{t}\l_{d}(\gamma_{r}|_{B_{r} \backslash K}) + \delta e^{\chi t} \l_{d}(\gamma_{r}|_{B_{r} \cap K}) \\
&= ce^{t}|B_{r} \backslash K| + \delta e^{\chi t} |B_{r} \cap K| \\
&\geq r\delta e^{\chi t }.
\end{align}
Choose $T$ large enough that $\delta e^{\chi T} > \kappa c^{-1}  e^{T}$. Then choose $r = r(T) > 0$ small enough that $\l_{\rho}(f^{T} \circ \gamma_{r}) \leq 1$. We conclude by combining \eqref{first lipschitz} and \eqref{second lipschitz} that we have
\[
\kappa c^{-1} r e^{T} \geq \l_{d}(f^{t} \circ \gamma_{r}) \geq  r\delta e^{\chi T},
\]
which, after canceling $r$ from each side, gives the inequality $\kappa c^{-1} e^{T} \geq \delta e^{\chi T}$, which contradicts our choice of $T$. We conclude that $\gamma'(s) \in L^{u}_{\gamma(s)}$ for a.e.~ $s \in I$. 
\end{proof}

We will also require a preliminary proposition that is a straightforward generalization of a theorem of Sadovskaya \cite{S15}. Below we let $M$ and $N$ be closed $C^2$ Riemannian manifolds and let $0 < \alpha \leq 1^{-}$. 

\begin{prop}\label{quasi cohomologous}
Let $f^{t}: M \rightarrow M$, $g^{t}: N \rightarrow N$ be transitive $C^2$ Anosov flows orbit equivalent by a homeomorphism $\varphi: M \rightarrow N$. Let $E$, $F$ be $C^{\alpha}$ vector bundles over $M$ and $N$ respectively and let $A^{t}: E \rightarrow E$, $B^{t}: F \rightarrow F$ be uniformly quasiconformal $C^{\alpha}$ linear cocycles over $f^{t}$ and $g^{t}$ respectively. Let $\mu$ be an $f^{t}$-invariant fully supported ergodic probability measure with local product structure. Let $\alpha$ be the additive cocycle over $f^{t}$ such that for all $x \in M$ and $t \in \R$,
\[
\varphi(f^{t}x) = g^{\alpha(t,x)}(\varphi(x)).
\]
Suppose that there is a measurable bundle map $\Phi:E \rightarrow F$ covering $\varphi$ such that for $\mu$-a.e. $x \in M$ we have for every $t \in \R$, 
\[
\Phi_{f^{t}x}\circ A^{t}_{x} = B_{\varphi(x)}^{\alpha(t,x)} \circ \Phi_{x}.
\]
Then $\Phi$ coincides $\mu$-a.e.~ with an orbit equivalence $\bar{\Phi}$ from $A^{t}$ to $B^{t}$ 
\end{prop} 

\begin{proof}
We take a Markov partition for $f^{t}$ as in Section \ref{thermo section}. We thus have a subshift of finite type $\Sigma$ with shift map $\sigma: \Sigma \rightarrow \Sigma$ and a H\"older roof function $\psi: \Sigma \rightarrow (0,\infty)$ such that the suspension flow $h^{t}: \Sigma_{\psi} \rightarrow \Sigma_{\psi}$ is semiconjugate to $f^{t}$ by a finite-to-one H\"older surjection $\kappa: \Sigma_{\psi} \rightarrow M$. Furthermore there is a unique $h^{t}$-invariant ergodic probability measure $\nu$ with local product structure such that $\kappa_{*}\nu = \mu$.  We obtain a Markov partition for $g^{t}$ by pushing the Markov partition for $f^{t}$ forward by $\varphi$, giving that $h^{t}$ is semiconjugate to $g^{t}$ via $\varphi \circ \kappa$. 

We pull everything back to $\Sigma_{\psi}$ using $\kappa$. We let $\t{E} = \kappa^{*}E$ and $\t{F} = (\varphi \circ \kappa)^{*}F$ denote the pullback bundles over $\Sigma_{\psi}$, and define linear cocycles $\t{A}^{t}: \t{E} \rightarrow \t{E}$ and $\t{B}^{t}: \t{F} \rightarrow \t{F}$ over $h^{t}$ by setting $\t{A}^{t}_{x} = A^{t}_{\kappa(x)}$ for $x \in \Sigma_{\psi}$ and similarly setting  $\t{B}^{t}_{x} = B^{t}_{\varphi(\kappa(x))}$. For $\nu$-a.e.~ $x \in \Sigma_{\psi}$ we set $\t{\Phi}_{x} = \Phi_{\kappa(x)}$. Then for $\nu$-a.e.~ $x \in \Sigma_{\psi}$, the equality 
\begin{equation}\label{measurable conjugacy}
\t{\Phi}_{f^{t}x}\circ \t{A}^{t}_{x} = \t{B}_{\varphi(x)}^{\alpha(t,x)} \circ \t{\Phi}_{x},
\end{equation}
holds for all $t \in \R$, by construction. 

Let $\t{E}^{0}$, $\t{F}^{0}$ denote the restrictions of each of these bundles to $\Sigma \times \{0\} \subseteq \Sigma_{\psi}$, which we will identify with $\Sigma$. Set $k = \dim E = \dim F$. Being H\"older vector bundles over a Cantor set $\Sigma$, we have H\"older trivializations $\t{E}^{0} \cong \R^{k}$, $\t{F}^{0} \cong \R^{k}$, which may be obtained for instance by partitioning $\Sigma$ into cylinders on which the bundle is trivial and noting that these cylinders are both open and closed in $\Sigma$. For each $x \in \Sigma$, identifying $x$ with $(x,0) \in \Sigma_{\psi}$, the map
\[
\t{A}^{\psi(x)}_{(x,0)}: \t{E}_{x}^{0} \rightarrow \t{E}_{\sigma(x)}^{0},
\]
gives rise to a linear map $S_{x}: \R^{k} \rightarrow \R^{k}$ in this trivialization, which induces a H\"older linear cocycle $S: \Sigma \times \R^{k} \rightarrow \Sigma \times \R^{k}$ over $\sigma$. Likewise $\t{B}^{\psi(x)}_{(x,0)}$ induces a H\"older linear cocycle $T: \Sigma \times \R^{k} \rightarrow \Sigma \times \R^{k}$ over $\sigma$. We let $\nu^{0}$ denote the $\sigma$-invariant probability measure on $\Sigma$ corresponding to the $h^{t}$-invariant measure $\nu$ on $\Sigma_{\psi}$ \cite{BR75} which also has local product structure. For $\nu^{0}$-a.e.~ $x \in \Sigma$ we then set $\Psi_{x}: \R^{k} \rightarrow \R^{k}$ to be the linear map corresponding to $\Phi_{(x,0)}:\t{E}_{x}^{0} \rightarrow \t{F}_{\varphi(x)}^{0}$ in this trivialization. The measurable orbit equivalence equation \eqref{measurable conjugacy} then gives us for $\nu^{0}$-a.e.~ $x \in \Sigma$, 
\begin{equation}\label{2nd measurable conjugacy}
\Psi_{\sigma(x)}\circ S_{x} = T_{x} \circ \Psi_{x}.
\end{equation}

Both of the linear cocycles $S$ and $T$ over $\sigma$ are H\"older continuous and uniformly quasiconformal, since they were constructed from the H\"older continuous uniformly quasiconformal linear cocycles $\t{A}$ and $\t{B}$ by taking a cross section. Hence by \cite[Theorem 2.7]{S15} the measurable conjugacy $\Psi$ coincides $\nu^{0}$-a.e.~ with a conjugacy $\bar{\Psi}$ between $S$ and $T$. Returning to the bundles $\t{E}^{0}$ and $\t{F}^{0}$ and passing back up to the suspension flow, we obtain a conjugacy (which we will still denote by $\bar{\Psi}:\t{E} \rightarrow \t{F}$) from $\t{A}^{t}$ to $\t{B}^{t}$ which agrees $\nu$-a.e.~ with $\t{\Phi}$. 

The semiconjugacy $\kappa: \Sigma_{\psi} \rightarrow M$ is a continuous bijection off of an $h^{t}$-invariant $\nu$-null set in $\Sigma_{\psi}$. From this we obtain on an $f^{t}$-invariant full $\mu$-measure subset $\bar{M} \subset M$ on which there is an orbit equivalence $\bar{\Phi}:E|_{\bar{M}} \rightarrow F|_{\varphi(\bar{M})}$ from $A^{t}$ to $B^{t}$ which agrees $\mu$-a.e.~ with $\Phi$. Since $\bar{M}$ is dense in $M$, it follows that we may extend $\bar{\Phi}$ continuously to a map $\bar{\Phi}: E \rightarrow F$ giving an orbit equivalence between $A^{t}$ and $B^{t}$. 
\end{proof}

As discussed in Section \ref{subsec:neg curved}, we have a 1-parameter family of expanding automorphisms $A^{t}$ of $G$ such that, writing $\mathfrak{g} = \mathfrak{l} \oplus \mathfrak{v}$ for the Lie algebra of $G$ split into horizontal and vertical subspaces, $DA^{t}$ expands $\mathfrak{l}$ by $e^{t}$ and $\mathfrak{v}$ by $e^{2t}$. Likewise, for an $l$-dimensional vector space $Q$ we have the standard 1-parameter family of expanding linear maps $B^{t}$ on $Q$ given by $B^{t}x = e^{t}x$. We say that a homomorphism $\psi: G \rightarrow Q$ is \emph{homogeneous} if $\psi \circ A^{t} = B^{t} \circ \psi$, i.e., $\psi$ commutes with these dilations. 

\begin{defn}[Pansu differentiable]\label{Pansu diff}
We say that a map $F:G \rightarrow Q$ is \emph{Pansu differentiable} at $x \in G$ if there is a homogeneous homomorphism $DF_{x}: G \rightarrow Q$ such that for all $y \in G$ we have
\[
DF_{x}(y) = \lim_{t \rightarrow 0} B^{1/t}(F(x \cdot A^{t}y) - F(x)).
\]
We then say that $DF_{x}$ is the \emph{Pansu derivative} of $F$ at $x$. 
\end{defn}
See \cite{P89b} for basic properties of the Pansu derivative and its uses. The Pansu derivative may be defined more generally for maps between any pair of Carnot groups, but we will only need the formulation for $G$ and $V$ described above.

Our next proposition gives us a horizontal derivative of the map $\varphi: \W^{u,g}(x) \rightarrow \W^{u,h}(\varphi(x))$  for each $x \in M$.
 
\begin{prop}\label{horizontal diff}
There is a continuous bundle map $\Phi: L^{u,g} \rightarrow L^{u,h}$ covering $\varphi$ such that $\Phi \circ Dg^{t} = Dh^{t} \circ \Phi$ for all $t \in \R$. Furthermore, for each $C^1$ curve $\gamma$ tangent to $L^{u,g}$ we have that $\varphi \circ \gamma$ is a $C^1$ curve tangent to $L^{u,h}$ satisfying $(\varphi \circ \gamma)' = \Phi \circ \gamma'$. 
\end{prop}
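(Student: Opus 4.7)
The plan is to construct $\Phi$ via a Pansu-type renormalization limit, leveraging the fact that $Dh^{t}$ acts conformally on $L^{u,h}$ with rate $e^{t}$ while expanding $V^{u,h}$ strictly faster (rate $e^{2t}$ by the $u$-splitting and the synchronization performed in Section \ref{subsec:synchro}). This exponential gap will allow us to kill the vertical components in a limit. Since $Dg^{t}$ is also conformal on $L^{u,g}$ with rate $e^{t}$ (from the symmetric-space structure), the two conformal rates match, which is what makes the renormalization converge.

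Concretely, for fixed $x \in T^{1}X$ and $v \in L^{u,g}_{x}$, let $\gamma : (-\varepsilon,\varepsilon) \to \W^{u,g}(x)$ be any $C^{1}$ curve tangent to $L^{u,g}$ with $\gamma(0)=x$, $\gamma'(0)=v$. I will define
\[
\Phi_{x}(v) = \lim_{t \to \infty} Dh^{-t}_{h^{t}\varphi(x)} \circ e_{h^{t}\varphi(x)}^{-1}\bigl( \pi_{\V^{u,h}}(\varphi_{x}(g^{-t}(\gamma_{t}))) \bigr),
\]
where $\gamma_{t}$ denotes the point on $\gamma$ at $\rho_{x,g}$-distance $e^{t}$ from $x$ and $e_{y}$ are the affine charts from Proposition \ref{quotient charts}. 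The intuition is that conjugation by $g^{t}$ pushes $\gamma_{t}$ to unit $\rho_{g^{t}x,g}$-distance from $g^{t}x$; applying $\varphi$ and quasisymmetry (Proposition \ref{quasisymmetry orbit}) keeps the image at bounded $\rho_{\varphi(g^{t}x),h}$-distance; projecting to $\mathcal Q^{u,h}$ via $\pi_{\V^{u,h}}$ and pulling back by $Dh^{-t}$ in the flat charts $e_{\cdot}$ extracts the horizontal component. The $\V^{u,h}$-direction contributions decay because the Hamenst\"adt metric contracts faster than the horizontal identification — Lemma \ref{lip proj} controls the size of $\pi_{\V^{u,h}}$ by the Hamenst\"adt metric on one side, while the horizontal chart is a true isometry on the other.

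Once existence of the limit is established, equivariance $\Phi_{g^{t}x} \circ Dg^{t}_{x} = Dh^{t}_{\varphi(x)} \circ \Phi_{x}$ follows tautologically from $\varphi \circ g^{t} = h^{t} \circ \varphi$ and the fact that our renormalization commutes with shifting the time parameter. Linearity of $\Phi_{x}$ on $L^{u,g}_{x}$ follows from the Carnot group structure of $\W^{u,g}(x) \cong G$, since translations in the horizontal layer are isometries of $\rho_{x,g}$ and $Dg^{t}$-commuting. At this stage $\Phi$ is a measurable bundle map intertwining two $C^{\alpha}$ uniformly quasiconformal linear cocycles $Dg^{t}|_{L^{u,g}}$ and $Dh^{t}|_{L^{u,h}}$ defined $m_{g}$-almost everywhere; since $m_{g}$ is a fully supported equilibrium state with local product structure, Proposition \ref{quasi cohomologous} upgrades $\Phi$ to a H\"older bundle map defined everywhere and intertwining the $Dg^{t}$ and $Dh^{t}$ actions for all $t \in \R$.

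Finally, for the curve statement, let $\gamma$ be a $C^{1}$ curve tangent to $L^{u,g}$. By Lemma \ref{horizontal rectifiable}, $\gamma$ is locally Lipschitz with respect to $\rho_{x,g}$; Proposition \ref{quasisymmetry orbit} and the Ahlfors regularity of both metric measure spaces (plus absolute continuity along curves as in the proof of Proposition \ref{prop:orbit to conj}) imply that $\varphi \circ \gamma$ is locally rectifiable in $\rho_{\varphi(x),h}$. The converse direction of Lemma \ref{horizontal rectifiable} then forces $\varphi \circ \gamma$ to be Lipschitz in the Riemannian metric with $(\varphi \circ \gamma)'(s) \in L^{u,h}$ for almost every $s$. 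Identifying these almost-everywhere derivatives with $\Phi(\gamma'(s))$ uses the definition of $\Phi$ at a Lebesgue point together with continuity of both $\Phi$ and $\gamma'$, and then continuity of both sides promotes the equality to every $s$, from which the $C^{1}$ regularity of $\varphi \circ \gamma$ follows.

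The main obstacle is proving convergence of the defining limit for $\Phi_{x}(v)$: the quasisymmetric control from Proposition \ref{quasisymmetry orbit} is only qualitative, and to obtain an actual Cauchy sequence I expect to need the Gibbs-type estimates from Proposition \ref{time change Gibbs property} together with the precise scaling of $Dh^{t}$ on $V^{u,h}$ (Jacobian $e^{2(k-l)t}$, cf. \eqref{vertical Jacobian}) to control the vertical "error" terms, showing they contract at rate $e^{-t}$ relative to the horizontal scale. Establishing this cleanly in a chart-independent way, while maintaining measurability in $x$, is the technical heart of the argument.
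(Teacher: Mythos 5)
Your plan to extract a horizontal derivative by renormalization is in the same spirit as the paper's argument, and you correctly identify the roles of the charts $e_{\cdot}$, Lemma \ref{lip proj}, Lemma \ref{horizontal rectifiable}, and the eventual invocation of Proposition \ref{quasi cohomologous}. But there is a genuine gap at exactly the step you flag as ``the technical heart.'' Your limit
\[
\Phi_{x}(v) = \lim_{t \to \infty} Dh^{-t}_{h^{t}\varphi(x)} \circ e_{h^{t}\varphi(x)}^{-1}\bigl( \pi_{\V^{u,h}}(\varphi_{x}(g^{-t}(\gamma_{t}))) \bigr)
\]
does not converge at a \emph{specified} point $x$ merely because the composition is Lipschitz: a Lipschitz map between Carnot groups (or even between Euclidean spaces) need not be differentiable at any preassigned point, and the Lipschitz constant controls boundedness of the difference quotients, not their convergence. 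The Gibbs estimates of Proposition \ref{time change Gibbs property} that you hope to use cannot rescue this, because they control conditional measures of unstable balls (and hence almost-everywhere regularity), not pointwise oscillation of a specific Lipschitz curve. What the paper does instead, and what you are missing, is to first show that the composition $\Psi = e_{\pi(\varphi(x))}^{-1} \circ \pi \circ \varphi \circ T_{x}: G \to Q^{u,h}_{\varphi(x)}$ is Lipschitz for the Carnot metrics and then invoke the Pansu--Rademacher--Stepanov theorem \cite{P89b} to obtain $m_{G}$-a.e.\ Pansu differentiability. This is the structural input that turns ``Lipschitz'' into ``a.e.\ differentiable''; there is no way around needing a Rademacher-type theorem here. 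Once you have $m_{G}$-a.e.\ differentiability the target being abelian forces $V \subseteq \ker D\Psi_{y}$ because $[\mathfrak{l},\mathfrak{l}] = \mathfrak{v}$, so the derivative factors through $G/V$ and gives a genuinely linear map; this is cleaner and stronger than the appeal to ``translations being $Dg^{t}$-commuting isometries'' you suggest for linearity.

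There is a second, smaller gap you should also address. Proposition \ref{quasi cohomologous} requires a \emph{measurable conjugacy}, i.e.\ that $\Phi_{x}$ be invertible $m_{g}$-a.e.; it does not apply to an arbitrary measurable intertwining bundle map. Your proposal jumps from ``$\Phi$ is a measurable bundle map intertwining the two cocycles'' directly to the H\"older upgrade, silently assuming invertibility. The paper spends a nontrivial portion of the proof establishing this: the kernel $\ker\Phi$ is a measurable $Dg^{t}$-invariant subbundle, which (by conformality of $Dg^{t}|_{L^{u,g}}$ and the results already used in Proposition \ref{exp to conf}) must coincide a.e.\ with a continuous, holonomy-invariant subbundle, and the only such subbundles of $L^{u,g}$ are $\{0\}$ and $L^{u,g}$; the case $\ker\Phi = L^{u,g}$ a.e.\ is then ruled out because it would force $\pi\circ\varphi$ to be locally constant along all horizontal curves and hence constant, contradicting surjectivity. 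Without this dichotomy-and-contradiction argument, you have not verified the hypotheses of Proposition \ref{quasi cohomologous}, and the H\"older upgrade is not available.
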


\begin{proof}
Fix $x \in M$. We denote by $T_{x}: G \rightarrow \W^{u,g}(x) $ our standard chart for $\W^{u,g}(x)$. Consider the composition of maps 
\[
\Psi: G \xrightarrow{T_{x}} \W^{u,g}(x) \xrightarrow{\varphi} \W^{u,h}(\varphi(x)) \xrightarrow{\pi_{\varphi(x)}} \mathcal{Q}^{u,h}(\varphi(x)) \xrightarrow{e_{\pi(\varphi(x))}^{-1}} Q^{u,h}_{\varphi(x)}.
\]
We consider $\W^{u,g}(x)$ and $\W^{u,h}(\varphi(x))$ with their Hamenst\"adt metrics and $\mathcal{Q}^{u,h}(\varphi(x))$ with its flat metric $\bar{d}_{\varphi(x)}$ induced from the chart $e_{\varphi(x)}$. By Proposition \ref{conjugacy invariant Hamenstadt}, we have that $\varphi$ is Lipschitz. By Lemma \ref{lip proj}, $\pi_{\varphi(x)}$ is also Lipschitz, and by Proposition \ref{quotient charts} $e_{\pi(\varphi(x))}^{-1}$ is an isometry.  We conclude that  $\Psi$ is Lipschitz with respect to the Carnot-Caratheodory metric $\rho_{G}$ on $G$ and the Euclidean metric on $Q^{u,h}_{\varphi(x)}$ induced by the Riemannian structure on $Q^{u,h}$.

By the Pansu-Rademacher-Stepanov theorem for Lipschitz maps between Carnot groups \cite{P89b}, the map $\Psi$ is Pansu differentiable $m_{G}$-a.e.~ on $G$. The Pansu derivative $D\Psi_{y}: G \rightarrow Q^{u,h}_{\varphi(x)}$ at each $y \in G$ is a homogeneous homomorphism. Since $[\mathfrak{l},\mathfrak{l}] = \mathfrak{v}$ in the Lie algebra $\mathfrak{g}$ of $G$, and since $Q^{u,h}_{\varphi(x)}$ is abelian, the vertical subgroup $V$ of $G$ satisfies $V \subseteq \ker D\Psi_{y}$. Hence $D\Psi_{y}$ descends to a homogeneous homomorphism $D\Psi_{y}: G/V \rightarrow Q^{u,h}_{\varphi(x)}$; since both $G/V$ and $Q^{u,h}_{\varphi(x)}$ are abelian, $D\Psi_{y}$ actually gives  a linear map between these two groups.

Since we showed in the proof of Proposition \ref{prop:orbit to conj} that $\varphi \circ T_{x}$ is absolutely continuous along $\mathrm{mod}_{N}$-a.e.~ curve in $G$, we conclude that the map $\Psi$ is also absolutely continuous along $\mathrm{mod}_{N}$-a.e.~ curve in $G$. It follows directly from this that for any left-invariant vector field $Z$ on $G$ that is tangent to $L$, $\Psi$ is absolutely continuous along $\mathrm{mod}_{N}$-a.e. curve in the foliation $\mathcal{Z}$ of $G$ by the integral curves of $Z$ and therefore $\Psi$ is differentiable in the $Z$-direction at $m_{G}$-a.e.~ point in $G$. Letting $Z_{1},\dots,Z_{l}$ be an orthonormal frame of left-invariant vector fields spanning $L$ and combining this with the $m_{G}$-a.e.~ differentiability of $\Psi$ obtained in the previous paragraph, we conclude that at $m_{G}$-a.e.~ $y \in G$ it is the case that $\Psi$ is differentiable at $y$ and that $D\Psi_{y}: L_{y} \rightarrow T_{\Psi(y)}Q_{\varphi(x)}^{u,h}$ is the unique linear transformation with $D\Psi_{y}(Z_{i}(y)) = \mathcal{Z}_{i,y}'(0)$ for $1 \leq i \leq l$, where $\mathcal{Z}_{i,y}$ is the curve $\mathcal{Z}_{i}(y)$ of the foliation $\mathcal{Z}_{i}$ through $y$, parametrized by $\rho_{G}$-arclength, such that $\mathcal{Z}_{i,y}(0) = y$.

It follows that $\pi \circ \varphi: \W^{u,g}(x) \rightarrow \mathcal{Q}^{u,h}(\varphi(x))$ has the same properties: let $Y_{i} = DT_{x}(Z_{i})$, $\mathcal{Y}_{i} = T_{x}(\mathcal{Z}_{i})$, $1 \leq i \leq l$, then at $m_{x,g}$-a.e.~ $y \in \W^{u,g}(x)$ we have that $\pi_{\varphi(x)} \circ \varphi$ is differentiable in each of the directions $Y_{1},\dots,Y_{l}$, that the total horizontal derivative $D_{L}(\pi_{\varphi(x)} \circ \varphi): L^{u,g} \rightarrow T\mathcal{Q}^{u,h}(\varphi(x))$ exists $m_{x,g}$-a.e., and finally that $D_{L}(\pi_{\varphi(x)} \circ \varphi)$ evaluates to the derivative along $Y_{i}$ for $1 \leq i\leq l$ $m_{x,g}$-a.e. We then define for $y \in \W^{u,g}(x)$,
\[
\Phi_{y} = (D\pi_{\varphi(x)})^{-1} \circ D_{L}(\pi_{\varphi(x)} \circ \varphi)_{y}: L^{u,g}_{y} \rightarrow L^{u,h}_{\varphi(y)}. 
\]
Performing this analysis on each leaf $\W^{u,g}(x)$, we obtain for $m_{g}$-a.e. $y \in T^{1}X$ a linear map $\Phi_{y}: L^{u,g}_{y} \rightarrow L^{u,h}_{\varphi(y)}$. From the fact that $\varphi$ is a conjugacy, we deduce that for $m_{g}$-a.e.~ $y \in T^{1}X$,
\begin{equation}\label{invertible semiconjugacy}
\Phi_{g^{t}y} \circ Dg^{t}_{y} = Dh^{t}_{\varphi(y)} \circ \Phi_{y},
\end{equation}
for all $t \in \R$.

We next show that $\Phi_{y}$ is invertible for $m_{g}$-a.e. $y \in T^{1}X$. By the measurable semiconjugacy equation \eqref{invertible semiconjugacy}, the set 
\[
K=\{y \in T^{1}X: \Phi_{y} \; \text{is invertible}\},
 \]
is $g^{t}$-invariant and thus either $m_{g}(K) = 0$ or $m_{g}(K) = 1$. Hence it suffices to show that we cannot have $m_{g}(K) = 0$.

Suppose otherwise, so that for $m_{g}$-a.e. $y \in T^{1}X$ we have that $\Phi_{y}$ is not invertible at $y$. Then $\ker \Phi \subseteq L^{u,g}$ is a measurable $Dg^{t}$-invariant subbundle defined $m_{g}$-a.e. Since $Dg^{t}|_{L^{u,g}}$ is conformal, by \cite[Lemma 2.5]{Bu1} $\ker \Phi$ coincides $m_{g}$-a.e.~ with a $Dg^{t}$-invariant continuous subbundle of $L^{u,g}$ that is invariant under both the stable and unstable holonomies $H^{s,g}$ and $H^{u,g}$ of  $Dg^{t}|_{L^{u,g}}$. The proof of Proposition \ref{exp to conf} shows that such a subbundle of $L^{u,g}$ must be trivial, i.e., we must either have $\ker \Phi_{y} = L^{u,g}_{y}$ or $\ker \Phi_{y} = \{0\}$ for $m_{g}$-a.e.~ $y \in T^{1}X$. If $\ker \Phi_{y} = \{0\}$ for $m_{g}$-a.e.~ $y \in M$ then $\Phi_{y}$ is invertible for $m_{g}$-a.e~ $y \in M$, contradicting that $m_{g}(K) = 0$.  Otherwise we have $\ker \Phi_{y} = L^{u,g}_{y}$ for $m_{g}$-a.e.~ $y \in T^{1}X$.

Let $x \in T^{1}X$ be such that $ \ker \Phi_{y} = L^{u,g}_{y}$ for $m_{g}$-a.e.~ $y \in \W^{u,g}(x)$. It follows that for $\mathrm{mod}_{N}$-a.e. curve in the foliation $\mathcal{Y}_{i}$, $1 \leq i \leq l$, the derivative of $\pi_{\varphi(x)} \circ \varphi$ is $0$ a.e. Since $\pi_{\varphi(x)} \circ \varphi$ is absolutely continuous on these curves, it follows that $\pi \circ \varphi$ is constant on $\mathrm{mod}_{N}$-a.e. curve $\mathcal{Y}_{i}$. By continuity we conclude that $\pi_{\varphi(x)} \circ \varphi$ is constant on every curve in $\mathcal{Y}_{i}$. Since any two points in $\W^{u,g}(x)$ can be connected by a sequence of curves from these $l$ foliations, it follows that $\pi_{\varphi(x)} \circ \varphi$ is constant, which contradicts the fact that it is surjective.

Hence $\Phi_{y}$ is invertible for $m_{g}$-a.e.~ $y \in T^{1}X$. By applying Proposition \ref{quasi cohomologous} to the corresponding time-changed bundle map and horizontal derivative cocycle for the smooth flow $f^{t}$ we obtained $h^{t}$ as a time change of, we conclude that $\Phi$ coincides $m_{g}$-a.e.~ with a continuous bundle map from $Dg^{t}|_{L^{u,g}}$ to $Dh^{t}|_{L^{u,h}}$ satisfying the conjugacy equation \ref{invertible semiconjugacy} $m_{g}$-a.e., hence by continuity satisfying it everywhere. We will continue to denote this bundle map by $\Phi$.  As a consequence, since $\Phi_{y}$ is  continuous in $y$ we obtain that $D\Psi_{y}$ is  continuous in $y$ as well. We conclude that $\Psi: G \rightarrow Q^{u,h}_{\varphi(x)}$ is continuously Pansu differentiable with Pansu derivative $D\Psi$. This implies that for any $C^1$ curve $\gamma$ in $G$ that is tangent to the horizontal distribution $L \subseteq TG$, we have that $\Psi \circ \gamma$ is a $C^1$ curve in $Q^{u,h}_{\varphi(x)}$ with $(\Psi \circ \gamma)' = D\Psi \circ \gamma'$. 

Set $\bar{\Phi}_{x} = D\pi_{\varphi(x)} \circ \Phi_{x}$, $\bar{\varphi}(x) = \pi_{\varphi(x)}(\varphi(x))$ for each $x \in T^{1}X$. For each $x \in M$ the assertions of the previous paragraph show that if $\gamma: I \rightarrow \W^{u,g}(x)$ is a $C^1$ curve tangent to $L^{u,g}$, then $\bar{\varphi} \circ \gamma$ is a $C^1$ curve in $\mathcal{Q}^{u,h}(x)$ with $(\bar{\varphi} \circ \gamma)' = \bar{\Phi} \circ \gamma'$. Since $\varphi: (\W^{u,g}(x),\rho_{x,g}) \rightarrow (\W^{u,h}(\varphi(x)),\rho_{\varphi(x),h})$ is Lipschitz, the curve $\sigma:=\varphi \circ \gamma$ is rectifiable with respect to $\rho_{x,h}$. By Lemma \ref{horizontal rectifiable}, $\sigma$ is a Lipschitz curve in the Riemannian metric $d_{\varphi(x)}$ on $W^{u,h}(\varphi(x))$ with $\sigma'(s) \in L^{u,h}_{\sigma(s)}$ for a.e.~ $s$. Since $\pi_{\varphi(x)} \circ \sigma = \bar{\varphi} \circ \gamma$, we have
\[
D\pi_{\varphi(x)} \circ \sigma' = (\bar{\varphi} \circ \gamma)' = \bar{\Phi} \circ \gamma',
\]
with this equality holding a.e.~ on $I$. Since $\sigma' \in L^{u,h}$ a.e.~ on $I$, we can invert $D\pi_{\varphi(x)}$ in this equation to obtain 
\[
\sigma' = \Phi\circ \gamma',
\]
a.e.~ on $I$. This implies that $\sigma = \varphi \circ \gamma$ is a $C^1$ curve tangent to $L^{u,h}$ with $(\varphi \circ \gamma)' = \Phi \circ \gamma'$. 
\end{proof}

We observe that $\Phi$ intertwines the unstable holonomies on $L^{u,g}$ and $L^{u,h}$ respectively. We let $H^{u,g}$ and $H^{u,h}$ denote the families of unstable holonomies for $Dg^{t}|_{L^{u,g}}$ and $Dh^{t}_{L^{u,h}}$ given by Proposition \ref{existuhol}.

\begin{lem}\label{intertwine holonomies}
For all $x \in T^{1}X$ and $y \in \W^{u,g}(x)$,
\[
\Phi_{y} \circ H^{u,g}_{xy} = H^{u,h}_{\varphi(x)\varphi_{x}(y)} \circ \Phi_{x}.
\]
\end{lem}

\begin{proof}
Note that in the conclusion of \cite[Theorem 2.7]{S15} the resulting conjugacy that is obtained is H\"older continuous. Tracing this back through the proof of Proposition \ref{quasi cohomologous} shows that the orbit equivalence obtained there is H\"older. We conclude from the use of Proposition \ref{quasi cohomologous} in the previous proposition that $\Phi$ itself is H\"older. For $x \in T^{1}X$, $y \in \W^{u,g}(x)$ set 
\begin{equation}\label{intertwine}
H^{u}_{xy} = \Phi_{y}^{-1} \circ H^{u,h}_{\varphi(x)\varphi(y)} \circ \Phi_{x}
\end{equation}
The family of maps $H_{xy}^{u}:L^{u,g}_{x} \rightarrow L^{u,g}_{y}$ satisfies (1) and (2) of Proposition \ref{existuhol} by their construction and the conjugacy property of $\Phi$. The H\"older continuity of $\Phi$ implies that they will also satisfy (3) for an exponent $0 < \alpha \leq 1$, some constant $C$, and a small enough $r > 0$. Since $Dg^{t}|_{L^{u,g}}$ is a smooth linear cocycle on a smooth bundle $L^{u,g}$ and since the action of $Dg^{t}|_{L^{u,g}}$ on that bundle is $1$-fiber bunched by the conformality of $Dg^{t}|_{L^{u,g}}$, we conclude by the uniqueness statement of Proposition \ref{existuhol} that $H^{u}_{xy} = H^{u,g}_{xy}$ for all $x \in T^{1}X$ and $y \in \W^{u,g}(x)$, which implies the desired result by the definition \eqref{intertwine} of $H^{u}$.
\end{proof}

Our next proposition  shows that $\varphi$ maps the foliation $\V^{u,g}$ foliation $\V^{u,h}$. We need a preliminary lemma. Let $\hat{\Phi}: Q^{u,g} \rightarrow Q^{u,h}$ denote the map corresponding to $\Phi:L^{u,g} \rightarrow L^{u,h}$ via the projection isomorphisms $L^{u,*} \rightarrow Q^{u,*}$. Let $\kappa: M \rightarrow  \mathcal{R}Q^{u,h}$ be the Riemannian structure on $Q^{u,f}$ obtained by pushing forward the Riemannian structure $\{\langle \;, \; \rangle\}_{x \in T^{1}X}$ on $L^{u,g}$ coming from the left-invariant Riemannian structure on the horizontal distribution $L\subseteq TG$, i.e., for $v$, $w \in Q^{u,h}$, 
\[
\kappa(v,w) = \langle \hat{\Phi}^{-1}(v),\hat{\Phi}^{-1}(w)\rangle.
\]
Recall that $\nabla$ denotes the $Dh^{t}|_{Q^{u,h}}$-invariant connection on $Q^{u,h}$ constructed in Proposition \ref{holonomy connect}, which is flat and smooth along $\W^{u,h}$. 

\begin{lem}\label{orientation preserve}
The Riemannian structure $\kappa$ is $\nabla$-invariant along $\W^{u,h}$.  
\end{lem}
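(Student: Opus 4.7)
The plan is to trace the Riemannian structure back to its source on the Carnot group $G$, show it is preserved by the unstable holonomies there, transport this invariance across the conjugacy $\Phi$, and finally invoke the identification of $H^{u,h}$-holonomy with $\nabla$-parallel transport.

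First, I will observe that by Proposition \ref{holonomy connect} (applied with $E=Q^{u,h}$ and the $1$-fiber bunching hypothesis (2) of Theorem \ref{core theorem}), parallel transport of $\nabla$ along $\W^{u,h}$ coincides with the holonomy maps $H^{u,h}$ of the cocycle $Df^{t}|_{Q^{u,h}}$. Thus it suffices to show that $\kappa$ is invariant under $H^{u,h}$. By Proposition \ref{quasi cohomologous} together with Proposition \ref{horizontal diff}, the H\"older conjugacy $\Phi$ intertwines the unstable holonomies: for each $x\in T^{1}X$ and $y\in \W^{u,g}(x)$,
\[
\Phi_{y}\circ H^{u,g}_{xy}=H^{u,h}_{\varphi(x)\varphi(y)}\circ \Phi_{x}.
\]
Since the projection isomorphisms $L^{u,*}\to Q^{u,*}$ commute with $Df^{t}$ and therefore (by uniqueness of holonomies in Proposition \ref{existuhol}) with $H^{u,*}$, the same intertwining relation holds for $\hat{\Phi}$. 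Consequently $\kappa$ is $H^{u,h}$-invariant if and only if the Riemannian structure $\langle\,\cdot\,,\,\cdot\,\rangle$ on $L^{u,g}$ inherited from the left-invariant metric on $L\subset TG$ via the charts $T_x$ is $H^{u,g}$-invariant.

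The remaining step is thus to verify the $H^{u,g}$-invariance of this structure on $L^{u,g}$. Fix $x\in T^{1}X$ and work in the chart $T_{x}:G\to\W^{u,g}(x)$, which conjugates $g^{t}|_{\W^{u,g}(x)}$ to the Carnot automorphism $A^{t}$ and identifies $L^{u,g}$ with the left-invariant horizontal distribution $L\subset TG$. For $p,q\in G$, left translation $L_{qp^{-1}}:G\to G$ preserves $L$, is an isometry of the left-invariant metric restricted to $L$, and satisfies the intertwining relation $A^{t}\circ L_{qp^{-1}} = L_{A^{t}(qp^{-1})}\circ A^{t}$. Pushing these maps forward by $T_{x}$ yields a family $\t H_{y,z}:L^{u,g}_{y}\to L^{u,g}_{z}$ for $y,z\in\W^{u,g}(x)$ that restricts to the identity on the diagonal, satisfies the cocycle identity, and is $Dg^{t}$-equivariant in the sense of Proposition \ref{existuhol}(2); it is also Lipschitz close to the identity in the natural trivializations and hence obeys the short-distance estimate of Proposition \ref{existuhol}(3). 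The uniqueness statement in Proposition \ref{existuhol} then forces $\t H_{y,z}=H^{u,g}_{y,z}$. Since left translations are isometries of the left-invariant metric on $L$, the family $H^{u,g}$ preserves $\langle\,\cdot\,,\,\cdot\,\rangle$ along leaves of $\W^{u,g}$.

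The main technical point is the identification of $H^{u,g}$ with the derivatives of left translations in the Carnot charts; once this is in hand, the rest is bookkeeping using the intertwining property of $\hat{\Phi}$ and the coincidence of $H^{u,h}$ with $\nabla$-parallel transport. Combining the three steps, $\nabla$-parallel transport of $\kappa$ along $\W^{u,h}$ equals $H^{u,h}$-transport, which by the intertwining with $\hat{\Phi}$ reduces to $H^{u,g}$-transport of $\langle\,\cdot\,,\,\cdot\,\rangle$, which is the identity on inner products. Hence $\kappa$ is $\nabla$-invariant along $\W^{u,h}$, as required.
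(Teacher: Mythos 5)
Your proof is correct and follows the same overall route as the paper: reduce $\nabla$-invariance to $H^{u,h}$-invariance via Proposition \ref{holonomy connect}, transfer across $\hat{\Phi}$ using the intertwining from Proposition \ref{quasi cohomologous}, and reduce to $H^{u,g}$-invariance of the left-invariant inner product on $L^{u,g}$. Where you go further than the paper is the last step: the paper stops at "The lemma follows," leaving implicit that the unstable holonomies $H^{u,g}$ of $Dg^{t}_{X}|_{L^{u,g}}$ preserve the Carnot inner product, whereas you supply an explicit argument identifying $H^{u,g}$ with the derivatives of left translations in the charts $T_{x}$ via the uniqueness clause of Proposition \ref{existuhol}. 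That identification is the crux of the lemma, and your verification of the cocycle and equivariance conditions (using $A^{t}\circ L_{qp^{-1}} = L_{A^{t}(qp^{-1})}\circ A^{t}$) is the right way to pin it down; one could alternatively deduce the isometry property directly from the conformality of $Dg^{t}_{X}|_{L^{u,g}}$ in the limit defining $H^{u,g}$, but your route through left translations is clean and makes the underlying symmetric-space structure visible.
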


\begin{proof}
By Lemma \ref{intertwine holonomies} $\Phi$ is equivariant with respect to the unstable holonomies of $Dg^{t}|_{L^{u,g}}$ and $Dh^{t}|_{L^{u,h}}$, and an analogous equivariance property holds for $\hat{\Phi}$ with respect to the unstable holonomies of $Dg^{t}|_{Q^{u,g}}$ and $Dh^{t}|_{Q^{u,h}}$. By Proposition \ref{holonomy connect} the unstable holonomies for $Dh^{t}|_{Q^{u,h}}$ are given by parallel transport by $\nabla$. The lemma follows.
\end{proof}

\begin{prop}\label{vert preserve}
For each $x \in T^{1}X$ we have $\varphi(\V^{u,g}(x)) = \V^{u,h}(\varphi(x))$. 
\end{prop}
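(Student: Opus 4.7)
The strategy is to establish the inclusion $\varphi(\V^{u,g}(x)) \subseteq \V^{u,h}(\varphi(x))$ by showing that the composition $\bar\varphi_x := \pi_{\V^{u,h}} \circ \varphi : \W^{u,g}(x) \to \mathcal{Q}^{u,h}(\varphi(x))$ is constant on each leaf of $\V^{u,g}$. Transferring to the chart $T_x: G \to \W^{u,g}(x)$ from Section \ref{subsec:neg curved} and using the isometric identification $e_{\varphi(x)}: Q^{u,h}_{\varphi(x)} \to \mathcal{Q}^{u,h}(\varphi(x))$, this reduces to showing that the map $\Psi_x := e^{-1}_{\pi_{\V^{u,h}}(\varphi(x))} \circ \bar\varphi_x \circ T_x : G \to Q^{u,h}_{\varphi(x)}$ is constant on cosets of the vertical subgroup $V \subset G$. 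The proof of Proposition \ref{horizontal diff} already establishes that $\Psi_x$ is Lipschitz in the Carnot-Caratheodory metric, is continuously Pansu differentiable, and that its Pansu derivative $D\Psi_x(g)$ annihilates $V$ at every point (using $[\mathfrak{l},\mathfrak{l}] = \mathfrak{v}$ together with the abelianness of the target).

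The key step is a flow-amplification argument. The conjugacy $\varphi \circ g^t = h^t \circ \varphi$ combined with the intertwining of charts for $g^t$ on $G$ (via $A^t$, the Carnot dilation) and for $\bar h^t$ on $Q^{u,h}$ (via $B^t = e^t \cdot \mathrm{Id}$) yields the equivariance $\Psi_{g^t x}(A^t g) = B^t \Psi_x(g)$. Specializing to $t = -T$ for any fixed $g \in G$ and $v \in V$ and using $A^{-T}v = e^{-2T}v$ for vertical $v$ gives
\[
\Psi_x(g \cdot v) - \Psi_x(g) = e^T \bigl[\Psi_{g^{-T}x}(A^{-T}g \cdot e^{-2T}v) - \Psi_{g^{-T}x}(A^{-T}g)\bigr].
\]
As $T \to \infty$, both $A^{-T}g$ and $e^{-2T}v$ have CC-norm $O(e^{-T})$ and collapse to the identity $e \in G$. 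Applying Pansu linearization at $e$ to each of the two terms inside the bracket, and using that the linear map $D\Psi_{g^{-T}x}(e)$ vanishes on $V$, the bracket is $o(e^{-T})$ provided the Pansu linearization error is uniform in the base point $y = g^{-T}x$ ranging over the compact set $T^1X$. Multiplying by $e^T$, one obtains $\Psi_x(g v) - \Psi_x(g) = o(1)$ as $T \to \infty$; since the left side is independent of $T$, it must vanish.

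For the reverse inclusion $\V^{u,h}(\varphi(x)) \subseteq \varphi(\V^{u,g}(x))$, I would note that $\V^{u,g}(x)$ and $\V^{u,h}(\varphi(x))$ are connected topological manifolds of the same dimension $k - l$, and that $\varphi|_{\V^{u,g}(x)}$ is a continuous injection into $\V^{u,h}(\varphi(x))$ by the first part. Invariance of domain then forces $\varphi(\V^{u,g}(x))$ to be open in $\V^{u,h}(\varphi(x))$, and it is also closed by flow-equivariance combined with properness of the foliation; connectedness yields equality.

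The principal technical obstacle is establishing the uniform Pansu linearization at the identity across the family $\{\Psi_y\}_{y \in T^1X}$, that is, showing that the modulus
\[
\omega_y(\delta) := \sup_{0 < \rho_G(e,g) \leq \delta} \frac{|\Psi_y(g) - \Psi_y(e) - D\Psi_y(e)(g)|}{\rho_G(e,g)}
\]
tends to zero as $\delta \to 0^+$ uniformly in $y$. Pointwise convergence $\omega_y(\delta) \to 0$ is just Pansu differentiability at $e$; uniformity follows from the H\"older continuity of $\Phi$ in Proposition \ref{horizontal diff}, which provides equicontinuity of the Pansu derivatives $D\Psi_y(g)$ jointly in $(y,g)$, combined with the compactness of $T^1X$ via a Dini-type argument applied to the monotone family $\delta \mapsto \omega_y(\delta)$.
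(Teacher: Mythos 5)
Your proposal takes a genuinely different route from the paper's proof. The paper argues geometrically: given $y \in \V^{u,g}(x)$, using $[\mathfrak{l},\mathfrak{l}]=\mathfrak{v}$ one builds a horizontal rectangle $\gamma$ in $\W^{u,g}(x)$ (via the chart $T_x$) tangent to $L^{u,g}$ whose endpoints are $x$ and $y$; then $\sigma = \varphi\circ\gamma$ is a piecewise $C^1$ curve tangent to $L^{u,h}$ by Proposition~\ref{horizontal diff}, and projecting $\sigma$ into the flat affine chart on $\mathcal{Q}^{u,h}(\varphi(x))$ (Proposition~\ref{quotient charts}) and invoking that $\hat\Phi$ is an orientation- and angle-preserving isometry (Lemma~\ref{orientation preserve}) together with the torsion-freeness of $\bar\nabla$ (Proposition~\ref{quotient connect}), one concludes that the projected curve is a \emph{literal} Euclidean rectangle, so it closes up exactly and hence $\pi(\varphi(y)) = \pi(\varphi(x))$. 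Your flow-amplification argument instead uses only the Lipschitz and $C^1_H$ properties of $\Psi$ from Proposition~\ref{horizontal diff} and the fact that homogeneous homomorphisms into an abelian target annihilate $[\mathfrak{l},\mathfrak{l}]$; it does not appeal to Lemma~\ref{orientation preserve}. The equivariance $\Psi_{g^{-T}x}\circ A^{-T} = (Dh^{-T}|_{Q^{u,h}})\circ\Psi_x$ you use is correct up to an inserted isometry coming from the conformality of $Dh^t|_{Q^{u,h}}$, which does not affect the norm estimates, so the reduction to the uniform Pansu linearization is sound.

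However, your proposed resolution of that linearization gap does not work. You want $\omega_y(\delta)\to 0$ uniformly in $y\in T^1X$ by a Dini-type argument, but Dini requires $y\mapsto\omega_y(\delta)$ to be \emph{upper} semicontinuous, and $\omega_y(\delta)$ is a supremum of a continuous function over the \emph{noncompact} punctured ball $\{0<\rho_G(e,g)\leq\delta\}$, hence only \emph{lower} semicontinuous; establishing joint continuity of the normalized error at $(y,e)$ is exactly the uniformity statement you are trying to prove, so the argument is circular. A correct route is a mean-value estimate: for $w\in V$, write $gw$ as the endpoint of a horizontal rectangle of side $s\asymp\rho_G(e,w)$ based at $g$, integrate $D\Psi_y$ over that loop (this is valid since $\Psi_y$ is absolutely continuous on $\mathrm{mod}_N$-a.e.\ rectifiable curve), and pair the opposite sides to get $|\Psi_y(gw)-\Psi_y(g)|\lesssim s\cdot\omega(s)$ with $\omega$ the modulus of continuity of $D\Psi_y$, which is uniform over $T^1X$ by the H\"older continuity of $\Phi$. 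Carrying this out carefully essentially reconstructs the paper's commutator-rectangle computation, so the two proofs are closer in substance than they first appear. Your reverse-inclusion argument via invariance of domain is fine, since the leaves of $\V^{u,g}$ and $\V^{u,h}$ are closed embedded submanifolds of the corresponding unstable leaves (a coset of the closed subgroup $V\subseteq G$, and the leaves admitting a Hausdorff quotient by Proposition~\ref{prop: quotient reg}).
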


\begin{proof} 
We give $Q^{u,h}$ the Riemannian structure $\kappa$ of Lemma \ref{orientation preserve}, whose norm we denote by $|\cdot|$. By construction $\hat{\Phi}:Q^{u,g} \rightarrow Q^{u,h}$ restricts to an orientation-preserving isometry $\hat{\Phi}_{x}: Q^{u,g}_{x} \rightarrow Q^{u,h}_{\varphi(x)}$ on each fiber for this Riemannian structure on $Q^{u,h}$. Since $\hat{\Phi} \circ Dg^{t} = Dh^{t} \circ \hat{\Phi}$ on $Q^{u,g}$, from this isometry we conclude that $|Dh^{t}(v)| =e^{t}|v|$ for all $v \in L^{u,h}$. 

Let $x \in T^{1}X$ and $y \in \V^{u,g}(x)$ be given. Then we must show that $\varphi(y) \in \V^{u,h}(\varphi(x))$. Using the chart $T_{x}$, we view $\W^{u,g}(x)$ as a homogeneous copy of $G$ with $x$ at $0$. Since $ [\mathfrak{l},\mathfrak{l}] = \mathfrak{v}$, there are two orthogonal left-invariant vector fields $X_{1}$ and $X_{2}$ on $G$, which are tangent to $\mathfrak{l}$ at $0$, such that $Y:=[X_{1},X_{2}]$ is a left-invariant vector field on $G$ for which $\exp(Y(0)) = y$, i.e., $y$ is the image of $Y(0)$ by the exponential map $\exp: \mathfrak{g} \rightarrow G$. 

Let $P_{g}$ be the plane in $\R^{l}$ spanned by the projection of $X_{1}(0)$ and $X_{2}(0)$ to $\R^{l}$. Since $[X_{1},X_{2}] = Y$, there is a clockwise oriented rectangle in $P_{g}$ with sides $\bar{\gamma}_{i}$, $1 \leq i \leq 4$ -- each of which are tangent to either the projection of $X_{1}$ or the projection of $X_{2}$ to $\R^{l}$ -- such that the curve $\bar{\gamma}$ lifts to a piecewise smooth curve $\gamma$ tangent to $L^{u,g}$ which has initial point $x$ and endpoint $y$. We let $\gamma_{i}$, $1 \leq i \leq 4$, denote the lifts of the sides of the rectangle. 

Thus $\sigma:=\varphi \circ \gamma$ is a piecewise $C^1$ curve tangent to $L^{u,h}$ that starts at $\varphi(x)$ and ends at $\varphi(y)$. Set $\sigma_{i}:=\varphi \circ \gamma_{i}$. Since the tangent vectors to the curve $\sigma_{i}$ are given by $\Phi \circ \gamma'_{i}$, and since $\Phi$ is an isometry on fibers of $L^{u,g}$ and $L^{u,h}$, we conclude that $\l(\sigma_{i}) =\l(\gamma_{i})$ for $1 \leq i \leq 4$, where $\l$ denotes the length of the curve. 

By Lemma \ref{orientation preserve} $\kappa$ is $\nabla$-invariant. Consequently $\kappa$ projects to a Riemannian structure $\bar{\kappa}$ on $T\mathcal{Q}^{u,h}(\varphi(x))$ which is $\bar{\nabla}$-invariant, where $\bar{\nabla}$ is the projection of $\nabla$ to $T\mathcal{Q}^{u,h}$ constructed prior to Proposition \ref{quotient connect}. Since $\bar{\nabla}$ is torsion-free by Proposition \ref{quotient connect}, we conclude that $\bar{\nabla}$ is the Levi-Civita connection for $\bar{\kappa}$. By the construction of Proposition \ref{quotient charts} we then have an isometric chart $e_{\varphi(x)}: Q^{u,h}_{\varphi(x)} \rightarrow T\mathcal{Q}^{u,h}(\varphi(x))$ given by the exponential map, with $Q^{u,h}$ having the flat metric induced by the inner product $\kappa_{\varphi(x)}$.

Since the projections $\pi_{*}: \W^{u,*}(x) \rightarrow \mathcal{Q}^{u,*}$ (where $\mathcal{Q}^{u,h}$ is equipped with the Riemmanian metric induced by the Riemannian structure $\bar{\kappa}$) are isometric on curves tangent to the $L^{u,*}$ bundle (for $* \in \{g,h\}$), we have that $\bar{\sigma_{i}}:= \pi \circ \sigma_{i}$ satisfies $\l(\bar{\sigma}_{i}) = \l(\bar{\gamma}_{i})$. Let $P_{h}$ be the plane in $Q^{u,h}_{\varphi(x)}$ spanned by the vector fields $De_{\varphi(x)}^{-1} \circ D\pi_{h} \circ \Phi \circ X_{i}$, for $i = 1,2$. These vector fields are parallel under the pullback of $\bar{\nabla}$ to $Q^{u,h}_{\varphi(x)}$, therefore they are constant and tangent to a foliation of $Q^{u,h}_{\varphi(x)}$ by parallel lines. Both $\hat{\Phi}$ and the projections $D\pi_{g}$ and $D\pi_{h}$ also preserve orientation and angles, so $\bar{\sigma}$ gives a clockwise oriented curve tangent to $P_{h}$ such that its sides $\bar{\sigma}_{1}$ and $\bar{\sigma}_{3}$ are parallel lines of the same length in $P_{h}$ and the same is true for $\bar{\sigma}_{2}$ and $\bar{\sigma}_{4}$. Furthermore, for $i=1,2,3$ the lines $\bar{\sigma}_{i}$ and $\bar{\sigma}_{i+1}$ meet at a right angle with the clockwise orientation.  This implies that $\bar{\sigma}$ is simply a rectangle in $P_{h}$. In particular the endpoint of $\bar{\sigma}_{4}$ coincides with the initial point of $\bar{\sigma}_{1}$. This implies that $\varphi(y) \in \V^{u,h}(\varphi(x))$ which completes the proof. 
\end{proof}

Since the vertical foliations are preserved by $\varphi$, we obtain an induced map $\bar{\varphi}: \mathcal{Q}^{u,g} \rightarrow \mathcal{Q}^{u,h}$ of the quotient spaces. 

\begin{lem}\label{smooth quotient map}
The map $\bar{\varphi}: \mathcal{Q}^{u,g} \rightarrow \mathcal{Q}^{u,h}$ is $C^{\infty}$. 
\end{lem}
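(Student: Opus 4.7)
Since $\mathcal{Q}^{u,g}$ is the disjoint union of its components $\mathcal{Q}^{u,g}(x)$ and $\bar{\varphi}$ sends $\mathcal{Q}^{u,g}(x)$ into $\mathcal{Q}^{u,h}(\varphi(x))$, it suffices to show that each restriction $\bar{\varphi}|_{\mathcal{Q}^{u,g}(x)}$ is $C^{\infty}$. The plan is to express this restriction in the smooth exponential charts $e_x : Q^{u,g}_x \to \mathcal{Q}^{u,g}(x)$ and $e_{\varphi(x)} : Q^{u,h}_{\varphi(x)} \to \mathcal{Q}^{u,h}(\varphi(x))$ from Proposition \ref{quotient charts} and to prove that in these charts the map is simply the linear map
\[
\hat{\Phi}_x : Q^{u,g}_x \to Q^{u,h}_{\varphi(x)}
\]
induced from $\Phi_x$ (of Proposition \ref{horizontal diff}) via the projection isomorphisms $D\pi|_{L^{u,*}_*}$. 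Because $\hat{\Phi}_x$ is linear (hence $C^{\infty}$) and the charts $e_x, e_{\varphi(x)}$ are smooth diffeomorphisms by Proposition \ref{quotient charts}, the conclusion will follow at once.

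To establish the identity $e_{\varphi(x)}^{-1} \circ \bar{\varphi}|_{\mathcal{Q}^{u,g}(x)} \circ e_x = \hat{\Phi}_x$, fix $v \in Q^{u,g}_x$ with unique preimage $\tilde{v} \in L^{u,g}_x$ under $D\pi$ and consider the radial $\bar{\nabla}$-geodesic $\bar{\gamma}(t) := e_x(tv)$. I would lift $\bar{\gamma}$ to the unique $C^1$ horizontal curve $\gamma : \R \to \W^{u,g}(x)$ with $\gamma(0) = x$ by solving the ODE $\gamma'(t) = (D\pi|_{L^{u,g}})^{-1}(\bar{\gamma}'(t))$, so that $\pi \circ \gamma = \bar{\gamma}$ and $\gamma' \in L^{u,g}$ along $\gamma$. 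Because $\bar{\gamma}'$ is constant in the flat chart $e_x$, it is $\bar{\nabla}$-parallel along $\bar{\gamma}$; since $\bar{\nabla}$ is the push-down of the flat connection $\nabla$ on $Q^{u,g}$ (by \eqref{project connect}, Proposition \ref{quotient connect}, and Lemma \ref{holonomy vert}), the corresponding section $\gamma'$ of $L^{u,g}$ along $\gamma$ is $\nabla$-parallel. By Proposition \ref{horizontal diff}, $\sigma := \varphi \circ \gamma$ is a $C^1$ horizontal curve in $\W^{u,h}(\varphi(x))$ with $\sigma'(t) = \Phi(\gamma'(t))$, and the intertwining equation \eqref{intertwine holonomies} of Proposition \ref{quasi cohomologous} combined with the identification of unstable holonomy with $\nabla$-parallel transport (Proposition \ref{holonomy connect}) shows that $\Phi$ maps $\nabla$-parallel sections to $\nabla$-parallel sections. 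Hence $\sigma'$ is $\nabla$-parallel along $\sigma$, so $\pi \circ \sigma$ is a $\bar{\nabla}$-geodesic in $\mathcal{Q}^{u,h}(\varphi(x))$ with initial velocity $D\pi(\Phi_x(\tilde{v})) = \hat{\Phi}_x(v)$. By the definition of $e_{\varphi(x)}$, this geodesic equals $t \mapsto e_{\varphi(x)}(t\hat{\Phi}_x(v))$, and since $\pi \circ \sigma = \bar{\varphi} \circ \bar{\gamma}$, evaluating at $t = 1$ gives the claimed identity.

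The main technical point to handle carefully will be verifying the correspondence between $\nabla$-parallel sections of $Q^{u,g}$ along horizontal curves in $\W^{u,g}$ and $\bar{\nabla}$-parallel sections of $T\mathcal{Q}^{u,g}$ along their projections, which is built into the construction of $\bar{\nabla}$ in Section \ref{sec:uniform} but requires noting that the leaf-independence of $\nabla$-parallel transport (by flatness) is precisely what allows one to read off $\bar{\nabla}$-parallelism from the tangent vectors of an arbitrary horizontal lift rather than from a fixed transversal. Everything else is routine, as it only involves composing smooth charts with a linear map.
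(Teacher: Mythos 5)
Your proposal is correct and follows essentially the same route as the paper's proof: both rest on the horizontal derivative $\Phi$ from Proposition~\ref{horizontal diff}, the holonomy intertwining \eqref{intertwine holonomies} from Proposition~\ref{quasi cohomologous}, the identification of unstable holonomy with $\nabla$-parallel transport from Proposition~\ref{holonomy connect}, and the smooth exponential charts $e_x$ from Proposition~\ref{quotient charts}. The only difference is presentational: you verify the affine identity $e_{\varphi(x)}^{-1}\circ\bar{\varphi}\circ e_x = \hat{\Phi}_x$ directly by tracing radial geodesics and their horizontal lifts, whereas the paper first records that $\bar{\varphi}$ is $C^1$ with derivative $\bar{\Phi}$ and then argues abstractly that a $C^1$ map carrying $\nabla^g$ to $\nabla^h$ is affine in the exponential charts.
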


\begin{proof}
We show that $\bar{\varphi}$ is smooth on each component $\mathcal{Q}^{u,g}(x)$ of $\mathcal{Q}^{u,g}$, $x \in T^{1}X$. From Proposition \ref{horizontal diff} we obtain that $\bar{\varphi}$ is $C^1$, with derivative $D\bar{\varphi} = \bar{\Phi}: T\mathcal{Q}^{u,g}(x) \rightarrow T\mathcal{Q}^{u,h}(\varphi(x))$ induced from the horizontal derivative $\Phi:L^{u,g} \rightarrow L^{u,h}$, so that $\bar{\Phi} \circ D\pi_{g} = D\pi_{h} \circ \Phi$. The map $\Phi$ intertwines the unstable holonomies $H^{u,g}$ and $H^{u,h}$ on $L^{u,g}$ and $L^{u,h}$ respectively, so $\bar{\Phi}$ intertwines the corresponding unstable holonomies $\bar{H}^{u,g}$ and $\bar{H}^{u,h}$ on $T\mathcal{Q}^{u,g}(x)$ and $T\mathcal{Q}^{u,h}(\varphi(x))$ respectively. 

These unstable holonomies are given by the parallel transport of $C^{\infty}$ connections $\nabla^{g}$ and $\nabla^{h}$ on $T\mathcal{Q}^{u,g}(x)$ and $T\mathcal{Q}^{u,h}(\varphi(x))$ respectively, by Proposition \ref{quotient charts}. This implies that $\bar{\varphi}$ maps $\nabla^{g}$ to $\nabla^{h}$. But this implies that 
\[
e_{\varphi(x),h}^{-1} \circ \bar{\varphi} \circ e_{x,g}:Q^{u,g}_{x} \rightarrow Q^{u,h}_{\varphi(x)},
\]
is an affine transformation, which is therefore smooth. Since the charts $e_{x,g}$ and $e_{\varphi(x),h}$ are both smooth we conclude that $\bar{\varphi}$ is smooth as well. 
\end{proof}

\section{Vertical differentiability}\label{sec: vert}
At this point we split into two cases, depending on whether $X$ is complex hyperbolic or $X$ is quaternionic/Cayley hyperbolic. We continue to write $h^{t} = h^{t}_{u}$ and $\varphi = \varphi_{u}$. We will be applying Journ\'e's lemma numerous times in this section, so we state below the form that we will use. Recall that we say that a map $f$ is smooth along a foliation $\W$ with smooth leaves if $\W$ has uniformly $C^{\infty}$ leaves and $f$ is uniformly $C^{\infty}$ along $\W$.

\begin{lem}\cite{J}\label{Journe}
Let $M$ and $N$ be smooth manifolds. Let $\W$ and $\V$ be transverse foliations of $M$ with smooth leaves. Let $f: M \rightarrow N$ be  smooth along $\W$ and smooth along $\V$. Then $f$ is smooth. 
\end{lem}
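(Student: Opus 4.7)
The plan is to prove this by reducing the global statement to a local analytic statement about functions on a product of Euclidean open sets, and then applying a Taylor-expansion argument of the type originally carried out by Journ\'e in \cite{J}.

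First I would observe that smoothness is a local property, so it suffices to show that $f$ is smooth in a neighborhood of an arbitrary point $p \in M$. Using the transversality hypothesis $T\W + T\V = TM$ together with the smoothness of the leaves of $\W$ and $\V$, I would construct a smooth local chart $\psi: U \to U_1 \times U_2$ near $p$ which simultaneously straightens both foliations, carrying the plaques of $\W|_U$ onto horizontal slices $U_1 \times \{y\}$ and (in the complementary case, on which the general case reduces by passing to a smooth subfoliation) the plaques of $\V|_U$ onto vertical slices $\{x\} \times U_2$. Such a chart is built by fixing a single plaque $P_\V \subset U$ of $\V$ through $p$ and sliding it along the leaves of $\W$; this sliding operation is smooth because $\W$ has smooth leaves and $f$'s transverse regularity plays no role here, only that of the foliations themselves.

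In these coordinates, the hypothesis that $f$ is uniformly $C^\infty$ along $\W$ becomes the statement that for every multi-index $\alpha$ the pure $x$-derivative $\partial_x^\alpha f(x,y)$ exists and is jointly continuous in $(x,y)$, and similarly that every pure $y$-derivative $\partial_y^\beta f(x,y)$ exists and is jointly continuous. Thus the lemma reduces to the purely analytic assertion that a function $f: U_1 \times U_2 \to N$ with continuous pure derivatives in each of the two variables (of all orders, with locally uniform bounds) is in fact jointly $C^\infty$, i.e.\ all mixed partials $\partial_x^\alpha \partial_y^\beta f$ exist and are continuous.

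The main obstacle, and the real content of the proof, is establishing this last analytic fact. The plan is to induct on $r = |\alpha| + |\beta|$. The inductive step is the standard Journ\'e trick: writing the increment $\partial_x^\alpha f(x,y) - \partial_x^\alpha f(x,y_0)$ as a Taylor polynomial in $y - y_0$ of order sufficiently higher than $|\beta|$ with integral remainder, one uses the uniform smoothness of $f$ in the $y$-variable (applied plaque by plaque) to identify the Taylor coefficients with the desired mixed partials and to bound the remainder. Continuity of the mixed partial $\partial_x^\alpha \partial_y^\beta f$ then follows from the continuity of the pure partials of $f$ of all orders. Since this argument is entirely local and requires no further input from the dynamical setting of the paper, I would simply cite Journ\'e's original paper \cite{J} for the detailed execution of the inductive step, which is the only nontrivial calculation in the proof.
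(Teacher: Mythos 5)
The paper cites Journ\'e's theorem without proof, so there is no internal argument to compare against; the real question is whether your reconstruction of Journ\'e's argument is sound. It is not, because of the chart-straightening step.

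The gap is in your second step, where you claim to build a \emph{smooth} local chart $\psi: U \to U_1 \times U_2$ that simultaneously straightens both $\W$ and $\V$ to the coordinate slice foliations. This is not possible under the hypotheses, and in the dynamical applications of the lemma it is precisely what fails. Having uniformly $C^\infty$ leaves means the leaves are smooth embedded submanifolds depending continuously (not smoothly) on the base point; the holonomy maps between transversals are in general only continuous or H\"older. Your construction -- fixing a plaque $P_\V$ of $\V$ through $p$ and sliding it along the leaves of $\W$ -- therefore does not produce a smooth diffeomorphism: the resulting map is continuous in the transverse direction but has no reason to be even Lipschitz there. Moreover, even if the slide map were smooth, the image slices would not be plaques of $\V$ unless $\V$ were invariant under $\W$-holonomy, which is false. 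Put differently, if one \emph{could} smoothly straighten both foliations, the paper would not need Journ\'e's theorem for $\W^{cu}$ together with $\W^{s}$, since the Anosov splitting of a generic Anosov flow is not a smooth direct sum.

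Journ\'e's actual argument is structured to avoid this. One straightens at most one foliation and treats the leaves of the other as a continuously varying family of uniformly smooth graphs; the core estimate is a polynomial interpolation lemma showing that a function admitting uniform polynomial approximation of every order along both families of leaves admits such approximation jointly, hence is smooth. The two-parameter Taylor/interpolation step you sketch at the end (your step 4) is roughly the right calculation, but it has to be carried out along the curved leaves of $\V$ inside the straightened $\W$-chart, not on a literal Euclidean product $U_1 \times U_2$. As written, your reduction proves only the weaker statement in which $\W$ and $\V$ are genuinely smooth complementary foliations, which is insufficient for the uses of Lemma~\ref{Journe} in the paper.
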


\subsection{The complex hyperbolic case}\label{complex vert}  This is the simpler of the two cases, as the vertical bundles are 1-dimensional. Recall that the conditionals of the SRB measure $m_{h}$ for $h^{t}$ along $\W^{u,h}$ are equivalent to the Riemannian volume on $\W^{u,h}$, as discussed after Definition \ref{SRB}.

\begin{lem}\label{vertical abs cont}
$\varphi$ is smooth along $\V^{u,g}$.
\end{lem}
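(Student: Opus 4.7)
The plan is to reduce the question to analyzing a one-parameter family of self-similar bi-Lipschitz homeomorphisms of $\R$ and to use the dynamical self-similarity, together with recurrence of the flow $g^{t}$, to force linearity. Since we are in the complex hyperbolic case, $V^{u,g}$ and $V^{u,h}$ are both $1$-dimensional. First I would apply the normal form theory of Section~\ref{sec:uniform} to the foliation $\V^{u,h}$ of $h^{t}$, viewing it as an expanding foliation for the restricted cocycle $Dh^{t}|_{V^{u,h}}$. Hypothesis (3) of Theorem~\ref{core theorem} says this cocycle is $1$-fiber bunched, and since the bundle is $1$-dimensional the degenerate case of Proposition~\ref{quotient charts} applies, producing smooth charts $e^{V,h}_{y}\colon V^{u,h}_{y}\to \V^{u,h}(y)$ that are isometries for the flat metric on $V^{u,h}_{y}$, depend continuously on $y$ in the $C^{\infty}$ topology, and conjugate $h^{t}$ to $Dh^{t}|_{V^{u,h}}$, which by equation~\eqref{vertical Jacobian} and the normalization of the metric on $V^{u,h}$ is simply multiplication by $e^{2t}$ on $\R$. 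On the $g$-side the Carnot chart $T_{x}\colon G\to \W^{u,g}(x)$ restricts along the central $\R$-subgroup $V\subset G$ of the complex Heisenberg group to analogous smooth parameterizations $V^{g}_{x}\colon \R\to \V^{u,g}(x)$ with $V^{g}_{g^{t}x}(e^{2t}s)=g^{t}(V^{g}_{x}(s))$.

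Next I would define $\psi_{x}\colon\R\to\R$ by $\varphi(V^{g}_{x}(s))=e^{V,h}_{\varphi(x)}(\psi_{x}(s))$. The conjugacy identity $\varphi\circ g^{t}=h^{t}\circ\varphi$ together with the equivariance of both sets of charts yields the functional equation
\[
\psi_{g^{t}x}(e^{2t}s)=e^{2t}\psi_{x}(s)\qquad\text{for all }t\in\R,\ s\in\R.
\]
The Lipschitz property of $\varphi$ in the Hamenst\"adt metrics (Proposition~\ref{conjugacy invariant Hamenstadt}) transfers to the vertical Hamenst\"adt metrics on $\V^{u,*}$ (these are defined using the conformal factor $e^{2t}$ in~\eqref{Hamenstadt inequality}, and on the vertical leaves they are bi-Lipschitz to the Euclidean metrics coming from the charts $V^{g}_{x}$ and $e^{V,h}_{y}$); consequently the family $\{\psi_{x}\}$ is uniformly bi-Lipschitz with $\psi_{x}(0)=0$ and depends continuously on $x$ in the $C^{0}$ topology.

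The key step is to show $\psi_{x}$ is linear. Consider the rescalings $\psi_{x}^{(\epsilon)}(s)=\psi_{x}(\epsilon s)/\epsilon$. Taking $e^{2t}=\epsilon^{-1}$ in the functional equation gives $\psi_{x}^{(\epsilon)}=\psi_{g^{t}x}$ with $t=-\tfrac{1}{2}\log\epsilon$, so as $\epsilon\to 0$ these rescalings coincide with $\psi_{y}$ for $y=g^{t}x$ running along the forward orbit of $x$. By transitivity of $g^{t}$ together with uniform bi-Lipschitz equicontinuity, for a $g^{t}$-generic $x$ every $\psi_{y}$ arises as a $C^{0}$-limit of rescalings of $\psi_{x}$. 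Since $\psi_{x}$ is bi-Lipschitz it is differentiable $\R$-a.e., and at any point $s_{0}$ of differentiability the functional equation propagates the derivative along the orbit: $\psi_{g^{t}x}$ is differentiable at $e^{2t}s_{0}$ with the same derivative. Combining these two pieces with the continuity $y\mapsto\psi_{y}$, one extracts for a full-measure set of $x$ a limiting derivative at $0$ for $\psi_{x}$, and then the functional equation together with continuity in $x$ forces $\psi_{y}(s)=c(y)s$ globally, where $c\colon M\to(0,\infty)$ is continuous and $g^{t}$-invariant (hence constant by transitivity). A separate argument, changing the basepoint $x$ along its vertical leaf, is needed to rule out a jump in slope between $\R_{+}$ and $\R_{-}$: a one-sided kink at $0$ for $\psi_{x}$ would persist as a kink at $-s_{0}$ for $\psi_{V^{g}_{x}(s_{0})}$, contradicting the conclusion that each $\psi_{y}$ has its only possible kink at the origin.

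Once linearity of each $\psi_{x}$ is established, smoothness of $\varphi$ along $\V^{u,g}$ is immediate: in the smooth charts $V^{g}_{x}$ and $e^{V,h}_{\varphi(x)}$ the restriction of $\varphi$ to $\V^{u,g}(x)$ is multiplication by a constant $c(x)$ depending continuously on $x$, and the charts themselves are uniformly $C^{\infty}$ with continuous dependence on the basepoint. The main obstacle is the linearity step: the functional equation is only a self-similarity under a skew-action that moves the basepoint, so extracting genuine differentiability of $\psi_{x}$ at $0$ (rather than a mere family of blow-up limits indexed by orbit accumulation points) is the delicate part, and it is here that the combination of uniform Lipschitz bounds, continuity of $y\mapsto\psi_{y}$, and the recurrence/transitivity of the Anosov flow $g^{t}$ all enter.
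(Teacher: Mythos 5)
Your setup is on the right track: the normal-form charts from Proposition~\ref{quotient charts} on the $h$-side, the central-subgroup charts on the $g$-side, the functional equation $\psi_{g^{t}x}(e^{2t}s)=e^{2t}\psi_{x}(s)$, and the bi-Lipschitz bound coming from Proposition~\ref{conjugacy invariant Hamenstadt} together with the one-dimensionality of $V^{u,h}$ (so that $\|Dh^{t}(v)\| = e^{2t}\|v\|$ by \eqref{vertical Jacobian}). These are all sound. But the linearity step, which you yourself flag as ``the delicate part,'' is genuinely incomplete, and the sketch as written does not close it.

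The functional equation rewritten as $\psi_{x}^{(\epsilon)}=\psi_{g^{t}x}$ with $\epsilon=e^{-2t}$ says that the \emph{family of rescalings at $0$} of $\psi_{x}$ coincides with $\{\psi_{g^{t}x}\}_{t\geq 0}$. Passing to a limit as $\epsilon\to 0$ produces the $\omega$-limit set of this orbit, i.e.\ for a transitive base point the full family $\{\psi_{y}\}_{y\in M}$. This does not yield differentiability of $\psi_{x}$ at $0$: a bi-Lipschitz map whose rescalings at $0$ sweep out a compact family of bi-Lipschitz maps need not be differentiable there (and the family of all blow-up limits need not be a single linear map, or even a family of linear maps). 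The ``a.e.\ differentiability plus translate the basepoint along the vertical leaf'' step gives you that $\psi_{y}'(0)$ exists and is constant along orbits, for $y$ in the full-measure set of translates of differentiability points — but it does not propagate to all $y$ because $y\mapsto\psi_{y}'(0)$ is only defined a.e.\ and you have no a priori continuity of a derivative that may not exist everywhere. The proposed ``kink at $-s_{0}$'' argument has the same issue: it rules out a particular pathology at one point but does not establish differentiability at the origin.

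The missing ingredient is a \emph{measurable cocycle-rigidity} theorem. What the paper does in Lemma~\ref{vertical abs cont} is: (i) observe that $\varphi_{*}m_{g}=m_{h}$ and that both measures have conditionals on vertical leaves equivalent to arc length (because they are SRB measures), which yields that $\varphi$ is absolutely continuous on $\V^{u,g}$-leaves; (ii) conclude from Rademacher--Stepanov that a measurable derivative map $D\varphi^{V}:V^{u,g}\to V^{u,h}$ exists, is nonzero $m_{g}$-a.e., and satisfies $D\varphi^{V}\circ Dg^{t}=Dh^{t}\circ D\varphi^{V}$; and (iii) invoke Proposition~\ref{quasi cohomologous} to upgrade this measurable conjugacy between uniformly quasiconformal cocycles to a H\"older continuous one that intertwines the unstable holonomies, at which point the normal-form charts force affinity and hence smoothness. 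Your bi-Lipschitz bound actually gives a.e.\ differentiability with nonzero derivative directly (as in the paper's Lemma~\ref{phidiffquasi} for the quaternionic case), so you \emph{could} plug into Proposition~\ref{quasi cohomologous} at step (ii) and bypass the conditionals computation entirely. What you cannot do is avoid invoking that rigidity theorem by a purely pointwise blow-up analysis of the functions $\psi_{x}$; the transition from ``a.e.\ differentiable'' to ``everywhere linear'' is exactly what Proposition~\ref{quasi cohomologous} is designed to supply, and it is not recoverable from the self-similarity and equicontinuity alone.
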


\begin{proof}
Let $\nu_{x,g}$, $x \in T^{1}X$ denote the conditional measures of $m_{g}$ on $\V^{u,g}$-leaves, and similarly for $y \in M$ let $\nu_{y,h}$ denote the conditional measures of $m_{h}$ on $\V^{u,h}$-leaves. Then $\nu_{x,g}$ is equivalent to the Riemannian arc length on $\V^{u,g}(x)$ and similarly $\nu_{y,h}$ is equivalent to the Riemannian arc length on $\V^{u,h}(y)$, since $m_{g}$ and $m_{h}$ have conditionals equivalent to the Riemannian volume along $\W^{u,g}$ and $\W^{u,h}$ respectively. 

Since $\varphi_{*}m_{g} = m_{h}$ and $\varphi(\V^{u,g}(x)) = \V^{u,h}(\varphi(x))$ for each $x \in T^{1}X$, we have $\varphi_{*}\nu_{x,g} = c(x)\nu_{\varphi(x),h}$ for $m_{g}$-a.e.~ $x \in T^{1}X$, where $c(x) > 0$ is a measurable function of $x$. We conclude that for $x \in T^{1}X$, $y \in \V^{u,g}(x)$,
\[
\limsup_{r \rightarrow 0} \frac{\sup\{d_{\varphi(x),h}(\varphi(x),\varphi(y)): d_{x,g}(x,y) = r)\}}{r} < \infty,
\]
for $m_{g}$-a.e.~ $x \in T^{1}X$. This implies by the Rademacher-Stepanov theorem that $\varphi$ is differentiable a.e.~ on $\V^{u,g}$ leaves, and thus we have an $m_{g}$-a.e.~ defined derivative map $D\varphi^{V}: V^{u,g} \rightarrow V^{u,h}$ that satisfies $D\varphi^{V}  \circ Dg^{t} = Dh^{t} \circ D\varphi ^{V}$ and satisfies $D\varphi^{V} \neq 0$ $m_{g}$-a.e.~, since $\varphi$ is absolutely continuous along $\V^{u,g}$. 

By Proposition \ref{quasi cohomologous} applied to the corresponding bundle map over the original flow $f^{t}$ like how it was used in Proposition \ref{horizontal diff} followed by arguing as in the proof of Lemma \ref{intertwine holonomies}, we conclude that $D\varphi^{V}$ agrees $m_{g}$-a.e.~ with a H\"older continuous homeomorphism from $V^{u,g}$ to $V^{u,h}$ that is linear on fibers and intertwines the unstable holonomies $P^{u,g}$ and $P^{u,h}$ of $Dg^{t}|_{V^{u,g}}$ and $Dh^{t}|_{V^{u,h}}$ respectively. Since these holonomies are given by parallel transport by a smooth connection in the charts of Proposition \ref{quotient charts}, we conclude that $\varphi$ is affine in these charts and therefore $\varphi$ is smooth along $\V^{u,g}$. 
\end{proof}

\subsection{The quaternionic/Cayley hyperbolic case} For the case of quaternionic and Cayley hyperoblic $X$, the vertical bundles are higher dimensional. Replacing $f^{t}$ with $f^{-t}$ and applying the results of Section \ref{sec:pansuderiv} to $h^{t}_{s}$ and $\varphi_{s}$, we obtain from Proposition \ref{vert preserve} that $\varphi_{s}(\mathcal{V}^{s,g}(x)) = \mathcal{V}^{s,h_{s}}(\varphi_{s}(x))$ for every $x \in T^{1}X$ as well. Since $\varphi$ and $\varphi_{s}$ are flow equivalent, this implies that $\varphi(\V^{cs,g}(x)) = \V^{cs,f}(\varphi(x))$. 

Set $q:=k(X)-l(X)$, $\mathbb{K} \in \{\mathbb{H}, \mathbb{O}\}$. For each $v \in T^{1}X$ we consider a lift $\t{v}$ of $v$ to $T^{1}\mathbf{H}_{\mathbb{K}}^{n}$.  There is a unique totally geodesic submanifold $\t{\mathcal{S}}(\t{v}) \cong \mathbf{H}^{q}_{\R}$ inside of $\mathbf{H}_{\mathbb{K}}^{n}$ of constant negative curvature $-4$ such that $\t{v} \in T^{1}\t{\mathcal{S}}(\t{v})$. Projecting back down, there is a unique totally geodesic submanifold $\mathcal{S}(v)$ of $X$ with $v \in T^{1}\mathcal{S}(v)$. This defines a foliation $T^{1}\mathcal{S}$ of $T^{1}X$ by $(2q-1)$-dimensional submanifolds $T^{1}\mathcal{S}(v)$ that are the unit tangent bundles of these totally geodesic submanifolds $\mathcal{S}(v)$. This foliation is tangent to the subbundle $V^{u,g} \oplus E^{c,g} \oplus V^{s,g}$ and the restriction of $g^{t}$ to each leaf $T^{1}\mathcal{S}(v)$ is the geodesic flow of $\mathcal{S}(v)$. 

\begin{lem}\label{smoothing}
For each $x \in T^{1}X$, $\varphi(T^{1}\mathcal{S}(x))$ is a smooth submanifold of $M$ tangent to $V^{u,f} \oplus E^{c,f} \oplus V^{s,f}$. Hence we have a foliation $\mathcal{K}$ of $M$ with smooth leaves that is tangent to $V^{u,f} \oplus E^{c,f} \oplus V^{s,f}$,  with $\varphi(T^{1}\mathcal{S}(x)) = \mathcal{K}(\varphi(x))$ for each $x \in T^{1}X$. 
\end{lem}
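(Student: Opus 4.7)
First I would establish that $\mathcal{K}_0 := \varphi(T^1\mathcal{S}(x))$ is saturated by each of the foliations $\V^{u,h}$, $\W^{c,h}$, and $\V^{s,h}$ of $M$. Inside $T^1 X$ the submanifold $T^1\mathcal{S}(x)$ is $g^t$-invariant, and its intersections with $\V^{u,g}$, $\W^{c,g}$, $\V^{s,g}$ give precisely the unstable, center, and stable foliations of the restricted Anosov flow $g^t|_{T^1\mathcal{S}(x)}$, namely the geodesic flow of the real hyperbolic space $\mathcal{S}(x)$. The $\V^{u,h}$-saturation of $\mathcal{K}_0$ then follows from Proposition \ref{vert preserve}, and the $\W^{c,h}$-saturation is immediate from $\varphi$ being an orbit equivalence. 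For the $\V^{s,h}$-saturation I would use the conjugacy $\varphi_s: T^1 X \to M$ from $g^t$ to $h^t_s$ of equation \eqref{stable conjugacy}: since $\varphi_s$ is flow related to $\varphi$ and $T^1\mathcal{S}(x)$ is flow invariant, $\varphi_s(T^1\mathcal{S}(x)) = \mathcal{K}_0$ as sets, and the stable counterpart of Proposition \ref{vert preserve} (obtained by running Section \ref{sec:pansuderiv} on the $u$-smooth Anosov flow $h^{-t}_s$) gives $\varphi_s(\V^{s,g}(z)) = \V^{s,h_s}(\varphi_s(z))$. The foliations $\V^{s,h_s}$ and $\V^{s,h}$ coincide on $M$ because the strong-stable foliation of an Anosov flow depends only on the orbit structure, which $h^t_u$ and $h^t_s$ share with $f^t$.

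From these saturations, for each $y \in \mathcal{K}_0$ we have $\V^{u,h}(y) \subset \mathcal{K}_0$ and $\V^{cs,h}(y) \subset \mathcal{K}_0$, where $\V^{u,h}$ has smooth leaves by Proposition \ref{prop: splitting reg} applied to $h^t$, and $\V^{cs,h}(y)$ is smoothly embedded as the $f^t$-flow saturation of the smooth leaf $\V^{s,h}(y)$ by the smooth ambient flow $f^t$. Hence $\mathcal{K}_0$ is a topological submanifold of $M$ of dimension $\dim T^1\mathcal{S}(x)$, tangent at each of its points $y$ to $V^{u,h}_y \oplus E^{c,h}_y \oplus V^{s,h}_y$.

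To upgrade $\mathcal{K}_0$ to a smooth submanifold I would apply Journ\'e's Lemma \ref{Journe} to a local graph representation. Near $y_0 \in \mathcal{K}_0$ choose smooth coordinates on $M$ adapted to the smoothly embedded transversals $\V^{u,h}(y_0)$ and $\V^{cs,h}(y_0)$, and express $\mathcal{K}_0$ locally as the graph of a continuous function $F$ from an open subset of $V^{u,h}_{y_0} \oplus E^{c,h}_{y_0} \oplus V^{s,h}_{y_0}$ into a smooth transverse complement. The $\V^{u,h}$-saturation of $\mathcal{K}_0$ forces the graph along the $V^{u,h}_{y_0}$ factor to coincide with the smooth leaf $\V^{u,h}(y_0)$, so $F$ is smooth in that direction; the $\V^{cs,h}$-saturation forces smoothness of $F$ along the complementary $E^{c,h}_{y_0} \oplus V^{s,h}_{y_0}$ factor via the smooth leaf $\V^{cs,h}(y_0)$. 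Journ\'e then yields smoothness of $F$, so $\mathcal{K}_0$ is a smooth submanifold of $M$; as $y$ ranges over $M$ these submanifolds partition $M$ in correspondence with the $T^1\mathcal{S}$-partition of $T^1 X$, producing the foliation $\mathcal{K}$.

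The main obstacle is in justifying that $\V^{cs,h}(y_0)$ is smoothly embedded in $M$ transverse to $\V^{u,h}(y_0)$, not merely a leaf of a continuous foliation with smooth leaves. This requires combining Proposition \ref{prop: splitting reg} applied to $h^{-t}_s$ (giving smoothness of $V^{s,h}$ along the center-stable direction) with the global smoothness of the ambient flow $f^t$ and a preliminary one-step Journ\'e argument on the parametrization $(t,z) \mapsto f^t z$ for $z \in \V^{s,h}(y_0)$, to verify that the $f^t$-flow saturation of $\V^{s,h}(y_0)$ is itself a smoothly embedded submanifold of $M$.
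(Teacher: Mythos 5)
Your proposal follows the same broad strategy as the paper: show $\mathcal{K}_{0}:=\varphi(T^{1}\mathcal{S}(x))$ is saturated by the smooth leaves of $\V^{u,h}$ and $\V^{cs,f}$, and then apply Journ\'e's lemma along these two transverse directions. However, there are two problems.

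First, a factual error: you claim $\V^{s,h_{s}} = \V^{s,h}$ ``because the strong-stable foliation of an Anosov flow depends only on the orbit structure.'' This is false. A time change of an Anosov flow changes the strong stable and strong unstable foliations $\W^{s}$ and $\W^{u}$; only the center-stable and center-unstable foliations $\W^{cs}$, $\W^{cu}$ are time-change invariant. So $\V^{s,h_{s}}$ and $\V^{s,h}$ are generally distinct subfoliations of $M$. What is time-change invariant (and what the paper actually uses) is the vertical center-stable foliation $\V^{cs}$, which is common to $f^{t}$, $h^{t}_{u}$, and $h^{t}_{s}$: saying $\mathcal{K}_{0}$ is $\V^{cs,f}$-saturated follows from $\varphi_{s}(\V^{s,g}(z)) = \V^{s,h_{s}}(\varphi_{s}(z))$ together with the flow-saturation of $\mathcal{K}_{0}$, without ever needing to compare $\V^{s,h_{s}}$ and $\V^{s,h}$. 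Your argument can be repaired by working directly with $\V^{cs,f}$, but the intermediate claim as stated is incorrect.

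Second, a genuine gap: you assert that $\mathcal{K}_{0}$ can be expressed locally as the graph of a continuous function over a transversal to $V^{u,h}\oplus E^{c,f}\oplus V^{s,f}$, but you do not justify this. A set saturated by two transverse foliations need not have local product structure a priori, and this is precisely the nontrivial content of the paper's proof: one picks a local transversal $H$ tangent to $V^{u,h}\oplus V^{cs,f}$ at $\varphi(x)$, projects orthogonally via $P:U_{f}\to H$, and proves that $P\circ\varphi$ is \emph{injective} on $U_{g}\cap T^{1}\mathcal{S}(x)$. The injectivity argument proceeds by contradiction: if $P(\varphi(y))=P(\varphi(z))$ with $z\notin\V^{u,g}(y)$, one takes the intersection point $w$ of $\V^{u,g}(y)$ with $\V^{cs,g}(z)$ inside $T^{1}\mathcal{S}(x)$ and shows $P(\varphi(w))$ must equal $P(\varphi(y))$, contradicting injectivity of $P\circ\varphi$ along $\V^{u,g}(y)$. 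Only after establishing this injectivity is the graph representation available, and only then can Journ\'e be applied. Since this is the hard step, your proof does not close.
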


\begin{proof}
As discussed above, for each $x \in T^{1}X$ we have $\varphi(\V^{u,g}(x)) = \V^{u,h}(\varphi(x))$ and $\varphi(\V^{cs,g}(x)) = \V^{cs,f}(\varphi(x))$. We note that
\begin{equation}\label{sum expression}
V^{u,f} \oplus E^{c,f} \oplus V^{s,f} = V^{u,h} \oplus V^{cs,f}.
\end{equation}
Let $x \in T^{1}X$ be given. We restrict to small foliation boxes $U_{g}$ and $U_{f}$ for all of the invariant foliations of $g^{t}$ and both $f^{t}$ and $h^{t}$ in a neighborhood of $x$ and $\varphi(x)$ respectively. Consider a small submanifold $H$ inside of $U_{f}$ which is tangent to  $V^{u,h} \oplus V^{cs,f}$ at $\varphi(x)$ and makes a uniformly small angle with this subbundle inside of $U_{f}$. We let $P: U_{f} \rightarrow H$ denote orthogonal projection. The foliations $\V^{u,h}$ and $\V^{cs,f}$ project to transverse foliations $\bar{\V}^{u,h}$ and $\bar{\V}^{cs,f}$ of $H$. When $U_{f}$ is small enough, each leaf of $\bar{\V}^{u,h}$ on $H$ will intersect at most one leaf of $\bar{\V}^{cs,f}$ on $H$. 

We claim that $P \circ \varphi$ is injective on $U_{g} \cap T^{1}\mathcal{S}(x)$, once $U_{g}$ is chosen small enough. Suppose otherwise, so that there are $y$, $z \in U_{g} \cap T^{1}\mathcal{S}(x)$ with $P(\varphi(y)) = P(\varphi(z))$. We may assume that $y \notin \V^{cu,g}_{U_{g}}(z)$, as $P \circ \varphi$ is a homeomorphism onto its image when restricted to $\V^{cu,g}_{U_{g}}(z)$. Here we recall that the subscript $U_{g}$ indicates taking the connected component of the leaf $\V^{cu,g}(z)$ in $U_{g}$ that contains $z$. Let $w$ be the unique intersection point of $\V^{u,g}_{U_{g}}(y)$ with $\V^{cs,g}_{U_{g}}(z)$ inside of $U_{g} \cap T^{1}\mathcal{S}(x)$, noting that $w \neq y$ since $y\notin \V^{u,g}_{U_{g}}(z)$. Since $P \circ \varphi$ is a homeomorphism onto its image on both $\V^{u,g}_{U_{g}}(y)$ and $\V^{cs,g}_{U_{g}}(z)$, with that image being contained in $\bar{\V}^{u,h}_{U_{f}}(P(\varphi(y)))$ and $\bar{\V}^{cs,f}_{U_{f}}(P(\varphi(z)))$ respectively, we conclude that $P(\varphi(w))$ must be the unique intersection point of $\bar{\V}^{u,h}_{U_{f}}(P(\varphi(y)))$ and $\bar{\V}^{cs,f}_{U_{f}}(P(\varphi(z)))$ inside of $H$. Since $P(\varphi(y)) = P(\varphi(z))$ we conclude that $P(\varphi(w)) = P(\varphi(y))$. But this is impossible because $P \circ \varphi$ is a homeomorphism onto its image when restricted to $\V^{u,g}_{U_{g}}(y)$. 

Hence we may consider $\mathcal{K}_{H}(\varphi(x)) = \varphi(U_{g} \cap T^{1}\mathcal{S}(x))$ as a graph over $H$. This graph is smooth along the transverse foliations $\bar{\V}^{u,h}$ and $\bar{\V}^{cs,f}$ of $H$. We conclude by Lemma \ref{Journe} that this graph is smooth and therefore $\mathcal{K}_{H}(\varphi(x))$ is a smooth submanifold of $M$. Since its tangent space is $(2q-1)$-dimensional and contains the transverse subbundles  $V^{u,h}$ and $V^{cs,f}$, we conclude that $\mathcal{K}_{H}$ is tangent to $V^{u,h} \oplus V^{cs,f}$. 

Since smoothness is a local property, it follows that $\mathcal{K}(y) = \varphi(T^{1}\mathcal{S}(\varphi^{-1}(y)))$ is a smooth submanifold of $M$ tangent to $V^{u,f} \oplus E^{c,f} \oplus V^{s,f}$ for each $y \in M$, by \eqref{sum expression}. The conclusion follows. 
\end{proof}

\begin{lem}\label{vert uniform quasi}
$Df^{t}|_{V^{*,f}}$ is uniformly quasiconformal for $* \in \{s,u\}$.
\end{lem}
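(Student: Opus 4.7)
The plan is to apply Proposition~\ref{one exp to conf} to $f^{t}$ in order to obtain the uniform quasiconformality of $Df^{t}|_{V^{u,f}}$; the argument for $V^{s,f}$ is identical after reversing time and using the conjugacy $\varphi_{s}$ of \eqref{stable conjugacy} in place of $\varphi = \varphi_{u}$.

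First I will extract the smooth transverse submanifold required by Proposition~\ref{one exp to conf} from the output of Lemma~\ref{smoothing}. That lemma provides a foliation $\mathcal{K}$ of $M$ with smooth leaves tangent to $V^{u,f} \oplus E^{c,f} \oplus V^{s,f}$, and by construction $\varphi$ carries each leaf $T^{1}\mathcal{S}(x)$ of the totally geodesic foliation on $T^{1}X$ to $\mathcal{K}(\varphi(x))$. Passing to universal covers and picking a lift $\t{x}$ of $x$, the lifted totally geodesic submanifold $\t{\mathcal{S}}(\t{x})$ is isometric to $\mathbf{H}^{q}_{\R}$, which is simply connected with constant (hence pinched) negative curvature; this will serve as the ambient manifold $Y$ of Proposition~\ref{one exp to conf}. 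The associated submanifold $S$ is the image of $\t{\varphi}(T^{1}\t{\mathcal{S}}(\t{x}))$ under the covering projection $\t{M} \to M$, viewed as an immersed submanifold with the induced metric; its completeness and the uniform $C^{2}$-boundedness of its inclusion into $M$ follow from $\mathcal{K}$ being a smooth foliation of the compact manifold $M$.

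The orbit equivalence hypothesis of Proposition~\ref{one exp to conf} is then furnished by the restriction of the lifted conjugacy $\t{\varphi}$, which is a conjugacy from $g^{t}_{Y}$ on $T^{1}Y$ to $\t{h}^{t}|_{\t{S}}$. Since $h^{t}$ is a smooth time change of $f^{t}$ along $\W^{cu}$, so that $f^{t}$ and $h^{t}$ share orbits on each leaf of $\mathcal{K}$, this yields an orbit equivalence from $f^{t}|_{S}$ to $g^{t}_{Y}$; uniform continuity follows from the H\"older regularity of $\varphi$ supplied by Proposition~\ref{prop:orbit to conj}. The remaining hypotheses are immediate: hypothesis (3) of Theorem~\ref{core theorem} gives the $1$-fiber bunching of $Df^{t}|_{V^{u,f}}$ and $Df^{t}|_{V^{s,f}}$; the horizontal measure $\mu_{f}$ is the equilibrium state of a H\"older potential on a topologically mixing Anosov flow and is therefore fully supported, ergodic, and of local product structure; and the identity $\vec{\la}^{u}(f,\mu_{f}) = c_{1}\vec{\la}^{u}(g_{X})$ from Theorem~\ref{core theorem}, combined with the fact that the last $k-l$ entries of $\vec{\la}^{u}(g_{X})$ all equal $2$, forces $\la_{l+1}^{u}(f,\mu_{f}) = \la_{k}^{u}(f,\mu_{f}) = 2c_{1}$.

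The main obstacle I anticipate is the technical work around the submanifold $S$: Lemma~\ref{smoothing} only records that the leaves of $\mathcal{K}$ are smoothly immersed, not that their inclusions carry uniform $C^{2}$ bounds or that their lifts are complete in the induced metric. These properties should follow essentially automatically from the uniform transverse smoothness of $\mathcal{K}$ on the compact base $M$ and the corresponding bounded geometry of its leaves, but they must be verified carefully so that everything slots cleanly into the hypotheses of Proposition~\ref{one exp to conf}.
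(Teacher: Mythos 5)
Your proposal is correct and follows essentially the same line as the paper's proof. The paper's actual argument is very brief: it fixes $x \in M$, restricts to the leaf $\mathcal{K}(x)$, observes that $\varphi\colon T^{1}\mathcal{S}(\varphi^{-1}(x)) \to \mathcal{K}(x)$ gives the required orbit equivalence with $Y = \mathcal{S}(\varphi^{-1}(x))$ playing the role of the simply connected pinched negatively curved manifold, cites hypothesis (3) of Theorem~\ref{core theorem} for the $1$-fiber bunching, notes $\la_{l+1}^{u}(f,m_{f}) = \la_{k}^{u}(f,m_{f})$, and invokes Proposition~\ref{one exp to conf}. One small clarification worth making: you set $Y = \mathbf{H}^{q}_{\R}$ and take $S$ to be the image of $\t{\varphi}(T^{1}\t{\mathcal{S}}(\t{x}))$ under the covering projection, but the orbit equivalence the proposition requires is a homeomorphism $S \to T^{1}Y$, which forces you to either (i) work with $S = \mathcal{K}(x)$ directly and assert, as the paper does, that $\mathcal{S}(\varphi^{-1}(x))$ is itself simply connected (which can be arranged by choosing $x$ so that the stabilizer in $\pi_{1}(X)$ of the lifted $\mathbb{K}$-line is trivial, a generic condition), or (ii) regard $S$ abstractly as $T^{1}\t{\mathcal{S}}(\t{x})$ with an isometric immersion into $M$, which is slightly at odds with the wording ``submanifold of $M$'' in Proposition~\ref{one exp to conf}. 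Finally, your concern about uniform $C^{2}$ bounds on the leaves of $\mathcal{K}$ is already handled by the paper's Definition~\ref{uniform foliation}: Lemma~\ref{smoothing} produces a foliation with uniformly $C^{\infty}$ leaves, which on the compact base $M$ gives exactly the uniform control on the leaf inclusions that Proposition~\ref{one exp to conf} asks for, so that step is a definition-check rather than a new argument.
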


\begin{proof}
We prove the claim for $Df^{t}|_{V^{u,f}}$; the claim for $Df^{t}|_{V^{s,f}}$ follows by an analogous argument considering $f^{-t}$.  Fix $x \in M$ and restrict to $\mathcal{K}(x)$. Since $\mathcal{K}$ is a foliation of $M$ with smooth leaves, the embedding of $\mathcal{K}(x)$ into $M$ has uniformly bounded $C^2$ norm. Restricted to $\mathcal{K}(x)$, $\varphi: T^{1}\mathcal{S}(\varphi^{-1}(x)) \rightarrow \mathcal{K}(x)$ gives an orbit equivalence from $f^{t}$ to the geodesic flow of the simply connected Riemannian manifold $\mathcal{S}(\varphi^{-1}(x))$ of pinched negative curvature. By the hypotheses of Theorem \ref{core theorem}, $Df^{t}|_{V^{u,f}}$ and $Df^{t}|_{V^{s,f}}$ are both $1$-fiber bunched and we have $\la_{l+1}^{u}(f,m_{f}) = \la_{k}^{u}(f,m_{f})$.  The claim then follows from Proposition \ref{one exp to conf}.
\end{proof}

We conclude from Lemma \ref{vert uniform quasi} that $Dh^{t}|_{V^{u,h}}$ is uniformly quasiconformal, since $h^{t}$ is obtained as a time change of $f^{t}$ which is smooth along $\W^{cu,f}$. For the following lemma we will use the metric notion of quasiconformality in Euclidean space. For a constant $K\geq 1$, a homeomorphism $\varphi: U_{1} \rightarrow U_{2}$ between open subsets $U_{i}$ of $\R^{q}$ is \emph{$K$-quasiconformal} if for each $x \in U_{1}$ we have
\[
\limsup_{r \rightarrow 0} \frac{\sup\{d(\varphi(x),\varphi(y)):d(x,y) \leq r\}}{\inf\{d(\varphi(x),\varphi(y)):d(x,y) \geq r\}} \leq K. 
\]
Note in particular that bi-Lipschitz maps are quasiconformal.

\begin{lem}\label{phidiffquasi}
$\varphi$ is smooth along $\V^{u,g}$.
\end{lem}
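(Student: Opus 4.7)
The plan is to reduce the smoothness of $\varphi$ along $\V^{u,g}$ to a classical regularity theorem for quasiconformal maps between Euclidean domains, exploiting the normal form charts on both sides together with the uniform quasiconformality of $Dh^t|_{V^{u,h}}$ established in Lemma \ref{vert uniform quasi}. Since $\dim V^{u,g} = k(X) - l(X) = \dim_{\R}\mathbb{K} - 1 \geq 3$ in the quaternionic/Cayley hyperbolic case, we are in the setting where quasiconformal maps have strong differentiability properties.

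First I would set up normal form charts on the vertical foliations on both sides. Applying Proposition \ref{quotient charts} to the action of $h^t$ on $\V^{u,h}$ (which is an expanding foliation for $h^t$ along which $Dh^t|_{V^{u,h}}$ is $1$-fiber bunched by the degenerate $Q^u = E^u$ case), we obtain smooth charts $e_{x,h}^V: V^{u,h}_x \to \V^{u,h}(x)$ in which $h^t$ acts as the linear cocycle $Dh^t|_{V^{u,h}}$. On the $g$ side the Carnot group charts $T_x$ restrict to charts $e_{x,g}^V : V^{u,g}_x \to \V^{u,g}(x)$ in which $g^t$ acts as the pure conformal dilation $v \mapsto e^{2t}v$. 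For each $x \in T^1X$, let $\Psi_x = (e_{\varphi(x),h}^V)^{-1} \circ \varphi|_{\V^{u,g}(x)} \circ e_{x,g}^V$, so that $\Psi_x : V^{u,g}_x \to V^{u,h}_{\varphi(x)}$ is a homeomorphism between Euclidean spaces.

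Next I would show that $\Psi_x$ is quasiconformal in the Euclidean sense. By Proposition \ref{quasisymmetry orbit}, the map $\varphi_x : (\W^{u,g}(x), \rho_{x,g}) \to (\W^{u,h}(\varphi(x)), \rho_{\varphi(x),h})$ is $\eta$-quasisymmetric with $\eta$ independent of $x$. Because $\varphi$ preserves the vertical foliation by Proposition \ref{vert preserve}, this restricts to a uniformly quasisymmetric map of vertical leaves in their induced Hamenst\"adt metrics. On the $g$ side, the restriction of $\rho_{x,g}$ to $\V^{u,g}(x)$ is bi-Lipschitz, via the chart $e_{x,g}^V$, to the square root of the Euclidean metric (this is the standard behavior of CC metrics on the vertical subgroup of a $2$-step Carnot group). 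On the $h$ side the uniform quasiconformality of $Dh^t|_{V^{u,h}}$, together with the normal form charts, yields an analogous comparison between $\rho_{\varphi(x),h}|_{\V^{u,h}(\varphi(x))}$ and a snowflake of the Euclidean metric in the chart $e^V_{\varphi(x),h}$, up to uniform quasisymmetry (using the construction of a $Dh^t$-invariant H\"older conformal structure on $V^{u,h}$ via \cite{KS2} exactly as in Section \ref{subsec:synchro}). Since snowflaking preserves quasisymmetry, $\Psi_x$ is quasisymmetric, hence quasiconformal, in the Euclidean metrics.

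By the Morrey--Gehring theorem, $\Psi_x$ is differentiable at Lebesgue-a.e.~point with invertible derivative. Globalizing, we obtain a measurable bundle map $D\varphi^V : V^{u,g} \to V^{u,h}$, defined $m_g$-a.e., that satisfies $D\varphi^V \circ Dg^t = Dh^t \circ D\varphi^V$. Applying Proposition \ref{quasi cohomologous} to the uniformly quasiconformal cocycles $Dg^t|_{V^{u,g}}$ and $Dh^t|_{V^{u,h}}$, the measurable conjugacy $D\varphi^V$ coincides a.e.~with a H\"older continuous bundle isomorphism intertwining the unstable holonomies of the two vertical cocycles. By Proposition \ref{holonomy connect}, these unstable holonomies are given on each side by parallel transport of a smooth flat connection on the vertical bundle; on the $g$ side these connections are the canonical ones in the Carnot group chart, so the intertwining relation forces $\Psi_x$ to be affine in the charts $e^V_{x,g}$ and $e^V_{\varphi(x),h}$. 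Since both charts are smooth, this gives that $\varphi$ is smooth along $\V^{u,g}$.

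The main obstacle I anticipate is Step 2: carefully comparing the vertical Hamenst\"adt metrics to (snowflakes of) the Euclidean metrics in the normal form charts, and verifying that the quasisymmetry of $\varphi$ on full unstable leaves restricts to a uniform quasisymmetry on vertical leaves in a form amenable to quasiconformal regularity. All other steps are either direct appeals to previously established machinery (Propositions \ref{quotient charts}, \ref{holonomy connect}, \ref{quasi cohomologous}, Lemma \ref{vert uniform quasi}) or standard arguments from quasiconformal analysis.
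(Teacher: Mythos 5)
Your proof is essentially correct and follows the same overall strategy as the paper: establish that $\varphi$ restricted to vertical leaves is quasiconformal, deduce a.e.\ differentiability with invertible derivative, upgrade to a H\"older bundle conjugacy via Proposition~\ref{quasi cohomologous}, and then conclude smoothness using the smooth invariant connections on the vertical bundles. The only substantive difference is in how the leaf-wise quasiconformality is established. The paper shows directly that $\varphi|_{\V^{u,g}(x)}$ is bi-Lipschitz in the Riemannian metrics: flow forward to unit scale, invoke uniform continuity of $\varphi$ to control the image at unit scale, and flow back using the scaling $\|Dh^{t}(v)\| \asymp e^{2t}\|v\|$ for $v \in V^{u,h}$ (which follows from Lemma~\ref{vert uniform quasi} and equation~\eqref{vertical Jacobian}); quasiconformality is then an immediate consequence. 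You instead restrict the quasisymmetry of $\varphi_{x}$ on full unstable leaves to vertical leaves and then compare the restricted Hamenst\"adt metrics to snowflakes of the Euclidean metric in the normal form charts, using the vertical scaling behavior of the CC-metric on one side and the uniform quasiconformality plus the Jacobian normalization on the other. This route is valid (snowflaking preserves quasisymmetry, and quasisymmetric maps between Euclidean domains are quasiconformal) but is strictly weaker and more roundabout: it yields only quasiconformality where the paper's rescaling argument yields bi-Lipschitzness, and it carries extra technical overhead in verifying the snowflake comparisons uniformly in $x$. The paper's rescaling argument is the cleaner of the two and avoids the metric comparison you flag as the main obstacle, though your argument reaches the same endpoint. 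One small note: since at this stage $\varphi$ is a genuine conjugacy rather than only an orbit equivalence, you could invoke Proposition~\ref{conjugacy invariant Hamenstadt} (bi-Lipschitzness in Hamenst\"adt metrics) rather than Proposition~\ref{quasisymmetry orbit} (quasisymmetry), which would slightly strengthen your Step~2.
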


\begin{proof}
We first show that there is a $K \geq 1$ independent of $x \in T^{1}X$ such that $\varphi |_{\V^{u,g}(x)}$ is $K$-quasiconformal. Since $Dh^{t}|_{V^{u,h}}$ is uniformly quasiconformal, we conclude by \eqref{vertical Jacobian} that for $v \in V^{u,h}$ and $t \in \R$ we have
\begin{equation}\label{vertical scaling}
\|Dh^{t}(v)\| \asymp e^{2t}\|v\|.
\end{equation}

We write $d_{x,g}$ for the Riemannian distance on $\V^{u,g}(x)$. Let $r > 0$ be given and let $y,z \in \V^{u,g}(x)$ satisfy $d_{x,g}(y,z) = r$. Setting $t = -(\log r)/2$, we then have $d_{g^{t}x,g}(g^{t}y,g^{t}z) = 1$. By the uniform continuity of $\varphi$, we then conclude that
\[
d_{h^{t}(\varphi(x)),h}(h^{t}(\varphi(y)),h^{t}(\varphi(z)) = d_{h^{t}(\varphi(x)),h}(\varphi(g^{t}y),\varphi(g^{t}z)) \asymp 1,
\]
with multiplicative constant independent of $y, z , r$. Since $h^{t}(\varphi(y)), h^{t}(\varphi(z)) \in \V^{u,h}(h^{t}(\varphi(x)))$, by \eqref{vertical scaling} we then have
\[
d_{\varphi(x),h}(\varphi(y),\varphi(z)) \asymp e^{2t} = r.
\]
for any $y,z \in \V^{u,g}(x)$ that satisfies $d_{x,g}(y,z) = r$. We conclude that $\varphi: (\V^{u,g}(x),d_{x,g}) \rightarrow (\V^{u,h}(x),d_{\varphi(x),h})$ is bi-Lipschitz with Lipschitz constant independent of $x$, and in particular there is a constant $K \geq 1$ independent of $x$ such that $\varphi: \V^{u,g}(x) \rightarrow \V^{u,h}(\varphi(x))$ is $K$-quasiconformal. 

As in Lemma \ref{vertical abs cont}, we let $\nu_{x,g}$ denote the conditional measures of $m_{g}$ on the leaves $\V^{u,g}(x)$ which are equivalent to the Riemannian volume on these leaves. As a consequence of the $K$-quasiconformality, $\varphi|_{\V^{u,g}(x)}$ is absolutely continuous and differentiable $\nu_{x,g}$-a.e.~ on this leaf\cite{V71}, and so we obtain a measurable $m_{g}$-a.e.~ defined derivative map $D\varphi^{V}: V^{u,g} \rightarrow V^{u,h}$ that satisfies $D\varphi^{V} \circ Dg^{t} = Dh^{t} \circ D\varphi^{V}$ and is invertible $m_{g}$-a.e. Reasoning as we did with the measurable bundle map $D\varphi^{V}$ in Lemma \ref{vertical abs cont}, we conclude that $D\varphi^{V}$ agrees $m_{g}$-a.e.~ with a H\"older continuous homeomorphism from $V^{u,g}$ to $V^{u,h}$ that is linear on fibers and intertwines the unstable holonomies $P^{u,g}$ and $P^{u,h}$ of $Dg^{t}|_{V^{u,g}}$ and $Dh^{t}|_{V^{u,h}}$. 

By Proposition \ref{quotient charts} these unstable holonomies are given by parallel transport by a smooth connection along the leaves of $\V^{u,g}$ and $\V^{u,h}$ respectively. By the same argument as in Lemma \ref{smooth quotient map}, we conclude that $\varphi$ is smooth in the coordinates given by these connections and is therefore smooth along $\V^{u,g}$.
\end{proof}

\subsection{Differentiability along the center-unstable foliation} We return to considering both cases together. We will now establish that $\varphi$ is smooth along $\W^{cu,g}$. Differentiability in the flow direction is simple.

\begin{lem}\label{flow dir conj diff}
$\varphi$ is smooth along $\W^{c,g}$. 
\end{lem}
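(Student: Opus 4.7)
The plan is to use the conjugacy equation $\varphi \circ g^t = h^t \circ \varphi$ supplied by Proposition \ref{prop:orbit to conj}. Since the leaves of $\W^{c,g}$ are precisely the $g^t$-orbits, any such leaf is smoothly parametrized by $s \mapsto g^s(y)$, and under $\varphi$ it is carried into the $h^t$-orbit $\W^{c,h}(\varphi(y))$. The conjugacy property forces the parametrizations to match: $\varphi(g^s(y)) = h^s(\varphi(y))$ for all $s \in \R$. So along $\W^{c,g}$, the map $\varphi$ is, in suitable parametrizations, literally the identity composed with $h^t$ acting on the $\varphi$-images of the transversal points.

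To verify uniform $C^\infty$ regularity along $\W^{c,g}$ in the sense of Definition \ref{uniform foliation}, I would work in foliation charts of the form $(s,y) \mapsto g^s(y)$ with $y$ ranging over a smooth local transversal to $\W^{c,g}$. The image under $\varphi$ in such a chart is $(s,y) \mapsto h^s(\varphi(y))$. Since $h^t$ is a $u$-smooth Anosov flow (a time change of the smooth flow $f^t$ with speed multiplier $l\bar\zeta_f^{-1}$ that is smooth along $\W^{cu,f}=\W^{cu,h}$, and $\W^{c,h}\subset\W^{cu,h}$), for every fixed $y'\in M$ the curve $s \mapsto h^s(y')$ is $C^\infty$ in $s$, and this curve depends continuously on $y'$ in the $C^\infty$ topology. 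Composing with the continuous map $y \mapsto \varphi(y)$ preserves this continuity in $y$, so the family of curves $s \mapsto h^s(\varphi(y))$ varies continuously in $y$ inside $C^\infty((-\e,\e),M)$, which is precisely the requirement for $\varphi$ to be uniformly $C^\infty$ along $\W^{c,g}$. There is no real obstacle: the lemma is essentially a formal consequence of the conjugacy property combined with the smoothness of $h^t$ in its orbit direction, and it does not require any of the harder horizontal/vertical regularity results developed in Sections \ref{sec:pansuderiv} and \ref{sec: vert}.
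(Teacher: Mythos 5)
Your proof is correct and is essentially the same argument as the paper's. You both use the conjugacy $\varphi\circ g^{t}=h^{t}\circ\varphi$ together with the fact that $h^{t}$ is smooth in the flow direction to conclude that, in orbit-parametrized coordinates $(s,y)\mapsto g^{s}(y)$ and $(s,y')\mapsto h^{s}(y')$, the map $\varphi$ acts as the identity in the $s$ variable; the paper just additionally normalizes a Riemannian structure on $\W^{c,h}$ so that $\|v_{h}\|\equiv 1$ before stating this, which is a notational convenience rather than a substantive extra step.
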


\begin{proof}
The restriction of $h^{t}$ to $\W^{c,h}$ is smooth, and thus we can choose a new Riemannian structure on $\W^{c,h}$, smooth along $\W^{c,h}$, such that the generator $v_{h}$ of $h^{t}$ satisfies $\|v_{h}(x)\| = 1$ for all $x \in M$. For $\delta > 0$ small enough we then have a smooth diffeomorphism $a_{x,h}:(-\delta,\delta) \rightarrow \W^{c,h}(x)$ given by $a_{x,h}(t) = h^{t}x$, and a similarly defined diffeomorphism $a_{x,g}: (-\delta,\delta) \rightarrow \W^{c,g}(x)$ given by $a_{x,g}(t) = g^{t}x$ for $x \in T^{1}X$. 

The conjugacy relation $\varphi \circ g^{t} = h^{t} \circ \varphi$ implies that $a_{\varphi(x),h}^{-1} \circ \varphi \circ a_{x,g}$ is simply the identity map on $(-\delta,\delta)$, hence is smooth. It follows that $\varphi$ is smooth along $\W^{c,g}$.
\end{proof}

We require a simple lemma. Recall that $V$ denotes the left-invariant distribution on $G$ tangent to $\mathfrak{v}$ at the identity. 

\begin{lem}\label{commutator}
Let $F\subset TG$ be a left-invariant distribution with $V \subseteq F$. Let $Z$ be a left-invariant vector field on $G$. Then $P = \mathrm{span}(Z,F)$ is an integrable left-invariant distribution on $G$, tangent to a foliation $\mathcal{P}$. 
\end{lem}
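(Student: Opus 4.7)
The plan is to invoke the Frobenius theorem: since $P$ is left-invariant, it is integrable if and only if the subspace $P_{0} := P|_{e}$ of $\mathfrak{g}$ is a Lie subalgebra. The key observation is that $G$ is $2$-step nilpotent with center $\mathfrak{v}$, so every Lie bracket in $\mathfrak{g}$ lies in $\mathfrak{v}$, which by hypothesis is already contained in $F_{0} \subseteq P_{0}$.

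More precisely, write $\mathfrak{f} = F|_{e}$ and $z = Z(e)$, so that $P_{0} = \mathbb{R}z + \mathfrak{f}$. Let $X, Y \in P_{0}$, and decompose $X = az + u$ and $Y = bz + w$ with $a, b \in \mathbb{R}$ and $u, w \in \mathfrak{f}$. Expanding by bilinearity,
\[
[X, Y] = a[z, w] + b[u, z] + [u, w],
\]
and each of the three brackets on the right lies in $[\mathfrak{g}, \mathfrak{g}] \subseteq \mathfrak{v} \subseteq \mathfrak{f} \subseteq P_{0}$, using that $G$ is $2$-step nilpotent. Hence $[X, Y] \in P_{0}$, so $P_{0}$ is a Lie subalgebra of $\mathfrak{g}$. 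Left-translating, $P$ is an involutive smooth distribution on $G$, and the Frobenius theorem produces the required foliation $\mathcal{P}$ tangent to $P$.

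There is no real obstacle here; the statement is essentially a direct consequence of the fact that the center $\mathfrak{v}$ absorbs all brackets in a $2$-step nilpotent Lie algebra, combined with the hypothesis $V \subseteq F$.
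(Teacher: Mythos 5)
Your proof is correct and takes essentially the same approach as the paper's: both rest on the observation that $2$-step nilpotency forces every bracket into $\mathfrak{v} \subseteq F \subseteq P$. The only difference is that you explicitly verify the bracket of two elements of $F$ also lands in $P_{0}$, whereas the paper reduces to checking only $[Z,Y]$ for $Y$ tangent to $F$, implicitly treating $[F,F]$ by the same reasoning.
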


\begin{proof}
It suffices to show that $[Z, Y] \subseteq \mathrm{span}(Z,F)$ for any left-invariant vector field $Y$ tangent to $F$. But in fact $[Z,Y] \subseteq V \subseteq F$ since $G$ is 2-step nilpotent. The conclusion follows. 
\end{proof}

\begin{lem}\label{unstable smooth map}
$\varphi$ is smooth along $\W^{cu,g}$. 
\end{lem}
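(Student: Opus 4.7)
The plan is to reduce the lemma to showing that $\varphi$ is smooth along $\W^{u,g}$ and then to prove this by an induction along a chain of smooth subfoliations of $\W^{u,g}$ built from the Carnot group structure, each step of which uses Journ\'e's lemma (Lemma \ref{Journe}) together with smoothness of $\bar\varphi$ (Lemma \ref{smooth quotient map}). First I would invoke Lemma \ref{flow dir conj diff} together with Journ\'e's lemma applied to the transverse smooth foliations $\W^{c,g}$ and $\W^{u,g}$ of $\W^{cu,g}$ to reduce the claim to smoothness of $\varphi$ along $\W^{u,g}$ alone.

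Fix $x \in T^{1}X$ and identify $\W^{u,g}(x)$ with the Carnot group $G$ via the chart $T_{x}$, so that $L^{u,g}|_{\W^{u,g}(x)}$ and $V^{u,g}|_{\W^{u,g}(x)}$ correspond to the left-invariant distributions $L$ and $V$ on $G$. Choose an ordered left-invariant basis $Z_{1},\dots,Z_{l}$ of $L$. Since $V$ is central in $\mathfrak{g}$ and $[Z_{j},Z_{k}] \in V$, the subspace $F_{i} := \mathrm{span}(Z_{1},\dots,Z_{i}) \oplus V$ is a Lie subalgebra of $\mathfrak{g}$ for every $1 \leq i \leq l$, so it integrates to a left-invariant foliation $\mathcal{F}_{i}$ of $G$ by cosets of a simply connected abelian Lie subgroup. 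Pushing forward by $T_{x}$ yields a nested family of smooth foliations $\mathcal{F}_{i}^{g}$ of $\W^{u,g}(x)$, with $\mathcal{F}_{0}^{g}$ equal to the restriction of $\V^{u,g}$ and $\mathcal{F}_{l}^{g}$ equal to $\W^{u,g}(x)$ itself. The plan is to prove by induction on $i$ that $\varphi$ is uniformly smooth along $\mathcal{F}_{i}^{g}$. The base case $i=0$ is given by Lemma \ref{vertical abs cont} in the complex hyperbolic case and by Lemma \ref{phidiffquasi} in the quaternionic and Cayley hyperbolic cases. For the inductive step, $\mathcal{F}_{i}^{g}$ splits as the direct sum of the transverse smooth subfoliations $\mathcal{F}_{i-1}^{g}$ (smooth along $\varphi$ by inductive hypothesis) and the one-dimensional foliation $\mathcal{Z}_{i}^{g}$ tangent to $Z_{i}$; applying Journ\'e's lemma reduces the inductive step to showing that $\varphi$ is smooth along $\mathcal{Z}_{i}^{g}$.

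This final one-dimensional claim is the heart of the argument. By Proposition \ref{horizontal diff}, the restriction of $\varphi$ to any integral curve $\sigma(t) = \exp(tZ_{i}) \cdot y$ is a $C^{1}$ curve tangent to $L^{u,h}$ with derivative given by the H\"older bundle map $\Phi$, and projecting via $\pi_{h}$ gives $\bar\varphi \circ \pi_{g} \circ \sigma$, which is the $\bar\varphi$-image of an affine line in the flat chart $e_{x}$ on $\mathcal{Q}^{u,g}(x)$ and is therefore smooth by Lemma \ref{smooth quotient map}. To upgrade this to smoothness of $\varphi \circ \sigma$ itself, I would recover the vertical component as follows: for each $t$, close the segment $\sigma|_{[0,t]}$ into a piecewise horizontal loop in $G$ using segments tangent to $Z_{1},\dots,Z_{i-1}$, with side lengths depending smoothly on $t$. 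By the Baker--Campbell--Hausdorff formula, the vertical displacement around such a loop is polynomial in the side lengths and in the structure constants $[Z_{i},Z_{j}] \in V$, hence smooth in $t$. Applying $\varphi$ to this loop, the images of the $Z_{j}$-segments ($j<i$) and of their vertical displacement are smooth by the inductive hypothesis and the preservation of $\V^{u,g}$ (Proposition \ref{vert preserve}) together with smoothness of $\varphi$ along $\V^{u,g}$. The discrepancy between the endpoints of $\varphi \circ \sigma$ and the already-smooth pieces is then a smooth function of $t$ valued in a $\V^{u,h}$-leaf, completing the induction.

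The main obstacle I anticipate is making the loop-closure argument quantitative enough to give genuine $C^{\infty}$ regularity of $\varphi \circ \sigma$ rather than just higher H\"older control. A cleaner alternative, which I would pursue if the BCH bookkeeping is awkward, is to apply Proposition \ref{quasi cohomologous} to the restriction of $Df^{t}$ to a suitable $Df^{t}$-invariant H\"older subbundle of $E^{u,h}$ tangent to the $\varphi$-images of the $\mathcal{F}_{i}^{g}$, producing a H\"older conjugacy that intertwines the parallel transport maps of the flat connection constructed in Proposition \ref{holonomy connect}; in the affine coordinates furnished by Proposition \ref{quotient charts}, such a conjugacy is automatically affine on fibers and this forces smoothness of $\varphi$ in the horizontal direction.
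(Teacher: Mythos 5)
Your overall framework---reducing via Journ\'e to a single unstable leaf, identifying it with $G$ via $T_x$, and inducting along the chain of left-invariant subalgebras $F_i = \mathrm{span}(Z_1,\dots,Z_i) \oplus V$ with base case given by Lemmas \ref{vertical abs cont} and \ref{phidiffquasi}---matches the paper's proof of Lemma \ref{unstable smooth map} exactly. The gap is in the one-dimensional step, which you rightly flag as the heart of the matter. A piecewise horizontal ``loop'' with one side $\sigma|_{[0,t]}$ (tangent to $Z_i$) and the remaining sides tangent only to $Z_1,\dots,Z_{i-1}$ cannot close: its projection to the abelian quotient $G/V$ has a $Z_i$-component equal to $t$, which the other sides contribute nothing to cancel, and for $i=1$ there are no such sides at all. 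Replacing the loop by a commutator rectangle requires a second $Z_i$-side at an offset basepoint, whose $\varphi$-image is precisely what you are trying to control. More fundamentally, the target leaf $\W^{u,h}(\varphi(x))$ carries no Carnot group structure intertwined by $\varphi$, so a BCH loop-defect computation in $G$ has no counterpart on the $h$-side; the paper uses only the much weaker facts that $\varphi$ preserves $\V^u$ (Proposition \ref{vert preserve}) and that the quotient map $\bar\varphi$ is smooth (Lemma \ref{smooth quotient map}). Your fallback via Proposition \ref{quasi cohomologous} stumbles on a different rock: the affine normal forms of Proposition \ref{quotient charts} live on the quotient $\mathcal{Q}^u$ and on the vertical leaves $\V^u$, not on $\W^u$ or on the intermediate leaves $\mathcal{F}_i$ (which are non-abelian modulo $V$ once $i \geq 2$), and a holonomy-intertwining bundle map over $\varphi$ on $E^{u,h}$ would at best reprove the smoothness of $\bar\varphi$, which is already Lemma \ref{smooth quotient map}.

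The paper's resolution of the 1D step avoids any explicit recovery of the vertical component. Setting $P = \mathrm{span}(Z,F)$ and letting $\mathcal{P}$ be the foliation tangent to it (Lemma \ref{commutator}), one first observes that the image leaf $\mathcal{K}(\psi(y)) := \psi(\mathcal{P}(y))$ is a smooth submanifold of $\W^{u,h}(\varphi(x))$: it is the $\pi_h$-preimage of $\bar\psi(\bar{\mathcal{P}}(\bar y))$, which is smooth by Lemma \ref{smooth quotient map} and Proposition \ref{prop: quotient reg}. Inside this smooth ``container'', the image foliation $\mathcal{Y} := \psi(\mathcal{Z})$ has holonomy maps between the transversals $\psi(\mathcal{F}(y))$ equal to $\psi \circ h^{\mathcal{Z}} \circ \psi^{-1}$; these are smooth by the inductive hypothesis, so $\mathcal{Y}$ is a smooth foliation of $\mathcal{K}(\psi(y))$. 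Finally $\psi|_{\mathcal{Z}(y)}$ factors as $\pi_h^{-1} \circ \bar\psi \circ \pi_G$, and since $\mathcal{Y}(\psi(y))$ is now a smooth curve tangent to $L^{u,h}$, the restriction of $\pi_h^{-1}$ to $\pi_h(\mathcal{Y}(\psi(y)))$ is smooth. The vertical part of $\psi$ along $\mathcal{Z}$ is thus captured implicitly, through the smooth foliation structure, rather than by any direct computation.
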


\begin{proof}
Fix $x \in M$ and consider the restriction $\varphi: \W^{u,g}(x) \rightarrow \W^{u,h}(\varphi(x))$. Identify $\W^{u,g}(x)$ with $G$ via the chart $T_{x}: G \rightarrow \W^{u,g}(x)$ and set $\psi = \varphi \circ T_{x}$. By Lemmas \ref{vertical abs cont} and \ref{phidiffquasi}, $\psi$ is smooth along the vertical foliation $\V$ of $G$ that is tangent to $V$. 

Now suppose that $\psi$ is smooth along a proper foliation $\mathcal{F}$ of $G$ tangent to a left-invariant distribution $F$ on $TG$ with $V \subseteq F$. Let $Z$ be a left-invariant vector field tangent to the horizontal distribution $L$ which is not contained in $F$. Let $P = \mathrm{span}(Z,F)$. By Lemma \ref{commutator} there is a smooth foliation $\mathcal{P}$ of $G$ tangent to $P$.   

By projecting to $G/\mathcal{V}$ we obtain a smooth left-invariant foliation $\bar{\mathcal{P}}$ of $G/\mathcal{V}$. Letting $\bar{\psi}:G/\mathcal{V} \rightarrow \mathcal{Q}^{u,h}(\varphi(x))$ denote the quotient map, by Lemma \ref{smooth quotient map} this map is smooth and therefore the image of $\bar{\mathcal{P}}$ is a smooth foliation $\bar{\mathcal{R}}$ of $\mathcal{Q}^{u,h}(\varphi(x))$. Taking the pre-image under the projection $\pi_{x}: \W^{u,h}(\varphi(x)) \rightarrow \mathcal{Q}^{u,h}(\varphi(x))$, we obtain a smooth foliation $\mathcal{R}$ of $\W^{u,h}(\varphi(x))$ with $\varphi(\mathcal{P}(y))  =\mathcal{R}(\psi(y))$ for each $y \in G$. If $P = TG$ then the foliation $\mathcal{P}$ is trivial and $\mathcal{R}(\psi(y)) = \W^{u,h}(\varphi(x))$ for all $y \in G$. The adaptations for this case in the argument below are straightforward. 

Let $\mathcal{Y}$ denote the image of $\mathcal{Z}$ under $\psi$. By Proposition \ref{horizontal diff} this defines a foliation of $\W^{u,h}(\varphi(x))$ by $C^1$ curves. We restrict to $\mathcal{P}(y)$ for some fixed $y \in G$ and consider the holonomy map $h^{\mathcal{Z}}: \mathcal{F}(y) \rightarrow \mathcal{F}(z)$ for $z \in \mathcal{Z}(y)$. This map is $C^{\infty}$, hence the $\mathcal{Y}$-holonomy map $h^{\mathcal{Y}}$ between $\psi(\mathcal{F}(y))$ and $\psi(\mathcal{F}(z))$ inside of $\mathcal{K}(\psi(y))$ will also be $C^{\infty}$, since $\psi$ is $C^{\infty}$ on $\psi(\mathcal{F}(y))$ and $\psi(\mathcal{F}(z))$. Thus by  \cite[Lemma 31]{BX} $\mathcal{Y}$ defines a smooth foliation of $\mathcal{K}(\psi(y))$, since its holonomies between transversals are smooth. Then $\psi: \mathcal{Z}(y) \rightarrow \mathcal{Y}(\psi(y))$ can be factored as $\psi = \pi_{h}^{-1} \circ \bar{\psi} \circ \pi_{G}$, where $\pi_{G}: G \rightarrow G/\mathcal{V}$ is the smooth projection, $\bar{\psi}$ is smooth by Lemma \ref{smooth quotient map}, and 
\[
\pi_{h}^{-1}: \pi_{h}( \mathcal{Y}(\psi(y)))\rightarrow \mathcal{Y}(\psi(y)),
\]
is smooth since the curve $\mathcal{Y}(\psi(y))$ is smooth inside of $\mathcal{K}(\psi(y))$.

We conclude that $\psi$ is smooth along $\mathcal{Z}$ considered as a subfoliation of a fixed leaf $\mathcal{P}(y)$  of $\mathcal{P}$. By Lemma \ref{Journe} this implies that $\psi$ is smooth along $\mathcal{P}$. This completes the inductive step. We repeat this process until we reach $P = TG$, at which point we conclude that $\psi$ is smooth and therefore $\varphi: \W^{u,g}(x) \rightarrow \W^{u,h}(\varphi(x))$ is smooth. Thus $\varphi$ is smooth along $\W^{u,g}$. Since $\varphi$ is smooth along $\W^{c,g}$ by Lemma \ref{flow dir conj diff}, by applying Lemma \ref{Journe} again we conclude that $\varphi$ is smooth along $\W^{cu,g}$. 
\end{proof}

\subsection{Conclusion}
We will now complete the proof of Theorem \ref{core theorem}. Let $h^{t}_{u}$ and $h^{t}_{s}$ be the respective unstable and stable synchronizations constructed in Section \ref{subsec:synchro}, which are smooth along $\W^{cu,f}$ and $\W^{cs,f}$ respectively. Let $\W^{u,h_{u}}$ and $\W^{u,h_{s}}$ denote the respective invariant subfoliations of $\W^{cu,f}$ and $\W^{cs,f}$ for $h^{t}_{u}$ and $h^{t}_{s}$. We constructed conjugacies $\varphi_{u}$ and $\varphi_{s}$ from $g^{t}$ to  $h^{t}_{u}$ and $h^{t}_{s}$ respectively in Proposition \ref{prop:orbit to conj}, and established in the previous section that $\varphi_{u}$ is smooth along $\W^{cu,g}$ and $\varphi_{s}$ is smooth along $\W^{cs,g}$; the result for $\varphi_{s}$ follows from arguments exactly analogous to those used for $\varphi_{u}$, replacing $f^{t}$ with $f^{-t}$ and $g^{t}$ with $g^{-t}$ as usual. 

\begin{lem}\label{foliation smooth}
The foliations $\W^{cu,f}$ and $\W^{cs,f}$ are $C^{\infty}$. 
\end{lem}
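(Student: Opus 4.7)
I would argue that $\W^{cu,f}$ is $C^\infty$; the case of $\W^{cs,f}$ follows by the same argument with the roles of $f^t$ and $f^{-t}$, and of $\varphi_u$ and $\varphi_s$, interchanged. Since $g^t = g^t_X$ is the geodesic flow of a negatively curved locally symmetric space, its invariant foliations $\W^{cu,g}$, $\W^{cs,g}$, $\W^{u,g}$, $\W^{s,g}$, $\W^{c,g}$ are all $C^\infty$ foliations of $T^1 X$ (this is standard and follows from homogeneity). The entire task is to transfer this smoothness across the conjugacy $\varphi_u$.

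The strategy is to show that $\varphi_u: T^1X \to M$ is itself a $C^\infty$ diffeomorphism. Once this is proved, $\W^{cu,f}$ is the $\varphi_u$-pushforward of the $C^\infty$ foliation $\W^{cu,g}$ (recall that $\varphi_u$ maps $\W^{cu,g}$ bijectively onto $\W^{cu,h_u} = \W^{cu,f}$, since time changes preserve the center-unstable foliation), and hence it is $C^\infty$. Lemma~7.8 already gives us that $\varphi_u$ is smooth along $\W^{cu,g}$. To conclude $\varphi_u$ is $C^\infty$ globally, I would invoke Journ\'e's Lemma~8.1 applied to the transverse smooth foliations $\W^{cu,g}$ and $\W^{s,g}$ of $T^1X$: it remains to establish that $\varphi_u$ is smooth along $\W^{s,g}$.

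For this, I would use the fact that $\varphi_u$ and $\varphi_s$ are flow-related orbit equivalences, so there exists a continuous function $\beta: T^1X \to \R$ with $\varphi_u(x) = f^{-\beta(x)}(\varphi_s(x))$. Because $\varphi_s$ is smooth along $\W^{cs,g} \supset \W^{s,g}$ (by the symmetric analog of Lemma~7.8 applied in constructing $\varphi_s$), and because $f^t$ is smooth on $M$, the smoothness of $\varphi_u$ along $\W^{s,g}$ reduces to showing that $\beta$ is smooth along $\W^{s,g}$.

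The main obstacle will be proving this smoothness of $\beta$ along $\W^{s,g}$. To start, $\beta$ is smooth along $\W^{c,g} = \W^{cu,g} \cap \W^{cs,g}$ since along the flow both $\varphi_u$ and $\varphi_s$ are smooth and the equation $f^{\beta(x)}(\varphi_u(x)) = \varphi_s(x)$ can be inverted using the smooth nonvanishing generator $v_f$. Combining the conjugacy equations $\varphi_u \circ g^t = h^t_u \circ \varphi_u$ and $\varphi_s \circ g^t = h^t_s \circ \varphi_s$ with $\varphi_s = f^\beta \circ \varphi_u$ yields a coboundary relation
\[
\beta(g^t x) - \beta(x) = \alpha_s(t,\varphi_s(x)) - \alpha_u(t,\varphi_u(x)),
\]
where $\alpha_u, \alpha_s$ are the additive cocycles governing the time changes $h^t_u, h^t_s$. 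I would exploit the exponential contraction of $\W^{s,g}$ under $g^t$, together with the smoothness of the speed multipliers $\zeta_f^{-1}$ and $\zeta_{f^{-1}}^{-1}$ along $\W^{cu,f}$ and $\W^{cs,f}$ respectively, to propagate smoothness of $\beta$ from $\W^{c,g}$ out along $\W^{s,g}$ in a bootstrap argument analogous to the $C^r$ section theorem. Once $\beta$ is smooth along $\W^{s,g}$, the chain $\beta \text{ smooth along } \W^{s,g} \Rightarrow \varphi_u \text{ smooth along } \W^{s,g} \Rightarrow \varphi_u \in C^\infty \Rightarrow \W^{cu,f} \in C^\infty$ closes the proof.
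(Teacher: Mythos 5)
Your plan reverses the logical order of the paper: it tries to establish that $\varphi_u$ is globally $C^\infty$ \emph{first} and deduce the lemma, whereas the paper proves the lemma directly and only afterwards uses it to get global smoothness of $\varphi_u$. The gap is in your bootstrap step for $\beta$ along $\W^{s,g}$. Writing out the coboundary relation that $\beta$ satisfies over $g^t$, the right-hand side involves the additive cocycles governing the time changes $h^t_u$ and $h^t_s$, whose generators are the speed multipliers $l\bar{\zeta}_f^{-1}$ and $l\bar{\zeta}_{f^{-1}}^{-1}$. At this stage of the argument the potential $\bar{\zeta}_f$ is only known to be H\"older on $M$ and smooth \emph{along} $\W^{cu,f}$; its transverse regularity is governed by the regularity of the bundle $\bar{Q}^{u,f} = E^{cu,f}/V^{cu,f}$, i.e.\ by whether $E^{cu,f}$ and $V^{cu,f}$ are smooth --- which is precisely the content of this lemma and its companion \ref{vert foliation smooth}. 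So the cocycle driving your Liv\v{s}ic/graph-transform bootstrap is not smooth in the $\W^{s,g}$-direction you want to propagate along, and the propagation cannot close. The dependency is circular.

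The paper's route avoids this entirely. It takes as transversals to $\W^{cs,f}$ the plaques of the unstable foliation $\W^{u,h_u}$ of the synchronized flow $h^t_u$ inside a foliation box, and writes the $\W^{cs,f}$-holonomy between two such transversals as $\varphi_u \circ h^{cs,g} \circ \varphi_u^{-1}$; this is possible because $\varphi_u$ conjugates $g^t$ to $h^t_u$ and therefore sends $\W^{cs,g}$ to $\W^{cs,h_u} = \W^{cs,f}$ and $\W^{u,g}$ to $\W^{u,h_u}$. Since $\W^{cs,g}$ is a smooth foliation of $T^1X$, the holonomy $h^{cs,g}$ is a smooth map between smooth transversals, and since $\varphi_u$ is smooth along $\W^{u,g}$ (from the preceding section), the conjugated holonomy $h^{cs,f}$ is a composition of smooth maps. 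Then [BX, Lemma 31] --- a foliation whose holonomies between smooth transversals are $C^\infty$ is itself $C^\infty$ --- finishes the argument for $\W^{cs,f}$, and the symmetric argument with $h^t_s$, $\varphi_s$ handles $\W^{cu,f}$. Notice that no information about $\varphi_u$ or $\varphi_s$ transverse to the center-unstable or center-stable leaves of $g^t$ is needed. Only after this lemma and \ref{vert foliation smooth} are in place does the paper conclude that $\bar{\zeta}_f$ and $\bar{\zeta}_{f^{-1}}$ are smooth on all of $M$, that the time changes are smooth, and hence --- by Liv\v{s}ic theory --- that the transfer function $\eta$ (your $\beta$) is smooth. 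That is exactly the step your proposal attempts prematurely.
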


\begin{proof}
Let $U$ be a foliation box for all of the invariant foliations of $f^{t}$. Consider the $\W^{cs,f}$-holonomy map $h^{cs,f}:\W^{u,h_{u}}_{U}(x) \rightarrow \W^{u,h_{u}}_{U}(y)$ for $x \in U$ and $y \in \W^{cs,f}_{U}(x)$. Since $\varphi_{u}$ conjugates $g^{t}$ to $h^{t}_{u}$, we have $h^{cs,f} = \varphi_{u} \circ h^{cs,g} \circ \varphi_{u}^{-1}$, where $h^{cs,g}$ denotes the $\W^{cs,g}$-holonomy map between  $\varphi_{u}^{-1}(\W^{u,h_{u}}_{U}(x))$ and $\varphi_{u}^{-1}(\W^{u,h_{u}}_{U}(x))$. Since $\W^{cs,g}$ is a smooth foliation of $T^{1}X$ and $\varphi_{u}$ is smooth along $\W^{u,g}$, we conclude that $h^{cs,f}$ is $C^{\infty}$. Since this holds for any pair of transversals to $\W^{cs,f}$ from the foliation $\W^{u,h_{u}}$, we conclude by \cite[Lemma 31]{BX} that $\W^{cs,f}$ is $C^{\infty}$. An analogous proof using $h^{t}_{s}$ and $\varphi_{s}$ instead of $h^{t}_{u}$ and $\varphi_{u}$ establishes that $\W^{cu,f}$ is $C^{\infty}$ as well. 
\end{proof}

We also establish an analogous smoothness property for the vertical foliations. 

\begin{lem}\label{vert foliation smooth}
The foliations $\V^{cu,f}$ and $\V^{cs,f}$ are $C^{\infty}$. 
\end{lem}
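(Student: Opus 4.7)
\smallskip

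The plan is to prove the stronger statement that $V^{cu,f} = V^{u,f} \oplus E^{c,f}$ and $V^{cs,f} = V^{s,f} \oplus E^{c,f}$ are $C^{\infty}$ subbundles of $TM$, which combined with their known integrability gives the $C^{\infty}$ smoothness of the foliations. The two cases are symmetric, so I will describe the argument for $\V^{cu,f}$ using $\varphi_u$; the argument for $\V^{cs,f}$ is identical with $\varphi_s$ in place of $\varphi_u$. A preliminary observation is that $\V^{cu,h_u}=\V^{cu,f}$ as foliations of $M$: the identification $E^{u,h_u}\cong\bar E^{u,f}$ matches the dominated splittings $L^{u,h_u}\oplus V^{u,h_u}$ and $\bar L^{u,f}\oplus\bar V^{u,f}$, so $V^{u,h_u}\oplus E^{c,f}=V^{u,f}\oplus E^{c,f}=V^{cu,f}$.

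\emph{Smoothness along $\W^{cu,f}$.} By Lemma \ref{unstable smooth map}, $\varphi_u$ is smooth along $\W^{cu,g}$, and by Proposition \ref{vert preserve} (applied with $h_u^t$ in place of $f^t$) it maps $\V^{cu,g}$ onto $\V^{cu,h_u}=\V^{cu,f}$. Because $\V^{cu,g}$ is a smooth foliation of the homogeneous symmetric space $T^1X$, transporting through the leafwise-smooth $\varphi_u$ shows that the Grassmannian section $\xi_{V^{cu,f}}\colon M\to\mathrm{Gr}(TM)$ is smooth along $\W^{cu,f}$.

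\emph{Smoothness along $\W^{cs,f}$.} The time-reversed versions of Lemma \ref{unstable smooth map} and Proposition \ref{vert preserve}, applied to $\varphi_s$ (smooth along $\W^{cs,g}$), give that $\xi_{V^{cs,f}}$ is smooth along $\W^{cs,f}$, and hence $V^{s,f}$ is smooth along $\W^{cs,f}$ since $E^{c,f}$ is a smooth line subbundle. To pin down $V^{cu,f}$ along $\W^{cs,f}$, one characterizes it inside the ambient smooth bundle $E^{cu,f}$ (which is smooth on $M$ by Lemma \ref{foliation smooth}) as the orthogonal complement of the horizontal bundle $L^{u,h_u}$ in the Riemannian structure adapted to the $u$-splitting in \S\ref{subsec:synchro}; smoothness of $L^{u,h_u}$ along $\W^{cs,f}$ is inherited from smoothness of $\varphi_s$ along $\W^{cs,g}$ combined with the smooth horizontal/vertical decomposition on $T^1X$. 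This yields smoothness of $\xi_{V^{cu,f}}$ along $\W^{cs,f}$.

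With smoothness of $\xi_{V^{cu,f}}$ verified along both $\W^{cu,f}$ and $\W^{cs,f}$, which by Lemma \ref{foliation smooth} are transverse smooth foliations of $M$ with smooth leaves, Journ\'e's Lemma \ref{Journe} applied to $\xi_{V^{cu,f}}$ yields smoothness of $V^{cu,f}$ on all of $M$. The main obstacle is the second step: $\varphi_s$ is only H\"older transversally to $\W^{cs,g}$, so we cannot directly transport $\V^{cu,g}$ through $\varphi_s$. The resolution relies on exploiting the rigid algebraic characterization of $V^{cu,f}$ inside $E^{cu,f}$ by its $Df^t$-invariance and its interaction with the smooth bundles already controlled, rather than on differentiating $\varphi_s$ outside its smooth directions.
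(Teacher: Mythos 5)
Your Journ\'e decomposition is different from the paper's, and the key second step has a real gap. You try to show $V^{cu,f}$ is smooth along $\W^{cs,f}$ by characterizing it inside $E^{cu,f}$ as the orthogonal complement of $L^{u,h_u}$, and you claim the smoothness of $L^{u,h_u}$ along $\W^{cs,f}$ follows from the smoothness of $\varphi_s$ along $\W^{cs,g}$. This does not work: $\varphi_s$ is a conjugacy from $g^t$ to $h_s^t$ that is smooth along $\W^{cs,g}$, so $D\varphi_s$ is only defined on $E^{cs,g}$ and maps into $E^{cs,f}$. The bundle $L^{u,h_u}$ lives in $E^{cu,f}$, entirely in the complementary direction, so $\varphi_s$'s leafwise smoothness never ``sees'' it. Moreover, even if you could push $L^{u,g}$ through $\varphi_s$, you would land on data associated to $h_s^t$ (e.g.\ $L^{u,h_s}$), not to $h_u^t$, and the two splittings of $E^{cu,f}$ into $E^{u,h_*}\oplus E^{c,f}$ differ. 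The ``rigid algebraic characterization'' you invoke at the end is not enough to close this: you still need an a priori smoothness of either $L^{u,h_u}$ or the adapted Riemannian structure along $\W^{cs,f}$, and neither has been established. You correctly flag this as the main obstacle, but the proposed resolution is not one.

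The paper avoids the problem with a different choice of transversal for Journ\'e. Rather than pairing $\W^{cu,f}$ with $\W^{cs,f}$, it pairs the center-(un)stable foliation with the opposite \emph{(un)stable synchronized} foliation: for $\V^{cs,f}$ it uses $\W^{cs,f}$ and $\W^{u,h_u}$. The smoothness of $\V^{cs,f}$ along $\W^{u,h_u}$ is obtained not by transporting a bundle but by showing that the holonomy of $\V^{cs,f}$ between $\W^{u,h_u}$-transversals is smooth. That holonomy is well-defined because $\varphi_u^{-1}$ carries $\W^{u,h_u}$ to $\W^{u,g}$ and $\V^{cs,f}$ to $\V^{cs,g}$, and these are jointly integrable in the symmetric space, and it equals $\varphi_u\circ h^{\V^{cs,g}}\circ\varphi_u^{-1}$, which is smooth because $\varphi_u$ is smooth along $\W^{u,g}$ --- i.e.\ the smoothness of $\varphi_u$ is used only in a direction where you already know it. Smooth holonomies between all such transversals then gives smoothness of the foliation via \cite[Lemma 31]{BX}, and Journ\'e finishes. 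The point is that the one conjugacy $\varphi_u$ suffices: you never need a transverse derivative of $\varphi_s$, nor any smoothness of $L^{u,h_u}$ off of $\W^{cu,f}$. To salvage your proof for $\V^{cu,f}$ you would need to adopt the same idea with the roles reversed, using the holonomy of $\V^{cu,f}$ between $\W^{s,h_s}$-transversals and the conjugacy $\varphi_s$ (smooth along $\W^{s,g}$), rather than the orthogonal-complement characterization.
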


\begin{proof}
We prove the claim for $\V^{cs,f}$, with the proof for $\V^{cu,f}$ being analogous. Let $U$ be a foliation box for all of the invariant foliations of $f^{t}$. We claim that if $x \in U$, $y \in \W^{u,h_{u}}_{U}(x)$, and $z \in \V^{cs,f}_{U}(x)$ then, provided $x$, $y$, and $z$ are close enough, there is a unique intersection point $\{w\} = \W^{u,h_{u}}_{U}(z) \cap \V^{cs,f}_{U}(y)$. Since $\varphi^{-1}_{u}$ takes $\W^{u,h_{u}}$ to $\W^{u,g}$ and takes $\V^{cs,f}$ to $\V^{cs,g}$, it suffices to prove the corresponding claim for the foliations $\W^{u,g}$ and $\V^{cs,g}$ of $T^{1}X$. But these foliations are jointly integrable (see Section \ref{subsec:neg curved}), and so the claim follows. 

Hence we have a well-defined holonomy map $h^{\V^{cs,f}}:\W^{u,h_{u}}_{U}(x) \rightarrow \W^{u,h_{u}}_{U}(z)$ along $\V^{cs,f}$. As in Lemma \ref{foliation smooth}, we can express this map as $h^{\V^{cs,f}} = \varphi_{u} \circ h^{\V^{cs,g}} \circ \varphi_{u}^{-1}$, from which we conclude that $h^{\V^{cs,f}}$ is smooth since $\V^{cs,g}$ is a smooth foliation of $T^{1}X$ and $\varphi_{u}$ is smooth along $\W^{u,h_{u}}$. Appealing to \cite[Lemma 31]{BX} again, we conclude that the bundle $\V^{cs,f}$ is smooth along $\W^{u,h_{u}}$. Since $V^{cs,f}$ is also smooth along $\W^{cs,f}$, we conclude by Lemma \ref{Journe} that $V^{cs,f}$ is a smooth subbundle of $TM$ and therefore $\V^{cs,f}$ is a smooth foliation. 
\end{proof}

As a consequence of Lemmas \ref{foliation smooth} and \ref{vert foliation smooth}, both $E^{cu,f}$ and $V^{cu,f}$ are $C^{\infty}$ subbundles of $TM$. It follows that $\bar{Q}^{u,f} = E^{cu,f}/V^{cu,f}$ is a $C^{\infty}$ bundle over $M$, with $C^{\infty}$ holonomies $H^{s,f}$ and $H^{u,f}$ given by Proposition \ref{holonomy connect}, since $Df^{t}|_{\bar{Q}^{u,f}}$ is conformal. Hence the Riemannian structure $\delta$ on $\bar{Q}^{u,f}$ constructed at the beginning of Section \ref{subsec:synchro} is $C^{\infty}$, and consequently the potential $\bar{\zeta}_{f}$ of \eqref{unstable quotient potential} is $C^{\infty}$. We conclude that $h^{t}_{u}$ is a $C^{\infty}$ time change of $f^{t}$. Applying the same reasoning to $E^{cs,f}$ and $V^{cs,f}$, we conclude that $h^{t}_{s}$ is a $C^{\infty}$ time change of $f^{t}$ as well. Hence $h^{t}_{u}$ is a smooth time change of $h^{t}_{s}$. 

We have the conjugacy relations
\[
\varphi_{u} \circ g^{t} = h^{t}_{u} \circ \varphi_{u},
\]
and
\[
\varphi_{s} \circ g^{-t} = h^{t}_{s} \circ \varphi_{s},
\]
valid for all $t \in \R$. Let $\xi:T^{1}X \rightarrow T^{1}X$ denote the flip map $\xi(v) = -v$ sending a tangent vector to its negation. Then
\[
\xi \circ g^{t} = g^{-t} \circ \xi,
\]
and therefore if we set $\hat{h}^{t}_{s} = h^{-t}_{s}$ and $\hat{\varphi}_{s} = \varphi_{s} \circ \xi$ then
\[
\hat{\varphi}_{s} \circ g^{t} = \hat{h}^{t}_{s} \circ \hat{\varphi}_{s}
\] 

Letting $\psi = \varphi_{u} \circ \hat{\varphi}_{s}^{-1}$, we obtain 
\begin{equation}\label{final conjugacy}
\psi \circ \hat{h}^{t}_{s} = h^{t}_{u} \circ \psi. 
\end{equation}
The map $\psi:M \rightarrow M$ fixes the $\W^{c,f}$ foliation. We can thus, after passing to a finite cover of $M$ if necessary, write $\psi(x) = h^{\eta(x)}_{u}x$ for some continuous function $\eta: M \rightarrow \R$. Since $h^{t}_{u}$ is a smooth time change of $\hat{h}^{t}_{s}$, we can write $\hat{h}^{t}_{s}x = h^{\tau(t,x)}_{u}x$ for a smooth additive cocycle $\tau$ over $\hat{h}^{t}_{s}$. Plugging this into equation \eqref{final conjugacy} gives
\[
h^{\eta(\hat{h}^{t}_{s}x)+\tau(t,x)}_{u}x = h^{t+\eta(x)}_{u}x.
\]
For any $x$ not lying on a periodic orbit of $h^{t}_{u}$, the map $t \rightarrow h^{t}_{u}x$ is a homeomorphism of $\R$ onto the $h^{t}_{u}$-orbit of $x$. Hence for such $x$ it follows that for all $t \in \R$, 
\[
\eta(\hat{h}^{t}_{s}x)- \eta(x) = t - \tau(t,x).
\]
Since non-periodic points for $h^{t}_{u}$ are dense, this equation holds for every $x \in M$ and $t \in \R$. Thus the additive cocycle $\tau$ over $\hat{h}^{t}_{s}$ is cohomologous to $t$ via the transfer function $\eta$. Since both $t$ and $\tau$ are smooth, standard Livsic theory \cite{LS72} implies that $\eta$ is also smooth. We conclude that $\psi$ is smooth. 

Hence we may write $\varphi_{u} = \psi \circ \hat{\varphi}_{s}$, with $\psi:M \rightarrow M$ smooth. Since $\hat{\varphi}_{s}$ is smooth along $\W^{cs,g}$, it then follows that $\varphi_{u}$ is smooth along $\W^{cs,g}$ as well. Then $\varphi_{u}$ is smooth along the transverse foliations $\W^{cu,g}$ and $\W^{s,g}$ of $T^{1}X$; a final application of Lemma \ref{Journe} then establishes that $\varphi_{u}$ itself is smooth. We may then take $\hat{\varphi} = \varphi_{u}$ in Theorem \ref{core theorem}, which completes the proof.

\section{Mostow rigidity}\label{mostow mod} We explain in this section how Corollary \ref{mostow rigidity} follows directly from Theorem \ref{periodic rigid} without the use of the minimal entropy rigidity theorem implicit in Theorem \ref{minimal rigidity}. We treat the nonconstant negative curvature case here, since it is the one that we consider in this paper; the constant negative curvature case is analogous and follows naturally by simplifying the arguments of the paper. 

Let's now restrict to the setting of Corollary \ref{mostow rigidity}. Let $\kappa: X \rightarrow Y$ be the given homotopy equivalence and let $\varphi:T^{1}X \rightarrow T^{1}Y$ be the orbit equivalence from $g^{t}_{X}$ to $g^{t}_{Y}$ built from $\kappa$ by the construction in Section \ref{subsec:geom rigidity}.

We first claim that $X$ and $Y$ must have homothetic universal covers. Let $G_{X}$ and $G_{Y}$ denote the respective (at most) 2-step nilpotent Lie groups giving the structure of the unstable manifolds of $g^{t}$ and $f^{t}$ respectively. We have standard charts $T_{x}: G_{X} \rightarrow \W^{u,g_{X}}(x)$ and $S_{y}: G_{Y} \rightarrow \W^{u,g_{Y}}(y)$  giving each of these unstable manifolds the structure of a homogeneous space for these groups. The orbit equivalence $\varphi$ gives rise to a quasisymmetric homeomorphism $\varphi_{x}: (\W^{u,g_{X}}(x),\rho_{x,g_{X}}) \rightarrow (\W^{u,g_{Y}}(\varphi(x)),\rho_{\varphi(x),g_{Y}})$ for each $x \in T^{1}X$, by Proposition \ref{quasisymmetry orbit}, which gives a quasisymmetric homeomorphism $\psi_{x} = S_{\varphi(x)}^{-1} \circ \varphi_{x} \circ T_{x}$ from $G_{X}$ to $G_{Y}$, when $G_{X}$ and $G_{Y}$ are equipped with their respective CC-metrics (or the Euclidean metric, in the case that $X$ and/or $Y$ has constant negative curvature). 

Set $k = \dim X - 1$. Since both $G_{X}$ and $G_{Y}$ have a positive modulus family of curves, by Theorem \ref{Tyson theorem} we must have $\mathrm{Hd}(G_{X},\rho_{G_{X}}) =  \mathrm{Hd}(G_{Y},\rho_{G_{Y}})$. The Hausdorff dimensions of $G_{X}$ and $G_{Y}$ are $N_{G_{X}} = l(X) + 2(k-l(X))$ and $N_{G_{Y}} = l(Y) + 2(k-l(Y))$ respectively, where if $X$ has universal cover homothetic to $\mathbf{H}^{n}_{\mathbb{K}}$ then 
\[
l(X) = k - \dim_{\R}\mathbb{K} + 1,
\]
and the same for $Y$. Since $N_{G_{X}} = N_{G_{Y}}$, we conclude that $l(X) = l(Y)$ and therefore both $X$ and $Y$ have universal covers homothetic to the same symmetric space $\mathbf{H}^{n}_{\mathbb{K}}$. We then set $G = G_{X} = G_{Y}$. 

By rescaling the metrics on $X$ and $Y$ we can then assume that their universal covers are isometric to $\mathbf{H}_{\mathbb{K}}^{n}$. We will be working in the nonconstant negative curvature case, as we mentioned above, so we will assume that $\mathbb{K} \in \{\C,\mathbb{H},\mathbb{O}\}$. With this rescaling we then have $\vec{\la}(g_{Y}) = \vec{\la}(g_{X})$. In particular $g^{t}_{Y}$ satisfies the hypotheses of Theorem \ref{periodic rigid}, since all $g^{t}_{Y}$-invariant ergodic probability measures $\nu$ have the same Lyapunov spectrum $\vec{\la}(g_{Y})$.

Proceeding into the body of the proof of Theorem \ref{core theorem}, we observe that in Section \ref{subsec:synchro} the synchronization of $g^{t}_{Y}$ is trivial, i.e., we may take $g^{t}_{Y} = h^{t}_{u} = h^{-t}_{s}$. This is because $Dg^{t}_{Y}$ is already conformal with scaling factor $e^{t}$ on $L^{u,g_{Y}}$ in the Riemannian structure on $L^{u,g_{Y}}$ coming from the left-invariant Riemannian metric on $G$. Then Proposition \ref{prop:orbit to conj} gives us a conjugacy $\hat{\varphi}: T^{1}X \rightarrow T^{1}Y$ from $g^{t}_{X}$ to $g^{t}_{Y}$ that is flow related to $\varphi$. The work of Sections \ref{sec:pansuderiv} and \ref{sec: vert} shows that $\hat{\varphi}$ is $C^{\infty}$. 
 
For each $x \in T^{1}X$ we claim that, in the notation of Section \ref{subsec:neg curved}, we have
\begin{equation}\label{flow related use}
\p \t{\kappa} = P_{\hat{\varphi}(x)} \circ \hat{\varphi} \circ P_{x}^{-1}: \p \mathbf{H}^{n}_{\mathbb{K}} \backslash\{\theta_{-}(x)\} \rightarrow \p \mathbf{H}^{n}_{\mathbb{K}} \backslash \{\theta_{-}(\hat{\varphi}(x))\}.
\end{equation}
Writing $P_{\varphi(x)}: \W^{cu,g_{Y}}(x) \rightarrow \p \mathbf{H}^{n}_{\mathbb{K}} \backslash\{\theta_{-}(\varphi(x))\}$ for the $\theta_{+}$ projection to $\p \mathbf{H}^{n}_{\mathbb{K}}$ on the full center-unstable manifold $ \W^{cu,g_{Y}}(x)$, by the construction of $\varphi$ we have
\[
\p \t{\kappa} = P_{\varphi(x)} \circ \varphi \circ P_{x}^{-1},
\]
on $\p \mathbf{H}^{n}_{\mathbb{K}} \backslash\{\theta_{-}(x)\}$. Since $\hat{\varphi}$ is flow related to $\varphi$, $\hat{\varphi}(x)$ is tangent to the same geodesic as $\varphi(x)$ for any $x \in T^{1}X$. Since the projections $P_{x}$ are projections along geodesics, this implies that 
\[
P_{\hat{\varphi}(x)} \circ \hat{\varphi} \circ P_{x}^{-1} = P_{\varphi(x)} \circ \varphi \circ P_{x}^{-1},
\]
from which equation \eqref{flow related use} follows. 

We conclude from \eqref{flow related use} that $\p \t{\kappa}$ is smooth on $\p \mathbf{H}^{n}_{\mathbb{K}} \backslash\{\theta_{-}(x)\}$ since $\hat{\varphi}$ is smooth and the projections $P_{x}$ are smooth. Taking a point $z \in T^{1}X$ with $\theta_{-}(z) \neq \theta_{-}(x)$, the same argument shows that $\p \t{\kappa}$ is smooth on $\p \mathbf{H}^{n}_{\mathbb{K}} \backslash\{\theta_{-}(z)\}$, and putting these two claims together shows that $\p \t{\kappa}$ is smooth on $\p \mathbf{H}^{n}_{\mathbb{K}}$.

We may think of $\p \mathbf{H}^{n}_{\mathbb{K}}$ as the one point compactification $\hat{G} = G \cup \{\infty\}$; the horizontal distribution $L$ then extends to a smooth distribution on $\hat{G}$ \cite{Bou18}. The inverse projection $P_{x}^{-1}: \p \mathbf{H}^{n}_{\mathbb{K}} \backslash\{\theta_{-}(x)\} \rightarrow \W^{u,g_{X}}(x)$ maps $L$ to $L^{u,g_{X}}$ on $\W^{u,g_{X}}(x)$, and likewise $P_{\varphi(x)}^{-1}$ maps $L$ to $L^{u,g_{Y}}$ on $\W^{u,g_{Y}}(\varphi(x))$. Since $D\hat{\varphi}(L^{u,g_{X}}) = L^{u,g_{Y}}$ and $D\hat{\varphi}$ is isometric on $L^{u,g_{X}}$ (as shown in Proposition \ref{vert preserve}), since the projection $P_{x}$ is conformal on $L^{u,g_{X}}$ and $P_{\varphi(x)}$ is conformal on $L^{u,g_{Y}}$ we conclude that $D(\p \t{\kappa})(L) = L$ and $D(\p \t{\kappa})$ is conformal on $L$.

It follows that $\p\t{\kappa}$ is conformal in the conformal structure on $\p \mathbf{H}^{n}_{\mathbb{K}}$ induced by the CC-metric on $G$, and therefore $\p \t{\kappa}$ is induced by a unique isometry $\t{\sigma}: \mathbf{H}^{n}_{\mathbb{K}} \rightarrow  \mathbf{H}^{n}_{\mathbb{K}}$ which is a bounded distance from $\t{\kappa}$ and equivariant with respect to the actions of $\pi_{1}(X)$ and $\pi_{1}(Y)$ on $\mathbf{H}^{n}_{\mathbb{K}}$ \cite{P89b}. The map $\t{\sigma}$ descends to the desired isometry $\sigma: X \rightarrow Y$ as desired. Lastly, since $\t{\kappa}$ and $\t{\sigma}$ are at a bounded distance from one another, they are homotopic by a straight line homotopy along geodesics which will descend to a homotopy from $\kappa$ to $\sigma$. 
\bibliographystyle{plain}
\bibliography{HorConf}
\end{document}